\documentclass[11pt]{amsart}
\title{Parametrized spectra, a low-tech approach}
\author{Cary Malkiewich}
\address{Department of Mathematics, Binghamton University}
\email{malkiewich@math.binghamton.edu}

\usepackage{amsmath,amssymb,amsthm,array,color,multirow,pdflscape,mathrsfs,subcaption}
\usepackage[all]{xypic}
\usepackage{tikz,tikz-cd}
\usepackage{imakeidx}
\makeindex[columns=2, title=Index] 
\newdir{ >}{{}*!/-7pt/@{>}}

\usepackage[margin=1.25in]{geometry}
\parindent 0in
\setlength{\extrarowheight}{3pt}
\newcommand{\beforesubsection}{\vspace{1em}}
\newcommand{\aftersubsection}{ \mbox{}\nopagebreak \vspace{0.5em} }

\makeatletter
\def\l@section{\@tocline{1}{0pt}{1pc}{}{}}
\def\l@subsection{\@tocline{2}{0pt}{1pc}{4.6em}{}}
\def\l@subsubsection{\@tocline{3}{0pt}{1pc}{7.6em}{}}
\renewcommand{\tocsection}[3]{%
	\indentlabel{\@ifnotempty{#2}{\makebox[2.3em][l]{%
				\ignorespaces#1 #2.\hfill}}}#3}
\renewcommand{\tocsubsection}[3]{%
	\indentlabel{\@ifnotempty{#2}{\hspace*{2.3em}\makebox[2.3em][l]{%
				\ignorespaces#1 #2.\hfill}}}#3}
\renewcommand{\tocsubsubsection}[3]{%
	\indentlabel{\@ifnotempty{#2}{\hspace*{4.6em}\makebox[3em][l]{%
				\ignorespaces#1 #2.\hfill}}}#3}
\makeatother

\usepackage{xcolor}
\definecolor{darkgreen}{rgb}{0,0.30,0} 
\definecolor{darkred}{rgb}{0.75,0,0}
\definecolor{darkblue}{rgb}{0,0,0.6} 
\usepackage[pdfborder=0,pagebackref,colorlinks,citecolor=darkgreen,linkcolor=darkgreen,urlcolor=darkblue]{hyperref}
\renewcommand*{\backref}[1]{}
\renewcommand*{\backrefalt}[4]{({%
    \ifcase #1 Not cited.%
          \or On p.~#2%
          \else On pp.~#2%
    \fi%
    })}

%
%
%
\def\makeautorefname#1#2{\expandafter\def\csname#1autorefname\endcsname{#2}}
%
%
\makeautorefname{equation}{Equation}%
\makeautorefname{footnote}{footnote}%
\makeautorefname{item}{item}%
\makeautorefname{figure}{Figure}%
\makeautorefname{table}{Table}%
\makeautorefname{part}{Part}%
\makeautorefname{appendix}{Appendix}%
\makeautorefname{chapter}{Chapter}%
\makeautorefname{section}{Section}%
\makeautorefname{subsection}{Section}%
\makeautorefname{subsubsection}{Section}%
\makeautorefname{paragraph}{Paragraph}%
\makeautorefname{subparagraph}{Paragraph}%
\makeautorefname{theorem}{Theorem}%
\makeautorefname{thm}{Theorem}%
\makeautorefname{addm}{Addendum}%
\makeautorefname{mainthm}{Main theorem}%
\makeautorefname{corollary}{Corollary}%
\makeautorefname{cor}{Corollary}%
\makeautorefname{lemma}{Lemma}%
\makeautorefname{lem}{Lemma}%
\makeautorefname{sublemma}{Sublemma}%
\makeautorefname{sublem}{Sublemma}%
\makeautorefname{subl}{Sublemma}%
\makeautorefname{prop}{Proposition}%
\makeautorefname{property}{Property}
\makeautorefname{pro}{Property}
\makeautorefname{sch}{Scholium}%
\makeautorefname{step}{Step}%
\makeautorefname{conject}{Conjecture}%
\makeautorefname{conj}{Conjecture}%
\makeautorefname{questn}{Question}
\makeautorefname{quest}{Question}
\makeautorefname{qn}{Question}
\makeautorefname{definition}{Definition}%
\makeautorefname{defn}{Definition}%
\makeautorefname{defi}{Definition}%
\makeautorefname{def}{Definition}%
\makeautorefname{dfn}{Definition}%
\makeautorefname{df}{Definition}%
\makeautorefname{notation}{Notation}
\makeautorefname{notn}{Notation}
\makeautorefname{rem}{Remark}%
\makeautorefname{rems}{Remarks}%
\makeautorefname{rmk}{Remark}%
\makeautorefname{rk}{Remark}%
\makeautorefname{remarks}{Remarks}%
\makeautorefname{rems}{Remarks}%
\makeautorefname{rmks}{Remarks}%
\makeautorefname{rks}{Remarks}%
\makeautorefname{example}{Example}%
\makeautorefname{examp}{Example}%
\makeautorefname{exmp}{Example}%
\makeautorefname{exam}{Example}%
\makeautorefname{exa}{Example}%
\makeautorefname{ex}{Example}%
\makeautorefname{axiom}{Axiom}%
\makeautorefname{axi}{Axiom}%
\makeautorefname{ax}{Axiom}%
\makeautorefname{case}{Case}%
\makeautorefname{claim}{Claim}%
\makeautorefname{clm}{Claim}%
\makeautorefname{assumpt}{Assumption}%
\makeautorefname{asses}{Assumptions}%
\makeautorefname{conclusion}{Conclusion}%
\makeautorefname{concl}{Conclusion}%
\makeautorefname{conc}{Conclusion}%
\makeautorefname{cond}{Condition}%
\makeautorefname{const}{Construction}%
\makeautorefname{con}{Construction}%
\makeautorefname{criterion}{Criterion}%
\makeautorefname{criter}{Criterion}%
\makeautorefname{crit}{Criterion}%
\makeautorefname{exercise}{Exercise}%
\makeautorefname{exer}{Exercise}%
\makeautorefname{exe}{Exercise}%
\makeautorefname{warn}{Warning}%
%

\newtheorem{thm}{Theorem}[subsection]
\newtheorem{cor}{Corollary}[subsection]
\newtheorem{lem}{Lemma}[subsection]
\newtheorem{prop}{Proposition}[subsection]
\theoremstyle{definition}
\newtheorem{df}{Definition}[subsection]
\newtheorem{ex}{Example}[subsection]
\newtheorem{warn}{Warning}[subsection]
\newtheorem{rmk}{Remark}[subsection]

\numberwithin{equation}{subsection}
\numberwithin{figure}{subsection}

\makeatletter
\let\c@cor=\c@thm
\let\c@prop=\c@thm
\let\c@lem=\c@thm
\let\c@df=\c@thm
\let\c@ex=\c@thm
\let\c@warn=\c@thm
\let\c@rmk=\c@thm
\let\c@notn=\c@thm
\let\c@equation\c@thm
\let\c@figure\c@thm
\let\c@table\c@thm
\makeatother

\hypersetup{linktoc=all}


\newcommand{\sB}{\mathscr{B}}
\newcommand{\sC}{\mathscr{C}}
\newcommand{\sD}{\mathscr{D}}


\newcommand{\bH}{\mathbf{H}}

\newcommand{\bS}{\mathbf{S}}
\newcommand{\bT}{\mathbf{T}}

\newcommand{\D}{\mathbb D}
\renewcommand{\L}{\mathbb L}
\newcommand{\M}{\mathbb M}
\newcommand{\N}{\mathbb N}
\newcommand{\R}{\mathbb R}

\newcommand{\Z}{\mathbb Z}
\newcommand{\Q}{\mathbb Q}
\newcommand{\Sph}{\mathbb S}
\newcommand{\mc}{\mathcal}
\newcommand{\cat}[1]{\textup{\textbf{{#1}}}}

\newcommand{\ra}{\longrightarrow}

\newcommand{\simar}{\overset\sim\longrightarrow}
\newcommand{\congar}{\overset\cong\longrightarrow}



\newcommand{\Hom}{\textup{Hom}}
\newcommand{\Map}{\textup{Map}}

\newcommand{\id}{\textup{id}}
\newcommand{\colim}{\textup{colim}\,}
\newcommand{\hocolim}{\textup{hocolim}\,}

\newcommand{\Cyl}{\textup{Cyl}}
\newcommand{\sma}{\wedge}
\newcommand{\barsmash}{\,\overline\wedge\,}
\newcommand{\barsma}[1]{\,\overline\wedge_{#1}\,}
\DeclareMathOperator{\barmap}{\underline{\smash{\textup{Map}}}}
\DeclareMathOperator{\barF}{\underline{\smash{F}}}
\newcommand{\ti}{\widetilde}
\newcommand{\op}{\textup{op}}

\newcommand{\sk}{\textup{Sk}}
\newcommand{\tr}{\textup{tr}\,}
\newcommand{\THH}{\textup{THH}}

\newcommand{\sh}{\textup{sh}}

\DeclareMathOperator{\ho}{\textup{Ho}}
\DeclareMathOperator{\Fun}{\textup{Fun}}
\newcommand{\Ex}{\mathcal{E}x}
\newcommand{\bcr}[3]{\left[{#1}\xrightarrow{#2}{#3}\right]}
\newcommand{\bcl}[3]{\left[{#3}\xleftarrow{#2}{#1}\right]}

\def\calBi#1#2{\ensuremath{\mathscr{#1}\!/\!_{\mathbf{#2}}}}

\newcommand{\Fix}{\textup{Fix}}
\newcommand{\ind}{\textup{ind}}
\newcommand{\cyc}{\textup{cyc}}

\newcommand{\shad}[1]{{\ensuremath{\hspace{1mm}\makebox[-1mm]{$\langle$}\makebox[0mm]{$\langle$}\hspace{1mm}{#1}\makebox[1mm]{$\rangle$}\makebox[0mm]{$\rangle$}}}}
\DeclareMathOperator{\ob}{\textup{ob}}
\newcommand{\Osp}{\mathcal{OS}}
\newcommand{\Psp}{\mathcal{PS}}
\newcommand{\adj}{\dashv}
\newcommand{\Th}{\textup{Th}}
\newcommand{\non}{\textup{non}}
\newcommand{\po}[2]{\ar@{}@<{#2}>[rd]|({#1})*\txt{\Large $\ulcorner$}}
\newcommand{\pb}[2]{\ar@{}@<{#2}>[rd]|({#1})*\txt{\Large $\lrcorner$}}

\hyphenation{un-derived}
\hyphenation{mon-oid-al}
\hyphenation{bi-cat-e-gory}
\hyphenation{pa-ram-e-trized}
\hyphenation{cat-e-gor-ies}

\begin{document}

\maketitle

\begin{abstract}
	We give an alternative treatment of the foundations of parametrized spectra, with an eye toward applications in fixed-point theory. We cover most of the central results from the book of May and Sigurdsson, sometimes with weaker hypotheses, and give a new construction of the bicategory $\Ex$ of parametrized spectra. We also give a careful account of coherence results at the level of homotopy categories.
	
	This is an expository work, akin to a set of lecture notes, encompassing and extending the more novel material in the paper ``A convenient category of parametrized spectra.'' Our primary goal is to give careful and explicit explanations for the standard facts surrounding the bicategory of parametrized spectra and the Reidemeister trace. In particular, our proof of Costenoble-Waner duality gives a more explicit account of how the concrete index formulas of Dold are related to the abstract theory of bicategorical duality from May and Sigurdsson, Ponto, and Shulman.
\end{abstract}

\setcounter{tocdepth}{2}
\tableofcontents

\parskip 2ex

\section{Introduction}
Parametrized spectra are a ``fiberwise'' or ``parametrized'' refinement of spectra. They were first introduced in order to systematically study the Becker-Gottlieb transfer \cite{becker1975transfer,becker1976transfer,clapp1984homotopy}, but they also occur naturally in fixed-point theory and intersection theory \cite{ponto_asterisque,klein_williams}, twisted $K$-theory \cite{atiyah_segal,abg_ktheory,fht_1,hebestreit2019homotopical}, differential cohomology \cite{braunack2018rational}, computations with Thom spectra, e.g. \cite{mahowald1977new,units_of_ring_spectra,hahn2018eilenberg,klang2018factorization}, the functor calculus of Goodwillie \cite{calc3}, the algebraic $K$-theory of spaces of Waldhausen \cite{waldhausen_1,waldhausen_2}, symplectic geometry and string topology \cite{kragh_parametrized_nearby}, and the index theory of Dwyer, Weiss, and Williams \cite{dww,klein_williams_bundle}. The framework of parametrized spectra allows us to formulate a Poincar\' e duality theorem with extraordinary, twisted coefficients (see \autoref{thm:cw_duality_1}).

Despite the ubiquity of these applications, the theory has had a tendency to make technical demands that seem excessive, even by homotopy theorists' standards. While there are several classical texts on the subject, e.g. \cite{clapp1981duality,clapp1984homotopy,crabb_james}, May and Sigurdsson's book \cite{ms} represents the first generation of results that integrate the classical techniques with the modern theory of diagram spectra from \cite{mmss}. It is hard to overstate their achievement -- they encounter pernicious technical issues that have no parallel in the non-parametrized theory, and introduce elaborate innovations to work around these problems. Shortly after their work, a second group of authors found a different approach based on quasicategories ($\infty$-categories) that avoided these problems. This led to considerable further development of parametrized homotopy theory and closely related topics, see e.g. \cite{abg,units_of_ring_spectra,infinity_cat_line_bundles,ayala2015factorization,barwick2016parametrized,harpaz_nuiten_prasma_parametrized}. In a sense, the main insight is that by Lurie's straightening/unstraightening \cite[2.2.1]{htt}, parametrized spectra are equivalent to diagrams of spectra, and diagrams are simpler to work with on a technical level.\footnote{It is not even mandatory to pass to $\infty$-categories to use this insight, see for instance \cite{vegetables}.}

This document, together with \cite{braunack2018rational,hebestreit_sagave_schlichtkrull}, might be regarded as a third generation. We are beginning to recognize a renewed need for good point-set models of parametrized spectra, in order to build examples, perform computations, and more directly relate the homotopy theory of spectra to the geometry of fiber bundles. This leads us back to the approach of \cite{ms}, with a desire to simplify and streamline so that we can hold onto as much geometry as possible. We use the insights gained in the intervening years to give more transparent proofs of the central results from the first 19 chapters of \cite{ms}, in many cases holding under weaker hypotheses.

We have not attempted to make these foundations complete. We don't discuss rationalization or $E_\infty$ structures, though the interested reader can find such a discussion in \cite{braunack2018rational} and \cite{hebestreit_sagave_schlichtkrull}, respectively. Instead we focus on those aspects of the theory that are most relevant in applications to classical topology and fixed-point theory, as developed for instance in \cite{clapp1981duality,ponto_asterisque,ponto_shulman_general}. We prove that parametrized spectra are related by a list of operations $f_!$, $f^*$, $f_*$, $\barsmash$, $\sma_B$, $\barsma{B}$, $\odot$, $\shad{}$, ... that are appropriately compatible with each other, both on the nose and in the homotopy category. We use this to relate the classical definition of the Lefschetz number $L(f)$ and Reidemeister trace $R(f)$ to the formal definition from \cite{ponto_asterisque}, along the way obtaining a convenient point-set formula for the fiberwise Reidemeister trace (see \autoref{thm:intro_reid_formula}, \autoref{cor:fiberwise_reidemeister_formula}).

Because of this focus, we can get away without a lot of technology. The author hopes this will make the subject accessible to a wider audience than is typical. In particular, we don't require the reader to know about $\infty$-categories. We don't even use model categories all that much -- classical arguments with cofibrations and weak equivalences get us most of the way there. We do assume that the reader has seen diagram spectra before, see \cite{mmss,schwede_symmetric_spectra}. Of course, once you have the foundations of parametrized spectra in place, you can always study them using any homotopy-theoretic technology you like, including model categories, homotopical categories, quasicategories, etc.

So what is a parametrized spectrum? Intuitively, it is an object over some base space $B$, which associates to each $b \in B$ a spectrum $X_b$ in a ``continuous'' way. Often, but not always, we insist that different points $b$ and $b'$ in the same path component of $B$ have fiber spectra $X_b, X_{b'}$ that are stably equivalent. Alternatively, a parametrized spectrum is a parametrized space over $B$ that has been ``stabilized.''

For a precise definition, recall that a spectrum $X$ is given by a sequence of based spaces $X_0, X_1, \dots, $ with bonding maps $\Sigma X_n \to X_{n+1}$. To make $X$ a parametrized spectrum over $B$, we instead ask that each space $X_n$ is a retractive space over $B$. This means there are maps $B \overset{i_n}\to X_n \overset{p_n}\to B$ composing to the identity of $B$. Therefore, for each $b \in B$, the fiber $p_n^{-1}(b) = (X_n)_b$ is a space with basepoint $i_n(b)$. We replace the bonding maps by maps of retractive spaces over $B$
\[ \Sigma_B {X}_n \ra {X}_{n+1}. \]
Here $\Sigma_B$ is an operation that takes the reduced suspension $\Sigma(-)$ of each of the fibers $(X_n)_b$. More precisely, it multiplies $X_n$ by the unit interval $I$, and then quotients out by the usual equivalence relation for $\Sigma(-)$ on each fiber separately. For each $b \in B$ we therefore get a map
\[ \Sigma (X_n)_b \to (X_{n+1})_b, \]
making the fibers into a spectrum $X_b := \{ (X_n)_b : n \geq 0 \}$. We have therefore achieved the philosophy that $X$ should consist of ordinary spectra $X_b$ that ``vary continuously in $b$.'' We can define parametrized versions of symmetric and orthogonal spectra just as easily.

In this document we give the details of an entire approach to the foundations of parametrized spectra. Since large sections are devoted to giving a cleaner treatment of existing results, we summarize our most significant innovations here.
\begin{thm}\label{thm:intro_q}(\autoref{thm:stable_model_structure})
	The ``$q$-model structure'' on parametrized orthogonal spectra (see \cite{ms}) exists, and is left and right proper.
\end{thm}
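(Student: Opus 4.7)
The plan is to build the q-model structure in two stages, mirroring the non-parametrized story but with retractive spaces over $B$ in place of based spaces. First I would establish a level q-model structure on parametrized orthogonal spectra: a map $f : X \ra Y$ is a level weak equivalence (resp.\ level fibration) if, at each inner product space $V$, the map $X_V \ra Y_V$ is a weak homotopy equivalence (resp.\ Serre fibration) of total spaces, and a q-cofibration is a retract of a relative cell complex built from generating cells of the form $F_V(S^{n-1}_+ \sma B_+) \ra F_V(D^n_+ \sma B_+)$, where $F_V$ is the free parametrized orthogonal spectrum functor. The existence of this level structure reduces by the small object argument to the corresponding q-model structure on retractive spaces over $B$, itself a mild variant of Quillen's structure on $\cat{Top}$. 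Since fibrations here are Serre fibrations on total spaces and cofibrations are in particular (underlying) Hurewicz cofibrations, left and right properness for the level structure are inherited directly from the corresponding properties of the Quillen model structure on $\cat{Top}$.

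To pass to the stable structure, I would keep the same cofibrations and enlarge the weak equivalences to stable equivalences, i.e.\ maps inducing $\pi_*$-isomorphisms on each fiber spectrum $X_b$. The stable fibrations are characterized as level fibrations whose adjoint ``$\Omega$-spectrum'' squares are homotopy pullbacks on each fiber. From here, one either verifies the model category axioms directly by producing generating acyclic cofibrations of the form $F_{V\oplus W}(S^W \sma B_+) \ra F_V(S^0 \sma B_+)$, suitably cofibrantly replaced, in the style of \cite{mmss}, or else obtains the stable structure as a left Bousfield localization of the level structure. The core input in either case is that levelwise constructions commute with passage to a fiber over $b \in B$, so that cofiber and fiber sequences of parametrized spectra yield long exact sequences of fiberwise stable homotopy groups; this is where the ``parametrized = diagrams of spectra'' philosophy pays off at the technical level.

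The main obstacle I expect is right properness of the stable structure, since pullbacks in parametrized homotopy theory are notoriously delicate. The key observation is that a stable fibration, being in particular a level fibration of retractive spaces over $B$, restricts to a Serre fibration on the total space of each level, and hence pulls back levelwise to a map whose fibers over $b \in B$ are classical pullbacks of spectra along levelwise Serre fibrations. Combined with the characterization of stable equivalences as fiberwise $\pi_*$-isomorphisms, this reduces stable right properness to the well-known right properness of the Quillen stable model structure on orthogonal spectra applied one fiber at a time. Left properness is less delicate: q-cofibrations are in particular closed Hurewicz cofibrations of total spaces, so pushouts along them preserve level equivalences, and the corresponding statement for stable equivalences then follows by running the associated cofiber sequences of stable homotopy groups fiberwise.
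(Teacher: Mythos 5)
Your overall plan — build the level $q$-model structure first, then pass to the stable one by keeping the cofibrations and enlarging the weak equivalences, with properness obtained from cofiber and fiber sequences — matches the paper's strategy. But the proposal treats as routine exactly the step that May--Sigurdsson were unable to carry out, and which is the whole content of the theorem.

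The crux is your sentence that ``pushouts along [$q$-cofibrations] preserve level equivalences, and the corresponding statement for stable equivalences then follows by running the associated cofiber sequences of stable homotopy groups fiberwise.'' To run that argument you need to know that the mapping cone preserves stable equivalences whenever the spectra involved are only level $h$-cofibrant (which is all that $q$-cofibrancy buys you — the generating cells $S^{n-1}_{+B} \to D^n_{+B}$ are $h$-cofibrations over $B$ but not $f$-cofibrations). If you define the uncorrected cofiber using the internal smash, $C_B f = (X\sma_B I_B)\cup_{X\sma_B S^0_B}Y$, as \cite{ms} do, this fails: $\sma_B$ does not preserve equivalences of merely $h$-cofibrant inputs, which is precisely why they were forced to the $qf$-model structure and to doubt that the $q$-model structure exists. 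The paper avoids this by defining $C_B f = (X\barsmash I)\cup_{X\barsmash S^0}Y$ with the \emph{external} smash product, proving (\autoref{prop:h_cofibrations_pushout_product}) that $\barsmash$ interacts correctly with $h$-cofibrations, and using the monoidal fibrant replacement $P$ — which commutes with $C_B$ — to reduce the long exact sequence to the non-parametrized one. Your proposal never makes this choice or identifies that it matters, so as written it would reproduce the same obstruction \cite{ms} ran into, and would not establish left-properness (\autoref{cor:spectra_left_proper}) under the hypotheses that the $q$-model structure requires.

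Two smaller issues. First, the generating cells you write, $F_V(S^{n-1}_+\sma B_+) \to F_V(D^n_+\sma B_+)$, only allow disks mapping to $B$ through the projection; you need $F_k(S^{n-1}_{+B}\to D^n_{+B})$ over \emph{arbitrary} maps $D^n\to B$, otherwise the fibration condition you detect is too weak. Second, your right-properness argument appeals to ``the characterization of stable equivalences as fiberwise $\pi_*$-isomorphisms,'' but stable equivalences of parametrized spectra are only detected by fiber spectra after level fibrant replacement; the paper's proof (\autoref{lem:LES2}, \autoref{cor:spectra_right_proper}) inserts $R^{lv}$ before comparing homotopy groups. In the right-properness setting you can get away with it because the pullback square has a level $q$-fibration leg, but this needs to be said, not assumed.
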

Furthermore on the $q$-cofibrant spectra, the reduced mapping cone has the right homotopy type, hence all of the expected results for cofiber sequences and colimits can be obtained from this model structure. In the language of \cite{ms}, it is ``well-grounded,'' though using a different ground structure than the one used in \cite{ms}. For a quick explanation of why \autoref{thm:intro_q} is possible, see \autoref{fixed_may_sigurdsson} and \autoref{fixed_may_sigurdsson_2}. We actually regard \autoref{thm:intro_q} as a new proof of a known result, because \cite{hebestreit_sagave_schlichtkrull} uses a more formal approach to establish a similar model structure for parametrized symmetric spectra.

The next result is a technical cornerstone of the present work. Its relies on several point-set topology results recalled in \autoref{sec:technical_lemmas} and a sequence of arguments in \autoref{sec:reedy} that generalizes earlier work from \cite[\S 4]{malkiewich2017coassembly}.
\begin{thm}\label{thm:intro_cof_fib}
	There is a notion of ``cofibrant'' and ``fibrant'' for parametrized orthogonal spectra with the following properties.
	\begin{itemize}
		\item Every spectrum is equivalent by a zig-zag to a spectrum that is cofibrant and fibrant.
		\item Every cofibrant spectrum $X$ has a natural cofibrant-and-fibrant replacement $PX$. Furthermore $P$ commutes with the external smash product, $PX \barsmash PY \cong P(X \barsmash Y)$.
		\item The external smash product $\barsmash$ preserves cofibrant spectra, equivalences of cofibrant spectra, and spectra that are both cofibrant and fibrant.
		\item For a map $f\colon A \to B$, the pullback $f^*$ preserves cofibrant spectra, fibrant spectra, and equivalences of fibrant spectra.
		\item The pushforward $f_!$ preserves cofibrant spectra and equivalences between them. It also preserves fibrant spectra if $f$ is a Hurewicz fibration.
	\end{itemize}
\end{thm}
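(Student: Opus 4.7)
The plan is to give point-set definitions of cofibrant and fibrant that are compatible with the Reedy structure on orthogonal spectra and the base-change functors. A parametrized orthogonal spectrum $X$ should be called \emph{cofibrant} when each level $X_n$ is a well-sectioned retractive space over $B$ (the section is an $h$-cofibration) and the latching maps $L_n X \to X_n$ coming from the orthogonal indexing category are $h$-cofibrations of such retractive spaces; this is the Reedy-type condition discussed in \autoref{sec:reedy}. Similarly, \emph{fibrant} means each $X_n$ is an ex-fibration and the adjoint structure maps $X(V) \to \Omega^W_B X(V \oplus W)$ are Hurewicz fibrations that are stably weak equivalences. These are parametrized analogues of ``Reedy-cofibrant with well-sectioned levels'' and of an $\Omega$-spectrum of ex-fibrations.

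With these notions in hand, one obtains cofibrant-and-fibrant replacements by first applying a cell/CW construction to produce a cofibrant model and then iteratively applying a parametrized spectrification that turns each adjoint structure map into a Hurewicz fibration. The functor $P$ would be constructed directly on cofibrant $X$ as such a spectrification, and its strict compatibility with the external smash, $P(X \barsmash Y) \cong PX \barsmash PY$, is to be arranged by building $P$ out of Moore-path-style data (as opposed to ordinary path spaces), whose universal properties interact well with the ``untwisted'' external smash of retractive spaces.

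The remaining preservation properties then reduce to point-set topology collected in \autoref{sec:technical_lemmas}. That $\barsmash$ preserves cofibrant spectra is a pushout-product argument using closure of $h$-cofibrations under external smash, and preservation of equivalences between cofibrants follows by a standard mapping-cylinder/cube argument. Preservation of cofibrant-and-fibrant spectra under $\barsmash$ then follows by combining these with the strict commutation of $P$ with $\barsmash$. The pullback $f^*$ preserves each class because $h$-cofibrations, Hurewicz fibrations, and the relevant equivalences are all fiber-checkable. The pushforward $f_!$, as a left adjoint, commutes with the colimits defining the latching maps and hence preserves cofibrant spectra and equivalences between them; when $f$ is a Hurewicz fibration, the classical base-change results in \autoref{sec:technical_lemmas} imply that $f_!$ also preserves Hurewicz fibrations, and hence fibrant spectra.

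The main obstacle is engineering $P$ to be simultaneously a valid fibrant replacement on cofibrant input and strictly monoidal for $\barsmash$. The Moore-path approach sidesteps the usual obstruction (associativity failures for ordinary path spaces, which destroy strict compatibility with smash), but one still has to verify that the output is level-wise an ex-fibration, that the adjoint structure maps are Hurewicz fibrations, and that cofibrancy is preserved --- the last requiring the careful Reedy-style cell arguments of \autoref{sec:reedy} that generalize the earlier work cited there.
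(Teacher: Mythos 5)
Your proposal has two genuine gaps that the paper's choices are engineered specifically to avoid.

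First, the cofibrancy notion. You propose a Reedy-type condition in which the latching maps $L_nX \to X_n$ are $h$-cofibrations of well-sectioned retractive spaces. The paper instead uses \emph{freely $f$-cofibrant} spectra, built from $f$-cofibrations of $f$-cofibrant spaces. The distinction matters precisely at the pullback step: $f^*$ preserves $f$-cofibrations of retractive spaces for \emph{arbitrary} $f$ (\autoref{lem:f_star_preserves}), but it preserves $h$-cofibrations only when $f$ is itself a Hurewicz fibration. With your $h$-cofibration Reedy notion, the claim ``$f^*$ preserves cofibrant spectra'' fails for general $f$, which is one of the bullet points you need to prove. Your justification that ``$h$-cofibrations $\ldots$ are fiber-checkable'' is exactly where this breaks: being an $h$-cofibration is \emph{not} a condition that pulls back, whereas being an $f$-cofibration is.

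Second, the fibrancy notion. You require each level to be an ex-fibration \emph{and} the adjoint structure maps $X(V) \to \Omega^W_B X(V\oplus W)$ to be Hurewicz fibrations that are stable equivalences. That is (roughly) the stably fibrant, or $\Omega$-spectrum, condition. But then the bullet ``$\barsmash$ preserves spectra that are both cofibrant and fibrant'' is false: the external smash of two $\Omega$-spectra is essentially never an $\Omega$-spectrum (this is already false non-parametrically). The paper avoids this by choosing the weaker notion \emph{level $h$-fibrant} --- each level $X_n \to B$ is a Hurewicz fibration, with no requirement on the adjoint structure maps. That weaker class is preserved by $\barsmash$ (\autoref{prop:spectra_pushout_product}), by $f^*$, and by $f_!$ for $f$ an $h$-fibration (Clapp's gluing theorem \autoref{prop:clapp}), exactly as the theorem requires.

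Finally, the Moore-path construction of $P$ is unnecessary. The paper's $P$ is simply $(p_1)_!(p_0)^*$ levelwise, built from the evaluation maps $p_0,p_1\colon B^I \to B$ on the ordinary path space; strict compatibility $PX \barsmash PY \cong P(X\barsmash Y)$ falls out of the point-set isomorphisms commuting $\barsmash$ with $f_!$ and $f^*$ (\autoref{lem:external_smash_and_base_change}, \autoref{prop:P_strong_monoidal}). No concatenation of paths is involved, so no associativity obstruction arises and no Moore structure is needed. (Also note that $P$ lands in level $h$-fibrant spectra only when applied to level $h$-cofibrant input --- it is a fibrant replacement on cofibrants, which is exactly the statement of the theorem --- rather than being a spectrification.)
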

This is proven in \autoref{prop:spectra_pushout_product} and \autoref{prop:stably_derived} -- the two notions are called ``freely $f$-cofibrant'' and ``level $h$-fibrant.''

\autoref{thm:intro_cof_fib} affords a new construction of the homotopy bicategory of parametrized spectra $\Ex$. Namely, we build a point-set version of $\Ex$, restrict to the cofibrant-fibrant spectra, and observe that the bicategory operations preserve both the cofibrant-fibrant spectra and the equivalences between them. Therefore they define a coherent set of operations on the homotopy category as well. This definition of $\Ex$ is significantly shorter and simpler than existing definitions (we show they are all canonically isomorphic in \autoref{thm:four_bicategories_of_spectra}). This part of the foundations is especially important because of its applications to fixed-point theory and algebraic $K$-theory, see for instance \cite{ponto_asterisque,ponto_shulman_indexed,ponto_shulman_general,ponto_shulman_mult,lind2019transfer,campbell_ponto}.

The following rigidity result is helpful when constructing $\Ex$ because it makes the coherence axioms trivial to check on the point-set level. The argument is a generalization of the one used in the author's thesis \cite[1.2, 3.17]{malkiewich_cyclotomic_dx}.
\begin{thm}\label{thm:intro_rigidity}(\autoref{thm:spectra_rigidity})
	Any functor of the form $f_!g^*(X_1 \barsmash \ldots \barsmash X_n)$ is rigid, i.e. the only automorphism is the identity, provided that $f$ is injective on each fiber of $g$.
\end{thm}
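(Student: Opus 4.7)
The strategy is to test $\eta$ on a family of ``minimal'' retractive spaces, exploit the injectivity hypothesis on $(f,g)$ to force the test case to be rigid, and then extend by naturality to arbitrary inputs. Since the external smash $X_1 \barsmash \cdots \barsmash X_n$ in the paper's construction of $\Ex$ is bi-indexed, the output of $F$ at multi-level $(V_1,\dots,V_n)$ depends only on the spaces $X_i(V_i)$. Hence a natural automorphism $\eta$ of $F$ restricts at each multi-level to a natural automorphism of the space-level functor
\[ \tilde{F}(Y_1,\dots,Y_n) := f_! g^*(Y_1 \barsmash \cdots \barsmash Y_n) \]
on tuples of retractive spaces, and it suffices to establish rigidity for $\tilde{F}$.

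For each $b_i \in B_i$, let $A_{b_i} := B_i \sqcup \{b_i\}$ be the retractive space over $B_i$ with a single extra non-section point at the fiber over $b_i$. Smashes of these are transparent: $A_{b_1} \barsmash \cdots \barsmash A_{b_n} \cong (B_1 \times \cdots \times B_n) \sqcup \{(b_1,\dots,b_n)\}$. Pulling back along $g$ yields $C \sqcup g^{-1}(b_1,\dots,b_n)$, and pushing forward along $f$ yields a retractive space over $D$ whose fiber over $d$ consists of the section point together with the finite set $f^{-1}(d) \cap g^{-1}(b_1,\dots,b_n)$. The hypothesis that $f$ is injective on each fiber of $g$ is exactly the statement that this finite set has at most one element, so any section-preserving self-bijection of this space over $D$ is forced to be the identity. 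Hence $\tilde\eta_{A_{b_1}, \dots, A_{b_n}} = \mathrm{id}$.

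An arbitrary retractive space $Y_i$ over $B_i$ is set-theoretically covered by its points: each non-section point $y_i$ in the fiber over some $b_i \in B_i$ is the image of the extra point of $A_{b_i}$ under a canonical map $A_{b_i} \to Y_i$, and together these exhaust $Y_i$. By the fiberwise formula for $f_!g^*(Y_1 \barsmash \cdots \barsmash Y_n)$, every non-section point of any fiber over $d \in D$ arises as the $\tilde{F}$-image of a tuple of non-section points $(y_1,\dots,y_n)$ together with a witnessing $c \in f^{-1}(d)$. Naturality of $\tilde\eta$ applied to these generator maps, together with its triviality on generators, forces $\tilde\eta$ to act as the identity pointwise on each fiber, hence everywhere.

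The main obstacle will be the level-indexing reduction in the first paragraph: one has to verify that the external smash, $g^*$, and $f_!$ in the paper's setup preserve the bi-indexed structure strictly, and that a natural automorphism at the spectrum level really is determined by its values at each multi-level of the underlying parametrized spaces. Once this is unpacked from the construction of $\Ex$, the remainder of the argument is a purely set-theoretic rigidity statement about retractive spaces, and the injectivity hypothesis enters exactly once, in the final cardinality bound on the fiber.
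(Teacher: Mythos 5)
Your second and third paragraphs correctly reproduce the paper's space-level rigidity argument (\autoref{prop:spaces_rigidity}): test on one-point retractive spaces, use the injectivity hypothesis to force triviality there, then extend to arbitrary inputs by choosing a point and the corresponding generator map and invoking naturality. That part is fine.

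The gap is in the first paragraph, and it is fatal. The external smash product $X_1 \barsmash \cdots \barsmash X_n$ of parametrized \emph{orthogonal spectra} is not a bi-indexed object: it is defined (as in the paper's discussion before \autoref{barsmash_free}) by forming the $\mathscr J^{\sma n}$-indexed bispectrum $\{X_1(V_1) \barsmash \cdots \barsmash X_n(V_n)\}$ and then taking an enriched left Kan extension along the direct-sum functor $\oplus\colon \mathscr J^{\sma n} \to \mathscr J$. So the level-$V$ piece of $X_1 \barsmash \cdots \barsmash X_n$ is a colimit over all ways of splitting $V$ into summands, not a product of levels of the $X_i$. Consequently a natural automorphism of the spectrum-level functor does \emph{not} restrict, multi-level by multi-level, to a natural automorphism of the space-level functor $\tilde F$; your reduction is not available.

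What makes the spectrum case genuinely harder, and what your proof omits, is the residual $O(n)$-action. As in the paper's argument, one first notes (using the adjunction between $F_V$ and evaluation at $V$) that a natural automorphism $\eta$ of $\Phi$ is determined by its values on tuples of free spectra $(F_{V_1}\,*_{+C_1}, \ldots, F_{V_n}\,*_{+C_n})$. Over a point of $B$, $\eta$ then yields a self-map of a spectrum of the form $F_{V_1 \oplus \cdots \oplus V_n} S^0$ --- and such spectra have many automorphisms, so knowing $\eta$ is trivial on ``the section plus one point'' (which is what your set-theoretic fiber argument gives) is not sufficient. The paper closes the gap by observing that this self-map agrees with the identity of $F_0 S^0$ along a chosen point of $S^{V_1 \oplus \cdots \oplus V_n}$, and since the orthogonal group acts faithfully on the sphere, this forces the self-map to be the identity. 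That faithfulness step is the core new input for spectra, and without it your argument reproves the space-level statement but does not yield the spectrum-level one.
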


The above notions of cofibrant and fibrant do not fit into a model structure. So we can use them to derive the operations $\barsmash$, $f^*$ and $f_!$, but the commutation of the derived functors cannot be proven by restricting to cofibrant-fibrant objects and using Whitehead's theorem. Instead, we use a new framework for passing an isomorphism of composites of point-set functors to an isomorphism of their derived functors, in a way that respects composition (see \autoref{sec:composing_comparing}). This gives us another way to construct $\Ex$, which we find useful for the proof of Costenoble-Waner duality. Our framework differs from the one in \cite{shulman_comparing} in that it is more ad-hoc but can accommodate longer composites such as left- then right- then left-deformable functors.

Finally, we generalize and expand on the proof of Costenoble-Waner duality from \cite[18.5]{ms}. We prove two versions, one with neighborhoods and one with mapping cones, that hold under different assumptions. By playing these two versions off of each other, we obtain a point-set formula for the fiberwise Reidemeister trace in the sense of \cite{klein_williams,ponto_asterisque} that holds under broad assumptions.
\begin{thm}\label{thm:intro_reid_formula}(\autoref{cor:reidemeister_formula}, \autoref{cor:fiberwise_reidemeister_formula})
	When $X$ is a compact ENR, the Reidemeister trace $R(f)$ of a map $f\colon X \to X$ is the desuspension of the map of spaces
\[ \xymatrix @R=0em{
	S^n \ar[r] & S^n_\epsilon \sma (\Lambda^f X)_+ \\
	v \ar@{|->}[r] & \left(v - f(p(v))\right) \sma \left\{ \ t \mapsto p((1-t)f(p(v)) + tv) \ \right\}.
} \]
	Here $p$ is the retract onto $X$ of a neighborhood in $\R^n$, and $\Lambda^f X$ is the twisted free loop space
	\[ \Lambda^f X = \{ \ \gamma: [0,1] \to X : \ \gamma(0) = f\gamma(1) \ \}. \]
	More generally, when $q\colon E \to B$ is a fiber bundle with fiber and base both compact ENRs, the fiberwise Reidemeister trace $R_B(f)$ of a fiberwise map $f\colon E \to E$ is a map of spaces over $B$ given by the same formula, which varies continuously in $B$.
\end{thm}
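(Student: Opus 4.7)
The plan is to realize $R(f)$ as a bicategorical trace in $\Ex$ and unwind it using the neighborhood form of Costenoble-Waner duality. By \cite{ponto_asterisque}, $R(f)$ is by definition the shadow in $\Ex$ of the trace of the 2-cell built from $f\colon X \to X$. Once the foundational results of the paper (especially \autoref{thm:intro_cof_fib} and \autoref{thm:intro_rigidity}) are in place, this bicategorical trace is represented by an honest point-set composite
\[ \Sph \xrightarrow{\eta} \Sigma^\infty X_+ \sma D(X_+) \xrightarrow{f\sma\id} \Sigma^\infty X_+ \sma D(X_+) \xrightarrow{\mathrm{sh}\,\epsilon} \Sigma^\infty \Lambda^f X_+, \]
where $(D(X_+),\eta,\epsilon)$ is a Costenoble-Waner duality for $\Sigma^\infty X_+$ and the final arrow combines $\epsilon$ with the cyclic rotation producing the shadow. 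The shadow lands in the twisted free loop space because the two appearances of the $X$-coordinate, one twisted by $f$, are forced to coincide up to a path.

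For $X \subseteq \R^n$ a compact ENR with neighborhood retract $p\colon U \to X$, I would plug in the neighborhood version of Costenoble-Waner duality proved earlier in the paper. This identifies $D(X_+)$, up to an $n$-fold suspension, with the Thom collapse $U/(U\setminus X)$, and realizes the duality data by the classical Pontryagin-Thom formulas
\[ \eta\colon S^n \ra X_+ \sma (U/(U\setminus X)), \quad v \mapsto p(v)\sma v, \]
\[ \epsilon\colon (U/(U\setminus X)) \sma X_+ \ra S^n, \quad (v,x) \mapsto v-x, \]
extended by the basepoint on inputs outside the domain of definition. The mapping cone version is invoked in parallel to justify that these point-set maps produce a well-defined composite in $\Ex$ after the cofibrant-fibrant replacements from \autoref{thm:intro_cof_fib}.

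Tracing $v\in S^n$ through the composite gives $v\mapsto p(v)\sma v \mapsto f(p(v))\sma v$, and then the shadow converts the pair $(f(p(v)),p(v))$ of $X$-coordinates into a path between them, obtained by retracting the straight-line homotopy in $\R^n$ via $p$. The residual Euclidean coordinate is $v - f(p(v))$, and the path is $\gamma(t) = p\bigl((1-t)f(p(v)) + tv\bigr)$, which satisfies $\gamma(0) = f(p(v)) = f(\gamma(1))$ so $\gamma \in \Lambda^f X$. Desuspending by $n$ yields the stated formula. The fiberwise version repeats the argument in $\Ex$ over $B$, using a fiberwise embedding $E \subseteq B\times \R^n$ with fiberwise neighborhood retract.

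The main obstacle is coherence. Getting the shadow and the cyclic rotations to play nicely with the point-set duality data requires the rigidity of \autoref{thm:intro_rigidity} and the composite-comparison framework of \autoref{sec:composing_comparing}. In particular, one must check that the neighborhood and mapping cone forms of Costenoble-Waner duality present compatible duals up to canonical equivalence, so that the final formula does not depend on which model is chosen; this is precisely why both versions are developed in the paper.
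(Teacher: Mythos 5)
Your high-level strategy is the same as the paper's: express $R(f)$ as a bicategorical trace in $\Ex$, instantiate Costenoble-Waner duality with a neighborhood retract, and trace a point $v \in S^n$ through the explicit collapse and scanning maps. You also correctly flag the role of both forms of duality (neighborhood and mapping cone) and of the rigidity/composite-comparison framework for coherence. However, there is a genuine gap in the detail that, if taken at face value, collapses the Reidemeister trace to the Lefschetz number.

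The problem is your evaluation map. You write the Spanier-Whitehead evaluation
\[
\epsilon\colon (U/(U\setminus X)) \sma X_+ \to S^n, \qquad (v,x) \mapsto v-x,
\]
which lands in a plain sphere with no parametrized structure over $X\times X$. The Costenoble-Waner evaluation (\autoref{thm:cw_duality_1}) is genuinely different: it is a map of retractive spaces over $X \times X$ whose target is $S^n_\epsilon \barsmash X^I_{+(X\times X)}$, and its formula is $(x,u) \mapsto (u-i(x),\gamma_{x,u})$ where $\gamma_{x,u}(t) = p((1-t)i(x)+tu)$. The path component of the Reidemeister trace is produced precisely by that extra factor $X^I_{+(X\times X)}$ in the scanning map, because the duality is over $X$ rather than over a point. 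If you compose with your $\epsilon$, the result really does live in $\Sph$, and you compute $L(f)$, not $R(f)$.

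You try to recover the path by asserting that ``the shadow converts the pair $(f(p(v)),p(v))$ into a path, obtained by retracting the straight-line homotopy.'' This is not how the shadow acts. The shadow is $(r_A)_!(\Delta_A)^*$ (or its derived form) applied after the evaluation; it identifies the two $X\times X$ coordinates along the twist by $f$ and delivers the twisted free loop space $\Lambda^f X$, but the \emph{specific} path $\gamma_{f(p(v)),v}$ is not conjured by the shadow -- it is already present as the $X^I_{+(X\times X)}$-component of the scanning map. In the paper's explicit unwinding, the point goes $v \mapsto (v,p(v)) \mapsto (v,p(v),f(p(v))) \mapsto (f(p(v)),v,p(v)) \mapsto (v-f(p(v)),\gamma_{f(p(v)),v},p(v))$, and the last coordinate is discarded by the composition product; the path is visibly created by the fourth arrow (the CW evaluation), not the shadow. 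So you should replace your $\epsilon$ with the parametrized scanning map, at which point your tracing-through does land on the stated formula and the rest of your outline is sound.
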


\begin{figure}[h]
	\centering
	\def\svgwidth{0.85\columnwidth}
	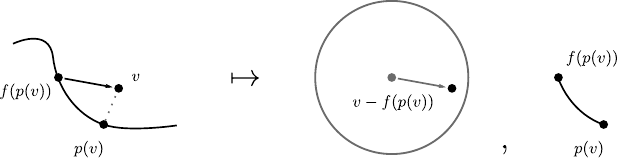
	\caption{A point-set formula for the Reidemeister trace.}\label{fig:reidemeister_formula}
\end{figure}
See also \autoref{fig:reidemeister_formula}. This is a fixed-point invariant that greatly generalizes the one-parameter Reidemeister trace of \cite{geoghegan_forum,gn_one_parameter}. This point-set formula was known to Klein, Ponto, and Williams around the time that \cite{ponto_asterisque} and \cite{klein_williams} were written, for smooth fiber bundles with cell complex base. But as far as we can tell, \autoref{thm:intro_reid_formula} seems to be its first appearance in the literature.

Just about every result generalizes to $G$-equivariant parametrized orthogonal spectra when $G$ is a finite group. With the exception of the technical lemmas at the beginning of \autoref{sec:G_reedy}, the arguments are essentially the same. We have therefore chosen to write the entire text nonequivariantly, and summarize the changes needed for the equivariant theory in \autoref{sec:G}.

\textbf{Acknowledgements.}
The author would like to give enormous thanks to John Lind, Peter May, and Kate Ponto for explaining the inside knowledge of how this theory is really supposed to go. Let's spread this knowledge more widely now. He thanks Mike Shulman for writing \cite{shulman_framed_monoidal} because we rely heavily on his perspective. He thanks Steve Ferry and Shmuel Weinberger for help and guidance concerning mapping cylinder neighborhoods and compact ANRs. Thanks also to Fabian Hebestreit and Steffen Sagave, who presciently suggested that the external smash products are really the more important ones in applications, and who were generous in discussing and sharing \cite{hebestreit_sagave_schlichtkrull} with the author while it was still being written. The author thanks the Hausdorff Institute in Bonn for their hospitality in July of 2017 when he started working on the $q$-model structure, and the Max Planck Institute in Bonn for their hospitality during the calendar year of 2018, during which the first six sections of this document were written.

The author would also like to thank David Carchedi, Madgalena Kedziorek, Inbar Klang, Mona Merling, Sean Tilson, Sarah Yeakel, and everyone else who showed him support and encouragement during the writing of this paper. This has been a large undertaking, motivated by the needs of a certain community, and it has not always been easy to keep going. The author hopes that this groundwork will support a diverse array of future projects by young mathematicians at the intersection of homotopy theory and fixed-point theory.

\beforesubsection
\subsection{How much category theory do we really need?}\aftersubsection

Here we briefly address concerns from two potential segments of our audience: those who worry that we are using too much category theory, and those who worry that we are using too little.

We begin with the first concern. Parametrized spectra are related by a lot of operations: external, internal, and relative smash products, pullbacks, pushforwards, composition products, shadows, base-change objects, $\ldots$. Each of these operations can also be \emph{derived}, meaning that it is modified so that it preserves weak equivalences (or gives an operation on the homotopy category). This is a lot of information to keep track of, before we even get to the relationships between these operations. To avoid getting lost, we need some kind of language that helps us stay organized. We use the notion of a ``bifibration'' and a ``bicategory,'' which come from fibered category theory and 2-category theory, respectively. Though abstract, they turn out to be just what we need to collect and organize these operations in a sensible way.

These notions are not familiar to everyone, so we have tried our best to make them accessible. We assure the reader interested in fixed-point theory that these subjects do not really have to be mastered getting started. For now, you can get along just fine without sections \autoref{sec:SMBF}, \autoref{sec:deriving_SMBFs}, or any of the sections that use model categories. The category theory contained in sections \autoref{sec:just_derived_functors} and \autoref{sec:duality} should be sufficient if all you want to do is start seriously working with traces in parametrized spectra.

Now for the second concern. In this paper, we focus a lot of our attention on homotopy categories. In particular, we build a homotopy bicategory of parametrized spectra, which is a direct generalization of the homotopy category of spectra with its symmetric monoidal structure. From the point of view of pure homotopy theory, this means that our treatment is incomplete. Passing to homotopy categories throws away the ``higher'' structure that one needs to take pushouts, homotopy colimits, bar constructions, etc.

Our primary justification for doing this is that duality theory and traces already work in a satisfactory way on the level of the homotopy category, and these are the notions we need to form the applications to fixed-point theory. Given the scope of the audience we hope to attract, it makes sense to keep the technology at the minimum level needed to make the applications work.

Of course, we cannot anticipate all future applications. So we have taken care to record how each of our homotopy categories comes from a point-set category with weak equivalences, and each operation comes from a point-set functor that preserves the weak equivalences. In principle, this is all the information one needs to build homotopy-coherent refinements of the constructions in this paper. Though, this may require significant technical work, and perhaps also some new insights. 
Even at the level of homotopy categories, there is a non-trivial amount of coherence machinery for understanding how we form the bicategory of parametrized spectra from other related structures \cite{mp2}.

The foundations in this paper are specifically adapted for applications beyond homotopy theory -- the goal is to pass as easily as possible between the homotopy theory and explicit point-set models, because we care about questions that concern the point-set models. Philosophically, the program of reformulating homotopy theory using $\infty$-categories travels in a different direction than this, away from the use of point-set models and towards a language in which every noun and verb is a homotopy-invariant notion. There is a tension between these two points of view, but it is a pleasant tension and it need not be resolved. It is healthy for a field to grow outwards in different directions at the same time.

\newpage

\section{Parametrized spaces}\label{sec:spaces}

\subsection{Basic definitions}\aftersubsection

Let $B$ be a topological space. A \textbf{retractive space}\index{retractive space} or ``ex-space'' over $B$ consists of a space $X$, called the total space, along with an inclusion and a projection map $B \overset{i}\to X \overset{p}\to B$ that compose to the identity of $B$. It is helpful to think of $X$ as a family of spaces $X_b = p^{-1}(b)$, each of which has a basepoint $i(b) \in X_b$. For this reason we often call $i$ the \textbf{basepoint section}\index{basepoint section} of $X$.

If $X \to B$ is a map with no basepoint section, we define $X_{+B}$ to be the disjoint union $X \amalg B$, with projection given by $p$ and $\id_B$, and with the canonical inclusion of $B$.

As usual, $B$ and $X$ are not plain-vanilla topological spaces. Recall that $X$ is \textbf{compactly generated} when its closed sets are determined by their preimages in every compact Hausdorff space $K$ mapping into $X$. Furthermore $X$ is \textbf{weak Hausdorff} if the image of every such $K \to X$ is closed. See \cite[App A]{lewis_appendix}, \cite{strickland2009category}, and \cite[App A]{schwede_global} for basic properties of such spaces. In this paper we adopt the following two conventions.
\begin{itemize}
	\item \textbf{(CGWH)}\index{CGWH} Every base space $B$ and total space $X$ is compactly generated weak Hausdorff.
	\item \textbf{(CG)}\index{CG} Every base space $B$ is compactly generated weak Hausdorff, but the total space $X$ only has to be compactly generated.
\end{itemize}
Most of the theory is exactly the same in both cases. At the points where it is different, we will use the tags (CGWH) and (CG) to distinguish what happens in each case. The reader can therefore pick either convention and learn the entire theory following just that convention.\footnote{At the end, the two model categories parametrized spectra we get are Quillen equivalent, so the difference does not matter for the purposes of homotopy theory.} Note that \cite{ms} is written in (CG).

\begin{ex}\label{ex:trivial}
	If $K$ is any based space then $B \times K$ is a retractive space over $B$. If $K$ is an unbased space then we have a homeomorphism $B \times (K_+) \cong (B \times K)_{+B}$.
\end{ex}

\begin{ex}\label{ex:fiberwise_thom_space}
	Let $V \to B$ be an $n$-dimensional real vector bundle with an inner product, and $S(V)$ and $D(V)$ its associated unit sphere and disc bundles. The \textbf{Thom space} $\Th(V)$ is the quotient $D(V)/S(V)$. One disadvantage of this classical construction is that it loses the information of the parametrization over $B$. To correct for this, define the \textbf{fiberwise Thom space}\index{fiberwise!Thom space $\Th_B(V)$} $\Th_B(V)$ to be the fiberwise quotient, i.e. the pushout
	\[ \xymatrix @R=2em{
		S(V) \ar[r] \ar[d] & D(V) \ar[d] \\
		B \ar[r] & \Th_B(V).
	} \]
	Then $\Th_B(V)$ is a retractive space over $B$.
\end{ex}

Retractive spaces over $B$ form a category $\mc R(B)$. The morphisms or ``ex-maps'' from $(B,X,i_X,p_X)$ to $(B,Y,i_Y,p_Y)$ are the continuous maps $f\colon X \to Y$ commuting with the inclusion maps $i$ and projection maps $p$. In other words, $f \circ i_X = i_Y$ and $p_Y \circ f = p_X$, or the following diagram commutes.\index{retractive space!morphisms of}
\[ \xymatrix @R=1.2em{
	& B \ar[ld]_-{i_X} \ar[rd]^-{i_Y} & \\
	X \ar[rr]^-f \ar[rd]_-{p_X} && Y \ar[ld]^-{p_Y} \\
	& B &
} \]

The category $\mc R(B)$ has a zero object, and all colimits and limits. The zero object is $B$, as a space over itself along the identity map. We form the colimit of a diagram $\cat I \to \mc R(B)$ by taking the colimit of the total spaces, then further quotienting their basepoint sections together.\index{colimit} Dually, the limit of the diagram maps to a product of copies of $B$, and we pull back to $B$ along the diagonal map.\index{limit} If the diagram is ``connected,'' this extra step of quotienting the basepoint sections or pulling back to $B$ is not necessary. So in particular: \\

\centerline{
\begin{tabular}{lcr}
	The coproduct of $X$ and $Y$ && is the union $X \cup_B Y$. \\
	The product of $X$ and $Y$ && is the fiber product $X \times_B Y$. \\
	The pushout of $Y \leftarrow X \to Z$ && is the usual pushout $Y \cup_X Z$. \\
	The pullback of $Y \to W \leftarrow Z$ && is the usual pullback $Y \times_W Z$.
\end{tabular}
}

A map of retractive spaces $f\colon X \to Y$ is a \textbf{fiberwise (based) homotopy equivalence}\index{fiberwise!homotopy equivalence} if it has a homotopy inverse in $\mc R(B)$, meaning that the compositions are homotopic to the identity along homotopies of maps of retractive spaces. We say that $f$ is a \textbf{weak equivalence}\index{weak equivalence} (or $q$-equivalence) if the map of total spaces $X \to Y$ is a weak homotopy equivalence. So it induces isomorphisms on homotopy groups at all basepoints.\footnote{Other kinds of weak equivalences are possible. One studied in \cite{intermont_johnson} measures the equivalences by fiberwise maps out of $S^n \times U$ where $U \subseteq B$ is an open subset of the base.}

A map $f\colon X \to Y$ of retractive spaces over $B$ is a fibration if the map of total spaces $X \to Y$ is a fibration. This can mean several things:
\begin{itemize}
	\item A \textbf{Hurewicz fibration} or \textbf{$h$-fibration}\index{$h$-fibration} $X \to Y$ is a map satisfying the homotopy lifting property. In other words the projection map $X^I \to X \times_Y Y^I$ has a section. We call this a path-lifting function.
	\item A \textbf{Serre fibration} or \textbf{$q$-fibration}\index{$q$-fibration} $X \to Y$ is a map satisfying the usual lifting property with respect to cylinders on discs:
	\[ \xymatrix @R=1.7em{
		D^n \times \{0\} \ar[r] \ar[d] & X \ar[d] \\
		D^n \times I \ar[r] \ar@{-->}[ur] & Y
	}\]
	\item A \textbf{quasifibration}\index{quasifibration} is a map $X \to Y$ such that for each $y \in Y$, the inclusion of the fiber $X_y$ into the homotopy fiber $X \times_Y (Y,\{y\})^{(I,\{1\})}$ is a weak equivalence.
\end{itemize}

We say the retractive space $X$ is \textbf{$h$-fibrant} when the projection $p\colon X \to B$ is an $h$-fibration. We can always replace $X$ by the $h$-fibrant space $X \times_B B^I$.

If $X$ and $Y$ are fibrant in any of the above senses, a map $X \to Y$ is a weak equivalence precisely when the map of fibers $X_b \to Y_b$ is a weak equivalence for every $b \in B$. It suffices to check this for one $b$ in every path component of $B$.

Cofibrations come next. A \textbf{closed inclusion} or \textbf{$i$-cofibration}\index{$i$-cofibration} is any map $X \to Y$ that is a homeomorphism onto its image, which is closed in $Y$. In (CGWH), the basepoint section is always a closed inclusion, so every space is $i$-cofibrant.

A map of retractive spaces $X \to Y$ is an \textbf{$h$-cofibration}\index{$h$-cofibration} if it is a closed inclusion, and the map of total spaces has the homotopy extension property. In other words, the inclusion
\[ (X \times I) \cup_{(X \times \{0\})} (Y \times \{0\}) \to (Y \times I) \]
has a retract. In (CGWH) we are allowed to drop ``closed inclusion'' from the definition because it follows from the homotopy extension property.

As usual, the map $X \to Y$ is an $h$-cofibration iff we can identify $X$ as a closed subspace of $Y$ and give $(Y,X)$ the structure of an NDR-pair (see e.g. \cite[\S 6.4]{concise}). This is a function $u: Y \to [0,1]$ with $X = u^{-1}(0)$, and a homotopy $h\colon Y \times I \to Y$, constant on $X$, from the identity map of $Y$ to a map that retracts the open neighborhood $U = u^{-1}([0,1))$ back onto $X$. Without loss of generality, we may choose $h$ so that it brings each point $y \in U$ to $X$ at time $u(y)$, after which it stays stationary \cite[Lem 4]{strom_2}.

The next definition will be different because it is not enough to look at the total spaces. The map $X \to Y$ of retractive spaces is an \textbf{$f$-cofibration}\index{$f$-cofibration} if it is a closed inclusion and has the fiberwise homotopy extension property.\footnote{In \cite{ms} these are called $\bar{f}$-cofibrations; in \cite{crabb_james} they are called ``closed fibrewise cofibrations.''} This means the inclusion
\[ (X \times I) \cup_{(X \times \{0\})} (Y \times \{0\}) \to (Y \times I) \]
has a retract \emph{that respects} the projection to $B$. Of course, this is stronger than being an $h$-cofibration. And it is equivalent to finding an NDR-pair structure on $(Y,X)$ such that the homotopy $h$ respects the projection to $B$.

For future reference, we will use the term \textbf{$q$-cofibration}\index{$q$-cofibration} for a map in $\mc R(B)$ that is a retract of a relative cell complex. However, these won't appear until the section on cofibrantly-generated model categories, \autoref{sec:cof_gen_model_cats}. The cells $D^n$ are equipped with maps to $B$, and the inclusions $S^{n-1} \to D^n$ are certainly $h$-cofibrations, but they do not have to be $f$-cofibrations in general. 

The following chart summarizes all of the classes of maps we have defined.
\[ \xymatrix @R=.5em @C=2em{
	\textup{fiberwise htpy equivalences} \ar@{=>}[r] & \textup{weak equivalences} \\
	h\textup{-fibrations} \ar@{=>}[r] & q\textup{-fibrations} \ar@{=>}[r] & \textup{quasifibrations} \\
	f\textup{-cofibrations} \ar@{=>}[r] & h\textup{-cofibrations} \ar@{=>}[r] & \textup{closed inclusions} \\
	& q\textup{-cofibrations} \ar@{=>}[u] &
} \]

\begin{ex}
	If $K$ is a well-based space then the retractive space $B \times K$ from \autoref{ex:trivial} is an $f$-cofibrant, $h$-fibrant space over $B$. The fiberwise Thom space $\Th_B(V)$ from \autoref{ex:fiberwise_thom_space} is always $f$-cofibrant and $h$-fibrant, using \autoref{prop:clapp} below.
\end{ex}

\beforesubsection
\subsection{Technical lemmas}\label{sec:technical_lemmas}\aftersubsection

\begin{prop}\label{prop:technical_cofibrations}\hfill
	\vspace{-1em}
	
	\begin{enumerate}
		\item Each notion of ``cofibration'' in the category $\mc R(B)$ is closed under pushout, transfinite compositions (therefore also coproducts), and retracts.
		\item (Gluing lemma) Given a weak equivalence of pushout diagrams
		\[ \xymatrix @R=1.7em{
			Y \ar[d]^-\sim & X \ar[l] \ar[r] \ar[d]^-\sim & Z \ar[d]^-\sim \\
			Y' & X' \ar[l] \ar[r] & Z',
		} \]
		if each one has one leg that is an $h$-cofibration, the map of pushouts
		\[ \xymatrix{ Y \cup_X Z \ar[r]^-\sim & Y' \cup_{X'} Z' } \]
		is also a weak equivalence.
		\item (Colimit lemma) Given a weak equivalence of sequential colimit diagrams
		\[ \xymatrix @R=1.7em{
			X_0 \ar[d]^-\sim \ar[r] & X_1 \ar[d]^-\sim \ar[r] & X_2 \ar[d]^-\sim \ar[r] & \ldots \\
			X_0' \ar[r] & X_1' \ar[r] & X_2' \ar[r] & \ldots, \\
		} \]
		if each map in each colimit diagram is an $h$-cofibration,\footnote{In (CGWH) the maps of the colimit system only have to be closed inclusions.} the map of colimits
		\[ \xymatrix{ \colim_n X_n \ar[r]^-\sim & \colim_n X_n' } \]
		is also a weak equivalence.
	\end{enumerate}
\end{prop}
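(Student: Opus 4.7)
The plan is to handle part (1) class-by-class using the retraction characterizations of each cofibration type, and to reduce parts (2) and (3) to the homotopy invariance of the double mapping cylinder and the mapping telescope respectively.

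For (1), closed inclusions are standard in the CGWH/CG setup (see \cite{lewis_appendix,strickland2009category}). For $h$- and $f$-cofibrations, use the characterization: $X \to Y$ is an $h$-cofibration iff the inclusion $(X \times I) \cup_{X \times \{0\}} (Y \times \{0\}) \hookrightarrow Y \times I$ admits a retraction $r$, and an $f$-cofibration iff additionally $r$ commutes with the projection to $B$. Under pushout along any map $X \to Z$ in $\mathcal{R}(B)$, paste $r$ with the identity homotopy on $Z \times I$ to obtain a retraction for the pushout square; this is well-defined because $r$ restricts to the identity on $X \times I$, and the fiberwise condition is preserved since everything is formed in $\mathcal{R}(B)$. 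For a countable composite $X_0 \to X_1 \to \cdots$, perform the $n$-th retraction on the subinterval $[2^{-n-1}, 2^{-n}] \subseteq I$, holding the limit point $0$ fixed; continuity at $0$ is checked using the colimit topology on $X_\infty$. Retracts of cofibrations are handled by pulling back the retraction along the retract datum, and $q$-cofibrations close under these operations by definition.

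For (2), replace each pushout by its double mapping cylinder $M(Y \leftarrow X \to Z) := Y \cup_{X \times \{0\}} (X \times I) \cup_{X \times \{1\}} Z$. The natural collapse map $M \to Y \cup_X Z$ is a fiberwise homotopy equivalence when $X \to Y$ (or $X \to Z$) is an $h$-cofibration, using a deformation retraction of the mapping cylinder of an $h$-cofibration onto its target. Thus the gluing lemma reduces to showing the double mapping cylinder takes pointwise weak equivalences to weak equivalences. This is proved by changing one of $X$, $Y$, $Z$ at a time; in each case the map of cylinders is a pushout of a weak equivalence along a closed inclusion, handled by a Mayer--Vietoris argument after covering the cylinder by two open halves that each deformation retract onto a single term.

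For (3), apply the analogous argument with the mapping telescope $\mathrm{Tel}(X_\bullet) := \bigsqcup_n X_n \times [n, n+1]/{\sim}$ in place of the double mapping cylinder. Under the $h$-cofibration hypothesis, the natural map $\mathrm{Tel}(X_\bullet) \to \colim_n X_n$ is a fiberwise homotopy equivalence, built inductively using the homotopy extension property. The telescope preserves pointwise weak equivalences because it is a sequential colimit of finite partial telescopes, each an iterated pushout to which part (2) applies. Under (CGWH), the weaker hypothesis that the transition maps are closed inclusions suffices because sequential colimits of closed inclusions of CGWH spaces already have the required point-set behavior. The chief obstacle is the Mayer--Vietoris step in (2), which is the classical content driving both (2) and (3); it reduces to the statement that pushout along a closed inclusion of CG(WH) spaces preserves weak equivalences on the pushed-in factor, provable by an elementary but delicate cover argument. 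A secondary obstacle is the transfinite-composition step in (1) for $f$-cofibrations, where the retraction pieced together from the stages must be simultaneously continuous at the limit of the subdivision and fiberwise at every slice.
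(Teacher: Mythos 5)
Your proposal takes a genuinely different route from the paper. The paper's entire proof is a two-sentence reduction: it observes that pushouts and sequential colimits in $\mc R(B)$ agree with the corresponding colimits of total spaces in $\cat{Top}$, and then invokes the classical (unparametrized) gluing and colimit lemmas as known. You instead re-derive these classical facts from first principles inside $\mc R(B)$, via retraction pasting, double mapping cylinders, and telescopes. This would be more self-contained if completed, but is much longer, and two of the intermediate steps as sketched have real gaps.

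First, your construction for transfinite (countable) compositions of $h$- and $f$-cofibrations — performing the $n$-th retraction on the subinterval $[2^{-n-1},2^{-n}]$ with $0$ held fixed — is not obviously well-defined as a single continuous map, and in particular it is not clear how the individual retractions, each of which only pushes $X_{n+1}$ into $(X_{n+1}\times\{0\}) \cup (X_n\times I)$, are to be chained so that their accumulated output lands in $(X_\infty\times\{0\})\cup(X_0\times I)$ continuously at the accumulation point. The standard argument avoids this: one verifies the homotopy extension property for $X_0 \to X_\infty$ directly by inductively extending a given homotopy one stage at a time (using the HEP for each $X_n\to X_{n+1}$), and then gluing the extensions using the colimit topology on the total space. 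This is cleaner, works equally well for $f$-cofibrations since every extension is constructed fiberwise, and sidesteps the interval subdivision entirely.

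Second, your reduction of the gluing lemma to the statement that ``pushout along a closed inclusion preserves weak equivalences on the pushed-in factor'' is false as stated — closed inclusions alone do not give homotopy pushouts, and the $h$-cofibration hypothesis is essential. What saves the double-mapping-cylinder argument is that the legs of the cylinder are always $h$-cofibrations (inclusions of the ends $X\times\{0\}$, $X\times\{1\}$ into $X\times I$), and the actual content is the classical excision/van Kampen argument via covering the cylinder by two pieces whose intersection deformation retracts onto the middle term. You gesture at this but do not carry it out; since the whole force of parts (2) and (3) lives in exactly this Mayer--Vietoris step, the sketch is incomplete at its crux. If you insist on re-deriving the lemma rather than citing it, you should either carefully give the cover argument or appeal explicitly to an existing unparametrized reference (as the paper does implicitly), combined with the observation that the colimit diagrams in $\mc R(B)$ compute the same total spaces as in $\cat{Top}$.
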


\begin{proof}
	Reduces to the non-parametrized version by looking at the total space. Note that the colimits here, when computed in $\mc R(B)$, are the same as the corresponding colimits in the category of spaces.
\end{proof}

The dual of this proposition also holds with $h$-fibrations or with $q$-fibrations, but not with quasifibrations.\footnote{The pullback of a quasifibration is not always a quasifibration. Equivalently, a pullback square with one leg a quasifibration may not be a homotopy pullback. There's an excellent discussion of this in \texttt{http://www.lehigh.edu/{~}dmd1/tg516.txt}, which gives sufficient conditions for a quasifibration to be preserved under pullback.}

\begin{cor}\label{cofibration_of_pushouts}
	For each notion of ``cofibration,'' a map of pushout diagrams
	\[ \xymatrix @R=1.7em{
		C \ar[d] & A \ar[l] \ar[r] \ar[d] & B \ar[d] \\
		C' & A' \ar[l] \ar[r] & B' } \]
	induces a cofibration on the pushouts, provided that both $B \to B'$ and $C \cup_A A' \to C'$ are cofibrations.
\end{cor}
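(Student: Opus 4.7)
The plan is to factor the induced map on pushouts as
\[
P := C \cup_A B \ \xrightarrow{\alpha}\ Q := C \cup_A B' \ \xrightarrow{\beta}\ C' \cup_{A'} B' =: P',
\]
and to identify $\alpha$ and $\beta$ as pushouts of the two hypothesized cofibrations $B \to B'$ and $C \cup_A A' \to C'$ respectively. Once this is done, both factors are cofibrations by closure under pushout (\autoref{prop:technical_cofibrations}(1)), and the composite is a cofibration by closure under finite composition (a special case of transfinite composition).

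For $\alpha$, consider the horizontally composed diagram
\[ \xymatrix @R=1.3em @C=2em{
    A \ar[r] \ar[d] & B \ar[r] \ar[d] & B' \ar[d] \\
    C \ar[r] & P \ar[r]^-{\alpha} & Q.
} \]
The left square is a pushout by the definition of $P$, and the outer rectangle is a pushout because $Q = C \cup_A B'$ by definition. The pushout pasting lemma then forces the right square to be a pushout, exhibiting $\alpha$ as the pushout of $B \to B'$ along $B \to P$.

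For $\beta$, a preliminary pasting argument applied to
\[ \xymatrix @R=1.3em @C=2em{
    A \ar[r] \ar[d] & A' \ar[r] \ar[d] & B' \ar[d] \\
    C \ar[r] & C \cup_A A' \ar[r] & Q
} \]
(left square a pushout by definition of $C \cup_A A'$, outer rectangle a pushout since $Q = C \cup_A B'$) identifies the right square as a pushout, i.e.\ $Q \cong (C \cup_A A') \cup_{A'} B'$. Stacking this right square on top of the defining pushout square of $P'$ yields
\[ \xymatrix @R=1.3em @C=2em{
    A' \ar[r] \ar[d] & B' \ar[d] \\
    C \cup_A A' \ar[r] \ar[d] & Q \ar[d]^-{\beta} \\
    C' \ar[r] & P',
} \]
in which the outer rectangle is the defining pushout of $P' = C' \cup_{A'} B'$, using that the given map $A' \to C'$ factors as $A' \to C \cup_A A' \to C'$ by the universal property of the top pushout. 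The pasting lemma again forces the bottom square to be a pushout, so $\beta$ is the pushout of $C \cup_A A' \to C'$ along $C \cup_A A' \to Q$, completing the factorization.

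The main obstacle is simply bookkeeping with the pasting lemma; the substantive content (closure of each class of cofibrations under pushouts and composition) is already recorded in \autoref{prop:technical_cofibrations}(1), so no further technical work is needed.
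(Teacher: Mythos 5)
Your proposal is correct and takes essentially the same approach as the paper: factoring the map of pushouts through $C \cup_A B'$, and identifying the first leg as a pushout of $B \to B'$ and the second as a pushout of $C \cup_A A' \to C'$. You just spell out the pasting-lemma bookkeeping that the paper leaves implicit.
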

\begin{proof}
	The desired map factors as
	\[ \xymatrix @R=1.7em{
		B \cup_A C \ar@{ >->}[r] & B' \cup_A C \ar@{ >->}[r] & B' \cup_{A'} C',
	} \]
	the first leg is a pushout of $B \to B'$ and the second is a pushout of $C \cup_A A' \to C'$.
\end{proof}

\begin{lem}\label{lem:product_ndr}
	If $A \subseteq X$ and $A' \subseteq X'$ are closed NDR-pairs then the map
	\[ \xymatrix{ (A \times X') \cup_{A \times A'} (X \times A) \ar[r] & X \times X' } \]
	also has the structure of a closed NDR-pair. The same is true for fiberwise closed NDR-pairs over $B$, or for just closed inclusions.
\end{lem}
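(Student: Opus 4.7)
The plan is to build a closed NDR-pair structure on the pushout-product directly from the given structures, using the standard ``minimum'' trick. Choose representing data $(u,h)$ for $(X,A)$ and $(u',h')$ for $(X',A')$, normalized as in the paragraph preceding the lemma so that $h(x,t)\in A$ whenever $t\ge u(x)$ (and similarly for $h'$), and so that $h$ is stationary after time $u(x)$. Define
\[ v\colon X\times X' \to [0,1], \qquad v(x,x') = \min(u(x),u'(x')). \]
Then $v$ is continuous and $v^{-1}(0)$ is exactly the image of $C:=(A\times X')\cup_{A\times A'}(X\times A')$ in $X\times X'$, which is the closed subspace we want to exhibit as a (closed) deformation retract of a neighborhood.

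Next, define a deformation $H\colon X\times X'\times I \to X\times X'$ by
\[ H(x,x',t) = \bigl( h(x,\,t\cdot\min(1,u'(x')/u(x))), \ h'(x',\,t\cdot\min(1,u(x)/u'(x'))) \bigr), \]
where the ratios are interpreted as $0$ when the denominator is $0$ and as $1$ when both numerator and denominator are $0$. The geometric meaning is that we deform in each factor at a rate determined by how far the other factor is from its subspace: whichever coordinate has the smaller $u$-value reaches its subspace first, and then stays there while the other slides down as far as the original deformations allow. By the normalization of $h$ and $h'$, at time $t=1$ the point $(x,x')$ has at least one coordinate in the corresponding subspace whenever $v(x,x')<1$, so $H(\cdot,1)$ retracts the open neighborhood $v^{-1}([0,1))$ onto $C$. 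Restricted to $C\times I$, $H$ is stationary (since either $u(x)=0$ or $u'(x')=0$ forces the corresponding scale factor in the ``active'' coordinate to be $1$ while the other factor has $u=0$, so $h$ or $h'$ is already stationary there by normalization). Together, $(v,H)$ exhibits $C\subseteq X\times X'$ as a closed NDR-pair.

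The main obstacle, and the only nontrivial point, is continuity of $H$ on the locus where one of $u(x)$, $u'(x')$ vanishes, because the scaling factors are defined by quotients. One handles this by the standard device of rewriting $\min(1,u'(x')/u(x))\cdot u(x) = \min(u(x),u'(x'))$, so that the expressions inside $h$ and $h'$ are continuous functions of $(x,x')$ everywhere on $X\times X'$; one then uses the fact that $h(x,s)$ is constant in $s$ once $s\ge u(x)$ to conclude that the composites depend continuously on the original discontinuous-looking ratio. This is a short direct check.

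For the two variants: in the fiberwise case, each of $h,h'$ by hypothesis preserves the projection to $B$, so the product $H$ preserves the projection to $B\times B$ in each coordinate; since $v$ doesn't enter the projection, the combined structure is a fiberwise NDR. Restricting to $X\times_B X'\to B$ (or to whichever base is relevant) is immediate. For the closed-inclusion statement, no deformation is needed: in CGWH (or CG) one checks that the pushout-product of two closed inclusions is again a closed inclusion by a direct point-set argument, using that $A\times X'$ and $X\times A'$ are closed in $X\times X'$ and intersect in $A\times A'$, so their union is closed and the canonical map from the pushout is a homeomorphism onto this closed image.
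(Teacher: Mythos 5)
Your proof is correct and follows the same route as the paper: the paper's two-line proof simply cites the standard product-NDR formula (\cite[\S 6.4]{concise}, \cite[Thm 6]{strom_2}) and the closed-inclusion product fact (\cite[A.7.1(a)]{lewis_appendix}), and your argument is a write-up of exactly those two ingredients, using the classical Steenrod ``$\min(u,u')$'' formula with the deformation that slows down each factor according to the other's $u$-value. One small caution: the identity $\min(1,u'/u)\cdot u = \min(u,u')$ by itself doesn't make the expression $t\cdot\min(1,u'/u)$ fed into $h$ continuous; the actual continuity check is the standard one you then invoke anyway, splitting $X\times X'$ into the closed regions $u\le u'$ and $u\ge u'$ and using that $h(x,-)$ is constant past time $u(x)$, so this is a presentational wrinkle rather than a gap.
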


\begin{proof}
	Follows because a product of closed inclusions is a closed inclusion \cite[A.7.1(a)]{lewis_appendix},
%
	together with the usual formulas for an NDR-pair structure on a product, \cite[\S 6.4]{concise}, \cite[Thm 6]{strom_2}.
\end{proof}

\begin{lem}\label{lem:dold}
	If $X \to Y$ is an $f$-cofibration of $h$-fibrant spaces over $B$, then the path-lifting function for $Y$ can be chosen so that it restricts to a path-lifting function of $X$.
\end{lem}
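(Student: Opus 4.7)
The plan is to start with any path-lifting function $\lambda_X$ for the Hurewicz fibration $p_X\colon X \to B$ and extend it to a path-lifting function $\lambda_Y$ for $p_Y$, by viewing the construction as a relative homotopy lifting problem against $p_Y$. This reduces the lemma to two classical inputs: the existence of a compatible NDR-pair structure on the pullback, and the relative homotopy lifting property for Hurewicz fibrations along closed cofibrations.

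First I would promote the $f$-cofibration $X \hookrightarrow Y$ to a closed cofibration $X \times_B B^I \hookrightarrow Y \times_B B^I$. Given a fiberwise NDR-pair structure $(u,h)$ on $(Y,X)$ -- so $u\colon Y \to [0,1]$ with $X = u^{-1}(0)$ and $h\colon Y \times I \to Y$ is a deformation that respects $p_Y$ -- set $u'(y,\gamma) = u(y)$ and $h'((y,\gamma),t) = (h(y,t),\gamma)$. The compatibility of $h$ with $p_Y$ is exactly what ensures $h'$ takes values in the pullback $Y \times_B B^I$, and the remaining NDR axioms transfer directly. So the inclusion $F := X \times_B B^I \hookrightarrow E := Y \times_B B^I$ is an $h$-cofibration.

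Next I would reformulate the extension problem. Define a partial lift $\mu_0 \colon (E \times \{0\}) \cup (F \times I) \to Y$ by sending $((y,\gamma),0)$ to $y$ and $((x,\gamma),t)$ to $\lambda_X(x,\gamma)(t)$; these agree on $F \times \{0\}$ since $\lambda_X(x,\gamma)(0) = x$. Let $G\colon E \times I \to B$ send $((y,\gamma),t)$ to $\gamma(t)$. Then $p_Y \circ \mu_0$ agrees with the restriction of $G$ to the domain of $\mu_0$, so the relative homotopy lifting property for $p_Y$ against the closed cofibration $F \hookrightarrow E$ produces an extension $\mu \colon E \times I \to Y$ with $p_Y \circ \mu = G$. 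Transposing, $\lambda_Y(y,\gamma)(t) := \mu((y,\gamma),t)$ is a path-lifting function for $p_Y$ whose restriction to $X \times_B B^I$ equals $\lambda_X$, and in particular lands in $X^I$.

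The main technical obstacle is the first step: one has to verify that the ``fiberwise'' aspect of the $f$-cofibration structure -- namely the fact that $h$ commutes with $p_Y$ -- genuinely propagates to the pullback, so that $h'$ is well-defined. Once the inclusion of pullbacks is known to be a closed cofibration, the proof is a direct invocation of the classical relative homotopy lifting property for Hurewicz fibrations, which is itself built from exactly this kind of NDR-pair data together with a path-lifting function for $p_Y$.
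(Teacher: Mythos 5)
Your proof is correct, and it sets up exactly the same relative lifting square as the paper's proof: extend the partial lift defined by $\lambda_X$ on $X\times_B B^I\times I$ and the identity on $Y\times_B B^I\times\{0\}$ against the Hurewicz fibration $p_Y\colon Y\to B$, where the base map is evaluation $(y,\gamma,t)\mapsto\gamma(t)$. The difference is in the final step. You first check that $X\times_B B^I\hookrightarrow Y\times_B B^I$ is a closed cofibration by transporting the fiberwise NDR data $(u,h)$ through the pullback -- a step that is genuinely needed and that the paper leaves implicit (it follows from \autoref{lem:f_star_preserves}) -- and then you invoke the relative homotopy lifting property for Hurewicz fibrations against NDR pairs, which is recorded in the paper as \autoref{prop:strom_lift}(1). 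The paper instead asserts that the lift is obtained by composing the fiber-preserving retract of the pushout inclusion with the top horizontal map. Read literally, that composite is not a lift: any retract of
\[
Y\times_B B^I\times\{0\}\cup_{X\times_B B^I\times\{0\}}X\times_B B^I\times I\;\hookrightarrow\;Y\times_B B^I\times I
\]
must shift the $I$-coordinate off points not in $X$, so the composite lands over $\gamma(t')$ rather than $\gamma(t)$; one then still needs a further path-lift along $p_Y$ to correct the time, and that correction is exactly what the relative HLP packages. So your route is the more careful one, and it is what actually closes the argument.
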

\begin{proof}
	It suffices to produce a lift
	\[ \xymatrix @R=1.7em{
		Y \times_B B^I \times \{0\} \cup_{X \times_B B^I \times \{0\}} X \times_B B^I \times I \ar[r] \ar[d] & Y \ar[d] \\
		Y \times_B B^I \times I \ar[r] \ar@{-->}[ur] & B,
	} \]
	where the bottom horizontal sends $\gamma \in B^I$ and $t \in I$ to $\gamma(t)$, and the top horizontal is the path-lifting function of $X$ and the identity of $Y$. To get the lift, compose the fiber-preserving retract of the left-hand vertical map with the top horizontal map.
\end{proof}

The following proposition is a technical cornerstone for us. It is classical by now, but not terrifically well-known, and so we reproduce the idea of the proof here. In essence it says that fibrations can be glued together to form new fibrations.

\begin{prop}[Clapp]\cite[Prop 1.3]{clapp1981duality}\label{prop:clapp}
	Given a pushout square of spaces over $B$
	\[ \xymatrix @R=1.7em{
		X \ar[r]^-i \ar[d]^-f & Y \ar[d] \\
		Z \ar[r] & Y \cup_X Z
	}, \]
	if $X$, $Y$, and $Z$ are all $h$-fibrant and $i$ is an $f$-cofibration then the pushout $Y \cup_X Z$ is also $h$-fibrant.
\end{prop}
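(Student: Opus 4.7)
The goal is to construct a path-lifting function for $p_W\colon W \to B$, where $W = Y \cup_X Z$.

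First I will apply \autoref{lem:dold} to the $f$-cofibration $i\colon X \hookrightarrow Y$, yielding a path-lifting function $\sigma_Y$ on $Y$ that restricts to some chosen path-lifting function $\sigma_X$ on $X$. I will also pick an arbitrary path-lifting function $\sigma_Z$ on $Z$. The function $\sigma_Z$ need not agree with the pushforward $f_*\sigma_X$ on the image of $X$ in $Z$, so I cannot naively glue $\sigma_Y$ and $\sigma_Z$ along $X$ to obtain a function on $W$; instead I will transport the problem onto the pushout.

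Next, since $f$-cofibrations are closed under pushout by \autoref{prop:technical_cofibrations}, the inclusion $\iota_Z\colon Z \hookrightarrow W$ is itself an $f$-cofibration. The fiberwise NDR-pair structure on $(Y, X)$ pushes out to one on $(W, Z)$, with NDR function $u_W\colon W \to I$ vanishing on $Z$ and fiberwise deformation $h_W\colon W \times I \to W$ constant on $Z$. Combining this with the NDR-pair $(I, \{0\})$ via \autoref{lem:product_ndr} produces a fiberwise NDR-pair structure on $\bigl(W \times I,\ (W \times \{0\}) \cup (Z \times I)\bigr)$, which after pulling back along $B^I \to B$ gives one on $\bigl(W \times_B B^I \times I,\ (W \times_B B^I \times \{0\}) \cup (Z \times_B B^I \times I)\bigr)$.

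Following the template of \autoref{lem:dold}, it then suffices to solve the lifting problem
\[ \xymatrix @R=1.7em{
(W \times_B B^I \times \{0\}) \cup (Z \times_B B^I \times I) \ar[r]^-{\Phi} \ar[d] & W \ar[d]^-{p_W} \\
W \times_B B^I \times I \ar[r]_-{(w,\gamma,t)\mapsto \gamma(t)} \ar@{-->}[ur] & B,
} \]
where $\Phi$ is the identity on the $\{0\}$-summand and $\sigma_Z$ on the $Z$-summand (these agree on $Z \times_B B^I \times \{0\}$). The lift is obtained by composing $\Phi$ with the fiberwise retract arising from the $f$-cofibration structure above, and taking adjoints yields the desired $\sigma_W$.

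\textbf{The main obstacle} will be arranging the retract to be fiber-preserving over the projection $(w,\gamma,t)\mapsto \gamma(t)$ rather than the more obvious $(w,\gamma,t)\mapsto \gamma(0)$. The deformation $h_W$ preserves each fiber of $p_W$, so it does not affect $\gamma(0)$; however, when $t$ slides down to $0$ during the retraction, the value $\gamma(t)$ changes. The resolution, already used implicitly in the proof of \autoref{lem:dold}, is to arrange the retract so that each decrease of $t$ is accompanied by a compatible reparametrization of $\gamma$, keeping $\gamma(t)$ constant throughout the retraction. This is a straightforward but fiddly interweaving of the $h_W$-deformation and the standard retraction of $(I, \{0\})$.
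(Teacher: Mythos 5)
Your plan correctly assembles the raw ingredients ($\sigma_Y$ extending $\sigma_X$, an arbitrary $\sigma_Z$, and the pushed-out fiberwise NDR structure on $(W,Z)$), and it correctly identifies the obstacle: the retract of $\bigl(W \times_B B^I \times I,\ W \times_B B^I \times \{0\} \cup Z \times_B B^I \times I\bigr)$ coming from $f$-cofibration $\times$ $(I,\{0\})$ preserves fibers only over $(w,\gamma,t)\mapsto\gamma(0)$, not over $\gamma(t)$. But the proposed resolution does not work, and there is a genuine gap.

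The "compatible reparametrization of $\gamma$" cannot rescue the argument. Trace what happens when the retract carries $(w,\gamma,t)$ into the $\{0\}$-summand. There $\Phi$ is the identity, so $\Phi\circ r$ outputs some $w'$ with $p_W(w') = p_W(w) = \gamma(0)$, whereas the lifting problem demands $p_W$ of the output to be $\gamma(t)$. Reparametrizing $\gamma$ to a new $\gamma'$ doesn't move $w'$: any $\gamma'$ compatible with the fiber-product structure must still satisfy $\gamma'(0) = p_W(w')=\gamma(0)$. To land $w'$ in the $\gamma(t)$-fiber you must transport $w$ \emph{along} $\gamma$, and that requires a path-lifting function on $W$ — which is precisely what you are trying to construct. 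There is no way around this circularity purely at the level of NDR data.

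The symptom of the gap is that you build $\sigma_Y$ in your first step and then never use it again. It is essential: on the part of $Y$ that is not close to $X$, the only way to cover paths in $B$ is with $\sigma_Y$; on $Z$ you use $\sigma_Z$; and the NDR deformation $h$ on $(Y,X)$ (not the pushed-out $h_W$ on $(W,Z)$, since you need to see how far a point of $Y$ is from $X$) is what tells you how to interpolate between the two, switching over to $\sigma_Z$ at a time determined by $u$ once the $h$-deformed lift has entered $X$. This three-way intertwining is the actual content of the proposition; it is not a formal consequence of \autoref{lem:product_ndr} and a pushed-out retract. (In \autoref{lem:dold} the corresponding lift can indeed be obtained abstractly, but that is because there the target $Y\to B$ is \emph{assumed} to be an $h$-fibration and one can invoke \autoref{prop:strom_lift}; here $W\to B$ being an $h$-fibration is the conclusion, so no such shortcut is available and the lift must be written down explicitly.)
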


\begin{proof}
	(Sketch) By \autoref{lem:dold}, we can take the path-lifting functions $L_X: X \times_B B^I \to X^I$ and $L_Y: Y \times_B B^I \to Y^I$ so that they agree along $i$. We also have a third, unrelated path-lifting function $L_Z: Z \times_B B^I \to Z^I$.  To glue these together to a path-lifting function for $Y \cup_X Z$, we use $L_Z$ on $Z$, and a combination of $L_Y$ and $L_Z$ on $Y \setminus X$.
	
	Specifically, we use a presentation of $X \to Y$ as an NDR-pair to assign every point $y \in Y$ to a path $h(t,y)$, such that the points close to $X$ are sent to a path that goes into $X$ and then remains constant in $X$ for some time (more time as we get closer to $X$). We then define the path lifting on $Y \setminus X$ as follows: first apply $L_Y$ to get a path $\sigma$ in $Y$. Then apply $h(t,-)$ to $\sigma$ for varying $t$ to get a square in $Y$. Traverse this square diagonally. If we are far from $X$ there is nothing more to say. If we are close to $X$ then at some point in the middle, our lifted path lands in $X$. When this happens, we switch over and lift the remainder of our path in $B$ using $L_Z$ instead. (The point at which we switch over is given by a formula involving $u$.) This recipe actually makes sense for all of $Y$, and on the subspace $X$ it returns $L_Z$, so our choices of path-lifting now glue together to give a single path-lifting function on $Y \cup_X Z$.
\end{proof}

The remaining results in this section are more standard, so we omit the proofs. It turns out that in the above proposition we can weaken the assumption of an $f$-cofibration to an $h$-cofibration. For this we recall the following two results. Recall that a DR-pair $(Y,X)$ is an NDR-pair in which the neighborhood $U = u^{-1}([0,1))$ is in fact all of $Y$.
\begin{prop}[Str\o m]\label{prop:strom_lift}\cite[Thm 3]{strom_1}
	\begin{enumerate}
		\item Given a square
		\[ \xymatrix @R=1.7em{
			K \times I \cup_{K \times 0} L \times 0 \ar[r] \ar[d]^-i & E \ar[d] \\
			L \times I \ar[r] \ar@{-->}[ur] & B
		}\]
		in which $K \to L$ is an NDR-pair and $E \to B$ is an $h$-fibration, a lift exists.
		\item Given a square
		\[ \xymatrix @R=1.7em @C=3em{
			A \ar[r] \ar[d]^-i & E \ar[d] \\
			X \ar[r] \ar@{-->}[ur] & B
		}\]
		in which $A \to X$ is a DR-pair and $E \to B$ is an $h$-fibration, a lift exists.
	\end{enumerate}
\end{prop}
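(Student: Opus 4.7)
The plan is to prove part (2) first and then deduce part (1) from it.

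For part (2), let $(u,h)$ denote the DR-pair structure on $(X,A)$, so $u\colon X\to I$ with $A=u^{-1}(0)$, and $h\colon X\times I\to X$ with $h_0=\id_X$, $h_1(X)\subseteq A$, and $h_t|_A=\id_A$. The idea is to use $h$ to push each $x\in X$ into $A$, where $f$ is already defined, and then lift backward along the $h$-fibration $p$. Concretely, choose a path-lifting function $\lambda\colon E\times_B B^I\to E^I$ for $p$, and for $x\in X$ let $\gamma_x(s)=g(h(x,1-s))$, a path in $B$ from $g(h(x,1))$ to $g(x)$. Define
\[ \tilde f(x) \;=\; \lambda\bigl(f(h(x,1)),\,\gamma_x\bigr)(1). \]
Then $p\tilde f=g$ by construction, and $\tilde f$ is continuous because $\lambda$, $f$, $g$, and $h$ are. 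To guarantee $\tilde f|_A=f$, one chooses $\lambda$ to be \emph{regular}, i.e.\ sending constant paths to constant paths: for $a\in A$ the deformation fixes $a$, so $\gamma_a$ is constant at $g(a)$, the lift stays at $f(a)$, and thus $\tilde f(a)=f(a)$. Str\o m's classical theorem on $h$-fibrations (of which this proposition is essentially a packaging) supplies such a regular $\lambda$.

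For part (1), the strategy is to verify that $(L\times I,\,L\times 0\cup K\times I)$ is itself a DR-pair whenever $(L,K)$ is an NDR-pair, and then invoke part (2). The deformation retract of $L\times I$ onto $L\times 0\cup K\times I$ is built from the NDR structure $(u,h)$ on $(L,K)$ by using $u(x)$ as a threshold: for $(x,t)$ with $t\le u(x)$ one retracts $t$ down to $0$, while for $t\ge u(x)$ one retracts $x$ along $h$ into $K$; these formulas glue along $t=u(x)$ and give a deformation that fixes $L\times 0\cup K\times I$ pointwise throughout. This is a standard exercise in the formulas of \cite[\S 6.4]{concise}, and once in hand it immediately reduces (1) to (2).

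The main technical obstacle in both parts is the same: arranging for the constructed lift to agree \emph{on the nose} with the prescribed map on the subspace, not merely up to homotopy in the fiber. In (2) this is exactly what forces the use of a regular path-lifting function, and in (1) it is what forces the auxiliary deformation to be chosen to fix $L\times 0\cup K\times I$ pointwise rather than merely to map into it. Both reductions are classical but hinge on explicit control of the NDR structure $(u,h)$.
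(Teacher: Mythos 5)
The paper gives no proof of this proposition---it cites Str\o m's theorem and moves on---so your argument is supplying one from scratch rather than matching anything in the text. Your proof is essentially correct, and the two halves are in a valid logical relationship: once (2) is known and $(L\times I, K\times I\cup L\times 0)$ is recognized as a DR-pair (a standard consequence of the NDR-pair formulas), (1) follows immediately.

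Two remarks. First, your proof of (2) leans on the existence of a \emph{regular} path-lifting function, i.e.\ one sending constant paths to constant paths. This is correct but is itself a nontrivial theorem (originally Hurewicz for metrizable base, later extended to the CGWH setting), and it is worth acknowledging that it does not come for free from the mere definition of $h$-fibration; without regularity, $\tilde f(a)=\lambda(f(a),c_{g(a)})(1)$ would only lie in the right fiber rather than equal $f(a)$. Second, the more customary route runs in the opposite direction and avoids regularity altogether: one proves (1) directly as a pushout-product statement (the inclusion $K\times I\cup L\times 0\hookrightarrow L\times I$ is a cofibration and a homotopy equivalence, and closed cofibrations that are homotopy equivalences have the LLP against $h$-fibrations---or more elementarily, one builds the lift explicitly from the NDR structure and any path-lifting function), and then deduces (2) by applying (1) to the homotopy $g\circ r_t$, where $r_t$ is the deformation retraction of $X$ onto $A$: lifting the homotopy from time $1$ (where $r_1$ lands in $A$ and the lift is forced to be $f\circ r_1$), and evaluating the lifted homotopy at time $0$, gives a lift agreeing with $f$ on $A$ on the nose. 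Your (2)-first route is shorter where regular lifting functions are available; the (1)-first route is more self-contained.
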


%

\begin{lem}[Heath and Kamps]\cite[1.3]{heath_kamps}\label{lem:heath_kamps}
	An $h$-cofibration $A \to X$ between $h$-fibrant spaces over $B$ is an $f$-cofibration.
\end{lem}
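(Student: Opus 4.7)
The plan is to reduce the lemma to a single application of Strøm's lifting theorem (part (2) of \autoref{prop:strom_lift}), using as input the classical fact that an $h$-cofibration gives rise to a deformation retraction of the cylinder.

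Concretely, since $i\colon A \to X$ is an $h$-cofibration, a classical theorem of Strøm says that the inclusion
\[ j\colon (A \times I) \cup_{A \times \{0\}} (X \times \{0\}) \lhook\joinrel\longrightarrow X \times I \]
admits the structure of a DR-pair (not merely an NDR-pair), i.e.\ $(X \times I, (A \times I) \cup (X \times \{0\}))$ is a strong deformation retract. This is precisely the universal case of the homotopy extension property. I would quote this directly, since deducing it uses only the definitions together with the standard ``Mather trick'' of pushing an NDR deformation into the cylinder.

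With $j$ identified as a DR-pair, the rest of the proof is a single lifting step. Consider the square
\[ \xymatrix @R=1.7em @C=3em{
(A \times I) \cup (X \times \{0\}) \ar[r]^-{\phi} \ar[d]_-{j} & X \ar[d]^-{p_X} \\
X \times I \ar[r]_-{p_X \circ \pi_1} \ar@{-->}[ur]^-{r} & B,
} \]
where $\phi$ is the identity on $X \times \{0\}$ and the composite $i \circ \pi_1\colon A \times I \to A \to X$ on $A \times I$. It is immediate that the outer square commutes: on $X \times \{0\}$ both sides are $p_X$, and on $A \times I$ both sides are the projection $p_A$, using $p_X \circ i = p_A$. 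Since $X$ is $h$-fibrant the right vertical is an $h$-fibration, and the left vertical is a DR-pair by the previous paragraph. Therefore \autoref{prop:strom_lift}(2) produces the dashed lift $r$.

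Unwinding what this lift accomplishes, $r\colon X \times I \to X$ is a retraction of the cylinder onto the subspace $(A \times I) \cup (X \times \{0\})$ that satisfies $p_X \circ r = p_X \circ \pi_1$, which is exactly to say the retract respects the projection to $B$. Hence $i$ is an $f$-cofibration. The only place where I anticipate any real subtlety is in invoking the ``DR-pair'' rather than merely ``NDR-pair'' version of Strøm's theorem for the cylinder inclusion; in the (CG) convention one should check that the standard formulas produce a closed deformation retraction, but this is routine since $i$ is assumed a closed inclusion and the cylinder construction preserves closed inclusions by \autoref{lem:product_ndr}.
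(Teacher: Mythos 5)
The lifting square you set up is the wrong one, and the argument has a genuine gap as a result. Your square has target $X$ on the right-hand side, so a lift is a map $r\colon X\times I\to X$ satisfying $r\circ j=\phi$ and $p_X\circ r=p_X\circ\pi_1$. But these conditions are vacuous: the projection $r=\pi_1$ already satisfies all of them, since $\pi_1\circ j=\phi$ and $p_X\circ\pi_1=p_X\circ\pi_1$. So the lift carries no information about the cofibration, and $r$ is certainly not a ``retraction of the cylinder onto the subspace $(A\times I)\cup(X\times\{0\})$'' --- the codomain is $X$, not $(A\times I)\cup(X\times\{0\})$, and nothing forces $r$'s values to land on $A$ or to encode an NDR representation $(u,h)$. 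A fiberwise NDR structure requires both a homotopy and the function $u$, and your construction produces neither.

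The repair is close to what you wrote but requires the lifting problem to target $(A\times I)\cup_{A\times\{0\}}(X\times\{0\})$ itself, viewed as a space over $B$, with the top arrow the identity:
\[ \xymatrix @R=1.7em @C=3em{
(A \times I) \cup (X \times \{0\}) \ar[r]^-{\id} \ar[d]_-{j} & (A \times I) \cup (X \times \{0\}) \ar[d]^-{q} \\
X \times I \ar[r]_-{p_X \circ \pi_1} \ar@{-->}[ur]^-{s} & B.
} \]
Here you need the right-hand vertical $q$ to be an $h$-fibration before you can invoke \autoref{prop:strom_lift}(2). This is where the $h$-fibrancy hypotheses enter: $A\times I$, $X\times\{0\}$, and $A\times\{0\}$ are all $h$-fibrant over $B$, and the inclusion $A\times\{0\}\hookrightarrow A\times I$ is an $f$-cofibration (the relevant retract is constant in the $A$-coordinate and hence fiber-preserving), so \autoref{prop:clapp} makes the pushout $h$-fibrant over $B$. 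With that established, the DR-pair structure on $j$ together with \autoref{prop:strom_lift}(2) gives a lift $s$ that is simultaneously a retraction of $j$ and a map over $B$, which is exactly the fiberwise retraction required. Note that the paper itself does not give a proof of this lemma --- it is cited to Heath and Kamps --- so you are not being asked to match a particular argument, but the argument as you have written it does not go through.
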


Therefore \autoref{prop:clapp} applies with ``$f$-cofibration'' replaced by ``$h$-cofibration.'' Of course the set of maps to which the result applies has not been enlarged. It is merely easier to check that a given map is in that class.

We'll also use the fact that sometimes a pullback of an $h$-cofibration is still an $h$-cofibration.

\begin{prop}[Str\o m]\cite[Thm 12]{strom_2}\label{prop:strom_pullback}
	In a pullback square
	\[ \xymatrix @R=1.7em{
		E|_A \ar[r] \ar[d] & E \ar[d] \\
		A \ar[r] & B
	} \]
	if $A \to B$ is an $h$-cofibration and $E \to B$ is an $h$-fibration then $E|_A \to E$ is also an $h$-cofibration.
\end{prop}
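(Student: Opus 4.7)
The plan is to construct an NDR-pair structure on $(E, E|_A)$ directly, by first choosing a convenient NDR-pair structure $(u,h)$ on $(B,A)$, lifting $h$ through the fibration $p\colon E \to B$, and then reparametrizing the lift so that it becomes stationary on $E|_A$. Closed inclusions are stable under pullback, so the map $E|_A \to E$ is at least a closed inclusion, and the remaining work is to produce the homotopy retract witnessing the NDR-pair structure.

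Choose a representing NDR-pair $(u,h)$ for $(B,A)$ in the sharpened form recalled earlier (\cite[Lem 4]{strom_2}): $u\colon B \to I$ with $A = u^{-1}(0)$, and $h\colon B \times I \to B$ a homotopy from $\id_B$ that is constant on $A$, satisfying in addition that $h(b,s) \in A$ whenever $s \geq u(b)$ for $b \in U := u^{-1}([0,1))$, with $h$ stationary in $s$ past time $u(b)$. Because $p$ is an $h$-fibration, the homotopy lifting problem with upper edge $\id_E\colon E \times \{0\} \to E$ and lower edge $(e,t) \mapsto h(p(e),t)$ admits a solution $H\colon E \times I \to E$ with $H(e,0) = e$ and $p\circ H(e,t) = h(p(e),t)$.

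Now set $u'(e) := u(p(e))$ and $H'(e,t) := H(e, \min(t, u'(e)))$. Then $(u')^{-1}(0) = p^{-1}(A) = E|_A$, and $H'(e,0) = H(e,0) = e$. For $e \in E|_A$ we have $u'(e) = 0$, so $H'(e,t) = H(e,0) = e$, which makes $H'$ stationary on $E|_A$. For $e \in p^{-1}(U) = (u')^{-1}([0,1))$ we compute $p(H'(e,1)) = p(H(e, u'(e))) = h(p(e), u(p(e))) \in A$, using the sharpened form of $h$, so $H'(e,1) \in E|_A$. Thus $(u', H')$ realizes $(E, E|_A)$ as an NDR-pair, and $E|_A \to E$ is an $h$-cofibration.

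The part I would spend the most care on is lining up the sharpened normal form of $h$ with the reparametrization $\min(t, u'(e))$, so that the endpoint of the deformation is guaranteed to sit inside $A$ at time $t = 1$ throughout the full neighborhood $p^{-1}(U)$. Once the correct normal form of $h$ is in hand, the rest is a short verification using that $p$ intertwines $H$ with $h$ and that the minimum of two continuous real-valued functions is continuous, so $H'$ is continuous.
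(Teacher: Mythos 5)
Your proof is correct, and it is essentially the argument from Strøm's paper, which is what the paper cites without reproducing: choose the sharpened NDR data on $(B,A)$, lift the deformation through the fibration, then reparametrize via $\min(t,u'(e))$ so that the lifted homotopy is stationary on $E|_A$ and lands in $E|_A$ at time $1$ throughout $p^{-1}(U)$. The verification steps you give are exactly the ones needed, and the opening remark that closed inclusions pull back to closed inclusions (using that $A \to B$ is a homeomorphism onto a closed subspace) correctly disposes of the point-set prerequisite.
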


%

Finally we lift a standard result about closed inclusions to $\mc R(B)$.
\begin{lem}\label{lem:closed_inclusion_equalizer}
	(CGWH) The map $f\colon X \to Y$ in $\mc R(B)$ is a closed inclusion iff it is the equalizer of some pair of maps $Y \rightrightarrows Z$.\footnote{In (CG), the equalizer of two maps is only an inclusion. Ordinary inclusions are not as well-behaved as closed inclusions, for instance inclusions into $X$ are not preserved by pushouts along their intersection.}
\end{lem}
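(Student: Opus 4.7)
The plan is to lift the classical CGWH fact that closed inclusions coincide with equalizers to the slice category $\mc R(B)$. The key preliminary observation is that equalizers in $\mc R(B)$ are created on total spaces: for any pair $g,h\colon Y \rightrightarrows Z$ in $\mc R(B)$, the equalizer is the subspace $E = \{y \in Y : g(y) = h(y)\} \subseteq Y$ with its subspace topology. Since $g$ and $h$ are ex-maps they satisfy $g\circ i_Y = i_Z = h\circ i_Y$, so $i_Y(B) \subseteq E$, and the projection $p_Y$ restricts to make $E$ a retractive space. With this in hand both directions reduce to statements about closed subspaces in CGWH.

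For the backward direction, if $f\colon X \to Y$ arises as such an equalizer, then $X$ is identified with the preimage of the diagonal $\Delta_Z \subseteq Z \times Z$ under $(g,h)\colon Y \to Z \times Z$. The weak Hausdorff hypothesis on $Z$ is exactly the assertion that $\Delta_Z$ is closed in $Z \times Z$, so $f$ is a closed inclusion.

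For the forward direction, given a closed inclusion $f\colon X \to Y$ in $\mc R(B)$, I would form the pushout $Z := Y \cup_X Y$ in $\mc R(B)$. Because the diagram is connected, this agrees with the ordinary topological pushout (as noted in the discussion of colimits in $\mc R(B)$). The two canonical inclusions $g,h\colon Y \rightrightarrows Z$ are morphisms in $\mc R(B)$: they share the common basepoint section $B \to X \to Y \to Z$ and both respect the projection to $B$. A direct inspection of the pushout at the set level shows that $g(y) = h(y)$ iff $y \in X$, so the equalizer is $X$ as a subset of $Y$; since $f$ was a closed inclusion the subspace topology agrees with the original topology on $X$, identifying $f$ with the equalizer.

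The main technical obstacle, and the reason the statement demands (CGWH), is knowing that the pushout $Y \cup_X Y$ is itself weak Hausdorff, so that $(g,h)$ really targets an object of our category and one may invoke the closed-diagonal property in the backward direction. This is the standard but nontrivial fact that pushouts along closed inclusions of weak Hausdorff $k$-spaces remain weak Hausdorff; see \cite{strickland2009category}. In (CG) this fails in general, so the backward direction degrades to ``inclusion'' rather than ``closed inclusion'' — precisely the subtlety flagged in the footnote.
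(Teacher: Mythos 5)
Your proof is correct, but for the forward direction you construct a different object $Z$ than the paper does. The paper takes $Z = Y \cup_X B$ (the strict cofiber of $f$ in $\mc R(B)$) and exhibits $X$ as the equalizer of the quotient map $Y \to Z$ against the ``zero map'' $Y \to B \to Z$; this exploits the zero object $B$ of the retractive category. You instead take $Z = Y \cup_X Y$, the cokernel pair, and equalize the two canonical inclusions $Y \rightrightarrows Z$. Both constructions are standard; yours is the one that works in an arbitrary category with pushouts (it is the usual characterization of a regular monomorphism), while the paper's is slightly more economical by taking advantage of the pointedness of $\mc R(B)$. Your observation that the two inclusions agree on basepoint sections because $i_Y = f \circ i_X$ is the correct justification that they are ex-maps, and you are right that in either construction one must know the pushout stays weak Hausdorff; you make this point explicitly, citing the fact that pushouts along closed inclusions in CGWH preserve weak Hausdorffness, whereas the paper leaves it implicit. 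The backward directions are essentially identical: equalizers are subspaces, and in the weak Hausdorff setting (closed diagonal) they are closed.
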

\begin{proof}
	An equalizer in unbased spaces is always a subspace of $Y$, and the weak Hausdorff condition guarantees that it is closed \cite[7.6]{lewis_appendix}. Conversely, if $f$ is a closed inclusion, take $Z = Y \cup_X B$. Since this pushout respects the underlying sets, as a set $Z$ is the disjoint union of $Y \setminus X$ and $B$. Then the subspace of $Y$ equalizing the quotient map $Y \to Z$ and the composite $Y \to B \to Z$ is precisely $X$.
\end{proof}

For completeness we recall the fiberwise Whitehead theorem, though our treatment of the foundations does not use it.
\begin{thm}\cite[2.7]{clapp1981duality}\label{thm:fiberwise_whitehead}
	If $f\colon X \to Y$ is a map of $h$-cofibrant, $h$-fibrant retractive spaces over $B$, that is a homotopy equivalence on the total spaces, then $f$ is a fiberwise homotopy equivalence.
\end{thm}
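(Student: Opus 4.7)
The plan is to factor $f$ through the fiberwise mapping cylinder to reduce to the case where $f$ is itself an $f$-cofibration, and then to promote Str\o m's classical strong deformation retract to a fiberwise one via two applications of \autoref{prop:strom_lift}(2).

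\emph{Step 1 (Mapping cylinder reduction).} Form the fiberwise mapping cylinder $M_B f := Y \cup_X (X \sma_B I_+)$ in $\mc R(B)$, with natural factorization $X \xrightarrow{j} M_B f \xrightarrow{q} Y$. Here $j$ is an $f$-cofibration and $q$ is a strong fiberwise deformation retraction, so in particular a fiberwise homotopy equivalence. Two applications of \autoref{prop:clapp} show that $M_B f$ is $h$-fibrant: one to the pushout defining $X \sma_B I_+$ along the $f$-cofibration $i_X(B) \times I \to X \times I$ (an $f$-cofibration because $i_X$ is, by \autoref{lem:heath_kamps}), and another to the pushout defining $M_B f$ along the $f$-cofibration $X \to X \sma_B I_+$. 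The basepoint section of $M_B f$ factors through a composite of $h$-cofibrations, so $M_B f$ is also $h$-cofibrant. Since $q$ is a total-space equivalence and $f$ is by hypothesis, so is $j$; establishing the theorem for $j$ and combining with the fiberwise equivalence $q$ yields the result for $f$. We may therefore assume $f$ itself is an $f$-cofibration.

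\emph{Step 2 (Fiberwise lifts).} Since $f$ is now an $h$-cofibration and a total-space equivalence, Str\o m's classical theorem produces a DR-pair structure on $(Y, X)$. Apply \autoref{prop:strom_lift}(2) with this DR-pair and the $h$-fibration $p_X\colon X \to B$ to lift $\id_X$ over $p_Y$, producing a fiberwise retract $r\colon Y \to X$ with $rf = \id_X$ and $p_X r = p_Y$. Similarly, the pushout-product inclusion $(X \times I) \cup_{X \times \partial I} (Y \times \partial I) \hookrightarrow Y \times I$ is both an $h$-cofibration (by \autoref{lem:product_ndr}) and a total-space equivalence (it is the pushout-product of the trivial $h$-cofibration $f$ with $\partial I \to I$), so by Str\o m again it admits a DR-pair structure. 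A second application of \autoref{prop:strom_lift}(2) lifts the map $(f\pi_1,\ \id_Y,\ fr)\colon (X \times I) \cup (Y \times \partial I) \to Y$ over $p_Y$ to produce a fiberwise homotopy $h\colon Y \times I \to Y$ from $\id_Y$ to $fr$ that is constant on $X$. Compatibility of the top map on the intersections $X \times \partial I$ follows from $rf = \id_X$, and compatibility with the projection to $B$ follows from the fiberwise property of $r$.

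Because $f$ intertwines the basepoint sections and both $r$ and $h$ fix $X$ pointwise, they automatically define morphisms in $\mc R(B)$, exhibiting $f$ as a fiberwise homotopy equivalence. The main technical obstacle is the verification in Step 1 that $M_B f$ is simultaneously $h$-fibrant and $h$-cofibrant; once this reduction is in place, the two lifts of Step 2 are direct applications of the lifting machinery in \autoref{prop:strom_lift}.
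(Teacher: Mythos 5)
The paper states this theorem without proof, only citing Clapp, so there is no "paper's own proof" to compare against; your job was to supply one, and the argument you give is correct and is essentially the standard one (reduction to an $f$-cofibration via the fiberwise mapping cylinder, then two Str\o m lifts promoting the classical DR-pair data to fiberwise data). A few details worth being explicit about, since they are the places a careless version of this argument would slip: the map $j\colon X \to X\barsmash I_+$ is an $f$-cofibration because $B \to X$ is an $h$-cofibration between $h$-fibrant spaces over $B$, hence an $f$-cofibration by \autoref{lem:heath_kamps}, and then one applies \autoref{prop:h_cofibrations_pushout_product}; the retraction $r$ obtained from the first Str\o m lift commutes with basepoint sections because $r\,i_Y = r f i_X = i_X$; and the homotopy $h$ obtained from the second lift fixes the basepoint section pointwise because the section of $Y$ sits inside $f(X)$, on which $h$ is the constant homotopy. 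You verify all three. The second Str\o m lift also implicitly uses that for the Hurewicz structure on plain spaces the pushout-product of an acyclic $h$-cofibration with an $h$-cofibration is again an acyclic $h$-cofibration (so $(Y\times I,\ X\times I\cup Y\times\partial I)$ is a DR-pair); this is classical and due to Str\o m, though it is not among the precise statements the paper records in \autoref{sec:technical_lemmas}, so it would be cleaner to either cite it explicitly or give the one-line direct deformation-retraction argument.
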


\beforesubsection
\subsection{Pullback and pushforward}\aftersubsection

If $f\colon A \to B$ is any map of spaces, the \textbf{pullback functor}\index{pullback $f^*$} $f^*\colon \mc R(B) \to \mc R(A)$ sends each retractive space $X$ over $B$ to the pullback $f^*X = A \times_B X$:
\[ \xymatrix @R=1.7em @C=3em{
	f^*X \ar[r] \ar[d] & X \ar[d]^-{p_X} \\
	A \ar[r]^-f & B
} \]
When $f$ is the inclusion of a subspace, $f^*$ restricts each retractive space to the subspace that lies over $A$.

To make $f^*X$ a retractive space we have to choose inclusion and projection maps. The projection map is obvious, and we define the inclusion by $a \mapsto (a,i_X(f(a)))$. In other words, we apply the universal property of the pullback to the following commuting square.
\[ \xymatrix @R=1.7em @C=3em{
	A \ar@{-->}[r]^-{i_X \circ f} \ar@{-->}[d]_-{\id_A} & X \ar[d]^-{p_X} \\
	A \ar[r]^-f & B
} \]
The universal property also assigns to each map $X \to Y$ of retractive spaces over $B$ a map $f^*X \to f^*Y$ of retractive spaces over $A$. This makes $f^*$ into a functor.
\begin{lem}\label{lem:f_star_preserves}
	The pullback functor $f^*$ preserves \vspace{-.5em}
	\begin{itemize}
		\item $h$-fibrations and $q$-fibrations,
		\item weak equivalences between $q$-fibrant (or $h$-fibrant) spaces,
		\item $f$-cofibrations, and closed inclusions.
	\end{itemize}
	Moreover if $f$ itself is a Hurewicz fibration ($h$-fibration), then $f^*$ preserves $h$-cofibrations, and all weak equivalences.
\end{lem}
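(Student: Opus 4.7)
The plan is to verify each property separately, exploiting that $f^*X = A \times_B X$ has fibers $(f^*X)_a = X_{f(a)}$, and that $f^*$ commutes with $(-) \times I$, with arbitrary limits, and (as a functor of underlying spaces over $B$) with the colimits we care about. For the two fibration assertions, I would construct a path-lifting function for $f^*p$ from one for $p\colon X \to Y$: a path in $f^*Y = A \times_B Y$ is a pair of a path in $A$ and a compatible path in $Y$, so lift the $Y$-coordinate using $p$'s path-lifting and retain the $A$-coordinate. An analogous direct argument with cylinders on disks $D^n \times \{0\} \hookrightarrow D^n \times I$ handles $q$-fibrations, since the lifting problem in $\mc R(A)$ translates back to one in $\mc R(B)$ along $f$.

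Next, closed inclusions are preserved because in (CGWH) they are equalizers (\autoref{lem:closed_inclusion_equalizer}) and $f^*$ preserves equalizers; directly in (CG), the pullback of a closed subspace inclusion is still a closed subspace inclusion. For $f$-cofibrations, the fiberwise retract $r\colon Y \times I \to (X \times I) \cup_{X \times 0} (Y \times 0)$ over $B$ pulls back to a fiberwise retract over $A$, using that $f^*$ commutes with $(-) \times I$ and with the relevant pushout. For weak equivalences between $q$-fibrant (or $h$-fibrant) spaces, recall that such a map is a weak equivalence iff its restriction to the fiber over each $b \in B$ is a weak equivalence. Step 1 shows $f^*$ preserves the fibrancy of source and target, and $(f^*X)_a = X_{f(a)}$, so this fiberwise criterion transfers from $\mc R(B)$ to $\mc R(A)$.

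For the final bullet, assume $f$ is an $h$-fibration. To see $f^*$ preserves $h$-cofibrations: given $X \to Y$ an $h$-cofibration in $\mc R(B)$, the projection $f^*Y = A \times_B Y \to Y$ is the base-change of $f$ and hence an $h$-fibration, so Strøm's pullback theorem \autoref{prop:strom_pullback} applied to the square with bottom $X \to Y$ and right $f^*Y \to Y$ yields that $f^*X \to f^*Y$ is an $h$-cofibration of total spaces; it is a closed inclusion by the previous step. For all weak equivalences, I would invoke right properness of the classical $q$-model structure on spaces: pullback along a $q$-fibration preserves weak equivalences of spaces. Since $h$-fibrations are $q$-fibrations and weak equivalences in $\mc R(A)$ are measured on total spaces, this gives the conclusion.

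The main obstacle I expect is this last clause on arbitrary weak equivalences, for which the fiberwise reduction of Step 3 is unavailable without fibrancy. If one preferred a self-contained argument to invoking right properness, one could factor $g\colon X \to Y$ through the fiberwise mapping path space $Mg = X \times_Y Y^I_B$: the first leg $X \to Mg$ is a fiberwise homotopy equivalence (hence preserved by $f^*$), while the second leg $Mg \to Y$ is an $h$-fibration whose homotopy fibers are weakly contractible (by 2-out-of-3 from $g$ being a weak equivalence); its pullback by $f^*$ retains both properties by Step 1, and an $h$-fibration with weakly contractible fibers is a weak equivalence by the long exact sequence.
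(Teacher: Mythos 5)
Your proof is correct and fills in the details that the paper's terse proof (``Most of these are straightforward exercises\ldots'') leaves out; in particular you use the same Strøm pullback theorem (\autoref{prop:strom_pullback}) for the $h$-cofibration clause, which is exactly what the paper cites. The only small divergence is that for closed inclusions the paper invokes a point-set lemma from Lewis's appendix, whereas you use the equalizer characterization (\autoref{lem:closed_inclusion_equalizer}) together with the fact that $f^*$ is a right adjoint -- both are standard and equivalent in (CGWH).
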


\begin{proof}
	Most of these are straightforward exercises. The part with closed inclusions uses \cite[A.10.1]{lewis_appendix} and the part with $h$-cofibrations uses \autoref{prop:strom_pullback}.
\end{proof}

\begin{rmk}
	Since $f^*$ is defined by a universal property, it is not unique, only unique up to canonical isomorphism. So technically, there are many pullback functors for each map $f$. 
\end{rmk}

Continuing to let $f\colon A \to B$ be any map of spaces, the \textbf{pushforward functor}\index{pushforward $f_{^^21}$} $f_!\colon \mc R(A) \to \mc R(B)$ sends each retractive space $X$ over $A$ to the pushout $f_!X = B \cup_A X$:
\[ \xymatrix @R=1.7em @C=3em{
	A \ar[r]^-f \ar[d]_-{i_X} & B \ar[d] \\
	X \ar[r] & f_!X
} \]
When $f$ is the inclusion of a subspace, $f_!$ takes a retractive space over $A$ and tacks on the complement $B \setminus A$, by gluing on $B$ along $A$.

\begin{rmk}
	In both (CG) and (CGWH), the underlying set of $f_! X$ is the pushout of the underlying sets of $B$ and $X$ along $A$. Similarly, the pullback $f^* X$ is also the pullback on the underlying sets. 
\end{rmk}

To make $f_!X$ a retractive space we take the obvious inclusion of $B$, and define the projection to $B$ by $b \mapsto b$, $x \mapsto f(p_X(x))$. In other words, we apply the universal property of the pushout to the following commuting square.
\[ \xymatrix @R=1.7em @C=3em{
	A \ar[r]^-f \ar[d]_-{i_X} & B \ar@{-->}[d]^-{\id_B} \\
	X \ar@{-->}[r]_-{f \circ p_X} & B
} \]
\begin{lem}\label{lem:f_shriek_preserves}
	The pushforward functor $f_!$ preserves\vspace{-.5em}
	\begin{itemize}
		\item $f$-cofibrations, $h$-cofibrations, closed inclusions, and
		\item weak equivalences between $h$-cofibrant (or $f$-cofibrant) spaces.
	\end{itemize}
	Moreover if $f$ itself is a Hurewicz fibration ($h$-fibration), then $f_!$ preserves spaces that are both $h$-fibrant and $f$-cofibrant.
\end{lem}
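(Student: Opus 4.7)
The plan is to reduce each clause to the technical lemmas on pushouts from \autoref{prop:technical_cofibrations} and \autoref{prop:clapp}. The key observation driving the first two bullets is that by associativity of pushouts, $f_! Y = B \cup_A Y$ can be rewritten as $f_! X \cup_X Y$ for any map $i\colon X \to Y$ in $\mc R(A)$; the induced map $f_! i\colon f_! X \to f_! Y$ is then literally the pushout of $i$ along the natural map $X \to f_! X$. Since $f$-cofibrations, $h$-cofibrations, and closed inclusions are each closed under pushout by \autoref{prop:technical_cofibrations}(1), the first bullet follows immediately.

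For the weak equivalence bullet, I would apply the gluing lemma \autoref{prop:technical_cofibrations}(2) to the map of pushout squares
\[
\xymatrix @R=1.7em @C=2.2em{
B \ar[d]^-= & A \ar[l]_-f \ar[r]^-{i_X} \ar[d]^-= & X \ar[d]^-\sim \\
B & A \ar[l]_-f \ar[r]^-{i_Y} & Y,
}
\]
whose columns are weak equivalences and whose right-hand legs $i_X, i_Y$ are $h$-cofibrations by the assumed $h$- (or stronger $f$-) cofibrancy of $X$ and $Y$. The gluing lemma then produces the desired weak equivalence $f_! X \to f_! Y$.

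For the final bullet, suppose $f$ is an $h$-fibration and $X$ is both $h$-fibrant and $f$-cofibrant over $A$. I would view the defining pushout square of $f_! X$ as a diagram of spaces over $B$ and apply \autoref{prop:clapp}. All three input spaces of the pushout are $h$-fibrant over $B$: the base $B$ via the identity, $A$ via $f$ (an $h$-fibration by hypothesis), and $X$ via the composite $X \to A \to B$ of two $h$-fibrations. The basepoint section $i_X\colon A \to X$ is an $f$-cofibration over $A$ by hypothesis, and since any fiberwise homotopy over $A$ is automatically fiberwise over $B$, $i_X$ is also an $f$-cofibration over $B$. So \autoref{prop:clapp} gives that $f_! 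X$ is $h$-fibrant. That $f_! X$ remains $f$-cofibrant is a special case of the first bullet, applied to the $f$-cofibration $A \to X$, whose image under $f_!$ is precisely the basepoint section $B \to f_! X$.

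No single step looks like a real obstacle; each is a direct invocation of an already-established technical result. The step demanding the most care is the bookkeeping in the last bullet, in particular the observation that $f$-cofibrancy over $A$ is inherited after postcomposition with $f$, so that the hypotheses of \autoref{prop:clapp} are actually met over $B$.
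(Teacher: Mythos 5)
Your proof is correct and follows the same route the paper has in mind — the paper simply declares the first two bullets "straightforward" and cites Clapp for the third, and your argument fills in those details in the expected way. The one place worth being slightly careful is that the pushout $f_!X \cup_X Y$ in the first bullet lives in spaces over $B$ (not in $\mc R(B)$, since $X$ has a section over $A$, not $B$), but since every "cofibration" notion here is a condition on total spaces and their fiberwise retracts, and since a retract over $A$ is automatically a retract over $B$, the closure-under-pushout statement applies at that level and the argument goes through; you note this implicitly when you observe that $f$-cofibrancy over $A$ is inherited over $B$.
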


\begin{proof}
	Again this is straightforward. The last claim follows from \autoref{prop:clapp}.
\end{proof}

\begin{ex}
	In \autoref{ex:trivial}, the space $B \times K$ is isomorphic to the pullback $r^*K$ along the unique map $r\colon B \to *$. Its pushforward to a point is a half-smash product
	\[ r_!r^*K \cong r_!(B \times K) \cong (B_+) \sma K. \]
\end{ex}
\begin{ex}\label{ex:thom_base_change}
	In \autoref{ex:fiberwise_thom_space}, the ordinary Thom space $\Th(V)$ can be recovered as a pushforward of the fiberwise Thom space $\Th_B(V)$\index{fiberwise!Thom space}
	\[ r_!\Th_B(V) \cong \Th(V). \]
	Moreover if $f^*V$ denotes the usual pullback of a vector bundle $V$ along $f\colon A \to B$, we have an isomorphism
	\[ \Th_A(f^*V) \cong f^*\Th_B(V). \]
\end{ex}

\begin{lem}
	There is an adjunction $(f_! \adj f^*)$.
\end{lem}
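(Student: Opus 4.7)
The plan is to exhibit a natural bijection
\[ \Hom_{\mc R(B)}(f_! X, Y) \;\cong\; \Hom_{\mc R(A)}(X, f^* Y) \]
by combining the universal properties of the pushout (which defines $f_!$) and the pullback (which defines $f^*$). Since both functors are defined by universal properties, the adjunction should fall out almost mechanically; the only thing to track carefully is how the retractive structure cuts down the morphisms on each side.

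First I would unpack the left-hand side. A morphism $g\colon f_!X \to Y$ in $\mc R(B)$ is a continuous map out of the pushout $B \cup_A X$ satisfying $g \circ i_{f_!X} = i_Y$ and $p_Y \circ g = p_{f_!X}$. By the universal property of the pushout, $g$ is the same as a pair of maps $g_B\colon B \to Y$, $g_X\colon X \to Y$ with $g_B \circ f = g_X \circ i_X$. The condition $g \circ i_{f_!X} = i_Y$ (together with the construction of $i_{f_!X}$ as the inclusion of $B$) forces $g_B = i_Y$, and then the condition $p_Y \circ g = p_{f_!X}$ (together with the construction of $p_{f_!X}$) becomes $p_Y \circ g_X = f \circ p_X$. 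So a map $g \in \Hom_{\mc R(B)}(f_! X, Y)$ is the same data as a map $\tilde g \colon X \to Y$ satisfying
\[ p_Y \circ \tilde g = f \circ p_X, \qquad \tilde g \circ i_X = i_Y \circ f. \]

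Next I would unpack the right-hand side by the same recipe. A morphism $h\colon X \to f^*Y = A \times_B Y$ in $\mc R(A)$ is, by the universal property of the pullback, a pair $(h_1,h_2)\colon X \to A \times Y$ with $f \circ h_1 = p_Y \circ h_2$, satisfying the retractive conditions $h \circ i_X = i_{f^*Y}$ and $p_{f^*Y} \circ h = p_X$. Inspecting the definitions of $i_{f^*Y}$ and $p_{f^*Y}$ from the universal property constructions above, the second condition forces $h_1 = p_X$, and the first becomes $h_2 \circ i_X = i_Y \circ f$. Combined with the pullback compatibility $f \circ p_X = p_Y \circ h_2$, this is exactly the same data as a map $h_2\colon X \to Y$ satisfying the two conditions displayed above.

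The assignments $g \mapsto \tilde g$ and $h \mapsto h_2$ therefore give mutually inverse bijections, so we may define the adjunction bijection by $g \leftrightarrow g|_X$. Naturality in $X$ and in $Y$ is immediate from the naturality of the universal properties. The only potential subtlety is set-theoretic: one must know that the pushout and pullback defining $f_! X$ and $f^* Y$ are computed on underlying sets in the conventions (CG) and (CGWH), which is recorded explicitly in the remark just before the pushforward is introduced; beyond this point, the argument is purely formal. For the reader who prefers unit/counit data, the above translates into the unit $X \to f^* f_! X$ given by $x \mapsto (p_X(x), x)$ and the counit $f_! f^* Y \to Y$ induced by $i_Y\colon B \to Y$ and the projection $A \times_B Y \to Y$; the triangle identities then reduce to an inspection of these formulas.
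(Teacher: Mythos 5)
Your proof is correct and takes essentially the same approach as the paper: both show that a map $f_! X \to Y$ in $\mc R(B)$ and a map $X \to f^* Y$ in $\mc R(A)$ each unwind to the data of a single map $X \to Y$ (of underlying spaces) compatible with the sections and projections over $f$. You spell out the universal-property bookkeeping and the unit/counit in more detail than the paper does, but the idea is identical.
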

\begin{proof}
	If $X \in \mc R(A)$ and $Y \in \mc R(B)$, the data of a map $f_!X \to Y$ in $\mc R(A)$ or a map $X \to f^*Y$ in $\mc R(B)$ rearranges into a choice of map $X \to Y$ making the following diagram commute.
	\[ \xymatrix @R=1.7em @C=3em{
		A \ar[r]^-f \ar[d]_-{i_X} & B \ar[d]^-{i_Y} \\
		X \ar[d]_-{p_X} \ar@{-->}[r] & Y \ar[d]^-{p_Y} \\
		A \ar[r]^-f & B
	} \]
	Some diagram-juggling is then required to check that this respects maps in $X$ and $Y$, and therefore gives an adjunction.
\end{proof}

\begin{rmk}
	The above proof illustrates how natural it is to consider the category $\mc R$ of all retractive spaces \emph{over all possible base spaces}, with morphisms defined by the diagram above. The functor $\mc R \to \cat{Top}$ that forgets down to the base space is an example of a ``bifibration,'' see \autoref{sec:SMBF} or \cite{ponto_shulman_indexed}.
\end{rmk}


Since $(f_! \adj f^*)$ is an adjoint pair, $f_!$ preserves all colimits and $f^*$ preserves all limits. In addition, $f^*$ preserves many of the colimits we actually care about.
\begin{prop}\label{prop:f_star_colimits}\hfill
	\vspace{-.5em}
	
	\begin{itemize}
		\item (CGWH) $f^*$ preserves the following colimits: coproducts, pushouts along a closed inclusion, sequential colimits along closed inclusions, and quotients by fiberwise actions of compact Hausdorff topological groups.
		\item (CG) $f^*$ preserves all colimits.
	\end{itemize}
\end{prop}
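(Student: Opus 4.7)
The plan is to reduce both parts to standard facts about pullbacks of unbased spaces. Colimits in $\mc R(B)$ can be computed in $\cat{Top}/B$: the colimit of a diagram $\{X_i\}_{i \in I}$ in $\mc R(B)$ is the colimit in $\cat{Top}/B$ of the enlarged diagram obtained by adjoining $B$ as an initial object mapping into each $X_i$ along its basepoint section. Since $f^*$ sends the retractive space $B$ to $A$ and commutes on the nose with basepoint-section inclusions, it suffices to verify that $A \times_B (-)$ commutes with the relevant colimits in $\cat{Top}/B$.

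For the (CG) statement I would invoke the fact that $\cat{Top}_{CG}$ is cartesian closed, and more generally that each slice $\cat{Top}_{CG}/B$ is locally cartesian closed: for any $f \colon A \to B$, the pullback $f^*$ has a right adjoint $\Pi_f$ built from the internal mapping space over $B$. Hence $f^*$ preserves all colimits in $\cat{Top}/B$, which then passes through to $\mc R(B)$ by the reduction above.

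For the (CGWH) statement the wrinkle is that colimits in CGWH are obtained by applying a weak-Hausdorffication reflection to the CG-colimit, and this reflection does not commute with pullback in general. My plan is instead to verify, for each of the four listed colimit types, that the CG-colimit is already weak Hausdorff, so that no reflection is needed and the (CG) argument applies verbatim. Concretely: a coproduct of CGWH spaces is automatically CGWH; a pushout of CGWH spaces along a closed inclusion is CGWH by \cite[A.7]{lewis_appendix}; a sequential colimit along closed inclusions of CGWH spaces is CGWH (loc.\ cit.); and a quotient of a CGWH space by a compact Hausdorff group action is CGWH, since compactness of $G$ forces the orbit relation to be a closed equivalence relation.

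The main obstacle is the point-set verification that each of these four colimit types already lies in CGWH without any reflection. These facts are standard but somewhat delicate, and the quotient case in particular requires using compactness of $G$ to show that the orbit equivalence relation is closed. Once they are in hand the rest of the argument is formal: the reduction from $\mc R(B)$ to $\cat{Top}/B$ is routine, and in CG the locally cartesian closed structure does all of the heavy lifting.
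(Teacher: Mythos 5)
Your reduction to $\cat{Top}/B$ and your treatment of (CGWH)---checking that each listed colimit already lands in weak Hausdorff spaces so that no reflection is triggered---match the paper's argument exactly, down to the same four point-set facts and the same reduction (the passage from the colimit in $\cat{Top}/B$ to the colimit in $\mc R(B)$ is a pushout along the coproduct of basepoint sections, which in (CGWH) is a closed inclusion, so it falls under the cases already established). The one place you diverge is (CG): you invoke the existence of a right adjoint $\Pi_f$ to $f^*$ (equivalently, local cartesian closedness of the CG slice), whereas the paper gives a more elementary argument by comparing underlying sets and citing \cite[10.3]{lewis_appendix}. Both are valid; the adjoint route is slicker but rests on the existence of $f_*$ under (CG), a fact the paper only records separately in a remark at the end of \autoref{sec:sections} by citing \cite[2.1]{ms}, whereas the set-level verification keeps the proposition self-contained and avoids a forward reference.
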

\begin{proof}
	The (CG) statement follows by analyzing the underlying sets, see also \cite[10.3]{lewis_appendix}. In (CGWH) it suffices to prove this statement in the category of spaces over $B$, then pass to retractive spaces (where the colimits change, by a pushout along $\amalg B \to B$). For the first step, the colimits listed above as constructed in weak Hausdorff spaces agree with the colimit in ordinary (or $k$-)spaces \cite{strickland2009category}, and the result is still weak Hausdorff, so it follows that they commute with $f^*$. For the second step, the pushout has one leg a closed inclusion because every (CGWH) retractive space is $i$-cofibrant. Therefore this follows from the first step.
\end{proof}

Finally, we use this adjunction to form the Beck-Chevalley isomorphism, which allows us to switch pullbacks and pushforwards past each other.
\begin{prop}[Beck-Chevalley]\label{prop:beck_chevalley_spaces}
	Given a commuting square of spaces
	\begin{equation}\label{diagonal_square}
	\xymatrix@R=1em @C=1em{
		& A \ar[ld]_-g \ar[rd]^-f & \\
		B \ar[rd]_-p && C \ar[ld]^-q \\
		& D &
	}
	\end{equation}
	There is a canonical natural transformation of functors $\mc R(C) \to \mc R(B)$ from the ``top route'' to the ``bottom route''
	\[ g_!f^* \Rightarrow p^*q_! \]
	given by pasting together units and counits of adjunctions, according to either of the three recipes illustrated below.\index{Beck-Chevalley isomorphism}
	\[ \xymatrix@R=1em @C=1em{
		&& A \ar[lld]_-{g_!} \ar@{<-}[ld]^-{g^*} \ar@{<-}[rd]_-{f^*} & \\
		B \ar@{=}[r] &B \ar@{<-}[rd]^-{p^*} && C \ar@{<-}[ld]_-{q^*} \ar@{=}[r] & C \ar[lld]^-{q_!} \\
		&& D & & 
	}
	\quad
	\xymatrix@R=1em @C=1em{
		&& A \ar[ld]_-{g_!} \ar@{<-}[rd]^-{f^*} & \\
		B \ar@{=}[r] \ar@{<-}[rd]_-{p^*} &B \ar[d]^-{p_!} && C \ar@{<-}[d]_-{q^*} \ar@{=}[r] & C \ar[ld]^-{q_!} \\
		& D \ar@{=}[rr] & & D & 
	}
	\quad
	\xymatrix@R=1em @C=1em{
		&& A \ar@{<-}[rrd]^-{f^*} \ar[ld]^-{g_!} \ar[rd]_-{f_!} && \\
		B \ar@{<-}[rrd]_-{p^*} &B \ar[rd]^-{p_!} \ar@{=}[l] && C \ar[ld]_-{q_!} &  C \ar@{=}[l] \\
		&& D & &
	} \]
	If \eqref{diagonal_square} is a pullback square, this transformation is an isomorphism.
\end{prop}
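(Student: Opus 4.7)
The plan is to construct the Beck--Chevalley transformation as the \emph{mate} of the canonical isomorphism $f^* q^* \cong g^* p^*$ (coming from $pg = qf$) under the adjunctions $(g_! \dashv g^*)$ and $(q_! \dashv q^*)$, and then verify the isomorphism claim by an explicit computation. The first recipe in the statement produces this mate in the usual way:
\[ g_! f^* \;\Longrightarrow\; g_! f^* q^* q_! \;\cong\; g_! g^* p^* q_! \;\Longrightarrow\; p^* q_!, \]
using the unit of $(q_! \dashv q^*)$ and the counit of $(g_! \dashv g^*)$. The other two pasting diagrams in the statement detour through $f_!$ or $p_!$, inserting extra compatible unit/counit pairs; all three collapse to the same transformation by the standard fact that a mate of an isomorphism is independent of how it is expressed, which reduces to a short diagram chase using the triangle identities (alternatively cite \cite{shulman_framed_monoidal}).

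For the isomorphism claim, I would assume the square is a pullback, so $A \cong B \times_D C$ with $f$ and $g$ the two projections. Unwinding the definitions, for $X \in \mc R(C)$,
\[ g_! f^* X \;=\; B \cup_A (A \times_C X), \qquad p^* q_! X \;=\; B \times_D (D \cup_C X). \]
The key step is that $p^*$ commutes with the pushout $D \cup_C X$ defining $q_! X$. In (CGWH) the basepoint section $C \to X$ is automatically a closed inclusion, so \autoref{prop:f_star_colimits} applies; in (CG) the same proposition says $p^*$ preserves all colimits. Either way,
\[ p^* q_! X \;\cong\; p^* D \cup_{p^* C} p^* X \;\cong\; B \cup_A (B \times_D X). \]
Meanwhile, the pullback identity $A = B \times_D C$ gives a natural identification $A \times_C X \cong B \times_D X$ (writing $a = (b,c)$ and using $c = f(a) = p_X(x)$), so $g_! f^* X$ is already in the same form. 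Hence both sides are canonically isomorphic to $B \cup_A (B \times_D X)$.

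Finally I would check that this canonical isomorphism really is the Beck--Chevalley map. Chasing the first recipe, the unit $\eta_X\colon X \to q^* q_! X$ sends $x \mapsto (p_X(x), [x])$; applying $f^*$ yields the map $A \times_C X \to A \times_D q_! X$, $(a,x) \mapsto (a, [x])$; pushing out along $g$ and then applying the counit of $(g_! \dashv g^*)$ at $p^* q_! X$ sends $b \mapsto (b, i_{q_!X}(p(b)))$ and $(a, x) \mapsto (g(a), [x])$. Under the explicit identifications above, this is exactly the canonical isomorphism $g_! f^* X \cong p^* q_! X$, so the BC map is an isomorphism as claimed. The main obstacle I anticipate is not conceptual but clerical: several universal properties and unit/counit formulas must be lined up consistently, and one has to keep the distinctions between total spaces, basepoint sections, and underlying sets straight throughout the computation. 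Once the notation is set up, each verification is routine.
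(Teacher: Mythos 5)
Your proposal is correct and follows essentially the same route as the paper: establish that \emph{some} natural isomorphism $g_!f^* \cong p^*q_!$ exists by observing that (under pullback hypotheses) $p^*$ commutes with the pushout defining $q_!X$ (valid in (CGWH) because the basepoint section is a closed inclusion, and in (CG) for free), rewrite both sides as $B \cup_A (B \times_D X)$, and then verify by an explicit element chase that the mate constructed in the first recipe agrees with this canonical isomorphism; the equality of the three recipes is, as you say, a triangle-identity exercise in both accounts. The only cosmetic difference is that the paper carries out the final verification by comparing two squares (one built from the all-pushforward recipe) rather than unwinding the unit and counit directly on elements, but the content is the same.
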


\begin{proof}
	The proof that the two non-symmetric recipes both give the same map as the symmetric one is an exercise with the triangle identities for an adjunction, and uses the above-mentioned fact that our adjunctions are compatible with compositions. 
	
	To prove it's an isomorphism, take any $X \in \mc R(C)$. If we think of $X$ as a space over $D$ (not a retractive space, just a space), $p^*$ takes it to a space over $B$, but the result is canonically isomorphic to $g^*X$ because $g$ is the pullback of $p$. Furthermore, $q_!X = D \cup_C X$ is a pushout along the closed inclusion $i_X\colon C \to X$. Therefore $p^*$ takes it to the pushout
	\[ p^*D \cup_{p^* C} p^*X \cong B \cup_A g^*X = f_!g^*X. \]
	This proves that some isomorphism $g_!f^* \cong p^*q_!$ exists. Put this isomorphism into the left-vertical in the square below, and fill out the rest of the square using the Beck-Chevalley recipe where the square in the middle consists entirely of pushforwards:
	\[ \xymatrix{
		B \cup_A (A \times_C X) \ar[d]_-\cong \ar[r] & B \times_D (D \cup_B B \cup_A (A \times_C X)) \ar[d]^-\cong \\
		B \times_D (D \cup_C X) & B \times_D (D \cup_C C \cup_A (A \times_C X)) \ar[l]
	} \]
	It suffices to check this commutes on $B$ and on $A \times_C X$. On $B$ it is automatic because all maps are maps of retractive spaces. On $A \times_C X$ the left-vertical is given by the formula $(a,x) \mapsto (g(a),x) \in B \times_D X$. The others are given by
	\[ \xymatrix{
		(a,x) \in A \times_C X \ar[r] & (g(a),a,x) \in B \times_D A \times_C X \ar[d]^-\cong \\
		(g(a),x) \in B \times_D X & (g(a),a,x) \in B \times_D A \times_C X \ar[l]
	} \]
	which finishes the proof. By the way, we will see later (\autoref{prop:spaces_rigidity}) that this is the only natural isomorphism $g_!f^* \cong p^*q_!$ that exists.
\end{proof}

\beforesubsection
\subsection{Sections}\label{sec:sections}\aftersubsection

There is a third way to change base spaces. If $X \in \mc R(B)$, its \textbf{space of sections}\index{sections $\Gamma_B(-)$} $\Gamma_B(X)$ consists of all the maps $s\colon B \to X$ such that the composite $p_X \circ s\colon B \to X \to B$ is the identity. This is a based space with basepoint given by $i_X$. In other words $\Gamma_B(X) \in \mc R(*)$.

More generally, along any fiber bundle $f\colon A \to B$, we can send a parametrized space $X \in \mc R(A)$ to the space $f_*X \in \mc R(B)$ whose fiber over $b \in B$ is the space of sections of $X$ over the subspace $f^{-1}(b)$ of $A$. The details of this are in the next proposition.

\begin{prop}\label{prop:sheafy_pushforward}
	If $f\colon A \to B$ is a fiber bundle and $B$ is a cell complex then $f^*$ has a right adjoint $f_*$.\index{sheafy pushforward $f_*$}
\end{prop}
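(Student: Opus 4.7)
The plan is to construct $f_*X$ explicitly as a space of partial sections, where the fiber over $b$ is the space of continuous sections of $X$ over $f^{-1}(b)$, and then to verify the adjunction by reducing locally to the trivial bundle case.

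First, I would define the underlying set
\[ f_*X = \bigl\{ (b, s) : b \in B, \ s \colon f^{-1}(b) \to X \text{ continuous with } p_X \circ s = \mathrm{incl}_{f^{-1}(b)} \bigr\}, \]
with projection $(b, s) \mapsto b$ and basepoint section $b \mapsto (b, i_X|_{f^{-1}(b)})$. To topologize $f_*X$, I would use local trivializations: each $b \in B$ has a neighborhood $U$ over which $A|_U \cong U \times F$, and under this identification $(f_*X)|_U$ is in canonical bijection with the set of continuous sections $U \times F \to X|_{U \times F}$ of $p_X$, which inherits the compact-open topology (suitably $k$-ified in (CGWH)). Because $B$ is a cell complex, it is paracompact, and these local topologies glue into a coherent global topology; alternatively, one can build $f_*X$ by induction over the skeleta of $B$, using attaching maps and pullbacks to extend from one skeleton to the next.

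Second, I would verify the adjunction. Given $Y \in \mc R(B)$ and a map $g \colon f^*Y \to X$ in $\mc R(A)$, define $\hat g \colon Y \to f_*X$ by $\hat g(y) = (p_Y(y), s_y)$ where $s_y(a) = g(y, a)$ for $a \in f^{-1}(p_Y(y))$. Over a trivializing neighborhood this is precisely the exponential transpose of $g$ regarded as a map $Y|_U \times F \to X|_{U \times F}$ over $U \times F$, which is continuous by the standard exponential law in compactly generated spaces. The inverse assignment sends $h \colon Y \to f_*X$ to the map $(y, a) \mapsto h(y)(a)$, and the two are mutually inverse by construction; naturality in $Y$ and $X$ is a straightforward diagram chase. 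This gives the required bijection
\[ \mc R(A)(f^*Y, X) \ \cong \ \mc R(B)(Y, f_*X). \]

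The main obstacle is establishing global continuity: the local exponential transposes over trivializing neighborhoods must assemble into a globally continuous map $Y \to f_*X$, and the topology on $f_*X$ itself must be independent of the chosen trivializing cover. This is where both hypotheses are needed — the fiber bundle condition reduces the analysis to the trivial case locally, where the adjunction is just the standard exponential law, and the cell complex structure on $B$ supplies the paracompactness (and the option of an inductive cellular approach) required to patch the local constructions into a globally coherent one.
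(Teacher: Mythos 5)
Your approach is essentially the same as the paper's: cover $B$ by trivializing neighborhoods, realize the local sections spaces as pullbacks of mapping spaces, glue along transition functions, and verify the adjunction by reduction to the exponential law over a trivializing chart. However, you misidentify why the cell complex hypothesis on $B$ is needed. Paracompactness is not the issue: the gluing of the $(f_*X)_\alpha$ along the cocycle of transition homeomorphisms is elementary and requires no hypothesis on $B$ beyond the existence of the trivializing cover (in (CG) the paper explicitly says this is elementary). The cell complex hypothesis earns its keep in the (CGWH) convention, where after gluing one must still check that $f_*X$ is weak Hausdorff; the paper does this by choosing the cover so each closed cell lies in some $U_\alpha$, noting that a compact Hausdorff $K \to f_*X$ has image over a finite subcomplex, and reducing the closedness check to each $(f_*X)_\alpha$, which is weak Hausdorff as a pullback of weak Hausdorff spaces. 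Your proposal never addresses this point-set issue, so if you intend to work in (CGWH) you have a gap; if you work in (CG) throughout, your argument is fine but then the cell complex hypothesis is unnecessary and you should say so rather than invoking paracompactness.
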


\begin{proof}
	Cover $B$ by neighborhoods $U_\alpha \subseteq B$ on which the bundle $A \to B$ is trivial, and pick fiber-preserving homeomorphisms $\phi_\alpha\colon U_\alpha \times F_\alpha \cong f^{-1}(U_\alpha)$. (The fibers may change because $B$ may not be connected.) For each $U_\alpha$, define $(f_*X)_\alpha$ as the pullback
	\[ \xymatrix @R=1.7em{
		(f_*X)_\alpha \ar[d] \ar[r] & \Map(F_\alpha,X) \ar[d]^-{\Map(\id,p_X)} \\
		U_\alpha \ar[r] & \Map(F_\alpha,A)
	} \]
	where the bottom map is the adjoint of the composite of $U_\alpha \times F_\alpha \cong f^{-1}(U_\alpha) \to A$. More simply, it sends each point $b \in U_\alpha$ to the inclusion of $F_\alpha$ into $A$ whose image is $f^{-1}(b)$. So each fiber of $(f_* X)_\alpha$ is a space of sections of $X$ over one of the subspaces $f^{-1}(b)$.
	
	For any pair $\alpha, \beta$, let $U_{\alpha\beta} = U_\alpha \cap U_\beta$ and let $(f_*X)_{\alpha\beta}$ and $(f_*X)_{\beta\alpha}$ denote the pullbacks
	\[ \xymatrix @R=1.7em{
		(f_*X)_{\alpha\beta} \ar[d] \ar[r] & (f_* X)_\alpha \ar[d] \\
		U_{\alpha\beta} \ar[r] & U_\alpha
	}
	\quad
	\xymatrix @R=1.7em{
		(f_*X)_{\beta\alpha} \ar[d] \ar[r] & (f_* X)_\beta \ar[d] \\
		U_{\alpha\beta} \ar[r] & U_\beta.
	}
	\]
	We form a homeomorphism $h_{\alpha\beta}\colon (f_*X)_{\alpha\beta} \cong (f_*X)_{\beta\alpha}$ as follows. Take the adjoint of the composite
	\[ \xymatrix @R=1.7em{
		U_{\alpha\beta} \times F_\alpha \ar[r]^-{\phi_\beta^{-1} \circ \phi_\alpha}_-\cong & U_{\alpha\beta} \times F_\beta \ar[r] & F_\beta
	} \]
	to get a function $\tau\colon U_{\alpha\beta} \to \Map(F_\alpha,F_\beta)$. Then form the map of products
	\[ \xymatrix @R=1.7em{
		U_{\alpha\beta} \times \Map(F_\beta,X) \ar[r]^-{(1,\tau) \times 1} &
		U_{\alpha\beta} \times \Map(F_\alpha,F_\beta) \times \Map(F_\beta,X) \ar[r]^-{1 \times \circ} &
		U_{\alpha\beta} \times \Map(F_\alpha,X).
	} \]
	This respects the subspaces
	\[ \xymatrix @R=1.7em{
		U_{\alpha\beta} \times_{\Map(F_\beta,A)} \Map(F_\beta,X) \ar[r] &
		U_{\alpha\beta} \times_{\Map(F_\alpha,A)} \Map(F_\alpha,X)
	} \]
	that define the pullbacks, and hence give a map $(f_*X)_{\beta\alpha} \to (f_*X)_{\alpha\beta}$. The same recipe in reverse gives the inverse of this map, hence it is a homeomorphism.
	
	The cocycle condition for the trivializations $\phi_\alpha$ imply that for any triple $\alpha, \beta, \gamma$ the composite $h_{\gamma\alpha} \circ h_{\beta\gamma} \circ h_{\alpha\beta}$ is the identity wherever it is defined. Hence the spaces $(f_* X)_\alpha$ glue together to give a single space $f_* X$ over $B$ whose restriction to $U_\alpha$ is canonically homeomorphic to $(f_* X)_\alpha$; this is elementary to argue in (CG).

	To establish the same claim in (CGWH), we need to check that this construction produces a weak Hausdorff space. For this we use the assumption that $B$ is a cell complex and choose the original cover so that every cell $e_i$ is contained in a $U_{\alpha(i)}$. Then any map into $f_*X$ from a compact Hausdorff space $K$ has image in $B$ landing in a finite complex, hence it suffices to prove that the image of $K$ when restricted to each closed cell $e_i$ in the base gives a closed subset of $(f_*X)_{e_i}$. This follows because each of the spaces $(f_*X)_\alpha$ is weak Hausdorff, being a pullback of weak Hausdorff spaces.
	
	To build the adjunction $(f^* \adj f_*)$, we first describe each $Y \in \mc R(B)$ as a collection of spaces $Y_\alpha \in \mc R(U_\alpha)$ glued together with homeomorphisms $Y_{\alpha\beta} \cong Y_{\beta\alpha}$ over $U_{\alpha\beta}$. The pullback $f^*Y = Y \times_B A \in \mc R(A)$ is therefore a collection of spaces $Y_\alpha \times F_\alpha \in \mc R(U_\alpha \times F_\alpha)$ glued together along the product of the gluing maps for the $Y_\alpha$ with those for the $F_\alpha$.
	
	Each map of topological spaces $f^*Y \to X$ is given by the data of maps $Y_\alpha \times F_\alpha \to X$ such that the following diagram commutes.
	\begin{equation}\label{eq:product_transition}
	\xymatrix @R=1.7em{
		Y_{\alpha\beta} \times F_\alpha \ar[d] \ar[r] & Y_{\beta\alpha} \times U_{\alpha\beta} \times F_\alpha \ar[r] & Y_{\beta\alpha} \times \Map(F_\alpha,F_\beta) \times F_\alpha \ar[r] & Y_{\beta\alpha} \times F_\beta \ar[d] \\
		Y_\alpha \times F_\alpha \ar[rr] && X & \ar[l] Y_\beta \times F_\beta
	}
	\end{equation}
	If the map $f^*Y \to X$ is over $A$, then the square below on the left commutes, hence its adjoint on the right commutes, so the adjoint maps $Y_\alpha \to \Map(F_\alpha,X)$ pass to maps $Y_\alpha \to (f_*X)_\alpha$ over $U_\alpha \subseteq B$.
	\[ \xymatrix @R=1.7em{
		Y_\alpha \times F_\alpha \ar[d] \ar[r] & X \ar[d]^-{p_X} \\
		U_\alpha \times F_\alpha \ar[r] & A
	}
	\quad
	\xymatrix @R=1.7em{
		Y_\alpha \ar[d] \ar[r] & \Map(F_\alpha,X) \ar[d]^-{\Map(\id,p_X)} \\
		U_\alpha \ar[r] & \Map(F_\alpha,A)
	} \]
	The overlap condition from \eqref{eq:product_transition} rearranges to the following diagram, which shows that for each pair $\alpha, \beta$ the two resulting maps $Y_{\alpha\beta} \to \Map(F_\alpha,X)$ are identical. Hence we get a map $Y \to f_*X$.
	\[ \xymatrix @R=1.7em{
		Y_{\alpha\beta} \ar[dr] \ar[r] & Y_\beta \times U_{\alpha\beta} \ar[r] & \Map(F_\beta,X) \times \Map(F_\alpha,F_\beta) \ar[d] \\
		& Y_\alpha \ar[r] & \Map(F_\alpha,X)
	} \]
	Furthermore if $f^*Y \to X$ is under $A$ then this map $Y \to f_*X$ is under $A$. This assignment is natural in $Y$ and $X$, and reversing the above steps gives its inverse, hence it is a bijection.
\end{proof}

As a result $f^*$ preserves all colimits so long as $f$ is a fiber bundle with base a cell complex. It is a classical fact that every map is weakly equivalent to such a fiber bundle, therefore $f^*$ is always a ``homotopical'' left adjoint, meaning in particular that it is a left adjoint if both $\mc R(B)$ and $\mc R(A)$ are localized by inverting the weak equivalences. This is similar to the behavior of the fixed points $(-)^G$ in equivariant homotopy theory. As in that case, $f^*$ will always commute with homotopy colimits up to equivalence, even though in (CGWH) it doesn't always strictly commute with strict colimits.

\begin{prop}\label{prop:sheafy_pushforward_preserves}
	The functor $f_*$ of \autoref{prop:sheafy_pushforward} preserves $h$-fibrations. If the fibers of $f$ are cell complexes, then $f_*$ preserves $q$-fibrations and weak equivalences between $q$-fibrant spaces.
\end{prop}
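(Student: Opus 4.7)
The plan is to exploit the local description of $f_*X$ from \autoref{prop:sheafy_pushforward}. For a trivializing cover $\{U_\alpha\}$ of $B$ for the bundle $f\colon A \to B$ with fibers $F_\alpha$, we have canonical identifications $(f_*X)|_{U_\alpha} \cong U_\alpha \times_{\Map(F_\alpha, A)} \Map(F_\alpha, X)$ (and likewise for $Y$). Hence the restricted map $(f_*X)|_{U_\alpha} \to (f_*Y)|_{U_\alpha}$ is the pullback of $\Map(F_\alpha, X) \to \Map(F_\alpha, Y)$ along $U_\alpha \to \Map(F_\alpha, A)$. All three claims will follow by analyzing this local structure and invoking appropriate local-to-global principles, using that $B$ is paracompact (being a cell complex).

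For $h$-fibrations, I would first observe that $\Map(F_\alpha, -)$ preserves $h$-fibrations: applying it to a path-lifting function $\lambda\colon Y^I \times_Y X \to X^I$ yields a path-lifting function for $\Map(F_\alpha, X) \to \Map(F_\alpha, Y)$, since $\Map(F_\alpha, -)$ commutes with products, pullbacks, and the $I$-cotensor. Combined with the fact that pullbacks preserve $h$-fibrations (\autoref{lem:f_star_preserves}), each local restriction $(f_*X)|_{U_\alpha} \to (f_*Y)|_{U_\alpha}$ is an $h$-fibration. The pullback of the cover $\{U_\alpha\}$ along $f_*Y \to B$ is numerable, so the classical local-to-global theorem for Hurewicz fibrations gives that $f_*X \to f_*Y$ is globally an $h$-fibration.

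For $q$-fibrations, the strategy is analogous, but the local step is more delicate: $\Map(F_\alpha, -)$ does not preserve $q$-fibrations for arbitrary $F_\alpha$, so the same argument does not apply directly. Instead, I would tackle the lifting problem head-on: given $D^n \to (f_*X)|_{U_\alpha}$ and $D^n \times I \to (f_*Y)|_{U_\alpha}$, the trivialization $f^{-1}(U_\alpha) \cong U_\alpha \times F_\alpha$ lets us adjoin to a lifting problem for $X \to Y$ against $D^n \times F_\alpha \times \{0\} \hookrightarrow D^n \times F_\alpha \times I$. I would then subdivide $D^n \times I$ in the $D^n$-direction and lift simplex-by-simplex using the RLP of $X \to Y$ against the generating Serre-cells $D^m \times \{0\} \to D^m \times I$, keeping $F_\alpha$ passive throughout. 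Once local $q$-fibration is in hand, the subdivision-based local-to-global argument for Serre fibrations (using compactness of $D^n \times I$ and the cover of $B$) gives the global statement.

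For preservation of weak equivalences between $q$-fibrants, the second part of the proposition (applied to the structure maps $X \to A$ and $Y \to A$, noting that $f_*A \cong B$) shows $f_*X$ and $f_*Y$ are $q$-fibrant over $B$. Hence a map between them is a weak equivalence iff it is a fiberwise weak equivalence. The fiber of $f_*X \to B$ over $b$ is identified from the construction as the section space $\Gamma_{F_b}(X|_{F_b})$ of the restricted $q$-fibration $X|_{F_b} \to F_b$, and similarly for $Y$. A parametrized Whitehead-type theorem, applied to the fiberwise weak equivalence $X|_{F_b} \to Y|_{F_b}$ of $q$-fibrations over the CW base $F_b$, gives that the induced map on section spaces is a weak equivalence, via cell-by-cell obstruction theory on $F_b$. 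This is precisely where the cell-complex hypothesis on the fibers of $f$ enters. The hard part will be the $q$-fibration case above, since Serre fibrations interact poorly with mapping-space functors over non-CW parameter spaces; the $h$-fibration argument, the fiber identification, and the section-space Whitehead step are all classical.
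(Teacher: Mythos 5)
You take a genuinely different route from the paper. The paper's proof is purely global and runs entirely through the adjunction $(f^* \dashv f_*)$: a lifting problem for $f_*X \to f_*Y$ against a cylinder inclusion $K \times 0 \to K \times I$ over $B$ transposes to a lifting problem for $X \to Y$ against $f^*K \times 0 \to f^*K \times I$ over $A$, with $K$ arbitrary for the $h$-fibration case and $K = D^n$ for the $q$-fibration case; for the weak-equivalence claim the paper first reduces to acyclic $q$-fibrations by Ken Brown's Lemma and then transposes against $f^*(S^{n-1} \to D^n)$, using that $f^*D^n \cong D^n \times F$ is a relative cell complex when $F$ is. Your local-to-global plan — the identification of $(f_*X)_\alpha \to (f_*Y)_\alpha$ as a pullback of $\Map(F_\alpha, X) \to \Map(F_\alpha, Y)$, the fact that $\Map(F_\alpha, -)$ carries Hurewicz path-lifting functions to path-lifting functions, pullback-stability from \autoref{lem:f_star_preserves}, and Dold uniformization over the numerable cover of $f_*Y$ pulled back from $\{U_\alpha\}$ — is correct for the $h$-fibration case, and your argument for weak equivalences by identifying the fiber of $f_*X \to B$ over $b$ with the section space $\Gamma_{F_b}(X|_{F_b})$ and then running cellular obstruction theory over $F_b$ is also sound; both are heavier than the paper's but show that the construction is "local over $B$" in a useful way.

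There is, however, a real gap in your $q$-fibration argument. After transposing locally to the lifting problem against $D^n \times F_\alpha \times 0 \hookrightarrow D^n \times F_\alpha \times I$, you propose to subdivide in the $D^n$-direction and lift \emph{simplex-by-simplex, keeping $F_\alpha$ passive}. This cannot work as stated: each local lifting problem in the subdivision still has the shape $\Delta^m \times F_\alpha \times 0 \to \Delta^m \times F_\alpha \times I$, carrying the factor $F_\alpha$. A Serre fibration $X \to Y$ has the right lifting property against $D^m \times 0 \to D^m \times I$, but not against $F \times (D^m \times 0 \to D^m \times I)$ for arbitrary $F$; you cannot "keep $F_\alpha$ passive," because a pointwise choice of lift over each point of $F_\alpha$ need not assemble to a continuous lift in the $F_\alpha$-direction. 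To make an induction succeed you would have to decompose $F_\alpha$ itself into cells, at which point you are reusing the cell-complex hypothesis the proposition reserves for its second sentence. You should either explicitly invoke that hypothesis here, or replace the subdivision plan with a different argument (the paper's transposition against $f^*(D^n \to D^n \times I)$, together with its identification of $f^*D^n$ with $D^n \times F$, puts the issue at exactly this point).
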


\begin{proof}
	The first part is a standard argument that re-arranges a lift in the square below on the right to a lift on the left. Note that in both squares any lift will automatically be a map over $A$ or $B$.
	\[ \xymatrix @R=1.7em @C=3em{
		f^*K \times \{0\} \ar[r] \ar[d] & X \ar[d] \\
		f^*K \times I \ar@{-->}[ur] \ar[r] & Y
	}
	\qquad
	\xymatrix @R=1.7em @C=3em{
		K \times \{0\} \ar[r] \ar[d] & f_*X \ar[d] \\
		K \times I \ar@{-->}[ur] \ar[r] & f_*Y
	}
	 \]
	Similarly, if $K = D^n$ and the fiber of $f$ is a cell complex then $f^*K$ is a cell complex. So if $X \to Y$ is a $q$-fibration, then the left-hand lift exists, hence the right-hand lift exists too.
	
	By Ken Brown's lemma \cite[1.1.12]{hovey_model_cats} it remains to show that $f_*$ preserves acyclic $q$-fibrations, i.e. maps that are both a weak equivalence and a $q$-fibration. To do this we rearrange the square below on the right to the one on the left. In the square on the left, the left-vertical is a relative cell complex and the right-vertical is a Serre fibration and a weak equivalence, hence the lift exists.
	\[ \xymatrix @R=1.7em @C=3em{
		f^*S^{n-1} \ar[r] \ar[d] & X \ar[d] \\
		f^*D^n \ar@{-->}[ur] \ar[r] & Y
	}
	\qquad
	\xymatrix @R=1.7em @C=3em{
		S^{n-1} \ar[r] \ar[d] & f_*X \ar[d] \\
		D^n \ar@{-->}[ur] \ar[r] & f_*Y.
	}
	 \]
\end{proof}

\begin{rmk}
	Under the (CG) assumptions, $f^*$ always has a right adjoint $f_*$ \cite[2.1]{ms}. However, $f_*$ does not seem to be right-deformable (see \autoref{sec:just_derived_functors}) for arbitrary maps $f$. This makes it difficult to use for homotopy theory. We won't define or use $f_*$ at this level of generality, because it appears that any time we are tempted to use it, there is some other workaround that ends up being more comprehensible (such as passing to modules over ring spectra, see \autoref{sec:equivalent_to_modules}).
\end{rmk}

\beforesubsection
\subsection{Whiskering and monoidal fibrant replacement}\aftersubsection

In this section we define a whiskering functor $W$ and a fibrant replacement functor $P$. Our definition of $P$ comes from \cite{malkiewich2017coassembly}, and has different properties than the fibrant replacement $L$ defined in \cite[8.3.1]{ms}. It is only a fibrant replacement when the input space is $h$-cofibrant. However, it is strong symmetric monoidal, which leads to major simplifications when we set up the bicategory of parametrized spectra.

For each real number $t \in [0,1]$, let $i_t\colon B \to B \times I$ be the map $b \mapsto (b,t)$. The \textbf{whiskering functor}\index{whiskering functor $WX$} $W\colon \mc R(B) \to \mc R(B)$ takes each retractive space $X$ to the retractive space whose total space is the pushout
\[ \xymatrix @R=1.7em @C=3em{
	B \ar[r]^-{i_X} \ar[d]_-{i_1} & X \ar[d] \\
	B \times I \ar[r] & WX.
} \]
The inclusion of the basepoint section is along $i_0\colon B \to B \times I$, and the projection is by $p_X$ and the projection map $B \times I \to B$. We define a natural map $WX \to X$ of retractive spaces over $B$, by the identity of $X$ and the projection $B \times I \to B$.
\begin{prop}\label{prop:whiskering_properties}\hfill
	\vspace{-1em}
	
	\begin{itemize}
		\item The map $WX \to X$ is always a weak equivalence.
		\item $WX$ is always $f$-cofibrant.
		\item $W$ preserves all $f$-cofibrations and $h$-cofibrations.
		\item If $X$ is $h$-fibrant then so is $WX$.
	\end{itemize}
\end{prop}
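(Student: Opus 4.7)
The plan is to verify the four claims using the gluing lemma, Clapp's \autoref{prop:clapp}, \autoref{cofibration_of_pushouts}, and an explicit fiberwise NDR-pair.

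For the weak equivalence $WX \to X$, I would compare pushout diagrams. The retractive space $X$ is the pushout of $X \xleftarrow{i_X} B \xrightarrow{\id} B$, while $WX$ is the pushout of $X \xleftarrow{i_X} B \xrightarrow{i_1} B \times I$. There is a levelwise weak equivalence from the second diagram to the first, given by $\id_X$, $\id_B$, and the projection $B \times I \to B$, and the right-hand leg of each diagram is an $h$-cofibration (indeed an $f$-cofibration). The gluing lemma \autoref{prop:technical_cofibrations}(2) delivers the weak equivalence $WX \to X$.

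For the $f$-cofibrancy of $WX$, I would give an explicit fiberwise NDR-pair structure on $(WX, i_0(B))$. Set $u \colon WX \to [0,1]$ to be $u(b,s) = \min(2s,1)$ on $B \times I$ and $u(x) = 1$ on $X$; both formulas yield $1$ on the gluing locus $B \times \{1\} = i_X(B)$, so $u$ is continuous on the pushout and $u^{-1}(0) = i_0(B)$. With $\phi(s) = \max(0, 2s-1)$, define the homotopy $h \colon WX \times I \to WX$ by $h((b,s),t) = (b, (1-t)s + t\phi(s))$ on $B \times I$ and $h(x,t) = x$ on $X$. Since $\phi(1) = 1$, the two formulas agree on the gluing locus pointwise at every $t$, so $h$ is continuous; it is fiberwise over $B$, stationary on $i_0(B) = B \times \{0\}$, and at $t = 1$ retracts $u^{-1}([0,1)) = B \times [0,1/2)$ onto $B \times \{0\}$.

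For the preservation statements, given a cofibration $X \to Y$ of either type, the induced map $WX \to WY$ is a map of pushouts along the diagrams $X \xleftarrow{i_X} B \xrightarrow{i_1} B \times I$ and $Y \xleftarrow{i_Y} B \xrightarrow{i_1} B \times I$, with identity vertical maps on the $B$ and $B \times I$ columns. \autoref{cofibration_of_pushouts} applies because the right-hand vertical $\id_{B \times I}$ is trivially a cofibration and the induced map $C \cup_A A' \to C'$ reduces to the original $X \to Y$; hence $WX \to WY$ inherits the same type of cofibration. Finally, if $X$ is $h$-fibrant, then in the defining pushout of $WX$ all three spaces $X$, $B$, $B \times I$ are $h$-fibrant over $B$, and the leg $B \xrightarrow{i_1} B \times I$ is an $f$-cofibration, so Clapp's \autoref{prop:clapp} gives $h$-fibrancy of $WX$.

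The most delicate step is the second bullet: the NDR homotopy must descend continuously to the pushout $WX$ and be compatible with the identity used on the $X$ side. The point is that its behavior on $B \times \{1\} = i_X(B)$ must be stationary, and choosing $\phi$ to fix the endpoint $s = 1$ is exactly what makes the fiberwise NDR structure on $(B \times I, B \times \{0\})$ extend across the gluing by the identity on $X$.
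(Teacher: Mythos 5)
Your proof is correct and takes essentially the same route as the paper's: the first claim follows from the gluing lemma applied to the pushout of total spaces along the equivalence $B \to B \times I$, the second from a direct verification that $i_0(B) \to WX$ admits a fiberwise NDR-pair structure (the paper phrases this as a retraction witnessing the fiberwise homotopy extension property, which is equivalent), the third from \autoref{cofibration_of_pushouts}, and the fourth from \autoref{prop:clapp}. Your explicit formulas $u(b,s) = \min(2s,1)$ and $h((b,s),t) = \bigl(b,(1-t)s + t\max(0,2s-1)\bigr)$ check out: they agree on the gluing locus $B \times \{1\}$, respect the projection to $B$, fix $B \times \{0\}$, and deform $u^{-1}([0,1)) = B \times [0,\tfrac12)$ onto the basepoint section at $t=1$.
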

\begin{proof}
	The first claim follows because $WX$ is defined by a homotopy pushout along an equivalence $B \to B \times I$. The second claim is proven by a direct construction of a fiberwise retract of the map
	\[ (B \times I) \cup_{(B \times \{0\})} (WX \times \{0\}) \to (WX \times I). \]
	The third claim follows from \autoref{cofibration_of_pushouts} and the fourth from \autoref{prop:clapp}.
\end{proof}


For each real number $t \in [0,1]$, let $p_t\colon B^I \to B$ be the map that evaluates a path in $B$ at the point $t$. We define the \textbf{monoidal fibrant replacement functor}\index{fibrant replacement!$P$ for spaces} $P\colon \mc R(B) \to \mc R(B)$ by the formula $(p_1)_!(p_0)^*$. More explicitly, when $X$ is a retractive space over $B$, $PX$ is the retractive space whose total space is the pushout
\[ \xymatrix @R=1.7em @C=6em{
	B^I \ar[r]^-{i_X \times_B \ \id_{B^I}} \ar[d]_-{p_1} & X \times_B B^I \ar[d] \\
	B \ar[r] & PX
} \]
We also get a natural map $X \to PX$ of retractive spaces over $B$, by including $X$ into $X \times_B B^I$ along the inclusion of constant paths into $B^I$.

\begin{prop}\label{prop:px_properties}\hfill
	\vspace{-1em}
	
	\begin{itemize}
		\item $P$ preserves all $f$-cofibrations and $h$-cofibrations.
		\item If $X$ is $h$-cofibrant then $PX$ is $h$-fibrant and $X \to PX$ is a weak equivalence.
	\end{itemize}
\end{prop}

As a result of this and \autoref{lem:heath_kamps}, every $h$-cofibration $X \to Y$ of $h$-cofibrant spaces gets ``upgraded'' by $P$ to an $f$-cofibration $PX \to PY$ of $f$-cofibrant spaces.

\begin{proof}
\autoref{lem:f_star_preserves} and \autoref{lem:f_shriek_preserves} tell us that the $f$-cofibrations and $h$-cofibrations both survive the journey through the definition of $P$. If $X$ is $h$-cofibrant, \autoref{lem:f_star_preserves} tells us that the top horizontal in the above square is an $h$-cofibration. Since the left vertical is a weak equivalence, therefore so is the right vertical. The first three vertices are all $h$-fibrations over $B$, so by \autoref{prop:clapp} the fourth vertex is also a fibration over $B$.
\end{proof}

In summary, each space $X$ can be replaced by an equivalent $f$-cofibrant space $WX$, and this in turn can be replaced by an equivalent space $PWX$ that is both $f$-cofibrant and $h$-fibrant.

\begin{rmk}
	There is a more obvious fibrant replacement functor $RX = X \times_B B^I$. The composite $WRX$ is another way of replacing $X$ by an equivalent space that is both $f$-cofibrant and $h$-fibrant. However $R$ does not commute with smash products. Later on, this will prevent us from using it to take fibrant replacement of spectra.
\end{rmk}

\newpage
\section{Smash products}\label{sec:products}

\subsection{External smash products and mapping spaces}\label{sec:external_smash}\aftersubsection

If $X$ is a retractive space over $A$ and $Y$ is a retractive space over $B$, the \textbf{external smash product}\index{external smash product $X \barsmash Y$!of retractive spaces} $X \barsmash Y$ is a retractive space over $A \times B$ whose fiber over $(a,b) \in A \times B$ is the smash product of the fibers, $X_a \sma Y_b$.

Of course, that's just a property, not a full definition. The full definition is that the total space of $X \barsmash Y$ is the pushout
\begin{equation}\label{external_smash}
\xymatrix @R=2em{
	(X \times B) \cup_{A \times B} (A \times Y) \ar[r] \ar[d] & X \times Y \ar[d] \\
	A \times B \ar[r] & X \barsmash Y }
\end{equation}
where the left vertical map arises from $p_X$ and $p_Y$, and the top horizontal arises from $i_X$ and $i_Y$. The inclusion of $A \times B$ into $X \barsmash Y$ is the obvious one, and the projection to $A \times B$ is by the square
\[
\xymatrix @R=2em{
	(X \times B) \cup_{A \times B} (A \times Y) \ar[r] \ar[d] & X \times Y \ar@{-->}[d]^-{p_X \times p_Y} \\
	A \times B \ar@{-->}[r]^-\id & A \times B. }
\]
This is yet another example where the pushout respects the underlying sets, because in (CG) that is automatic and in (CGWH) the top horizontal map is a closed inclusion. So the underlying set of $X \barsmash Y$ really is $X_a \sma Y_b$ on each fiber.

In the special case where we take $X_{+A} = X \amalg A$, with $X$ an unbased space over $A$, the external smash product $X_{+A} \barsmash Y$ is called a ``half-smash product'' and its total space is given more simply as the pushout
\begin{equation}\label{external_half_smash}
\xymatrix @R=1.7em{
	X \times B \ar[r] \ar[d] & X \times Y \ar[d] \\
	A \times B \ar[r] & X_{+A} \barsmash Y. }
\end{equation}
In the further special case of $X_{+A} \barsmash Y_{+B}$ we just get the disjoint union of $X \times Y$ and $A \times B$, in other words
\[ X_{+A} \barsmash Y_{+B} \cong (X \times Y)_{+(A \times B)}. \]

\begin{ex}\label{ex:fiberwise_suspension}
	Taking $A = *$ and $X = S^1$, we get the definition of the \textbf{fiberwise reduced suspension}\index{fiberwise!reduced suspension $\Sigma_B$} functor on spaces $Y$ over $B$:
	\[ \Sigma_B Y := S^1 \barsmash Y \]
\end{ex}

\begin{ex}\label{ex:thom_smash_product}
	If we take two vector bundles $V$ and $W$ over $A$ and $B$, respectively, the fiberwise Thom space $\Th_{A \times B}(V \times W)$ can be expressed by the formula
	\[ \Th_{A \times B}(V \times W) \cong \Th_A(V) \barsmash \Th_B(W). \]
	Combining with the previous example, we get
	\[ \Th_B(\R^n \times W) \cong S^n \barsmash \Th_B(W) = \Sigma^n_B \Th_B(W). \]
	This is the fiberwise variant of the isomorphism $\Th(\R^n \times W) \cong \Sigma^n \Th(W)$.
\end{ex}

\begin{ex}
	If $X$ is an unbased space over $B$ with projection $p\colon X \to B$, and $Y$ is a retractive space over $B$, we can take the half-smash product \eqref{external_half_smash} and pull back along $\Delta_B$, which preserves the pushout in both (CG) and (CGWH). This gives the square
	\begin{equation*}
	\xymatrix @R=1.7em{
		X \times_B B \ar[r] \ar[d] & X \times_B Y \ar[d] \\
		B \times_B B \ar[r] & X_{+b} \sma_B Y }
	\end{equation*}
	and hence an isomorphism
	\begin{equation}\label{half_smash_internal}
		X_{+B} \sma_B Y \cong p_!p^* Y.
	\end{equation}
\end{ex}


The external smash product has a right adjoint in each variable, the \textbf{external mapping space}\index{external!mapping space $\barmap_B(Y,Z)$} $\barmap_B(Y,Z)$.\footnote{The internal smash product, on the other hand, only has right adjoints if we adopt (CG) assumptions \cite{ms}.} For $Z \in \mc R(A \times B)$ and $Y \in \mc R(B)$, this is defined as the following pullback.
\begin{equation}\label{external_map}
	\xymatrix @R=2em{
		\barmap_B(Y,Z) \ar[r] \ar[d] & \Map(Y,Z) \ar[d] \\
		A \times \{{*}\} \times \{{*}\} \ar[r] & \Map(Y,A) \times \Map(Y,B) \times \Map(B,Z)
	}
\end{equation}
Here $\Map$ refers to the usual space of unbased maps, the right-vertical composes with the projections $Z \to A$, $Z \to B$, and the section $B \to Y$, and the bottom-horizontal assigns to each $a \in A$ the constant map $Y \to \{a\}$, the projection $Y \to B$, and the section $B \cong \{a\} \times B \to Z$.

When $Y$ is nonempty the bottom horizontal map is split,\footnote{When $Y$ is empty, both $B$ and $Z$ are empty too, and the horizontal maps become $A \to \{*\}$.} hence in (CGWH) both horizontal maps are closed inclusions, while in (CG) they are just inclusions. So $\barmap_B(Y,Z)$ is precisely the subspace of all maps $Y \to Z$ that fit into three commuting diagrams
\[ \xymatrix @R=2em{
	Y \ar[d] \ar[r] & Z \ar[d] \\
	\{{*}\} \ar[r]^-a & A
}
\qquad
\xymatrix @R=2em{
	Y \ar[dr] \ar[r] & Z \ar[d] \\
	 & B
}
\qquad
\xymatrix @R=2em{
B \ar[d] \ar[r]^-{a \times \id} & A \times B \ar[d] \\
Y \ar[r] & Z
}
\]
or equivalently the single commuting diagram
\[ \xymatrix @C=3em @R=1.7em{
	B \ar[d] \ar[r]^-{a \times \id} & A \times B \ar[d] \\
	Y \ar[d] \ar[r] & Z \ar[d] \\
	B \ar[r]^-{a \times \id} & A \times B
}
\]
for some value of $a \in A$.

From this we deduce that $\barmap_B(Y,Z)$ is a retractive space over $A$ whose fiber over $a \in A$ is the space of maps of retractive spaces from $Y$ into $Z_a$ over $B$. The usual rules define adjunctions
\[ X \barsmash Y \to Z \quad \textup{ over }A \times B \quad \longleftrightarrow \quad X \to \barmap_B(Y,Z) \quad \textup{ over }A, \]
\[ X \barsmash Y \to Z \quad \textup{ over }A \times B \quad \longleftrightarrow \quad Y \to \barmap_A(X,Z) \quad \textup{ over }B. \]

\begin{ex}\label{ex:fiberwise_loops}\hfill
	\vspace{-1em}
	
	\begin{itemize}
		\item Setting $A = *$, the based space $\barmap_B(Y,Z)$ is just the space of maps $Y \to Z$ that agree with the inclusions and projections to $B$, in other words the space of maps of retractive spaces from $Y$ to $Z$. This gives an enrichment of the category $\mc R(B)$ in based spaces, in other words it gives a natural topology on the sets $\mc R(B)(Y,Z)$.\index{retractive space!enrichment}
		\item Setting $A = *$ and $X = S^1$ defines the \textbf{fiberwise based loops}\index{fiberwise!based loops $\Omega_B$} of $Y \in \mc R(B)$,
		\[ \Omega_B Y := \barmap_*(S^1,Y). \]
		Every point of $\Omega_B Y$ is a map $S^1 \to Y$ that lands entirely in one fiber $Y_b$, sending the basepoint of $S^1$ to the basepoint of $Y_b$. We also get an adjunction between $\Sigma_B$ and $\Omega_B$ that agrees with the usual one over every point of $B$.
		\item Setting $A = *$ and $Y = B_{+B}$, we get the space of sections from \autoref{sec:sections}\index{sections $\Gamma_B(-)$},
		\[ \barmap_B(B_{+B},Z) \cong \Gamma_B(Z). \]
		More generally, if $Y = E_{+B}$ for some space $E$ with projection $p\colon E \to B$, juggling the above adjunctions and identifications gives
		\begin{equation}\label{half_map_sections}
			\barmap_B(E_{+B},Z) \cong \Gamma_E(p^* Z).
		\end{equation}
	\end{itemize}
\end{ex}

\beforesubsection
\subsection{Pushout-products and pullback-homs}\label{pushout_product_lemmas}\aftersubsection

To discuss how $\barsmash$ handles cofibrations, fibrations, and weak equivalences, we will use the language of pushout-products. Suppose that $\mc C$, $\mc D$, and $\mc E$ are categories that have all small colimits, and
\[ \otimes\colon \mc C \times \mc D \to \mc E \]
is a colimit-preserving functor. Then the \textbf{pushout-product}\index{pushout-product $\square$} of a map $f: K \to X$ in $\mc C$ and $g: L \to Y$ in $\mc D$ is the map $f \square g$ in $\mc E$ from the pushout of the first three terms in the following square to the final vertex.
\[ \xymatrix @R=2em @C=3em{
	K \otimes L \ar[r]^-{\id_K \otimes g} \ar[d]_-{f \otimes \id_L} & K \otimes Y \ar[d]^-{f \otimes \id_Y} \\
	X \otimes L \ar[r]_-{\id_X \otimes g} & X \otimes Y
}\]

\[ f \square g\colon X \otimes L \cup_{K \otimes L} K \otimes Y \to X \otimes Y \]

These have already appeared. \autoref{lem:product_ndr} is a statement about pushout-products of cofibrations. In \eqref{external_smash}, the top horizontal map is the pushout-product $i_X \square i_Y$ with $\otimes$ set to the Cartesian product $\times$.

If $K = \emptyset$ is an initial object, the pushout-product $f \square g$ is just $\id_X \otimes g$. So tensoring an object with a map is a special case of the pushout-product.

Let $a\mc C$ be the category of arrows, whose objects are morphisms of $\mc C$ and whose morphisms are commuting squares. We say a morphism in $a\mc C$ is a ``pushout morphism'' if this square is a pushout square in $\mc C$. The following is a straightforward exercise.
\begin{lem}\label{pushout_product_pushouts}\hfill
	\vspace{-1em}
	
	\begin{itemize}
		\item The operation $- \square -$ defines a functor $a\mc C \times a\mc D \to a\mc E$.
		\item For fixed $g \in a\mc D$, $- \square g$ sends pushout morphisms to pushout morphisms.
	\end{itemize}
\end{lem}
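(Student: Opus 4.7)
The plan for part (a) is essentially automatic. The pushout-product is built from two ingredients: the bifunctor $\otimes$, and the pushout functor applied to the three-object diagram $K \otimes Y \leftarrow K \otimes L \to X \otimes L$. Both are functorial, so their composite is too. Concretely, given morphisms $(\alpha,\beta)\colon f \to f'$ in $a\mc C$ and $(\gamma,\delta)\colon g \to g'$ in $a\mc D$, bifunctoriality of $\otimes$ produces a morphism between the two three-object diagrams, and the universal property of pushouts yields a canonical map $P_0 \to P_0'$ between the sources of $f \square g$ and $f' \square g'$, fitting into a square with $\beta \otimes \delta\colon X \otimes Y \to X' \otimes Y'$ between the targets. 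Compatibility with composition and identities descends directly from that of $\otimes$ and of pushouts.

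For part (b), I reformulate: given a pushout square
\[ \xymatrix @R=1.7em @C=3em{
	K \ar[r]^-\alpha \ar[d]_-f & K' \ar[d]^-{f'} \\
	X \ar[r]^-\beta & X'
} \]
in $\mc C$ and a fixed $g\colon L \to Y$ in $\mc D$, I must show that the induced square
\[ \xymatrix @R=1.7em @C=3em{
	P_0 \ar[r] \ar[d]_-{f \square g} & P_0' \ar[d]^-{f' \square g} \\
	X \otimes Y \ar[r] & X' \otimes Y
} \]
is a pushout in $\mc E$. The core input is that $\otimes$ preserves colimits in each variable. Applying $-\otimes L$ to the given pushout rewrites $X' \otimes L \cong X \otimes L \cup_{K \otimes L} K' \otimes L$. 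Substituting this into $P_0' = X' \otimes L \cup_{K' \otimes L} K' \otimes Y$ gives $(X \otimes L \cup_{K \otimes L} K' \otimes L) \cup_{K' \otimes L} K' \otimes Y$, which by associativity of iterated pushouts simplifies to $P_0' \cong X \otimes L \cup_{K \otimes L} K' \otimes Y$. Under this identification, the map $P_0 \to P_0'$ is the identity on the $X \otimes L$ summand and the canonical $K \otimes Y \to K' \otimes Y$ on the other summand.

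From here the pushout $X \otimes Y \cup_{P_0} P_0'$ can be computed by a short universal-property calculation: specifying a map out of it is the same as specifying a pair $X \otimes Y \to T$ and $K' \otimes Y \to T$ agreeing on $K \otimes Y$, since the conditions on $X \otimes L$ are automatic (the relevant map $X \otimes L \to T$ is forced to be the composite $X \otimes L \xrightarrow{\id \otimes g} X \otimes Y \to T$, and the $K \otimes L$ agreement reduces, via bifunctoriality of $\otimes$, to the $K \otimes Y$ agreement). This is exactly the universal property of $X \otimes Y \cup_{K \otimes Y} K' \otimes Y$, which cocontinuity of $-\otimes Y$ identifies with $X' \otimes Y$. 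The main obstacle is purely bookkeeping: confirming that the chain of identifications respects all the canonical structure maps, so that the comparison $P_0 \to P_0'$ produced via this rewriting agrees with the one arising from the functoriality in part (a). This is routine once one names all the universal cocones carefully, but it is the step where the argument most risks becoming tangled; everything else reduces mechanically to the hypothesis that $\otimes$ preserves colimits in each variable.
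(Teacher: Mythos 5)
The paper leaves this as ``a straightforward exercise'' and gives no proof, so there is nothing to compare against directly. Your write-up is correct: part (a) is indeed automatic from functoriality of $\otimes$ and of pushouts, and in part (b) the key moves — using cocontinuity of $-\otimes L$ to rewrite $X' \otimes L$ as a pushout, pasting pushouts to get $P_0' \cong X \otimes L \cup_{K \otimes L} K' \otimes Y$, then checking that the additional agreement condition along $K \otimes L$ is automatic so that the universal property collapses to that of $X \otimes Y \cup_{K \otimes Y} K' \otimes Y \cong X' \otimes Y$ — are exactly the ones needed, and you have identified correctly that the hypothesis being used is cocontinuity of $\otimes$ in each variable separately (which is what the paper's phrase ``colimit-preserving functor'' is clearly intended to mean, given that $\barsmash$ is a left adjoint in each variable). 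You could alternatively package the argument as a single colimit-interchange by observing that both $f' \square g$ and the pushout of $f \square g$ along the given square compute the colimit of the same finite diagram, which would shorten the ``bookkeeping'' worry you flag at the end, but the universal-property chase you give is fine.
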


Composition of arrows (i.e. objects of $a\mc C$) interacts with $\square$ in a way that is somewhat trickier to state. For starters, notice that this is a ``horizontal'' composition that should not be confused with ``vertical'' composition of morphisms in $a\mc C$.
\begin{lem}\label{pushout_product_composition}
	If $f: K \to X$ and $f': X \to Z$ are two morphisms in $\mc C$, then the pushout-product $(f' \circ f) \square g$ is a composition of $f' \square g$ and a pushout of $f \square g$.
\end{lem}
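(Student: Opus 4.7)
The plan is to compute the domain of $(f' \circ f) \square g$ as an iterated pushout and use that to build the factorization. Set $P(f,g) := X \otimes L \cup_{K \otimes L} K \otimes Y$, so that $f \square g\colon P(f,g) \to X \otimes Y$, and similarly $P(f' \circ f, g) = Z \otimes L \cup_{K \otimes L} K \otimes Y$ and $P(f', g) = Z \otimes L \cup_{X \otimes L} X \otimes Y$. The key observation is that the map $K \otimes L \to Z \otimes L$ factors through $X \otimes L$, so I can compute $P(f' \circ f, g)$ in two stages (``Fubini'' for pushouts): first push out along $X \otimes L$ to obtain $P(f,g)$, then push out $Z \otimes L$ along $X \otimes L \to P(f,g)$. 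This yields a canonical isomorphism
\[ P(f' \circ f, g) \;\cong\; Z \otimes L \cup_{X \otimes L} P(f,g). \]

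Next I would decompose $P(f', g)$ in the analogous way. The map $X \otimes L \to X \otimes Y$ factors as $X \otimes L \to P(f,g) \xrightarrow{f \square g} X \otimes Y$, so computing the pushout $Z \otimes L \cup_{X \otimes L} X \otimes Y$ in two stages gives
\[ P(f', g) \;\cong\; \bigl( Z \otimes L \cup_{X \otimes L} P(f,g) \bigr) \cup_{P(f,g)} X \otimes Y \;=\; P(f' \circ f, g) \cup_{P(f,g)} X \otimes Y. \]
This exhibits a pushout square whose right-hand vertical map is $P(f' \circ f, g) \to P(f', g)$ and whose left-hand vertical is $f \square g$, so the former is a pushout of the latter along the natural map $P(f,g) \to P(f' \circ f,g)$.

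To finish, it remains to identify the composite $P(f' \circ f, g) \to P(f', g) \xrightarrow{f' \square g} Z \otimes Y$ with $(f' \circ f) \square g$. By the universal property both maps are determined by their restrictions to $Z \otimes L$ and to $K \otimes Y$, and on those components both give the evident structure maps into $Z \otimes Y$; this is immediate from the constructions. Thus $(f' \circ f) \square g$ factors as a pushout of $f \square g$ followed by $f' \square g$, as desired.

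The proof is essentially formal diagram chasing; there is no real obstacle beyond bookkeeping. The only step requiring a moment's care is the first one, recognizing $P(f' \circ f,g)$ as an iterated pushout over $P(f,g)$; once that reassociation is in hand, the factorization falls out by applying the same reassociation trick in the target.
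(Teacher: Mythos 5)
Your argument is correct and is essentially the same as the paper's: the paper presents the iterated-pushout ("Fubini") decomposition as a single diagram in which every square is a pushout, and you have simply unpacked that diagram into explicit reassociations of $P(f'\circ f,g)$ and $P(f',g)$ together with the universal-property check. No gap, no genuinely different route.
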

\begin{proof}
	Examine the following diagram in which all squares are pushouts.
	\[ \xymatrix @R=1.8em @!C{
		K \otimes L \ar[r]^-{\id_K \otimes g} \ar[d]^-{f \otimes \id_L} & K \otimes Y \ar[d] \ar[dr]^-{f \otimes \id_Y} \\
		X \otimes L \ar[r] \ar[d]^-{f' \otimes \id_L} \ar@/_1em/[rr]_(.4){\id_X \otimes g} & X \otimes L \cup_{K \otimes L} K \otimes Y \ar[r]^-{f \square g} \ar[d] & X \otimes Y \ar[d] \ar[dr]^-{f' \otimes \id_Y} \\
		Z \otimes L \ar[r] \ar@/_3em/[rrr]_-{\id_Z \otimes g} & Z \otimes L \cup_{K \otimes L} K \otimes Y \ar[r] \ar@/_1em/[rr]_-{(f' \circ f)\square g} & Z \otimes L \cup_{X \otimes L} X \otimes Y \ar[r]^-{f' \square g} & Z \otimes Y \\
	}\]
\end{proof} 

The above argument generalizes to countable sequential colimits as follows. If $f: X^{(0)} \to X^{(\infty)}$ is the composition of the maps $f_{m-1}: X^{(m-1)} \to X^{(m)}$ in $\mc C$, meaning that $X^{(\infty)}$ is identified with the colimit of the $X^{(m)}$, and $g: L \to Y$ is an arrow of $\mc D$, then we have the larger diagram in which the squares are pushout squares:
\[ \resizebox{\textwidth}{!}{
	\xymatrix @R=1.7em @C=1em{
	X^{(m-1)} \otimes L \ar[r] \ar[d] & \ldots \ar[r] & X^{(m-1)} \otimes Y \ar[d] \ar[rrd] && \\
	X^{(m)} \otimes L \ar[r] \ar[d] & \ldots \ar[r] & X^{(m)} \otimes L \cup_{X^{(m-1)} \otimes L} X^{(m-1)} \otimes Y \ar[rr] \ar[d] && X^{(m)} \otimes Y \ar[d] \ar[rd] \\
	\vdots \ar[d] && \vdots \ar[d] && \vdots \ar[d] & \ddots \ar[rd] \\
	X^{(\infty)} \otimes L \ar[r] & \ldots \ar[r] & X^{(\infty)} \otimes L \cup_{X^{(m-1)} \otimes L} X^{(m-1)} \otimes Y \ar[rr] && X^{(\infty)} \otimes L \cup_{X^{(m)} \otimes L} X^{(m)} \otimes Y \ar[r] & \ldots \ar[r] & X^{(\infty)} \otimes Y
}
}\]
From this diagram one can conclude that the pushout-product
\[ f \square g : (X^{(\infty)} \otimes L) \cup_{X^{(0)} \otimes L} (X^{(0)} \otimes Y) \to X^{(\infty)} \otimes Y \]
is the countable composition of pushouts of the maps $f_{m-1} \square g$.\footnote{This argument also generalizes to general transfinite compositions as in \cite[2.1.1]{hovey_model_cats}, by factoring the pushout-product into maps indexed by the same ordinal $\lambda$ as the $X^{(m)}$, using the above construction for the successor ordinals in $\lambda$, and using the commutation of $\otimes$ and $\cup$ with colimits for the colimit ordinals in $\lambda$.}

Next we define pullback-homs, the duals of pushout-products. We assume that $\mc C$ has all small limits and that there is a functor
\[ \Hom_{\mc D}\colon \mc D^{op} \times \mc E \to \mc C \]
and an adjunction between $- \otimes d$ and $\Hom_{\mc D}(d,-)$ for every $d \in \mc D$. In other words, there are bijections natural in $c$, $d$, and $e$,
\[ c \otimes d \to e \quad \textup{ in }\mc E \quad \longleftrightarrow \qquad c \to \Hom_{\mc D}(d,e) \quad \textup{ in }\mc C. \]
For every $g: L \to Y$ in $\mc D$ and $h\colon M \to Z$ in $\mc E$, the \textbf{pullback-hom}\index{pullback-hom} $\Hom_\square(g,h)$ is the map in $\mc C$ that takes the first vertex of the square below to the pullback of the other three vertices:
\[ \xymatrix @R=2em{
	\Hom_{\mc D}(Y,M) \ar[r] \ar[d] & \Hom_{\mc D}(L,M) \ar[d] \\
	\Hom_{\mc D}(Y,Z) \ar[r] & \Hom_{\mc D}(L,Z)
} \]

\[ \Hom_\square(g,h)\colon \Hom_{\mc D}(Y,M) \to \Hom_{\mc D}(L,M) \times_{\Hom_{\mc D}(L,Z)} \Hom_{\mc D}(Y,Z). \]

This can be visualized as the space of all choices of diagonal in the square below, mapping to the space of all choices of two horizontal maps making the square commute.
\[ \xymatrix @R=1.7em @C=3em{
	L \ar@{-->}[r] \ar[d]_-g & M \ar[d]^-h \\
	Y \ar@{-->}[r] \ar@{-->}[ur] & Z
} \]
If $L = \emptyset$ is the initial object then the pullback-hom is just $\Hom_{\mc D}(Y,h)$. $Z = *$ is the final object then the pullback-hom is just $\Hom_{\mc D}(g,M)$.

\begin{lem}\label{lem:pullback_hom_adjunction}
	For a fixed arrow $g \in a\mc D$, the functors $- \square g$ and $\Hom_\square(g,-)$ form an adjunction between $a\mc C$ and $a\mc E$.
\end{lem}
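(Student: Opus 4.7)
The plan is to unpack both sides of the desired bijection using the universal properties of the pushout defining $f \square g$ and the pullback defining $\Hom_\square(g,h)$, and then apply the hypothesized adjunction $(-\otimes d) \dashv \Hom_{\mc D}(d,-)$ componentwise. The claim will then reduce to matching up two lists of three compatibility conditions, with no categorical input beyond naturality.

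First I would take a morphism in $a\mc E$ from $f \square g$ to $h\colon M \to Z$. By the universal property of the pushout defining $f \square g$, such a commuting square is equivalent to a triple of arrows
\[ \alpha\colon X \otimes L \to M, \qquad \beta\colon K \otimes Y \to M, \qquad \gamma\colon X \otimes Y \to Z \]
satisfying three compatibilities: (i) $\alpha\circ(f\otimes\id_L) = \beta\circ(\id_K\otimes g)$ as maps $K\otimes L \to M$; (ii) $h\circ\alpha = \gamma\circ(\id_X\otimes g)$ as maps $X\otimes L \to Z$; and (iii) $h\circ\beta = \gamma\circ(f\otimes\id_Y)$ as maps $K\otimes Y\to Z$.

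Next I would apply the adjunction bijection to each of $\alpha$, $\beta$, $\gamma$, obtaining
\[ \tilde\alpha\colon X \to \Hom_{\mc D}(L,M), \qquad \tilde\beta\colon K \to \Hom_{\mc D}(Y,M), \qquad \tilde\gamma\colon X \to \Hom_{\mc D}(Y,Z). \]
Using naturality of the adjunction in both variables, conditions (i)--(iii) translate respectively into: (i$'$) $\tilde\alpha\circ f = \Hom_{\mc D}(g,M)\circ\tilde\beta$; (ii$'$) $\Hom_{\mc D}(L,h)\circ\tilde\alpha = \Hom_{\mc D}(g,Z)\circ\tilde\gamma$; and (iii$'$) $\tilde\gamma\circ f = \Hom_{\mc D}(Y,h)\circ\tilde\beta$.

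Finally I would compare these with a morphism in $a\mc C$ from $f$ to $\Hom_\square(g,h)$. By the universal property of the pullback defining $\Hom_\square(g,h)$, such a commuting square is the data of a map $\tilde\beta\colon K\to\Hom_{\mc D}(Y,M)$ together with a pair of maps $\tilde\alpha\colon X\to\Hom_{\mc D}(L,M)$ and $\tilde\gamma\colon X\to\Hom_{\mc D}(Y,Z)$ whose composites to $\Hom_{\mc D}(L,Z)$ agree (which is exactly (ii$'$)), subject to the squareness condition that applying $\Hom_\square(g,h)$ to $\tilde\beta$ gives the pair $(\tilde\alpha\circ f,\,\tilde\gamma\circ f)$. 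Unpacking $\Hom_\square(g,h)\circ\tilde\beta = (\Hom_{\mc D}(g,M)\circ\tilde\beta,\,\Hom_{\mc D}(Y,h)\circ\tilde\beta)$ shows that this squareness condition is exactly the conjunction of (i$'$) and (iii$'$). Hence the two sets of data are in natural bijection, and naturality in $f$ and $h$ is inherited from the componentwise adjunction. The only real obstacle is keeping the bookkeeping straight across three simultaneous adjoint transpositions; once the conditions are labelled, the identification is forced.
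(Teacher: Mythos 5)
Your proof is correct and follows essentially the same route as the paper's: both unpack a morphism in one arrow category into a triple of maps plus three compatibility conditions via the pushout/pullback universal properties, apply the componentwise adjunction $(-\otimes d)\dashv\Hom_{\mc D}(d,-)$ to each map, and observe that the three conditions match up on the other side. You simply traverse the bijection from $a\mc E$ to $a\mc C$ (the paper goes the other way) and label the compatibility conditions (i)--(iii) explicitly where the paper points to regions of a diagram.
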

\begin{proof}
	A choice of dotted maps making the square
	\[ \xymatrix @R=1.7em{
		K \ar@{-->}[r] \ar[d]^-f & \Hom_{\mc D}(Y,M) \ar[d]^-{\Hom_\square(g,h)} \\
		X \ar@{-->}[r] & \Hom_{\mc D}(L,M) \times_{\Hom_{\mc D}(L,Z)} \Hom_{\mc D}(Y,Z)
	} \]
	commute in $\mc C$ can be expressed uniquely by the data of three maps $K \otimes Y \to M$, $X \otimes L \to M$, and $X \otimes Y \to Z$ in $\mc E$, subject to three compatibility conditions, two that say the above square commutes, and one that says that the maps to the lower-right land not just in the product but in the fiber product. These three compatibility conditions correspond to the three regions in the diagram below, and they hold if the maps around these regions commute.
	\[ \xymatrix @R=1.7em{
		& K \otimes L \ar[ld]_-{f \otimes \id} \ar[rd]^-{\id \otimes g} & \\
		X \otimes L \ar@{-->}[r] \ar[rdd]_-{\id \otimes g} & M \ar[d]^-h & K \otimes Y \ar@{-->}[l] \ar[ldd]^-{f \otimes \id} \\
		& Z & \\
		& X \otimes Y \ar@{-->}[u] & 
	} \]
	This rearranges to the statement that the square
	\[ \xymatrix @R=1.7em{
		X \otimes L \cup_{K \otimes L} K \otimes Y \ar@{-->}[r] \ar[d]^-{f \square g} & M \ar[d]^-{h} \\
		X \otimes Y \ar@{-->}[r] & Z
	} \]
	commutes (two of the conditions) and that the top map is in fact a map out of the pushout and not just the coproduct (the last condition). In this way the adjunction between $\otimes$ and $\Hom_{\mc D}$ specifies the data of an adjunction between $- \square g$ and $\Hom_\square(g,-)$. It is straightforward to check these identifications are natural in $f$ and $h$ (and therefore in $g$).
\end{proof}

\beforesubsection
\subsection{Smashing cofibrations, fibrations, and weak equivalences}\aftersubsection

\begin{prop}\label{prop:h_cofibrations_pushout_product}\hfill
	\vspace{-1em}
	
	\begin{itemize}
		\item Let $f: K \to X$ and $g: L \to Y$ be $h$-cofibrations of retractive spaces over $A$ and $B$, respectively. Then $f \square g$, constructed using $\barsmash$, is an $h$-cofibration.
		\item The same is true for $f$-cofibrations or closed inclusions.
		\item If $X$ and $Y$ are $h$-cofibrant and $h$-fibrant then $X \barsmash Y$ is $h$-fibrant.
		\item If $X$ is an $h$-cofibrant space over $A$ and $g: Y \to Y'$ is a weak equivalence of $h$-cofibrant spaces over $B$ then $\id_X \barsmash g$ is a weak equivalence.
	\end{itemize}
\end{prop}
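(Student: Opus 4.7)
The plan is to treat the three (or four) claims in order, using part (1) as a technical ingredient for parts (2) and (3). The overall strategy is to push the work down onto the Cartesian product, where Lemma 2.2.3 handles the cofibrations, and then apply Clapp's proposition and the gluing lemma from Section 2.2 to transfer information across the pushout that defines $\barsmash$.

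For part (1), the key observation is that $f \square_\barsmash g$ is itself a pushout, in the category of topological spaces, of the pushout-product $f \square_\times g$ with respect to the Cartesian product. Writing $P = (X \times L) \cup_{K \times L} (K \times Y)$ for the source of $\square_\times$, I want to exhibit the square
\[ \xymatrix @R=1.5em{
P \ar[r]^-{f \square_\times g} \ar[d] & X \times Y \ar[d] \\
(X \barsmash L) \cup_{K \barsmash L} (K \barsmash Y) \ar[r]_-{f \square_\barsmash g} & X \barsmash Y
} \]
as a pushout square, where the vertical maps collapse the various basepoint sections down to the common copy of $A \times B$. Unpacking the pushout defining $\barsmash$, one sees that the images in $X \times Y$ of all ``extra'' basepoint data in $P$ (such as $A \times L$ and $K \times B$) already land inside $(X \times B) \cup_{A \times B} (A \times Y)$, so the two ways of computing the quotient agree. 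Lemma 2.2.3 then gives that $f \square_\times g$ is a closed NDR-pair (resp.\ a fiberwise closed NDR-pair over $A \times B$, or merely a closed inclusion), and Proposition 2.2.1(1) transfers each of these classes across the pushout to $f \square_\barsmash g$.

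For part (2), I apply Clapp's Proposition 2.2.5 twice, working over the base $A \times B$. The corner $A \times B$ is tautologically $h$-fibrant over itself, $X \times Y$ is $h$-fibrant as a product of $h$-fibrations, and the third corner $(X \times B) \cup_{A \times B} (A \times Y)$ is $h$-fibrant by a first application of Clapp: both $A \times B \to X \times B$ and $A \times B \to A \times Y$ are of the form $i_X \times \id$ or $\id \times i_Y$, hence $h$-cofibrations between $h$-fibrant spaces, and so $f$-cofibrations by Heath-Kamps (Lemma 2.2.7). The remaining inclusion $(X \times B) \cup_{A \times B} (A \times Y) \to X \times Y$ is the pushout-product $i_X \square i_Y$, an $h$-cofibration by part (1), and again between $h$-fibrant spaces, so by Heath-Kamps it is an $f$-cofibration. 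A second application of Clapp then concludes that $X \barsmash Y$ is $h$-fibrant over $A \times B$.

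For part (3), I apply the gluing lemma (Proposition 2.2.1(2)) twice, comparing the pushout diagrams defining $X \barsmash Y$ and $X \barsmash Y'$. The left column is the identity of $A \times B$, and the right column $\id_X \times g\colon X \times Y \to X \times Y'$ is a weak equivalence: the formula $\pi_n(X \times Z, (x,z)) \cong \pi_n(X,x) \times \pi_n(Z,z)$ is a formal consequence of the universal property of products, so $X \times -$ preserves weak equivalences. For the middle column I apply the gluing lemma once more to the pushout definition of $(X \times B) \cup_{A \times B} (A \times Y)$, using that $\id_A \times g$ is a weak equivalence by the same argument and that the map $A \times B \to X \times B$ is the $h$-cofibration $i_X \times \id_B$. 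For the outer gluing, the $h$-cofibration required in each row is the pushout-product $(X \times B) \cup_{A \times B} (A \times Y) \to X \times Y$ supplied by part (1). The main obstacle is the bookkeeping in part (1) to identify $f \square_\barsmash g$ as a pushout of $f \square_\times g$; the check is elementary but the basepoint-collapsing quotients on source and target look different enough that some care is needed. A subsidiary point is that Lemma 2.2.3, stated for fiberwise NDR-pairs over a single base, extends without change to pairs with different bases $A$ and $B$ yielding a fiberwise NDR-pair over $A \times B$, because Str\o m's product formulas for the NDR data respect the product of the two projections.
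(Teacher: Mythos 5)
Your proposal is correct and takes essentially the same approach as the paper. The paper packages the first two bullets via \autoref{cofibration_of_pushouts} applied to a map of spans that is the identity in the left and middle columns, which is exactly (after pasting of pushouts) the assertion that $f\square_\barsmash g$ is a pushout of $f\square_\times g$ along the quotient maps; your pushout square is that identification made explicit. Parts (2) and (3) are precisely the paper's two-step Clapp argument and two-step gluing-lemma argument. One small slip: in part (3) you attribute the $h$-cofibration $(X \times B) \cup_{A \times B} (A \times Y) \to X \times Y$ to part (1), but part (1) is about the $\barsmash$-pushout-product; what you actually want here is \autoref{lem:product_ndr} applied to the Cartesian pushout-product $i_X \square_\times i_Y$. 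Since that is the same ingredient you used inside the proof of part (1), the argument is unaffected.
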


Note that the third of these also appears in \cite[8.2.4]{ms} and \cite[3.6]{may1975classifying}.
 
\begin{rmk}\hfill
	\vspace{-1em}
	
	\begin{itemize}
		\item As a corollary, $\Sigma_B(-)$ preserves $f$-cofibrations, $h$-cofibrations, and weak equivalences between $h$-cofibrant spaces. 
		\item Conspicuously, we do not prove that a pushout-product of two $h$-cofibrations, one of which is a weak equivalence, gives a weak equivalence. We do not know if this is true.\footnote{It is true if we also assume that all of the spaces involved are $h$-cofibrant. That can be deduced from the statement that we did prove.} This would be a problem if we were trying to construct a model structure (see \autoref{prop:quillen_bifunctor_defn}), but fortunately, we are not.
	\end{itemize}
\end{rmk}

\begin{proof}
	For the first two claims, we compare universal properties and write the desired pushout-product as the pushout of the map of spans
	\[ \xymatrix @R=1.7em{
		A \times B \ar@{=}[d] & (X \times B) \cup_{A \times B} (A \times Y) \ar[l] \ar[r] \ar@{=}[d] & (X \times L) \cup_{K \times L} (K \times Y) \ar[d] \\
		A \times B & (X \times B) \cup_{A \times B} (A \times Y) \ar[l] \ar[r] & X \times Y
	} \]
	By \autoref{cofibration_of_pushouts}, the claim reduces to the fact that $f \square g$ computed with the Cartesian product is a cofibration, which follows from \autoref{lem:product_ndr}.
	
	For the third claim, in the square \eqref{external_smash}, the top-left term is a pushout along $h$-cofibrations of spaces that are fibrant over $A \times B$. By \autoref{prop:clapp} and its extension with $h$-cofibrations, this top-left term of \eqref{external_smash} is therefore fibrant. The top map of \eqref{external_smash} itself is also an $h$-cofibration, and the other terms are clearly fibrant over $A \times B$, so again their pushout $X \barsmash Y$ is also fibrant.
	For the final claim, the same analysis shows that for a weak equivalence $Y \to Y'$ we get a weak equivalence on the top-left term of the square \eqref{external_smash}, as well as on the other two terms, and therefore gives a weak equivalence on the pushouts $X \barsmash Y \to X \barsmash Y'$.
\end{proof}

\begin{prop}\label{h_fibrations_pullback_hom}\hfill
	\vspace{-1em}
	
	\begin{itemize}
		\item Let $g: L \to Y$ be an $h$-cofibration over $B$ and $h: M \to Z$ be an $h$-fibration over $A \times B$. Then $\Hom_\square(g,h)$, constructed using $\barmap_B$, is an $h$-fibration.
		\item (CGWH) If $h\colon M \to Z$ is a closed inclusion then so is $\barmap_B(Y,h)$.
		\item (CGWH) If $Y$ is compact and $h\colon M \to Z$ is an $f$-cofibration then $\barmap_B(Y,h)$ is an $f$-cofibration.
		\item If $Y$ is $h$-cofibrant and has total space homotopy equivalent to a cell complex, and $Z$ is $h$-fibrant, then $\barmap_B(Y,Z)$ preserves weak equivalences in both variables.
	\end{itemize}
\end{prop}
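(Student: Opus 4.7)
For the first claim, I would combine the adjunction $-\square g \dashv \Hom_\square(g,-)$ from \autoref{lem:pullback_hom_adjunction} with Str\o m's lifting theorem (\autoref{prop:strom_lift}), which says $h$-fibrations have the right lifting property against DR-pairs. By the adjunction, it suffices to show that $f \square g$ is a DR-pair whenever $f$ is a DR-pair and $g$ is an $h$-cofibration. This is a standard NDR computation: a deformation retraction $H\colon X \times I \to X$ of $X$ onto $K$ (fixing $K$ pointwise) combines with $\id_Y$ to produce a deformation retraction $H \barsmash \id_Y$ of $X \barsmash Y$ onto the pushout subspace $X \barsmash L \cup_{K \barsmash L} K \barsmash Y$, and this retraction respects the projection to $A \times B$.

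For the second claim, since $\barmap_B(Y,-)$ is right adjoint to $- \barsmash Y$, it preserves equalizers; combined with \autoref{lem:closed_inclusion_equalizer} (in CGWH every closed inclusion is an equalizer), this shows $\barmap_B(Y,h)$ is a closed inclusion. For the third claim, given an NDR-presentation $(u,H)$ of the $f$-cofibration $h$ over $A\times B$, I would define $\bar u\colon \barmap_B(Y,Z)\to I$ by $\bar u(\phi) = \sup_{y \in Y} u(\phi(y))$ and $\bar H\colon \barmap_B(Y,Z)\times I\to \barmap_B(Y,Z)$ by $\bar H(\phi,t)(y) = H(\phi(y),t)$. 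Compactness of $Y$ ensures the sup is attained and $\bar u$ is continuous (via uniform convergence on $Y$), and a direct verification shows $(\bar u,\bar H)$ is an NDR-presentation of $\barmap_B(Y,h)$ respecting the projection to $A$.

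The fourth claim is the main obstacle. For the $Y$-variable, factor a weak equivalence $Y_1 \to Y_2$ through the fiberwise mapping cylinder into a trivial $h$-cofibration followed by a fiberwise homotopy equivalence (the latter is preserved by $\barmap_B(-,Z)$ for free). Claim 1 makes $\barmap_B(-,Z)$ of the trivial cofibration an $h$-fibration, and by the pushout-product/pullback-hom adjunction this $h$-fibration has the RLP against every $S^{n-1}\to D^n$ once $Z$ has the RLP against $(S^{n-1}\to D^n)_+\square (\text{trivial $h$-cofibration})$; this pushout-product is itself a trivial $h$-cofibration by \autoref{prop:h_cofibrations_pushout_product} and the gluing lemma (\autoref{prop:technical_cofibrations}), against which the $h$-fibrant $Z$ admits lifts by Str\o m's theorem. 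The $Z$-variable case is handled by a cellular induction on $Y$: the CW-homotopy-type hypothesis on the total space of $Y$, combined with the $Y$-variable case just established, allows us to replace $Y$ by a retractive CW complex $Y'$ over $B$, after which $\barmap_B(Y',Z_i)$ is built up as an iterated homotopy pullback along cell attachments. Each attachment step reduces to a cell-level statement handled by claim 1 together with invariance of the fibers of $Z_i$ under the weak equivalence $Z_1 \to Z_2$ (which is fiberwise because both $Z_i$ are $h$-fibrant). The most delicate point throughout is keeping the basepoint section over $B$ preserved in every stage of the cellular reduction.
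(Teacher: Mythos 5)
The overall architecture is the same as the paper's (adjunction plus Str\o m's lifting theorem for the first and fourth bullets, the equalizer lemma for the second, a continuity computation using compactness for the third), but there are two genuine technical gaps.

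For the first bullet, you correctly reduce via \autoref{lem:pullback_hom_adjunction} to showing that $f \square g$ is a DR-pair when $f$ is. However, the explicit construction is wrong: $H \barsmash \id_Y$ is a homotopy from the identity to $r \barsmash \id_Y$, so at time $1$ it retracts $X \barsmash Y$ onto $K \barsmash Y$, but it does \emph{not} fix the pushout subspace $X \barsmash L \cup_{K \barsmash L} K \barsmash Y$ pointwise (on $X \barsmash L$ your homotopy moves points unless $K = X$), so it is not a deformation retraction onto that subspace. The correct argument, as in Str\o m's proof that the pushout-product of an $h$-cofibration and a DR-pair is a DR-pair, must blend in the NDR structure of $g$ to re-parametrize the homotopy; it is not just $H \barsmash \id_Y$. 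Also, for the Hurewicz fibration property you only need to test $f = K \to K \times I$, which is what the paper does, and the remark that the retraction ``respects the projection to $A \times B$'' is both unnecessary (any lift in the adjunct square is automatically over $A \times B$) and false for general DR-pairs $f$, since a DR-pair deformation need not preserve the projection to $A$.

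For the fourth bullet, in the $Y$-variable step you reduce to showing $Z$ has the RLP against $(S^{n-1}_+ \to D^n_+) \square g$ where $g$ is an acyclic $h$-cofibration, and you invoke Str\o m's theorem. But \autoref{prop:strom_lift} gives lifts against DR-pairs (closed $h$-cofibrations that are strong homotopy equivalences), not against $h$-cofibrations that are merely weak equivalences, and those two notions genuinely differ outside the CW world. The paper bridges exactly this gap with the hypothesis that the total spaces are homotopy equivalent to cell complexes, which lets Whitehead's theorem upgrade the weak equivalence to a homotopy equivalence so that $g$, and then its pushout-product with a cell, is an honest DR-pair. Your argument never uses the cell-complex hypothesis in this step, so as written the Str\o m lift does not exist. (For the $Z$-variable your cellular induction is a genuinely different route from the paper's reduction via Ken Brown's lemma to an acyclic $q$-fibration and the $q$-model structure lifting, and could plausibly be made to work, though the step identifying $\barmap_B(Y',Z)$ as an iterated homotopy pullback along cell attachments would need claim 1 plus \autoref{ex:smashing_spaces_is_left_Quillen} spelled out carefully.) Bullets two and three are fine and are essentially what the paper does.
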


As a consequence, $\Omega_B(-)$ preserves $h$-fibrations and weak equivalences between $h$-fibrant spaces. Curiously, it also preserves $f$-cofibrations, at least in (CGWH).

\begin{proof}
	For the first claim, examine the proof of \autoref{lem:pullback_hom_adjunction}. A dotted map lifting the bottom square corresponds to a dotted map lifting the top square. Taking $X = K \times I$, we observe that $K \to K \times I$ is an $h$-cofibration and the left-vertical in the last square is clearly a homotopy equivalence. Therefore we get a lift by \autoref{prop:strom_lift}, hence the original right-vertical $\Hom_\square(g,h)$ was an $h$-fibration.
	
	The second claim follows from \autoref{lem:closed_inclusion_equalizer} because $\barmap_B(Y,-)$ is a right adjoint and therefore preserves equalizers. 
	
	For the third claim, given a retract
	\[ \xymatrix{
		M \times I \cup_{M \times 0} Z \times 0 \ar[r] & Z \times I \ar@/_1em/[l]
	} \]
	over $A \times B$, form the following square.
	\[ \xymatrix{
		\Map(Y,M \times I \cup_{M \times 0} Z \times 0) \ar[r] \ar@/_/[d] & \Map(Y,Z \times I) \ar@/^/[d] \ar@/_/[l] \\
		(\Map(Y,M) \times I) \cup_{\Map(Y,M) \times 0} (\Map(Y,Z) \times 0) \ar[r] \ar[u] & \Map(Y,Z) \times I \ar[u]
	} \]
	The left-pointing map is $\Map(Y,-)$ applied to our retract. The maps pointing to the right are closed inclusions, so we regard the left-hand column as subspaces of the right-hand column, and define all the vertical maps by focusing on the right-hand column and checking our rule preserves the given subspaces. We define the upward vertical map by assembly, sending $(f,t)$ to the function $(y \mapsto (f(y),t))$. We define the first factor of the downward vertical map by projecting $Z \times I \to Z$. The second factor takes the minimum over $Y$ of the $I$-coordinate of $f$.
	
	To prove this last map is continuous, it suffices to show the preimage of an open interval is open in the compact-open topology, for then it must also be open in the compactly-generated compact-open topology. The preimage of $(a,b) \subseteq I$ is the basic open set of all functions sending $Y$ to $Z \times (a,1]$, minus the set sending $Y \to Z \times [b,1]$. This latter set is closed because each point in its complement can be separated from it by a basic open set of the form $y \to (0,b)$.
	
	By a diagram chase, we can now retract the lower-right term onto the lower-left term by mapping up, left, and then down.\footnote{The author learned this argument from Irakli Patchkoria, who in turn learned it from Thomas Nikolaus and Wolfgang L\" uck.} It is straightforward to check that this retraction respects the three conditions that define the closed subspaces $\barmap_B(Y,M) \subseteq \Map(Y,M)$ and $\barmap_B(Y,Z) \subseteq \Map(Y,Z)$, and also the projection map from these closed subspaces to $A$.
	
	For the final claim, by Ken Brown's lemma it suffices to assume the equivalence $Y \to Y'$ is also an $h$-cofibration, or $Z \to Z'$ is also an $h$-fibration. In both cases we take the proof of \autoref{lem:pullback_hom_adjunction} and plug in $A = *$, $K = S^{n-1}_+$, $X = D^n_+$. It suffices to take a square defining an element of relative $\pi_n$ and find a lift up to homotopy, and this rearranges into a square of one of the two forms
	\[ \xymatrix @R=1.7em{
		D^n_+ \barsmash Y \cup_{S^{n-1}_+ \barsmash Y} S^{n-1}_+ \barsmash Y' \ar@{-->}[r] \ar[d] & Z \ar[d] \\
		D^n_+ \barsmash Y' \ar@{-->}[r] & B
	},
	\quad
	\xymatrix @R=1.7em{
		D^n_+ \barsmash A \cup_{S^{n-1}_+ \barsmash A} S^{n-1}_+ \barsmash Y \ar[r]^-\cong & S^{n-1}_+ \barsmash Y \ar@{-->}[r] \ar[d] & Z \ar[d] \\
		& D^n_+ \barsmash Y \ar@{-->}[r] & Z'
	} \]
	In both squares the left-vertical is an $h$-cofibration by \autoref{prop:h_cofibrations_pushout_product}, and the right-vertical is an $h$-fibration. In the first square the left-hand map is also a homotopy equivalence (using that $Y$ is a cell complex), hence the lift exists by \autoref{prop:strom_lift}. In the second square we may without loss of generality replace $Y$ by an actual cell complex (since we only want a lift up to homotopy). Then since $Z \to Z'$ is a Serre fibration and a weak equivalence, the existence of this lift is a standard property of the $q$-model structure, see \autoref{sec:model_cats}.
\end{proof}

We will need the following corollary when we prove Costenoble-Waner duality. It holds with $S^1$ replaced by any compact space, but we will not need that level of generality.

For any space $X$, let $\Lambda X = \Map(S^1,X)$ denote its free loop space. We think of it as a retractive space over $X$ along the inclusion of constant loops and the evaluation at a fixed point in $S^1$.
\begin{lem}\label{lem:awesome}
	(CGWH) If the diagonal map of $X$ is an $h$-cofibration, then the inclusion of constant loops $X \to \Lambda X$ is an $f$-cofibration over $X$.
\end{lem}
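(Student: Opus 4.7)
The plan is to transport a fiberwise NDR pair structure on the diagonal $\Delta\colon X \to X\times X$, viewed as a map of retractive spaces over $X$ via the first projection, across the evaluation map to a fiberwise NDR pair structure on the constant-loop inclusion $X \hookrightarrow \Lambda X$ over $X$. The key first step is that although the hypothesis only gives $\Delta$ the structure of an $h$-cofibration, both $X$ and $X \times X$ are $h$-fibrant over $X$ via the identity and the first projection $\mathrm{pr}_1$ (both obviously Hurewicz fibrations), so \autoref{lem:heath_kamps} automatically upgrades $\Delta$ to an $f$-cofibration over $X$. This yields a continuous $u_\Delta\colon X\times X \to [0,1]$ with $\Delta X = u_\Delta^{-1}(0)$ and a homotopy $h_\Delta\colon (X\times X)\times I \to X\times X$ that (i) starts at the identity, (ii) fixes $\Delta X$, (iii) is fiberwise over the first coordinate, so $\mathrm{pr}_1 \circ h_\Delta = \mathrm{pr}_1$, and (iv) by the Str{\o}m refinement recalled before \autoref{prop:strom_lift}, brings each $(x,y)$ into $\Delta X$ at time $u_\Delta(x,y)$ and then stays put.

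Next, I would define
\[ u(\gamma) = \max_{s \in S^1} u_\Delta(\gamma(0),\gamma(s)), \qquad h(\gamma,t)(s) = \mathrm{pr}_2\, h_\Delta(\gamma(0),\gamma(s),t). \]
Continuity of $u$ uses the standard fact that the sup of a continuous family over a compact space is continuous; continuity of $h$ follows from the exponential laws in a convenient category of spaces. One checks directly that $u(\gamma) = 0$ iff $\gamma$ is constant, that $h(\gamma,0) = \gamma$, and that $h(\gamma,t)(0) = \gamma(0)$ because $h_\Delta$ fixes the diagonal point $(\gamma(0),\gamma(0))$; writing $S^1 = I/(0\sim 1)$ and using $\gamma(1)=\gamma(0)$, the same observation ensures $h(\gamma,t)$ is a well-defined continuous loop $S^1 \to X$, and it also shows that $h$ is constant on the subspace of constant loops.

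The essential calculation is that at $t \geq u(\gamma)$, the loop $h(\gamma,t)$ becomes the constant loop at $\gamma(0)$. Indeed, for each $s$ we have $u_\Delta(\gamma(0),\gamma(s)) \leq u(\gamma) \leq t$, so (iv) gives $h_\Delta(\gamma(0),\gamma(s),t) \in \Delta X$; combining with (iii) forces both coordinates to equal $\gamma(0)$, so $h(\gamma,t)(s) = \gamma(0)$. Hence $(u,h)$ is a fiberwise NDR pair structure on $(\Lambda X, X)$ over $X$, and $X \hookrightarrow \Lambda X$ is a closed inclusion because it equals $u^{-1}(0)$ and $\Lambda X$ is weak Hausdorff.

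The main obstacle is recognizing the need for the fiberwise upgrade (iii) from \autoref{lem:heath_kamps}. A naive attempt using only an arbitrary $h$-cofibration structure on $\Delta$ fails: at time $u(\gamma)$ the map $h(\gamma,t)$ would land in the diagonal subspace, but it need not be constant at $\gamma(0)$, because the retraction onto $\Delta X$ could bring $(\gamma(0),\gamma(s))$ to some point $(x',x') \in \Delta X$ with $x' \neq \gamma(0)$. The fiberwise property $\mathrm{pr}_1\circ h_\Delta = \mathrm{pr}_1$ is exactly what rules this out.
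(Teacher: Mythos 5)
Your proof is correct, and it is a genuinely different route from the paper's. Both begin the same way: view $X \times X$ as a retractive space over $X$ via the first projection, note that both $X$ and $X \times X$ are $h$-fibrant over $X$, and apply \autoref{lem:heath_kamps} to upgrade $\Delta_X$ from an $h$-cofibration to an $f$-cofibration. After that they diverge. The paper simply invokes the third bullet of \autoref{h_fibrations_pullback_hom} (that $\barmap_B(Y,-)$ preserves $f$-cofibrations when $Y$ is compact), takes $Y = S^1$, and identifies $\barmap_*(S^1,\Delta_X)$ with the inclusion $X \to \Lambda X$. You instead unwind the construction and build the fiberwise NDR pair on $(\Lambda X, X)$ by hand, transporting $(u_\Delta, h_\Delta)$ via $u(\gamma) = \max_s u_\Delta(\gamma(0),\gamma(s))$ and $h(\gamma,t)(s) = \mathrm{pr}_2\,h_\Delta(\gamma(0),\gamma(s),t)$. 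The paper's approach is shorter because it delegates to a previously proven lemma; yours is self-contained and more explicit, essentially re-deriving the relevant special case of that lemma (your maximum over $S^1$ is the loop-space dual of the minimum over $Y$ that appears in the paper's proof of \autoref{h_fibrations_pullback_hom}). One small point of exposition: the sentence ``at $t \geq u(\gamma)$, the loop $h(\gamma,t)$ becomes constant'' is, as literally stated, only guaranteed by the Str\o m refinement when $u(\gamma) < 1$, since the refinement only controls points of the open set $U_\Delta = u_\Delta^{-1}([0,1))$; this is exactly the case needed for the NDR condition (retraction of $u^{-1}([0,1))$), so the proof is sound, but it is worth phrasing the claim as holding for $\gamma$ with $u(\gamma) < 1$.
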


The hypothesis is true if $X$ is a retract of a cell complex, because it is preserved under retracts, and if $X$ is a cell complex then its diagonal can be made a subcomplex.

\begin{proof}
	Let $X \times X$ be a retractive space over $X$, along the diagonal map
	\[ \Delta_X\colon X \to X \times X \]
	and the projection onto the first factor $X$. By assumption, $\Delta_X$ is an $h$-cofibration. Since both sides are $h$-fibrant over $X$, it is an $f$-cofibration by \autoref{lem:heath_kamps}. Therefore by \autoref{h_fibrations_pullback_hom},  $\barmap_*(S^1,\Delta_X)$ is an $f$-cofibration. But this map is homeomorphic as a map over $X$ to the inclusion of constant loops $X \to \Lambda X$, by writing out the definition of $\barmap_*$ and comparing universal properties.
\end{proof}

\beforesubsection
\subsection{Rigidity and interaction with base-change}\aftersubsection

The following result says that the external smash product commutes with pullbacks and pushforwards.
\begin{lem}\label{lem:external_smash_and_base_change}
	Given maps
\[ \xymatrix @R=.2em{
	X \in \mc R(A), & f\colon A \to A', & W \in \mc R(A'), \\
	Y \in \mc R(B), & g\colon B \to B', & Z \in \mc R(B'),
} \]
there are canonical isomorphisms
\begin{equation}\label{eq:external_smash_and_base_change}
\begin{array}{c}
f_!X \barsmash g_!Y \cong (f \times g)_!(X \barsmash Y) \\
f^*W \barsmash g^*Z \cong (f \times g)^*(W \barsmash Z).
\end{array}
\end{equation}
\end{lem}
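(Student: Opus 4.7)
The statement contains two isomorphisms; the pullback one is substantially easier, so I would handle it first.

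\textit{Pullback case.} The plan is to apply $(f \times g)^*$ to the defining pushout \eqref{external_smash} of $W \barsmash Z$ and identify the result with the defining pushout of $f^* W \barsmash g^* Z$. The top horizontal in \eqref{external_smash} is the pushout-product $i_W \square i_Z$ of the two basepoint sections, hence a closed inclusion by \autoref{lem:product_ndr}. By \autoref{prop:f_star_colimits}, $(f \times g)^*$ preserves this pushout (unconditionally in (CG), and along closed inclusions in (CGWH)). Since the pullback commutes on the nose with the Cartesian products involved---for instance $(f \times g)^*(W \times Z) \cong f^* W \times g^* Z$ and $(f \times g)^*(W \times B') \cong f^* W \times B$---substituting into the image of the pushout yields exactly the defining pushout of $f^* W \barsmash g^* Z$.

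\textit{Pushforward case.} The unit maps $X \to f_! X$ over $f$ and $Y \to g_! Y$ over $g$ combine to give a map $X \barsmash Y \to f_! X \barsmash g_! Y$ over $f \times g$, adjoint via $(f \times g)_! \dashv (f \times g)^*$ to a canonical map
\[ \alpha\colon (f \times g)_!(X \barsmash Y) \longrightarrow f_! X \barsmash g_! Y \]
of retractive spaces over $A' \times B'$. I would prove $\alpha$ is an isomorphism by Yoneda, checking that it induces a bijection on $\Hom(-, V)$ for every retractive space $V$ over $A' \times B'$.

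Using $(f \times g)_! \dashv (f \times g)^*$ together with the universal property of $\barsmash$, a map out of $(f \times g)_!(X \barsmash Y)$ unfolds to an unbased map $\phi\colon X \times Y \to V$ satisfying (i) $p_V \circ \phi = (f p_X, g p_Y)$, (ii) $\phi|_{X \times B} = i_V \circ (f p_X, g)$, and (iii) $\phi|_{A \times Y} = i_V \circ (f, g p_Y)$. A map out of $f_! X \barsmash g_! Y$ likewise unfolds to an unbased map $\psi\colon f_! X \times g_! Y \to V$ satisfying the analogous conditions where the basepoint section is $(f_! X \times B') \cup (A' \times g_! Y)$. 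Using distributivity of $\times$ over pushout, I would decompose $f_! X \times g_! Y = (X \cup_A A') \times (Y \cup_B B')$ into the four pieces $X \times Y$, $X \times B'$, $A' \times Y$, $A' \times B'$, glued along their evident intersections, and define $\psi$ from $\phi$ by using $\phi$ on $X \times Y$ and $i_V$ composed with the obvious projections on each of the other three pieces. Conditions (i)--(iii) imply consistency on all five overlaps, so $\psi$ is well-defined and continuous; restriction back to $X \times Y$ recovers $\phi$, giving the desired bijection.

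\textit{Main obstacle.} The naive hope of applying $(f \times g)_!$ term-by-term to the pushout defining $X \barsmash Y$ fails, because $(f \times g)_!(X \times Y) \not\cong f_! X \times g_! Y$ in general: the latter contains extra off-diagonal pieces such as $X \times (B' \setminus B)$ and $(A' \setminus A) \times (B' \setminus B)$. These extra pieces all lie within the basepoint section of $f_! X \times g_! Y$, however, and so are collapsed to $A' \times B'$ when forming the external smash product. Making this collapse precise---which is exactly what the $\phi \leftrightarrow \psi$ extension argument accomplishes---is the main technical point in the pushforward half of the proof.
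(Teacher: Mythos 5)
Your pullback half is the same as the paper's: apply $(f\times g)^*$ to the defining pushout \eqref{external_smash} and invoke \autoref{prop:f_star_colimits}. The pushforward half takes a genuinely different route. The paper reduces to the case $g = \id_B$ and proves, via two cube-shaped diagrams of pushout squares, that $f_! X \barsmash Y$ is itself a pushout of $X \barsmash Y$ along $f \times \id_B$; crucially, that argument never forms $f_!X \times g_!Y$ and so never meets the distributivity difficulty at all. Your Yoneda-style argument constructs the comparison map by adjunction and checks the bijection on $\Hom(-,V)$ directly, which forces you to confront the fact that $f_!X \times g_!Y$ contains off-diagonal pieces absent from $(f\times g)_!(X\times Y)$ --- you correctly identify this as the main obstacle and correctly resolve it by noting the extra pieces live in the basepoint section. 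Both proofs ultimately rest on the same ingredient (in (CGWH), the product commutes with pushouts along closed inclusions), but yours makes the collapse of the extra pieces explicit, while the paper's simply avoids producing them. One thing worth tightening: in (CGWH) the map $f\colon A\to A'$ need not be an inclusion at all, so ``distributivity of $\times$ over pushout'' should be invoked in the order that uses only the closed inclusions $i_X\colon A\to X$ and $i_Y\colon B\to Y$ as the relevant legs --- e.g., first distribute $- \times g_!Y$ over $X \cup_A A'$ (legitimate since $A \to X$ is closed), then distribute each factor over $Y \cup_B B'$. With that care the grid decomposition goes through in both (CG) and (CGWH), and the rest of your argument is sound.
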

By ``canonical,'' we mean that $f_!X \barsmash g_!Y$ fits into the pushout diagram that defines $(f \times g)_!(X \barsmash Y)$ in an obvious way, and so receives a map, and this map is an isomorphism.

\begin{proof}
	For the isomorphism with the pullback functors, it suffices to assume one of the two maps, say $g$, is the identity. In the following cube, the first three vertical maps are pullbacks of $f \times \id_B$, and the top and bottom faces are pushouts (along closed inclusions in (CGWH)). Since $(f \times \id_B)^*$ preserves such pushouts, the final vertical map is also a pullback of $f \times \id_B$.
	\[ \xymatrix@!C=3em @R=1.2em{
			& A \times B \ar'[d][dd]^-(.4){}\ar[rrr]^-{i_{f^*W} \times 1}\ar[ld]_-{1 \times i_Z}
			&&& f^*W \times B \ar[dd]^-{}\ar[ld]_-{} \\
			A \times Z \ar[dd]_-{}\ar[rrr]^-{}
			&&& A \times Z \cup_{A \times B} f^*W \times B \ar[dd]^-(.3){}\ar[rrd] \\
			& A' \times B \ar'[rr]^-(.6){i_W \times 1}[rrr]\ar[ld]_-{1 \times i_Z}
			&&& W \times B\ar[ld]_-{}
			& A \times B \ar[dd]^-{f \times \id_B} \\
			A' \times Z \ar[rrr]^-{}
			&&& A' \times Z \cup_{A' \times B} W \times B \ar[rrd] \\
			&&&&& A' \times B
	} \]
	Repeating this argument for the cube
	\[ \xymatrix@!C=3em @R=1.2em{
		& A \times Z \cup_{A \times B} f^*W \times B \ar'[d][dd]^-(.4){}\ar[rrr]^-{}\ar[ld]_-{}
		&&& f^*W \times Z \ar[dd]^-{}\ar[ld]_-{} \\
		A \times B \ar[dd]_-{}\ar[rrr]^-{}
		&&& f^*W \barsmash Z \ar[dd]^-(.3){}\ar[rrd] \\
		& A' \times Z \cup_{A' \times B} W \times B \ar'[rr]^-(.6){}[rrr]\ar[ld]_-{}
		&&& W \times Z\ar[ld]_-{}
		& A \times B \ar[dd]^-{f \times \id_B} \\
		A' \times B \ar[rrr]^-{}
		&&& W \barsmash Z \ar[rrd] \\
		&&&&& A' \times B
	} \]
	we get a pullback diagram demonstrating that $f^*W \barsmash Z$ is a pullback of $W \barsmash Z$.
	
	For pushouts the proof is similar, but some of the vertical maps are pushouts of $f \times \id_B$, others of $f \times \id_Y$. For the first half, we observe that in the diagram below the top and bottom faces are pushouts by definition. The back face is a pushout because $- \times B$ preserves colimits. It follows that the front face is also a pushout.
	\[ \xymatrix@!C=3em @R=1.2em{
		& A \times B \ar'[d][dd]^-(.4){}\ar[rrr]^-{i_{X} \times 1}\ar[ld]_-{1 \times i_Y}
		&&& X \times B \ar[dd]^-{}\ar[ld]_-{} \\
		A \times Y \ar[dd]_-{}\ar[rrr]^-{}
		&&& A \times Y \cup_{A \times B} X \times B \ar[dd]^-(.3){} \\
		& A' \times B \ar'[rr]^-(.6){i_{f_!X} \times 1}[rrr]\ar[ld]_-{1 \times i_Y}
		&&& f_!X \times B\ar[ld]_-{} \\
		A' \times Y \ar[rrr]^-{}
		&&& A' \times Y \cup_{A' \times B} f_!X \times B
	} \]
	Then in the diagram below, the left half of the back face is a pushout. The entire back face is a pushout since $- \times Y$ preserves colimits, so the right half of the back face is also a pushout. Again since the top and bottom faces are pushouts, this implies the front face is a pushout, and we are done.
	\[ \xymatrix@!C=3em @R=1.2em{
		& A \times Y \ar[dd]^-(.4){f \times \id_Y}\ar[rrr]^-{}
		&&& A \times Y \cup_{A \times B} X \times B \ar'[d][dd]^-(.4){}\ar[rrr]^-{}\ar[ld]_-{}
		&&& X \times Y \ar[dd]^-{}\ar[ld]_-{} \\
		&&& A \times B \ar[dd]_-{}\ar[rrr]^-{}
		&&& X \barsmash Y \ar[dd]^-(.3){} \\
		& A' \times Y \ar[rrr]
		&&& A' \times Y \cup_{A' \times B} f_!X \times B \ar'[rr]^-(.6){}[rrr]\ar[ld]_-{}
		&&& f_!X \times Y\ar[ld]_-{} \\
		&&& A' \times B \ar[rrr]^-{}
		&&& f_!X \barsmash Y
	} \]
\end{proof}

This proof actually demonstrates that in the larger category $\mc R$ of all parametrized spaces, the external smash product of any two ``pushforward maps'' $X \to f_!X$ is isomorphic to a pushforward map, and similarly the external smash product of two pullback maps $f^*X \to X$ is isomorphic to a pullback map, see \autoref{prop:spaces_SMBF} below. The isomorphisms \eqref{eq:external_smash_and_base_change} then follow from this fact and the universal property of the pushout/pullback.

The isomorphisms of \autoref{lem:external_smash_and_base_change} and the Beck-Chevalley isomorphism of \autoref{prop:beck_chevalley_spaces} allow us to commute pullbacks, pushforwards, and smash products with each other up to isomorphism. In order to calculate traces later, we will need to have control over the isomorphism thus produced. Is it possible to compose strings of these isomorphisms and end up with two different isomorphisms between the same two functors? 

This is a coherence problem. There are general methods for solving it (see \autoref{sec:SMBF} and \cite{mp2}); however, in this category, we have a cheap shortcut. Any time a functor is isomorphic to $(f \times g)^*(X \barsmash Y)$ or $(f \times g)_!(X \barsmash Y)$, there is actually only one isomorphism.
\begin{prop}[Rigidity]\label{prop:spaces_rigidity}
	Suppose $n \geq 0$ and we have maps of spaces
	\[ \xymatrix{ B & \ar[l]_-f A \ar[r]^-g & C_1 \times \ldots \times C_n } \]
	such that $(f,g)\colon A \to B \times C_1 \times \ldots \times C_n$ is injective.
	
	Then any functor $\mc R(C_1) \times \ldots \times \mc R(C_n) \to \mc R(B)$ isomorphic to
	\[ \Phi\colon (X_1,\ldots,X_n) \leadsto f_!g^*(X_1 \barsmash \ldots \barsmash X_n) \]
	is in fact uniquely isomorphic to $\Phi$. In other words, $\Phi$ is \textbf{rigid}.\index{rigidity!for spaces}
\end{prop}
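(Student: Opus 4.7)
The plan is first to reduce to showing that $\Phi$ admits only the identity automorphism: given two isomorphisms $\alpha_1, \alpha_2 \colon \Psi \cong \Phi$, the composite $\alpha_1 \alpha_2^{-1}$ is an automorphism of $\Phi$, so triviality of automorphisms forces $\alpha_1 = \alpha_2$. To pin down automorphisms of $\Phi$, the strategy is to probe it with a family of universal retractive spaces that isolate a single point in each fiber, and then propagate the identity to all of $\Phi$ by naturality.

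For any point $c$ in a space $C$, write $\hat{c}$ for the retractive space over $C$ with total space $C \amalg \{*\}$, basepoint section the canonical inclusion of $C$, and projection sending $*$ to $c$; then morphisms $\hat{c} \to X$ in $\mc R(C)$ are in bijection with the fiber $X_c$. For a tuple $(c_1, \ldots, c_n)$, the external smash $\hat{c_1} \barsmash \ldots \barsmash \hat{c_n}$ has total space $(C_1 \times \ldots \times C_n) \amalg \{*\}$ with the extra point lying over $(c_1, \ldots, c_n)$. Chasing through the pushout and pullback descriptions of $f_!$ and $g^*$ on underlying sets yields
\[ \Phi(\hat{c_1}, \ldots, \hat{c_n}) \;\cong\; F_{+B}, \qquad F := g^{-1}(c_1, \ldots, c_n), \]
with $F \to B$ given by $f|_F$. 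An automorphism $\eta$ of $\Phi$ specializes at this probe to an automorphism of $F_{+B}$ over $B$ fixing $B$, which by bijectivity on total spaces must send $F$ to itself; the identity $f(\eta(a)) = f(a)$ combined with the hypothesis that $f|_F$ is injective then forces $\eta(a) = a$ for every $a \in F$.

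To extend the identity from the probes to all of $\Phi$, observe that every non-basepoint element of $\Phi(X_1, \ldots, X_n)$ has the form $(a, x_1, \ldots, x_n)$ with $a \in A$, $g(a) = (c_1, \ldots, c_n)$, and each $x_i \in (X_i)_{c_i}$ non-basepoint, and is the image of $a \in F$ under the natural map $\Phi(\hat{c_1}, \ldots, \hat{c_n}) \to \Phi(X_1, \ldots, X_n)$ induced by the morphisms $\hat{c_i} \to X_i$ picking out the $x_i$. Naturality of $\eta$ together with the previous step then fixes every non-basepoint element, and since $\eta$ already fixes the basepoint section $B$, this gives $\eta = \id$. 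The main point to verify carefully is that $f_!$, $g^*$, and $\barsmash$ really do restrict on underlying sets to the expected operations, so that both the calculation $\Phi(\hat{c_1}, \ldots, \hat{c_n}) \cong F_{+B}$ and the assertion that every non-basepoint point of $\Phi(X_\bullet)$ is the image of such a probe map hold on the nose; this was recorded earlier in the paper under both the (CG) and (CGWH) conventions, so it should not present a real difficulty.
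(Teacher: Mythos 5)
Your proof is correct and follows essentially the same strategy as the paper: probe $\Phi$ with the retractive spaces $*_{+C_i}$ (your $\hat{c_i}$), observe that $\Phi(\hat{c_1},\ldots,\hat{c_n}) \cong g^{-1}(c_1,\ldots,c_n)_{+B}$ has no nontrivial automorphisms over $B$ because $f$ is injective on each fiber of $g$, and then propagate by naturality since every non-basepoint element of $\Phi(X_1,\ldots,X_n)$ is hit by a morphism out of some probe. Your point-set argument for why the probe maps surject onto all non-basepoint elements is precisely the lifting-along-a-zig-zag check that the paper relegates to a footnote, so the two proofs match line for line.
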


\begin{proof}
	It suffices to prove that $\Phi$ has a unique natural automorphism. Given any natural automorphism $\eta\colon \Phi \Rightarrow \Phi$, examine $\eta$ on an $n$-tuple of spaces of the form $(*_{+C_1},\ldots,*_{+C_n})$ where the free points map to $(c_1,\ldots,c_n) \in C_1 \times \ldots \times C_n$. The external smash product of these spaces is $*_{+(C_1 \times ... \times C_n)}$, with the free point mapping to $(c_1,\ldots,c_n)$. Pulling back gives $g^{-1}(c_1,\ldots,c_n)_{+A}$, which is a single fiber of $g$ regarded as a space over $A$, with a disjoint copy of $A$. Pushing forward gives a space over $B$. By assumption, every fiber of this space is either $*_+$ or $\emptyset_+$. Therefore the automorphism of this space provided by $\eta$ must be the identity map.
	
	Now suppose $(X_1,\ldots,X_n)$ is any other collection of retractive spaces in $\mc R(C_1) \times \ldots \times \mc R(C_n)$, and $x$ is some point in the total space of $f_!g^*(X_1 \barsmash \ldots \barsmash X_n)$. We will calculate the action of $\eta$ on $x$. Of course if $x$ is in the basepoint section then this action is automatically trivial, so we'll assume we aren't in that case. Then $x$ can be lifted along the following zig-zag of maps of total spaces, because the first backwards (vertical) map is surjective away from the basepoint section, and the second one is surjective everywhere.
	\[ \xymatrix @R=1.2em{
		& X_1 \times \ldots \times X_n \ar[d] \\
		g^*(X_1 \barsmash \ldots \barsmash X_n) \ar[d] \ar[r] & X_1 \barsmash \ldots \barsmash X_n \\
		f_!g^*(X_1 \barsmash \ldots \barsmash X_n) &
	} \]
	The resulting lift $(x_1,\ldots,x_n)$ corresponds to an $n$-tuple of maps $m_i\colon *_{+C_i} \to X_i$ over $C_i$. The natural transformation $\eta$ assigns to this $n$-tuple the commuting square
	\[ \xymatrix @C=6em{
		f_!g^*(*_{+C_1} \barsmash \ldots \barsmash *_{+C_n}) \ar[r]^-{\eta = \id} \ar[d]^-{f_!g^*(m_1 \barsmash \ldots \barsmash m_n)} & f_!g^*(*_{+C_1} \barsmash \ldots \barsmash *_{+C_n}) \ar[d]^-{f_!g^*(m_1 \barsmash \ldots \barsmash m_n)} \\
		f_!g^*(X_1 \barsmash \ldots \barsmash X_n) \ar[r]^-\eta & f_!g^*(X_1 \barsmash \ldots \barsmash X_n).
	} \]
	By construction, the vertical map $f_!g^*(m_1 \barsmash \ldots \barsmash m_n)$ contains the chosen point $x$ in its image.\footnote{The non-obvious part of that claim is that some point $z \in g^*(*_{+C_1} \ldots)$ is sent to our chosen lift $y$ of $x$ living in $g^*(X_1 \ldots)$. To find $z$, you have project $y$ to $A$, giving $a \in g^{-1}(c_1,\ldots)$. Then take $z$ to be the unique nontrivial point of $g^*(*_{+C_1} \ldots)$ lying over $a$. The universal property of the pullback then tells you $z$ is mapped to $y$, so the image of $z$ in $f_!g^*(*_{+C_1} \ldots)$ is sent to $x$.} Since the top horizontal map is the identity, we conclude the bottom horizontal map must send $x$ to $x$. But $x$ was arbitrary, therefore $\eta$ acts as the identity.
\end{proof}

\begin{rmk}
	The same conclusion applies if we restrict the domain of $\Phi$ to any full subcategory that contains all the $n$-tuples of spaces of the form $(*_{+C_1},\ldots,*_{+C_n})$.
\end{rmk}

\begin{cor}
	\begin{itemize}
		\item The functor
		\[ \Phi\colon (X_1,\ldots,X_n) \leadsto h^*k_!(X_1 \barsmash \ldots \barsmash X_n) \]
		is also rigid, with no restrictions on $h$ and $k$.
		\item The adjunction between the base-change functors $(f_! \adj f^*)$ is unique. Any model for $f_!$ and $f^*$ comes with one and only one adjunction between them.
	\end{itemize}
\end{cor}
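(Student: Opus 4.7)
My plan is to deduce both parts from the rigidity proposition \autoref{prop:spaces_rigidity} just established.

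For part 1, the difficulty is that $h^*k_!$ is not directly in the form $f_!g^*$ covered by the proposition, since the pullback and pushforward are composed in the opposite order. The natural tool to swap them is the Beck-Chevalley isomorphism of \autoref{prop:beck_chevalley_spaces}. So I would first form the pullback square of $h$ and $k$,
\[ \xymatrix@R=1.6em@C=2em{
E \ar[r]^-{\pi_2} \ar[d]_-{\pi_1} & C_1 \times \ldots \times C_n \ar[d]^-k \\
B \ar[r]^-h & D,
}\]
producing a canonical natural isomorphism $h^*k_! \cong (\pi_1)_!(\pi_2)^*$. Next I would verify the hypothesis of rigidity: $\pi_1$ is automatically injective on each fiber of $\pi_2$, because the fiber $\pi_2^{-1}(c_0)$ consists of those pairs $(b, c_0) \in E$ satisfying $h(b) = k(c_0)$, on which $\pi_1$ is just the projection $(b, c_0) \mapsto b$. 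Hence \autoref{prop:spaces_rigidity} applies and gives rigidity of $(\pi_1)_!(\pi_2)^*(X_1 \barsmash \ldots \barsmash X_n)$. Since rigidity transports along natural isomorphisms of functors, this also proves rigidity of $h^*k_!(X_1 \barsmash \ldots \barsmash X_n)$.

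For part 2, I would first specialize part 1 to $n = 1$ with either $h = \id_B$, $k = f$, or $h = f$, $k = \id_B$, to conclude that the functors $f_!$ and $f^*$ are each rigid; in other words, the only natural automorphism of either one is the identity. To bridge from rigidity of these functors to uniqueness of the adjunction, I would invoke the standard categorical fact that, for fixed functors $F$ and $G$, the set of adjunction structures $(F \adj G)$ is either empty or a torsor for the group $\textup{Aut}(F)$ of natural automorphisms. (Two adjunctions correspond to two natural isomorphisms $\Hom(F-, -) \cong \Hom(-, G-)$, whose ratio is, by the Yoneda lemma, a natural automorphism of $F$.) Since $\textup{Aut}(f_!)$ is trivial by rigidity, any two adjunction structures on $(f_! \adj f^*)$ must coincide.

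The main obstacle is purely conceptual rather than computational: part 2 leans on the standard-but-sometimes-unstated torsor description of adjunction structures, which is worth stating explicitly to avoid ambiguity. Everything else is routine bookkeeping once the rigidity proposition and Beck-Chevalley isomorphism are in hand.
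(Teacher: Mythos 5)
Your proof is correct and follows essentially the same route as the paper's. Both rely on the same two steps: (i) use the Beck--Chevalley isomorphism over the pullback square of $h$ and $k$ to rewrite $h^*k_!$ as $(\pi_1)_!(\pi_2)^*$, observe that the pullback square is also a pullback on underlying sets so $\pi_1$ is injective on each fiber of $\pi_2$, and then invoke \autoref{prop:spaces_rigidity}; and (ii) use the standard fact that any two adjunctions between the same pair of functors differ by a unique natural automorphism of one of them, together with the rigidity of a base-change functor to kill that automorphism. The only cosmetic difference is that you kill the ambiguity using $\mathrm{Aut}(f_!)$ where the paper uses $\mathrm{Aut}(f^*)$; both are equivalent and both are covered by the first part of the corollary (or directly by \autoref{prop:spaces_rigidity} with one of the two maps taken to be the identity).
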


\begin{proof}
	\begin{itemize}
		\item Define $f$ and $g$ by pulling back $h$ and $k$. The resulting pullback square of spaces is also a pullback square on the underlying sets, hence $f$ is injective on every fiber of $g$. Along the Beck-Chevalley isomorphism, $\Phi$ is isomorphic to the functor in \autoref{prop:spaces_rigidity}, and is therefore also rigid.
		\item It is standard that any two adjunctions are isomorphic along the identity of $f_!$ and some automorphism of $f^*$. But $f^*$ has no nontrivial automorphisms, hence any two adjunctions must be equal.
	\end{itemize}
\end{proof}

The monoidal fibrant replacement functor $P$ is not rigid, because we could for instance re-parametrize the path in $B^I$. However, any automorphism of $P$ that induces the identity on $P(B_{+B}) \cong (B^I)_{+B}$ will induce the identity on $P(*_{+B})$, and by the proof of \autoref{prop:spaces_rigidity} will therefore induce the identity on $P(X)$ for all $X \in \mc R(B)$.

\begin{prop}\label{prop:P_strong_monoidal}\cite[4.15]{malkiewich2017coassembly}
	There is a canonical homeomorphism $PX \barsmash PY \cong P(X \barsmash Y)$ of functors $\mc R(A) \times \mc R(B) \to \mc R(A \times B)$.
\end{prop}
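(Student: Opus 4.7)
My plan is to use the defining formula $P = (p_1)_!(p_0)^*$ together with the natural homeomorphism $(A \times B)^I \cong A^I \times B^I$, and then apply \autoref{lem:external_smash_and_base_change} twice to commute the external smash product past the pullback and pushforward stages.

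First I would fix notation: write $p_t^A\colon A^I \to A$ for evaluation at $t$, and similarly for $B$ and $A \times B$. Under the canonical homeomorphism $(A \times B)^I \cong A^I \times B^I$ (which is just the universal property of the product), the evaluation maps satisfy $p_t^{A \times B} = p_t^A \times p_t^B$ for each $t \in [0,1]$. This identification is the one geometric input of the proof and it is manifestly natural.

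Next I would compose the two isomorphisms of \autoref{lem:external_smash_and_base_change}. Starting from $PX \barsmash PY = (p_1^A)_!(p_0^A)^* X \ \barsmash\ (p_1^B)_!(p_0^B)^* Y$, the pushforward version of \eqref{eq:external_smash_and_base_change} gives
\[ (p_1^A)_!(p_0^A)^*X \ \barsmash\ (p_1^B)_!(p_0^B)^*Y \ \cong\ (p_1^A \times p_1^B)_!\bigl((p_0^A)^*X \ \barsmash\ (p_0^B)^*Y\bigr), \]
and the pullback version then gives
\[ (p_0^A)^*X \ \barsmash\ (p_0^B)^*Y \ \cong\ (p_0^A \times p_0^B)^*(X \barsmash Y). \]
Combining with the identifications $p_t^{A \times B} = p_t^A \times p_t^B$ yields the desired isomorphism
\[ PX \barsmash PY \ \cong\ (p_1^{A \times B})_!(p_0^{A \times B})^*(X \barsmash Y) \ =\ P(X \barsmash Y). \]
Naturality in $X$ and $Y$ is automatic because each of the two isomorphisms in \autoref{lem:external_smash_and_base_change} is natural, as is the path-space identification.

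The main thing to worry about is that this is the \emph{canonical} isomorphism, not just some isomorphism. The cleanest way to pin this down is to observe that the middle functor $(X,Y) \leadsto h^*k_!(X \barsmash Y)$, with $k = p_1^A \times p_1^B$ and $h = p_0^A \times p_0^B$, is of the rigid form covered by (the corollary of) \autoref{prop:spaces_rigidity} once we rewrite it via Beck-Chevalley as $f_!g^*$ for a pullback square—so any two natural isomorphisms into $P(X \barsmash Y)$ built out of $\barsmash$, pullbacks, pushforwards, and the $(A\times B)^I \cong A^I \times B^I$ homeomorphism must agree. Equivalently, as the remark preceding the proposition already notes, it suffices to check the induced map on $P(B_{+B}) \barsmash P(B_{+B})$ is the one coming from the obvious identification of path spaces, which is immediate from the construction. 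This is the only real subtlety; everything else is formal bookkeeping with the earlier lemmas.
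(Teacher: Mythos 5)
Your proposal matches the paper's proof: both define the isomorphism by composing the two isomorphisms of \autoref{lem:external_smash_and_base_change} via $P = (p_1)_!(p_0)^*$ and the identification $(A\times B)^I \cong A^I\times B^I$, and both pin down ``canonicity'' by tracking the induced identification on $P(A_{+A})\barsmash P(B_{+B}) \cong (A^I\times B^I)_{+(A\times B)}$.

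One caveat on your rigidity claim, however: the middle functor $(p_1^A\times p_1^B)_!(p_0^A\times p_0^B)^*(X\barsmash Y)$ is \emph{not} covered by \autoref{prop:spaces_rigidity} or its corollary. It is of the form $f_!g^*(X\barsmash Y)$ with $f = p_1^A\times p_1^B$ and $g = p_0^A\times p_0^B$, and $f$ is not injective on the fibers of $g$ (many paths starting at a common point also end at a common point), so the hypothesis of the proposition fails. There is also no Beck--Chevalley square here that converts $f_!g^*$ into the $h^*k_!$ form of the corollary. This is exactly the situation the paper flags prior to the proposition: $P$ is \emph{not} rigid, since reparametrizing $B^I$ gives a nontrivial automorphism. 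Your fallback argument --- checking that the constructed map induces the ``obvious'' identification on $P(A_{+A})\barsmash P(B_{+B})$, and then invoking the remark's extended rigidity principle (naturality of the resulting automorphism of $(A^I\times B^I)_{+(A\times B)}$ in the path spaces themselves) --- is the correct justification and is the one the paper actually uses; you should lean on that rather than the unqualified rigidity proposition.
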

\begin{proof}
	By ``canonical'' we mean that both functors come with an identification $F(A_{+A},B_{+B}) \cong (A^I \times B^I)_{+(A \times B)}$, and the map respects that identification. We define the canonical isomorphism by composing the isomorphisms of \autoref{lem:external_smash_and_base_change}, and tracing through the diagrams to check it respects the above identifications.
	
	Alternatively, we prove these maps are also natural with respect to replacing $A^I$ and $B^I$ with other spaces over $A$ and $B$, in which case the resulting self-isomorphism $(A^I \times B^I)_{+(A \times B)}$ is natural in $A^I$ and $B^I$ and so must be the identity by a rigidity argument.
\end{proof}

\beforesubsection
\subsection{Homology, cofiber sequences, homotopy, and fiber sequences}\label{space_cofiber_fiber}\aftersubsection

Now we can re-create the rest of classical homotopy and homology theory in the parametrized setting. The process is fairly straightforward, but it is useful to have a few guidelines and warnings.

\begin{itemize}
	\item The \textbf{strict cofiber}\index{cofiber} of a map $f\colon X \to Y$ of retractive spaces is the pushout
	\[ Y/_B X := B \cup_X Y. \]
	Over each point of $B$, this is the usual cofiber.
	\item The mapping cone or \textbf{uncorrected homotopy cofiber}\index{uncorrected homotopy cofiber}\index{mapping cone} $C_B f$ is the pushout
	\[ C_B f = X \barsmash I \cup_{X \barsmash S^0} Y \]
	where $S^0 \to I$ is regarded as a map of based spaces (retractive spaces over $*$). Over each point $b \in B$, this is the usual (uncorrected) homotopy cofiber. We also have an isomorphism $PC_B f \cong C_B Pf$.\footnote{For these last two claims, recall that in (CG) the pullback functor commutes with all colimits, and in (CGWH) the same is true for the colimit forming $C_B f$ because $B \to X$ and $S^0 \to I$ are closed inclusions.} As in the case of based spaces, $C_B f$ doesn't always have the homotopy type we want. It does if $X$ is $h$-cofibrant, using \autoref{prop:h_cofibrations_pushout_product}, or if $f$ is an $h$-cofibration. In these cases we call it the \textbf{homotopy cofiber}\index{homotopy!cofiber}. If $f$ is an $h$-cofibration, the strict and homotopy cofibers are equivalent.
	\item The \textbf{reduced homology}\index{homology} of $X$ is the collection of relative singular homology groups $H_*(X,B)$. This functor always respects weak equivalences, though it is easier to prove this when $X$ is $h$-cofibrant. One sample result is that a homotopy cofiber sequence gives a long exact sequence on homology groups
	\[ \xymatrix{ \ldots \ar[r] & H_n(X,B) \ar[r] & H_n(Y,B) \ar[r] & H_n(C_B f,B) \cong H_n(Y,X) \ar[r] & H_{n-1}(X,B) \ar[r] & \ldots } \]
	\item We form a \textbf{homotopy colimit}\index{homotopy!colimit} of a diagram of retractive spaces, or a \textbf{realization} of any simplicial retractive space, by replacing all instances of $\sma \Delta^n_+$ in the usual formula by $\barsmash \Delta^n_+$, thinking of $\Delta^n_+$ as a retractive space over $*$. The homotopy colimit preserves equivalences of diagrams of $h$-cofibrant retractive spaces, while the realization preserves equivalences of simplicial retractive spaces whose latching maps are $h$-cofibrations. 
	\item The \textbf{strict fiber}\index{fiber (strict)} of a map $f\colon X \to Y$ is the preimage of the basepoint section $B$, or equivalently the pullback
	\[ X \times_Y B. \]
	Over each point of $B$, this is the usual fiber.
	\item The \textbf{uncorrected homotopy fiber}\index{uncorrected homotopy fiber} $F_B f$ is the pullback
	\[ F_B f = X \times_{\barmap_*(S^0,Y)} \barmap_*(I,Y) \]
	where $\barmap_*$ is our right adjoint to $\barsmash$ that outputs a space over $B$. Over each point of $B$, this is the usual homotopy fiber. It preserves equivalences as soon as $Y$ is $h$-fibrant or $X \to Y$ is an $h$-fibration; in these cases we call it the \textbf{homotopy fiber}\index{homotopy!fiber}. (Using \autoref{ex:smashing_spaces_is_left_Quillen} below, this is true more generally when $Y$ is $q$-fibrant or $f$ is a $q$-fibration.)
	
	\textbf{Warning:} The uncorrected homotopy fiber does not always preserve equivalences. Take $X = B = I$ and $Y$ to be the cone on a circle, with projection to $B$ by the cone coordinate, joined to an extra copy of $B$ along a single point of the base circle. Since $Y$ is contractible, the (corrected) homotopy fiber of $X \to Y$ is contractible, but the uncorrected homotopy fiber has total space equivalent to $\Z$.\footnote{This indicates a (fixable) error in the proof of \cite[12.4.2]{ms}. The map labeled $F_B(I,Y) \to Y$, which in our language is $\barmap_*(I,Y) \to Y$, is only a Hurewicz fibration once we also assume that $Y$ is $h$-fibrant.}
	\item The \textbf{homotopy groups}\index{homotopy!groups} of $X$ are the homotopy groups of the fibers $X_b$. These only respect weak equivalences if $X$ is quasifibrant. One sample result is that a homotopy fiber sequence of fibrant spaces gives a long exact sequence on homotopy groups
	\[ \xymatrix{ \ldots \ar[r] & \pi_{n+1,b}(Y) \ar[r] & \pi_{n,b}(F_B f) \cong \pi_{n+1,b}(Y;X) \ar[r] & \pi_{n,b}(X) \ar[r] & \pi_{n,b}(Y) \ar[r] & \ldots } \]
	\item We can form a \textbf{homotopy limit}\index{homotopy!limit} of a diagram or the \textbf{totalization} of a cosimplicial retractive space by replacing all instances of $F(\Delta^n_+,-)$ in the usual formulas by $\barmap_*(\Delta^n_+,-)$. The homotopy limit preserves equivalences as soon as the retractive spaces in our diagram are all $q$-fibrant, and the totalization appears to also preserve equivalences of cosimplicial objects whose matching maps are $q$-fibrations.
	\item Define the homotopy category $\ho\mc R(B)$ by inverting the weak equivalences. Taking maps in this category, maps out of a homotopy cofiber sequence or into a homotopy fiber sequence form a long exact sequence. Maps out of a sequential direct limit (colimit) of cofibrations or into a sequential inverse limit of fibrations give a $\lim^1$ exact sequence.\footnote{The classical argument gives you these statements for maps up to fiberwise based homotopy. Using the fundamental theorem of model categories (\autoref{sec:model_cats}), this is the same as maps in the homotopy category, assuming the sources are made cofibrant and the targets are made fibrant first. See \cite[5.6]{ms} for more details, and for simplicity replace every instance of ``well-grounded'' with ``cofibrant.''}
\end{itemize}


\beforesubsection
\subsection{Symmetric monoidal categories and bifibrations}\label{sec:SMBF}\aftersubsection

The external smash product $\barsmash$ makes the category $\mc R$ of all retractive spaces into a \textbf{symmetric monoidal category}\index{symmetric monoidal!category}. This means there is a product functor and unit object
\[ \xymatrix{ \mc R \times \mc R \ar[r]^-{\barsmash} & \mc R & {*} \ar[r]^-{S^0} & \mc R } \]
such that the product is associative, commutative, and unital. This does not have to happen strictly, so we ask for four natural isomorphisms $\alpha, \lambda, \rho, \gamma$ making the four diagrams of functors below commute.
\[ \begin{array}{ccc}
\xymatrix{
	\mc R \times \mc R \times \mc R \ar[d]_-{\barsmash \times 1} \ar[r]^-{1 \times \barsmash} & \mc R \times \mc R \ar[d]^-{\barsmash} \\
	\mc R \times \mc R \ar[r]_-{\barsmash} & \mc R
}
&
\xymatrix{
	\mc R \ar[d]_-{S^0 \times 1} \ar@{=}[dr] \ar[r]^-{1 \times S^0} & \mc R \times \mc R \ar[d]^-{\barsmash} \\
	\mc R \times \mc R \ar[r]_-{\barsmash} & \mc R
}
&
\xymatrix{
	\mc R \times \mc R \ar[dr]_-{\barsmash} \ar[r]^-{\textup{swap}} & \mc R \times \mc R \ar[d]^-{\barsmash} \\
	& \mc R
}
\\[5em]
\alpha\colon (X \barsmash Y) \barsmash Z \cong X \barsmash (Y \barsmash Z) &
\lambda\colon S^0 \barsmash X \cong X &
\gamma\colon X \barsmash Y \cong Y \barsmash X \\
& \rho\colon X \barsmash S^0 \cong X &
\end{array}
\]
Finally, these four isomorphisms have to be coherent. This means that any time we have an expression for the product of $X_1$, $\ldots$, $X_n$, such as
\[ (X_2 \barsmash (S^0 \barsmash X_3)) \barsmash (S^0 \barsmash X_1), \]
if we use the isomorphisms $\alpha, \lambda, \rho, \gamma$ to re-arrange the terms, and at some point arrive at the same expression for the product again, the composite of all the isomorphisms we have applied must be the identity map. In fact, it is enough to check this condition for just four particular cases, and then every other case follows automatically. This is a non-trivial theorem whose details and proof can be found in \cite[VII,XI]{maclane}.

The practical consequence of this coherence theorem is that we can take an $n$-fold product of a set of objects $\{X_a\}_{a \in A}$, $|A| = n$, without having to worry about which order we put them in or how we group them with parentheses. Any two expressions for this product admit a canonical isomorphism between them arising from $\alpha, \lambda, \rho, \gamma$, so we can treat them as the same without getting into trouble.

\begin{prop}
	There is a unique symmetric monoidal structure on $\mc R$ whose product is $\barsmash$ and unit is $S^0 \in \mc R(*)$.
\end{prop}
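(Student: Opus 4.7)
The plan is to construct the four coherence isomorphisms $\alpha, \lambda, \rho, \gamma$ explicitly, and then exploit the rigidity result (\autoref{prop:spaces_rigidity}) to reduce both coherence and uniqueness to a triviality.

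For existence, I would build each natural isomorphism from the pushout defining $\barsmash$ in \eqref{external_smash}. Both $(X \barsmash Y) \barsmash Z$ and $X \barsmash (Y \barsmash Z)$ are retractive spaces over $A \times B \times C$ which on underlying sets are the quotient of $X \times Y \times Z$ by the closed subspace of triples in which at least one factor lies in the basepoint section; comparing universal properties of the iterated pushouts yields a canonical homeomorphism $\alpha$. The unitors $\lambda$ and $\rho$ come from the identifications $* \times B \cong B$ and the fact that smashing with $S^0$ is the identity on each fiber, and $\gamma$ comes from the swap $A \times B \cong B \times A$. Naturality in each variable is immediate from the universal property.

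Verifying MacLane's pentagon, two triangles, and hexagon is where rigidity does the work. Fix a coherence diagram involving $n$ input variables $X_1, \ldots, X_n$ (with possibly a few $S^0$ factors interspersed). Every vertex of the diagram is a functor of the form $(X_1, \ldots, X_n) \leadsto X_{\sigma(1)} \barsmash \cdots \barsmash X_{\sigma(n)}$ for some permutation $\sigma$, and the edges are natural isomorphisms built from $\alpha, \lambda, \rho, \gamma$. Composing any loop with the appropriate $\gamma$'s to reorder factors back into the identity permutation gives a natural automorphism of $X_1 \barsmash \cdots \barsmash X_n$. By \autoref{prop:spaces_rigidity}, applied with $f = \id$ and $g = \id$ on the space $B_1 \times \cdots \times B_n$, this functor is rigid, so the automorphism must be the identity. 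Hence the two composites around the diagram agree. For the unitor diagrams, the $S^0$ factor is treated as the fixed space $*_{+*} \in \mc R(*)$; after absorbing it into the formalism of \autoref{prop:spaces_rigidity} (or just restricting to the remaining variables, noting that the proof of rigidity only needs the values on tuples of the form $(*_{+C_1}, \ldots, *_{+C_n})$), rigidity still applies.

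Uniqueness follows by exactly the same observation. Given another symmetric monoidal structure $(\barsmash, S^0, \alpha', \lambda', \rho', \gamma')$ with the same product and unit, each of $\alpha' \circ \alpha^{-1}$, $\lambda' \circ \lambda^{-1}$, $\rho' \circ \rho^{-1}$, $\gamma' \circ \gamma^{-1}$ is a natural automorphism of a functor of the rigid form, hence the identity, so $\alpha = \alpha'$ and similarly for the rest. The only place one must be careful is confirming that the unit isomorphisms, where $S^0$ is not a variable input, still fall under the umbrella of \autoref{prop:spaces_rigidity}; this is the main (and only) subtlety in the argument, and it is resolved by the remark following that proposition, which allows restricting to any full subcategory containing the tuples $(*_{+C_1}, \ldots, *_{+C_n})$.
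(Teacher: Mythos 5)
Your proof is correct and takes the same route as the paper: construct $\alpha,\lambda,\rho,\gamma$ by comparing universal properties of iterated pushouts, then invoke \autoref{prop:spaces_rigidity} (with $f=g=\id$, so $\Phi(X_1,\ldots,X_n)=X_1\barsmash\cdots\barsmash X_n$) to dispatch both coherence and uniqueness at once. The paper states this more tersely; your write-up merely makes explicit that each coherence loop yields an automorphism of a rigid functor, and that the remark after \autoref{prop:spaces_rigidity} covers the unitor diagrams where $S^0$ is a fixed object rather than a variable.
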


\begin{proof}
	Once we fix a model for $\barsmash$ and $S^0$, it is straightforward to construct natural isomorphisms
	\[ (X \barsmash Y) \barsmash Z \cong X \barsmash (Y \barsmash Z), \quad
	S^0 \barsmash X \cong X, \quad
	X \barsmash Y \cong Y \barsmash X.
	\]
	By the rigidity result \autoref{prop:spaces_rigidity}, these isomorphisms are then unique, so there is only one possible choice for $\alpha, \lambda, \rho, \gamma$. Their coherence also follows immediately from \autoref{prop:spaces_rigidity}.
\end{proof}

\begin{ex}
	The category $\cat{Top}$ of (CG or CGWH) spaces is a symmetric monoidal category with product $\times$, unit $*$, and isomorphisms arising from the universal property of the product. The universal property makes coherence easy to check. This works for any category with a product; the symmetric monoidal categories that arise in this way are called \textbf{cartesian monoidal categories}.
\end{ex}

If $(\mc C,\otimes,I_C)$ and $(\mc D,\boxtimes,I_D)$ are symmetric monoidal categories, a \textbf{(strong) symmetric monoidal functor}\index{symmetric monoidal!functor} is a functor $F\colon \mc C \to \mc D$ and two natural isomorphisms $m_F$, $i_F$ making these squares commute:
\[ \begin{array}{ccc}
\xymatrix @R=1.7em @C=3em{
	\mc C \times \mc C \ar[d]_-{F \times F} \ar[r]^-{\otimes} & \mc C \ar[d]^-F \\
	\mc D \times \mc D \ar[r]^-{\boxtimes} & \mc D
}
&&
\xymatrix @R=1.7em{
	{*} \ar@{=}[d] \ar[r]^-{I_C} & \mc C \ar[d]^-F \\
	{*} \ar[r]^-{I_D} & \mc D
}
\\
\\
F(X) \boxtimes F(Y) \cong F(X \otimes Y)
&&
I_D \cong F(I_C)
\end{array} \]
Furthermore, each of the four isomorphisms $\alpha, \lambda, \rho, \gamma$ in $\mc D$ corresponds to the same isomorphism in $\mc C$ along these maps. For instance, this condition for $\alpha$ means that the following hexagon must commute.
\[ \xymatrix @R=1.7em @C=5em{
	(F(X) \boxtimes F(Y)) \boxtimes F(Z) \ar[d]_-{\alpha} \ar[r]^-{m \boxtimes \id}
	& F(X \otimes Y) \boxtimes F(Z) \ar[r]^-{m}
	& F((X \otimes Y) \otimes Z) \ar[d]^-{F(\alpha)} \\
	F(X) \boxtimes (F(Y) \boxtimes F(Z)) \ar[r]^-{\id \boxtimes m}
	& F(X) \boxtimes F(Y \otimes Z) \ar[r]^-{m}
	& F(X \otimes (Y \otimes Z)) \\
} \]
One can visualize these six maps as the faces of a cube, obtained by taking the square of functors that represents $\alpha$ and multiplying by an interval representing $F$. In other words, the definition of a symmetric monoidal \emph{functor} is obtained from the definition of a symmetric monoidal \emph{category} by making the replacements
\begin{align*}
	\textup{category} &\to \textup{functor} \\
	\textup{functor} &\to \textup{natural isomorphism} \\
	\textup{natural isomorphism} &\to \textup{coherence condition} \\
	\textup{coherence condition} &\to \textup{nothing}.
\end{align*}
\begin{ex}
	The canonical homeomorphism from \autoref{prop:P_strong_monoidal} and the unique homeomorphism $P(S^0) \cong S^0$ make $P$ into a strong symmetric monoidal functor. 
\end{ex}

We have now given the category $\mc R$ three operations, $\barsmash$, $f^*$, and $f_!$. Loosely, these operations always commute with each other, because of \autoref{lem:external_smash_and_base_change} and \autoref{prop:beck_chevalley_spaces}. A category with this calculus of three operations is called a \textbf{symmetric monoidal bifibration (SMBF)}\index{symmetric monoidal!bifibration}\index{SMBF}. In more detail, an SMBF consists of
\begin{itemize}
	\item a symmetric monoidal category $(\mc C,\boxtimes,I)$,
	\item a cartesian monoidal category $(\bS,\times,*)$,
	\item a functor $\Phi\colon \mc C \to \bS$,
	\item and a class of commuting squares (\textbf{Beck-Chevalley squares})\index{Beck-Chevalley square} in $\bS$, containing at least the squares listed in \cite[5.3]{mp2},
\end{itemize}
with the following properties.
\begin{itemize}
	\item The functor $\Phi$ is strict symmetric monoidal, meaning it preserves the product and unit on the nose, and carries the isomorphisms $\alpha, \lambda, \rho, \gamma$ in $\mc C$ to the corresponding isomorphisms in $\bS$. (In other words, when we define the symmetric monoidal functor structure on $\Phi$, the two isomorphisms we pick are actually identity maps.)
	\item For every arrow $f\colon A \to B$ in $\bS$ and object $X$ in the fiber category $\mc C^B = \Phi^{-1}(B)$, there is a \textbf{cartesian arrow}\index{cartesian/co-cartesian arrow} $f^*X \to X$ in $\mc C$ with the universal property illustrated below.
	\[ \xymatrix @R=1em{
		Y \ar@{~>}[dd]^-\Phi \ar[rrd] \ar@{-->}[rd]_-{\exists !} & & \\
		& f^*X \ar[r] \ar@{~>}[d]^-\Phi & X \ar@{~>}[d]^-\Phi \\
		\Phi(Y) \ar[r] & A \ar[r]^-f & B
	} \]
	\item For every $f\colon A \to B$ and $X$ in $\mc C^A = \Phi^{-1}(A)$, there is a \textbf{co-cartesian arrow} $X \to f_!X$ with the universal property illustrated below.
	\[ \xymatrix @R=1em{
		& & Y \ar@{~>}[dd]^-\Phi \\
		X \ar[r] \ar[rru] \ar@{~>}[d]^-\Phi & f_!X \ar@{~>}[d]^-\Phi \ar@{-->}[ru]_-{\exists !} & \\
		A \ar[r]^-f & B \ar[r] & \Phi(Y)
	} \]
	\item The product $\boxtimes$ preserves cartesian arrows and co-cartesian arrows.
	\item In any Beck-Chevalley square, the Beck-Chevalley map from \autoref{prop:beck_chevalley_spaces} is an isomorphism.
\end{itemize}
See \cite{shulman_framed_monoidal,mp2}. The proof of the following mostly amounts to translating the above universal properties back into the more familiar ones we used to define $f^*$ and $f_!$ earlier, and then re-interpreting the proof of \autoref{lem:external_smash_and_base_change}.

\begin{prop}\label{prop:spaces_SMBF}
	The category $\mc R$ of all retractive spaces, with the unique symmetric monoidal structure coming from $\barsmash$ and $S^0$, is a symmetric monoidal bifibration.
\end{prop}

\begin{rmk}
	If we drop every reference to the product $\barsmash$, we get the notion of a \textbf{bifibration}\index{bifibration} of categories. In other words, a bifibration is a map of categories $\Phi\colon \sC \to \bS$, with pushforwards and pullbacks that ``commute'' along Beck-Chevalley isomorphisms.
\end{rmk}

As we have seen in the case of $\mc R$, the nice thing about bifibrations is that the definitions of $f_!$, $f^*$, their adjunction, and the canonical isomorphisms between their composites, are all rolled into the definition of the category $\mc R$ itself. For instance we can reconstruct $f_!$ by choosing a co-cartesian arrow over $f$ for each $X \in \mc C^A$, and similarly for the pullback $f^*$. This point of view often leads to major simplifications.

For instance, in what sense does $\barsmash$ commute with pullbacks? Back in \autoref{lem:external_smash_and_base_change} we had to give an explicit description of the commutation isomorphisms, but now we can just say that $\barsmash$ preserves cartesian and co-cartesian arrows. We are lucky that in this case the commutation isomorphism happens to be unique. As soon as we pass to the homotopy category, that is no longer necessarily the case (and is certainly false in the homotopy category of spectra). The structure of a bifibration helps us keep track of which isomorphism is the \emph{canonical} one.

\beforesubsection
\subsection{Internal smash products}\label{sec:internal}\aftersubsection

We have seen that $\mc R$ is symmetric monoidal under the external smash product $\barsmash$. There is another product that makes $\mc R(B)$ into a symmetric monoidal category for any fixed base space $B$. Define the \textbf{internal smash product}\index{internal smash product $X \sma_B Y$} by
\[ \sma_B\colon \mc R(B) \times \mc R(B) \to \mc R(B)
\qquad X \sma_B Y = \Delta_B^*(X \barsmash Y), \]
where $\Delta_B\colon B \to B \times B$ is the diagonal map. Applying $\Delta_B^*$ to the definition of the external smash product gives the pushout diagram
\begin{equation}\label{eq:internal_smash}
\xymatrix @R=1.7em{
	X \cup_B Y \ar[d] \ar[r] & X \times_B Y \ar[d] \\
	B \ar[r] & X \sma_B Y.
}
\end{equation}
Informally, $X \sma_B Y$ is a space over $B$ whose fiber over $b$ is the smash product $X_b \sma Y_b$.

\begin{ex}\label{ex:internal_smash_thom_spectra}
	If $V$ and $W$ are vector bundles over $B$ and $V \times_B W$ is their fiber product, there is an isomorphism
	\[ \Th_B(V \times_B W) \cong \Th_B(V) \sma_B \Th_B(W) \]
	obtained by combining the isomorphisms from \autoref{ex:thom_smash_product} and \autoref{ex:thom_base_change}.
\end{ex}

By \autoref{prop:spaces_rigidity}, the internal smash product $\sma_B$ is a rigid functor. The commutation of pullbacks $f^*$ with $\barsmash$ allows us to verify that it is associative, unital, and commutative up to some natural isomorphisms. It follows immediately that those natural isomorphisms are unique and coherent, making $\mc R(B)$ into a symmetric monoidal category. The unit is the 0-sphere over $B$,
\[ S^0_B := S^0 \times B \cong r_B^* S^0. \]

The categories $\mc R(B)$, together with the products $\sma_B$ and pullbacks $f^*$, form an \textbf{indexed symmetric monoidal category with coproducts}\index{indexed category}. In broad strokes, this structure says that the pullbacks compose in an associative way and commute with the internal smash products. The pushforwards $f_!$ do not commute with the internal smash products, and instead satisfy a projection formula. We will not define this structure precisely because it is more complicated than that of a symmetric monoidal bifibration, and because these two formal structures are actually equivalent to each other \cite{shulman_framed_monoidal}.

Concretely, these structures are equivalent because $\sma_B$ and $\barsmash$ can be recovered from each other. We already defined $\sma_B$ using $\barsmash$ and $\Delta_B^*$. Going the other way, we reconstruct $\barsmash$ from the internal smash products by the formula
\[ X \barsmash Y = (\pi_A^* X) \sma_{A \times B} (\pi_B^* Y) \]
where $\pi_A\colon A \times B \to A$ and $\pi_B\colon A \times B \to B$ are the projections.

The internal smash product $\sma_B$ is easier to contemplate than $\barsmash$, but it does not interact as well with cofibrations, fibrations, and weak equivalences.
\begin{cor}\label{cor:internal_smash_properties}\hfill
	\vspace{-1em}
	
	\begin{itemize}
		\item Pushout-products of $f$-cofibrations defined using $\sma_B$ are $f$-cofibrations.
		\item $\sma_B$ preserves spaces that are both $f$-cofibrant and $h$-fibrant, and also weak equivalences between such spaces.
		\item More generally, $\sma_B$ preserves equivalences when both spaces are $f$-cofibrant and at least one of them is $q$-fibrant.
	\end{itemize}
\end{cor}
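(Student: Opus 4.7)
My plan is to derive all three statements from the corresponding facts about $\barsmash$ in \autoref{prop:h_cofibrations_pushout_product}, via the formula $X \sma_B Y = \Delta_B^*(X \barsmash Y)$ combined with the preservation properties in \autoref{lem:f_star_preserves} and \autoref{prop:f_star_colimits}.

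For the first bullet, the $\barsmash$-pushout-product of two $f$-cofibrations is built as a pushout along an $f$-cofibration, hence along a closed inclusion. By \autoref{prop:f_star_colimits}, $\Delta_B^*$ preserves such pushouts in (CGWH) and all pushouts in (CG), so $f \square_{\sma_B} g = \Delta_B^*(f \square_{\barsmash} g)$. The inner map is an $f$-cofibration by \autoref{prop:h_cofibrations_pushout_product}, and \autoref{lem:f_star_preserves} transfers the conclusion through $\Delta_B^*$.

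For the second bullet, \autoref{prop:h_cofibrations_pushout_product} shows that when $X$ and $Y$ are both $f$-cofibrant and $h$-fibrant, $X \barsmash Y$ is $f$-cofibrant and $h$-fibrant over $A \times B$. Both properties are preserved by $\Delta_B^*$, giving the first claim. For the equivalence claim, the fourth bullet of \autoref{prop:h_cofibrations_pushout_product} shows that $\id_X \barsmash g$ is an equivalence between $h$-fibrant spaces whenever $X$ and the endpoints of $g$ are all $f$-cofibrant and $h$-fibrant, and \autoref{lem:f_star_preserves} preserves such equivalences through $\Delta_B^*$.

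The third bullet is the main obstacle. Factoring a pair of equivalences into two one-variable changes (a brief case analysis shows one can always route through an intermediate pair still in the class), we reduce to showing $\id_X \sma_B g$ is an equivalence whenever $X, Y, Y'$ are $f$-cofibrant and either $X$ is $q$-fibrant or both $Y, Y'$ are. The map $\id_X \barsmash g$ is an equivalence by \autoref{prop:h_cofibrations_pushout_product}, so again it suffices to apply $\Delta_B^*$, which preserves equivalences between $q$-fibrant spaces by \autoref{lem:f_star_preserves}. The key remaining step is a $q$-fibrant analog of the third bullet of \autoref{prop:h_cofibrations_pushout_product}: if $X$ is $f$-cofibrant and $Y$ is $f$-cofibrant and $q$-fibrant, then $X \barsmash Y$ is $q$-fibrant over $A \times B$. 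I would prove this by adapting \autoref{prop:clapp}: the pushout presentation of $X \barsmash Y$ together with the NDR-pair structure of $X$ over $A$ lets one split a disc-cylinder lifting problem into a portion routed through $X \times Y \to X \times B$ (a pullback of the $q$-fibration $Y \to B$) and a portion routed through the basepoint section $A \times B$, interpolating via the NDR homotopy just as Clapp interpolates path-liftings.
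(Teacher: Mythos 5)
Your first two bullets are fine and follow essentially the paper's route. The gap is in the third bullet, and it is genuine: the proposed ``$q$-fibrant analog'' lemma is false. If $X$ is $f$-cofibrant but \emph{not} $q$-fibrant over $A$, then $X \barsmash Y \to A \times B$ cannot be a Serre fibration no matter how nice $Y$ is. Concretely, take $Y = B_{+B}$, which is $f$-cofibrant and $q$-fibrant. Then $X \barsmash Y \cong X \times B$ with projection $p_X \times \id_B$, which is a Serre fibration if and only if $p_X$ is. So you cannot get the fibrancy of $X \barsmash Y$ over $B \times B$ under the stated hypothesis, and the appeal to \autoref{lem:f_star_preserves} for $\Delta_B^*$ breaks down. (This is not a fixable defect of the auxiliary lemma; even the $h$-fibrant version in \autoref{prop:h_cofibrations_pushout_product} needs \emph{both} factors fibrant, and the proof there crucially uses \autoref{prop:clapp} with both factors $h$-fibrant.)

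The paper's argument for the third bullet sidesteps this entirely by not trying to show $X \barsmash Y$ is fibrant over $B \times B$. Instead it works directly with the pushout square \eqref{eq:internal_smash} defining $X \sma_B Y$. Because pullbacks and pushout-products preserve $f$-cofibrations, the top horizontal map $X \cup_B Y \to X \times_B Y$ is an $f$-cofibration, so the square is a homotopy pushout. Then one argues separately that each nontrivial corner preserves equivalences: $X \cup_B Y$ because both sides are cofibrant (so the coproduct is a double homotopy pushout), and $X \times_B Y$ because it is a strict pullback along a fibration once at least one of $X \to B$, $Y \to B$ is a $q$-fibration (using right-properness, or equivalently that $p^*$ preserves equivalences between $q$-fibrant spaces). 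The conclusion for $X \sma_B Y$ then follows from the gluing lemma. The moral difference is that the fiber product $X \times_B Y$ preserves equivalences under a one-sided fibrancy hypothesis even though the full smash $X \barsmash Y$ over $B \times B$ is nowhere near fibrant; you need to decompose \emph{before} applying $\Delta_B^*$, not after.
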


\begin{proof}
	The first two points follow from \autoref{lem:f_star_preserves} and \autoref{prop:h_cofibrations_pushout_product}. For the last point, assume $X$ and $Y$ are $f$-cofibrant, and $X$ is $q$-fibrant. Since pullbacks and pushout-products preserve $f$-cofibrations, $X \cup_B Y \to X \times_B Y$ is an $f$-cofibration of spaces over $B$, so \eqref{eq:internal_smash} is a homotopy pushout square. The term $X \cup_B Y$ preserves equivalences because $X$ and $Y$ are cofibrant, and $X \times_B Y$ preserves equivalences because it is a pullback along a $q$-fibration. Therefore $X \sma_B Y$ preserves equivalences under these assumptions.
\end{proof}

The internal smash product does not preserve weak equivalences if both spaces are not fibrant. Consider the following counterexample. Over a connected space $B$, all retractive spaces of the form $*_{+B}$ are equivalent. But their internal smash products are either the zero object or $*_{+B}$ again, according to whether the two points lie over different points of $B$ or the same point of $B$.

\begin{rmk}\label{fixed_may_sigurdsson}
	The earlier work \cite{ms} regards $\sma_B$ as the most fundamental smash product, and defines $\barsmash$ in terms of it. This follows the philosophy of \cite{clapp1981duality,crabb_james} that every construction and definition should be a fiberwise version of the classical construction. In hindsight, however, it turns out to be better to regard the external smash product $\barsmash$ as the more fundamental notion, and to define the internal smash product $\sma_B$ in terms of it.
	
	One might expect this does not matter, because $\barsmash$ and $\sma_B$ can each be defined in terms of the other.  But $\sma_B$ does not preserve equivalences nearly as often as $\barsmash$, as we have seen above. As a result, using $\sma_B$ suggests methods of argument that require stronger hypotheses to work (essentially $f$-cofibrations wherever $h$-cofibrations would suffice). This eventually leads to the well-grounded objects and the $qf$-model structure of \cite{ms}. It is surprising, but true, that we can avoid these difficulties simply by starting with $\barsmash$ and waiting until the end to use $\sma_B$. We will discuss this further in \autoref{fixed_may_sigurdsson_2}.
\end{rmk}

We also briefly mention that $\sma_B$ generalizes to something called the \textbf{external smash product rel $B$}\index{external!smash rel $B$ $X \barsma{B} Y$}, $\barsma{B}$. When $X \in \mc R(D)$ and $Y \in \mc R(E)$, and $D$ and $E$ are equipped with maps to $B$, this relative external smash product is defined as
\[ X \barsma{B} Y := \Delta_{D,E}^*(X \barsmash Y), \]
\[ \Delta_{D,E}\colon D \times_B E \to D \times E. \]
It is essentially the external smash product, but carried out separately over every point of $B$. This makes the category of retractive spaces $X$ over spaces $E$ over $B$ into a symmetric monoidal category. The symmetric monoidal category $\mc R(B)$ that we constructed above is contained inside, as the subcategory on which $E \to B$ is the identity map of $B$.

\newpage
\section{Derived functors}

Up to this point, we have been informally talking about studying retractive spaces ``up to weak equivalence.'' We now recall how to formalize this using homotopical categories. This leads to a derived pushforward $\L f_!$, derived pullback $\R f^*$, and derived external smash product $\barsmash^\L$, that have the same relationship up to weak equivalence that $f_!$, $f^*$ and $\barsmash$ have on the nose.

\beforesubsection
\subsection{Derived functors in homotopical categories}\label{sec:just_derived_functors}\aftersubsection

Recall the following definition from \cite{dhks}. A \textbf{homotopical category}\index{homotopical!category} $(\cat C,W)$ is a category $\cat C$ and a class of morphisms $W$ in $\cat C$ satisfying the following 2-out-of-6 property. Given three composable maps $f, g, h$, if $g \circ f$ and $h \circ g$ are in $W$ as shown below, then $f$, $g$, $h$, and $h \circ g \circ f$ are also in $W$.
\[ \xymatrix @C=3em @R=1.7em{
	A \ar[r]^-f \ar[rd]_-\sim & B \ar[d]_-g \ar[rd]^-\sim & \\
	& C \ar[r]^-h & D
} \]
In addition, every isomorphism of $\cat C$ must be in $W$. We call the maps in $W$ the \textbf{weak equivalences} of $\cat C$.

\begin{ex}
	The category $\mc R(B)$ of retractive spaces over $B$, together with the weak homotopy equivalences, is a homotopical category. Any finite product $\mc R(B_1) \times \ldots \times \mc R(B_n)$ is also a homotopical category, using the maps that are $n$-tuples of weak equivalences.
\end{ex}

The two conditions on $W$ are automatically satisfied if there is a functor $\delta\colon \cat C \to \cat D$ and $W$ consists of those maps $f$ for which $\delta(f)$ is an isomorphism. The converse of this is not quite true \cite[27.5]{dhks}. But when $W$ satisfies the above two conditions, one can find a functor $\delta$ that turns the maps in $W$ into isomorphisms and does as little else as possible.
\begin{prop}\cite[1.1]{gabriel_zisman}
	For any homotopical category $(\cat C,W)$, there is a \textbf{homotopy category}\index{homotopy!category} $\ho\cat C = \cat C[W^{-1}]$ and a functor $\delta\colon \cat C \to \ho\cat C$, that is initial among all functors out of $\cat C$ sending weak equivalences to isomorphisms.
\end{prop}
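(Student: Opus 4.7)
The plan is to give the classical formal construction of the localization, essentially due to Gabriel--Zisman, which works for an arbitrary category $\cat C$ with an arbitrary class $W$ of marked morphisms; the 2-out-of-6 and isomorphism-closure properties of a homotopical category are not needed for the existence statement itself, only for later niceness results.

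First I would build a directed graph $\Gamma$ with $\mathrm{ob}(\Gamma)=\mathrm{ob}(\cat C)$ and with two kinds of edges: every arrow $f\colon X \to Y$ of $\cat C$ contributes a forward edge, and every weak equivalence $w\colon X \to Y$ in $W$ contributes a formal backward edge $w^{-1}\colon Y \to X$. Let $\mathrm{Free}(\Gamma)$ be the free category on $\Gamma$, whose morphisms are finite composable paths (``zig-zags''), composed by concatenation, with identities given by the empty paths. Now define $\ho\cat C = \cat C[W^{-1}]$ as the quotient of $\mathrm{Free}(\Gamma)$ by the smallest congruence relation (i.e.\ equivalence relation on each hom that respects composition) imposing the following relations: (a) for composable $f,g$ in $\cat C$, the length-two path $g\cdot f$ is identified with the length-one edge corresponding to the composite $g\circ f$ in $\cat C$; (b) the edge corresponding to $\id_X$ is identified with the empty path at $X$; (c) for every $w\in W$, the composites $w^{-1}\cdot w$ and $w\cdot w^{-1}$ are identified with the appropriate empty paths. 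Define $\delta\colon \cat C \to \ho\cat C$ on objects as the identity and on arrows by sending $f$ to its class as a length-one path; relations (a) and (b) make $\delta$ a functor, and relation (c) shows it sends each $w\in W$ to an isomorphism.

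The main step is verifying the universal property. Suppose $G\colon\cat C\to\cat D$ is any functor sending $W$ to isomorphisms. Extend $G$ to the graph $\Gamma$ by setting $\hat G(f) = G(f)$ on forward edges and $\hat G(w^{-1}) = G(w)^{-1}$ on backward edges. By the universal property of the free category, $\hat G$ extends uniquely to a functor $\tilde G\colon \mathrm{Free}(\Gamma)\to\cat D$. One checks that $\tilde G$ respects each of the relations (a), (b), (c): (a) and (b) hold because $G$ itself is a functor, and (c) holds because $G(w)\cdot G(w)^{-1}$ and $G(w)^{-1}\cdot G(w)$ are identities in $\cat D$. Hence $\tilde G$ descends to a functor $\bar G\colon\ho\cat C\to\cat D$ satisfying $\bar G\circ\delta = G$. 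Uniqueness is automatic: any such $\bar G$ must agree with $G$ on forward edges, and then must send $w^{-1}$ to $G(w)^{-1}$ since $\bar G\delta(w)$ is already determined and must be invertible with a unique inverse; since forward and backward edges generate every morphism of $\ho\cat C$, $\bar G$ is fully determined.

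The genuine obstacle is not logical but set-theoretic: a priori the hom-classes of $\ho\cat C$ are proper classes, since we are quotienting a free category on a class-sized graph, so the construction yields only a ``large'' category. For the bare existence statement this is harmless if one is willing to pass to a higher universe, which is the convention implicit in citing \cite{gabriel_zisman}; local smallness of $\ho\cat C$ is a separate question that typically requires extra structure (a calculus of fractions, a model structure, etc.) to resolve, and is not claimed here.
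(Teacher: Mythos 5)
Your proof is correct and is exactly the Gabriel--Zisman zig-zag construction that the paper itself sketches in the discussion immediately following the proposition (the paper merely cites \cite{gabriel_zisman} without reproducing the argument). Your remarks that the 2-out-of-6 condition plays no role in the bare existence statement, and that the hom-classes are only sets under additional hypotheses such as the presence of a model structure, agree with the paper's footnote on this point.
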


We may take the objects of $\ho \cat C$ to be the objects of $\cat C$, and we do so throughout this text. The set of maps from $X$ to $Y$ is the set of zig-zags
\[ \xymatrix{
	X \ar[r] &  X_1 & \ar[l]_-\sim X_2 & \ar[l]_-\sim X_3 \ar[r] &  X_4 & \ar[l]_-\sim X_5 \ar[r] &  X_6 \ar[r] &  X_7 \ar[r] &  \ldots \ar[r] &  Y
} \]
in which every backwards map is a weak equivalence, subject to the relation that we may compose (or un-compose) two maps in the same direction, and cancel out (or introduce) a pair of maps of the form $\overset{w}\rightarrow \overset{w}\leftarrow$ or $\overset{w}\leftarrow \overset{w}\rightarrow$ for any weak equivalence $w$.\footnote{Technically, this forms a class in general and not necessarily a set. We are guaranteed it is a set when $\cat C$ is a model category, see \cite{hovey_model_cats}.}

A \textbf{homotopical functor}\index{homotopical!functor} $F\colon \cat C \to \cat D$ is a functor that sends every weak equivalence to a weak equivalence. By the universal property of $\ho \cat C$, for each homotopical functor $F$ there is a unique functor $\ho F$, the \textbf{total derived functor}\index{derived functor} of $F$, making this square of functors strictly commute.
\[ \xymatrix @R=1.7em @C=3em{
	\cat C \ar[d]_-\delta \ar[r]^-F & \cat D \ar[d]^-\delta \\
	\ho\cat C \ar@{-->}[r]_-{\ho F} & \ho\cat D
} \]
We sometimes say ``$F$ on the homotopy category'' instead of $\ho F$, since $F$ determines $\ho F$ on the nose.

Any natural transformation $\eta\colon F \Rightarrow G$ of homotopical functors induces a natural transformation $\ho\eta\colon \ho F \Rightarrow \ho G$.
Passing to the homotopy category preserves composition, both of functors and of natural transformations. This can be restated in the language of 2-categories as follows.

\begin{prop}
	The above operations specify a 2-functor from the 2-category of homotopical categories, homotopical functors, and natural transformations to the 2-category of categories, functors, and natural transformations.
\end{prop}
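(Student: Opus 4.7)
The plan is to verify the five axioms of a 2-functor: preservation of identity 1-morphisms and of 1-morphism composition, plus preservation of identity 2-morphisms together with their vertical and horizontal composition. The underlying assignments $(\cat C, W) \mapsto \ho\cat C$, $F \mapsto \ho F$, and $\eta \mapsto \ho\eta$ have all been introduced just above, so only the compatibility checks remain. Throughout I would rely on two ingredients: the universal property characterizing $\ho F$ as the unique functor satisfying $\delta \circ F = \ho F \circ \delta$, and the explicit pointwise formula $(\ho\eta)_X = \delta(\eta_X)$.

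The 1-morphism axioms follow immediately from the uniqueness clause in the universal property. Given composable homotopical functors $\cat C \xrightarrow{F} \cat D \xrightarrow{G} \cat E$, the composite $\ho G \circ \ho F$ satisfies $(\ho G \circ \ho F) \circ \delta = \ho G \circ \delta \circ F = \delta \circ G \circ F$, identifying it as the unique functor induced by the homotopical functor $G \circ F$. Hence $\ho(G \circ F) = \ho G \circ \ho F$, and the same pattern yields $\ho(\id_{\cat C}) = \id_{\ho\cat C}$.

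For 2-morphisms, preservation of identities and of vertical composition is immediate from functoriality of $\delta$, since $(\ho\,\id_F)_X = \delta(\id_{F(X)}) = \id_{F(X)}$ and $(\ho(\beta \cdot \alpha))_X = \delta(\beta_X \circ \alpha_X) = \delta(\beta_X) \circ \delta(\alpha_X) = (\ho\beta \cdot \ho\alpha)_X$. Horizontal composition is the step that needs a touch more care. For $\alpha \colon F \Rightarrow F'$ between functors $\cat C \to \cat D$ and $\beta \colon G \Rightarrow G'$ between functors $\cat D \to \cat E$, the Godement formula and the identity $\delta \circ G' = \ho G' \circ \delta$ give
\[ (\ho(\beta \ast \alpha))_X = \delta(G'(\alpha_X)) \circ \delta(\beta_{F(X)}) = \ho G'((\ho\alpha)_X) \circ (\ho\beta)_{(\ho F)(X)} = (\ho\beta \ast \ho\alpha)_X. \]
The only wrinkle in the whole argument is confirming that the pointwise recipe for $\ho\eta$ really is natural over arbitrary morphisms in $\ho\cat C$, rather than merely over morphisms coming from $\cat C$; this reduces to a check on single arrows of a zig-zag representative, where forward maps are handled by naturality of $\eta$ and functoriality of $\delta$, while the formal inverse of a weak equivalence $w$ is handled by inverting the naturality square at $w$, using that $F(w)$ and $G(w)$ are weak equivalences because $F$ and $G$ are homotopical. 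I do not expect any genuine obstacle; each axiom collapses to a short computation once the universal property and the formula for $\ho\eta$ are in hand.
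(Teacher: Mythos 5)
Your proof is correct, and it matches the paper's intent: the paper omits an explicit proof because the functoriality of $\ho(-)$ on 1-cells and 2-cells follows in exactly this routine way from the universal property of localization and the pointwise formula $(\ho\eta)_X = \delta(\eta_X)$. Your extra paragraph justifying that $\ho\eta$ is natural with respect to all morphisms of $\ho\cat C$ (reducing to zig-zag generators and inverting naturality squares at weak equivalences) is the one point someone might gloss over, and you handle it correctly.
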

\begin{cor}
	Any adjunction of homotopical functors $(F \adj G)$ gives an adjunction on their images in the homotopy category $(\ho F \adj  \ho G)$.
\end{cor}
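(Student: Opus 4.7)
The plan is to read this off directly from the 2-functoriality statement in the preceding proposition. An adjunction $(F \adj G)$ is by definition the data of a unit $\eta \colon \id_{\cat C} \Rightarrow GF$ and a counit $\epsilon \colon FG \Rightarrow \id_{\cat D}$ satisfying the two triangle identities $(\epsilon F)(F\eta) = \id_F$ and $(G\epsilon)(\eta G) = \id_G$. Since $F$ and $G$ are homotopical, so are the composites $GF$ and $FG$, and the identity functors are trivially homotopical; hence $\eta$ and $\epsilon$ are natural transformations of homotopical functors, in the sense of the 2-category appearing in the proposition.

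First I would apply $\ho(-)$ to produce natural transformations $\ho\eta$ and $\ho\epsilon$. To identify their source and target, I would invoke two immediate consequences of 2-functoriality: $\ho(-)$ preserves composition of functors, so $\ho(GF) = \ho G \circ \ho F$ and $\ho(FG) = \ho F \circ \ho G$; and it sends identity functors to identity functors, so $\ho(\id_{\cat C}) = \id_{\ho\cat C}$ and $\ho(\id_{\cat D}) = \id_{\ho\cat D}$. (The latter is essentially automatic from the universal property of $\delta$, since $\delta \circ \id_{\cat C} = \id_{\ho\cat C} \circ \delta$.) Consequently $\ho\eta$ and $\ho\epsilon$ have exactly the shape of a candidate unit and counit for $(\ho F \adj \ho G)$.

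Next I would verify the triangle identities. Again by 2-functoriality, $\ho(-)$ preserves whiskering and vertical composition of natural transformations, so applying it to $(\epsilon F)(F\eta) = \id_F$ gives
\[ (\ho\epsilon \, \ho F)(\ho F \, \ho\eta) \;=\; \ho((\epsilon F)(F\eta)) \;=\; \ho(\id_F) \;=\; \id_{\ho F}, \]
and similarly $(\ho G \, \ho\epsilon)(\ho\eta \, \ho G) = \id_{\ho G}$. These are precisely the triangle identities for the adjunction $(\ho F \adj \ho G)$, completing the argument.

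I do not foresee a genuine obstacle here; the entire proof is a mechanical unpacking of what it means for $\ho(-)$ to be a 2-functor. The only point that deserves a sentence of justification is the on-the-nose equality $\ho(\id_{\cat C}) = \id_{\ho\cat C}$, but this follows instantly from the universal property of the localization $\delta$.
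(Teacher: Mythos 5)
Your proof is correct and is exactly the argument the paper intends: the corollary is stated as an immediate consequence of the preceding 2-functoriality proposition, with no further proof given, and your unpacking of how a 2-functor transports the unit, counit, and triangle identities is the standard mechanical verification that fills that gap.
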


Next recall that a \textbf{left deformation retract} or \textbf{cofibrant replacement} is any functor $Q\colon \cat C \to \cat C$ that admits an equivalence to the identity, $QX \overset\sim\to X$. The functor $F\colon \cat C \to \cat D$ is \textbf{left-deformable}\index{left-deformable!functor} if we can find a full subcategory $\cat A \subset \cat C$ on which $F$ preserves weak equivalences, and a cofibrant replacement functor $Q$ whose image lies in $\cat A$.

If $F$ is left-deformable, its \textbf{left-derived functor}\index{derived functor} is the homotopical functor
\[ \L F = F \circ Q. \]
%
The left-derived functor satisfies two universal properties, which imply that it does not actually depend on the choice of retraction functor $Q$. To state these properties, consider the category of all functors $\Fun(\cat C, \cat D)$, with weak equivalences given by natural transformations that are a weak equivalence at each object. The homotopical functors form a full subcategory $\Fun^h(\cat C,\cat D)$.

\begin{prop}\label{prop:derived_functors_universal_property}
	If one such $\cat A$ and $Q$ exist, then $\ho\L F$ is terminal among all those functors $\ho\cat C \to \ho\cat D$ admitting a map to $F$ as functors $\cat C \to \ho\cat D$. Likewise $\L F$ is terminal in comma category $\ho(\Fun^{h}(\cat C,\cat D) \downarrow F)$ of homotopy functors mapping to $F$, with the weak equivalences inverted.
\end{prop}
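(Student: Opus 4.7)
The plan is to exhibit the canonical comparison map and then verify both universal properties using the natural weak equivalence $q\colon Q \Rightarrow \id_{\cat C}$ to transport data between $\cat C$ and the deformation subcategory $\cat A$.

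First I would observe that $Q$ is automatically a homotopical functor: given a weak equivalence $f\colon X \to Y$, the naturality square of $q$ at $f$ has $q_X$ and $q_Y$ as weak equivalences, so the 2-out-of-6 property forces $Qf$ to be a weak equivalence. Combined with the hypothesis that $F$ preserves weak equivalences on $\cat A \supseteq Q\cat C$, this makes $\L F = F \circ Q$ itself homotopical, so it descends to $\ho\L F\colon \ho\cat C \to \ho\cat D$, and the natural transformation $Fq\colon \L F \Rightarrow F$ exhibits $(\L F, Fq)$ as an object of each of the two comma categories appearing in the statement.

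Given a competitor $(G, \alpha)$ for the first universal property, I would define $\beta_X := \alpha_{QX} \circ G(\delta q_X)^{-1}$, using that $\delta q_X$ is an isomorphism in $\ho\cat C$. Naturality of $\beta$ as a transformation of functors $\ho\cat C \to \ho\cat D$ is a diagram chase combining naturality of $\alpha$ with naturality of $q$, and the required identity $\delta Fq \circ \beta\delta = \alpha$ is the naturality square of $\alpha$ at each $q_X$. For uniqueness, any competitor $\beta'$ restricted to $\delta QX$ is forced by the compatibility condition together with invertibility of $\delta Fq_{QX}$ in $\ho\cat D$ (since $QX$ and $QQX$ both lie in $\cat A$) to equal $(\delta Fq_{QX})^{-1}\circ\alpha_{QX}$; naturality of $\beta'$ along the isomorphism $\delta q_X$ then propagates this to a unique value at every other object. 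For the second universal property, given $(H,\phi)$ with $H$ homotopical, the zig-zag
\[ H \xleftarrow[\sim]{Hq} HQ \xrightarrow{\phi Q} FQ = \L F \]
is invertible in $\ho(\Fun^h(\cat C,\cat D)\downarrow F)$ because $Hq$ is an objectwise weak equivalence, and naturality of $\phi$ at $q$ shows the result is compatible with the maps to $F$.

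The hard part will be the final coherence check in the first uniqueness argument: one must verify that the two a priori different weak equivalences $F(Qq_X)$ and $F(q_{QX})$ from $FQQX$ to $FQX$ represent the same isomorphism in $\ho\cat D$, so that the formula derived from naturality of $\beta'$ collapses to the formula defining $\beta$. This rests on the identity $Fq_X \circ F(Qq_X) = Fq_X \circ F(q_{QX})$ coming from naturality of $q$ at $q_X$ itself, together with the fact that within the comma category over $F$ the maps down to $F$ pin down the required identification. Once this coherence is handled, the two parts of the proposition become essentially reformulations of each other: the first works strictly inside $\Fun(\ho\cat C, \ho\cat D)$, while the second invokes the same argument after inverting objectwise weak equivalences in $\Fun^h(\cat C,\cat D)$.
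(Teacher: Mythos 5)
Your argument for the \emph{first} universal property is correct and essentially the standard one (it is a close paraphrase of the proof in the reference the paper cites). One thing worth stressing: the ``coherence check'' you flag at the end is not actually needed. Trace through the three verifications: (i) naturality of $\beta$ at $\delta f$ for $f\colon X\to Y$ in $\cat C$ follows solely from naturality of $q$ at $f$ together with naturality of $\alpha$ at $Qf$; (ii) the compatibility $\delta Fq\circ\beta=\alpha$ is exactly naturality of $\alpha$ at $q_X$; and (iii) uniqueness holds because any competitor $\beta'$ must agree with $\beta$ at $QX$ (invertibility of $\delta Fq_{QX}$, since $QQX,QX\in\cat A$), after which naturality along the \emph{isomorphism} $\delta q_X$ forces $\beta'_X=\beta_X$. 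At no point is the identity $\delta FQq_X=\delta Fq_{QX}$ required; since $\beta$ is shown to be a morphism in the comma category, and that morphism is shown to be unique, that identity is a \emph{consequence} of the argument rather than a premise. So you can safely delete the last paragraph's concern.

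The genuine gap is in the \emph{second} universal property. You establish existence of a morphism $(H,\phi)\to(\L F,Fq)$ in $\ho(\Fun^h(\cat C,\cat D)\downarrow F)$ via the zig-zag $H\xleftarrow{\,Hq\,}HQ\xrightarrow{\,\phi Q\,}\L F$, and your check that naturality of $\phi$ at $q$ makes $\phi Q$ land over $F$ is right. But you do not address \emph{uniqueness}, and this does not fall out of the part-one argument: in the Gabriel--Zisman localization the morphisms are equivalence classes of zig-zags, so the componentwise ``evaluate at $QX$ and propagate'' strategy has no direct analogue. One needs a separate argument showing that an arbitrary zig-zag from $(H,\phi)$ to $(\L F,Fq)$ is in the class of the one you wrote down -- for instance by observing that precomposition with $Q$, together with the natural weak equivalence $(-)q$, yields a left deformation retraction on $\Fun^h(\cat C,\cat D)\downarrow F$, and then using it to identify the relevant hom-set in the localization with something concrete. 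Until something of this kind is supplied, the proof of the second part is incomplete. (The paper's claim that the two parts are proved ``by exactly the same argument'' is a gesture, not a demonstration; your proposal should not take it at face value.)
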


\begin{proof}
	The first part is classical and a proof can be found for instance in \cite[6.4]{riehl_basic}. The second part is perhaps less classical but it is proven by exactly the same argument.
\end{proof}

So $\L F$ satisfies two universal properties: one in the homotopy category of functors $\ho\Fun(\cat C,\cat D)$, and one as functors on the homotopy category $\Fun(\ho\cat C,\ho\cat D)$.\footnote{In fact, the left-derived functor satisfies at least four more universal properties, before we even begin to consider the ``homotopical'' universal properties that state that a certain space of maps is weakly contractible. The two that we have listed will be the most directly useful to us.}
Therefore any two models of $\L F$ are \emph{canonically isomorphic} in these categories. In fact, any map between two models of $\L F$, in either of these two settings, must be equal to the canonical isomorphism, as soon as we know that it commutes with the map back to $F$.\footnote{If $F$ is not left-deformable, the situation gets more complicated. There might be a functor satisfying only one of the above two universal properties. Or there may be two different ones, and the image of one in the homotopy category is not equivalent to the other. Our discussion here is condensed from the more general discussion found in \cite{dhks} and \cite{shulman2006homotopy}.}

If instead $Q$ admits a natural equivalence \emph{from} the identity, we call it a \textbf{right deformation retract} or \textbf{fibrant replacement}, denoting it by $R$ instead of $Q$. Dualizing the above discussion gives the \textbf{right derived functor}\index{derived functor}\index{right-deformable!functor} $\R F = F \circ R$. This functor, and its image in the homotopy category, satisfy the duals of the above universal properties. So they are also unique up to canonical isomorphism.

\begin{ex}\label{ex:strict_and_uncorrected}\hfill
	\vspace{-1em}
	
	\begin{itemize}
		\item Every homotopical functor $F$ has both a left- and a right-derived functor, both of which are equivalent to $F$ itself. 
		\item The pushforward $f_!\colon \mc R(A) \to \mc R(B)$ is left-deformable. It is homotopical on the subcategory of $h$-cofibrant spaces, and the whiskering functor $W$ from \autoref{prop:whiskering_properties} gives a left retraction to this category. 
		\item The pullback $f^*\colon \mc R(B) \to \mc R(A)$ is right-deformable. It is homotopical on the subcategory of $q$-fibrant spaces, and the functor $RX = X \times_B B^I$, gives a right deformation onto this subcategory. 
		\item The external smash product $\barsmash\colon \mc R(A) \times \mc R(B) \to \mc R(A \times B)$ is left-deformable. It is homotopical on the subcategory of pairs of $h$-cofibrant spaces.
		\item The external mapping space $\barmap_B\colon \mc R(B)^{\op} \times \mc R(A \times B) \to \mc R(A)$ is right-deformable. It is homotopical on the subcategory of pairs where the first space is $h$-cofibrant and homotopy equivalent to a cell complex, and the second is $h$-fibrant.
		\item If $f$ is a fiber bundle with cell complex fiber, then $f_*$ is right-deformable by \autoref{prop:sheafy_pushforward_preserves}. In (CG), $f_*$ is defined for every map $f$, but this is of limited use because it does not appear to be right-deformable in general.
		\item The strict cofiber $SC(-)$ and uncorrected homotopy cofiber $C(-)$ are both left-deformable functors $a\mc R(B) \to \mc R(B)$. 
		The collapse map $C(-) \to SC(-)$ induces an equivalence $\L C(-) \overset\sim\to \L SC(-)$, and we refer to both left-derived functors as the homotopy cofiber. This all follows directly from the gluing lemma, \autoref{prop:technical_cofibrations}.
		\item Dual to the above example, the strict fiber and uncorrected homotopy fiber are both right-deformable, their right-derived functors are equivalent, and we call them the homotopy fiber.
		\item The homotopy colimit of a diagram of spaces $X\colon \cat I \to \cat{Top}$ is the left-derived functor of the colimit, applied to $X$.\index{homotopy!colimit} As a functor over $\colim$, it is (canonically) equivalent to the functor given by the usual Bousfield-Kan formula \cite[XII]{bousfield_kan}.
		\item If $R$ is a ring, the Hochschild complex $B^{\cyc}_*(R;M)$ for $R$-$R$ bimodules $M$ is the left-derived functor of the abelianization functor
		\[ M \leadsto M/(rm = mr, r \in R), \]
		in the homotopical category of $R$-chain complexes and quasi-isomorphisms.\index{Hochschild homology} There are two common models for this, one by a cyclic bar construction and one by a derived tensor of $R$ and $M$ over $R \otimes R^{\op}$. However these two models must agree, up to a canonical quasi-isomorphism of functors lying over the abelianization.
	\end{itemize}
\end{ex}

\begin{rmk}
	If $F$ has neither a left nor a right derived functor, it may have a ``middle derived functor'' $\mathbb M F$\index{middle-deformable}\index{derived functor} obtained by applying a sequence of left and right deformation retracts. For instance, the internal smash product $\sma_B$ is middle-deformable. Unfortunately, middle-derived functors are not unique in general, they depend explicitly on which retractions we choose. 
\end{rmk}
	

\begin{warn}\label{warn:changing_equivs}
	If we change the class of weak equivalences $W$, the derived functors $\L F$ or $\R F$ can change. For instance let $G$ be a finite group, $\cat C$ the category of $G$-spaces, and $F(X) = X^G$, the $G$-fixed points of $X$. If we derive $F$ using the maps inducing equivalences on all fixed-point subspaces $X^H \overset\sim\to Y^H$, we just get $X^G$ again. But if we derive $F$ using the maps inducing equivalences on the underlying space $X \overset\sim\to Y$, we get the homotopy fixed points $X^{hG}$.
\end{warn}

\beforesubsection
\subsection{Composing and comparing derived functors}\label{sec:composing_comparing}\aftersubsection

We need to lift the isomorphisms that commute $f_!$, $f^*$, and $\barsmash$ to weak equivalences that commute the derived functors $\L f_!$, $\R f^*$, and $\barsmash^{\L}$, in a canonical way. We will describe a new theoretical framework for doing this, since the approach of \cite{ms} relies on choosing a class of cofibrant-fibrant objects, but we don't wish to have the theory depend on such choices. We take motivation from \cite{shulman_comparing} and adapt the idea to accommodate longer strings of functors.

Suppose we have a composable list of functors of homotopical categories
\[ \xymatrix{
	\cat C \ar@{=}[r] & \cat C_0 \ar[r]^-{F_1} & \cat C_1 \ar[r]^-{F_2} & \ldots \ar[r]^-{F_n} & \cat C_n \ar@{=}[r] & \cat C'
} \]
and each functor $F_i$ is left or right deformable.\footnote{If $F_i$ is both left and right deformable and the canonical map $\L F_i \to F_i \to \R F_i$ is \emph{not} an equivalence, fix a choice of whether to look at $\L F_i$ or $\R F_i$. If the canonical map is an equivalence (for instance if $F_i$ is already a homotopy functor) then the choice does not matter for the discussion that follows.} Let $\D F_i$ refer to the left or right derived functor of $F_i$, and $\D F_i \leftrightarrow F_i$ the canonical map in the homotopy category of functors.

Our essential task is to take an isomorphism between two such composites, and deduce that the composites of the derived functors $\D F_n \circ \ldots \circ \D F_2 \circ \D F_1$ are also equivalent. This is not possible in general, even if all the functors are left-deformable. For instance, the colimit functor is a composite of a coproduct and a coequalizer:
\[ \colim \cong \textup{coeq} \circ \amalg. \]
But this isomorphism of point-set functors does not become an equivalence on the composites of left-derived functors:
\[ \L \colim \not\simeq \L(\textup{coeq}) \circ \L(\amalg). \]
In other words, the homotopy colimit is not the homotopy coequalizer of the coproduct.

So our task is not possible unless we impose some kind of condition on the list $(F_1,\ldots,F_n)$. We say the list is \textbf{coherently deformable}\index{coherently deformable} if there exists a full subcategory $\cat A_{i-1} \subseteq \cat C_{i-1}$ for each $1 \leq i \leq n$ such that
\begin{itemize}
	\item $\cat A_{i-1}$ contains the image of some composite of left and right retraction functors applied to $\cat C_{i-1}$,
	\item on $\cat A_{i-1}$, the canonical map $\D F_i \leftrightarrow F_i$ is an equivalence,\footnote{Notice this implies that $\D F_i$ satisfies the same universal property on $\cat A_{i-1}$ that it satisfies on $\cat C_{i-1}$, and also that $F_i$ is homotopical on $\cat A_{i-1}$.}
	\item and $F_i(\cat A_{i-1}) \subseteq \cat A_i$.
\end{itemize}
This condition is preserved if we restrict to a subset of the above composite, or if we replace any functor $F_i$ by $\D F_i$. Given another such list
\[ \xymatrix{
	\cat C \ar@{=}[r] & \cat D_0 \ar[r]^-{G_1} & \cat D_1 \ar[r]^-{G_2} & \ldots \ar[r]^-{G_k} & \cat D_k \ar@{=}[r] & \cat C'
} \]
we say that the \textbf{pair is coherently deformable} if there exists some coherent deformation for each one with the same choice of subcategory $\cat A_0 \subseteq \cat C$.
\begin{prop}[Canonical recipe for deriving a natural transformation]\label{prop:passing_natural_trans_to_derived_functors}\hfill
	Given a pair of coherently deformable composites as above, for each natural transformation
	\[ \eta\colon F_n \circ \ldots \circ F_1 \Rightarrow G_k \circ \ldots \circ G_1 \]
	there is a unique morphism in the homotopy category of homotopy functors, or of functors on the homotopy category,
	\[ \D \eta\colon \D F_n \circ \ldots \circ \D F_1 \Rightarrow \D G_k \circ \ldots \circ \D G_1 \]
	that when restricted to $\cat A_0$ agrees with
	\begin{align*}
	& \D F_n \circ \ldots \circ \D F_2 \circ \D F_1 \\
	\simeq\ & \D F_n \circ \ldots \circ \D F_2 \circ F_1 \\
	\simeq\ & \D F_n \circ \ldots \circ F_2 \circ F_1 \\
	\simeq\ & F_n \circ \ldots \circ F_2 \circ F_1 \\
	\overset\eta\to\ & G_k \circ \ldots \circ G_2 \circ G_1 \\
	\simeq\ & \D G_k \circ \ldots \circ G_2 \circ G_1 \\
	\simeq\ & \D G_k \circ \ldots \circ \D G_2 \circ G_1 \\
	\simeq\ & \D G_k \circ \ldots \circ \D G_2 \circ \D G_1.
	\end{align*}
	Furthermore $\D \eta$ does not depend on the choice of $\cat A_0$.
\end{prop}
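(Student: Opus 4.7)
The plan is to build $\D\eta$ in two steps: define it on $\cat A_0$ using the recipe in the statement, then extend canonically to all of $\cat C$ via a single deformation functor. I work primarily in $\ho\Fun^h(\cat C, \cat C')$, the homotopy category of homotopy functors; the corresponding claim for $\Fun(\ho\cat C, \ho\cat C')$ follows by the canonical passage. Uniqueness and independence of $\cat A_0$ then fall out of the extension step.

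For the recipe on $\cat A_0$: each of the $n + k$ equivalence steps amounts to whiskering the canonical transformation $\D F_i \leftrightarrow F_i$ (or $G_j \leftrightarrow \D G_j$) with composites of other functors on the left and right. The key observation is that at the $i$-th step on the $F$-side, the canonical map is whiskered on the right by $F_{i-1} \circ \ldots \circ F_1$, and the image condition $F_j(\cat A_{j-1}) \subseteq \cat A_j$ guarantees by induction on $j$ that this composite carries $\cat A_0$ into $\cat A_{i-1}$, where $\D F_i \leftrightarrow F_i$ is an equivalence by hypothesis. The dual induction handles the $G$-side, and the middle step is $\eta$ itself. Composing all of this in $\ho\Fun^h(\cat A_0, \cat C')$ gives a well-defined morphism $\D\eta|_{\cat A_0}$.

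For the extension, coherent deformability provides a finite composite $D\colon \cat C \to \cat C$ of left/right retraction functors whose image lies in $\cat A_0$, together with a natural zig-zag $D \leftrightarrow \id_{\cat C}$ of weak equivalences. Since $\D F_n \circ \ldots \circ \D F_1$ and $\D G_k \circ \ldots \circ \D G_1$ are homotopy functors, applying them to this zig-zag yields canonical isomorphisms to their precompositions with $D$ in $\ho\Fun^h(\cat C, \cat C')$. I then define $\D\eta$ to be the composite
\[
\D F_n \circ \ldots \circ \D F_1 \xleftarrow{\ \sim\ } (\D F_n \circ \ldots \circ \D F_1) \circ D \xrightarrow{\ (\D\eta|_{\cat A_0}) \ast D\ } (\D G_k \circ \ldots \circ \D G_1) \circ D \xrightarrow{\ \sim\ } \D G_k \circ \ldots \circ \D G_1.
\]
Uniqueness follows immediately: any competing $\alpha$ agreeing with the recipe on $\cat A_0$ satisfies $\alpha \ast D = \D\eta \ast D$, and naturality along $D \leftrightarrow \id$ then forces $\alpha = \D\eta$ in the homotopy category.

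The main obstacle, as I see it, is independence of $\cat A_0$. Given a second valid choice $(\cat A_0', D')$, the two recipes are not directly comparable because neither of $\cat A_0, \cat A_0'$ need contain the other's deformation image. My approach is to show that the transformation $\D\eta'$ built from $\cat A_0'$ also satisfies the defining recipe on $\cat A_0$, so that the uniqueness step forces $\D\eta = \D\eta'$. For $X \in \cat A_0$, the value $\D\eta'_X$ is related to $\D\eta'_{D'X}$ by the canonical isomorphism coming from naturality of $\D\eta'$ along the weak equivalence $X \leftrightarrow D'X$. Since the recipes on $\cat A_0$ and $\cat A_0'$ are built from the same natural transformations $\D F_i \leftrightarrow F_i$, $\eta$, and $G_j \leftrightarrow \D G_j$, a chase of naturality squares in $\ho\Fun^h$ along the zig-zag $X \leftrightarrow D'X$ identifies the recipe on $X$ with the recipe on $D'X$, conjugated by the canonical isomorphisms. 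This step is formal but fiddly, reducing ultimately to the naturality of each canonical transformation in its input variable together with the fact that all composites in play are already homotopy functors.
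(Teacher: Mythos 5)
Your construction of $\D\eta$ and your uniqueness argument follow essentially the same route as the paper's: restriction to $\cat A_0$ is an equivalence of the relevant functor categories (the deformation $D \leftrightarrow \id$ provides the inverse), so a morphism between fixed sources and targets is determined by its restriction, and any well-defined transformation on $\cat A_0$ -- such as the one in the recipe -- extends uniquely. You spell out the extension more explicitly than the paper does, but the content is the same.

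Where you diverge, and where there is a genuine gap, is independence of $\cat A_0$. You attempt a direct ``both recipes agree on $\cat A_0$'' chase, transporting the recipe applied at $X$ to the recipe applied at $D'X$ along $X \leftrightarrow D'X$. You assert this reduces to ``naturality of each canonical transformation together with the fact that all composites in play are already homotopy functors'' -- but that last clause is false and hides the real issue. The mixed composites $\D F_n \circ \cdots \circ \D F_{i+1} \circ F_i \circ \cdots \circ F_1$ are \emph{not} homotopy functors on $\cat C$; they are homotopical only on suitable subcategories. Your chase requires knowing that each $F_{i} \circ \cdots \circ F_1(X) \to F_{i} \circ \cdots \circ F_1(D'X)$ is a weak equivalence, and neither $\cat A_0$ nor $\cat A_0'$ contains the \emph{morphism} $X \to D'X$, only its endpoints. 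This is fixable -- since the canonical map $\D F_i \leftrightarrow F_i$ is an equivalence at both endpoints and $\D F_i$ is homotopical, 2-out-of-3 shows $F_i$ preserves that particular weak equivalence, and an induction propagates this -- but you neither state nor prove this, and the argument does not ``reduce'' to the facts you cite.

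The paper instead observes that $\cat A_0 \cup \cat A_0'$ (with the chain $\cat A_j \cup \cat A_j'$) is again a valid coherent deformation, so the uniqueness already established over this larger subcategory identifies both answers at once. That observation packages exactly the 2-out-of-3 argument your chase is missing, in a form that avoids any bookkeeping of naturality squares. You should replace your final paragraph with this union argument.
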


In particular, if $\eta$ is an equivalence on some (therefore on all) choices of $\cat A_0$, $\D\eta$ is an isomorphism. The defining property actually forces $\D$ to respect vertical compositions and units, so that $\D(\eta_2 \circ \eta_1) = \D \eta_2 \circ \D \eta_1$ (whenever the three strings of functors are jointly coherently deformable) and $\D(\id) = \id$. It also respects horizontal compositions, i.e. if $\eta$ is a horizontal composite of two smaller transformations $\alpha * \beta$ along two sub-composites of $F_n \circ \ldots \circ F_1$, then $\D\eta = \D\alpha * \D\beta$.\footnote{We can't say that $\D$ is a 2-functor because the set of all transformations $\eta$ satisfying the assumptions isn't the set of 2-cells in a 2-category. The horizontal and vertical compositions are not always defined.}

\begin{proof}
	We just need to know that restricting to $\cat A_0$ induces an equivalence on the homotopy category of homotopy functors to $\cat D$. This is true because the given deformation retracts provide the inverse equivalence. Since $\D \eta$ is a morphism, and we have already picked its source and target, it is therefore defined uniquely by its restriction to $\cat A_0$. We may furthermore pick any natural transformation we like, such as the one given in the statement of the proposition. For the last statement, if $\cat A_0$ and $\cat A_0'$ both occur as subcategories of $\cat C$ for which the above conditions hold, then $\cat A_0 \cup \cat A_0'$ also occurs, and $\D \eta$ as calculated on $\cat A_0 \cup \cat A_0'$ clearly agrees with $\D\eta$ as calculated on $\cat A_0$ or $\cat A_0'$.
\end{proof}

\begin{ex}\label{ex:derived_external_smash_and_base_change}
	Given maps $f: A \to A'$ and $g: B \to B'$, the two composites
	\[ \xymatrix{
		\mc R(A) \times \mc R(B) \ar[r]^-\barsmash \ar[d]_-{f_! \times g_!} & \mc R(A \times B) \ar[d]^-{(f \times g)_!} \\
		\mc R(A') \times \mc R(B') \ar[r]^-\barsmash & \mc R(A' \times B')
	} \]
	are coherently deformable, using for example the $h$-cofibrant spaces for all four categories. Therefore the canonical interchange isomorphism from \autoref{lem:external_smash_and_base_change} induces an equivalence of derived functors that is canonical in the homotopy category,
	\[ \L f_!X \barsmash^{\L} \L g_!Y \simeq \L (f \times g)_!(X \barsmash^{\L} Y). \]
	Similarly the two composites
	\[ \xymatrix{
		\mc R(A') \times \mc R(B') \ar[r]^-\barsmash \ar[d]_-{f^* \times g^*} & \mc R(A' \times B') \ar[d]^-{(f \times g)^*} \\
		\mc R(A) \times \mc R(B) \ar[r]^-\barsmash & \mc R(A \times B)
	} \]
	are coherently deformable, using for example the $f$-cofibrant and $h$-fibrant spaces for all four categories. We therefore also get a canonical equivalence
	\[ \R f^*X \barsmash^{\L} \R g^*Y \simeq \R(f \times g)^*(X \barsmash^{\L} Y). \]
	Both of these equivalences pass to canonical natural isomorphisms of functors on the homotopy categories $\ho\mc R(-)$, where they agree with the maps defined in \cite[9.4.1]{ms}.
\end{ex}

\begin{ex}\label{ex:derived_beck_chevalley}
	Given a commuting square of unbased spaces and ``rotated'' square of functors
	\[ \xymatrix@R=1em @C=1em{
		& A \ar[ld]_-g \ar[rd]^-f & \\
		B \ar[rd]_-p && C \ar[ld]^-q \\
		& D &
	} \qquad
	\xymatrix@R=1em @C=-.5em{
		& \mc R(A) \ar[ld]_-{g_!} \ar@{<-}[rd]^-{f^*} & \\
		\mc R(B) \ar@{<-}[rd]_-{p^*} && \mc R(C) \ar[ld]^-{q_!} \\
		& \mc R(D) &
	} \]
	if either $p$ or $q$ is an $h$-fibration then the functors are coherently deformable. The Beck-Chevalley isomorphism of \autoref{prop:beck_chevalley_spaces} therefore gives an equivalence of derived functors
	\[ \L g_! \circ \R f^* \simeq \R p^* \circ \L q_!, \]
	cf. \cite[9.4.6]{ms}.
\end{ex}

\begin{ex}
	Consider the category whose objects are diagrams of the form
	\[ \xymatrix{ X_0 \ar[r] & X_1 \ar[r] & X_2 \ar[r] & \ldots } \]
	where each map is a closed inclusion, each space $X_i$ has a $\Z/2$-action, and the maps are $\Z/2$-equivariant. There are two operations we can perform, ``colimit'' and ``$\Z/2$-fixed points,'' that commute with each other. We capture this in the commuting square:
	\[ \xymatrix @C=4em{
		\cat{Top}^{\N \times \Z/2} \ar[r]^-{\underset{\N}\colim} \ar[d]_-{\underset{\Z/2}\lim} &
		\cat{Top}^{\Z/2} \ar[d]^-{\underset{\Z/2}\lim} \\
		\cat{Top}^{\N} \ar[r]_-{\underset{\N}\colim} &
		\cat{Top}
	} \]
	The colimits can be left-deformed and the limits can be right-deformed, but the square of functors is not coherently deformable. To prove this we take $X_n = \mathbb{RP}^n$ to be the $n$-skeleton of $\mathbb{RP}^\infty$ with the trivial $\Z/2$-action. The two composites of derived functors give
	\[ \hocolim \left((\mathbb{RP}^n)^{h\Z/2}\right), \qquad \left( \hocolim \mathbb{RP}^n \right)^{h\Z/2} \]
	which are not weakly equivalent,\footnote{The framework of \cite{shulman_comparing} gives us a map between them, but the map is not an equivalence.} because by the Sullivan conjecture \cite{miller_sullivan},
	\[ \hocolim \Map(\mathbb{RP}^\infty,\mathbb{RP}^n) \simeq \hocolim * \simeq * \not\simeq \Map(\mathbb{RP}^\infty,\mathbb{RP}^\infty). \]
\end{ex}

The ``coherently deformable'' condition can be strengthened. If each functor $F_i$ is left-deformable and each subcategory $\cat A_{i-1}$ contains the image of a single left deformation retract $Q_{i-1}$, we say the list $F_n \circ \ldots \circ F_1$ is \textbf{coherently left deformable}\index{coherently left/right deformable}. This is essentially the definition found in \cite[42]{dhks}. A \textbf{coherently right deformable} list is defined similarly. This stronger condition implies the following almost immediately.
\begin{lem}
	If $F_n \circ \ldots \circ F_1$ is coherently left deformable, then the composite functor $F_n \circ \ldots \circ F_1$ is also left-deformable and the canonical map
	\[ \L F_n \circ \ldots \circ \L F_1 \to \L(F_n \circ \ldots \circ F_1) \]
	is an equivalence.
\end{lem}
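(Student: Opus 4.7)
The plan is to verify directly that $Q_0$ serves as a left deformation retract for the composite $F := F_n \circ \ldots \circ F_1$, and then to unpack the canonical map by removing the intermediate retractions $Q_i$ one at a time, checking that each removal yields a weak equivalence.

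First I would show by induction on $k$ that for $1 \leq k \leq n$, the composite $F_k \circ \ldots \circ F_1$ is homotopical on $\cat A_0$ and lands in $\cat A_k$. The base case is the hypothesis that $F_1$ is homotopical on $\cat A_0$ and satisfies $F_1(\cat A_0) \subseteq \cat A_1$. For the inductive step, $F_k \circ \ldots \circ F_1$ is by induction homotopical on $\cat A_0$ with image in $\cat A_k$, so post-composing with $F_{k+1}$, which is homotopical on $\cat A_k$ with image in $\cat A_{k+1}$, preserves both properties. Since $Q_0$ is a natural weak equivalence to the identity that lands in $\cat A_0$, this exhibits $Q_0$ as a left deformation retract for $F$, so $F$ is left-deformable with $\L F = F \circ Q_0$.

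Next I would analyze the canonical comparison map
\[ \L F_n \circ \ldots \circ \L F_1 \;=\; F_n \circ Q_{n-1} \circ F_{n-1} \circ Q_{n-2} \circ \ldots \circ F_1 \circ Q_0 \;\longrightarrow\; F_n \circ \ldots \circ F_1 \circ Q_0 \;=\; \L F \]
obtained by applying, in the order $j = n-1, n-2, \ldots, 1$, the natural transformations $Q_j \Rightarrow \id$. At the stage where $Q_j$ is about to be removed, the composite has the form $F_n \circ \ldots \circ F_{j+1} \circ Q_j \circ F_j \circ \ldots \circ F_1 \circ Q_0$. For any object $X$, the value $Z := F_j(\cdots F_1(Q_0 X))$ lies in $\cat A_j$ by the first step, so the counit $Q_j(Z) \to Z$ is a weak equivalence between two objects of $\cat A_j$. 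Applying $F_{j+1}$, which is homotopical on $\cat A_j$ and carries $\cat A_j$ into $\cat A_{j+1}$, produces a weak equivalence between two objects of $\cat A_{j+1}$; each further $F_k$ with $k > j+1$ then preserves it because $F_k$ is homotopical on $\cat A_{k-1}$. Hence every intermediate step is a natural weak equivalence, and so is the whole comparison map.

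The only point that requires attention is the bookkeeping that ensures both coherence hypotheses (homotopicality of $F_i$ on $\cat A_{i-1}$, and the containment $F_i(\cat A_{i-1}) \subseteq \cat A_i$) are available at every stage of both inductions. This is precisely what the definition of \textbf{coherently left deformable} is engineered to guarantee, so no further obstacle is anticipated. The universal property of $\L F$ from \autoref{prop:derived_functors_universal_property} then identifies the map constructed above with the canonical one, completing the argument.
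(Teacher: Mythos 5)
Your proposal is correct, and it is exactly the argument the paper leaves implicit: the lemma is stated with the remark that it follows ``almost immediately'' from the definition of coherently left deformable, and no proof is given. Your two steps — first showing by induction that $F_n \circ \ldots \circ F_1$ is homotopical on $\cat A_0$ (so that $Q_0$ serves as a left deformation retract for the composite), and then peeling off each intermediate $Q_j$ while checking that the object to which the counit $Q_j \Rightarrow \id$ is applied lies in $\cat A_j$, so that the remaining functors $F_{j+1},\ldots,F_n$ preserve the resulting weak equivalence — is the natural way to make the claim precise, and is essentially the only reasonable argument.

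One small bookkeeping slip: you remove the $Q_j$ in the order $j = n-1, n-2, \ldots, 1$, so at the stage where $Q_j$ is about to be removed, the terms $Q_{j-1}, \ldots, Q_1$ are still present, and the object should be $Z = F_j(Q_{j-1}(F_{j-1}(\cdots Q_1(F_1(Q_0 X))\cdots )))$ rather than $F_j(\cdots F_1(Q_0 X))$. This does not affect the argument, because each $Q_i$ has image in $\cat A_i$, so $Z$ still lies in $\cat A_j$; alternatively one could remove in the order $j = 1, \ldots, n-1$ to match the expression as written. Either way the conclusion stands.
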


\begin{ex}
	Any string of pushforward functors is coherently left-deformable. Any string of pullbacks is coherently right-deformable.
	In the first diagram of \autoref{ex:derived_external_smash_and_base_change}, relating $\barsmash$ and $f_!$, both branches are coherently left deformable. Therefore $\L f_!X \barsmash^{\L} \L g_!Y$ is actually the left-derived functor of the composite $f_!X \barsmash g_!Y$. Of course, the second diagram cannot satisfy this stronger condition because it composes left and right deformable functors together.
\end{ex}

\begin{prop}\label{prop:derived_adjunction}(cf. \cite[43.2]{dhks}, \cite[6.4.13]{riehl_basic}) 
	Given an adjunction $(F \adj G)$ between homotopical categories, if $F$ is left deformable and $G$ is right deformable, then the pairs of lists $\id_{\cat C}, \R G \circ F$ and $\L F \circ G, \id_{\cat D}$ are coherently deformable, and the derivations
	\[ \xymatrix @R=0.5em{
		\id_{\cat C} \ar[r]^-\eta & G \circ F \ar[r]^-r & \R G \circ F & \ar@{~>}[r] & & \id_{\cat C} \ar[r]^-{\D (r \circ \eta)} & \R G \circ \L F \\
		\L F \circ G \ar[r]^-q & F \circ G \ar[r]^-\epsilon & \id_{\cat D} & \ar@{~>}[r] & & \L F \circ \R G \ar[r]^-{\D (\epsilon \circ q)} & \id_{\cat D}
	} \]
	give an adjunction $(\L F \adj  \R G)$ of functors on the homotopy category.\index{deformable!adjunction}
\end{prop}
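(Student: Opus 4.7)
The plan is to fix specific models $\L F = F \circ Q$ and $\R G = G \circ R$ using chosen left and right deformations $Q$ and $R$, verify the coherent deformability of each pair via convenient subcategories, apply \autoref{prop:passing_natural_trans_to_derived_functors} to produce the derived unit and counit, and then verify the triangle identities by restricting to these subcategories, where they reduce to the classical triangle identities for $(F \adj G)$.

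For coherent deformability, I would first observe that $\L F = FQ$ is automatically homotopical on all of $\cat C$: given any weak equivalence $X \to X'$, the naturality square of $q\colon Q \to \id$ together with the equivalences $QX \overset\sim\to X$ and $QX' \overset\sim\to X'$ forces $QX \to QX'$ to be a weak equivalence by 2-out-of-3, and then $F$ preserves it since $QX, QX' \in \cat C_F$. Dually $\R G$ is homotopical on all of $\cat D$. Consequently, for the pair $(\id_{\cat C}, \R G \circ F)$, the full subcategory $\cat A_0 = \cat C_F \subseteq \cat C$ (the image of $Q$) serves as a common coherent deformation subcategory: the list $(\id_{\cat C})$ is trivially homotopical on any subcategory, while the list $(F, \R G)$ has deformation data $(\cat C_F, \cat D)$ since on $\cat C_F$ the canonical map $\L F \to F$ is an equivalence and $F$ is homotopical, and on $\cat D$ the functor $\R G$ is already homotopical, so $\R(\R G) \to \R G$ can be taken to be the identity. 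Dually, the pair $(\L F \circ G, \id_{\cat D})$ is coherently deformable via $\cat A_0 = \cat D_G$. \autoref{prop:passing_natural_trans_to_derived_functors} then produces unique natural transformations
\[ \D(r \circ \eta)\colon \id_{\cat C} \Rightarrow \R G \circ \L F, \qquad \D(\epsilon \circ q)\colon \L F \circ \R G \Rightarrow \id_{\cat D} \]
in the homotopy category of functors, each characterized by its restriction to the corresponding subcategory.

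For the first triangle identity, consider the composite
\[ \L F \xrightarrow{\ \L F \,*\, \D(r \circ \eta)\ } \L F \circ \R G \circ \L F \xrightarrow{\ \D(\epsilon \circ q) \,*\, \L F\ } \L F. \]
Both this composite and $\id_{\L F}$ are natural transformations between the same pair of homotopical functors. By the uniqueness part of \autoref{prop:passing_natural_trans_to_derived_functors}, applied to the triple-composite list (coherently deformable via the common $\cat A_0 = \cat C_F$), the composite is determined by its restriction to $\cat A_0$; using the horizontal-composition compatibility of $\D$ established in that same proposition, this restriction unpacks on objects $X \in \cat C_F$ to the point-set composite $\epsilon_{FQX} \circ F(\eta_{QX})$, possibly precomposed and postcomposed with instances of the equivalences $q$ and $r$. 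By the classical triangle identity for $(F \adj G)$, the point-set composite $\epsilon_{FQX} \circ F(\eta_{QX})$ equals $\id_{FQX}$, and the additional weak equivalences become identities in the homotopy category, so the derived composite equals $\id_{\L F}$. The second triangle identity is handled symmetrically using $\cat A_0 = \cat D_G$. The main obstacle is exactly this last step: the careful bookkeeping of the extra $Q$'s and $R$'s introduced by derivation, which is why it is essential to work in the homotopy category and to invoke the uniqueness clause of \autoref{prop:passing_natural_trans_to_derived_functors}.
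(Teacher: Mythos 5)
Your overall strategy — reduce the triangle identities to a point-set verification by passing to a convenient subcategory and invoking the characterizing property of $\D$ — is sound, and your discussion of coherent deformability is correct. However, the final step has a genuine gap that is not discharged by the uniqueness clause of \autoref{prop:passing_natural_trans_to_derived_functors}.

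You assert that the restriction of the derived composite $\L F \Rightarrow \L F \R G \L F \Rightarrow \L F$ ``unpacks\ldots to the point-set composite $\epsilon_{FQX} \circ F(\eta_{QX})$, possibly precomposed and postcomposed with instances of the equivalences $q$ and $r$,'' and then that ``the additional weak equivalences become identities in the homotopy category.'' Neither claim is right as stated. Writing things out using the specific point-set zig-zags that model $\D(r\circ\eta)$ and $\D(\epsilon\circ q)$, the composite at an object $X$ is
\[
FQX \overset{q}\longleftarrow FQQX \overset{\eta}\longrightarrow FQGFQX \overset{r}\longrightarrow FQGRFQX \overset{q}\longrightarrow FGRFQX \overset{\epsilon}\longrightarrow RFQX \overset{r}\longleftarrow FQX,
\]
where the $q$'s and $r$'s occur \emph{interleaved} throughout, not merely at the ends. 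In the homotopy category these equivalences become \emph{isomorphisms}, not identities, and an arbitrary composite of isomorphisms and point-set maps need not be $\id_{FQX}$. To see that it is, one must push the $q$'s and $r$'s past $\eta$ and $\epsilon$ using a handful of naturality squares (for $q\colon FQ\Rightarrow F$ against $\eta$, for $q$ against $Gr$, and for $\epsilon$ against $r$), then apply the point-set triangle identity, then finally use that $q_Q$ and $Qq$ agree in the homotopy category. This is exactly the content of the $3\times 3$ commuting diagram in the paper's proof, and it is what you have skipped. Your invocation of ``horizontal-composition compatibility'' does not help here either: the underived transformations $F*(r\circ\eta)$ and $(\epsilon\circ q)*F$ have mismatched sources and targets at the point-set level (one involves $GRF$, the other $QGF$), so $\D$ cannot be applied to a point-set vertical composite that does not exist. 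The missing ingredient in your argument is precisely that bookkeeping; the uniqueness clause only tells you it suffices to check on $\cat A_0$, it does not make the check on $\cat A_0$ trivial.
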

\begin{proof}
	We check that the two diagrams below commute. The maps are taken either in the homotopy category of functors, or as functors of homotopy categories. The composite along each left-hand edge and top row satisfies the defining property of $\D (r \circ \eta)$ and along each right-hand column and bottom edge satisfies the defining property of $\D (\epsilon \circ q)$. This proves the triangle identities.
	\[ \xymatrix{
		FQQ \ar[d]^-{1q = q1}_-\sim \ar[r]^-\eta & FQGFQ \ar[r]^-r \ar[d]^-q & FQGRFQ \ar[d]^-q \\
		FQ \ar@{=}[rd] \ar[r]^-\eta & FGFQ \ar[r]^-r \ar[d]^-\epsilon & FGRFQ \ar[d]^-\epsilon \\
		& FQ \ar[r]^-r_-\sim & RFQ
	} \qquad
	\xymatrix{
		QGR \ar[d]^-q_-\sim \ar[r]^-\eta & GFQGR \ar[d]^-q \ar[r]^-r & GRFQGR \ar[d]^-q \\
		GR \ar@{=}[rd] \ar[r]^-\eta & GFGR \ar[r]^-r \ar[d]^-\epsilon & GRFGR \ar[d]^-\epsilon \\
		& GR \ar[r]^-{1r = r1}_-\sim & GRR
	} \]
\end{proof}

\begin{ex}
	We therefore get an adjunction of derived functors $(\L f_! \adj  \R f^*)$ for any map $f\colon A \to B$ of unbased spaces. We also get an adjunction $(\L f^* \simeq \R f^* \adj \R f_*)$ when $f$ is a fiber bundle where both base and fiber are cell complexes.
\end{ex}

\beforesubsection
\subsection{Deriving symmetric monoidal categories and bifibrations}\label{sec:deriving_SMBFs}\aftersubsection

If $(\mc C,\otimes,I)$ is both a symmetric monoidal category and a homotopical category, we might wish to lift the symmetric monoidal structure to $\ho\mc C$. We say that the symmetric monoidal structure is \textbf{left-deformable}\index{left-deformable!symmetric monoidal category} if there is a full subcategory $\cat A$ (whose objects are ``cofibrant'') containing a left deformation retract $Q$ of $\mc C$, such that
\begin{enumerate}
	\item[(SM1)] $\otimes$ preserves cofibrant objects and weak equivalences between them,
	\item[(SM2)] and $I$ is cofibrant.\footnote{This could be weakened by only asking for $QI \otimes QX \to I \otimes QX$ to be an equivalence, because in that case we simply enlarge $\cat A$ to include $I$. This returns us to the original condition (2) without breaking condition (1).}
\end{enumerate}

\begin{ex}
	The symmetric monoidal category $(\mc R,\barsmash,S^0)$ is left-deformable, using for instance the $h$-cofibrant spaces and the whiskering functor $W$.
\end{ex}

\begin{prop}\label{prop:deform_sym_mon_cat}
	If $(\mc C,\otimes,I)$ is left-deformable then there is a canonical symmetric monoidal structure on $\ho\mc C$ whose product is $\otimes^{\L}$ and whose unit is $I$.
\end{prop}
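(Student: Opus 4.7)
The plan is to work in four stages: construct the derived product, derive the structure isomorphisms, verify coherence, and check canonicity.

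First I would build the bifunctor $\otimes^{\L}\colon \ho\mc C \times \ho\mc C \to \ho\mc C$. By (SM1), the functor $\otimes\colon \mc C \times \mc C \to \mc C$ is homotopical on the full subcategory $\cat A \times \cat A$, and $Q \times Q$ is a left deformation of $\mc C \times \mc C$ landing in this subcategory. Hence $\otimes$ is left-deformable as a bifunctor, and $\otimes^{\L}(X,Y) := Q(X) \otimes Q(Y)$ descends to a well-defined bifunctor on $\ho\mc C$, with the universal properties of \autoref{prop:derived_functors_universal_property}. An easy induction using (SM1) shows that any iterated tensor product $X_1 \otimes \cdots \otimes X_n$ is left-deformable via $Q \times \cdots \times Q$, and that any composite of such iterated products together with the unit insertion $I\colon * \to \mc C$ (allowed by (SM2), which ensures $I \in \cat A$) is coherently left-deformable in the sense of \autoref{sec:composing_comparing}.

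Next I would derive the structure isomorphisms $\alpha, \lambda, \rho, \gamma$. Each of the four squares of functors defining these isomorphisms has both legs given by such composites of $\otimes$ and insertions of $I$, so both legs of each square are coherently left-deformable with a common source subcategory (e.g. $\cat A^{\times n}$). Applying \autoref{prop:passing_natural_trans_to_derived_functors} yields unique derived natural transformations $\D\alpha, \D\lambda, \D\rho, \D\gamma$, and each of these is a natural \emph{iso}morphism because on cofibrant inputs the canonical recipe reduces to $\alpha, \lambda, \rho, \gamma$ themselves (all of which are isomorphisms on $\mc C$, hence in particular on $\cat A$).

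Then I would verify coherence. Mac Lane's theorem reduces this to checking a finite list of pentagon, triangle, hexagon, and symmetry diagrams. In each case the diagram is a commuting diagram of natural transformations whose vertices are composites of $\otimes$ and $I$, hence are coherently left-deformable. The final clause of \autoref{prop:passing_natural_trans_to_derived_functors} tells us that the operation $\D$ respects identities, vertical composition, and horizontal composition whenever all pieces are jointly coherently deformable. Therefore applying $\D$ termwise converts each coherence diagram in $\mc C$ into the corresponding coherence diagram in $\ho\mc C$, and commutativity is preserved. The main obstacle is really just bookkeeping at this step: one must check that each individual 2-cell in each coherence diagram admits a common coherent deformation, but this is immediate because all the relevant composites reduce to iterated $\otimes$ of the same input cofibrant objects together with finitely many inserted copies of the cofibrant unit $I$.

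Finally, canonicity follows from the last clause of \autoref{prop:passing_natural_trans_to_derived_functors}: the derived 2-cells $\D\alpha, \D\lambda, \D\rho, \D\gamma$ do not depend on the choice of $\cat A$ or $Q$, so the symmetric monoidal structure on $\ho\mc C$ depends only on $(\mc C,\otimes,I)$ and its class of weak equivalences. Moreover, the localization functor $\delta\colon \mc C \to \ho\mc C$ together with the comparison map $\delta(X \otimes Y) \to \delta(Q(X) \otimes Q(Y)) = X \otimes^{\L} Y$ assembles into a lax symmetric monoidal functor, and any other symmetric monoidal structure on $\ho\mc C$ with product $\otimes^{\L}$ and unit $I$ receiving such a lax monoidal comparison from $\mc C$ must agree with this one by the universal property of $\D$.
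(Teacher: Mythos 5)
Your proposal is correct and follows essentially the same route as the paper: derive each of $\alpha,\lambda,\rho,\gamma$ via \autoref{prop:passing_natural_trans_to_derived_functors} using the common subcategory $\cat A$, and deduce coherence from the fact that the $\D$-recipe respects (vertical and horizontal) composition and identities. The paper's proof is somewhat terser but the method, the key citation, and the reasoning for coherence match; your extra remarks on Mac Lane's reduction and the lax-monoidal comparison map are consistent elaborations rather than a different argument.
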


\begin{proof}
	This is well-known but we can give a proof by repeated application of \autoref{prop:passing_natural_trans_to_derived_functors}. The isomorphisms $\alpha,\lambda,\rho,\gamma$ ascend to the homotopy category because in each case the lists of functors are coherently left-deformable, using the subcategory $\cat A$ in every case. They are coherent because they were coherent in $\mc C$, and the recipe in \autoref{prop:passing_natural_trans_to_derived_functors} respects composition of natural transformations and identity natural transformations. Concretely, $\alpha$ on the homotopy category can be described by removing ``extraneous'' copies of $Q$, applying $\alpha$ from $\mc C$, then re-inserting the ``extraneous'' $Q$s,
	\[ QX \otimes Q(QY \otimes QZ) \simeq QX \otimes (QY \otimes QZ) \simeq (QX \otimes QY) \otimes QZ \simeq Q(QX \otimes QY) \otimes QZ. \]
	The other isomorphisms admit a similar description.
\end{proof}

\begin{rmk}
	We can dualize the above discussion and get a corresponding result for right-deformable symmetric monoidal structures. One example is the category $\mc R(B)$ with a cartesian monoidal structure $X \otimes Y := X \times_B Y$.
\end{rmk}

Similarly, if $\mc C$ is a bifibration over $\bS$ and each fiber $\mc C^A$ is a homotopical category, we might wish to make $\ho\mc C$ into a bifibration over $\bS$ as well.\footnote{Technically, to define $\ho\mc C$ we first make $\mc C$ into a homotopical category by closing the weak equivalences under 2-out-of-6. But $\ho\mc C$ is still described as the initial category in which all the weak equivalences in every $\mc C^A$ have been inverted.} We say that $\mc C$ is \textbf{bi-deformable}\index{deformable!bifibration} if the following three conditions hold.
\begin{enumerate}
\item[(BF3)] The pushforwards are coherently left-deformable: there exists a simultaneous choice of left deformation retract $Q_A$ on each fiber category $\mc C^A$ under which the pushforwards $f_!$ preserve cofibrant objects and weak equivalences between them.
\item[(BF4)] The pullbacks are coherently right-deformable: there exists a simultaneous choice of right deformation retract $R_A$ on each fiber category $\mc C^A$ under which the pullbacks $f^*$ preserve fibrant objects and weak equivalences between them.
\item[(BF5)] We choose a class of \textbf{homotopy Beck-Chevalley}\index{homotopy!Beck-Chevalley square} squares. Each one must be a Beck-Chevalley square in $\mc C$, and in addition, the two lists of functors in the Beck-Chevalley isomorphism must be coherently deformable.
\end{enumerate}

\begin{thm}\label{prop:deform_bifibration}
	If $\mc C$ is bi-deformable, then the category $\ho\mc C$ obtained by inverting the weak equivalences in $\mc C$, together with the homotopy Beck-Chevalley squares, is also a bifibration. Furthermore its fiber categories are canonically isomorphic to the homotopy categories of the fibers,
	\[ (\ho\mc C)^A \cong \ho(\mc C^A). \]
\end{thm}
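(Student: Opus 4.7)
The plan is to verify each piece of the bifibration structure on $\ho\mc C$ directly from the bifibration structure on $\mc C$ together with the deformability hypotheses. Because weak equivalences in $\mc C$ lie in the fibers of $\Phi\colon \mc C \to \bS$, the projection sends every weak equivalence to an identity in $\bS$ and therefore descends through the localization to a functor $\ho\Phi\colon \ho\mc C \to \bS$. This is our candidate projection, and the fibers of $\ho\Phi$ are the candidate fiber categories $(\ho\mc C)^A$.

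The fiber identification $(\ho\mc C)^A \cong \ho(\mc C^A)$ I view as the main obstacle. The inclusion $\iota_A\colon \mc C^A \hookrightarrow \mc C$ preserves weak equivalences and has constant projection $A$, so it induces a canonical functor $\ho(\mc C^A) \to (\ho\mc C)^A$. For the inverse, I would take a morphism in $(\ho\mc C)^A$, represented by a zig-zag in $\mc C$ whose forward-leg composite in $\bS$ is $\id_A$, and show it is equivalent to a zig-zag in $\mc C^A$. The tool is the cartesian/co-cartesian factorization in $\mc C$: each forward arrow lying over $g\colon C \to D$ factors as a co-cartesian arrow $X \to g_!X$ followed by a morphism in $\mc C^D$, and dually each backward arrow factors through a cartesian arrow. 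Applied inductively, this replaces the original zig-zag by an equivalent one whose non-fiber steps are all cartesian or co-cartesian arrows covering a chain of maps in $\bS$ that composes to $\id_A$. Consecutive co-cartesian/cartesian pairs can then be cancelled against each other using the adjunctions $(f_! \adj f^*)$ and the fiberwise deformations $Q_A, R_A$, collapsing the whole zig-zag into one in $\mc C^A$. Care must be taken that this procedure is compatible with the equivalence relation on zig-zags generated by 2-out-of-6 and composition, so that the assignment is a well-defined functor.

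With the fibers identified, the remaining structure follows from earlier results. By (BF3) and (BF4) each $f_!$ is left-deformable and each $f^*$ is right-deformable, so $\L f_!$ and $\R f^*$ exist on the fibers of $\ho\mc C$, and \autoref{prop:derived_adjunction} supplies the derived adjunction $(\L f_! \adj \R f^*)$. For $X \in \mc C^B$ the cartesian arrow $f^* R_B X \to R_B X$ in $\mc C$ composed with the weak equivalence $R_B X \overset{\sim}{\leftarrow} X$ represents a morphism in $\ho\mc C$ whose universal property as a cartesian arrow over $f$ reduces, via the fiber identification, to the universal property of the counit of $(\L f_! \adj \R f^*)$; co-cartesian arrows are produced dually from $Q_A$ and the unit. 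Finally, the Beck-Chevalley condition for the chosen homotopy Beck-Chevalley squares is immediate from \autoref{prop:passing_natural_trans_to_derived_functors}, since by (BF5) both composites in the square are coherently deformable and so the point-set Beck-Chevalley isomorphism derives canonically to an isomorphism $\L g_! \circ \R f^* \cong \R p^* \circ \L q_!$ in $\ho\mc C$.
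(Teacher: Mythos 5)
Your high-level plan uses the same ingredients as the paper (cartesian/co-cartesian arrows, the derived adjunction, coherent deformability for Beck-Chevalley), but the central step — constructing the inverse to $\ho(\mc C^A) \to (\ho\mc C)^A$ — is where the real work lies, and your sketch for it is not workable as stated. A morphism of $(\ho\mc C)^A$ is a zig-zag in $\mc C$ whose intermediate objects may sit in arbitrary fiber categories $\mc C^B$; when you factor each forward leg through a co-cartesian arrow and try to ``cancel,'' you are implicitly pushing segments of the zig-zag between fibers using $f_!$ and $f^*$. But $f_!$ only preserves weak equivalences after (left) deformation and $f^*$ only after (right) deformation, and these are deformations of opposite handedness, so the collapse of a single wandering segment $f_!X \to U \overset{\sim}{\leftarrow} V \to g_!V$ inside $\mc C^B$ is not a simple induction: you must insert $Q_B$ or $R_B$ at every stage and then track which insertions matter. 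More seriously, you need this procedure to be a well-defined functor modulo the equivalence relation that generates morphisms of $\ho\mc C$, and nothing in your sketch explains why two different collapsings of equivalent zig-zags give the same answer. That well-definedness problem is exactly what the paper's proof avoids by \emph{first} building a Grothendieck construction $\bH \to \bS$ from the derived pullbacks $f^*R_B$ and their coherently deformed composition isomorphisms, and \emph{then} exhibiting functors $\ho\mc C \to \bH$ and $\bH \to \ho\mc C$ each defined by a single formula and verified to invert each other by a generator check. You essentially need $\bH$ (or something equivalent) to make the inverse a definition rather than a case-by-case recipe.

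A second, smaller gap: your Beck-Chevalley step is not ``immediate.'' By \autoref{prop:passing_natural_trans_to_derived_functors} the point-set Beck-Chevalley map descends to an isomorphism of derived functors, yes — but the statement being proved asserts that $\ho\mc C$ is a bifibration, and in a bifibration the Beck-Chevalley transformation has an intrinsic definition in terms of units and counits of the adjunctions in $\ho\mc C$. You still have to check that this intrinsic transformation agrees with the one obtained by differentiating the point-set transformation, which is the content of the large diagram chase at the end of the paper's proof. Without that compatibility check you have shown that \emph{some} isomorphism $\L g_! \circ \R f^* \cong \R p^* \circ \L q_!$ exists, not that $\ho\mc C$ satisfies the Beck-Chevalley condition as a bifibration.
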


\begin{proof}
	The first step is to show that $\ho\sC$ is a fibration with the desired fiber categories. For this, it suffices to define a second fibration $\bH \to \bS$ and an isomorphism of categories $\ho\sC\to \bH$ over $\bS$,
	such that the induced dotted map in the diagram
	\[\xymatrix @R=1.7em{\sC^A \ar[r]\ar[d]& (\ho\sC)^A \ar[r]&\bH^A\\\ho(\sC^A) \ar@{-->}[urr]}\]
	is an isomorphism for all $A$ in $\bS$.

	We build the fibration $\bH$ by a Grothendieck construction. For each $A \in \bS$ we take the homotopy category $\ho(\mc C^A)$. For each map $f\colon A \to B$ we take the derived pullback functor $f^*R_B\colon \ho(\mc C^B) \to \ho(\mc C^A)$. We extend this to an \textbf{indexed category}\index{indexed category} by picking composition isomorphisms and unit isomorphisms between these derived functors (see e.g. \cite[B1.2]{johnstone2002sketches} or \cite[7.1]{mp2}). We get them by right-deriving the same isomorphisms for the strict pullback functors in $\sC$, in other words using the maps
	\[ f^*R_Bg^*R_C \overset\sim\leftarrow f^*g^*R_C \cong (g \circ f)^*R_C \]
	\[ \id_A^* R_A \overset\sim\leftarrow \id_A^* \cong \id_{\sC^A}. \]
	These isomorphisms have to satisfy certain coherence conditions, but they are the same conditions the strict pullbacks already satisfy, so the coherence passes automatically to the derived pullbacks $\R f^*$ by the universal property of right-derived functors. By the general theory of Grothendieck constructions, $\bH \to \bS$ is a fibration.
	
	Concretely, the objects of $\bH$ are the objects of $\sC$, and a morphism from $X$ to $Y$ over $f\colon A \to B$ is a zig-zag in $\sC^A$ from $X$ to $f^*R_B Y$. The composition of morphisms in $\bH$ is given by
	\[ \xymatrix @R=1em{
		X \ar@{<->}[r] & f^*R_BY \ar@{<->}[r] & f^*R_Bg^*R_CZ \ar@{<-}[d]^-\sim \\
		&& f^*g^*R_CZ  \ar@{<-}[d]^-\cong \\
		&& (g \circ f)^*R_CZ.
	} \]
	The identity morphism at $X$ is $X \cong \id^* X \overset\sim\to \id^* R_A X$.
	
	Define $\sC \to \bH$ by taking each map $X \to Y$ over $f$, represented by a vertical map $X \to f^*Y$, to the composite $X \to f^*Y \to f^*R_BY$. Chasing diagrams, we check
	\begin{itemize}
		\item this respects composition and units, hence is a functor,
		\item it sends weak equivalences to isomorphisms, hence defines a functor $\ho\sC \to \bH$,
		\item the resulting map $\ho(\sC^A) \to (\ho\sC)^A \to \bH^A = \ho(\sC^A)$ is the identity,
		\item $\bH$ is generated by morphisms of the form $X \to X' \overset\sim\to \id^*R_A X'$, $X \overset\sim\leftarrow X' \overset\sim\to \id^*R_A X'$, and $f^*R_B Y = f^*R_B Y$, and 
		\item these are each in the image of $\ho\sC$, hence $\ho\sC \to \bH$ is surjective.
	\end{itemize}
	
	Then we map $\bH \to \ho\sC$ by sending a zig-zag $X \leftrightarrow f^*R_B Y$ to the zig-zag of maps of $\sC$, $X \leftrightarrow f^*R_B Y \to R_B Y \leftarrow Y$. We check
	\begin{itemize}
		\item this respects the equivalence relation that defines the fibers of $\bH$, hence is well-defined,
		\item this defines a functor $\bH \to \ho\sC$, and 
		\item the composite $\sC \to \bH \to \ho\sC$ is equal to the canonical inclusion.
	\end{itemize}
	Hence $\ho \sC \to \bH$ is also injective, and is therefore an isomorphism of categories.
	
	We deduce that there is a cartesian arrow in $\ho\mc C$ of the form $f^*RX \to RX \overset\sim\leftarrow X$ for each $X$ and $f$, identifying $f^*R$ with the pullback in $\ho\mc C$. Dualizing everything, $\ho\mc C$ has co-cartesian arrows of the form $X \overset\sim\leftarrow QX \to f_!QX$ that identify $f_!Q$ with the pushforward in $\ho\mc C$.
		
	Therefore each derived pushforward $f_!Q$ forms an adjunction with the derived pullback $f^*R$. The unit is the unique map $X \to f^*Rf_!QX$ lying over the map from each of these two terms to $f_!QX$ (one of which is co-cartesian and the other of which is cartesian):
	\[ \xymatrix @R=1.7em{
		X & \ar[l]_-\sim QX \ar@{-->}[d] \ar[rd] & \\
		& f^*f_!QX \ar@{-->}[d] \ar[r] & f_!QX \ar[d]^-\sim \\
		& f^*Rf_!QX \ar[r] & Rf_!QX
	} \]
	The dotted lines are filled in by the universal property of cartesian arrows in $\mc C$. Therefore the unit is the zig-zag $X \overset\sim\leftarrow QX \to f^*f_!QX \to f^*Rf_!QX$, and further the $Q$s can be dropped if $X$ is cofibrant. Similarly the counit is $f_!Qf^*RY \to f_!f^*RY \to RY \overset\sim\leftarrow Y$, and the $R$s can be dropped if $Y$ is fibrant.
	
	For each homotopy Beck-Chevalley square, the Beck-Chevalley map in $\mc C$ is an isomorphism by assumption. Because the functors are coherently deformable, as in \autoref{ex:derived_beck_chevalley} this gives an isomorphism of their derived functors on the homotopy category. It remains to show this isomorphism agrees with the Beck-Chevalley map that is constructed out of the adjunctions on the homotopy category. We check this by restricting to inputs in the common category on which $q_!$ and $f^*$ are derived, and inspecting the routes in the following diagram from $g_!Qf^*R$ to $p^*Rq_!Q$.
	\[ \resizebox{\textwidth}{!}{\xymatrix @C=-1em{
		&& p^*q_! \ar[rrr]^-r &&& p^*Rq_! &&& p^*Rq_!Q \ar[lll]_-q^-\sim \\
		&&& g_!g^*p^*q_! \ar@{-}[dd]^-\cong \ar[ul]_-\epsilon \ar[rrr]^-r &&& g_!g^*p^*Rq_! \ar@{-}[dd]^-\cong \ar[ul]_-\epsilon &&& g_!g^*p^*Rq_!Q \ar[ul]_-\epsilon \ar[lll]_-q^-\sim \\
		&&&&&&& g_!Qg^*p^*Rq_! \ar@{-}[dd]^-\cong \ar[ul]_-q &&& g_!Qg^*p^*Rq_!Q \ar@{-}[dd]^-\cong \ar[ul]_-q \ar[rd]^-r_-\sim \ar[lll]_-q^-\sim \\
		g_!f^* \ar[rrr]^-\eta &&& g_!f^*q^*q_! \ar[rrr]^-r &&& g_!f^*q^*Rq_! &&& && g_!Qg^*Rp^*Rq_!Q \\
		& g_!Qf^* \ar[rrr]^-\eta \ar[ul]_-q \ar[rd]^-r_-\sim &&& g_!Qf^*q^*q_! \ar[ul]_-q \ar[rd]^-r \ar[rrr]^-r &&& g_!Qf^*q^*Rq_! \ar[ul]_-q \ar[rd]^-r_-\sim &&& g_!Qf^*q^*Rq_!Q  \ar[rd]^-r_-\sim \ar[lll]_-q^-\sim \\
		&& g_!Qf^*R \ar[rrr]^-\eta &&& g_!Qf^*Rq^*q_! \ar[rrr]^-r &&& g_!Qf^*Rq^*Rq_! &&& g_!Qf^*Rq^*Rq_!Q \ar[lll]_-q^-\sim \\
	}} \]
\end{proof}

\begin{rmk}\label{expand_beck_chevalley}
	By standard properties of Beck-Chevalley isomorphisms, the Beck-Chevalley condition is preserved any time we replace a square in $\bS$ by a ``weakly equivalent'' square, so long as for each ``weak equivalence'' the functors $\L f_!$ and $\R f^*$ induce equivalences on the fiber categories $\ho\mc C^A$. This allows us to further expand the class of squares for which $\ho\mc C$ satisfies the Beck-Chevalley condition.
\end{rmk}

The following results fall out of the proof of the previous theorem.\index{homotopy!(co-)cartesian arrow}
\begin{prop}\label{prop:homotopy_cartesian_arrows}(cf. \cite{harpaz_prasma_grothendieck})
	An arrow is cartesian in $\ho\mc C$ iff it is isomorphic to a cartesian arrow in $\mc C$ whose target is fibrant. A cartesian arrow $f^*Y \to Y$ in $\mc C$ is cartesian in $\ho\mc C$ iff the map $f^*Y \to f^*RY$ is an isomorphism in $\ho(\mc C^A)$. The dual statements apply to the co-cartesian arrows.
\end{prop}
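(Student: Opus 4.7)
The plan is to leverage the explicit cartesian arrows in $\ho\mc C$ constructed during the proof of \autoref{prop:deform_bifibration}: for each $Y \in \mc C^B$ and $f\colon A \to B$, there is a cartesian arrow in $\ho\mc C$ of the form $f^*RY \to RY \overset\sim\leftarrow Y$, represented on the point-set level by the cartesian arrow $f^*RY \to RY$ whose target is already fibrant. Combining this with the uniqueness of cartesian arrows over a given map with a given target (up to unique isomorphism in the relevant fiber category) drives the entire argument.

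For the first sentence, in the ``if'' direction I would take any cartesian arrow $\phi\colon f^*Y \to Y$ in $\mc C$ with $Y$ fibrant, and observe that $Y \overset\sim\to RY$ is a weak equivalence between fibrant objects, so condition (BF4) makes the induced map $f^*Y \to f^*RY$ an isomorphism in $\ho(\mc C^A)$. Thus in $\ho\mc C$ the arrow $\phi$ differs from the reference cartesian arrow $f^*RY \to Y$ by an isomorphism on the source (and the identity on the target), so $\phi$ is itself cartesian in $\ho\mc C$. For the ``only if'' direction, any cartesian arrow in $\ho\mc C$ over $f$ with target $Y$ is canonically isomorphic there to $f^*RY \to Y$, whose underlying point-set representative $f^*RY \to RY$ is cartesian in $\mc C$ with fibrant target $RY$.

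The second sentence is then a specialization: a given point-set cartesian arrow $f^*Y \to Y$ is cartesian in $\ho\mc C$ if and only if it is isomorphic there, over $Y$, to the reference cartesian arrow $f^*RY \to Y$. By the universal property of cartesian arrows, the unique comparison isomorphism on sources is forced to coincide with the image in $\ho(\mc C^A)$ of the point-set map $f^*Y \to f^*RY$ induced by $Y \to RY$; so the condition reduces to that map being an isomorphism in $\ho(\mc C^A)$. The dual claims follow by running the same argument in $\mc C^{\op}$, using (BF3) in place of (BF4), $Q$ in place of $R$, and the co-cartesian arrows $X \overset\sim\leftarrow QX \to f_!QX$ produced at the end of the proof of \autoref{prop:deform_bifibration}.

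I do not anticipate any serious obstacle; the only subtlety is to confirm that the comparison isomorphism produced by uniqueness of cartesian arrows really is the derived version of the point-set map, but this is immediate from the explicit presentation of morphisms in the Grothendieck-style fibration $\bH$ used to model $\ho\mc C$ in the preceding proof.
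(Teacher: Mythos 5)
Your proof is correct and follows essentially the same route the paper has in mind: the paper's ``proof'' is the single sentence that the result falls out of the construction in the proof of \autoref{prop:deform_bifibration}, and you are simply unpacking that construction — reading off the explicit model $\bH$, identifying the reference cartesian arrows $f^*RY \to RY \overset{\sim}{\leftarrow} Y$, and invoking uniqueness of cartesian arrows over a fixed base arrow and target together with (BF4) to compare any other candidate to the reference. The only point worth keeping visible in a written-out version is the verification that the unique comparison isomorphism supplied by the universal property really is the image of the point-set morphism $f^*Y \to f^*RY$; your check — that precomposing the representative zig-zag $f^*RY \to RY \overset{\sim}{\leftarrow} Y$ with $f^*Y\to f^*RY$ recovers the zig-zag $f^*Y \to Y \to RY \overset{\sim}{\leftarrow} Y$, which collapses to $\phi$ in $\ho\mc C$ since the unit $\id \to R$ is natural — is exactly right and should be stated rather than merely asserted.
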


When the base category is $\bullet \to \bullet$, ignoring the Beck-Chevalley conditions, a bifibration $\mc C$ over $\bS$ is precisely an adjunction of two categories.

\begin{prop}
	In this case, when the left adjoint $F$ is left-deformable and the right adjoint $G$ is right-deformable, the induced bifibration $\ho\mc C$ gives the same adjunction $(\L F \adj \R G)$ that we constructed in \autoref{prop:derived_adjunction}.\index{deformable!adjunction}
\end{prop}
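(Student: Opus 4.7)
The plan is to identify the two adjunctions by matching their units. Both constructions produce the same pair of functors --- left adjoint $\L F = F\circ Q$ and right adjoint $\R G = G\circ R$ --- so it suffices to check that the units agree as natural transformations $\id \Rightarrow \R G\circ \L F$ of functors on the homotopy categories; the counits are then forced to coincide by the triangle identities.

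Specializing \autoref{prop:deform_bifibration} to the base $\bS = \bullet\to\bullet$ with single nontrivial arrow $f$ (so $f_! = F$ and $f^* = G$), the unit of the bifibration-induced adjunction at $X$ is the zig-zag
\[ X \overset\sim\la QX \ra GFQX \ra GRFQX, \]
in which the two rightward arrows use the ordinary unit $\eta\colon \id\Rightarrow GF$ and the fibrant-replacement map $r\colon\id\Rightarrow R$. On the other hand, the unit $\D(r\circ\eta)$ supplied by \autoref{prop:derived_adjunction} is characterized by the uniqueness clause of \autoref{prop:passing_natural_trans_to_derived_functors} as the unique natural transformation $\id \Rightarrow \R G\circ \L F$ whose restriction to the cofibrant subcategory $\cat A$ agrees with the point-set composite $X\ra GFX\ra GRFX$.

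Restricting the bifibration zig-zag to a cofibrant $X$, the first leg $QX\overset\sim\to X$ becomes invertible in the homotopy category and collapses, leaving precisely $X\ra GFX\ra GRFX$. Hence the bifibration unit satisfies the defining property of $\D(r\circ\eta)$ and must coincide with it. The main subtlety, rather than a genuine obstacle, is verifying that the bifibration unit is honestly a natural transformation of functors on the homotopy category (so that it is pinned down by its values on $\cat A$); this is built into the construction of the cartesian and co-cartesian arrows of $\ho\mc C$ in the proof of \autoref{prop:deform_bifibration}, which realizes these arrows via zig-zags through $Q$ and $R$. Once the units agree, the two adjunctions coincide.
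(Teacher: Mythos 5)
Your proof is correct and matches what the paper leaves implicit. The paper states this proposition without an explicit proof, because the verification follows directly from the zig-zag descriptions of the unit and counit given at the end of the proof of \autoref{prop:deform_bifibration} (``Therefore the unit is the zig-zag $X \overset\sim\leftarrow QX \to f^*f_!QX \to f^*Rf_!QX$, and further the $Q$s can be dropped if $X$ is cofibrant'') together with the uniqueness clause in \autoref{prop:passing_natural_trans_to_derived_functors}; you have identified and articulated exactly this argument. One small imprecision worth flagging: when you say the first leg ``collapses, leaving precisely $X \to GFX \to GRFX$,'' inverting the equivalence $QX \to X$ alone leaves $QX \to GFQX \to GRFQX$ with the $Q$'s still present; what actually happens (and what the paper means by ``the $Q$'s can be dropped'') is that on cofibrant $X$ the equivalence $QX \to X$ also induces an equivalence $GRFQX \to GRFX$, and naturality of $\eta$ and $r$ makes the resulting square commute, identifying the zig-zag with the straight composite $X \to GFX \to GRFX$ composed with that isomorphism. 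This is the same isomorphism used in the recipe of \autoref{prop:passing_natural_trans_to_derived_functors} to align $\R G \circ F$ with $\R G \circ \L F$, so the two units are literally equal, not merely agreeing after an unspecified identification.
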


This gives a second proof of \autoref{prop:derived_adjunction} and shows that our ad-hoc construction of the adjunction $(\L F \adj \R G)$ was far more canonical than it appeared. It arises by inverting the weak equivalences in the bifibration $\mc C$ that encoded the adjunction $(F \adj G)$. This sort of canonical approach also appears in \cite{hinich2018so}.

If $\mc C$ is a symmetric monoidal bifibration over $\bS$, and each fiber category $\mc C^A$ is a homotopical category, we may wish to combine the above two constructions and make the homotopy category $\ho\mc C$ into a symmetric monoidal bifibration. To integrate the above two constructions, we make $\mc C$ itself into a homotopical category by taking the equivalences in each fiber and closing under 2-out-of-6. Then we ask for a left deformation $Q$ for $\boxtimes$ that is fiberwise over $\bS$, meaning that each map $QX \simar X$ lies over an identity map, and that $\boxtimes$ preserves the equivalences in some full subcategory of each fiber $\mc A^A \subseteq \mc C^A$ containing the image of $Q$.

Then we ask for the five conditions (SM1), (SM2), (BF3), (BF4), (BF5). This makes $\ho\mc C$ a bifibration with a symmetric monoidal structure. The last piece of the puzzle is to prove that the derived tensor product $\boxtimes^{\L}$ preserves cartesian and cocartesian arrows in $\ho\mc C$. We need the following final two conditions.
\begin{enumerate}
	\item[(SMBF6)] Suppose that $\boxtimes$ and \emph{all} of the pushforwards $f_!$ are coherently deformable, meaning there is a subcategory $\mc A$ containing a finite composite of fiberwise left and right deformation retracts of $\mc C$ over $\bS$, preserved by $\boxtimes$ and each $f_!$, on which both functors are equivalent to their derived functors,
	\item[(SMBF7)] and the same condition for $\boxtimes$ and all of the pullbacks $f^*$ (the category $\mc A$ can be different).
\end{enumerate}
These are not necessarily stronger than (BF4) and (BF5), because for instance there does not have to be a single right deformation retract that deforms the pullbacks and $\boxtimes$. We are allowed to use a composite of left and right deformation retracts.\index{deformable!SMBF}
\begin{prop}\label{prop:deform_SMBF}
	Under these assumptions, $\boxtimes^{\L}$ preserves cartesian and cocartesian arrows in $\ho\mc C$.
\end{prop}
\begin{proof}
	We give two proofs. By \autoref{prop:homotopy_cartesian_arrows}, a co-cartesian arrow in $\ho\mc C$ is isomorphic to a co-cartesian arrow of the form $Q_A X \to f_!Q_A X$, where $Q_A$ is the left deformation used for $f_!$. Taking $Y \in \mc A^A$ isomorphic to $X$ in $\ho\mc C^A$, we get a zig-zag of weak equivalences $Y \overset\sim\leftarrow Q_A Y \ldots Q_A X$ in $\mc C^A$, all of which are preserved by $f_!$. Therefore without loss of generality each of our co-cartesian arrows has source in $\mc A$. Since $\boxtimes$ preserves equivalences on $\mc A$, we have $\boxtimes^{\L} \simeq \boxtimes$ on this subcategory, hence $\boxtimes^{\L}$ of our two co-cartesian arrows is equivalent to $\boxtimes$ of them, which is co-cartesian in $\mc C$. The tensor product of the sources is still in $\mc A$, hence $(f\times g)_!Q_{A \times B} \simeq (f\times g)_!$ on this source, so this co-cartesian arrow is also co-cartesian in $\ho\mc C$. The proof for cartesian arrows is dual.
	
	The second proof is more explicit, but establishes the helpful corollary that the canonical isomorphisms
\begin{align*}
\L f_!X \boxtimes^{\L} \L g_!Y &\simeq \L(f \times g)_!(X \boxtimes^{\L} Y), \\
\R f^*X \boxtimes^{\L} \R g^*Y &\simeq \R(f \times g)^*(X \boxtimes^{\L} Y)
\end{align*}
	arising from the universal property of co/cartesian arrows in $\ho\mc C$ agree with the isomorphisms produced by applying \autoref{prop:passing_natural_trans_to_derived_functors} to the corresponding isomorphisms in $\mc C$. For the co-cartesian arrows, we assume $X$ and $Y$ are in $\mc A$ and form the following diagram in which the top row is the canonical co-cartesian arrow in $\ho\mc C$ for $X \boxtimes^{\L} Y$, and the left-hand column is $\boxtimes^{\L}$ of the canonical co-cartesian arrows for $X$ and $Y$. The bottom-right route is the isomorphism arising from \autoref{prop:passing_natural_trans_to_derived_functors}. All maps marked $\sim$ or $\cong$ lie over identity maps in $\bS$, and all others lie over $f \times g$. The commutativity of the diagram establishes that the isomorphism from \autoref{prop:passing_natural_trans_to_derived_functors} extends to an isomorphism of co-cartesian arrows, hence agrees with the canonical isomorphism arising from $\ho\mc C$.
	\[ \xymatrix @R=1.7em{
		QX \boxtimes QY \ar[rd]^-\sim & Q_{A \times B}(Q X \boxtimes Q Y) \ar[l]_-\sim \ar[d]^-\sim \ar[r] & (f \times g)_! Q_{A \times B}(Q X \boxtimes Q Y) \ar[d]^-\sim \\
		QQ_A X \boxtimes QQ_B Y \ar[u]_-\sim \ar[d] \ar[r]^-\sim & X \boxtimes Y \ar[d] \ar[r] & (f \times g)_! (X \boxtimes Y) \ar@{<->}[ld]^-\cong \\
		Qf_!Q_A X \boxtimes Qg_!Q_B Y \ar[r]^-\sim & f_!X \boxtimes g_!Y
	} \]
	For the pullbacks the argument is exactly the same, but the commuting diagram is as follows, with the two cartesian arrows we wish to compare running along the bottom and right-hand sides.
	\[ \xymatrix @R=1.7em{
		f^*X \boxtimes g^*Y \ar@{<->}[d]^-\cong \ar[rd] & Qf^*X \boxtimes Qg^*Y \ar[l]_-\sim \ar[r]^-\sim \ar[rddd] & Qf^*R_A X \boxtimes Qg^*R_B Y \ar[dd] \\
		(f \times g)^* (X \boxtimes Y) \ar[d]^-\sim \ar[r] & X \boxtimes Y \ar[d]^-\sim & \\
		(f \times g)^* R_{A \times B}(X \boxtimes Y) \ar[r] & R_{A \times B}(X \boxtimes Y) & QR_A X \boxtimes QR_B Y \\
		(f \times g)^* R_{A \times B}(QX \boxtimes QY) \ar[u]_-\sim \ar[r] & R_{A \times B}(QX \boxtimes QY) \ar[u]_-\sim & QX \boxtimes QY \ar[u]_-\sim \ar[l]_-\sim \ar[luu]_-\sim
	} \]
\end{proof}

\begin{cor}
	The homotopy category of all retractive spaces $\ho\mc R$ is a symmetric monoidal bifibration over $\cat{Top}$ with Beck-Chevalley on the homotopy pullback squares.
\end{cor}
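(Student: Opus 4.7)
The plan is to apply \autoref{prop:deform_bifibration} and \autoref{prop:deform_smbf} to the symmetric monoidal bifibration $\mc R \to \cat{Top}$ furnished by \autoref{prop:spaces_smbf}. This requires verifying the seven conditions (SM1), (SM2), (BF3), (BF4), (BF5), (SMBF6), (SMBF7). Three different choices of cofibrant-like subcategory handle the various conditions, and nearly every verification is immediate from the results already proved in \autoref{sec:spaces} and \autoref{sec:products}.

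First I would dispatch (SM1), (SM2), (BF3), and (SMBF6) together using the subcategory $\mc A$ of $h$-cofibrant retractive spaces, with the whiskering functor $W$ of \autoref{prop:whiskering_properties} as a fiberwise left deformation onto $\mc A$ (indeed $W$ even lands in the $f$-cofibrant spaces). The unit $S^0$ over $*$ is clearly $h$-cofibrant; \autoref{prop:h_cofibrations_pushout_product} shows that $\barsmash$ preserves $\mc A$ and equivalences between its objects; and \autoref{lem:f_shriek_preserves} gives the analogous statement for each $f_!$. Coherent deformability of $\barsmash$ with all pushforwards is automatic because we use the same $W$ in every fiber.

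For (BF4), I would use the fibrant replacement $R(X) = X \times_B B^I$, landing in the $h$-fibrant spaces, on which $f^*$ is homotopical by \autoref{lem:f_star_preserves}. The main obstacle is (SMBF7), which requires a single subcategory on which both $\barsmash$ and \emph{all} the pullbacks $f^*$ are simultaneously homotopical. Because $f^*$ preserves $f$-cofibrations and $h$-fibrations but need not preserve $h$-cofibrations in general, the right choice is the subcategory $\mc A'$ of $f$-cofibrant, $h$-fibrant spaces, reached by the composite $P \circ W$ using \autoref{prop:whiskering_properties} and \autoref{prop:px_properties}. On $\mc A'$, \autoref{prop:h_cofibrations_pushout_product} shows $\barsmash$ preserves $\mc A'$ and equivalences, and \autoref{lem:f_star_preserves} does the same for each $f^*$. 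The fact that $PW$ is a composite of a left and a right deformation (rather than a single one) is exactly the flexibility built into the notion of coherently deformable functors.

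Finally, for (BF5) I would declare the homotopy Beck-Chevalley squares to be the homotopy pullback squares of $\cat{Top}$. \autoref{ex:derived_beck_chevalley} already derives the Beck-Chevalley map for any strict pullback square with one leg a Hurewicz fibration. Every homotopy pullback square is weakly equivalent to such a square, so by \autoref{expand_beck_chevalley} the class can be enlarged to all homotopy pullback squares, provided that a weak equivalence of bases $f\colon A \to B$ induces an equivalence $\L f_! \adj \R f^*$ between $\ho\mc R^A$ and $\ho\mc R^B$. This last point is the one technical check that is not purely formal — it can be verified directly at the level of fibers, since the underlying total space of $f^*Y$ is weakly equivalent to that of an $h$-fibrant $Y$ when $f$ is a weak equivalence, and the fibers are preserved — but once it is in place, \autoref{prop:deform_bifibration} and \autoref{prop:deform_smbf} assemble the pieces into the desired symmetric monoidal bifibration structure on $\ho\mc R$.
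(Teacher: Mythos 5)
Your proposal is correct and follows essentially the same route as the paper's (very brief) proof: verify (SM1)--(SMBF7) for $\mc R \to \cat{Top}$ using the $h$-cofibrant spaces via $W$ for the left-deformable conditions, the $f$-cofibrant $h$-fibrant spaces via $PW$ for (SMBF7), and strict pullbacks along Hurewicz fibrations for (BF5), then enlarge the Beck-Chevalley class via \autoref{expand_beck_chevalley}. The paper dispatches the seven checks in one sentence; your expansion of which subcategory handles which condition is accurate (and the one non-formal point you flag at the end is exactly \autoref{ex:derived_adjunction_spaces}).
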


\begin{proof}
	We verify the seven conditions (SM1), (SM2), (BF3), (BF4), (BF5), (SMBF6), and (SMBF7) using all of the earlier results concerning $\barsmash$, $f_!$, $f^*$, $f$-cofibrations, $h$-fibrations, and weak equivalences. This gives an SMBF with Beck-Chevalley for strict pullbacks along fibrations. By \autoref{expand_beck_chevalley}, and the fact that every homotopy pullback square is weakly equivalent to a strict pullback along a fibration, we therefore get Beck-Chevalley for all homotopy pullback squares.
\end{proof}

\beforesubsection
\subsection{Review of model categories}\label{sec:model_cats}\aftersubsection

A model category is a homotopical category with extra structure, which we now recall from \cite{hovey_model_cats}.

Let $\cat C$ be a category with all small colimits and limits. A \textbf{model structure}\index{model category} on $\cat C$ is a choice of three subcategories $W$, $C$, and $F$ whose maps are called ``weak equivalences,'' ``cofibrations,'' and ``fibrations,'' respectively, such that
\begin{itemize}
	\item The weak equivalences $W$ are closed under 2-out-of-3 (if $f$ and $g$ are composable, and two of the maps $f$, $g$, and $g \circ f$ are in the given class, so is the third).
	\item Each of the three classes $W$, $C$, and $F$ is closed under retracts.
	\item Given a commuting square in $\cat C$
	\[ \xymatrix @R=1.7em{
		A \ar[d]_-i \ar[r] & X \ar[d]^-p \\
		B \ar[r] \ar@{-->}[ur] & Y
	} \]
	the dotted lift exists if $i$ is an acyclic cofibration ($i \in W \cap C$) and $p$ is a fibration, or if $i$ is a cofibration and $p$ is an acyclic fibration ($p \in W \cap F$).
	\item There is a functorial factorization of each map $f$ into an acyclic cofibration followed by a fibration, and there is another functorial factorization into a cofibration followed by an acyclic fibration.	
\end{itemize}

In this case $\cat C$ is called a \textbf{model category}. In any model category $\cat C$:
\begin{itemize}
	\item Each isomorphism in $\cat C$ is a cofibration, a fibration, and a weak equivalence.
	\item The weak equivalences satisfy 2-out-of-6, so $\cat C$ is also a homotopical category.
	\item An object $X$ is \textbf{cofibrant} if $\emptyset \to X$ is a cofibration, where $\emptyset$ is the initial object. There exists a left retraction $Q$ onto the subcategory of cofibrant objects, and in addition $QX \to X$ is an acyclic fibration, not just a weak equivalence.\index{cofibrant replacement}
	\item An object $X$ is \textbf{fibrant} if $X \to *$ is a fibration, where $*$ is the terminal object. There exists a right retraction $R$ onto the subcategory of fibrant objects, and in addition $X \to RX$ is an acyclic cofibration, not just a weak equivalence. 
	\index{fibrant replacement}
	\item A map $i$ is an acyclic cofibration iff it has the left-lifting property with respect to the fibrations, i.e. the lift in the above square exists any time $p$ is a fibration. Similar statements characterize the cofibrations, the fibrations, and the acyclic fibrations.
	\item The cofibrations are closed under pushouts, transfinite compositions (therefore coproducts), and retracts. The same applies to acyclic cofibrations. The dual statements apply to the fibrations and to the acyclic fibrations.
	\item (Ken Brown's Lemma) If $F\colon \cat C \to \cat D$ is a functor of model categories, taking acyclic cofibrations between cofibrant objects to weak equivalences, then it takes all weak equivalences of cofibrant objects to weak equivalences. The same applies to fibrant objects and acyclic fibrations.
	\item (Fundamental Theorem) If $X$ is cofibrant and $Y$ is fibrant, then $\cat C(X,Y) \to \ho\cat C(X,Y)$ is surjective, and we can describe the kernel as well. For cofibrant $X$, let $``X \times I"$ be any factorization of $X \amalg X \to X$ into a cofibration followed by an acyclic fibration:
	\[ \xymatrix @R=1.7em{
		X \amalg X \ar@{^{(}->}[r] \ar[rd]_-{(\id_X,\id_X)} & ``X \times I" \ar@{->>}[d]^-\sim \\
		& X
	} \]
	Then two maps $f,g \colon X \rightrightarrows Y$ are equal in the homotopy category iff they are homotopic, meaning there is some extension of $(f,g)\colon X \amalg X \to Y$ to $X \times I \to Y$.
\end{itemize}

\begin{ex}
	The \textbf{Quillen model structure} on $\mc R(B)$ has $W$ the weak homotopy equivalences and $F$ the Serre fibrations ($q$-fibrations). The class of cofibrations $C$ must be determined by the other two; we will describe it more once we recall cofibrantly-generated model categories. This is sometimes called the \textbf{$q$-model structure}.
\end{ex}

\begin{ex}
	The \textbf{Hurewicz model structure} on $\mc R(B)$ has $W$ the maps that are homotopy equivalences on the total space, $C$ the $h$-cofibrations, and $F$ the $h$-fibrations. This explains the result of \autoref{prop:strom_lift} (although really that result is used to prove this is a model structure, see \cite{strom1972homotopy}). We won't use this model structure so much, because it has the wrong equivalences. This is sometimes called the \textbf{$h$-model structure}.
\end{ex}

\begin{ex}
	There is an \textbf{integral model structure} on the category $\mc R$ of all retractive spaces, obtained by combining together the Quillen model structures on the fiber categories $\mc R(B)$, see \cite{harpaz_prasma_grothendieck,cagne2017bifibrations,hebestreit_sagave_schlichtkrull}. There are actually two of these, one where we take the weak equivalences in the base category to be the usual equivalences, and another where we take the weak equivalences in the base to be only the isomorphisms. Inverting this second class of weak equivalences gives us the bifibration $\ho \mc R$ we constructed in the previous section.
\end{ex}

In this paper we put more emphasis on homotopical categories, but model structures are still useful. They make it easier to count maps in the homotopy category (using the fundamental theorem) and to left and right deform functors (using cofibrant and fibrant replacement).\footnote{A third advantage is that if one is interested in lifting things to $\infty$-categories, model categories are more useful than just categories with weak equivalences.}

Recall a functor $F\colon \cat C \to \cat D$ is \textbf{left Quillen}\index{Quillen adjunction} if it is a left adjoint and preserves both the cofibrations and the acyclic cofibrations. Equivalently, the right adjoint $G\colon \cat D \to \cat C$ is \textbf{right Quillen}, meaning it preserves the fibrations and the acyclic fibrations. By Ken Brown's lemma, if $(F \adj G)$ is a Quillen pair then $F$ is left-deformable and $G$ is right-deformable.

The pair $(F \adj G)$ is a \textbf{Quillen equivalence}\index{Quillen equivalence} if the adjunction $(\L F \adj \R G)$ defined in \autoref{prop:derived_adjunction} gives an equivalence of homotopy categories $\ho\cat C \simeq \ho\cat D$.\footnote{The recipe we used to define this adjunction, in fact, strictly agrees with the usual recipe in the setting of a model category.} Equivalently, for all cofibrant $X$ in $\cat C$ and fibrant $Y$ in $\cat D$, a map $FX \to Y$ is a weak equivalence in $\cat D$ iff its adjoint $X \to GY$ is a weak equivalence in $\cat C$.
%

\begin{ex}\label{ex:derived_adjunction_spaces}
	The pullback $f^*\colon \mc R(B) \to \mc R(A)$ is right Quillen with respect to the Quillen model structures on $\mc R(B)$ and $\mc R(A)$. Therefore the pushforward $f_!$ is left Quillen. 
	Moreover, the pair $(f_! \adj f^*)$ is a Quillen equivalence whenever $f\colon A \to B$ is a weak equivalence. To see this, just observe that if $X \in \mc R(A)$ is cofibrant and $Y \in \mc R(B)$ is fibrant, the following square in $\mc R$ has both horizontal maps weak equivalences.
	\[ \xymatrix @R=1.7em{
		X \ar[d] \ar[r]^-\sim & f_!X \ar[d] \\
		f^*Y \ar[r]_-\sim & Y
	} \]
	Therefore the left-hand map is an equivalence iff the right-hand map is.
\end{ex}

\begin{ex}\label{ex:other_adjunction_spaces}
	When $f$ is a fiber bundle with fiber a cell complex, the pullback $f^*\colon \mc R(B) \to \mc R(A)$ is also left Quillen. Therefore the sheafy pushforward $f_*$ is right Quillen. This gives a second proof of most of \autoref{prop:sheafy_pushforward_preserves}. Moreover, if $f$ is a weak equivalence, then $(f^* \adj f_*)$ is a Quillen equivalence, because $\L f^* \simeq \R f^*$ gives an equivalence of homotopy categories by the previous example. 
\end{ex}

\begin{ex}\label{ex:cg_quillen_equiv_to_cgwh}
	There are actually two categories that we call $\mc R(B)$, one using (CG) and a smaller one using (CGWH). Both are endowed with the Quillen model structure as above, and the forgetful functor (CGWH) $\to$ (CG) is a right Quillen equivalence. This implies that the weak-Hausdorffification functor $h(-)$ preserves equivalences between cofibrant objects. We could also see this directly: the cofibrant objects in (CG) are already weak Hausdorff, therefore $h(-)$ is isomorphic to the identity on that subcategory.
	
\end{ex}

A model category $\cat C$ is \textbf{left proper} if for every square as below with $i$ a cofibration and $f$ a weak equivalence, the map $g$ is also a weak equivalence.
\[ \xymatrix @R=1.7em{
	A \ar[r]^-i \ar[d]_-f & B \ar[d]^-g \\
	C \ar[r]_-j & B \cup_A C
} \]
In a left proper model category $\cat C$, the pushout is homotopical on those spans for which $i$ (or $f$) is a cofibration. This last condition is usually called the \textbf{gluing lemma}\index{gluing lemma}, compare \autoref{prop:technical_cofibrations}, and is equivalent to being left proper. It implies the pushout is left-deformable; its left-derived functor is the \textbf{homotopy pushout}\index{homotopy!pushout}.\footnote{The pushout is actually always left-deformable, but if $\cat C$ is not left proper, we have to additionally replace $A$, $B$, and $C$ by cofibrant spaces.}

Dualizing, and swapping out the cofibrations for fibrations, we get the definition of \textbf{right proper}, which is equivalent to the \textbf{dual gluing lemma}. The right-derived functor of pullback is called the \textbf{homotopy pullback}. We say $\cat C$ is \textbf{proper}\index{model category!proper} if it is both left and right proper.

\begin{ex}
	The Quillen model structure on $\mc R(B)$ is proper by \autoref{prop:technical_cofibrations}.
\end{ex}

\beforesubsection
\subsection{Cofibrantly generated model categories}\label{sec:cof_gen_model_cats}\aftersubsection

Let $I$ be any set of maps in $\cat C$. A \textbf{generalized $I$-cell complex} is a transfinite composition of pushouts of maps in $I$, see \cite{hovey_model_cats}. An \textbf{$I$-cell complex} is a sequential composition of pushouts of coproducts of maps in $I$. (Each of these is also a generalized $I$-cell complex.) An \textbf{$I$-injective map} is any map with the right lifting property with respect to any map in $I$ (equivalently, with respect to any retract of a generalized $I$-cell complex). An \textbf{$I$-cofibration} is any map with the left lifting property with respect to $I$-injective maps. In other words $i\colon A \to B$ is an $I$-cofibration if for every square
	\[ \xymatrix @R=1.7em @C=3em{
		A \ar[d]_-i \ar[r] & X \ar[d]^-p \\
		B \ar[r] \ar@{-->}[ur] & Y
	} \]
with $p\colon X \to Y$ an $I$-injective map, the dotted lift exists. A retract of a generalized $I$-cell complex is always an $I$-cofibration, but in general there could be more.

A model structure on $\cat C$ is \textbf{cofibrantly generated}\index{model category!cofibrantly generated} once we specify a subset $I$ of all cofibrations, and $J$ of all acyclic cofibrations, such that
\begin{itemize}
	\item The fibrations $F$ are the $J$-injective maps: $F = J$-inj.
	\item The acyclic fibrations $W \cap F$ are the $I$-injective maps: $W \cap F = I$-inj.
\end{itemize}
In addition, the domains of the maps in $I$ have to be ``small relative to the generalized $I$-cell complexes,'' and similarly for $J$. We call $I$ the \textbf{generating cofibrations} and $J$ the \textbf{generating acyclic cofibrations}. The smallness condition is laborious to spell out at the highest level of generality; for us the following will be sufficient.

\begin{df}\label{df:simple_smallness}
	$I$ has the \textbf{simplified smallness condition} if for each domain $K$ of a map of $I$, and each $I$-cell complex
	\[ X_0 \ra X_1 \ra \ldots X_{\infty} = \underset{n}\colim\, X_n, \]
	every map $K \to X_\infty$ factors through some $X_n$.\footnote{This simplification is also used in the notion of a ``compactly generated model category'' from \cite{mmss}. But we won't be able to use that notion directly, because the model structures for parametrized spectra that we are interested in only satisfy the ``cofibration hypothesis'' with respect to the category of sequences of unbased spaces (not retractive spaces), and this forgetful functor does not preserve colimits and limits.}
\end{df}

If $\cat C$ is a cofibrantly-generated model category with the simplified smallness condition, then the cofibrations are precisely the retracts of the $I$-cell complexes.\footnote{If we didn't have the simplified smallness condition, we would have to take the generalized $I$-cell complexes instead.} Similarly, the class of acyclic cofibrations is equal to the class of retracts of the $J$-cell complexes. Both of these claims are consequences of Quillen's small-object argument \cite{quillen, hovey_model_cats}.

\begin{ex}
	The Quillen model structure on $\mc R(B)$ is cofibrantly generated. Set
	\[ \begin{array}{ccrll}
	I &=& \{ \ S^{n-1}_{+B} \to D^n_{+B} & : n \geq 0, & D^n \to B \ \} \\
	J &=& \{ \ D^n_{+B} \to (D^n \times I)_{+B} &: n \geq 0, & (D^n \times I) \to B \ \}.
	\end{array} \]
	The simplified smallness condition holds for $I$ and $J$ because $S^{n-1}$ and $D^n$ are compact topological spaces, and cell complexes built out of such have the property that the filtration maps $X_{i-1} \to X_i$ are closed inclusions with weak Hausdorff quotient. The Serre fibrations are $J$-inj by definition, and the acyclic Serre fibrations are $I$-inj by an elementary argument. By the above discussion, the cofibrations in $\mc R(B)$ are precisely the maps that are retracts of relative cell complexes. We call such maps \textbf{$q$-cofibrations}.
\end{ex}

A common strategy for proving that $F\colon \cat C \to \cat D$ is left-deformable is to show that it preserves retracts of $J$-cell complexes, and then invoke Ken Brown's Lemma. Often this reduces further to checking that $F$ sends each generating acyclic cofibration to an acyclic cofibration. For instance, we can prove that $f_!$ is left-deformable this way.

%
%
%

The external smash product $\barsmash$ is a little more complicated because it is a functor in two inputs, and it is only a left adjoint in each variable separately. Suppose that $\mc C$, $\mc D$, and $\mc E$ are model categories, and $\otimes\colon \mc C \times \mc D \to \mc E$ a functor. We say that $\otimes$ is a \textbf{left Quillen bifunctor}\index{left Quillen bifunctor} if
	\begin{itemize}
		\item it is a left adjoint in each slot,
		\item for each cofibration $f$ in $\mc C$ and cofibration $g$ in $\mc D$, $f \square g$ is a cofibration, and
		\item if in addition one of $f$ or $g$ is a weak equivalence then $f \square g$ is a weak equivalence.
	\end{itemize} 
Then $X \otimes -$ is left Quillen whenever $X$ is cofibrant in $\cat C$, and similarly $- \otimes Y$ is left Quillen if $Y$ is cofibrant in $\cat D$. More informally, $X \otimes Y$ preserves equivalences when $X$ and $Y$ are both cofibrant. A standard application of the lemmas of \autoref{pushout_product_lemmas} is the following.\index{model category!monoidal}
\begin{prop}\label{prop:quillen_bifunctor_defn}
	If $\mc C$ and $\mc D$ are cofibrantly generated, then for $\otimes$ to be a Quillen bifunctor it suffices that
	\begin{itemize}
		\item it is a left adjoint in each slot,
		\item if $f,g \in I$ then $f \square g \in C$, and
		\item if $f \in I, g \in J$ or $f \in J, g \in I$ then $f \square g \in C \cap W$.
	\end{itemize} 
\end{prop}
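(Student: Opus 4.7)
The plan is to use the standard small-object-argument characterization of (acyclic) cofibrations as retracts of (generalized) cell complexes built from $I$ (resp.\ $J$), and to push the hypothesized pushout-product relations from the generators out to all cofibrations via closure properties.

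First I would record the following closure principle: for any fixed arrow $g$ in $\mc D$, and any class of maps $\mc K$ in $\mc E$ that is closed under pushouts, coproducts, transfinite compositions, and retracts, the class
\[ \{\,f \in a\mc C : f \square g \in \mc K\,\} \]
is itself closed under those same operations. This follows by combining \autoref{pushout_product_pushouts} (which says $-\square g$ carries pushout morphisms to pushout morphisms), \autoref{pushout_product_composition} (which expresses $(f'\circ f)\square g$ as $f'\square g$ composed with a pushout of $f\square g$), the transfinite-composition extension sketched immediately after \autoref{pushout_product_composition}, and functoriality of $-\square g$ for retracts; the hypothesis that $\otimes$ is a left adjoint in each slot is what makes these lemmas apply in the first place. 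The same principle holds with the roles of $f$ and $g$ swapped. Since both $C$ and $C\cap W$ are closed under pushouts, coproducts, transfinite compositions, and retracts in a model category, and since (as $\mc C$ and $\mc D$ are cofibrantly generated) $C$ is the smallest such class containing $I$ while $C\cap W$ is the smallest such class containing $J$, the principle will let us bootstrap from generators to all (acyclic) cofibrations.

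The extension then proceeds in three short steps. \emph{Step 1:} Fix any $g\in I$; by hypothesis $I\subseteq\{f:f\square g\in C\}$, so by the closure principle every $f\in C$ satisfies $f\square g\in C$. \emph{Step 2:} Now fix any $f\in C$; by Step 1 we have $I\subseteq\{g:f\square g\in C\}$, so again by the closure principle (with roles swapped) every $g\in C$ satisfies $f\square g\in C$. This gives the first half of the Quillen bifunctor condition. \emph{Step 3 (acyclicity):} Repeat the same two-stage bootstrap with the class $C\cap W$ in place of $C$, starting from the third hypothesis. Fixing $g\in I$, the third hypothesis gives $J\subseteq\{f:f\square g\in C\cap W\}$, and closure extends this to all $f\in C\cap W$. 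Then fixing $f\in C\cap W$, we have $I\subseteq\{g:f\square g\in C\cap W\}$ by what we just showed, and closure extends this to all $g\in C$. The symmetric argument, starting from $f\in I$ and the third hypothesis, yields $f\square g\in C\cap W$ whenever $f\in C$ and $g\in C\cap W$.

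There is no real obstacle here; the whole proof is a bookkeeping exercise once the closure principle is stated. The only mild subtlety is checking that the class $\{f:f\square g\in\mc K\}$ is closed under transfinite compositions, which requires the extension of \autoref{pushout_product_composition} to arbitrary ordinals indicated in the footnote after that lemma, using the commutation of $\otimes$ and $\cup$ with colimits at limit ordinals (again using that $\otimes$ is a left adjoint in each slot).
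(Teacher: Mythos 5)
Your proof is correct and is exactly the ``standard application of the lemmas of \autoref{pushout_product_lemmas}'' that the paper gestures at without spelling out: bootstrap from generators to all (acyclic) cofibrations by combining \autoref{pushout_product_pushouts}, \autoref{pushout_product_composition} and its transfinite extension, and functoriality of $\square$ for retracts, together with the fact that in a cofibrantly generated model category $C$ (resp.\ $C\cap W$) is the saturation of $I$ (resp.\ $J$). The one thing worth noting is that the same section also contains \autoref{lem:pullback_hom_adjunction}, which gives a slightly slicker variant of Step 2 and Step 3: once the generator-level hypotheses are in place, $\Hom_\square(f,p)$ is an acyclic fibration for any $f\in I$ and acyclic fibration $p$ (and an acyclic fibration for $f\in J$ and fibration $p$), and then the desired lifting property of $f\square g$ for arbitrary cofibrations $f,g$ follows by two applications of the adjunction rather than by re-running the saturation argument in the second variable. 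Both routes are standard and equivalent; yours is entirely sound.
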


\begin{ex}\label{ex:smashing_spaces_is_left_Quillen}
	The above conditions are straightforward to check for the external smash product $\barsmash\colon \mc R(A) \times \mc R(B) \to \mc R(A \times B)$, and therefore it is a left Quillen bifunctor. It therefore preserves equivalences when both inputs are $q$-cofibrant. Note that this is weaker than the conclusion of \autoref{prop:h_cofibrations_pushout_product}, but the dual conclusions about fibrations are not weaker than those found in \autoref{h_fibrations_pullback_hom}. 
	In particular, this proves that $\barmap_A(X,Y)$ preserves equivalences when $X$ is $q$-cofibrant and $Y$ is $q$-fibrant.
\end{ex}

The following standard result is also immediate from the above discussion and \autoref{prop:deform_sym_mon_cat}. It applies in particular to the integral model structure on $\mc R$.
\begin{cor}
	Suppose $\mc C$ is both a model category and a symmetric monoidal category, the tensor product $\otimes$ is a left Quillen bifunctor, and the unit $I$ is cofibrant. Then $\ho\mc C$ has a canonical left-derived symmetric monoidal structure.
\end{cor}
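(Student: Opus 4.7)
The plan is to verify the hypotheses of \autoref{prop:deform_sym_mon_cat}, which immediately produces the desired left-derived symmetric monoidal structure on $\ho\mc C$. So we must exhibit a full subcategory $\cat A \subseteq \mc C$ containing a left deformation retract $Q$ of $\mc C$ such that conditions (SM1) and (SM2) of that proposition are satisfied.

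Take $\cat A$ to be the full subcategory of cofibrant objects of $\mc C$, and let $Q$ be any functorial cofibrant replacement, obtained from the functorial factorization of $\emptyset \to X$ as a cofibration followed by an acyclic fibration. Then $QX \simar X$ is a natural weak equivalence landing in $\cat A$, so $Q$ is a genuine left deformation retract onto $\cat A$. Condition (SM2) holds by hypothesis: the unit $I$ is assumed cofibrant, hence lies in $\cat A$.

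For (SM1), we must check that $\otimes$ preserves cofibrant objects and weak equivalences between them. If $X, Y$ are cofibrant, then $\emptyset \to X$ and $\emptyset \to Y$ are cofibrations, and their pushout-product $\emptyset \square (\emptyset \to Y) \colon \emptyset \to X \otimes Y$ is a cofibration by the left Quillen bifunctor hypothesis; hence $X \otimes Y$ is cofibrant. For weak equivalences, given $f \colon X \simar X'$ and $g \colon Y \simar Y'$ between cofibrant objects, factor $f \otimes g$ as $(f \otimes \id_{Y'}) \circ (\id_X \otimes g)$. Since $X$ is cofibrant, $X \otimes (-)$ is a left Quillen functor (its left adjoint partner is the internal hom promised by the bifunctor adjointness), so by Ken Brown's lemma it preserves weak equivalences between cofibrant objects; thus $\id_X \otimes g$ is a weak equivalence. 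Symmetrically, $(-) \otimes Y'$ is left Quillen since $Y'$ is cofibrant, so $f \otimes \id_{Y'}$ is a weak equivalence. Composing, $f \otimes g$ is a weak equivalence.

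With (SM1) and (SM2) verified, \autoref{prop:deform_sym_mon_cat} applies, giving the canonical symmetric monoidal structure on $\ho\mc C$ with product $\otimes^{\L}$ and unit $I$. The only mild subtlety is the observation that each slot $X \otimes (-)$ and $(-) \otimes Y$ really is left Quillen when $X$, respectively $Y$, is cofibrant, which is the standard unpacking of the pushout-product axiom and follows once one checks that the pushout-product of $\emptyset \to X$ (a cofibration) with any (acyclic) cofibration is again an (acyclic) cofibration. No other step poses any real obstacle.
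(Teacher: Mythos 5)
Your proof is correct and matches the paper's intended argument: the paper's Corollary is stated as immediate from \autoref{prop:deform_sym_mon_cat} together with the preceding observations that a left Quillen bifunctor preserves cofibrant objects and (by Ken Brown) weak equivalences between them, which is exactly what you verify as (SM1), with (SM2) being the hypothesis on $I$. One small slip: the internal hom is the \emph{right} adjoint partner of $X \otimes -$, not the left.
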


The final virtue of cofibrantly-generated model categories is that there is a standard method for proving that one exists. It suffices to check the following list of conditions, cf. \cite{hovey_model_cats}.
\begin{prop}\label{prop:construct_cofibrantly_generated_model_category}
	Given a category $\cat C$ with all small colimits and limits, a subcategory $W$ and sets of maps $I$ and $J$, for this data to make $\cat C$ into a cofibrantly generated model category, it is sufficient that:
	\begin{enumerate}
		\item $W$ is closed under 2-out-of-3 and retracts.
		\item $I$ satisfies the simplified smallness condition.
		\item $J$ satisfies the simplified smallness condition.
		\item $J$-cell complexes are in $W$ $\cap$ $I$-cof.
		\item $I$-inj $\subseteq$ $W$ $\cap$ $J$-inj.
		\item Either $W$ $\cap$ $I$-cof $\subseteq$ $J$-cof or $W$ $\cap$ $J$-inj $\subseteq$ $I$-inj.
	\end{enumerate}
\end{prop}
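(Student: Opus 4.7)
The plan is to run the standard recognition argument for cofibrantly generated model structures, defining the three classes of maps as
\[ \textup{cof} = I\textup{-cof}, \quad \textup{fib} = J\textup{-inj}, \quad \textup{weq} = W, \]
and then verifying the model category axioms. Conditions (1), together with the fact that classes defined by a lifting property are automatically closed under retracts, immediately give the 2-out-of-3 and retract axioms. The simplified smallness conditions (2) and (3) allow Quillen's small object argument to produce functorial factorizations of every map $f$ as
\[ f = p_I \circ i_I = p_J \circ i_J \]
with $i_I$ an $I$-cell complex, $p_I \in I$-inj, $i_J$ a $J$-cell complex, and $p_J \in J$-inj. By the small object argument, every $I$-cofibration is a retract of an $I$-cell complex, and similarly for $J$.

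Next I would prove that the factorizations land in the correct classes. The $(I\textup{-cell}, I\textup{-inj})$ factorization gives (cofibration, acyclic fibration) directly from condition (5), which says $I\textup{-inj} \subseteq W \cap J\textup{-inj} = W \cap \textup{fib}$. The $(J\textup{-cell}, J\textup{-inj})$ factorization gives (acyclic cofibration, fibration) from condition (4), which says $J$-cell complexes lie in $W \cap I$-cof; since $W$ and $I$-cof are closed under retracts, this upgrades to $J\textup{-cof} \subseteq W \cap \textup{cof}$.

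The remaining task, and the main step where condition (6) is used, is the lifting axioms. One direction is immediate from the definitions: $I$-cof lifts against $I$-inj, and $J$-cof lifts against $J$-inj. So it suffices to prove the two identifications
\[ W \cap J\textup{-inj} \subseteq I\textup{-inj}, \qquad W \cap I\textup{-cof} \subseteq J\textup{-cof}. \]
Under the first alternative of (6) the inclusion $W \cap I\textup{-cof} \subseteq J\textup{-cof}$ holds by hypothesis, and I would deduce $W \cap J\textup{-inj} \subseteq I\textup{-inj}$ as follows: given an acyclic fibration $p$, factor $p = q \circ j$ with $j \in I$-cell and $q \in I$-inj; by (5) and 2-out-of-3 both $q$ and $j$ lie in $W$; by (6a), $j \in J$-cof, so $j$ lifts against $p \in J$-inj, which expresses $p$ as a retract of $q \in I$-inj, hence $p \in I$-inj. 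Under the second alternative of (6), the same style of retract argument applied to the $J$-factorization of an acyclic cofibration gives $W \cap I\textup{-cof} \subseteq J\textup{-cof}$: factor as $q \circ j$ with $j \in J$-cell, $q \in J$-inj, conclude $q \in W \cap J\textup{-inj} \subseteq I\textup{-inj}$ by (6b), lift to exhibit the acyclic cofibration as a retract of $j$.

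The main obstacle is exactly this symmetric retract argument, and specifically getting the two alternatives of (6) to yield the two needed identifications in a parallel way; once one sees that each alternative handles the ``harder'' of the two inclusions while (4) and (5) handle the ``easier'' halves automatically, the rest of the verification is formal.
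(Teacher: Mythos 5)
The paper gives no proof of this proposition, citing Hovey's book (this is the recognition theorem for cofibrantly generated model categories). Your argument reproduces that standard proof correctly, including the key retract argument by which either alternative of condition (6) can be bootstrapped to yield both missing identifications $W \cap J\textup{-inj} = I\textup{-inj}$ and $W \cap I\textup{-cof} = J\textup{-cof}$ once the factorizations and conditions (4), (5) are in place.
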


\newpage
\section{Parametrized spectra}

Next we will define parametrized spectra and level equivalences between them. We prove several technical preliminaries here before defining stable equivalences in the next section.

\beforesubsection
\subsection{Basic definitions}\aftersubsection

For us, a parametrized spectrum is simply a diagram spectrum in the sense of \cite{mmss}, except that the levels of the diagram are in $\mc R(B)$, and we use the external smash product when defining the bonding maps.

In detail, we define $S^n$ to be the one-point compactification of $\R^n$, regarded as a retractive space over $*$. For $X \in \mc R(B)$ we regard $\Sigma_B X := S^1 \barsmash X$ as a space over $B$, not $* \times B$.\footnote{The different ways of doing this are not strictly equal, but canonically isomorphic. Given any two models, a bonding map defined using one model carries over to a bonding map defined using the other model. The notion of a bonding map is therefore independent of which model we pick. Of course one encounters exactly the same issue when defining non-parametrized spectra, because there is technically not a canonical choice of model for the functor $\Sigma(-)$.}

\begin{df}\hfill
	
	\vspace{-1em}
	\begin{itemize}
		\item A \textbf{sequential spectrum} or \textbf{prespectrum} over $B$ is a sequence of retractive spaces
		\[ X_n \in \mc R(B), \qquad n \geq 0, \]
		together with \textbf{bonding maps}
		\[ \sigma\colon \Sigma_B X_n \to X_{1+n}. \]\index{sequential spectra $\Psp(B)$}
		\item A \textbf{symmetric spectrum} over $B$ is a sequential spectrum together with an action of the symmetric group
		\[ \Sigma_n \curvearrowright X_n \]
		through maps of retractive spaces, such that each composite
		\[ \sigma^p\colon S^p \barsmash X_q \to \ldots \to X_{p + q} \]
		is $\Sigma_p \times \Sigma_q$-equivariant.
		\item A \textbf{orthogonal spectrum} over $B$ is a sequential spectrum with a continuous action of the orthogonal group $O(n)$ on $X_n$ through maps of retractive spaces, such that the above composite is $O(p) \times O(q)$-equivariant.\index{orthogonal spectra!category $\Osp(B)$}
	\end{itemize}
\end{df}

Because external smash commutes with pullback, for any sequential spectrum $X$ over $B$ and $b \in B$, the fibers $X_b$ form a  sequential spectrum in the usual sense, which we call the \textbf{fiber spectrum}\index{fiber spectrum} at $b$. The same applies to symmetric and orthogonal spectra.

\begin{ex}[Suspension spectra] \hfill
	\vspace{-1em}
	
	\begin{itemize}
		\item The \textbf{fiberwise sphere spectrum} $\Sph_B$ is the sequential spectrum whose $n$th space is
		\[ S^n \times B \cong S^n \barsmash (B_{+B}). \]
		Each fiber spectrum is isomorphic to the sphere spectrum $\Sph$.
		\item More generally the \textbf{fiberwise suspension spectrum}\index{fiberwise!suspension spectrum} $\Sigma^\infty_B X$ of a retractive space $X$ over $B$ has $n$th space
		\[ S^n \barsmash X \]
		and bonding maps $S^1 \barsmash (S^n \barsmash X) \cong S^{1+n} \barsmash X$. Each of its fibers is the suspension spectrum $\Sigma^\infty X_b$ of the fiber $X_b$.
		\item If $X$ is only a space over $B$, we can add a basepoint section and then take its fiberwise suspension spectrum. We sometimes denote this as $\Sigma^\infty_{+B} X$ instead of $\Sigma^\infty_B X_{+B}$. Each of its fibers is the suspension spectrum $\Sigma^\infty_+ X_b$. If $X \to B$ happens to be a fibration or a bundle, we can think of $\Sigma^\infty_{+B} X$ as the associated bundle of suspension spectra.
		\item All three of these examples are in fact symmetric/orthogonal spectra using the usual action of the groups $\Sigma_n \leq O(n)$ on $\R^n$ and its compactification $S^n$. 
	\end{itemize}
\end{ex}

\begin{ex}[Bundles of spectra]
	Suppose that $E$ is a sequential spectrum, symmetric spectrum, or orthogonal spectrum, that $G$ is a topological group, and that that we specify a continuous action $G \curvearrowright E$. This means that $G$ acts continuously on each $E_n$ in a way that respects the bonding maps. Then to each principal $G$-bundle $P \to B$, we can associate a \textbf{bundle of spectra}\index{bundle of spectra} with fiber $E$. At level $n$ this is just the usual mixing construction $P \times_G E_n$, and there are canonical homeomorphisms $\Sigma_B(P \times_G E_n) \cong P \times_G (\Sigma_B E_n)$ that allow us to define the bonding maps. More concretely, we can define a bundle of spectra by giving an open cover of $B$ and assigning the overlaps to automorphisms of $E$, in a way that satisfies the cocycle condition.
	
	One special case is when the $G$-action on $E$ is trivial. The associated bundle is just $E_n \times B$ at level $n$.
	
	As another special case, if $A$ is an abelian group and $G = \textup{Aut}(A)$, the usual bar-construction model for the the Eilenberg-Maclane spectrum $HA$ admits an action by $G$, whose action on the homotopy groups $\pi_0(HA) \cong A$ is the canonical one. From this we deduce that any bundle of abelian groups $\mc A \to B$ has an associated \textbf{parametrized Eilenberg-Maclane spectrum} $H\mc A \to B$, whose fiber is $HA$.
\end{ex}

If $X$ is a parametrized spectrum, the bonding maps have adjoints $X_n \to \Omega_B X_{1+n} := \barmap_*(S^1,X_{1+n})$. Furthermore the loop space functor will be derived if $X_{1+n} \to B$ is a fibration. We say $X$ is a \textbf{(weak) $\Omega$-spectrum}\index{$\Omega$-spectrum} if the composite $X_n \to \Omega_B X_{1+n} \to \R\Omega_B X_{1+n}$ is a weak equivalence.

\begin{ex}[$\Omega$-spectra] \hfill
	\vspace{-1em}
	
	\begin{itemize}
		\item If $E$ is a weak $\Omega$-spectrum then so is any bundle of spectra with fiber $E$. The bundle of Eilenberg-Maclane spectra $H\mc A$ is one example of this.
		\item If $X \to B$ is a fibration and $X$ is weak Hausdorff,\footnote{This implies the maps of the colimit system are closed inclusions, so the colimit is a homotopy colimit.} we can form a weak $\Omega$-sequential spectrum over $B$ whose $n$th space is
		\[ Q_B(\Sigma^n_{+B} X) = \underset{k}\colim \Omega^k_B \Sigma^{k+n}_B X_{+B}. \]
		Using \autoref{prop:h_cofibrations_pushout_product} and \autoref{h_fibrations_pullback_hom}, this is a fibration at every spectrum level and the fiber is the classical fibrant replacement of the spectrum $\Sigma^\infty_+ X_b$. We could also use the $O(n)$ action on $S^n$ to form different bonding maps, giving a weak $\Omega$-orthogonal spectrum with the same levels, but whose adjoint bonding maps are only weak equivalences instead of homeomorphisms.
	\end{itemize}
\end{ex}

The above definitions can be re-phrased using diagrams of retractive spaces indexed by $\mathscr N, \Sigma_S, \mathscr J$ from \cite{mmss}. We will state this carefully for orthogonal spectra. Recall that the topological category $\mathscr J$\index{orthogonal spectra!indexing category $\mathscr J$} has objects the finite-dimensional subspaces $V$ of some fixed inner product space isomorphic to $\R^\infty$. For each pair $V$ and $W$, let $O(V,W)$ be the space of all linear isometric embeddings of $V$ into $W$. This has a vector bundle whose fiber consists of the orthogonal complement of $V$ in $W$, and we let $\mathscr J(V,W)$ be the Thom space of this vector bundle. More concretely, we have the formula
\[ \mathscr J(\R^n,\R^{m+n}) \cong O(m+n)_+ \sma_{O(m)} S^m. \]
The composition in $\mathscr J$ composes linear embeddings and adds points in their orthogonal complements.

This category contains inside it a smaller category $\mathscr N$, whose objects are the standard subspaces $\R^n \subseteq \R^\infty$ and whose morphisms are
\[ \mathscr N(\R^n,\R^{m+n}) = S^m. \]

\begin{df}
	A parametrized $\mathscr J$-space is an enriched $\mathscr J$-diagram in $\mc R(B)$. Concretely, this means for each object $V$ an object $X(V) \in \mc R(B)$, and for each pair $V,W$ a map in $\mc R(B)$
	\[ \mathscr J(V,W) \barsmash X(V) \to X(W) \]
	such that all of the following ``associativity'' and ``unit'' diagrams commute:
	\[ \xymatrix @R=1.7em{
		\mathscr J(V,W) \barsmash \mathscr J(U,V) \barsmash X(U) \ar[r] \ar[d] & \mathscr J(V,W) \barsmash X(V) \ar[d] \\
		\mathscr J(U,W) \barsmash X(U) \ar[r] & X(W)
	}
	\qquad
	\xymatrix @R=1.7em{
		X(V) \ar[r] \ar@{=}[dr] & \mathscr J(V,V) \barsmash X(V) \ar[d] \\
		& X(V).
	}
	\]
\end{df}

\begin{lem}
	Restricting to $V = \R^n$ gives an equivalence of categories from parametrized $\mathscr J$-spaces to parametrized orthogonal spectra. 
\end{lem}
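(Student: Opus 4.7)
My plan is to exhibit an inverse to the restriction functor and check both composites are naturally isomorphic to the identity, following the MMSS dictionary between $\mathscr J$-spaces and orthogonal spectra carried out fiberwise in $\mc R(B)$. The restriction functor sends a parametrized $\mathscr J$-space $X$ to the orthogonal spectrum with $n$th level $X(\R^n)$, with $O(n)$-action obtained by restricting the $\mathscr J$-structure map along $O(n)_+ = \mathscr J(\R^n, \R^n)$ and with $m$-fold bonding map $\sigma^m\colon S^m \barsmash X(\R^n) \to X(\R^{m+n})$ obtained by smashing $X(\R^n)$ with the canonical based map $S^m \hookrightarrow \mathscr J(\R^n, \R^{m+n}) = O(m+n)_+ \sma_{O(m)} S^m$ sending $v$ to the class of $(\id_{\R^{m+n}}, v)$. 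The $O(p) \times O(q)$-equivariance of $\sigma^p$ is immediate from the associativity of the $\mathscr J$-action together with the block embedding $O(p) \times O(q) \hookrightarrow O(p+q)$.

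For the inverse, given a parametrized orthogonal spectrum $Y$ I would construct a $\mathscr J$-space $X$ using that each $V \in \mathscr J$ is isomorphic in $\mathscr J$ to $\R^{\dim V}$ via any linear isometry. Explicitly, set
\[ X(V) := \mathscr J(\R^n, V) \barsmash_{O(n)} Y_n \qquad (n = \dim V), \]
where $O(n)$ acts by precomposition on $\mathscr J(\R^n, V)$ and via the orthogonal spectrum structure on $Y_n$, and the fiberwise $O(n)$-quotient over $B$ is well-defined because the external smash product $\barsmash$ with a based space preserves the relevant colimits in $\mc R(B)$ (\autoref{prop:f_star_colimits}). Structure maps $\mathscr J(V, W) \barsmash X(V) \to X(W)$ then come from composition in $\mathscr J$ together with the iterated bonding maps of $Y$, and their well-definedness on the $O(n)$-quotient is ensured by equivariance of $\sigma^m$. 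An equivalent, more abstract description of the inverse is as the left Kan extension along the inclusion $\mathscr N \hookrightarrow \mathscr J$ of the subcategory on the standard subspaces, enlarged to include the $O(n)$-actions.

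The cleanest route to the lemma is probably to split the claim in two. First, let $\mathscr J_O \subseteq \mathscr J$ denote the full subcategory on the standard subspaces $\R^n$; this inclusion is an enriched equivalence of topological categories (fully faithful by definition, and essentially surjective since each $V$ admits an invertible morphism in $\mathscr J$ to $\R^{\dim V}$), so restriction gives an automatic equivalence from parametrized $\mathscr J$-spaces to parametrized $\mathscr J_O$-spaces. The remaining identification of $\mathscr J_O$-spaces with parametrized orthogonal spectra is an unpacking of definitions: a $\mathscr J_O$-structure map out of $\mathscr J(\R^n, \R^{m+n}) = O(m+n)_+ \sma_{O(m)} S^m$ is, by induction--restriction, equivalent to an $O(m) \times O(n)$-equivariant map $S^m \barsmash X(\R^n) \to X(\R^{m+n})$, and the composition axiom for $\mathscr J_O$-spaces translates precisely into the compatibility of iterated bonding maps. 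The main obstacle is the equivariance bookkeeping needed for this last step, but it is identical to the non-parametrized MMSS argument because the external smash product with a based space commutes with the coequalizers implicit in $\sma_{O(n)}$, so the parametrized setting introduces no new difficulty.
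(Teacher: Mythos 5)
Your proof is correct and follows the standard MMSS dictionary, which is exactly what the paper invokes with its one-line citation ("identical to the non-parametrized case"); the split into (a) restricting to the equivalent full subcategory on standard subspaces and (b) unpacking $\mathscr J(\R^n,\R^{m+n})\cong O(m+n)_+\sma_{O(m)}S^m$ via induction--restriction is the intended argument. The only small glitch is the citation of \autoref{prop:f_star_colimits} to justify commuting $\barsmash$ past the $O(n)$-quotient: that proposition concerns $f^*$, whereas what you actually need is that $K\barsmash-$ is a left adjoint and hence preserves the coequalizer defining $\barsmash_{O(n)}$, which you correctly note in the final sentence.
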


\begin{lem}
	In the same way, parametrized $\mc N$-spaces are equivalent to parametrized sequential spectra.
\end{lem}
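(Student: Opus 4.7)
The plan is to mirror the argument for the preceding lemma. I would first define a restriction functor $F$ from parametrized $\mc N$-spaces to parametrized sequential spectra by $(FX)_n := X(\R^n)$, with bonding map $\sigma\colon \Sigma_B X_n \to X_{1+n}$ obtained from the $\mc N$-structure map $\mc N(\R^n,\R^{1+n}) \barsmash X(\R^n) \to X(\R^{1+n})$ under the identification $\mc N(\R^n,\R^{1+n}) = S^1$.

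Next I would construct the inverse functor $G$ from parametrized sequential spectra to $\mc N$-spaces by setting $G(X)(\R^n) := X_n$ and declaring the structure map $\mc N(\R^n,\R^{n+m}) \barsmash X_n = S^m \barsmash X_n \to X_{n+m}$ to be the $m$-fold iterate $\sigma^m$ of the bonding map, via the canonical homeomorphism $S^m \cong S^1 \barsmash \cdots \barsmash S^1$. The unit axiom is immediate since $\sigma^0 = \id$, and the associativity axiom reduces to the tautology that iterating $\sigma$ first $m$ times and then $m'$ times agrees with iterating $m+m'$ times.

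It would then remain to verify that $F$ and $G$ are mutually inverse, which should be straightforward. The composite $FG$ returns the original sequential spectrum, since the bonding map is recovered as $\sigma^1 = \sigma$. For $GF$, the one non-trivial step is to check that for any $\mc N$-space $X$, the original structure map $\mc N(\R^n,\R^{n+m}) \barsmash X(\R^n) \to X(\R^{n+m})$ equals the $m$-fold iterate of the structure map on $\mc N(\R^n,\R^{n+1}) = S^1$; this would follow from applying the associativity axiom of $X$ repeatedly with the middle dimension equal to $1$. Morphisms pass through both functors unchanged, since a system of maps commuting with a single bonding map automatically commutes with all its iterates.

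The whole argument hinges on the one observation that $\mc N$, viewed as a based-space-enriched category, is generated by the morphism $S^1 \in \mc N(\R^n,\R^{n+1})$ together with the composition identifications $S^1 \barsmash S^m \cong S^{m+1}$; hence an enriched functor out of $\mc N$ is uniquely determined by its values on objects and the image of this generating morphism, subject only to associativity. I do not anticipate any real obstacle.
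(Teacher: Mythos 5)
Your proposal is correct and is, up to wording, exactly the argument the paper invokes: the paper's proof for this lemma and its companion for $\mathscr J$-spaces is simply ``Identical to the proof in the non-parametrized case \cite{mmss},'' and the proof in \cite{mmss} is precisely the explicit construction you give (restrict to the generating morphism $S^1 \in \mathscr N(\R^n,\R^{n+1})$, recover the full $\mathscr N$-action by iterating $\sigma$, and check associativity/unit via the canonical identifications $S^m \sma S^{m'} \cong S^{m+m'}$). You have merely filled in the details that the paper delegates to the citation.
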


\begin{proof} Identical to the proof in the non-parametrized case \cite{mmss}. \end{proof}

Let $\Osp(B)$ denote the category of parametrized $\mathscr J$-spaces. In light of the previous lemma, it is harmless to call this the \textbf{category of orthogonal spectra over $B$}\index{orthogonal spectra!category $\Osp(B)$}. Similarly let $\Psp(B)$ denote the \textbf{category of sequential spectra (prespectra) over $B$}.\index{sequential spectra $\Psp(B)$}

\begin{rmk}
	We use the term \textbf{spectra} any time our discussion applies equally well to sequential spectra and to orthogonal spectra. We will not treat symmetric parametrized spectra in detail in this document, because they require an additional discussion of semistability as in \cite{schwede_symmetric_spectra} to make the theory work, see \cite{braunack2018rational,hebestreit_sagave_schlichtkrull}.
\end{rmk}

A colimit of a diagram of spectra is computed by taking the colimit in $\mc R(B)$ on each spectrum level separately, and then using the canonical commutation of $\Sigma_B(-)$ with colimits to define the bonding maps. The same applies to limits and $\Omega_B(-)$.\index{colimit}\index{limit}

Since spectra are diagrams, we can construct free spectra as free diagrams. Given $V$ and a retractive space $A$, define the \textbf{free orthogonal spectrum}\index{free spectrum $F_V A$} $F_V A$ to be the parametrized $\mathscr J$-space given by the formula
\[ (F_V A)(W) := \mathscr J(V,W) \barsmash A. \]
This is the left adjoint of the functor that sends a $\mathscr J$-space $X$ to $X(V)$. When $V = \R^n$ we call this orthogonal spectrum $F_n A$.
The analogous construction in sequential spectra gives the \textbf{free sequential spectrum} $F_n A$, which at spectrum level 0 to $(n-1)$ is the zero object of $\mc R(B)$, then the retractive space $A \in \mc R(B)$ and its fiberwise suspensions:
\[ F_n A = \{ \xymatrix{ B & B & \ldots & B & A & \Sigma_B A & \Sigma_B^2 A & \ldots } \} \]
In both cases, $F_0 = \Sigma^\infty$ is the suspension spectrum functor
\[ (\Sigma^\infty A)(W) = \Sigma^W_B A = S^W \barsmash A. \]

\begin{ex}\label{ex:thom_spectra}
	We may represent a virtual bundle $\xi$ over $B$ by a vector bundle $V$ and integer $n$, and define the \textbf{fiberwise Thom spectrum} of $\xi$ to be
	\[ \Th_B(\xi) := F_n \Th_B(V). \]\index{fiberwise!Thom spectrum $\Th_B(\xi)$}
	This depends on the choice of $V$ and $n$, an also on whether we take $F_n$ in orthogonal spectra or sequential spectra. \autoref{ex:equivalent_thom_spectra} and \autoref{prop:pre_equiv_to_orth} below imply that different choices give ``stably equivalent'' results.
\end{ex}

\beforesubsection
\subsection{Level equivalences, level fibrations, free cofibrations}\aftersubsection

Every result in this subsection applies to both sequential spectra and orthogonal spectra.

A \textbf{level equivalence}\index{level equivalence} is a map of spectra $X \to Y$ inducing an equivalence on each level $X(V) \to Y(V)$. A \textbf{level $h$-fibration}\index{$h$-fibration!level} is a map which is a $h$-fibration at every level, and similarly for $q$-fibrations\index{$q$-fibration!level}. A \textbf{level $f$-cofibration}\index{$f$-cofibration!level}\index{$h$-cofibration!level}\index{$i$-cofibration!level} is an $f$-cofibration at every level, and similarly for other kinds of cofibrations. Since $X(V) \cong X(\R^n)$ when $n = \dim V$, it is enough to check these conditions for $V = \R^n$.

We will need a slightly stronger kind of cofibration. Consider all maps of spectra over $B$ of the form $F_n(K \to X)$, where $n\geq 0$ and $K \to X$ is an $f$-cofibration of $f$-cofibrant retractive spaces over $B$. The class of \textbf{free $f$-cofibrations}\index{$f$-cofibration!free} is the smallest class of maps of spectra containing the above, and closed under pushout, transfinite composition, and retracts. In particular, any countable composition of pushouts of coproducts of maps of the form $F_n(K \to X)$ is a free cofibration.

The \textbf{free $q$-cofibrations}, \textbf{free $h$-cofibrations}, and \textbf{free $i$-cofibrations} are defined similarly\index{$q$-cofibration!free}\index{$h$-cofibration!free}\index{$i$-cofibration!free}. For $q$-cofibrations, this class is generated by free cells $F_k(S^{n-1}_{+B}) \to F_k(D^n_{+B})$. In the case of sequential spectra, a map $X \to Y$ is a free cofibration iff each of the maps
\[ \Sigma_B Y_n \cup_{\Sigma_B X_n} X_{1+n} \to Y_{1+n} \]
is a cofibration. 

When a spectrum is cofibrant in this sense, we say it is \textbf{freely cofibrant}.\footnote{The more obvious term ``free cofibrant'' turns out to be confusing because spectra that are cofibrant in this sense do not have to be free, only built out of free spectra in a certain way.}

\begin{lem}\label{prop:free_implies_level}
	Every free cofibration is a level cofibration.
\end{lem}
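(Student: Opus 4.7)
The plan is to reduce to the generating maps and then verify the claim level by level. By $\autoref{prop:technical_cofibrations}$ applied at each spectrum level, the class of level $f$-cofibrations is closed under pushouts, transfinite compositions, and retracts, and the same is true for each of the other cofibration notions. It therefore suffices to prove that for a generating map $F_n(K \to X)$, where $K \to X$ is an $f$-cofibration (respectively $h$-, $q$-, or $i$-cofibration) of $f$-cofibrant (resp.\ $h$-, $q$-, or $i$-cofibrant) spaces, the result is a level cofibration of the same type.

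I would next compute the generators level by level. For orthogonal spectra, $F_n(K \to X)$ evaluated at $\R^m$ is $\mathscr{J}(\R^n, \R^m) \barsmash (K \to X)$, external smash with a retractive space $\mathscr{J}(\R^n,\R^m)$ over a point. For sequential spectra, the $m$th level is $S^{m-n} \barsmash (K \to X)$ when $m \geq n$, and the identity map of the zero object $B$ when $m < n$; the latter is trivially a cofibration of every type, so we are reduced to treating the former.

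Writing $T$ for either $\mathscr{J}(\R^n, \R^m)$ or $S^{m-n}$, the map at level $m$ has the form $T \barsmash K \to T \barsmash X$. Since the zero object of $\mc R(\ast)$ is the one-point based space $\ast$, and $\ast \barsmash (-)$ returns the zero object $B$ of $\mc R(B)$, this map is canonically identified with the pushout-product of the basepoint inclusion $\ast \to T$ with $K \to X$ taken in the external smash product
\[ \barsmash\colon \mc R(\ast) \times \mc R(B) \to \mc R(\ast \times B) = \mc R(B). \]
The space $T$ is a well-based space (for $\mathscr{J}(\R^n,\R^m) \cong O(m)_+ \sma_{O(m-n)} S^{m-n}$, the basepoint inclusion is a subcomplex inclusion in an $O(m)$-equivariant CW structure), so $\ast \to T$ is a cofibration of each of the four relevant types. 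Applying $\autoref{prop:h_cofibrations_pushout_product}$ for $f$-cofibrations, $h$-cofibrations, and closed inclusions, or $\autoref{ex:smashing_spaces_is_left_Quillen}$ for $q$-cofibrations, the pushout-product is a cofibration of the same type, and the proof is complete.

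The main point to verify carefully is the cofibrancy of the indexing spaces $\mathscr{J}(\R^n,\R^m)$; beyond this, the argument is a direct application of closure properties and of the pushout-product results already established for $\barsmash$.
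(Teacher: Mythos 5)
Your proof is correct and follows the same strategy as the paper's: reduce to the generating maps $F_n(K \to X)$ using the closure of level cofibrations under pushouts, transfinite compositions, and retracts (which you justify by \autoref{prop:technical_cofibrations} plus the fact that spectrum colimits are computed levelwise), then identify the level-$m$ map with the pushout-product of the basepoint inclusion $\ast \to T$ with $K \to X$ and apply \autoref{prop:h_cofibrations_pushout_product} or \autoref{ex:smashing_spaces_is_left_Quillen}. The paper's own proof phrases the level-$m$ map as ``the identity of $\mathscr J(\R^n,\R^k)$ or $\mathscr N(\R^n,\R^k)$ smashed with $K \to X$,'' which is the same thing, since the pushout-product with a map out of the initial object $\ast$ of $\mc R(\ast)$ is exactly smashing with the target; you have simply spelled out more of the routine steps, including making explicit the well-basedness (resp.\ $q$-cofibrancy) of the indexing spaces that the paper leaves implicit.
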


\begin{proof}
	It suffices to prove that $F_n(K \to X)$ is a level cofibration. At level $k$ it is the identity of $\mathscr J(\R^n,\R^k)$ or $\mathscr N(\R^n,\R^k)$ smashed with $K \to X$, which is a cofibration by \autoref{prop:h_cofibrations_pushout_product} (for $f$, $h$, or $i$-cofibrations) or \autoref{ex:smashing_spaces_is_left_Quillen} (for $q$-cofibrations).
\end{proof}

\begin{lem}
	The free spectrum functor $F_n$ sends cofibrant spaces to freely cofibrant spectra. It sends $h$-cofibrant, $h$-fibrant spaces to level $h$-fibrant spectra.
\end{lem}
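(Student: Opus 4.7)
The plan is to reduce both claims to facts about the external smash product and the behavior of the left-adjoint $F_n$.

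For the first claim, the key observation is that $F_n$ is a left adjoint, hence preserves the zero object (sending $B \in \mc R(B)$ to the zero spectrum $0$), all colimits, and retracts. A space $X \in \mc R(B)$ is cofibrant in the $f$-, $h$-, or $i$-sense precisely when $B \to X$ is a cofibration out of the zero object, and applying $F_n$ yields $0 \to F_n X$, which is exactly of the form listed among the generators of the free cofibrations. For the $q$-case the generators are instead the free cells $F_k(S^{n-1}_{+B}) \to F_k(D^n_{+B})$; a $q$-cofibrant $X$ is a retract of a relative cell complex built from $S^{n-1}_{+B} \to D^n_{+B}$ (viewed as maps at level $0$), and since $F_n$ preserves colimits and retracts, $0 \to F_n X$ is a retract of a relative free cell complex built from the corresponding cells at level $n$. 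In all four cases, $F_n X$ is freely cofibrant.

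For the second claim, at each level $V$ there is a canonical homeomorphism $(F_n X)(V) \cong \mathscr J(\R^n, V) \barsmash X$ in the orthogonal case, and $(F_n X)(V) \cong \mathscr N(\R^n, V) \barsmash X$ in the sequential case, where the first factor is viewed as a retractive space over a point. These based spaces are Thom spaces of vector bundles over compact Stiefel manifolds (respectively, spheres), hence are well-based; as retractive spaces over a point they are therefore $h$-cofibrant, and trivially $h$-fibrant. The third bullet of \autoref{prop:h_cofibrations_pushout_product} applied to the pair $(\mathscr J(\R^n, V), X)$ then yields that $\mathscr J(\R^n, V) \barsmash X$ is $h$-fibrant over $* \times B \cong B$. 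Since this holds at every level, $F_n X$ is level $h$-fibrant.

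The only mild technical point is confirming well-basedness of $\mathscr J(\R^n, V)$, which amounts to verifying that the zero section of the orthogonal complement bundle over $O(\R^n, V)$ is a closed NDR-inclusion into the Thom space; this holds by a standard collar construction on the unit disk bundle, or by recognizing the Thom space as a CW complex with the basepoint a subcomplex.
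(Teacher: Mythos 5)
Your proof is correct and follows essentially the same route the paper has in mind: the paper's proof is simply a citation of \autoref{prop:h_cofibrations_pushout_product}, and your level-by-level argument for $h$-fibrancy is exactly the intended use of its third bullet, while the first claim is immediate from the definition of freely cofibrant spectra together with the fact that $F_n$ preserves colimits and retracts (which you spell out). One small slip in a side remark: the basepoint of the Thom space $\mathscr J(\R^n, V)$ is the collapse point (the image of the boundary sphere bundle), not the zero section — but this does not affect the argument, and your alternative justification via a CW structure with the basepoint as a subcomplex is valid.
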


\begin{proof}
	Also follows directly from \autoref{prop:h_cofibrations_pushout_product}.
\end{proof}

Now we turn our attention to cofibrant and fibrant replacement. It is possible, but tedious, to generalize the whiskering construction $WX$ from \autoref{prop:whiskering_properties}. It is much easier to prove that spectra have a model structure with the level equivalences.

\begin{prop}[Level model structure]\label{prop:level_model_structure}
	The level equivalences, free $q$-cofibrations, and level $q$-fibrations define a proper model structure on spectra (sequential spectra or orthogonal spectra). It is cofibrantly generated by
	\[ \begin{array}{ccrll}
	I &=& \{ \ F_k\left[ S^{n-1}_{+B} \to D^n_{+B} \right] & : n,k \geq 0, & D^n \to B \ \} \\
	J &=& \{ \ F_k\left[ D^n_{+B} \to (D^n \times I)_{+B} \right] &: n,k \geq 0, & (D^n \times I) \to B \ \}.
	\end{array} \]
\end{prop}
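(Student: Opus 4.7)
The plan is to verify the six conditions of \autoref{prop:construct_cofibrantly_generated_model_category} with $W$ the level equivalences and the stated generating sets $I$ and $J$, and then derive properness levelwise. Throughout, I will use that colimits, limits, pushouts, and pullbacks of spectra are all computed levelwise, and that the free spectrum functor $F_k$ is left adjoint to evaluation $\textup{ev}_{\R^k}$ at level $k$.

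Condition (1) is immediate: the level equivalences inherit 2-out-of-3 and closure under retracts from weak equivalences in $\mc R(B)$. For the smallness conditions (2) and (3), the domains of maps in $I$ and $J$ have the form $F_k(K)$ with $K$ one of the compact spaces $S^{n-1}_{+B}$, $D^n_{+B}$, or $(D^n\times I)_{+B}$. By adjunction, a map $F_k(K)\to X_\infty$ corresponds to $K\to X_\infty(\R^k)$. An $I$- or $J$-cell complex is computed levelwise, and by \autoref{prop:free_implies_level} and \autoref{prop:h_cofibrations_pushout_product} each level is a sequential colimit of pushouts of $h$-cofibrations, hence of closed inclusions in $\mc R(B)$. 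The standard compactness of $S^{n-1}$, $D^n$, and $D^n\times I$ then forces any map out of $K$ to factor through a finite stage.

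For condition (5), the adjunction $(F_k\dashv \textup{ev}_{\R^k})$ identifies $I$-injective maps with those whose level $k$ component has the RLP against $S^{n-1}_{+B}\to D^n_{+B}$ for every $n,k\geq 0$, i.e.\ the level acyclic Serre fibrations; these are both level equivalences and $J$-injective. For condition (6), I will establish the stronger $W\cap J\textup{-inj}\subseteq I\textup{-inj}$: a level $q$-fibration that is a level equivalence is a levelwise acyclic $q$-fibration, which by the Quillen model structure on $\mc R(B)$ has the RLP against $S^{n-1}_{+B}\to D^n_{+B}$ at each level, so is $I$-injective.

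Condition (4) is the main obstacle. That a $J$-cell complex lies in $I$-cof follows because the space-level map $D^n_{+B}\to (D^n\times I)_{+B}$ is a relative $I$-cell complex in $\mc R(B)$, and $F_k$ preserves colimits. For the level-equivalence claim, each pushout of a map in $J$ looks, at level $\R^m$, like a pushout along
\[ \mathscr J(\R^k,\R^m)\barsmash\bigl[D^n_{+B}\to (D^n\times I)_{+B}\bigr] \]
(replacing $\mathscr J$ by $\mathscr N$ in the sequential case). This is the pushout-product of $\emptyset\to \mathscr J(\R^k,\R^m)$ with a generating acyclic $q$-cofibration, and since $\mathscr J(\R^k,\R^m)$ is a based CW complex (a Thom space over a compact Stiefel manifold), $\emptyset\to\mathscr J(\R^k,\R^m)$ is a $q$-cofibration. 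By \autoref{ex:smashing_spaces_is_left_Quillen} and \autoref{prop:quillen_bifunctor_defn} the pushout-product is therefore an acyclic $q$-cofibration in $\mc R(B)$. Closure of the acyclic $q$-cofibrations under pushouts and transfinite compositions (these follow from \autoref{prop:technical_cofibrations} combined with left properness of the Quillen model structure, since $q$-cofibrations are $h$-cofibrations) then promotes an arbitrary $J$-cell complex to a level equivalence. Properness of the resulting model structure is now inherited levelwise from $\mc R(B)$: free $q$-cofibrations are levelwise $q$-cofibrations by \autoref{prop:free_implies_level}, level $q$-fibrations are levelwise by definition, and \autoref{prop:technical_cofibrations} supplies the gluing lemma at each level, while the dual cogluing lemma holds for the Quillen model structure on $\mc R(B)$.
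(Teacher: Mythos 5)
Your proof is correct and follows the same overall strategy the paper intends: verify the six conditions of \autoref{prop:construct_cofibrantly_generated_model_category} via the adjunctions $(F_k \dashv \textup{ev}_{\R^k})$, and check properness levelwise. The paper's own proof simply declares this ``very well-known'' and leaves the verification implicit, except for one remark on properness; you have filled in all the missing steps and done so accurately. One small difference worth noting: the paper's suggested route to left properness is to observe that free $q$-cofibrations are level $h$-cofibrations (via \autoref{prop:free_implies_level}) and invoke the gluing lemma from \autoref{prop:technical_cofibrations}, whereas you instead use that they are level $q$-cofibrations and inherit properness directly from the Quillen model structure on $\mc R(B)$. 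Both readings are supported by \autoref{prop:free_implies_level} and both work; yours has the minor advantage that you never leave the realm of the $q$-model structure. The one place you could tighten: in verifying smallness you should flag, as the paper does for the stable model structure, that the quotients of the filtration are weak Hausdorff, which is what makes the ``compact image factors through a finite stage'' argument valid in both (CG) and (CGWH); and in condition (4) the detour through left properness of $\mc R(B)$ to close acyclic $q$-cofibrations under pushouts and transfinite compositions is more than you need---these closure properties are part of the model-structure package directly.
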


\begin{proof}
	This kind of result is a lifting of a model structure along an adjunction, and is very well-known. We have hopefully made it straightforward for the interested reader to verify the axioms laid out in \autoref{prop:construct_cofibrantly_generated_model_category}. By \autoref{prop:free_implies_level} for $h$-cofibrations, the cofibrations in this model structure are always level $h$-cofibrations, therefore it is left proper. Right properness is also immediate.
\end{proof}

\begin{rmk}
	So far in this section, everything works equally well in (CGWH) and (CG), including \autoref{prop:level_model_structure}. We therefore have two distinct model categories of sequential spectra and two distinct model categories of orthogonal spectra. In each case, the forgetful functor (CGWH) $\to$ (CG) is a right Quillen equivalence.
\end{rmk}

The above model structure gives us a cofibrant replacement functor $Q$\index{cofibrant replacement}, and a fibrant replacement functor $R^{lv}$\index{fibrant replacement!$R^{lv}$ for spectra}. In principle these are only defined on each fiber category $\Osp(B)$ or $\Psp(B)$ separately, but an examination of the small-object argument reveals that they actually define functors on all of $\Osp$ or $\Psp$, that preserve each fiber category.

On the other hand, we also want to generalize the monoidal fibrant replacement functor $P$ to spectra. If $X$ is a spectrum, we define $PX$ by applying $P$ to every level $X_n$\index{fibrant replacement!$P$ for spectra} and using the canonical homeomorphisms to define the bonding maps
\[ \mathscr J(V,W) \barsmash PX(V) \cong P(\mathscr J(V,W) \barsmash X(V)) \to PX(W) \]
or more simply in sequential spectra
\[ S^n \barsmash PX_m \cong P(S^n \barsmash X_m) \to PX_{n+m}. \]
Again, this defines a functor on each fiber category, or on the entire category of all spectra over all base spaces.

\begin{prop}\label{prop:spectrum_px}
	$X \to PX$ is a level equivalence. If $X$ is level $h$-cofibrant, then $PX$ is level $h$-fibrant. $P$ preserves free/level $f$-cofibrations and free/level $h$-cofibrations. If the spectra are all level $h$-cofibrant, it turns free/level $h$-cofibrations into free/level $f$-cofibrations.
\end{prop}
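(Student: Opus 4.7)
The plan is to reduce every assertion to the corresponding space-level statement from \autoref{prop:px_properties}, since $P$ is applied levelwise to a spectrum and most of the structural features (level equivalences, level fibrations, level cofibrations) are defined levelwise. The only nontrivial ingredient will be handling the \emph{free} cofibration claims, and for those the key tool will be the strong monoidal compatibility of $P$ with the external smash product from \autoref{prop:P_strong_monoidal}.

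First I would dispose of the three purely levelwise statements. The map $X(V) \to PX(V) = P(X(V))$ is an equivalence in $\mc R(B)$ for every $V$ by \autoref{prop:px_properties}, so $X \to PX$ is a level equivalence. If each $X(V)$ is $h$-cofibrant then each $PX(V)$ is $h$-fibrant, so $PX$ is level $h$-fibrant. Preservation of level $f$- and level $h$-cofibrations is checked one level at a time, and again follows from \autoref{prop:px_properties}.

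For the free cofibration assertions I would first establish a canonical isomorphism $PF_nA \cong F_n(PA)$. At level $k$ this reads $P(\mathscr J(\R^n,\R^k)\barsmash A)\cong \mathscr J(\R^n,\R^k)\barsmash PA$, which follows from \autoref{prop:P_strong_monoidal} together with the observation that $P$ is the identity on based spaces (since $*^I=*$ makes its definition trivial over a point). A routine diagram chase verifies compatibility with the structure maps. Consequently $P$ sends a generating free $f$- (respectively $h$-) cofibration $F_n(K\to X)$ to $F_n(PK\to PX)$, which is again a generator of the same class because $P$ preserves $f$-cofibrations and $f$-cofibrant spaces (resp.\ $h$-cofibrations and $h$-cofibrant spaces) by \autoref{prop:px_properties}. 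Since $P=(p_1)_!(p_0)^*$ preserves the pushouts and sequential colimits used to assemble free cofibrations from these generators, and trivially preserves retracts, it preserves the whole class. For the upgrade claim, I would invoke \autoref{lem:heath_kamps}: if $X$ and $Y$ are level $h$-cofibrant then $PX$ and $PY$ are level $h$-fibrant, so any level $h$-cofibration $PX\to PY$ between them becomes a level $f$-cofibration. In the free version, each generator $F_n(K\to X)$ has $K$ and $X$ both $h$-cofibrant, so $PK$ and $PX$ are simultaneously $h$-cofibrant and $h$-fibrant, whence the basepoint sections $B=PB\to PK,\, PX$ and the map $PK\to PX$ are $f$-cofibrations of $f$-cofibrant spaces by \autoref{lem:heath_kamps}, making $F_n(PK\to PX)$ a generator of free $f$-cofibrations.

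The main obstacle I anticipate is not a genuine difficulty but some (CGWH) bookkeeping: one must check that the pullback $(p_0)^*$ inside $P$ commutes with the pushouts and sequential colimits that build free cofibrations from their generators. This works because each such colimit is taken along a level closed inclusion (every free cofibration is a level $h$-cofibration by \autoref{prop:free_implies_level}, and hence a closed inclusion), so \autoref{prop:f_star_colimits} applies. Everything else is a direct transcription of the space-level properties through the levelwise definition of $P$.
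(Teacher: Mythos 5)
Your proof is correct and follows essentially the same route as the paper's: reduce the levelwise claims directly to \autoref{prop:px_properties}, reduce the free-cofibration claims to the generating maps via $PF_n \cong F_n P$, then propagate through pushouts and transfinite compositions by noting that $P = (p_1)_!(p_0)^*$ preserves such colimits along level closed inclusions (\autoref{lem:f_shriek_preserves}, \autoref{prop:f_star_colimits}). The only cosmetic difference is that you deduce $PF_n A \cong F_n(PA)$ explicitly from the strong monoidality of $P$ (\autoref{prop:P_strong_monoidal}) and the triviality of $P$ over a point, whereas the paper simply asserts the isomorphism.
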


\begin{proof}
	\autoref{prop:px_properties} implies everything except for the statements about free cofibrations. For that part, we observe that $P$ preserves retracts because it is a functor, and preserves pushouts and transfinite compositions when they are taken along free cofibrations, because free cofibrations are level cofibrations and therefore level closed inclusions, and the pullback and pushforward that together form $P$ preserve those colimits by \autoref{lem:f_shriek_preserves} and \autoref{prop:f_star_colimits}. This reduces our work to checking that $P$ sends $F_n(K \to X)$ to a free $f$-cofibration if $K \to X$ is an $h$-cofibration between $h$-cofibrant retractive spaces. But this happens because
	\[ P(F_n(K \to X)) \cong F_n(PK \to PX) \]
	and $PK \to PX$ is an $f$-cofibration of $f$-cofibrant spaces by \autoref{prop:px_properties} again.
\end{proof}

In summary, any spectrum $X$ can be replaced by a freely $f$-cofibrant level $h$-fibrant spectrum $PQX$, and there is a zig-zag of level equivalences $X \overset\sim\leftarrow QX \overset\sim\to PQX$. This three-term zig-zag serves as a replacement for the nine-term zig-zag found in \cite[13.5.2]{ms}. 

\beforesubsection
\subsection{Base change}\aftersubsection

Every result in this subsection applies to both sequential spectra and orthogonal spectra, but to save space we will only state them for orthogonal spectra $\Osp(B)$.

Given $f\colon A \to B$, we define the functor $f^*\colon \Osp(B) \to \Osp(A)$ by pulling back every level of a spectrum $X$ over $B$ back to $A$, and defining the bonding maps using the canonical commutation of pullback and smash product from \autoref{lem:external_smash_and_base_change}:
\[ \mathscr J(V,W) \barsmash f^*X(V) \cong f^*(\mathscr J(V,W) \barsmash X(V)) \to f^*X(W) \]\index{pullback $f^*$}
The pushforward is defined in exactly the same way.\index{pushforward $f_{^^21}$} The adjunction $(f_! \adj f^*)$ on each level commutes with the action of $\mathscr J(V,W)$ and therefore defines an adjunction $(f_! \adj f^*)$ as functors between $\Osp(A)$ and $\Osp(B)$.

These functors could alternatively be defined by first defining the bifibration of all parametrized spectra $\Osp \to \cat{Top}$, whose fibers are the categories of spectra over $B$, $\Osp(B)$. A map in the category $\Osp$ is given at every spectrum level by maps in $\mc R$ that commute with the bonding maps. Then the above pullback and pushforward functors arise from the cartesian and cocartesian arrows in $\Osp$, as in \autoref{prop:spaces_SMBF}.

\autoref{lem:f_star_preserves} and \autoref{lem:f_shriek_preserves} give sufficient conditions under which $f^*$ and $f_!$ preserve level equivalences, level fibrations, and level cofibrations. Since $f_!(F_n K) \cong F_n f_!K$ and $f^*(F_n K) \cong F_n f^*K$ by \autoref{lem:external_smash_and_base_change} again, we also get that $f_!$ preserves all free cofibrations, and $f^*$ preserves the free $f$-cofibrations.\footnote{See the proof of \autoref{prop:spectrum_px} for how to show that $f^*$ preserves the compositions and pushouts that make up a general free cofibration.}  Collecting this together:
\begin{lem}\label{lem:spectrum_f_preserves}
	$f^*\colon \Osp(B) \to \Osp(A)$ preserves
	\begin{itemize}
		\item level $h$-fibrations and $q$-fibrations,
		\item level equivalences between level $q$-fibrant (or $h$-fibrant) spectra,
		\item free or level $f$-cofibrations, and free or level closed inclusions.
	\end{itemize}
	$f_!\colon \Osp(A) \to \Osp(B)$ preserves
	\begin{itemize}
		\item free or level $f$-cofibrations, $q$-cofibrations, $h$-cofibrations, or closed inclusions, and
		\item level equivalences between level $h$-cofibrant spectra.
	\end{itemize}
	Moreover if $f$ itself is a Hurewicz fibration ($h$-fibration), then $f^*$ preserves free or level $h$-cofibrations, and all level equivalences, while $f_!$ preserves spectra that are both level $h$-fibrant and level $h$-cofibrant.
\end{lem}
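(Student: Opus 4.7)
The plan is to reduce every claim to the corresponding statement about retractive spaces (\autoref{lem:f_star_preserves}, \autoref{lem:f_shriek_preserves}), applied level by level, and then to handle the free-cofibration statements separately using the commutation of $f^*$ and $f_!$ with the free-spectrum functors $F_n$.

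First I would dispose of the ``level'' versions. Both $f^*$ and $f_!$ are defined on each spectrum level by the corresponding functor on $\mc R$, and the bonding maps are built from the canonical isomorphisms $\mathscr J(V,W) \barsmash f^*X(V) \cong f^*(\mathscr J(V,W) \barsmash X(V))$ and similarly for $f_!$ (\autoref{lem:external_smash_and_base_change}). Therefore a map of spectra has a given ``level'' property exactly when each of its levels has that property in $\mc R$, and \autoref{lem:f_star_preserves} and \autoref{lem:f_shriek_preserves} immediately give every claim about level fibrations, level equivalences (under the stated level-fibrancy or level-cofibrancy hypotheses), level cofibrations, and level closed inclusions. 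The clauses about $f$ being a Hurewicz fibration are the same reduction.

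Next I would handle the free-cofibration and free-closed-inclusion claims. The class of free $f$-cofibrations is the smallest class containing the generating maps $F_n(K \to X)$ (with $K \to X$ an $f$-cofibration of $f$-cofibrant retractive spaces) that is closed under pushout, transfinite composition, and retracts; analogous generators exist for free $q$-, $h$-, and $i$-cofibrations. From \autoref{lem:external_smash_and_base_change} we get natural isomorphisms $f^*(F_n K) \cong F_n(f^*K)$ and $f_!(F_n K) \cong F_n(f_!K)$, so \autoref{lem:f_star_preserves} and \autoref{lem:f_shriek_preserves} show that each functor sends a generating map in the indicated class to a map in the same class. It therefore remains to argue that these functors preserve the three closure operations applied to such maps. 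For $f_!$ this is immediate because $f_!$ is a left adjoint, hence preserves all colimits and (trivially) retracts. For $f^*$, retracts are preserved automatically, and the relevant pushouts and sequential colimits are taken along level closed inclusions (since free cofibrations are in particular level cofibrations by \autoref{prop:free_implies_level}), so \autoref{prop:f_star_colimits} applies and these colimits are preserved by $f^*$ on each level. The same argument gives the ``free closed inclusion'' statement for $f^*$ and, in the Hurewicz-fibration case, the ``free $h$-cofibration'' statement for $f^*$ using the extra clause of \autoref{lem:f_star_preserves}.

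For the last clause about $f_!$ preserving spectra that are both level $h$-fibrant and level $h$-cofibrant when $f$ is a Hurewicz fibration, I would again work level by level and invoke the corresponding space-level statement at the end of \autoref{lem:f_shriek_preserves} (which itself is a consequence of \autoref{prop:clapp}). The only small subtlety is that the bonding maps of $f_! X$ are induced from those of $X$ through the canonical isomorphism $\mathscr J(V,W) \barsmash f_! X(V) \cong f_!(\mathscr J(V,W) \barsmash X(V))$, but this is definitional and requires no further argument. I do not expect any of this to be difficult; the only ``nontrivial'' point is the closure argument for $f^*$ and free cofibrations, where one must remember to quote \autoref{prop:f_star_colimits} rather than rely on $f^*$ being a left adjoint (it is not, in general).
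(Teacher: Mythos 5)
Your proposal matches the paper's argument essentially step for step: the level claims are reduced levelwise to \autoref{lem:f_star_preserves} and \autoref{lem:f_shriek_preserves}, and the free-cofibration claims are handled by commuting $f^*$ and $f_!$ past $F_n$ via \autoref{lem:external_smash_and_base_change}, with $f_!$ preserving the requisite colimits because it is a left adjoint, and $f^*$ preserving them because free cofibrations are level closed inclusions and \autoref{prop:f_star_colimits} applies. In particular, your observation that one must invoke \autoref{prop:f_star_colimits} rather than left-adjointness for $f^*$ is exactly the point the paper makes in the footnote referencing the proof of \autoref{prop:spectrum_px}.
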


In particular, $(f_! \adj f^*)$ is a Quillen pair for the level model structure, which is a Quillen equivalence when $f$ is a weak equivalence.

\beforesubsection
\subsection{Smashing with a space}\label{sec:smash_with_space}\aftersubsection

Every result in this subsection applies to both sequential spectra and orthogonal spectra.

It is straightforward to define functors that smash a spectrum with a space,
\[ \barsmash\colon \mc R(A) \times \Psp(B) \to \Psp(A \times B), \qquad \barsmash\colon \mc R(A) \times \Osp(B) \to \Osp(A \times B). \]
To smash with a space $K$, just apply $K \barsmash -$ at each spectrum level, and use the symmetric monoidal structure of $\barsmash$ to define the bonding maps.\index{external smash product $X \barsmash Y$!of a spectrum and a space} This gives two functors as above, which are unique up to unique isomorphism. If we smash a spectrum with multiple spaces, the resulting functors are associative and unital, up to unique isomorphism (\autoref{thm:spectra_rigidity}).

\begin{ex}
	Every free spectrum $F_n K$ over $B$ is canonically isomorphic to the smash product of the space $K \in \mc R(B)$ with the free spectrum $F_n S^0 \in \Psp(*)$ or $\Osp(*)$.
\end{ex}

\begin{ex}
	The reduced suspension functor $\Sigma_B$ is defined on spectra over $B$ by smashing with the space $S^1 \in \mc R(*)$.\index{fiberwise!reduced suspension $\Sigma_B$}
\end{ex}

Smashing with a space has a right adjoint in each variable. For formal reasons, these right adjoints will also be uniquely defined up to unique isomorphism, and the choice of adjunction will also be unique.

If we fix the space $K \in \mc R(A)$, the right adjoint of $K \barsmash -$ is the \textbf{external mapping spectrum}\index{external!function spectrum $\barF_B(Y,Z)$} functor
\[ \barF_A(K,-)\colon \Osp(A\times B) \to \Osp(B) \quad \textup{or} \quad \barF_A(K,-)\colon \Psp(A\times B) \to \Psp(B) \]
that applies $\barmap_A(K,-)$ at each spectrum level. (We use the notation $\barF$ because the output is a spectrum, and reserve $\barmap$ for when the output is a space.) The adjunctions from \autoref{sec:external_smash} pass in a unique way to an adjunction 
\[ K \barsmash X \to Y \quad \textup{ over }A \times B \quad \longleftrightarrow \quad X \to \barF_A(K,Y) \quad \textup{ over }B. \]

\begin{ex}
	The based loops functor $\Omega_B$ is defined on spectra over $B$ by taking external maps out of the space $S^1 \in \mc R(*)$.\index{fiberwise!based loops $\Omega_B$}
\end{ex}

If we fix the orthogonal spectrum $X \in \Osp(B)$, the right adjoint of $- \barsmash X$ sends the spectrum $Y$ over $A \times B$ to the \textbf{external mapping space}\index{external!mapping space $\barmap_B(Y,Z)$} $\barmap_B(X,Y)$. This is defined from the space-level mapping space as the equalizer in $\mc R(A)$
\[ \barmap_B(X,Y) \to \prod_n \barmap_B(X_n,Y_n) \rightrightarrows \prod_{m,n} \barmap_B(X_m \barsmash \mathscr J(\R^m,\R^n),Y_n) \]
where the two parallel maps either
\begin{itemize}
	\item pre-compose the map $X_n \to Y_n$ with $X_m \barsmash \mathscr J(\R^m,\R^n) \to X_n$,
	\item or post-compose $X_m \to Y_m$ with $Y_m \barsmash \mathscr J(\R^m,\R^n) \to Y_n$.
\end{itemize}
In other words, for each $a \in A$ a collection of maps $X_n \to (Y_a)_n$ in $\mc R(B)$ lands in the equalizer subspace iff they give a map of orthogonal spectra $X \to Y_a$ over $B$.

In sequential spectra, we similarly get the equalizer
\[ \barmap_B(X,Y) \to \prod_n \barmap_B(X_n,Y_n) \rightrightarrows \prod_{m,n} \barmap_B(X_m \barsmash S^{n-m},Y_n) \]
and in both cases the above definition helps us define an adjunction
\[ K \barsmash X \to Y \quad \textup{ over }A \times B \quad \longleftrightarrow \quad K \to \barmap_B(X,Y) \quad \textup{ over }A. \]

\begin{rmk}
	Setting $A = *$, this gives an enrichment of the categories $\Osp(B)$ and $\Psp(B)$ in based spaces. In other words it endows each set of maps of spectra over $B$, $\Osp(B)(X,Y)$ or $\Psp(B)(X,Y)$, with a topology.
\end{rmk}

\beforesubsection
\subsection{Smashing with a spectrum}\aftersubsection

The results in this section apply to orthogonal spectra, but not to sequential spectra.

Let $\mathscr J \sma \mathscr J$ denote the category whose objects are pairs $(V,V')$ of objects of $\mathscr J$ and whose morphisms are smash products of the morphism spaces
\[ (\mathscr J \sma \mathscr J)((V,V'),(W,W')) = \mathscr J(V,W) \sma \mathscr J(V',W'). \]
We call a $\mathscr J \sma \mathscr J$-diagram a \textbf{bispectrum}. The direct sum of representations, plus some choice of embedding $\mc U \oplus \mc U \to \mc U$, gives a direct sum functor 
\[ \oplus\colon \mathscr J \sma \mathscr J \to \mathscr J. \]
Then we can turn any bispectrum into a spectrum by taking the left Kan extension along the direct sum functor $\oplus$.

In the non-parametrized case, we define the smash product of two orthogonal spectra $X$ and $Y$ by smashing their levels $X(V) \sma Y(V')$ for all $V$ and $V'$. This gives a bispectrum in an obvious way. So the left Kan extension along $\oplus$ gives a spectrum, which we call the smash product spectrum $X \sma Y$.\footnote{In fact, once the definition is written out, it becomes clear that the choice of embedding $\mc U \oplus \mc U \to \mc U$ does not affect the answer up to canonical isomorphism.}

In the parametrized case, we do exactly the same thing, using the external smash product of retractive spaces. Given $X \in \Osp(A)$ and $Y \in \Osp(B)$, the external smash products $X(V) \barsmash Y(V')$ form a bispectrum whose levels are in the category $\mc R(A \times B)$. Its left Kan extension along $\oplus$ is therefore a spectrum $X \barsmash Y \in \Osp(A \times B)$. We call this the \textbf{external smash product spectrum} of $X$ and $Y$.\index{external smash product $X \barsmash Y$!of parametrized spectra} This is the same definition as \cite[11.4.10]{ms}.\footnote{Unfortunately, the term ``external smash product'' is also used in \cite{mmss} to differentiate the bispectrum $\{X(V) \sma Y(V')\}$ from the spectrum $X \sma Y$. These are two different uses of the word ``external.'' Our product is ``internal'' over $\mathscr J$ but ``external'' over the base spaces. In other words it produces a spectrum (not a bispectrum) but the spectrum is over $A \times B$ (not just $A$). See also \cite[11.1.7]{ms}.}

The following lemma has the same proof as the non-parametrized case.
\begin{lem}\label{barsmash_free}
	There is an isomorphism natural in retractive spaces $X$ and $Y$
	\[ F_V X \barsmash F_W Y \cong F_{V \oplus W} (X \barsmash Y). \]
	If $X$ is a space and $Z$ is a spectrum, then smashing with the suspension spectrum of $X$ is naturally isomorphic to smashing with the space $X$,
	\[ (\Sigma^\infty X) \barsmash Z \cong X \barsmash Z. \]
\end{lem}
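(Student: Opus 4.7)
The plan is to derive both isomorphisms from the universal property of left Kan extensions, exactly as in the non-parametrized case \cite{mmss}, using the fact that the external smash product of retractive spaces commutes with colimits in each variable and with the pullbacks/pushforwards that appear in the definitions of $F_V$ and $\barsmash$ of spectra.

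First I would rewrite each side of the first asserted isomorphism as a coend. By definition, $F_V X$ is the spectrum over $B$ whose value at $U$ is $\mathscr{J}(V,U) \barsmash X$, and the external smash product of spectra is the left Kan extension of the bispectrum $\{X(V') \barsmash Y(W')\}$ along $\oplus\colon \mathscr{J}\sma\mathscr{J} \to \mathscr{J}$. So at level $U$ we get
\[ (F_V X \barsmash F_W Y)(U) \;=\; \int^{(V',W')} \mathscr{J}(V' \oplus W', U) \barsmash \mathscr{J}(V,V') \barsmash X \barsmash \mathscr{J}(W,W') \barsmash Y. \]
Because $\barsmash$ distributes over colimits of retractive spaces in each variable (it is a left adjoint in each slot by \autoref{sec:external_smash}), and because pulling the objects $X$ and $Y$ outside of the coend is purely formal, we may slide $X$ and $Y$ past the coend variables. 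Then two applications of the enriched co-Yoneda lemma collapse the coend over $V'$ and then over $W'$:
\[ \int^{V'} \mathscr{J}(V' \oplus W', U) \barsmash \mathscr{J}(V,V') \;\cong\; \mathscr{J}(V \oplus W', U), \quad \int^{W'} \mathscr{J}(V \oplus W', U) \barsmash \mathscr{J}(W,W') \;\cong\; \mathscr{J}(V \oplus W, U). \]
What remains is $\mathscr{J}(V\oplus W, U) \barsmash X \barsmash Y = F_{V \oplus W}(X \barsmash Y)(U)$. Checking that this identification respects the bonding maps is a direct unraveling of the structure maps of both Kan extensions.

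For the second statement, observe $\Sigma^\infty X = F_0 X$. Applying the first isomorphism with $V = 0$ and $W = W'$, or more directly running the same coend manipulation, gives
\[ (F_0 X \barsmash Z)(U) \;\cong\; \int^{W'} \mathscr{J}(W', U) \barsmash X \barsmash Z(W') \;\cong\; X \barsmash Z(U), \]
where the last step uses co-Yoneda applied to the spectrum $Z$. One verifies the bonding maps on both sides are induced by the same action of $\mathscr{J}(U,U')$, so this is an isomorphism of spectra.

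The main obstacle is bookkeeping rather than mathematics: one must check that each step is genuinely natural in $X$ and $Y$, and that the coend/Kan-extension identifications respect the structure maps of $\oplus$, so the results are isomorphisms of spectra (and not merely levelwise). Because the external smash product of retractive spaces enjoys the same universal properties and colimit-preservation properties as the ordinary smash of based spaces (\autoref{lem:external_smash_and_base_change} and the fact that $\barsmash$ is a left adjoint in each variable), every coend manipulation used in the non-parametrized argument of \cite{mmss} transports verbatim, with $\sma$ replaced by $\barsmash$ and base spaces tracked via $\times$.
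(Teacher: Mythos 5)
Your proof is correct, and the coend manipulation is the right computation. The paper, however, takes a slicker route: it says only that the statement ``has the same proof as the non-parametrized case'' (deferring to \cite{mmss}), and when it does an analogous argument explicitly --- in \autoref{smash_two_semi_free} for the semifree variant $\mc G_m K \barsmash \mc G_n L \cong \mc G_{m+n}(\ldots)$ --- it uses the universal property of left adjoints rather than a coend computation. The idea there is: both sides of the asserted isomorphism are left adjoint functors of the pair $(X,Y)$, so it suffices to identify their right adjoints. $F_V X \barsmash F_W Y$ is left adjoint to the functor sending a spectrum $Z$ to the level $Z(V \oplus W)$ regarded as a pair of retractive spaces, and so is $F_{V \oplus W}(X \barsmash Y)$; the isomorphism then comes for free by uniqueness of adjoints. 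What this buys you is precisely the thing you flag as ``the main obstacle'': compatibility with bonding maps and naturality come automatically, since you are comparing functors rather than identifying levels one at a time. Your coend argument is more elementary in that it unwinds the Kan extension by hand, but you do need to actually carry out the check you wave at --- that the composite levelwise isomorphism is equivariant under $\mc J(U,U')$ and hence a map of $\mc J$-spaces --- which requires a brief but genuine chase through the structure maps of both Kan extensions. Both approaches are standard and legitimate; the paper's choice avoids the bookkeeping at the cost of being less explicit.
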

Both of these statements specialize to the statement that there is an isomorphism
\[ (\Sigma^\infty X) \barsmash (\Sigma^\infty Y) \cong \Sigma^\infty (X \barsmash Y). \]
We will soon see that all of these isomorphisms are not just canonical, but unique.

The external smash product of spectra also has a right adjoint in each variable. The proof of this follows the same formal recipe found in \cite[\S 21]{mmss}. To describe the resulting right adjoint let $Y \in \Osp(B)$ and $Z \in \Osp(A \times B)$. Recall that the \textbf{shift functor} $\sh^n$ simply re-indexes the levels of $Z$,
\[ (\sh^n Z)_m = Z_{m+n}. \]
Pulling $Z$ back along the direct sum map $\oplus$ gives a bispectrum whose $n$th level is the orthogonal spectrum $\sh^n Z$. Since this is an orthogonal spectrum of orthogonal spectra, we can apply $\barmap_B(Y,-)$ and get just an orthogonal spectrum, which we call $\barF_B(Y,Z)$:\index{external!function spectrum $\barF_B(Y,Z)$}
\[ \barF_B(Y,Z)_n = \barmap_B(Y,\sh^n Z). \]
The formal argument cited above gives an adjunction
\[ X \barsmash Y \to Z \quad \textup{ over }A \times B \quad \longleftrightarrow \quad X \to \barF_B(Y,Z) \quad \textup{ over }A. \]
As an important special case, the right adjoint of $- \barsmash F_n S^0$ is the shift functor $\sh^n$.

We finish this subsection by describing how external smashes of spectra interact with level equivalences, level fibrations, and free cofibrations.
\begin{thm}\label{prop:spectra_pushout_product}\hfill
	\vspace{-1em}
	
	\begin{itemize}
		\item Let $f: K \to X$ and $g: L \to Y$ be free $h$-cofibrations of spectra over $A$ and $B$, respectively. Then $f \square g$, constructed using $\barsmash$, is a free $h$-cofibration.
		\item The same is true for free $f$-cofibrations, free $q$-cofibrations, and free closed inclusions.
		\item If $X$ and $Y$ are freely $h$-cofibrant and level $h$-fibrant then $X \barsmash Y$ is level $h$-fibrant.
		\item If $X$ is freely $h$-cofibrant and $g: Y \to Y'$ is a level equivalence of freely $h$-cofibrant spectra then $\id_X \barsmash g$ is a level equivalence.
	\end{itemize}
\end{thm}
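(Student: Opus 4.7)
My plan is to prove the four bullets in order, using the key isomorphism $F_V K \barsmash F_W L \cong F_{V \oplus W}(K \barsmash L)$ of \autoref{barsmash_free} to reduce each statement to a space-level statement that we have already proven.

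For bullets one and two, the pushout-product $\square$ commutes with pushouts, transfinite compositions, and retracts in each slot (\autoref{pushout_product_pushouts} and the subsequent transfinite-composition discussion). Since free $h$-cofibrations (respectively free $f$-cofibrations, free $q$-cofibrations, or free closed inclusions) are by definition generated from the maps $F_n(k)$ with $k$ a space-level cofibration of the corresponding flavor, under these three operations, it suffices to show $F_n(k) \square F_m(\ell)$ is of the desired flavor. By \autoref{barsmash_free} this pushout-product identifies canonically with $F_{n+m}(k \square \ell)$, where the pushout-product on the right uses the external smash of retractive spaces over $A \times B$. Then $k \square \ell$ is a cofibration of the required type by \autoref{prop:h_cofibrations_pushout_product} (for $f$, $h$, or closed inclusion) or \autoref{ex:smashing_spaces_is_left_Quillen} (for $q$), so $F_{n+m}(k \square \ell)$ is a free cofibration by definition.

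For the third bullet, I would induct on the free cell structures of $X$ and $Y$. The base case is when both are single free cells $X = F_n K$, $Y = F_m L$ with $K$, $L$ both $h$-cofibrant and $h$-fibrant: then $X \barsmash Y \cong F_{n+m}(K \barsmash L)$ has level $V$ equal to $\mathscr J(\R^{n+m}, V) \barsmash (K \barsmash L)$, which is $h$-fibrant because $\mathscr J(\R^{n+m}, V)$ is a well-based CW complex and $K \barsmash L$ is $h$-cofibrant and $h$-fibrant, so we can apply \autoref{prop:h_cofibrations_pushout_product} twice. For the inductive step, every cell attachment or transfinite composition in $X$ produces, via bullet one, a pushout or sequential colimit along free $h$-cofibrations in $X \barsmash Y$; by \autoref{lem:heath_kamps} these are in fact $f$-cofibrations between level $h$-fibrant spectra, so \autoref{prop:clapp} and its sequential-colimit variant preserve level $h$-fibrancy. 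A parallel induction on the cell structure of $Y$ completes the argument.

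For the fourth bullet, I would again induct on the cell structure of $X$. The functor $(-) \barsmash g$ preserves colimits, and the gluing and colimit lemmas of \autoref{prop:technical_cofibrations}, applied levelwise, reduce us --- using bullet one to ensure their cofibrancy hypotheses --- to the case $X = F_n K$ with $K$ $h$-cofibrant. For this base case, one writes $F_n K \barsmash Y$ at level $V$ as a coend of the retractive spaces $\mathscr J(\R^n \oplus U, V) \barsmash K \barsmash Y(U)$ via the left Kan extension formula, and observes that because $Y$ and $Y'$ are freely $h$-cofibrant the coend is built from $h$-cofibrations between $h$-cofibrant spaces. On each term the map induced by $g$ is of the form $\id \barsmash \id \barsmash g_U$, which is a weak equivalence of $h$-cofibrant spaces by two applications of the equivalence-preservation clause of \autoref{prop:h_cofibrations_pushout_product} (first smashing $g_U$ with the $h$-cofibrant space $K$, then smashing the result with the well-based space $\mathscr J(\R^n \oplus U, V)$). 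A final appeal to \autoref{prop:technical_cofibrations} then assembles these termwise equivalences into a level equivalence of the coends. The main obstacle is managing this coend presentation carefully enough that the free-cell structure of $Y$ remains visible through it; this is precisely where the definition of ``free $h$-cofibration'' (as opposed to merely ``level $h$-cofibration'') earns its keep.
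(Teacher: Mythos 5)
Your treatment of the first two bullets is exactly the paper's argument: reduce to free cells via \autoref{barsmash_free}, then close up under pushouts, transfinite compositions, and retracts via \autoref{pushout_product_pushouts} and \autoref{pushout_product_composition}. That part is fine.

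The last two bullets are where your argument has a real gap, and it is precisely the gap the paper flags (``At the moment we can only prove the first two claims'') and then spends the whole of \autoref{sec:reedy} closing. For bullet three, you propose to induct on a free cell structure of $X$ and $Y$, and at each cell attachment to invoke \autoref{lem:heath_kamps} and \autoref{prop:clapp}. But a free $h$-cofibration is built from maps $F_n(K \to K')$ with $K$, $K'$ only $h$-\emph{cofibrant}, not $h$-fibrant. When you push out along $F_n(K\to K') \barsmash Y$, the cell term $F_n K \barsmash Y$ has no reason to be level $h$-fibrant, so the hypotheses of \autoref{prop:clapp} are not met and the induction does not close. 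There is no way to arrange for the free cells of a freely $h$-cofibrant, level $h$-fibrant spectrum to themselves be level $h$-fibrant. The machinery the paper introduces to fix exactly this is the semifree (Reedy) cofibrations: for a semifreely $h$-cofibrant level $h$-fibrant spectrum, the skeletal filtration $\sk^{n-1}X \to \sk^n X$ is controlled by the latching object $L_n X$, and \autoref{cor:latching_objects_fibrant} shows that $L_n X$ \emph{is} automatically $h$-fibrant under these hypotheses. That is the substitute cell structure along which your intended induction actually works.

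Bullet four has the same problem in a different guise. You write $F_n K \barsmash Y$ as a coend and assert that because $Y$ is freely $h$-cofibrant, ``the coend is built from $h$-cofibrations between $h$-cofibrant spaces'' and then ``a final appeal to \autoref{prop:technical_cofibrations} assembles these termwise equivalences.'' But a coend is not a termwise construction; to deduce anything about it from termwise data you need to know it is a homotopy colimit, and that is a Reedy-cofibrancy statement about the $\mathscr J \sma \mathscr J$-bispectrum, not something supplied by free cofibrancy alone. Moreover, a level equivalence $g \colon Y \to Y'$ of freely cofibrant spectra need not respect any chosen free cell structures, so there is no filtration along which a cell-by-cell comparison makes sense. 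The paper resolves both issues at once in \autoref{prop:reedy_pushout_product}: it filters by skeleta, uses \autoref{prop:map_skeleta_pushout} to present the relative skeleta as pushouts of semifree spectra on (relative) latching maps, and then uses \autoref{cor:latching_objects_derived} to know that a level equivalence of semifreely cofibrant spectra is detected by its latching maps. Your coend idea is morally going in the right direction, but making it rigorous is exactly the content of the semifree/Reedy analysis, and you cannot bypass it by appealing to free cells.
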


\begin{proof}
	At the moment we can only prove the first two claims. Start by assuming that $f$ and $g$ are free spectra on $h$-cofibrations. Then by \autoref{barsmash_free}, their pushout-product is a free spectrum on a pushout-product of $h$-cofibrations, which is an $h$-cofibration by \autoref{prop:h_cofibrations_pushout_product}, so $f \square g$ is a free $h$-cofibration.
	
	Now examine the class of pairs of maps $f,g$ for which $f \square g$ is a free $h$-cofibration. It contains the free spectra on the $h$-cofibrations. By \autoref{pushout_product_pushouts}, it is closed under pushouts. By \autoref{pushout_product_composition}, it is closed under transfinite composition. It is clearly closed under retracts, just because $\square$ is a bifunctor. Therefore it contains all pairs of free $h$-cofibrations.
\end{proof}

The final two claims of \autoref{prop:spectra_pushout_product} are a trivial corollary of \autoref{prop:reedy_pushout_product} below, which we will prove after introducing another kind of cofibration that is ``semifree'' instead of free. The payoff for this is the ability to commute $\barsmash$ and $f^*$ on the homotopy category. Note that a slightly weaker form of these two claims already appears in the literature in \cite[4.27, 4.31]{malkiewich2017coassembly}. Also, a different proof of the fourth claim appears in \cite[8.6]{mp1}, and a different, more ad-hoc argument that $\barsmash$ and $f^*$ commute in the homotopy category can be found in \cite[8.8]{mp1}.

\beforesubsection
\subsection{Rigidity and interaction with base-change}\aftersubsection

The results in this section apply to orthogonal spectra, but not to sequential spectra.

The universal property of pushforward and pullback give canonical maps
\[ (f \times g)_!(X \barsmash Y) \to f_!X \barsmash g_!Y \]
\[ f^*X' \barsmash g^*Y' \to (f \times g)^*(X' \barsmash Y'). \]
\begin{prop}\label{prop:spectra_external_smash_and_base_change}(cf. \cite[11.4.1]{ms}, \cite{shulman_framed_monoidal})\hfill
	\vspace{-1em}
	
	\begin{itemize}
		\item (CG) These maps are isomorphisms.
		\item (CGWH) The first map is an isomorphism. The second is an isomorphism when $X'$ and $Y'$ are freely $i$-cofibrant.\footnote{It seems to actually be enough if only one of them is semifreely $i$-cofibrant in the sense of the next section, and the other is arbitrary, but the proof is longer.}
	\end{itemize}
\end{prop}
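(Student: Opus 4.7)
The plan is to reduce both statements to the space-level isomorphisms of \autoref{lem:external_smash_and_base_change}, using the coend presentation of $\barsmash$. Writing $X \barsmash Y$ as the left Kan extension of the bispectrum $\{X(V) \barsmash Y(V')\}$ along $\oplus\colon \mathscr J \sma \mathscr J \to \mathscr J$, each spectrum level is the coend
\[ (X \barsmash Y)(W) \;\cong\; \int^{(V,V')} \mathscr J(V \oplus V', W) \barsmash X(V) \barsmash Y(V') \]
in $\mc R(A \times B)$, where $\mathscr J(V \oplus V', W)$ is regarded as a retractive space over a point. This coend is a coequalizer of coproducts, so it is a colimit construction in $\mc R(A \times B)$.

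For the first map I would use that $(f \times g)_!$ is a left adjoint and hence commutes with this coend. Iterating \autoref{lem:external_smash_and_base_change} (first for the space factor, then for $f$ and $g$), one obtains a natural isomorphism on the integrand
\[ (f \times g)_!\bigl(\mathscr J(V \oplus V', W) \barsmash X(V) \barsmash Y(V')\bigr) \;\cong\; \mathscr J(V \oplus V', W) \barsmash f_!X(V) \barsmash g_!Y(V'). \]
Passing to the coend identifies $(f \times g)_!(X \barsmash Y)(W)$ with $(f_!X \barsmash g_!Y)(W)$, and a routine check confirms this is the canonical comparison map and that it respects the bonding maps in $W$.

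Under the (CG) convention, the second map is handled identically, since $(f \times g)^*$ also preserves all colimits by \autoref{prop:f_star_colimits} and the space-level \autoref{lem:external_smash_and_base_change} applies on the integrand. Under (CGWH), the pullback only preserves coproducts, pushouts along closed inclusions, and sequential colimits along closed inclusions, so the plan is to leverage the free $i$-cofibrancy of $X'$ and $Y'$ to write $X' \barsmash Y'$ as precisely the kind of colimit $(f \times g)^*$ distributes over. Concretely, I would write $X'$ and $Y'$ as transfinite composites of pushouts along generating free $i$-cofibrations $F_n(K \to L)$, which are level closed inclusions by \autoref{prop:free_implies_level}. By \autoref{prop:spectra_pushout_product}, the pushout-product of two free $i$-cofibrations is again a free $i$-cofibration, so $X' \barsmash Y'$ is itself a transfinite composite of pushouts along free $i$-cofibrations whose base cells are external smashes $F_V K \barsmash F_{V'} L \cong F_{V \oplus V'}(K \barsmash L)$ by \autoref{barsmash_free}. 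For such a free spectrum, the comparison map reduces on each level to the space-level isomorphism $(f \times g)^*(K \barsmash L) \cong f^*K \barsmash g^*L$, and at every subsequent stage $(f \times g)^*$ commutes with the pushouts and sequential colimits by \autoref{prop:f_star_colimits}, so the comparison map remains an isomorphism in the colimit.

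The main obstacle is ensuring that every attaching map in the inductive construction of $X' \barsmash Y'$ is genuinely a level closed inclusion in $\mc R(A' \times B')$, so that \autoref{prop:f_star_colimits} is applicable at every stage. This is handled by the combination of \autoref{prop:spectra_pushout_product} (external smashes of free $i$-cofibrations are free $i$-cofibrations) and \autoref{prop:free_implies_level} (free $i$-cofibrations are level closed inclusions), together with the fact that in (CGWH) the basepoint section of every free cell $F_V K$ is automatically a closed inclusion, so the reduction to the space-level statement really does go through.
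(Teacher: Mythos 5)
Your proposal is correct and follows essentially the same strategy as the paper: commute $(f\times g)_!$ with the coend defining $\barsmash$ for the first map, handle the second by reducing to free cells $F_V K \barsmash F_{V'} L$ where $f^*$ commutes with free spectra and with the space-level smash, and then propagate through the cell structure using that $(f\times g)^*$ preserves pushouts and sequential colimits along level closed inclusions. The only cosmetic difference is that you build a single pushout-product cell structure on $X' \barsmash Y'$, whereas the paper holds $Y'$ fixed and inducts on cells of $X'$ (then symmetrically on $Y'$); both rest on the same base case and the same colimit-preservation fact, and both leave the routine retract step implicit.
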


\begin{proof}
	For the first map, this reduces to the fact that $f_!$ commutes with left Kan extensions and the space-level smash products. For the second map, it is an isomorphism when $X' = F_n K$ and $Y' = F_m L$ because $f^*$ commutes with free spectra. In (CG), $f^*$ preserves all colimits so the conclusion follows. In (CGWH), $f^*$ preserves pushouts along level closed inclusions. So if we take a pushout square along a level closed inclusion
	\[ \xymatrix @R=1.7em{
		X_1 \ar[r] \ar[d] & X_2 \ar[d] \\
		X_3 \ar[r] & X_4
	} \]
	and the claim is true for the first three vertices and $Y'$, then it is true for $X_4$ and $Y'$. (The desired map is a pushout of three isomorphisms and is therefore an isomorphism.) By a similar argument, if $X'$ is a colimit of a sequence of closed inclusions $X_i \to X_{i+1}$ and the claim is true for each $X_i$ and $Y'$ then it is true for $X'$ and $Y'$. Therefore the class of spaces for which it is true must include all freely $i$-cofibrant $X'$ and $Y'$.
\end{proof}

The rigidity theorem \autoref{prop:spaces_rigidity} generalizes to spectra, but it's a good idea to make the statement a little stronger to accommodate the fact that $f^*$ and $\barsmash$ don't always commute in (CGWH).

\begin{thm}[Rigidity]\label{thm:spectra_rigidity}
	Suppose $n \geq 0$ and we have maps of spaces
	\[ B \overset{f}\leftarrow A \overset{g}\to C_1 \times \ldots \times C_n \]
	such that $f$ is injective on each fiber of $g$.
	
	Then any functor $\Osp(C_1) \times \ldots \times \Osp(C_n) \to \Osp(B)$ isomorphic to
	\[ \Phi\colon (X_1,\ldots,X_n) \leadsto f_!g^*(X_1 \barsmash \ldots \barsmash X_n) \]
	is in fact uniquely isomorphic to $\Phi$.
	
	More generally, on any full subcategory of $\Osp(C_1) \times \ldots \times \Osp(C_n)$ containing all $n$-tuples of the form $(F_{V_1} *_{+C_1},\ldots, F_{V_n} *_{+C_n})$, any functor isomorphic to $\Phi$ is uniquely isomorphic to $\Phi$.\index{rigidity!for spectra}
\end{thm}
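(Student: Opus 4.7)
The plan is to adapt the proof of \autoref{prop:spaces_rigidity}: since composing any isomorphism $\Psi \cong \Phi$ with its inverse lands in the automorphism group of $\Phi$, it suffices to show that the only natural automorphism $\eta\colon \Phi \Rightarrow \Phi$ is the identity. Because the entire argument will reduce to tuples of free spectra of the form $(F_{V_1}*_{+C_1},\ldots,F_{V_n}*_{+C_n})$, the ``more general'' final assertion will follow from the same proof.

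First I would use naturality to reduce to such test inputs. Since the smash product $X_1 \barsmash \ldots \barsmash X_n$ is a left Kan extension along $\oplus\colon \mathscr J^{\times n} \to \mathscr J$, every point of $\Phi(X_1,\ldots,X_n)(W) = f_!g^*(X_1 \barsmash \ldots \barsmash X_n)(W)$ not on the basepoint section is represented (in the manner of the proof of \autoref{prop:spaces_rigidity}) by a tuple $(a, V_1, \ldots, V_n, \phi, x_1, \ldots, x_n)$ where $a \in A$, $\phi \in \mathscr J(V_1 \oplus \ldots \oplus V_n, W)$, and each $x_i \in X_i(V_i)$ lies over $c_i := g(a)_i$ and is not on the basepoint section. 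Each $x_i$ is adjoint to a spectrum map $F_{V_i}*_{+C_i} \to X_i$ with $*_{+C_i}$ based at $c_i$, so chasing the naturality square for $\eta$ along $\Phi(F_{V_1}*_{+C_1},\ldots,F_{V_n}*_{+C_n}) \to \Phi(X_1,\ldots,X_n)$ shows that the action of $\eta$ on the chosen point is determined by its action on the test input.

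The key computation comes next. By \autoref{barsmash_free} and \autoref{prop:spectra_external_smash_and_base_change},
\[ \Phi(F_{V_1}*_{+C_1},\ldots,F_{V_n}*_{+C_n}) \cong F_V(f_!g^* *_{+(C_1 \times \ldots \times C_n)}), \qquad V := V_1 \oplus \ldots \oplus V_n. \]
The hypothesis that $f$ is injective on every fiber of $g$ ensures, as in the space-level proof, that $K := f_!g^* *_{+(C_1 \times \ldots \times C_n)}$ is a retractive space over $B$ each of whose fibers is either $*$ or $*_+$. By the adjunction $F_V \adj \textup{ev}_V$, an automorphism of $F_V K$ corresponds to a map $\tilde\eta\colon K \to (F_V K)(V) = O(V)_+ \barsmash K$ of retractive spaces over $B$ extending the basepoint section, and invertibility of $\eta_{F_V K}$ forces $\tilde\eta$ to carry the free point of each fiber $K_b = *_+$ to $(\xi(b), \bullet)$ for some $\xi(b) \in O(V)$.

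To pin down $\xi(b) = \id_V$ I exploit naturality in the $V_i$. Each $v_i \in V_i$ is a non-basepoint of $\mathscr J(0,V_i)$ and so gives a spectrum map $F_{V_i}*_{+C_i} \to F_0 *_{+C_i} = \Sigma^\infty_{C_i}*_{+C_i}$; assembling these over $i$ and applying $\Phi$ produces $\alpha_v\colon F_V K \to \Sigma^\infty_B K$, which at level $V$ sends $(\psi,k) \in O(V)_+ \barsmash K$ to $(\psi(v),k) \in S^V \barsmash K$. The component $\eta_{\Sigma^\infty_B K}$ is forced to be the identity, since via the $F_0 \adj \textup{ev}_0$ adjunction (and $\mathscr J(0,0) = S^0$) its automorphisms are self-maps of $K$ over $B$ extending the section, of which there is only the identity because every fiber of $K$ is $*$ or $*_+$. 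The naturality identity $\alpha_v \circ \eta_{F_V K} = \alpha_v$, evaluated at level $V$ on $(\id_V, \bullet)$, then gives $\xi(b)(v) = v$; letting $v$ range over $V$ forces $\xi(b) = \id_V$, so $\tilde\eta$ is the unit of the adjunction and $\eta_{F_V K} = \id$. Combined with the first paragraph's reduction this yields $\eta = \id$ on all of $\Phi$. The main technical point is to correctly package the commutation of $\barsmash$, $f_!$, and $g^*$ with the free spectrum functors so that $\Phi$ of an elementary test tuple becomes a single free spectrum on a space of the type to which the space-level rigidity theorem applies via $F_V \adj \textup{ev}_V$.
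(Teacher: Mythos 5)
Your proposal is correct and follows essentially the same strategy as the paper's proof: reduce by naturality and the $(F_V \adj \mathrm{ev}_V)$ adjunction to free-spectrum test tuples, identify the automorphism over each point of $B$ as an element of $O(V)$, then use that this element must agree with the identity along every map to $F_0$ induced by a point of $S^V$, which forces it to be $\id_V$ since $O(V)$ acts faithfully on $S^V$. The only difference is exposition: you spell out the $O(V)$-valued section $\xi$ and the maps $\alpha_v$ explicitly, whereas the paper gives a compressed version citing its reference for the key step.
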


\begin{proof}
	Let $\eta$ be an automorphism of $\Phi$. Using the adjunction between free spectra and evaluation, the proof of \autoref{prop:spaces_rigidity} shows that $\eta$ is determined by its value on every tuple of free spectra of the form $(F_{V_1} *_{+C_1},\ldots, F_{V_n} *_{+C_n})$. It is therefore enough to show that $\eta$ is the identity on these spectra. By \autoref{prop:spaces_rigidity} again, $\eta$ must be trivial on the suspension spectra of any $n$-tuple of spaces of the form $(*_{+C_1},\ldots,*_{+C_n})$ where the free points map to $(c_1,\ldots,c_n) \in C_1 \times \ldots \times C_n$. We argue the same for the free spectra $(F_{V_1} *_{+C_1},\ldots, F_{V_n} *_{+C_n})$ using the argument from \cite[3.17]{malkiewich_cyclotomic_dx}: the action of $\eta$ over any point of $B$ is a self-map of a spectrum of the form $F_{V_1 + \ldots + V_n} S^0$ which agrees with the identity map of $F_0S^0$ along any choice of point in $S^{V_1 + \ldots + V_n}$. Since the orthogonal group acts faithfully on the sphere, this is enough to conclude that $\eta$ acts as the identity on this tuple of free spectra. This finishes the proof.
\end{proof}

\begin{rmk}
	The same rigidity theorem holds if some of the factors in the smash product are spaces. It also holds for sequential spectra so long as at most one of the factors in the smash product is a spectrum, and the rest are spaces.
\end{rmk}

The same corollaries follow as before, except that in (CGWH) we restrict the domain to freely $i$-cofibrant spectra any time we need to switch $f^*$ with $\barsmash$ to get an isomorphism. For instance:
\begin{itemize}
	\item The interchange transformations of \autoref{prop:spectra_external_smash_and_base_change} are unique on the subcategory of freely $i$-cofibrant spectra. This implies that, as natural transformations, they are unique on the entire category.
	\item The category $\Osp$ of all parametrized spectra over all parametrized spaces is a symmetric monoidal category.
	\item In (CG), $\Osp$ is a symmetric monoidal bifibration. In both (CG) and (CGWH), the full subcategory $\Osp'$ of freely $i$-cofibrant spectra is a symmetric monoidal bifibration.
	\item In (CG), $\Osp(B)$ is a symmetric monoidal category under the internal smash product $\Delta_B^*(- \barsmash -)$. In (CG) and (CGWH), the subcategory $\Osp(B)'$ is a symmetric monoidal category.
\end{itemize}

\begin{rmk}\label{rmk:general_rigidity}
	The proof of \autoref{thm:spectra_rigidity} actually establishes a stronger statement, but it is more complicated to spell out. Roughly, if we modify $\Phi$ by allowing smash products after the pullbacks, then any natural transformation $\Phi \Rightarrow \Theta$ that is an isomorphism on free spectra, is the unique natural transformation with that property. See \cite[\S 3.3]{malkiewich_cyclotomic_dx} for similar statements to this one.
\end{rmk}

We will not use the following corollary in the current work, but it would be strange to call $P$ a monoidal fibrant replacement functor and not mention it.
\begin{cor}\hfill
	\vspace{-1em}
	
	\begin{itemize}
		\item (CG) The functor $P\colon \Osp \to \Osp$ has a strong symmetric monoidal structure.
		\item (CGWH) The functor $P\colon \Osp \to \Osp$ has a lax symmetric monoidal structure that is an isomorphism on freely $i$-cofibrant spectra.
	\end{itemize}
\end{cor}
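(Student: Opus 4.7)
The plan is to bootstrap from the space-level strong symmetric monoidal structure on $P$ (\autoref{prop:P_strong_monoidal}) to orthogonal spectra. For each pair of spectra $X \in \Osp(A)$, $Y \in \Osp(B)$ and each pair $V, V'$ of objects of $\mathscr{J}$, the space-level canonical homeomorphism $PX(V) \barsmash PY(V') \cong P(X(V) \barsmash Y(V'))$ is natural in $V, V'$, so these homeomorphisms assemble into an isomorphism of bispectra over $A \times B$. Taking the left Kan extension along $\oplus\colon \mathscr{J} \sma \mathscr{J} \to \mathscr{J}$ yields a natural comparison map
\[ \mu_{X,Y}\colon PX \barsmash PY \to P(X \barsmash Y). \]
When $P$ commutes with the defining colimits of $X \barsmash Y$ this is an isomorphism by construction; otherwise it is the map obtained from the universal property of the Kan extension.

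To verify when $\mu_{X,Y}$ is an isomorphism I would invoke \autoref{prop:f_star_colimits}. In (CG), $P = (p_1)_! p_0^*$ commutes with all colimits, so $\mu_{X,Y}$ is an isomorphism for arbitrary $X, Y$ and $P$ is strong symmetric monoidal. In (CGWH), $P$ still commutes with coproducts, pushouts along closed inclusions, and sequential colimits along closed inclusions; these are exactly the colimits that appear in the left Kan extension along $\oplus$ when $X$ and $Y$ are freely $i$-cofibrant (by \autoref{prop:spectra_pushout_product} and the construction of free spectra, every map built into such an $X \barsmash Y$ is a level closed inclusion). So $\mu_{X,Y}$ is an isomorphism on that subcategory. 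The unit structure is trivial: over a point $B = *$ we have $B^I = *$, hence $P$ restricts to the identity functor on $\Osp(*)$, so $P\Sph = \Sph$ and we take $e = \id$.

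For the coherence data (associativity pentagon, unit triangles, symmetry hexagon for $\mu$ and $e$), I would reduce to the space level. Both sides of each coherence equation are natural transformations between functors built, level-wise on the underlying bispectra, from the space-level structure maps of \autoref{prop:P_strong_monoidal} and the associator, unitor, and braiding of $(\Osp, \barsmash, \Sph)$. These bispectrum-level equalities hold by the space-level coherence (ultimately an application of \autoref{prop:spaces_rigidity}), and left Kan extension along $\oplus$ preserves commuting diagrams. The main obstacle is the (CGWH) case: one must track that the colimits needed to form $\barsmash$ at the spectrum level are along closed inclusions when the inputs are freely $i$-cofibrant, and that the class of freely $i$-cofibrant spectra is closed under $\barsmash$; both points follow from \autoref{prop:spectra_pushout_product}. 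Because the coherence diagrams are equations of natural transformations defined pointwise, the merely lax (not strong) nature of $\mu$ in (CGWH) does not obstruct their verification.
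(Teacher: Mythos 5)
Your construction of the comparison map is legitimate and gives a genuinely different route from the paper's. The paper expands $P = (p_1)_!(p_0)^*$ and applies the spectrum-level interchange isomorphisms of \autoref{prop:spectra_external_smash_and_base_change} (once for $(p_0)^*$ and once for $(p_1)_!$), so the hypotheses ``(CG) always / (CGWH) on freely $i$-cofibrant spectra'' appear verbatim; you instead keep $P$ intact and filter through bispectra using the space-level \autoref{prop:P_strong_monoidal}, then invoke \autoref{prop:f_star_colimits} to control when the ensuing Kan-extension comparison is invertible. Both pictures are correct for the existence and (non)invertibility of $\mu_{X,Y}$, and your observation that $P$ over a point is literally the identity is a clean replacement for the paper's uniqueness argument for $P\Sph \cong \Sph$.

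The gap is in the coherence verification for the lax (CGWH) case. You implicitly decompose $\mu_{X,Y}$ as (Kan extension of the bispectrum isomorphism) followed by the colimit-commutation map $\mathrm{Lan}_\oplus\bigl(P(X(V)\barsmash Y(W))\bigr) \to P\,\mathrm{Lan}_\oplus\bigl(X(V)\barsmash Y(W)\bigr)$. The statement ``left Kan extension along $\oplus$ preserves commuting diagrams'' applies to the first factor, which is genuinely the Kan extension of a map of $\mathscr J\sma\mathscr J$-diagrams, but not to the second, which is an instance of the adjunction comparison for $(p_0)^*$ and is not itself a Kan extension of anything on bispectra. When $\mu$ is not an isomorphism this second factor is exactly the lax part, and the coherence diagrams you need (e.g. the associativity square) involve it in a way that cannot be checked merely ``pointwise on bispectra.'' You either need to prove that the colimit-comparison map is 2-natural with respect to the reassociations of $\oplus\colon \mathscr J^{\sma n}\to\mathscr J$ (which is true, but it is work you have not done), or do what the paper does and invoke the spectrum-level rigidity result \autoref{rmk:general_rigidity}: every natural transformation involved is determined by its restriction to free spectra, and on free spectra $\mu$ is an isomorphism coming directly from \autoref{barsmash_free} and \autoref{prop:P_strong_monoidal}, reducing the coherence to the space level. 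That last step is the one your argument elides, and without it the lax coherences in (CGWH) are unverified.
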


\begin{proof}
	The isomorphism $\Sph \cong P\Sph$ is unique. There is a canonical map $PX \barsmash PY \to P(X \barsmash Y)$ over the identity of $A \times B$, by the above and the fact that $P = (p_1)_!(p_0)^*$, which is an isomorphism (CG) always (CGWH) on freely $i$-cofibrant spectra. As in \autoref{prop:P_strong_monoidal}, by \autoref{rmk:general_rigidity} we can keep track of which natural transformation is the right one by restricting attention to $X = \Sigma^\infty_{+A} A$ and $Y = \Sigma^\infty_{+B} B$, where it gives a map $A^I \times B^I \to (A \times B)^I$. Using this principle it is straightforward to check the associator, unitor, and symmetry coherences.
\end{proof}

\beforesubsection
\subsection{Skeleta and semifree cofibrations}\label{sec:reedy}\aftersubsection

The results in this section apply to orthogonal spectra. The ones that do not involve smash products also hold for sequential spectra, but in that case they say nothing new, see \autoref{rmk:reedy_prespectra}.

This subsection is meant to be skipped unless you want the details of the remainder of the proof of \autoref{prop:spectra_pushout_product}. The remaining statements can't be proven directly using free cofibrations; instead we have to pass to a more general notion that enjoys a close relationship with the skeleta of an orthogonal spectrum.

Let $O$ be a compact Lie group. We will be interested in the case where $O = O(n)$ for $n \geq 0$. A retractive $O$-equivariant space is a retractive space $Y$ over $B$ with a fiber-preserving $O$-action. Furthermore, when we have an $O$-equivariant map of retractive spaces $X \to Y$, we say it is an $f$-cofibration or $h$-cofibration if this is true after forgetting the $O$-action.

\begin{prop}\label{smashing_with_free_complex}
	Under the conventions above:
	\begin{itemize}
		\item If $f: K \to L$ is a free $O$-cell complex of based spaces and $g: X \to Y$ is an $O$-equivariant map of retractive $B$-spaces that is an $h$-cofibration, then $f \square g$ constructed by $(- \barsmash -)_{O}$ is an $h$-cofibration.
		\item The same is true for $f$-cofibrations or closed inclusions.
		\item If $L$ is any finite free based $O$-cell complex, and $X$ is any $O$-equivariant retractive space that is both $h$-cofibrant and $h$-fibrant, then $(L \barsmash X)_O$ is $h$-fibrant.
		\item If $L$ is any finite free based $O$-cell complex, and $g: X \to Y$ is any $O$-equivariant map of retractive $h$-cofibrant spaces over $B$ that is nonequivariantly a weak equivalence, then $(\id_L \barsmash g)_{O}$ is a weak equivalence.
	\end{itemize}
\end{prop}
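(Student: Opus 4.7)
The plan is to argue by induction on the cells of a free $O$-CW structure, having first reduced the statement about a single free $O$-cell to a statement in Section \ref{pushout_product_lemmas} that does not involve $O$ at all. The key identification is this: if $Z$ is any based space, $X$ an $O$-equivariant retractive space over $B$, and we equip $(O \times Z)_+ \barsmash X$ with the diagonal $O$-action (acting on $O$ by left multiplication and on $X$ by the given action), then the orbit map $[g,z,x] \mapsto (z, g^{-1}x)$ induces a homeomorphism
\[ \left((O \times Z)_+ \barsmash X\right)_O \;\cong\; Z_+ \barsmash X \]
of retractive spaces over $B$, where the $O$-action on the right is forgotten. This uses that $O$ acts freely on $O \times Z$ and, in (CGWH), that the relevant quotient is already weak Hausdorff because $O$ is compact (see \autoref{prop:f_star_colimits}). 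The identification is natural in $Z$ and $X$, so it carries pushout diagrams for $f\colon (O \times S^{n-1})_+ \to (O \times D^n)_+$ smashed with a map $g\colon X \to Y$ to the analogous pushout for $S^{n-1}_+ \to D^n_+$ smashed with $g$.

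For parts (1) and (2), start with a single free cell $f\colon (O \times S^{n-1})_+ \to (O \times D^n)_+$ and $g\colon X \to Y$ an $O$-equivariant $h$-cofibration (resp.\ $f$-cofibration, closed inclusion). The identification above together with \autoref{pushout_product_pushouts} produces a natural isomorphism $(f \square g)_O \cong (S^{n-1}_+ \to D^n_+) \square g$, where the right-hand pushout-product is computed in $\mc R(B)$ using $\barsmash$. This right-hand pushout-product is of the required cofibration class by \autoref{prop:h_cofibrations_pushout_product}. One then extends to an arbitrary free $O$-cell complex $f\colon K \to L$ by copying the formal argument in \autoref{prop:spectra_pushout_product}: the class of maps $f$ for which $f \square g$ is an $h$-cofibration (resp.\ $f$-cofibration, closed inclusion) contains the single free cells and is closed under pushouts (\autoref{pushout_product_pushouts}), transfinite compositions (\autoref{pushout_product_composition}), and retracts, so it contains all free $O$-cell complexes.

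Parts (3) and (4) are proven by induction on a finite free $O$-cell structure on $L$. At each stage we have a pushout
\[ \xymatrix @R=1.7em{
    ((O \times S^{n-1})_+ \barsmash X)_O \ar[r] \ar[d] & (L_{k-1} \barsmash X)_O \ar[d] \\
    ((O \times D^n)_+ \barsmash X)_O \ar[r] & (L_k \barsmash X)_O
}\]
which, under the identification above, is a pushout along the $h$-cofibration $S^{n-1}_+ \barsmash X \to D^n_+ \barsmash X$ (using the first two parts, or directly \autoref{prop:h_cofibrations_pushout_product}). For part (3), the two top vertices are $h$-fibrant by \autoref{prop:h_cofibrations_pushout_product} (third bullet) and the inductive hypothesis on $L_{k-1}$, so \autoref{prop:clapp} (in its extension along $h$-cofibrations via \autoref{lem:heath_kamps}) shows that $(L_k \barsmash X)_O$ is $h$-fibrant. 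For part (4), the map $(\id_L \barsmash g)_O$ is built from maps of the form $S^{n-1}_+ \barsmash g$ and $D^n_+ \barsmash g$, each of which is a weak equivalence by \autoref{prop:h_cofibrations_pushout_product} (fourth bullet), so the gluing lemma \autoref{prop:technical_cofibrations} concludes the induction step.

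The one step that requires real care is the orbit identification in the first paragraph, specifically the basepoint bookkeeping: the smash product $(O \times Z)_+ \barsmash X$ involves collapsing $(O \times Z) \times B \cup (O \times Z \times B) \times X$ (the wedge locus) to the base section before taking orbits, and one has to verify that the resulting set-theoretic quotient really agrees with $Z_+ \barsmash X$ on the nose (not merely fiberwise) and inherits the correct topology in both (CGWH) and (CG). This is routine — in (CG) the orbit functor commutes with all colimits, and in (CGWH) we use \autoref{prop:f_star_colimits} together with the fact that $O$ is a compact Hausdorff group acting fiberwise — but it is the only place in the argument that is not purely formal.
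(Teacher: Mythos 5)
Your proof is correct and follows essentially the same route as the paper: the same orbit identification $\bigl((O \times D)_+ \barsmash Y\bigr)_O \cong D_+ \barsmash Y$, the same reduction of parts (1)--(2) to the single-cell case followed by closure under pushouts, transfinite compositions and retracts, and the same skeletal induction using the gluing lemma and Clapp's theorem for parts (3)--(4). One small slip: you write ``if $Z$ is any based space \ldots $(O \times Z)_+$''; if $Z$ were already based then $(O\times Z)_+$ would add a spurious extra basepoint, so you should either take $Z$ unbased (as the paper does with its $D$) or replace $(O\times Z)_+$ by $O_+ \sma Z$ -- the argument is unaffected either way.
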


We will only use the variant of the first statement where $O = O(n)$, $K = *$, and $L = \mathscr J(\R^n,\R^m)$, but this general formulation with pushout-products $\square$ is easier to prove. Note also that the finiteness assumption on $L$ in the last two statements does not seem to be necessary, but shortens the proof.

\begin{proof}
	The key ingredient for the first two claims is that there is an isomorphism
	\[ ((O \times D)_+ \barsmash Y)_{O} \cong (D_+ \barsmash Y) \]
	of retractive spaces over $B$ without $O$-actions (so we forget the action on the right), which is natural in the unbased space $D$ and the retractive space $Y \in \mc R(B)$. This follows from the pushout square \eqref{external_half_smash}.
	Therefore these claims hold in the special case where  $f$ is a single free based $O$-cell,
	\[ (O \times S^{d-1})_+ \to (O \times D^d)_+, \]
	by reduction to \autoref{prop:h_cofibrations_pushout_product}. But the case of general $f$ immediately follows because $- \square g$ preserves pushouts and compositions in the sense explained in \autoref{pushout_product_lemmas}.
	
	For the fourth statement we induct on the skeleta of $L$. If $L$ is obtained from $K$ by pushout along a single cell
	\[ \xymatrix @R=1.7em{
		(O \times S^{d-1})_+ \ar[r] \ar[d] & (O \times D^d)_+ \ar[d] \\
		K \ar[r] & L
	} \]
	Then $g$ gives a map of two pushout squares of the form
	\[ \xymatrix @R=1.7em{
		S^{d-1}_+ \barsmash (-) \ar[r] \ar[d] & D^d_+ \barsmash (-) \ar[d] \\
		(K \barsmash (-))_O \ar[r] & (L \barsmash (-))_O.
	} \]
	The horizontal maps are cofibrations by the first part of this proposition, so each of these squares is a homotopy pushout square. The map $g$ induces equivalences on the top-left and top-right terms by \autoref{prop:h_cofibrations_pushout_product}, and on the bottom-left term by inductive hypothesis, and therefore induces an equivalence on the bottom-right term. By induction $(\id_{L'} \barsmash f)_O$ is an equivalence for all skeleta $L'$ of $L$, and therefore for $L$ itself.
	
	The third statement is proven by the same induction, and we only get one pushout square of the above form. The first three terms are $h$-fibrations and the top horizontal map is an $h$- (therefore $f$-)cofibration, so by \autoref{prop:clapp} the last term is an $h$-fibration.
\end{proof}

\begin{lem}
	$\mathscr J(\R^n,\R^{m+n})$ is a based free $O(n)$-cell complex.
\end{lem}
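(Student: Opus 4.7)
The plan is to exploit the concrete description $\mathscr J(\R^n, \R^{m+n}) \cong O(m+n)_+ \sma_{O(m)} S^m$ recorded just before the definition of parametrized $\mathscr J$-spaces. Under this homeomorphism, the $O(n)$-action by precomposition on the source $\R^n$ corresponds to the action induced by the inclusion $O(n) \hookrightarrow O(m+n)$ as block-diagonal matrices acting on the first $n$ coordinates. This action descends to $O(m+n)/O(m) \cong V_n(\R^{m+n})$, the Stiefel manifold of $n$-frames in $\R^{m+n}$, and a frame $\phi$ is fixed by $h \in O(n)$ only if $\phi \circ h^{-1} = \phi$, which forces $h = 1$ since $\phi$ is injective. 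So $O(n)$ acts freely on $V_n(\R^{m+n})$, away from no points.

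Next I would put a free $O(n)$-CW structure on $V_n(\R^{m+n})$. The cleanest way is to use the classical Schubert cell decomposition of the Grassmannian $\textup{Gr}_n(\R^{m+n})$, which expresses it as a finite CW complex whose open cells are Euclidean spaces. The principal $O(n)$-bundle $V_n(\R^{m+n}) \to \textup{Gr}_n(\R^{m+n})$ is trivial over each (contractible) Schubert cell, so pulling back this CW structure gives an $O(n)$-CW structure on $V_n(\R^{m+n})$ in which every cell is a free cell $O(n) \times D^d$.

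Now $\mathscr J(\R^n, \R^{m+n})$ is the Thom space of the orthogonal-complement bundle $\xi$ over $V_n(\R^{m+n})$, which is an $O(n)$-equivariant real vector bundle of rank $m$. Over each Schubert cell $D^d$ in the Grassmannian we may trivialize both the principal bundle and $\xi$, so above each Schubert cell the pair (disk bundle, sphere bundle) of $\xi$ is equivariantly of the form $(O(n) \times D^d \times D^m,\ O(n) \times D^d \times S^{m-1})$. Collapsing the sphere bundle and filtering by the Schubert skeleta therefore expresses $\mathscr J(\R^n,\R^{m+n})$ as built from its basepoint by successive attachments of free $O(n)$-cells $O(n) \times D^{d+m}$ along $O(n) \times S^{d+m-1}$, which is exactly the definition of a based free $O(n)$-cell complex.

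The main obstacle will be making the cellular pieces fit together compatibly: I need to know that the trivializations of the principal bundle and of $\xi$ over the individual Schubert cells can be chosen so that the attaching maps of the resulting free cells are continuous equivariant maps into the previous skeleton of the Thom space. This is handled by a standard application of equivariant triangulation (or equivalently, by carrying the Schubert stratification through to a CW decomposition of the disk bundle relative to the sphere bundle), which for a smooth compact manifold with a smooth free compact-Lie-group action is routine. Everything else is bookkeeping with the identifications already in place.
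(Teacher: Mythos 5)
Your argument is essentially correct, but it takes a long scenic route where the paper takes a shortcut. The paper's entire proof is a single sentence citing Illman's equivariant triangulation theorem: the disk bundle of the orthogonal-complement bundle over $O(\R^n,\R^{m+n})$ is a smooth compact manifold with boundary carrying a smooth $O(n)$-action (free, as you observe), so Illman produces an $O(n)$-equivariant triangulation in which the sphere bundle is a subcomplex; since the action is free, all cells are free $O(n)$-cells, and collapsing the sphere-bundle subcomplex gives the desired based free $O(n)$-cell structure on the Thom space.

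Your route — identify $\mathscr J(\R^n,\R^{m+n})$ as the Thom space over the Stiefel manifold, pull back Schubert cells from the Grassmannian, trivialize bundles cell-by-cell — is a perfectly reasonable concrete alternative, and in favorable cases gives an explicit cell structure together with a count of cells by Schubert symbols. But notice that in your last paragraph you concede the remaining issue (compatible trivializations across cell boundaries so that the attaching maps land equivariantly in the previous skeleton) is ``handled by a standard application of equivariant triangulation.'' That step is not a minor footnote: it is precisely the content of Illman's theorem, so your proof has not actually avoided the appeal to it — it has only postponed it to the end after a substantial amount of additional setup. In other words, you rebuilt, by hand and for this specific Thom space, what the paper gets in one line for all smooth compact $G$-manifolds. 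The trade-off is that your version is more explicit (Schubert cell combinatorics, specific vector bundles), while the paper's is shorter and more general; both are valid.
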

\begin{proof}
	This is a consequence of Illman's triangulation theorem \cite{illman}.
\end{proof}

The \textbf{semifree spectrum}\index{semifree spectrum} $\mc G_n K$ on an $O(n)$-equivariant retractive space $K$ at level $n$ is defined to be the $\mathscr J$-space
\[ \mathscr J(\R^n,-) \sma_{O(n)} K. \]
This gives the left adjoint to the operation that takes a spectrum $X$ to $X_n$ as an $O(n)$-space.

\begin{prop}\label{smash_two_semi_free}
	Let $K$ and $L$ be ex-spaces over $A$ and $B$, respectively. There is a natural isomorphism
	\[ \mc G_m K \barsmash \mc G_n L \cong \mc G_{m+n} (O(m+n)_+ \barsmash_{O(m) \times O(n)} K \barsmash L). \]
\end{prop}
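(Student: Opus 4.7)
The plan is to prove the isomorphism via Yoneda, by showing that both sides corepresent the same functor on $\Osp(A \times B)$. For any orthogonal spectrum $Z$ over $A \times B$, I will identify both
\[ \Osp(A \times B)(\mc G_m K \barsmash \mc G_n L,\, Z) \quad \textup{and} \quad \Osp(A \times B)(\mc G_{m+n}(O(m+n)_+ \barsmash_{O(m) \times O(n)} K \barsmash L),\, Z) \]
with the common target $(O(m) \times O(n))\textup{-}\mc R(A \times B)(K \barsmash L,\, Z_{m+n})$, naturally in $Z$.

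For the right-hand side this is immediate: first apply the defining adjunction of $\mc G_{m+n}$ (evaluation at level $m+n$ as an $O(m+n)$-equivariant ex-space), then the change-of-groups adjunction that turns $O(m+n)$-equivariant maps out of an induced space into $(O(m) \times O(n))$-equivariant maps.

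For the left-hand side, I would chain together several adjunctions. First apply $\barsmash \dashv \barF_A$ from \autoref{sec:smash_with_space} to land in $\Osp(B)(\mc G_n L, \barF_A(\mc G_m K, Z))$, and then apply the defining adjunction of $\mc G_n$ to land in $O(n)\textup{-}\mc R(B)(L,\, \barF_A(\mc G_m K, Z)_n)$. The $n$th level $\barF_A(\mc G_m K, Z)_n = \barmap_A(\mc G_m K, \sh^n Z)$ can be identified with the $O(m)$-fixed-point mapping space $\barmap_A(K, Z_{m+n})^{O(m)}$: this combines the isomorphism $\mc G_m K \barsmash J \cong \mc G_m(K \barsmash J)$ (with $O(m)$ acting only on the $K$-factor), the space-level $(\barsmash, \barmap_A)$ adjunction, and the $\mc G_m$-adjunction. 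A final application of the space-level adjunction then converts $O(n)$-equivariant maps $L \to \barmap_A(K, Z_{m+n})^{O(m)}$ into the desired $(O(m) \times O(n))$-equivariant maps $K \barsmash L \to Z_{m+n}$.

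The main obstacle will be carefully tracking the commuting group actions. The residual $O(n)$-action on $\barF_A(\mc G_m K, Z)_n$ coming from the orthogonal spectrum structure must be matched with the action on the last $n$ coordinates of $Z_{m+n}$, while the $O(m)$-fixed-point condition refers to the first $m$ coordinates; the commutation of these two actions inside $O(m+n)$ is what allows them to be combined into the $(O(m) \times O(n))$-action. Naturality in $K$ and $L$ is then automatic from the Yoneda setup. As an alternative verification, one could bypass Yoneda entirely and compute the coend that defines $\barsmash$ on orthogonal spectra, using the enriched coYoneda formula $\int^{V_1} \mathscr J(V_1 \oplus V_2, V) \barsmash \mathscr J(\R^m, V_1) \cong \mathscr J(\R^m \oplus V_2, V)$ in each slot to reduce both sides directly to $\mathscr J(\R^{m+n}, V) \barsmash_{O(m+n)} (O(m+n)_+ \barsmash_{O(m) \times O(n)} K \barsmash L)$.
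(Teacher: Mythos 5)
Your proposal is correct and follows essentially the same strategy as the paper's proof: both identify the two sides as the left adjoint of the functor $Z \mapsto Z_{m+n}$, regarded as an $(O(m) \times O(n))$-equivariant ex-space over $A \times B$. The paper reaches this a bit more directly by inspecting the $\mathscr J \sma \mathscr J$-bispectrum that defines the smash product and recognizing the left Kan extension as a left adjoint, whereas you make the same adjointness argument explicit by chaining the $(\barsmash, \barF_A)$, semifree, and change-of-groups adjunctions, but the key observation is identical.
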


\begin{proof}
	As in the non-fiberwise case (cf \cite[I.5.14]{schwede_symmetric_spectra}, \cite[2.3.4]{brun_dundas_stolz}), we look at the $\mathscr J \sma \mathscr J$-space defining the smash product on the left-hand side. At bilevel $(m+m',n+n')$ it is
	\[ (\mathscr J(\R^m,\R^{m+m'}) \sma \mathscr J(\R^n,\R^{n+n'})) \barsmash_{O(m) \times O(n)} (K \barsmash L). \]
	Since the left-hand side of the statement in the proposition is defined as a left Kan extension of the diagram just above, it is a left adjoint functor from $O(m) \times O(n)$-spaces over $A \times B$ to spectra over $A \times B$, applied to $K \barsmash L$. To make this agree with the right-hand side, we simply verify that both right adjoints send the spectrum $X$ over $A \times B$ to $X_{m+n}$ considered as an $O(m) \times O(n)$-space.
\end{proof}

Given an orthogonal spectrum $X$ over $B$, its $n$-skeleton $\sk^n X$ is defined by
\[ \sk^n X := (i_n)_!(i_n)^* X, \]
where $i_n$ is the inclusion of the full subcategory of $\mathscr J$ on the objects $\R^0$ through $\R^n$ back into $\mathscr J$, $(i_n)^*$ is the restriction to this subcategory, and $(i_n)_!$ is the enriched left Kan extension. Writing the definition out gives the presentation
\[ \bigvee_{i,j \leq n} \mathscr J(\R^j,-) \barsmash \mathscr J(\R^i,\R^j) \barsmash X_i \rightrightarrows
\bigvee_{i \leq n} \mathscr J(\R^i,-) \barsmash X_i \to \sk^n X. \]
The $n$-skeleton also has a ``semifree'' presentation
\[ \bigvee_{i,j \leq n} \mathscr J(\R^j,-) \barsmash_{O(j)} \mathscr J(\R^i,\R^j) \barsmash_{O(i)} X_i \rightrightarrows
\bigvee_{i \leq n} \mathscr J(\R^i,-) \barsmash_{O(i)} X_i \to \sk^n X. \]
By a standard categorical argument, $\sk^{n-1} X$ admits a map to $X$ which is an isomorphism levels 0 through $n-1$. At level $n$ this map gives an $O(n)$-equivariant map $L_n X \to X_n$ over $B$ called the $n$th \textbf{latching map}\index{latching map} of $X$. (We define the 0th latching map to be $B \to X_0$.)

\begin{ex}
	Let $X = \mc G_n K$. Then $\sk^m X$ is the zero object for $m < n$ and $\mc G_n K$ for $m \geq n$. By comparing universal properties, it has a single nontrivial latching map, which is at level $n$ and is just the inclusion of the zero object $B \to K$.
\end{ex}

More generally, any map of spectra $X \to Y$ over $B$ has a sequence of relative skeleta, defined as pushouts of the skeleta of $Y$ with $X$ along the skeleta of $X$:
\[ \xymatrix @R=1.7em{
	\sk^n X \ar[r] \ar[d] & \sk^n Y \ar[d] \\
	X \ar[r] & \sk^n (X \to Y) } \]
The map from $\sk^{n-1}(X \to Y)$ to $Y$ on level $n$ can then be written
\[ L_n Y \cup_{L_n X} X_n \to Y_n, \]
and we call this the $n$th relative latching map $X \to Y$. Notice that when $X$ is the zero object in spectra (all levels equal to $B$) these relative latching maps are the latching maps of $Y$.

We recall how the skeleta are built inductively from the latching maps.
\begin{prop}\label{prop:skeleta_pushout}
	For each spectrum $X$ over $B$ there is a natural pushout square
	\[ \xymatrix @R=1.7em{
		\mc G_n L_n X \ar[r] \ar[d] & \mc G_n X_n \ar[d] \\
		\sk^{n-1} X \ar[r] & \sk^n X }. \]
\end{prop}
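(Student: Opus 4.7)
The plan is to verify the pushout square by directly computing both sides as left Kan extensions and comparing them level by level. Let $i_n \colon \mathscr{J}_{\leq n} \hookrightarrow \mathscr{J}$ denote the full subcategory on $\R^0,\ldots,\R^n$. By construction $\sk^n X = (i_n)_!(i_n)^* X$, which by the semifree coend presentation recalled in the text can be written at evaluation $\R^m$ as
\[ (\sk^n X)(\R^m) \;=\; \mathrm{coeq}\!\left( \bigvee_{i,j \leq n} \mathscr{J}(\R^j,\R^m) \barsmash_{O(j)} \mathscr{J}(\R^i,\R^j) \barsmash_{O(i)} X_i \;\rightrightarrows\; \bigvee_{i \leq n} \mathscr{J}(\R^i,\R^m) \barsmash_{O(i)} X_i \right). \]
First I would observe that the inclusion $\sk^{n-1} X \to \sk^n X$ is given at each level by enlarging the coproduct from $i \leq n-1$ to $i \leq n$, so the extra information is entirely the $i = n$ (or $j=n$) summands.

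Next, the strategy is to form the candidate pushout $P = \sk^{n-1} X \cup_{\mc G_n L_n X} \mc G_n X_n$ and produce mutually inverse maps between $P$ and $\sk^n X$. The map $P \to \sk^n X$ comes from the two canonical inclusions: $\sk^{n-1} X \to \sk^n X$, and the map $\mc G_n X_n \to \sk^n X$ given by the universal property of $\mc G_n$ applied to the $O(n)$-equivariant map $X_n \to (\sk^n X)_n$ (which is an isomorphism since $\sk^n X$ agrees with $X$ through level $n$). One then checks these two maps agree after pulling back along $\mc G_n L_n X$: this is a matter of chasing the definition of $L_n X$, which parametrizes exactly the elements of $X_n$ coming from $X_i$ with $i < n$ via $\mathscr{J}(\R^i,\R^n)$.

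For the inverse direction, I would produce a map $\sk^n X \to P$ using the universal property of the coend: specify on each wedge summand $\mathscr{J}(\R^i,\R^m) \barsmash_{O(i)} X_i$ the obvious map into $\sk^{n-1} X$ for $i < n$, and into $\mc G_n X_n$ for $i = n$ (which tautologically receives $\mathscr{J}(\R^n,\R^m) \barsmash_{O(n)} X_n$). The coequalizer relations for pairs $(i,j)$ with both $<n$ are handled inside $\sk^{n-1} X$; those with $j = n$, $i < n$ factor through $L_n X$ and hence through the pushout identification $\mc G_n L_n X \to \sk^{n-1} X$; those with $i = n$ are trivial since $j \geq i = n$ forces $j = n$ and the $O(n)$-action is already accounted for by $\barsmash_{O(n)}$. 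Composing in both directions gives the identity by inspection on each wedge summand.

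The main obstacle is the bookkeeping for the case $j = n$, $i < n$: one must check carefully that the two ways of mapping $\mathscr{J}(\R^n,\R^m) \barsmash_{O(n)} \mathscr{J}(\R^i,\R^n) \barsmash_{O(i)} X_i$ into $P$ — via the $i < n$ summand into $\sk^{n-1} X$, or via composing into $X_n$ first and then using $\mc G_n X_n$ — agree precisely along the pushout identification, which is essentially the definition of $L_n X$ as the colimit over $i < n$ and $j = n$. Naturality in $X$ is automatic because every functor in sight ($\mc G_n$, $L_n$, $\sk^{n-1}$, $\sk^n$) and the identifications used above are natural, and the pushout is a colimit.
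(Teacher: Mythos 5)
Your proof is correct, but it takes a genuinely different route from the paper. The paper argues directly via the universal property of the left Kan extension: a map out of $\sk^n X$ into any $Y$ is, by adjunction, the same data as maps on levels $0$ through $n$ compatible with the $\mathscr J$-action, and this is the same as a pair of maps out of $\sk^{n-1}X$ (for levels $0$ through $n-1$) and out of $\mc G_n X_n$ (for level $n$) that agree on level $n$ of $\sk^{n-1}X$, which is exactly $L_n X$. That is, the paper verifies the pushout universal property in one stroke, never unwinding the coend. You instead compute both $\sk^n X$ and the candidate pushout $P$ levelwise via the semifree coefficient-system/coend presentation and build explicit mutually inverse maps, with the crux being the identification of the $(j=n, i<n)$ coequalizer relations with the gluing over $\mc G_n L_n X$. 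This is more explicit but requires the bookkeeping you flag — in particular, you must also verify both composites are the identity (one composite checked on wedge summands in $\sk^n X$, the other via the universal property of $P$), a step your sketch gestures at but does not fully carry out. The paper's universal-property argument buys a shorter proof at the cost of some abstraction; your coend-level argument buys concreteness and makes the role of $L_n X$ completely transparent, at the cost of more case analysis.
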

\begin{proof}
	We check that $\sk^n X$ has the universal property of the pushout. To define a map from $\sk^n X$ into $Y$ is to restrict to the objects $0$ through $n$ and define a map $X \to Y$. This map on $X_0$ through $X_{n-1}$ is specified by $\sk^{n-1} X \to Y$, and on $X_n$ is specified by $\mc G_n X_n \to Y$. It is therefore clear that the map out of $\sk^n X$ into $Y$ would be determined uniquely by the maps on $\sk^{n-1} X$ and $\mc G_n X$, and it remains to check that such a map always exists if the other two agree along $\mc G_n L_n X$. We clearly get the spaces $X_0$ through $X_n$ and all the maps between them, except for agreement along morphisms in $\mathscr J(a,n)$ for every $a < n$. But this follows from the claim that two maps $L_n X \rightrightarrows Y_n$ are the same, one coming about from level $n$ of $\sk^{n-1} X \to Y$ and the other coming about by restricting $X_n \to Y_n$ to the subspace $L_n X$. Of course this agreement holds when we have a map from the pushout diagram into $Y$, so we also have the existence part of the universal property for $\sk^n X$.
\end{proof}

\begin{prop}\label{prop:map_skeleta_pushout}
	For each map of spectra $X \to Y$ over $B$ there is a natural pushout square
	\[ \xymatrix @R=1.7em @C=3em{
		\mc G_n (L_n Y \cup_{L_n X} X_n) \ar[r] \ar[d] & \mc G_n Y_n \ar[d] \\
		\sk^{n-1} (X \to Y) \ar[r] & \sk^n (X \to Y) }. \]
\end{prop}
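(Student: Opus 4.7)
My plan is to realize both sides as colimits of the same diagram built from the level-$n$ pieces and the $(n-1)$-skeleta. First I would apply \autoref{prop:skeleta_pushout} to both $X$ and $Y$ separately, obtaining
\[ \sk^n X \cong \sk^{n-1} X \cup_{\mc G_n L_n X} \mc G_n X_n, \qquad \sk^n Y \cong \sk^{n-1} Y \cup_{\mc G_n L_n Y} \mc G_n Y_n, \]
and combine these with the defining pushout $\sk^n(X \to Y) = X \cup_{\sk^n X} \sk^n Y$. Naturality of the identifications in $X$ and in $Y$ is automatic, and the compatibility of the various maps follows from the universal properties of the latching maps and of the counit $\sk^n(-) \to (-)$.

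Next I would use that $\mc G_n$, being the left adjoint to evaluation at level $n$ as an $O(n)$-space, preserves all colimits; in particular
\[ \mc G_n\!\left(L_n Y \cup_{L_n X} X_n\right) \;\cong\; \mc G_n L_n Y \cup_{\mc G_n L_n X} \mc G_n X_n. \]
Together with the unrolling $\sk^{n-1}(X \to Y) = X \cup_{\sk^{n-1} X} \sk^{n-1} Y$, the pushout appearing on the right of the statement also becomes an iterated pushout built out of the same seven objects $\mc G_n L_n X,\ \mc G_n X_n,\ \mc G_n L_n Y,\ \mc G_n Y_n,\ \sk^{n-1} X,\ \sk^{n-1} Y,\ X$, with the evident connecting maps coming from the latching maps, the $(n-1)$-skeleton inclusions, and the map $X \to Y$.

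The remaining step is a purely formal reorganization of iterated pushouts via the pasting lemma. The cleanest way to carry it out is to verify that both sides corepresent the same functor: a map into a spectrum $Z$ corresponds in either presentation to the data of a map $X \to Z$, a map $\sk^{n-1} Y \to Z$ agreeing with it on $\sk^{n-1} X$, and an $O(n)$-equivariant map $Y_n \to Z_n$ whose restriction along $L_n Y \cup_{L_n X} X_n \to Y_n$ agrees with the composites coming from $\sk^{n-1} Y \to Z$ on the $L_n Y$-summand and from $X \to Z$ (via the counit $\mc G_n X_n \to X$ at level $n$) on the $X_n$-summand. This matches exactly the universal property of $\sk^n(X \to Y) = X \cup_{\sk^n X} \sk^n Y$ once $\sk^n X$ and $\sk^n Y$ are expanded via \autoref{prop:skeleta_pushout}.

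I do not expect a genuine obstacle here; the only slightly delicate point is bookkeeping the compatibility conditions on the $\mc G_n L_n X$-piece, where one must check that the two ways of mapping $\mc G_n L_n X$ into $\sk^{n-1}(X \to Y)$ — through $\sk^{n-1} X \hookrightarrow \sk^{n-1} Y$, or through $\sk^{n-1} X \hookrightarrow X$ — coincide, which is immediate from the defining pushout of $\sk^{n-1}(X \to Y)$.
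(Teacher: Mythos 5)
Your argument is correct and is essentially the paper's: both decompose everything into the same seven pieces ($X$, $\sk^{n-1}X$, $\sk^{n-1}Y$, $\mc G_n L_n X$, $\mc G_n L_n Y$, $\mc G_n X_n$, $\mc G_n Y_n$) via \autoref{prop:skeleta_pushout} and then reorganize the iterated pushout. The paper packages the reorganization as a $3\times 3$ Fubini interchange of colimits rather than your direct comparison of corepresented functors, but the two are interchangeable formalities.
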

\begin{proof}
	Consider the colimit of the diagram
	\[ \xymatrix @R=1.7em{
		X & \sk^{n-1} X \ar[l] \ar[r] & \sk^{n-1} Y \\
		\mc G_n X_n \ar@{=}[d] \ar[u] & \mc G_n L_n X \ar[r]  \ar[d] \ar[l] \ar[u] & \mc G_n L_n Y \ar[d] \ar[u] \\
		\mc G_n X_n & \mc G_n X_n \ar@{=}[l] \ar[r]  & \mc G_n Y_n
	}\]
	It may be calculated as the pushout of the pushouts of the columns, or as the pushout of the pushouts of the rows. The first recipe, by  \autoref{prop:skeleta_pushout}, gives the pushout square that defines $\sk^n(X \to Y)$. The second recipe gives the pushout square in the statement of this proposition.
\end{proof}

\begin{df}
	A map of spectra $X \to Y$ over $B$ is a \textbf{semifree} or \textbf{Reedy $h$-cofibration}\index{$h$-cofibration!semifree} if each relative latching map $L_n Y \cup_{L_n X} X_n \to Y_n$ is an $h$-cofibration. The notions of semifree $f$-cofibration and semifree closed inclusion are defined similarly.\index{$f$-cofibration!semifree}\index{$i$-cofibration!semifree}\footnote{Loosely, a flat map of orthogonal spectra is just a semifree $q$-cofibration. More precisely, to be flat, the latching map has to be not just a relative cell complex but a relative $O(n)$-cell complex at spectrum level $n$.}
\end{df}

We emphasize that in this and the previous definition, the latching map is $O(n)$-equivariant, but that action is ignored when determining if the map is a cofibration. This means our proofs below will be more general than the ones done in \cite[\S 4]{malkiewich2017coassembly}.\footnote{However when you develop this theory equivariantly, it becomes important to stick with homotopy extension properties that respect the $G \times O(n)$-action.}

\begin{rmk}\label{rmk:reedy_prespectra}
	The notion of a semifree cofibration makes sense for sequential spectra, but it turns out to be the same thing as a free cofibration. The presence of the $O(n)$-action is what makes the two notions different in orthogonal spectra.
\end{rmk}

To simplify our language a bit, in the rest of this section we will use the word ``cofibration'' when making a statement that is equally true for $f$-cofibrations, $h$-cofibrations, or closed inclusions.

\begin{prop}\label{reedy_preserved}
	Semifree cofibrations are closed under pushouts, transfinite compositions (therefore also coproducts), and retracts.
\end{prop}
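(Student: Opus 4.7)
My plan is to reduce each of the three closure properties (pushouts, transfinite compositions, retracts) to the corresponding closure properties of cofibrations in $\mc R(B)$, established in \autoref{prop:technical_cofibrations}. The whole argument rests on two elementary observations: first, the latching object functor $L_n$ preserves all colimits, since $\sk^{n-1} = (i_{n-1})_!(i_{n-1})^*$ is a composite of left adjoints and $L_n = (\sk^{n-1}(-))_n$ is followed by level evaluation, which also preserves colimits because colimits of spectra are formed levelwise; second, for a map $X \to Y$, the relative latching map $L_n Y \cup_{L_n X} X_n \to Y_n$ is the value at a pushout construction that is functorial in the arrow $X \to Y$.

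For retracts, suppose $X \to Y$ is a retract of $X' \to Y'$ in the arrow category, with the latter a semifree cofibration. Applying the functorial construction $(X \to Y) \leadsto (L_n Y \cup_{L_n X} X_n \to Y_n)$ shows that the $n$th relative latching map of $X \to Y$ is a retract of that of $X' \to Y'$, so \autoref{prop:technical_cofibrations} finishes this case.

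For pushouts, suppose $X \to Y$ is a semifree cofibration and we form a pushout $Y' = Y \cup_X X'$ along an arbitrary map $X \to X'$. Using that $L_n$ and $(-)_n$ preserve pushouts, I compute
\[ L_n Y' \cup_{L_n X'} X'_n \;\cong\; (L_n Y \cup_{L_n X} L_n X') \cup_{L_n X'} X'_n \;\cong\; (L_n Y \cup_{L_n X} X_n) \cup_{X_n} X'_n, \]
while $Y'_n \cong Y_n \cup_{X_n} X'_n$. Hence the $n$th relative latching map of $X' \to Y'$ is the pushout of the $n$th relative latching map of $X \to Y$ along $X_n \to X'_n$, and is a cofibration by \autoref{prop:technical_cofibrations}. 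For transfinite compositions $X_0 \to X_1 \to \ldots$ with colimit $X_\infty$, the same computation generalizes: the $n$th relative latching map of $X_0 \to X_\infty$ is a transfinite composition of pushouts of the $n$th relative latching maps of the successive $X_k \to X_{k+1}$, using that $L_n$ and $(-)_n$ preserve the sequential colimit. Again \autoref{prop:technical_cofibrations} applies (noting that relative latching maps of semifree cofibrations are level closed inclusions, so the colimit hypotheses are met in both (CG) and (CGWH)).

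There is no real obstacle; the only point requiring a moment of care is the purely formal verification that $L_n$ commutes with pushouts and sequential colimits. This follows because both $(i_{n-1})_!$ and $(i_{n-1})^*$ are left adjoints (the latter having a right adjoint via right Kan extension), and level evaluation is the left adjoint to the semifree functor $\mc G_n$, hence also preserves colimits.
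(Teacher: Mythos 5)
Your argument is correct and takes essentially the same route as the paper's proof: in all three cases you reduce to the closure properties of cofibrations in $\mc R(B)$ (\autoref{prop:technical_cofibrations}) by showing the relative latching map of the new map is a retract, pushout, or transfinite composition of pushouts of relative latching maps of the given maps. The paper arrives at the same factorizations by drawing explicit diagrams of iterated pushouts rather than invoking colimit-preservation of $L_n$ abstractly, but the underlying calculation is identical. One small slip in your closing paragraph: level evaluation $X \mapsto X_n$ is the \emph{right} adjoint to the semifree functor $\mc G_n$, not its left adjoint (that is how $\mc G_n$ is defined in the paper); evaluation nonetheless preserves colimits because colimits of parametrized spectra are computed levelwise, which is the justification you already gave in the first paragraph, so the conclusion stands.
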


\begin{proof}
	The proofs are very similar to those in \autoref{pushout_product_composition}, but formally a little different.
	
	If $X \to X \cup_K M$ is a pushout of the semifree cofibration $K \to M$, then its latching map
	\[ L_n (X \cup_K M) \cup_{L_n X} X_n \to X_n \cup_{K_n} M_n \]
	rearranges as
	\[ L_n M \cup_{L_n K} X_n \to M_n \cup_{K_n} X_n \]
	This is a pushout of the original latching map $L_n M \cup_{L_n K} K_n \to M_n$ by the diagram in which every square is a pushout
	\[ \xymatrix @R=1.7em @!C{
		L_n K \ar[r] \ar[d] & L_n M \ar[d] \\
		K_n \ar[r] \ar[d] & L_n M \cup_{L_n K} K_n \ar[r] \ar[d] & M_n \ar[d] \\
		X_n \ar[r] & L_n M \cup_{L_n K} X_n \ar[r] & M_n \cup_{K_n} X_n
	} \]
	and is therefore a cofibration. So the pushout $X \to X \cup_K M$ is a semifree cofibration.
	
	For compositions, suppose $X \to Y \to Z$ are both semifree cofibrations. In the following diagram, every square is a pushout:
	\[ \xymatrix @R=1.7em @!C=6em{
		L_n X \ar[r] \ar[d] & X_n \ar[d] \ar[rd] & \\
		L_n Y \ar[r] \ar[d] & L_n Y \cup_{L_n X} X_n \ar[r]^-{c} \ar[d] & Y_n \ar[d] \ar[rd] \\
		L_n Z \ar[r] & L_n Z \cup_{L_n X} X_n \ar[r]^-{c} & L_n Z \cup_{L_n Y} Y_n \ar[r]^-{c} & Z_n
	}\]
	Therefore the maps labeled $c$ are cofibrations; two of them by assumption, and the last because it is a pushout of a cofibration. It follows that the latching map $L_n Z \cup_{L_n X} X_n \to Z_n$ is also a cofibration.
	
	Next we'll do countable sequential colimits; the case of a transfinite composition is essentially the same. If $X^{(\infty)}$ is the sequential colimit of spectra $X^{(m)}$ and each $X^{(m-1)} \to X^{(m)}$ is a semifree cofibration then the latching map for $X^{(0)} \to X^{(\infty)}$ factors as a sequential colimit of maps of the form
	\[ L_n X^{(\infty)} \cup_{L_n X^{(m-1)}} X^{(m-1)}_n \to L_n X^{(\infty)} \cup_{L_n X^{(m)}} X^{(m)}_n. \]
	Each of these fits into a larger diagram in which the large squares are pushout squares:
	\[ \xymatrix  @R=1.7em @C=1em{
		L_n X^{(m-1)} \ar[r] \ar[d] & \ldots \ar[r] & X^{(m-1)}_n \ar[d] \ar[rrd] && \\
		L_n X^{(m)} \ar[r] \ar[d] & \ldots \ar[r] & L_n X^{(m)} \cup_{L_n X^{(m-1)}} X^{(m-1)}_n \ar[rr]^-{c} \ar[d] && X^{(m)}_n \ar[d] \ar[rd] \\
		\vdots \ar[d] && \vdots \ar[d] && \vdots \ar[d] & \ddots \ar[rd] \\
		L_n X^{(\infty)} \ar[r] & \ldots \ar[r] & L_n X^{(\infty)} \cup_{L_n X^{(m-1)}} X^{(m-1)}_n \ar[rr]^-{c} && L_n X^{(\infty)} \cup_{L_n X^{(m)}} X^{(m)}_n \ar[r] & \ldots \ar[r] & X^{(\infty)}_n
	}\]
	So, as before, each of these component maps is a cofibration. Therefore their sequential colimit $L_n X^{(\infty)} \cup_{L_n X^{(0)}} X^{(0)}_n \to X^{(\infty)}_n$ is also a cofibration.
	
	Finally, retracts are preserved by the latching map construction, and a retract of a cofibration is a cofibration, so a retract of a semifree cofibration is a semifree cofibration.
\end{proof}

\begin{cor}
	The class of semifree cofibrations is the smallest class containing the semifree spectra on cofibrations of spaces, and closed under pushouts, transfinite compositions, and retracts. In particular, every free cofibration is a semifree cofibration.
\end{cor}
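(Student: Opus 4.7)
The plan is to prove both directions using the skeletal filtration of a map of spectra. The previous proposition already establishes that the class of semifree cofibrations is closed under pushouts, transfinite compositions, and retracts, so it remains to show (i) every semifree spectrum $\mc G_n f$ on a cofibration $f\colon K \to L$ of retractive spaces is a semifree cofibration, and (ii) every semifree cofibration arises from such generators via the three closure operations. Combined, these two facts identify the semifree cofibrations with the smallest such class. The ``in particular'' statement about free cofibrations will then follow quickly.

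For (i), I would compute the relative latching maps of $\mc G_n f$ directly. Using the example of the skeleton of $\mc G_n K$ calculated just above (both $\sk^{m-1}(\mc G_n K)$ and $\sk^m(\mc G_n K)$ agree with $\mc G_n K$ for $m > n$, and the only nontrivial latching map is at level $n$, where it is $B \to K$), one checks that the relative latching map of $\mc G_n f$ at level $m$ is the zero map for $m < n$, the map $f$ itself at level $m = n$, and an isomorphism for $m > n$. Each of these is a cofibration, so $\mc G_n f$ is semifree.

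For (ii), I would exploit \autoref{prop:map_skeleta_pushout}. For any map $X \to Y$, the relative skeleta fit into a sequence
\[ X = \sk^{-1}(X \to Y) \to \sk^0(X \to Y) \to \sk^1(X \to Y) \to \ldots \]
whose colimit is $Y$ (since $\sk^n(X \to Y)$ agrees with $Y$ at level $n$, so the colimit agrees with $Y$ at every level). By \autoref{prop:map_skeleta_pushout}, each successor map $\sk^{n-1}(X \to Y) \to \sk^n(X \to Y)$ is a pushout of $\mc G_n$ applied to the $n$-th relative latching map $L_nY \cup_{L_nX} X_n \to Y_n$. When $X \to Y$ is semifree, each such relative latching map is a cofibration of spaces, so each successor is a pushout of a generator. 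Hence $X \to Y$ is a transfinite composition of pushouts of generators.

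Finally, to prove every free cofibration is semifree, I would note the identity $F_n K \cong \mc G_n(O(n)_+ \barsmash K)$, so the generating free cofibration $F_n(K \to L)$ equals $\mc G_n$ applied to $O(n)_+ \barsmash (K \to L)$. Smashing with the well-based space $O(n)_+$ preserves the relevant cofibrations (by \autoref{prop:h_cofibrations_pushout_product} or \autoref{smashing_with_free_complex} with trivial group, taking the pushout-product with $\emptyset \to O(n)_+$), so this is $\mc G_n$ of a cofibration, hence semifree by (i). Closing under pushouts, transfinite compositions, and retracts gives all free cofibrations. The main technical point requiring care is the computation of the relative latching maps of $\mc G_n f$ above the top cell; everything else is a clean skeletal induction.
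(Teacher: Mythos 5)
Your proof is correct and is exactly the (unwritten) argument the paper intends: the corollary appears with no proof body, being an immediate consequence of \autoref{reedy_preserved}, \autoref{prop:map_skeleta_pushout}, and the identity $F_n K \cong \mc G_n(O(n)_+ \barsmash K)$. The only small slip is calling the relative latching map of $\mc G_n f$ at level $m < n$ ``the zero map''; it is the identity of the zero object $B$, hence an isomorphism, so the conclusion that it is a cofibration is unaffected.
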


Next we show that semifree cofibrations are level cofibrations, generalizing \autoref{prop:free_implies_level}.
\begin{prop}\label{reedy_implies_skeleta_level}
	If the map of spectra $X \to Y$ is a semifree cofibration then each map of skeleta $\sk^n X \to \sk^n Y$ is a level cofibration.
\end{prop}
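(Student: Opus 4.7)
The plan is to induct on $n$. For the base case $n=0$, the zeroth latching objects $L_0 X$, $L_0 Y$ are both the zero object $B$, so the semifree hypothesis at level $0$ amounts to saying that $X_0 \to Y_0$ is a cofibration in $\mc R(B)$. Since $O(0)$ is trivial, $\sk^0 X = \mc G_0 X_0$ has spectrum level $m$ equal to $\mathscr J(\R^0,\R^m) \barsmash X_0$, and the induced map is a cofibration at each level by \autoref{prop:h_cofibrations_pushout_product}.

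For the inductive step, suppose $\sk^{n-1} X \to \sk^{n-1} Y$ is a level cofibration. I would factor the map $\sk^n X \to \sk^n Y$ as $\sk^n X \to P_n \to \sk^n Y$, where
\[ P_n := \sk^n X \cup_{\sk^{n-1} X} \sk^{n-1} Y. \]
The first map $\sk^n X \to P_n$ is the pushout of $\sk^{n-1} X \to \sk^{n-1} Y$ along $\sk^{n-1} X \to \sk^n X$, hence a level cofibration by the inductive hypothesis together with \autoref{prop:technical_cofibrations}(1) applied levelwise.

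For the second map, I would use \autoref{prop:skeleta_pushout} to rewrite $P_n \cong \sk^{n-1} Y \cup_{\mc G_n L_n X} \mc G_n X_n$, and then verify the natural pushout square
\[ \xymatrix @R=1.7em{
\mc G_n(L_n Y \cup_{L_n X} X_n) \ar[r] \ar[d] & \mc G_n Y_n \ar[d] \\
P_n \ar[r] & \sk^n Y
} \]
by factoring the defining presentation $\sk^n Y = \sk^{n-1} Y \cup_{\mc G_n L_n Y} \mc G_n Y_n$ through the intermediate object $\mc G_n(L_n Y \cup_{L_n X} X_n) \cong \mc G_n L_n Y \cup_{\mc G_n L_n X} \mc G_n X_n$. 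The semifree hypothesis supplies that $L_n Y \cup_{L_n X} X_n \to Y_n$ is an $O(n)$-equivariant cofibration of retractive spaces (forgetting the action); since each $\mathscr J(\R^n,\R^m)$ is a finite free based $O(n)$-cell complex, \autoref{smashing_with_free_complex} yields that the top horizontal map is a level cofibration. Its pushout $P_n \to \sk^n Y$ is therefore also a level cofibration, completing the induction.

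The main technical obstacle is the double pushout-square verification: identifying $P_n$ as $\sk^{n-1} Y \cup_{\mc G_n L_n X} \mc G_n X_n$, and then recognizing $\sk^n Y$ as the further pushout of $P_n$ along $\mc G_n$ of the relative latching map. Both are routine universal-property computations (using that $\mc G_n$ preserves colimits and that $\mc G_n L_n Y \to \mc G_n Y_n$ factors through $\mc G_n(L_n Y \cup_{L_n X} X_n)$), but they require careful bookkeeping with several nested pushouts.
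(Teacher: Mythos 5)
Your proof is correct and takes essentially the same route as the paper's: the same induction, the same base case, and the inductive step ultimately reduces to the same two cofibrations (the inductive hypothesis on $(n-1)$-skeleta, and $\mathscr J(\R^n,\R^m)\barsmash_{O(n)}$ applied to the relative latching map, handled by \autoref{smashing_with_free_complex}). The only difference is presentational: the paper applies \autoref{cofibration_of_pushouts} levelwise, distinguishing the cases $m<n$ and $m\geq n$, whereas you carry out the two-step factorization that corollary encodes directly at the spectrum level via the intermediate object $P_n = \sk^n X \cup_{\sk^{n-1}X}\sk^{n-1}Y$, which handles all levels uniformly (at levels $m<n$ the free-spectrum terms vanish and $P_n \to \sk^n Y$ is an isomorphism). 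Your identification of $P_n$ and the pushout square exhibiting $\sk^n Y$ as a pushout of $P_n$ along $\mc G_n$ of the relative latching map both check out by pasting pushout squares, exactly as you describe.
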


\begin{proof}
	We induct on $n$. The $0$-skeleta $\sk^0 X \to \sk^0 Y$ are just suspension spectra of the map of spaces $X_0 \to Y_0$, which is a cofibration by assumption (it is the 0th relative latching map). Since $S^m \barsmash -$ preserves cofibrations by \autoref{prop:h_cofibrations_pushout_product}, this gives a cofibration on each spectrum level $m$.
	
	For the inductive step, we focus on spectrum level $m$ for some $m \geq 0$. When $m < n$ we get the square in which the horizontal maps are homeomorphisms,
	\[ \xymatrix @R=1.7em{
		(\sk^{n-1} X)_m \ar[r] \ar[d] & (\sk^n X)_m \ar[d] \\
		(\sk^{n-1} Y)_m \ar[r] & (\sk^n Y)_m
	}. \]
	Since the left-vertical is a cofibration by inductive hypothesis, so is the right vertical. When $m \geq n$, two applications of \autoref{prop:skeleta_pushout} gives us two pushout squares
	\[ \resizebox{\textwidth}{!}{$
		\xymatrix{
		\mathscr J(\R^n,\R^m) \barsmash_{O(n)} L_n X \ar[d] \ar[r] & \mathscr J(\R^n,\R^m) \barsmash_{O(n)} X_n \ar[d] \\
		(\sk^{n-1} X)_m \ar[r] & (\sk^n X)_m }
	\quad \ra \quad
	\xymatrix{
		\mathscr J(\R^n,\R^m) \barsmash_{O(n)} L_n Y \ar[d] \ar[r] &\mathscr J(\R^n,\R^m) \barsmash_{O(n)}  Y_n \ar[d] \\
		(\sk^{n-1} Y)_m \ar[r] & (\sk^n Y)_m }
	$}
	\]
	Applying \autoref{cofibration_of_pushouts} with the lower-left corners as $B$ and the upper-right corners as $C$, for the map of pushouts to be a cofibration, it suffices that the following two maps are cofibrations.
	\[ (\sk^{n-1} X)_m \ra (\sk^{n-1} Y)_m \]
	\[ \mathscr J(\R^n,\R^m) \barsmash_{O(n)} (L_n Y \cup_{L_n X} X_n \to Y_n) \]
	The first is by inductive hypothesis and the second is by \autoref{smashing_with_free_complex}. This completes the induction.
\end{proof}

\begin{cor}\label{reedy_implies_level}
	Every semifree cofibration is a level cofibration.
\end{cor}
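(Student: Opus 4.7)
The plan is to deduce this corollary directly from \autoref{reedy_implies_skeleta_level} by observing that the $n$-skeleton agrees with the spectrum through level $n$. Specifically, for any orthogonal spectrum $X$, the canonical map $\sk^n X \to X$ is an isomorphism on spectrum levels $0$ through $n$ (this is essentially built into the definition of $\sk^n$ as $(i_n)_!(i_n)^*$, and was already noted in the excerpt just before the definition of the latching map).

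First I would fix a spectrum level $m \geq 0$ and consider the commuting square
\[ \xymatrix @R=1.7em{
(\sk^m X)_m \ar[r]^-\cong \ar[d] & X_m \ar[d] \\
(\sk^m Y)_m \ar[r]^-\cong & Y_m
} \]
where the horizontal maps are homeomorphisms. By \autoref{reedy_implies_skeleta_level}, the map $\sk^m X \to \sk^m Y$ is a level cofibration, so in particular the left-hand vertical arrow is a cofibration at level $m$. Hence the right-hand vertical arrow $X_m \to Y_m$ is also a cofibration. Since $m$ was arbitrary, this shows $X \to Y$ is a level cofibration.

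There is no real obstacle: the entire content of the corollary is already packaged into \autoref{reedy_implies_skeleta_level}, and the only thing to check is the elementary fact that skeleta stabilize at each level, which is immediate from the definition of $\sk^n$ via restriction and left Kan extension along $i_n$. The statement holds uniformly for each of the three meanings of ``cofibration'' ($f$-cofibration, $h$-cofibration, closed inclusion), because \autoref{reedy_implies_skeleta_level} was proven uniformly in all three cases.
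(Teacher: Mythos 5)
Your proof is correct and is exactly the intended deduction: the paper states the result as a corollary with no separate proof precisely because it follows from \autoref{reedy_implies_skeleta_level} by the observation you make, namely that $\sk^m X \to X$ is an isomorphism at spectrum level $m$ (the paper records the equivalent statement that $\sk^{n-1}X \to X$ is an isomorphism on levels $0$ through $n-1$, just before introducing latching maps). No issues.
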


The remaining results concern level equivalences and fibrations.

\begin{prop}
	A level equivalence of semifreely $h$-cofibrant spectra $f: X \simar X'$ gives a level equivalence of skeleta $\sk^n X \to \sk^n X'$ for all $n \geq 0$.
\end{prop}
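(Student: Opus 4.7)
The plan is to induct on $n$. The base case $n=0$ is direct: $\sk^0 X$ at spectrum level $m$ is $\mathscr{J}(\R^0,\R^m)\barsmash X_0 \cong S^m \barsmash X_0$. Since $X$ is semifreely $h$-cofibrant, the $0$th latching map $B \to X_0$ is an $h$-cofibration, so $X_0$ is $h$-cofibrant, and likewise $X_0'$. Because $X_0\to X_0'$ is a weak equivalence of $h$-cofibrant spaces, the last bullet of \autoref{prop:h_cofibrations_pushout_product} gives that $S^m \barsmash X_0 \to S^m \barsmash X_0'$ is a weak equivalence.

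For the inductive step, I will compare the semifree pushout squares from \autoref{prop:skeleta_pushout} for $X$ and $X'$ at spectrum level $m$. When $m<n$, both sides of $(\sk^n X)_m \to (\sk^n X')_m$ collapse to $(\sk^{n-1} X)_m \to (\sk^{n-1} X')_m$ since $\mathscr{J}(\R^n,\R^m)=*$, and the inductive hypothesis applies. When $m \geq n$, the squares are
\[ \xymatrix @R=1.7em{
\mathscr{J}(\R^n,\R^m) \barsmash_{O(n)} L_n X \ar[r] \ar[d] & \mathscr{J}(\R^n,\R^m) \barsmash_{O(n)} X_n \ar[d] \\
(\sk^{n-1} X)_m \ar[r] & (\sk^n X)_m
} \]
and similarly for $X'$. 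Using the lemma after \autoref{smashing_with_free_complex} that $\mathscr{J}(\R^n,\R^m)$ is a finite free based $O(n)$-cell complex, the second item of \autoref{smashing_with_free_complex} tells us the top horizontal is an $h$-cofibration, so each square is a homotopy pushout. By the gluing lemma (\autoref{prop:technical_cofibrations}), it then suffices to check that the map of squares is a weak equivalence on the three corners other than the bottom right.

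The bottom-left corner is handled by the inductive hypothesis. For the top-right corner, note that $X_n \to X_n'$ is a weak equivalence between spaces that are $h$-cofibrant (because semifree $h$-cofibrancy, via \autoref{reedy_implies_level}, implies level $h$-cofibrancy of $X$ and $X'$); the fourth item of \autoref{smashing_with_free_complex} then handles this corner. For the top-left corner, observe that $L_n X = (\sk^{n-1} X)_n$ and the same for $X'$, so $L_n X \to L_n X'$ is a weak equivalence by the inductive hypothesis applied at level $n$; moreover $L_n X$ and $L_n X'$ are $h$-cofibrant because $\sk^{n-1} X$ and $\sk^{n-1} X'$ are themselves semifreely $h$-cofibrant (all latching maps above level $n-1$ are isomorphisms, and below are inherited from $X$, $X'$), so again by \autoref{reedy_implies_level} they are level $h$-cofibrant. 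The fourth item of \autoref{smashing_with_free_complex} then yields the required equivalence on the top-left corner.

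The main bookkeeping obstacle is ensuring that the spaces we are smashing with $\mathscr{J}(\R^n,\R^m)$ are genuinely $h$-cofibrant, so that \autoref{smashing_with_free_complex} applies; this reduces to the observation above that semifree $h$-cofibrancy of $X$ forces semifree $h$-cofibrancy of $\sk^{n-1}X$, hence level $h$-cofibrancy of its latching object $L_n X$ at level $n$. Once this is set up, everything assembles by the gluing lemma and the induction closes.
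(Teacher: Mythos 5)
Your proof is correct and follows essentially the same route as the paper's: induction on $n$, the skeletal pushout square from \autoref{prop:skeleta_pushout}, cofibrancy of $L_n X$, $L_n X'$, $X_n$, $X_n'$ (the paper cites \autoref{reedy_implies_skeleta_level} directly where you derive this by observing $\sk^{n-1} X$ is itself semifreely $h$-cofibrant), \autoref{smashing_with_free_complex} on the top row, and the gluing lemma. The only cosmetic differences are that you spell out the $m < n$ case explicitly and unwind the $h$-cofibrancy bookkeeping a bit more carefully.
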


\begin{proof}
	Induction on $n$. Again the $0$-skeleton $\sk^0 X$ is the suspension spectrum $\Sigma^\infty X_0$. Since both $X_0$ and $X'_0$ are $h$-cofibrant the map between their suspension spectra is an equivalence on every level by \autoref{prop:h_cofibrations_pushout_product}.
	
	For the inductive step we again use the pushout square of \autoref{prop:skeleta_pushout}, and prove the claim for each spectrum level $m \geq n$, assuming the claim is true for $n-1$ and for every value of $m$.
	
	In particular, we can assume that the map of spaces $L_n X \to L_n X'$ is a weak equivalence, since this is the statement of the result for $n-1$ at spectrum level $n$. We also know that the spaces $L_n X$, $L_n X'$, $X_n$, and $X_n'$ are $h$-cofibrant by \autoref{reedy_implies_skeleta_level}. Combining these observations with \autoref{smashing_with_free_complex}, we conclude that $f$ induces an equivalence on the top-left and top-right terms of the pushout square
	\[ \xymatrix @R=1.7em{
		\mathscr J(\R^n,\R^m) \barsmash_{O(n)} L_n X \ar[d] \ar[r] & \mathscr J(\R^n,\R^m) \barsmash_{O(n)} X_n \ar[d] \\
		(\sk^{n-1} X)_m \ar[r] & (\sk^n X)_m } \]
	Again by \autoref{smashing_with_free_complex} the top horizontal is an $h$-cofibration, and the inductive hypothesis tells us that $f$ induces an equivalence on the bottom-left term. We conclude that $f$ gives an equivalence on $(\sk^n X)_m$, completing the induction.
\end{proof}

\begin{cor}\label{cor:latching_objects_derived}
	A level equivalence of semifreely $h$-cofibrant spectra $X \simar X'$ gives an equivalence of the latching objects $L_n X \simar L_n X'$. Therefore the relative latching map $L_n X' \cup_{L_n X} X_n \to X_n'$ is a weak equivalence for every $n \geq 0$.
\end{cor}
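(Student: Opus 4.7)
My plan is to obtain both conclusions essentially for free from the preceding proposition together with the gluing lemma, so this corollary should require only a short unpacking argument rather than new technical work.

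For the first statement, I would simply observe that the $n$th latching object is by definition the $n$th space of the $(n-1)$-skeleton, $L_n X = (\sk^{n-1} X)_n$. The preceding proposition, applied at stage $n-1$, tells us that a level equivalence $X \simar X'$ between semifreely $h$-cofibrant spectra induces a level equivalence $\sk^{n-1} X \simar \sk^{n-1} X'$. Evaluating this level equivalence at spectrum level $n$ gives precisely the claim that $L_n X \simar L_n X'$ is a weak equivalence.

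For the second statement, I would apply the gluing lemma (\autoref{prop:technical_cofibrations}) to the map of pushout spans
\[ \xymatrix @R=1.5em{
L_n X' \ar@{=}[d] & L_n X \ar[l] \ar[d]^-\sim \ar[r] & X_n \ar[d]^-\sim \\
L_n X' & L_n X' \ar@{=}[l] \ar[r] & X_n'
} \]
The left vertical is the identity; the middle vertical is a weak equivalence by the first part of the corollary; the right vertical is a weak equivalence by hypothesis. For the gluing lemma to apply, one leg of each span must be an $h$-cofibration: the top span has $L_n X \to X_n$, which is the $n$th latching map of $X$ and hence an $h$-cofibration since $X$ is semifreely $h$-cofibrant, and analogously $L_n X' \to X_n'$ on the bottom. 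Taking pushouts of the two rows gives $L_n X' \cup_{L_n X} X_n$ on top and $X_n'$ on the bottom (the bottom pushout collapses since one leg is the identity), so the gluing lemma yields exactly the desired weak equivalence $L_n X' \cup_{L_n X} X_n \simar X_n'$.

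I do not anticipate any serious obstacle here. The only thing worth double-checking is that the relevant maps really are $h$-cofibrations, i.e.\ that the latching maps of a semifreely $h$-cofibrant spectrum are $h$-cofibrations; but this is immediate from the definition of ``semifree'' by taking the relative version with respect to the zero spectrum, whose latching objects are all $B$. Everything else is formal.
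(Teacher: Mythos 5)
Your proof is correct and is exactly the intended unpacking of the corollary, which the paper states without proof immediately after the preceding proposition. The first part is read off from that proposition at stage $n-1$ and level $n$ using $L_n X = (\sk^{n-1}X)_n$; the second part is the gluing lemma applied as you describe, with the latching maps $L_n X \to X_n$ and $L_n X' \to X_n'$ being $h$-cofibrations precisely because ``semifreely $h$-cofibrant'' means the map from the zero spectrum is a semifree $h$-cofibration.
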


\begin{prop}
	If $X$ is semifreely $h$-cofibrant and level $h$-fibrant then the each skeleton $\sk^n X$ is level $h$-fibrant.
\end{prop}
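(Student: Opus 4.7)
The plan is to induct on $n$, using the pushout presentation of $\sk^n X$ from Proposition~\ref{prop:skeleta_pushout} together with Clapp's gluing lemma (Proposition~\ref{prop:clapp}, strengthened by Lemma~\ref{lem:heath_kamps} so that $h$-cofibrations suffice in place of $f$-cofibrations). Before starting, I extract two consequences of the semifreely $h$-cofibrant hypothesis on $X$. First, applied to the map $0 \to X$ from the zero object, the relative latching maps are just the ordinary latching maps $L_n X \to X_n$, so each of these is an $h$-cofibration. Second, by Corollary~\ref{reedy_implies_level}, $X$ is level $h$-cofibrant, and by Proposition~\ref{reedy_implies_skeleta_level} each skeleton $\sk^{n-1} X$ is level $h$-cofibrant as well, so $X_n$ and $L_n X = (\sk^{n-1} X)_n$ are $h$-cofibrant for every $n$.

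For the base case $n = 0$, $\sk^0 X = \Sigma^\infty X_0$, whose level $m$ is $S^m \barsmash X_0$. The space $X_0$ is $h$-cofibrant by the preliminary remark and $h$-fibrant by assumption, and $S^m$ is $h$-cofibrant and $h$-fibrant as a retractive space over $*$, so Proposition~\ref{prop:h_cofibrations_pushout_product} (third point) gives that $S^m \barsmash X_0$ is $h$-fibrant.

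For the inductive step, assume $\sk^{n-1} X$ is level $h$-fibrant; this supplies in particular that $L_n X = (\sk^{n-1} X)_n$ is $h$-fibrant. For $m < n$ the identification $(\sk^n X)_m = (\sk^{n-1} X)_m$ and the inductive hypothesis handle the claim directly. For $m \geq n$, consider the pushout square of Proposition~\ref{prop:skeleta_pushout} at level $m$:
\[ \xymatrix @R=1.7em{
    \mathscr J(\R^n,\R^m) \barsmash_{O(n)} L_n X \ar[d] \ar[r] & \mathscr J(\R^n,\R^m) \barsmash_{O(n)} X_n \ar[d] \\
    (\sk^{n-1} X)_m \ar[r] & (\sk^n X)_m. } \]
Both $L_n X$ and $X_n$ are $h$-cofibrant and $h$-fibrant $O(n)$-equivariant retractive spaces over $B$, and $\mathscr J(\R^n,\R^m)$ is a finite based free $O(n)$-cell complex (by the lemma following Proposition~\ref{smashing_with_free_complex}), so the third point of Proposition~\ref{smashing_with_free_complex} gives that the two top vertices are $h$-fibrant. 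Because $L_n X \to X_n$ is an $h$-cofibration, the first point of Proposition~\ref{smashing_with_free_complex} gives that the top horizontal is an $h$-cofibration. The bottom-left vertex is $h$-fibrant by inductive hypothesis. All hypotheses of Proposition~\ref{prop:clapp} (with Lemma~\ref{lem:heath_kamps}) are in place, so the pushout $(\sk^n X)_m$ is $h$-fibrant, completing the induction.

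The main obstacle is really bookkeeping rather than a serious technical hurdle: one must simultaneously track that the latching object $L_n X$ is both $h$-cofibrant (coming from the fact that lower skeleta of $X$ are level $h$-cofibrant, via Proposition~\ref{reedy_implies_skeleta_level}) and $h$-fibrant (coming from the inductive hypothesis about $\sk^{n-1} X$ itself), since Clapp's gluing lemma requires fibrancy of all three corners and a cofibration on top. Once these are aligned, the argument is a direct application of the gluing lemma levelwise.
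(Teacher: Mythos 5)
Your proof is correct and follows essentially the same argument as the paper: induct on $n$, use the pushout square of Proposition~\ref{prop:skeleta_pushout}, note that the inductive hypothesis makes $L_n X = (\sk^{n-1}X)_n$ an $h$-fibrant space, apply Proposition~\ref{smashing_with_free_complex} to the top row, and close with Clapp's Proposition~\ref{prop:clapp}. Your preliminary bookkeeping (latching maps of $X$ are $h$-cofibrations, skeleta are level $h$-cofibrant via Proposition~\ref{reedy_implies_skeleta_level}) simply makes explicit what the paper's terser proof leaves implicit.
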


\begin{proof}
	The same induction on $n$ as above. The levels $S^n \barsmash X_0$ of the 0-skeleton are $h$-fibrant by \autoref{prop:h_cofibrations_pushout_product}. For the inductive step we get to assume that $L_n X$ is $h$-fibrant, and so both $L_n X$ and $X_n$ are both $h$-cofibrant and $h$-fibrant. Combining these observations with \autoref{smashing_with_free_complex}, we conclude that the first three vertices in the square
	\[ \xymatrix @R=1.7em{
		\mathscr J(\R^n,\R^m) \barsmash_{O(n)} L_n X \ar[d] \ar[r] & \mathscr J(\R^n,\R^m) \barsmash_{O(n)} X_n \ar[d] \\
		(\sk^{n-1} X)_m \ar[r] & (\sk^n X)_m } \]
	are $h$-fibrant, and the top horizontal is an $h$-cofibration. By \autoref{prop:clapp} again the last vertex is also $h$-fibrant.
\end{proof}

\begin{cor}\label{cor:latching_objects_fibrant}
	If $X$ is semifreely $h$-cofibrant and level $h$-fibrant then each latching space $L_n X$ is $h$-fibrant.
\end{cor}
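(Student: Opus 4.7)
The plan is to observe that this corollary is essentially immediate from the previous proposition, once we unwind the definition of the latching object. Recall that the $n$th latching map of $X$ was defined as level $n$ of the skeleton map $\sk^{n-1} X \to X$, so in particular $L_n X = (\sk^{n-1} X)_n$ as an $O(n)$-equivariant retractive space over $B$ (ignoring the $O(n)$-action for the purposes of $h$-fibrancy).

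Under the hypotheses that $X$ is semifreely $h$-cofibrant and level $h$-fibrant, the preceding proposition tells us that $\sk^{n-1} X$ is level $h$-fibrant. First I would apply this to obtain that every level of $\sk^{n-1} X$ is $h$-fibrant over $B$. Then I would restrict attention to level $n$, which by the identification above is exactly $L_n X$. This yields the claim.

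There is no real obstacle here; the corollary is just the preceding proposition specialized to the level that is relevant. The only thing worth double-checking is the compatibility of the identification $L_n X \cong (\sk^{n-1} X)_n$ with the retractive-space structure over $B$, which is clear from the construction of $\sk^{n-1} X$ as a left Kan extension. So the entire proof reduces to one sentence: by definition $L_n X = (\sk^{n-1} X)_n$, and $\sk^{n-1} X$ is level $h$-fibrant by the previous proposition.
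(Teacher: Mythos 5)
Your proof is correct and is precisely the intended argument: the corollary is an immediate consequence of the preceding proposition via the definitional identity $L_n X = (\sk^{n-1} X)_n$. The paper leaves this implicit, but there is nothing more to it than what you wrote.
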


The climax of this section is the following extension of \autoref{prop:spectra_pushout_product}.
\begin{thm}\label{prop:reedy_pushout_product}
	Let $f: X \to Y$ and $g: X' \to Y'$ be maps of orthogonal spectra over $A$ and $B$, respectively. Form $f \square g$ using the external smash product of spectra.
	\begin{enumerate}
		\item If $f$ and $g$ are semifree $h$-cofibrations, so is their pushout-product $f \square g$. The same applies to semifree $f$-cofibrations and semifree closed inclusions.
		\item If $Y$ and $Y'$ are both semifreely $h$-cofibrant and level $h$-fibrant then so is $Y \barsmash Y'$.
		\item If $f$ is a semifree $h$-cofibration, $g$ is a level equivalence, and all four spectra are semifreely $h$-cofibrant, then $f \square g$ is a level equivalence.
	\end{enumerate}
\end{thm}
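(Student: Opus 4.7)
The plan is to handle the three claims in turn, leveraging the skeletal machinery developed above. For (1), I will follow the template used for the first two claims of \autoref{prop:spectra_pushout_product}. The key special case is that for $f = \mc G_m(j)$ and $g = \mc G_n(k)$ with $j$, $k$ cofibrations of retractive spaces, \autoref{smash_two_semi_free} identifies the pushout-product $f \square g$ with $\mc G_{m+n}$ applied to $O(m+n)_+ \barsmash_{O(m) \times O(n)} (j \square k)$. Illman's theorem makes $O(m+n)$ a free $O(m) \times O(n)$-cell complex, so \autoref{smashing_with_free_complex} combined with \autoref{prop:h_cofibrations_pushout_product} shows this equivariant smash is a space-level cofibration of the correct type, and hence $\mc G_{m+n}$ of it is a semifree cofibration of spectra (its unique nontrivial latching map being exactly this space-level cofibration). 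The general case then follows from the closure properties in \autoref{reedy_preserved} together with \autoref{pushout_product_pushouts} and \autoref{pushout_product_composition}.

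For (2), first observe that at spectrum level $\R^p$ the coend computing $(Y \barsmash Y')(\R^p)$ only has non-trivial contributions from bilevels $(V,V')$ with $\dim V + \dim V' \leq p$, since $\mathscr J(V \oplus V', \R^p)$ is the zero object otherwise. This reduces the claim to a finite skeletal induction showing that $\sk^n Y \barsmash \sk^n Y'$ is level $h$-fibrant for every $n$. The base case $\sk^0 Y \barsmash \sk^0 Y' \cong \Sigma^\infty(Y_0 \barsmash Y'_0)$ reduces at level $p$ to $S^p \barsmash Y_0 \barsmash Y'_0$, which is $h$-fibrant by \autoref{prop:h_cofibrations_pushout_product} since $Y_0$ and $Y'_0$ are $h$-cofibrant (by the $0$-th latching map condition) and $h$-fibrant (by assumption). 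The inductive step uses \autoref{prop:skeleta_pushout} and the fact that $-\barsmash-$ is a left adjoint in each variable to present $\sk^n Y \barsmash \sk^n Y'$ as an iterated pushout whose edges are semifree cofibrations by (1), hence level $h$-cofibrations by \autoref{reedy_implies_level}. Its non-pushout vertices either fall to the inductive hypothesis or reduce via \autoref{smash_two_semi_free} to $\mc G_{m+n}(O(m+n)_+ \barsmash_{O(m) \times O(n)} K \barsmash L)$ with $K$ and $L$ both $h$-cofibrant and $h$-fibrant (via \autoref{cor:latching_objects_fibrant} and \autoref{reedy_implies_skeleta_level}), to which \autoref{smashing_with_free_complex} then applies. \autoref{prop:clapp} promotes level $h$-fibrancy from the input vertices to the pushout.

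For (3), I will reduce to a \emph{Key Lemma}, namely the fourth bullet of \autoref{prop:spectra_pushout_product} with ``free'' weakened to ``semifree'': if $Z$ is semifreely $h$-cofibrant and $g \colon X' \to Y'$ is a level equivalence of semifreely $h$-cofibrant spectra, then $Z \barsmash g$ is a level equivalence. Granting this, (3) follows from the defining square of the pushout-product: the top edge $X \barsmash g$ is a level equivalence by the Key Lemma, and the left vertical $f \barsmash X' = f \square (\emptyset \to X')$ is a semifree (hence level) $h$-cofibration by (1), so the induced map $Y \barsmash X' \to P$ from the left edge into the pushout $P$ is a level equivalence (pushout of a weak equivalence along a cofibration, via \autoref{prop:technical_cofibrations}). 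Then $2$-out-of-$3$ against the Key Lemma's equivalence $Y \barsmash g$, which factors as $Y \barsmash X' \to P \to Y \barsmash Y'$, yields that $f \square g \colon P \to Y \barsmash Y'$ is a level equivalence. To prove the Key Lemma, induct on the skeleton of $Z$, using \autoref{prop:skeleta_pushout} and the gluing lemma at each stage to reduce to the sub-claim that $\mc G_n K \barsmash g$ is a level equivalence for each $h$-cofibrant $h$-fibrant $O(n)$-space $K$. The sub-claim is established by a parallel skeletal induction on the target $Y'$ of $g$, with \autoref{cor:latching_objects_derived} ensuring the latching maps of $g$ are equivalences of $h$-cofibrant spaces and the final bullet of \autoref{smashing_with_free_complex} providing the needed preservation of equivalences through the equivariant smash.

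The hardest part will be managing this double skeletal induction without losing track of which spaces remain $h$-cofibrant at each stage, since the final bullet of \autoref{smashing_with_free_complex} requires both inputs to be $h$-cofibrant and each inductive step generates new pushout and smash-product terms whose cofibrancy must be verified afresh. This bookkeeping is controlled by \autoref{reedy_implies_skeleta_level}, \autoref{cor:latching_objects_fibrant}, and \autoref{cor:latching_objects_derived}, which between them ensure that skeleta and latching objects of semifreely $h$-cofibrant spectra remain $h$-cofibrant and (in the fibrant case) $h$-fibrant throughout.
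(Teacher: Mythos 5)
Your claims (1) and (2) follow essentially the same route as the paper: reduce to semifree generators via \autoref{smash_two_semi_free}, invoke \autoref{smashing_with_free_complex} (together with Illman) at the space level, close up via \autoref{reedy_preserved}/\autoref{pushout_product_pushouts}/\autoref{pushout_product_composition}, and run the skeletal induction with \autoref{prop:clapp} for the fibrancy in (2). No meaningful difference there.

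For (3), however, you take a genuinely different route. The paper argues directly on $f \square g$: it first treats the generator case $\mc G_m(\phi) \square \mc G_n(\gamma)$ with $\gamma$ a weak equivalence, then factors a general semifree cofibration $g$ into pushouts of generators via \autoref{prop:map_skeleta_pushout} and \autoref{cor:latching_objects_derived}, and finally passes from acyclic semifree cofibrations to arbitrary level equivalences $g$ by an adaptation of Ken Brown's lemma (factoring $X \vee Y \to Y$ in the level model structure). You instead split off a ``Key Lemma'' --- the semifree version of \autoref{prop:spectra_pushout_product}(4), i.e.\ the specialization of (3) to $f = (\emptyset \to Z)$ --- prove that outright by a double skeletal induction on $Z$ and on $g$, and then recover (3) by a gluing-plus-2-out-of-3 argument on the defining pushout square of $f \square g$. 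This inverts the deductive order: the paper deduces the Key Lemma as a trivial corollary of (3), whereas you prove the Key Lemma independently and derive (3) from it. Your approach avoids Ken Brown and the level model structure entirely, at the cost of a second layer of skeletal induction (on $X' \to Y'$) where the paper reuses the already-established factoring machinery and a formal trick; both are valid, and yours is arguably more self-contained with respect to the skeletal machinery of \autoref{sec:reedy}, while the paper's is shorter.

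One slip to flag: in your sub-claim for the Key Lemma you ask for $K$ to be ``$h$-cofibrant $h$-fibrant,'' but the latching spaces $L_n Z$ and $Z_n$ of a general semifreely $h$-cofibrant $Z$ are only guaranteed $h$-cofibrant (\autoref{reedy_implies_skeleta_level}); $h$-fibrancy only comes in via \autoref{cor:latching_objects_fibrant}, which is used in (2) and requires the additional assumption that $Z$ is level $h$-fibrant, not available in (3). As you've stated it, the sub-claim does not cover the cases your induction on $Z$ actually produces. Fortunately this is cosmetic: the fourth bullet of \autoref{smashing_with_free_complex}, which closes your second induction, needs only $h$-cofibrancy of both inputs, so you should simply delete ``$h$-fibrant'' from the sub-claim.
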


\begin{proof}
	Using \autoref{prop:map_skeleta_pushout} and the formal results \autoref{pushout_product_pushouts} and \autoref{pushout_product_composition}, (1) reduces to the case where the maps $f$ and $g$ are of the form
	\[ f: \mc G_m(K \to L), \qquad g: \mc G_n(K' \to L'), \]
	where $\phi: K \to L$ and $\gamma: K' \to L'$ are cofibrations of ex-spaces over $A$ and $B$, respectively. By \autoref{smash_two_semi_free} and the fact that the semifree spectrum functor preserves colimits we get
	\[ f \square g \cong \mc G_{m+n}(O(m+n)_+ \barsmash_{O(m) \times O(n)} (\phi \square \gamma)) \]
	Therefore it suffices to prove that $O(m+n)_+ \sma_{O(m) \times O(n)} (\phi \square \gamma)$ is a $h$-cofibration. This follows from one application of \autoref{smashing_with_free_complex} and one of \autoref{prop:h_cofibrations_pushout_product}.
	
	For (2), it suffices to prove this when $Y$ is equal to its $m$-skeleton and $Y'$ is equal to its $m'$-skeleton, for any value of $m$ and $m'$. When both are equal to 0 it is clearly true. Incrementing $m$ or $m'$ requires us to replace one of the two terms by a pushout along a map of the form $\mc G_{m+1}(K \to L)$ where $K$ and $L$ are both $h$-cofibrant and $h$-fibrant (by \autoref{cor:latching_objects_fibrant}), and $K \to L$ is an $h$-cofibration. By \autoref{smashing_with_free_complex} we know that each level of $\mc G_{m+1} K$ and $\mc G_{m+1} L$ is $h$-fibrant, therefore at each spectrum level we get a square of the form found in \autoref{prop:clapp}, therefore the final term is also $h$-fibrant.
	
	For (3) assume first that $f$ and $g$ are in the special form described in (1) above, with $\gamma\colon K' \to L'$ a weak equivalence. Then $f \square g$ is a semifree spectrum on a weak equivalence, and is therefore a level equivalence by \autoref{smashing_with_free_complex}. Next, if $g$ is a semifree cofibration and level equivalence, it factors as a sequence of pushouts of maps of the previous form, by \autoref{prop:map_skeleta_pushout} and \autoref{cor:latching_objects_derived}. Therefore $f \square g$ is a composition of pushouts of maps that are level $h$-cofibrations and weak equivalences, so $f \square g$ is also a level $h$-cofibration and weak equivalence. By the same argument, we may also allow $f$ to be a general semifree $h$-cofibration.
	
	Finally we adapt the proof of Ken Brown's lemma. For fixed $f$, the class of maps $g$ between semifreely cofibrant spectra such that $f \square g$ is a level equivalence includes the acyclic cofibrations (i.e. semifree cofibrations that are level equivalences) and is closed under 2-out-of-3. Given an arbitrary level equivalence $X \to Y$ of semifree-cofibrant spectra, we factor the obvious map $X \vee Y \to Y$ into a cofibration $X \vee Y \to Z$ followed by a level equivalence $Z \to Y$, using \autoref{prop:level_model_structure}. Then the inclusions of $X$ and $Y$ into $Z$ are semifree cofibrations and level equivalences, so they are in the class. The composite $Y \to Z \to Y$ is the identity, which is in the class, and so by 2 out of 3 the projection $Z \to Y$ is in the class. Then $X \to Z \to Y$ is a composite of two maps in the class and is therefore in the class as well. This finishes the proof.

\end{proof}

\begin{rmk}
	(3) can be strengthened: If $f$ is a semifree $h$-cofibration, $g$ is a level equivalence, $X$, $Y$, and $X'$ are semifreely $h$-cofibrant, and $Y'$ is level $h$-cofibrant, then $f \square g$ is a level equivalence. However the proof of this strengthening is more complicated.
\end{rmk}


\newpage
\section{Stable equivalences}

This section completes the foundations of parametrized spectra. We define the stable equivalences and extend them to a model structure. Then we describe how to derive the operations we have encountered with respect to the stable equivalences. Finally, using the techniques from \autoref{sec:composing_comparing} we lift the compatibilities between these operations to compatibilities between their derived functors.

\beforesubsection
\subsection{Stable homotopy groups}\aftersubsection

If $X$ is a parametrized sequential spectrum or orthogonal spectrum over $B$, its \textbf{(stable) homotopy groups}\index{homotopy!groups} are the stable homotopy groups of the fiber spectra $X_b$, for $b \in B$ and $n \in \Z$:
\[ \pi_{n,b}(X) := \pi_n(X_b) = \underset{k}\colim \pi_{k+n}((X_b)_n). \]
These homotopy groups do not preserve level equivalences. For instance the suspension spectrum of $\{0\}_{+I} \to I_{+I}$ over $I$ is a level equivalence but not an isomorphism on $\pi_{0,1/2}$.

However, the homotopy groups are right-deformable. Recall that the level fibrant replacement functor $R^{lv}$ from \autoref{prop:level_model_structure} is a retract into the subcategory of level $q$-fibrant spectra. Between two spectra in this subcategory, any level equivalence $X \to Y$ induces a level equivalence of fiber spectra $X_b \to Y_b$ for each $b \in B$, which is clearly an isomorphism on $\pi_*$. Therefore the homotopy groups send level equivalences in this subcategory to isomorphisms.

We therefore get \textbf{right-derived homotopy groups} $\R \pi_{n,b}(X)$, cf. \cite[12.3.4]{ms}. These can be described in more than one way up to natural isomorphism:
\[ \R\pi_{n,b}(X) \cong \pi_{n,b}(R^{lv}X) \cong \pi_{n,b}(PQX). \]
A map $X \to Y$ is an \textbf{$\R\pi_*$-isomorphism} or \textbf{stable equivalence}\index{stable equivalence} if it induces isomorphisms on the derived homotopy groups for all $b \in B$ and $n \in \Z$. So informally, a stable equivalence is a map that induces isomorphisms on $\pi_*$ after we make the levels of our spectra into fibrations. Since the stable equivalences are precisely those maps that the functors $\{\R\pi_{n,b}(-)\}$ send to isomorphisms, they make the category of orthogonal spectra $\Osp(B)$ or of sequential spectra $\Psp(B)$ into a homotopical category.

\begin{ex}\label{ex:equivalent_thom_spectra} \hfill
	\vspace{-1em}
	
	\begin{itemize}
		\item If $E \to E'$ is a stable equivalence of spectra then it induces a map of trivial bundles $r^*E \to r^*E'$, or $E \times B \to E' \times B$, which is a stable equivalence.
		\item If $X \to B$ is a weak Hausdorff fibration then the map from the fiberwise suspension spectrum of $X$ to its classical fibrant replacement
		\[ \Sigma^\infty_{+B} X \to Q_B(\Sigma^\infty_{+B} X) = \underset{k}\colim \Omega^k_B \Sigma^{k}_B \Sigma^\infty_{+B} X \]
		is a stable equivalence. This is because both spectra are fibrations at every level, and on each fiber this statement is a classical fact in stable homotopy theory.
		\item If $X \in \mc R(B)$ there is a standard map
		\[ F_{n+1} \Sigma_B X \to F_n X. \]
		In sequential spectra, we define this by observing that the truncation of $F_n X$ that replaces level $n$ by the zero object returns $F_{n+1} \Sigma_B X$. In orthogonal spectra, this map is adjoint to the map
		\[ X \barsmash S^1 \to X \barsmash \mathscr J(\R^n,\R^{n+1}) \]
		that identifies $S^1$ with the fiber over the standard embedding $\R^n \to \R^{n+1}$.
		
		As long as $X$ is an $h$-cofibrant space, this standard map is a stable equivalence. The proof is simple: apply $P$ to make the levels fibrations. Since $P$ commutes with $\Sigma_B$ and $F_n$, this now follows from the same statement for non-parametrized spectra.
		\item By the previous example and \autoref{ex:thom_smash_product}, we have for any vector bundle $W$ over $B$ a stable equivalence
		\[ F_{m+n} \Th_B(\R^m \times W) \overset\sim\to F_n \Th_B(W). \]
		Therefore different models for the Thom spectrum of a virtual bundle $\xi$ (see \autoref{ex:thom_spectra}) are stably equivalent.
	\end{itemize}
\end{ex}

When we want to prove that a functor $F$ preserves stable equivalences, it is easiest to break up the stable equivalence $X \to Y$ into a zig-zag of simpler pieces, and prove that each piece is separately preserved by $F$. The two most useful decompositions are
\begin{equation}\label{eq:stable_equivalence_decomposition}
\xymatrix @R=1.7em{
	X \ar[r]^-\sim & R^{lv}X \ar[d] \\
	Y \ar[r]^-\sim & R^{lv}Y
}
\qquad
\xymatrix @R=1.7em{
	X & \ar[l]_-\sim QX \ar[r]^-\sim & PQX \ar[d] \\
	Y & \ar[l]_-\sim QY \ar[r]^-\sim & PQY.
}
\end{equation}
The maps marked $\sim$ are level equivalences, and the vertical maps are $\pi_*$-isomorphisms on each fiber spectrum. The following is more or less immediate.

\begin{prop}\label{stable_equivalences}
	The class of stable equivalences is generated under 2-out-of-3 by the level equivalences and the maps of level $h$-fibrant spectra inducing isomorphisms on the homotopy groups of each fiber. A map of level-quasifibrant spectra is a stable equivalence iff it is a $\pi_*$-isomorphism on each fiber.
\end{prop}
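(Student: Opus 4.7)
The class of stable equivalences, being detected by the collection of functors $\R\pi_{n,b}$, automatically satisfies 2-out-of-3; and every level equivalence is a stable equivalence, since $R^{lv}$ preserves level equivalences and a level equivalence between level $q$-fibrant spectra restricts to a level equivalence on each fiber spectrum, inducing isomorphisms on $\pi_*$. So the real content is the second claim together with a decomposition showing that every stable equivalence lies in the 2-out-of-3 closure of the two specified classes. The plan is to prove the second claim first, then read off the first from the zig-zag already displayed as \eqref{eq:stable_equivalence_decomposition}.

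For the second claim, I would fix a level-quasifibrant spectrum $X$ and compare $X$ with $R^{lv}X$ via the level equivalence $X \overset\sim\to R^{lv}X$. On each spectrum level $n$ this is a weak equivalence over $B$ from a quasifibration to a Serre fibration. By definition of quasifibration the strict fiber $X_{n,b}$ maps by a weak equivalence into the homotopy fiber of $X_n \to B$ at $b$, and the same holds for $R^{lv}X_n$; the two homotopy fibers are weakly equivalent because the total spaces are weakly equivalent over the same base. Hence $X_{n,b} \to (R^{lv}X)_{n,b}$ is a weak equivalence, so the map of fiber spectra $X_b \to (R^{lv}X)_b$ is a level equivalence and induces a canonical isomorphism $\pi_{n,b}(X) \cong \R\pi_{n,b}(X)$. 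Applied simultaneously to $X$ and $Y$, this identifies $\R\pi_{n,b}(f)$ with $\pi_{n,b}(f)$, so $f$ is a stable equivalence iff it is a $\pi_*$-isomorphism on each fiber.

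For the first claim, I would apply the zig-zag \eqref{eq:stable_equivalence_decomposition} to an arbitrary $f\colon X \to Y$:
\[ X \overset\sim\leftarrow QX \overset\sim\to PQX \ra PQY \overset\sim\leftarrow QY \overset\sim\to Y. \]
The four outer maps are level equivalences. Since every free $q$-cofibration is a free $h$-cofibration (the generating $q$-cells $F_k(S^{n-1}_{+B} \to D^n_{+B})$ are free spectra on $h$-cofibrations between $h$-cofibrant retractive spaces), $QX$ and $QY$ are freely $h$-cofibrant, and so by \autoref{prop:spectrum_px} the spectra $PQX$ and $PQY$ are level $h$-fibrant, hence level-quasifibrant. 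By the second claim the middle map $PQX \to PQY$ is a stable equivalence iff it is a $\pi_*$-isomorphism on each fiber, placing it in the second generating class. Iterated 2-out-of-3 along the zig-zag then exhibits $f$ as a combination of level equivalences and a $\pi_*$-isomorphism between level $h$-fibrant spectra, completing the proof.

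The step that carries all the real weight is the fiberwise comparison in the second claim: I must be sure that the level equivalence $X \to R^{lv}X$ induces equivalences on \emph{strict} fibers, not merely on homotopy fibers, and this is exactly where the quasifibration hypothesis is used. Everything else is formal manipulation of the decomposition functors $Q$, $P$, and $R^{lv}$ and the 2-out-of-3 property of the classes involved.
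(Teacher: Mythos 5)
Your proposal is correct and follows the same route the paper has in mind: the displayed zig-zag decompositions \eqref{eq:stable_equivalence_decomposition} reduce the first claim to checking that $PQ$ lands in freely $f$-cofibrant, level $h$-fibrant spectra and that the outer maps are level equivalences, while the second claim amounts to the observation that for a level-quasifibrant $X$ the map $X \to R^{lv}X$ becomes a weak equivalence on strict fibers (because both sides have strict fiber equivalent to homotopy fiber), identifying $\pi_{n,b}$ with $\R\pi_{n,b}$. The paper leaves this as ``more or less immediate,'' and you have in fact supplied the details, including the correct use of the quasifibration hypothesis to get from homotopy fibers back to strict fibers — which is exactly where care is needed.
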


\begin{ex}\label{ex:smash_with_cell_complex_is_derived} \hfill
	\vspace{-1em}
	
	\begin{itemize}
		\item If $K \in \mc R(A)$ is an $h$-cofibrant space then $K \barsmash -$ preserves level equivalences of level $h$-cofibrant spectra. It also preserves $\pi_*$-isomorphisms of spectra that are level $h$-cofibrant and $h$-fibrant, because on each fiber this is smashing a stable equivalence of ordinary spectra with a space, and all the spaces involved are well-based, hence the result is still a stable equivalence \cite[7.4(i)]{mmss}. Using the second decomposition in \eqref{eq:stable_equivalence_decomposition}, we conclude that $K \barsmash -$ preserves stable equivalences between level $h$-cofibrant spectra.
		\item Similarly, if $K$ is a finite cell complex then $\barF_*(K,-)$ preserves $\pi_*$-isomorphisms of level $q$-fibrant spectra, by an induction on the cells of $K$ and the long exact sequence for fiber sequences of spectra \cite[7.4.(vi)]{mmss}. Hence it preserves stable equivalences of level $q$-fibrant spectra.
		\item The maps $X \to \Omega_B\Sigma_B X$ and $\Sigma_B\Omega_B X \to X$ are always $\pi_*$-isomorphisms on each fiber, using \cite[7.4.(i), 7.4.(i')]{mmss}, but they are not always stable equivalences, see for instance the counterexample in \autoref{space_cofiber_fiber}. However if we left-derive $\Sigma_B$ and right-derive $\Omega_B$ then they are stable equivalences.\footnote{There are two notions of equivalence floating around, the level equivalences and the stable equivalences. A priori, deriving a functor with respect to each one could produce different results, see \autoref{warn:changing_equivs}. But in this case the two derived functors we get are equivalent.} To prove this we observe that if $X$ is level $f$-cofibrant, level $h$-fibrant, and weak Hausdorff, then by repeated application of \autoref{prop:h_cofibrations_pushout_product} and \autoref{h_fibrations_pullback_hom}, the source and target of both maps are level $h$-fibrant. This, and the fact that the map is a $\pi_*$-isomorphism, implies that it is a stable equivalence. Moreover under these assumptions on $X$, the functors $\Sigma_B$ and $\Omega_B$ are equivalent to their derived functors, so the same is true for the map of derived functors for any input $X$.
	\end{itemize}
\end{ex}

\begin{prop}(cf. \cite[12.4.3]{ms}, \cite[7.4]{mmss})\label{prop:coproduct_colimit_stable_equivalences}
	The derived homotopy groups commute with arbitrary coproducts of level $h$-cofibrant spectra, and sequential colimits along level $h$-cofibrations.\footnote{In (CGWH) these assumptions can be weakened. The maps of the colimit system only have to be level closed inclusions. In the coproduct, the $h$-cofibrant assumption can be dropped when $B = *$ but probably not in general.} Hence both of these operations preserve stable equivalences.
\end{prop}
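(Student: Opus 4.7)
The plan is to reduce both statements, via the monoidal fibrant replacement $P$, to the classical nonparametrized commutation results of \cite[7.4]{mmss} applied fiberwise. The two features of $P$ that do the work are: (i) by \autoref{prop:spectrum_px}, $P$ sends level $h$-cofibrant spectra to level $h$-fibrant ones, and upgrades level $h$-cofibrations to level $f$-cofibrations; and (ii) since $P = (p_1)_!(p_0)^*$, it commutes with precisely those colimits that pullback along the fibration $p_0$ preserves, which by \autoref{prop:f_star_colimits} are exactly the ones appearing in the statement.

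First I would treat the sequential colimit. Given $X_0 \to X_1 \to \cdots$ consisting of level $h$-cofibrations between level $h$-cofibrant spectra, the colimit $X_\infty$ is again level $h$-cofibrant, and by the previous paragraph $PX_\infty \cong \colim_n PX_n$, with transition maps that are level $h$-cofibrations of level $h$-fibrant spectra. Restricting to a fiber $b \in B$ commutes with the colimit for the same reason (the transition maps are level closed inclusions), so $(PX_\infty)_b \cong \colim_n (PX_n)_b$ as a classical sequential spectrum with well-pointed, $h$-fibrant levels and closed inclusions between them. Applying the colimit lemma \autoref{prop:technical_cofibrations} level by level, together with the classical commutation of the defining colimit for $\pi_k$ past a sequential colimit of closed inclusions, yields $\R\pi_{k,b}(X_\infty) \cong \colim_n \R\pi_{k,b}(X_n)$.

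For an arbitrary coproduct $\coprod_i X_i$ of level $h$-cofibrant spectra, the coproduct in $\mc R(B)$ at each spectrum level is a pushout along the $h$-cofibrant basepoint section $B \hookrightarrow X_{i,n}$, which is again a colimit preserved by $P$ and by restriction to a fiber (\autoref{prop:f_star_colimits}). Hence $P(\coprod_i X_i) \cong \coprod_i PX_i$ is level $h$-fibrant, and on the fiber over $b$ it is a wedge of spectra with well-pointed levels. The classical statement that $\pi_k$ commutes with arbitrary such wedges \cite[7.4(i)]{mmss} then gives $\R\pi_{k,b}(\coprod_i X_i) \cong \bigoplus_i \R\pi_{k,b}(X_i)$. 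The final ``hence'' is immediate: a family of stable equivalences $X_i \to Y_i$ between level $h$-cofibrant spectra induces isomorphisms on each $\R\pi_{*,b}$, and the isomorphisms just established assemble these into isomorphisms on the corresponding coproducts or colimits.

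The main subtlety I expect lies in (CGWH), where colimits behave less uniformly than in (CG); the level $h$-cofibrancy hypothesis is precisely what guarantees that the relevant transition maps and basepoint sections are level closed inclusions, so that \autoref{prop:f_star_colimits} applies to both $(p_0)^*$ inside the definition of $P$ and to the restriction to a fiber $b \in B$. Once that commutation is secured the argument is otherwise formal.
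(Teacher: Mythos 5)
Your overall strategy — push everything through the monoidal fibrant replacement $P$ and reduce to \cite[7.4]{mmss} on each fiber — is the same one the paper uses, and your coproduct argument is essentially the paper's. But in the sequential-colimit case you have silently strengthened the hypothesis: you start with ``level $h$-cofibrations between level $h$-cofibrant spectra,'' whereas the proposition assumes only that the \emph{maps} of the system are level $h$-cofibrations. There is no assumption that $X_0$ (and hence any $X_n$) is level $h$-cofibrant. That hypothesis is exactly what your argument needs, since \autoref{prop:spectrum_px} only guarantees $PX$ is level $h$-fibrant when $X$ is level $h$-cofibrant; without it, $PX_n$ and $PX_\infty$ need not be level $h$-fibrant, and you cannot identify their fiberwise homotopy groups with the derived ones.

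The missing step, which the paper supplies, is a preliminary cofibrant replacement of the whole system: factor $B \to X_0$ as a $q$-cofibration followed by a level equivalence, then inductively factor each induced map $X_{n-1}' \to X_n$ the same way, producing a system $X_0' \to X_1' \to \cdots$ of $q$-cofibrations starting from a $q$-cofibrant object together with a levelwise equivalence of systems to the original one. The colimit lemma of \autoref{prop:technical_cofibrations} then shows the two colimits are level equivalent, so derived homotopy groups agree, and every $X_n'$ is level $h$-cofibrant, at which point your $P$-based argument applies verbatim. Inserting that replacement step would close the gap; as written, your proof only covers the special case where the initial term of the system is already level $h$-cofibrant.
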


\begin{proof}
	If $X$ and $Y$ are parametrized spectra over $B$, their coproduct $X \cup_B Y$ is just the union along $B$ on each spectrum level. When $X^i$ is an arbitrary collection of spectra over $B$, their coproduct is similarly $\bigcup_B X^i$. If the $X^i$ are all level $h$-cofibrant then this preserves level equivalences, so to compute the homotopy groups of the coproduct, we can make all the $X^i$ level $h$-cofibrant and $h$-fibrant. In this case the coproduct is also level $h$-fibrant because the proof of \autoref{prop:clapp} works just as well for these arbitrary pushouts along a common subspace as it does for pushouts of two spaces. Alternatively, we could use $P$ to make the fibrant replacement, and $P$ commutes with the coproduct by \autoref{prop:f_star_colimits}. At any rate, this means the derived homotopy groups of the coproduct are the actual homotopy groups of its fibers, which are the coproduct of the homotopy groups of the fibers of $X^i$ by \cite[7.4]{mmss}.
	
	For the sequential colimit of spectra $X^i$ over $B$, we repeatedly factor the maps into $q$-cofibrations followed by level equivalences, producing an equivalent colimit system of spectra where the maps of the system are $q$-cofibrations and the first term is $q$-cofibrant. This new system has colimit level equivalent to the original system by \autoref{prop:technical_cofibrations}. Then we apply $P$, giving another colimit system with equivalent colimit, where each map is an $f$-cofibration and each term is level $h$-fibrant. Since $P$ commutes with sequential colimits along level closed inclusions, the colimit is also level $h$-fibrant. So its derived homotopy groups are the ordinary homotopy groups, which on each fiber over $B$ are the colimit of the homotopy groups of the other spectra in the system by \cite[7.4]{mmss}.
\end{proof}

\begin{ex} \hfill
	\vspace{-1em}
	
	\begin{itemize}
		\item Recall that the product $X \times_B Y$ is the fiber product over $B$ on each spectrum level. Using either decomposition in \eqref{eq:stable_equivalence_decomposition}, we see the product preserves stable equivalences on level $q$-fibrant spectra. By induction, the same is true for finite products. This turns out to be false however for infinite products -- an infinite product has to be right-derived using the stable model structure below before it will preserve stable equivalences.
		\item The canonical map $X \cup_B Y \to X \times_B Y$ is always a $\pi_*$-isomorphism on each fiber separately \cite[7.4(ii)]{mmss}. On inputs that are both level $f$-cofibrant and level $h$-fibrant, it is therefore a stable equivalence. Hence the canonical map of derived functors $X \cup_B^{\L} Y \to X \times_B^{\R} Y$ is a stable equivalence.\footnote{We could have waited until later and then recovered this statement from the fact that homotopy cofiber and fiber sequences coincide, see \autoref{prop:pushout_equals_pullback}.}
	\end{itemize}
\end{ex}

As the above examples indicate, we usually prove things about stable equivalences by assuming the inputs are level fibrant, or applying $P$, and then reducing to the non-parametrized case. This justifies the work in earlier sections proving that various operations preserve fibrations or commute with $P$.

\beforesubsection
\subsection{Cofibers, fibers, pushouts and pullbacks}\aftersubsection

Every result in this subsection applies to both sequential spectra and orthogonal spectra.

Let $f\colon X \to Y$ be a map of sequential spectra or orthogonal spectra. Define the mapping cone or \textbf{uncorrected homotopy cofiber}\index{uncorrected homotopy cofiber}\index{mapping cone} of $f$ by the formula
\[ C_Bf = (X \barsmash I) \cup_{X \barsmash S^0} Y. \]
As in \autoref{space_cofiber_fiber}, on each fiber this gives the usual mapping cone $Cf_b = (X_b \sma I) \cup_{X_b} Y_b$, and we have an isomorphism of spectra $PC_B f \cong C_B Pf$.

The \textbf{homotopy cofiber}\index{homotopy!cofiber} the left-derived functor of the mapping cone, as a functor from maps of spectra to spectra, using the level equivalences. It is given by the formula $C_B(Qf)$, using \autoref{prop:free_implies_level} and the discussion in \autoref{space_cofiber_fiber}. As in \autoref{ex:strict_and_uncorrected}, the map to the strict cofiber
\[ C_B f \to Y \cup_X B \]
induces an equivalence of left-derived functors, so we could also think of the homotopy cofiber as left-derived from the cofiber.

\begin{lem}\label{lem:LES}
	There is a natural long exact sequence
	\[ \xymatrix{
		\ldots \ar[r] & \R\pi_{n,b}(X) \ar[r]^-{\R\pi_{n,b}(f)} & \R\pi_{n,b}(Y) \ar[r] & \R\pi_{n,b}(\L C_B f) \ar[r] & \R\pi_{n-1,b}(X) \ar[r] & \ldots
	}. \]
	Therefore $\L C_B f \simeq C_BQf$ preserves stable equivalences.
\end{lem}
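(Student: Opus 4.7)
The plan is to reduce the construction of the long exact sequence to the classical, non-parametrized statement for the mapping cone in ordinary spectra, applied fiberwise, and then deduce the preservation of stable equivalences via the five lemma. The subtlety is that the uncorrected mapping cone $C_B f$ only has the correct homotopy type under a cofibrancy assumption, and that derived homotopy groups are only readable off the fibers when the total spectra are level $h$-fibrant, so we must first bring $f$ into a form where both conditions hold simultaneously.

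First I would replace $f\colon X \to Y$ by $Qf\colon QX \to QY$, so that source and target are freely $q$-cofibrant (hence level $h$-cofibrant by \autoref{prop:free_implies_level}); this identifies $\L C_B f$ with $C_B Qf$ by \autoref{space_cofiber_fiber}. Next I would apply the monoidal fibrant replacement $P$. By \autoref{prop:spectrum_px}, $PQX$ and $PQY$ remain freely $f$-cofibrant while now being level $h$-fibrant, and $PQf$ is level-equivalent to $f$ so it has the same derived homotopy groups. Because $P$ commutes with the pushout defining $C_B$ (each level of $P$ preserves the relevant pushout along a closed inclusion by \autoref{prop:f_star_colimits}), there is a canonical isomorphism $P C_B Qf \cong C_B PQf$. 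Using \autoref{prop:spectra_pushout_product}(2) and (1) (with the space-level pushout-product of $S^0 \hookrightarrow I$ against $PQX$ giving a level-$h$-fibrant $PQX \barsmash I$ and a free $h$-cofibration $PQX \to PQX \barsmash I$) together with \autoref{prop:clapp}, the pushout $C_B PQf$ is itself level $h$-fibrant. Thus after these reductions all three spectra $PQX$, $PQY$, $C_B PQf$ are level $h$-fibrant, and their derived homotopy groups at $b$ coincide with the ordinary homotopy groups of their fibers.

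At this point, restricting to the fiber over an arbitrary point $b \in B$ yields a map of non-parametrized orthogonal spectra $(PQX)_b \to (PQY)_b$ whose mapping cone is $(C_B PQf)_b = C((PQf)_b)$, and I would invoke the classical long exact sequence for the mapping cone in ordinary spectra \cite[7.4]{mmss}. Naturality in $b$ and in $f$ is immediate from the construction. This yields exactly the displayed long exact sequence for $\R\pi_{*,b}$.

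The final sentence is then a direct application of the five lemma: given a square from $f$ to $f'$ in which the vertical maps $X \to X'$ and $Y \to Y'$ are stable equivalences, the induced ladder of long exact sequences has isomorphisms in four out of every five consecutive columns (using 2-out-of-3 to first replace by $Q$-resolutions, which still induce stable equivalences), forcing the middle column $\R\pi_{n,b}(\L C_B f) \to \R\pi_{n,b}(\L C_B f')$ to be an isomorphism for every $n$ and $b$. The main obstacle is the bookkeeping of the two replacements $Q$ and $P$ and the verification that $C_B$ of the resulting map is still level $h$-fibrant; once that is set up, every other step is either the classical statement in ordinary spectra or a diagram chase.
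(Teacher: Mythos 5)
Your proof is correct and follows essentially the same strategy as the paper's: reduce to $PQX \to PQY$, use that $P$ commutes with the uncorrected mapping cone to identify $C_B PQf$ with a level-fibrant model for $\L C_B f$, and then invoke the classical non-parametrized long exact sequence on the fiber spectra. The only cosmetic difference is that you argue directly that $C_B PQf$ is level $h$-fibrant, whereas the paper pins down $\R\pi_{n,b}(\L C_B f)$ via the level equivalence $C_B PQf \cong PC_B Qf \overset{\sim}{\leftarrow} PQC_B Qf$; both routes yield the same identification.
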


\begin{proof}	
	We take the usual the non-parametrized long exact sequence for the homotopy groups of the fibers of $PQX \to PQY \to C_B PQf$, then identify the homotopy groups of $C_B PQf$ with those of $PQC_B Qf$ along the functorial string of maps
	\[ \xymatrix{
		C_B PQf \ar@{<->}[r]^-\cong & PC_B Qf & \ar[l]_-\sim PQC_B Qf
	} \]
	inducing isomorphisms on the (underived) homotopy groups of every fiber spectrum. As usual, it is not necessary to worry whether all possible reasonable choices of isomorphism give the same map. We only need to know that some functorial long exact sequence exists. 
\end{proof}

\begin{lem}
	$C_B f$ preserves stable equivalences when $f$ is a level $h$-cofibration or both $X$ and $Y$ are level $h$-cofibrant. $\L C_B f$ is also a left-derived functor of $C_B f$ using the \emph{stable} equivalences.
\end{lem}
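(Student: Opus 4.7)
The strategy for Part 1 is to reduce to the already-established left derived functor $\L C_B f = C_B Qf$ (with respect to level equivalences) by showing that under either hypothesis, the natural comparison $\L C_B f \to C_B f$ is a level equivalence. Granting this, any stable equivalence of arrows $f \to f'$ with both satisfying the hypothesis fits into a square
\[ \xymatrix@R=1.7em{ \L C_B f \ar[r]^-\sim \ar[d] & C_B f \ar[d] \\ \L C_B f' \ar[r]^-\sim & C_B f' } \]
whose horizontals are level equivalences by the claim, and whose left vertical is a stable equivalence by the five lemma applied to the derived long exact sequence of \autoref{lem:LES}; hence the right vertical is a stable equivalence as well.

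Case A ($X$ and $Y$ both level $h$-cofibrant) is handled by applying the gluing lemma (\autoref{prop:technical_cofibrations}) at each spectrum level to the map of pushout spans
\[ \xymatrix@R=1.7em{ QY \ar[d] & \ar[l] QX \ar[d] \ar[r] & QX \barsmash I \ar[d] \\ Y & \ar[l] X \ar[r] & X \barsmash I. } \]
The right legs are $h$-cofibrations by \autoref{prop:h_cofibrations_pushout_product}, and the three vertical maps are level equivalences; the third uses that smashing with the well-based space $I$ preserves level equivalences of level $h$-cofibrant spectra by \autoref{prop:h_cofibrations_pushout_product}, which applies because both $X$ and $QX$ are level $h$-cofibrant.

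Case B ($f$ a level $h$-cofibration) is reduced to Case A by replacing $f$ with a cofibration between cofibrant spectra. Factor the composite $QX \to X \xrightarrow{f} Y$ in the level model structure as $QX \xrightarrow{f''} Y'' \xrightarrow{\sim} Y$, with $f''$ a $q$-cofibration and $Y'' \to Y$ a level acyclic fibration. Then $f''$ satisfies hypothesis A, so Case A yields a level equivalence $\L C_B f'' \xrightarrow{\sim} C_B f''$; and $\L C_B$ sends the level equivalence of arrows $f'' \to f$ to a level equivalence $\L C_B f'' \xrightarrow{\sim} \L C_B f$. To match $C_B f''$ with $C_B f$ up to level equivalence, compose with the natural collapse to the strict cofiber $C_B(-) \to (-)/_B(-)$, which is a level equivalence on any level $h$-cofibration; both $f''$ and $f$ qualify. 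The induced map of strict cofibers $Y''/_B QX \to Y/_B X$ is a level equivalence by the gluing lemma applied to the spans $(B \leftarrow QX \to Y'') \to (B \leftarrow X \to Y)$, whose right legs are level $h$-cofibrations and whose verticals are level equivalences.

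Part 2 then follows quickly: the cofibrant replacement $Q$ sends every arrow to a cofibration between $q$-cofibrant (hence level $h$-cofibrant) spectra, so Part 1 applies to show that $\L C_B = C_B Q$ sends stable equivalences of arrows to stable equivalences (using two-out-of-three to lift a stable equivalence $f \to f'$ to a stable equivalence $Qf \to Qf'$ of arrows satisfying hypothesis A). Together with the natural level equivalence $\L C_B f \xrightarrow{\sim} C_B f$ on $q$-cofibrant arrows from Case A, this verifies that $\L C_B$ is a left derived functor of $C_B$ with respect to the stable equivalences. The main technical obstacle is Case B of the comparison claim, since the hypothesis there does not directly control $X$ or $Y$ and so the gluing lemma cannot be applied to $C_B f$ as in Case A; the resolution threads together the factorization axiom, the collapse-to-strict-cofiber equivalence, and a second gluing lemma argument.
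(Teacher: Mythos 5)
Your proof is correct and follows the same overall strategy as the paper's: show the natural comparison $\L C_B f \to C_B f$ is a level equivalence under either hypothesis, then push stable equivalences through the square using \autoref{lem:LES}. The paper's own proof is one sentence, deferring the key level-equivalence claim to the informal bullet in \autoref{space_cofiber_fiber} (which asserts without proof that $C_Bf$ has the correct homotopy type when $X$ is $h$-cofibrant or $f$ is a level $h$-cofibration). What you have done is supply the proofs of that claim: Case A by a direct gluing-lemma argument on the defining pushout spans, and Case B by first replacing $f$ with a factorization $QX \rightarrowtail Y'' \xrightarrow{\sim} Y$ and then comparing the reduced mapping cones to the strict cofibers. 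Case B is where the real content lies, since $Qf\colon QX \to QY$ need not itself be a level $h$-cofibration, so the collapse map $C_BQf \to QY/_B QX$ is not obviously a level equivalence — your factorization circumvents this correctly. Two minor remarks: in Case A only $X$ actually needs to be level $h$-cofibrant, not $Y$ as well (the lemma is stated with the more restrictive hypothesis, but your proof works with less); and the invocation of "the five lemma" for the left vertical in your first square is unnecessary — $\L C_Bf$ preserving stable equivalences is precisely the content of \autoref{lem:LES}.
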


\begin{proof}
	The discussion in \autoref{space_cofiber_fiber} tells us that for such maps $C_BQf \to C_B f$ is a level equivalence, so $C_B f$ preserves stable equivalences in these cases by \autoref{lem:LES}. Therefore the same left deformation retract $Q$ derives $C_B f$ with respect to the level and the stable equivalences.
\end{proof}

\begin{cor}[Left-properness]\label{cor:spectra_left_proper}
	In a strict pushout square of spectra over $B$
	\[ \xymatrix @R=1.7em{
		X \ar[r]^-i \ar[d]^-f & Y \ar[d] \\
		Z \ar[r] & Y \cup_X Z
	} \]
	if $f\colon X \to Z$ is a stable equivalence and either $i$ or $f$ is a level $h$-cofibration then $Y \to Y \cup_X Z$ is also a stable equivalence.
\end{cor}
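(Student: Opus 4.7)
The plan is to compare the long exact sequences of derived homotopy groups supplied by \autoref{lem:LES}. In both cases, the pushout property of $W = Y \cup_X Z$ gives canonical isomorphisms of strict cofibers
\[ Y/X \cong W/Z \quad \text{and} \quad Z/X \cong W/Y, \]
because quotienting $W$ by either leg collapses that component and leaves the cofiber of the parallel map. Moreover, whichever of $i$ or $f$ is the assumed level $h$-cofibration, its pushout is also a level $h$-cofibration, because $h$-cofibrations of spectra are levelwise and \autoref{prop:technical_cofibrations} gives closure under pushout in $\mc R(B)$.

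Suppose first that $f$ is a level $h$-cofibration, so $g$ is as well. Then by the discussion in \autoref{space_cofiber_fiber}, the collapse maps $C_B f \to Z/X$ and $C_B g \to W/Y$ are level equivalences, so
\[ \L C_B f \simeq Z/X \cong W/Y \simeq \L C_B g. \]
Since $f$ is a stable equivalence, \autoref{lem:LES} forces $\R\pi_{*,b}(\L C_B f) = 0$ for every $b \in B$; hence the same vanishing holds for $\L C_B g$, and the long exact sequence applied to $g$ shows that $g$ is a stable equivalence.

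Suppose instead that $i$ is a level $h$-cofibration, so $j\colon Z \to W$ is as well. Then $\L C_B i \simeq Y/X$ and $\L C_B j \simeq W/Z$, and the pushout isomorphism $Y/X \cong W/Z$ — which is natural in the map of arrows $(X \overset{i}\to Y) \Rightarrow (Z \overset{j}\to W)$ provided by $f$ and $g$ — upgrades to a stable equivalence $\L C_B i \simeq \L C_B j$ under the induced map on derived cofibers. Naturality of \autoref{lem:LES} in this map of arrows then produces a commuting ladder
\[
\xymatrix @C=.7em{
	\ldots \ar[r] & \R\pi_{n,b}(X) \ar[d]^-\cong \ar[r] & \R\pi_{n,b}(Y) \ar[d] \ar[r] & \R\pi_{n,b}(\L C_B i) \ar[d]^-\cong \ar[r] & \R\pi_{n-1,b}(X) \ar[d]^-\cong \ar[r] & \ldots \\
	\ldots \ar[r] & \R\pi_{n,b}(Z) \ar[r] & \R\pi_{n,b}(W) \ar[r] & \R\pi_{n,b}(\L C_B j) \ar[r] & \R\pi_{n-1,b}(Z) \ar[r] & \ldots
}
\]
in which every vertical arrow except possibly the middle one is an isomorphism: the columns over $X$ and $Z$ because $f$ is a stable equivalence, and the cofiber column by the identification just noted. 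The five lemma forces the middle vertical to be an isomorphism at every $n$ and $b$, so $g$ is a stable equivalence.

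The main point to verify carefully, and essentially the only place this argument could fail, is the naturality claim in the second case: that the map of derived cofibers $\L C_B i \to \L C_B j$ induced by the map of arrows really does agree with the stable equivalence coming from the pushout identification $Y/X \cong W/Z$ of strict cofibers. This is automatic at the strict level by direct inspection of the quotient maps out of $Y$ and out of $W$, and lifts to the derived cofibers precisely because both $i$ and $j$ are level $h$-cofibrations, so their strict cofibers already compute the left-derived cofibers.
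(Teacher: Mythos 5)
Your proof is correct, and it takes a genuinely different route from the paper in the case where $i$ (rather than $f$) is the level $h$-cofibration. The paper handles that case by \emph{reduction}: it factors $f\colon X \to Z$ as a level $h$-cofibration followed by a level equivalence, splits the pushout square into two stacked pushouts, uses left-properness with respect to \emph{level} equivalences (which holds in the level model structure and is already available) to dispose of the bottom square, and is thereby reduced to the case where $f$ is the $h$-cofibration — which it then handles exactly as in your first case, by identifying the vertical strict cofibers $Z/_B X \cong W/_B Y$ via the pushout. You instead treat the $i$-cofibration case head-on by running the long exact sequence of \autoref{lem:LES} along the \emph{horizontal} cofiber sequences of $i$ and $j$ and invoking the five lemma, with the middle column controlled by the pushout identification $Y/_B X \cong W/_B Z$. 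Both arguments are valid; the trade-off is that your version avoids the factorization step entirely but in exchange requires the naturality verification you flag at the end — that the induced map $\L C_B i \to \L C_B j$ coming from the map of arrows $(f,g)$ is the same map in the homotopy category as the pushout isomorphism of strict cofibers. You justify this correctly: since $C_B Q(-) \to C_B(-) \to SC(-)$ is a zig-zag of natural transformations of functors on arrows, and since $i, j$ being level $h$-cofibrations makes each of these a level equivalence at the two objects in question, naturality forces the compatibility. The paper's factorization trick sidesteps exactly this bookkeeping and also makes explicit that left-properness for stable equivalences is bootstrapped from left-properness for level equivalences, which is perhaps a cleaner organizing principle; your approach is more self-contained in that it never invokes the level-equivalence case of properness. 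One minor clarification on your first case: when you write $\L C_B f \simeq Z/_B X$, you are implicitly using that for a level $h$-cofibration $f$, the chain $C_B Q f \to C_B f \to Z/_B X$ consists of level equivalences — this follows from the discussion in \autoref{space_cofiber_fiber} and the universal property of the left-derived functor, so it is fine, but it is worth saying since that equivalence is the load-bearing identification in that case.
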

\begin{cor}[Gluing lemma]\label{cor:spectra_gluing}
	Any diagram of spectra over $B$
	\[ \xymatrix @R=1.7em{
		Y \ar[d]^-\sim & X \ar[l]_-i \ar[r]^-f \ar[d]^-\sim & Z \ar[d]^-\sim \\
		Y' & X' \ar[l]^{i'} \ar[r]_-{f'} & Z'
	} \]
	in which the vertical maps are stable equivalences and both $i$ and $i'$ are level $h$-cofibrations, induces a stable equivalence of pushout spectra
	\[ Y \cup_X Z \to Y' \cup_{X'} Z' \]
\end{cor}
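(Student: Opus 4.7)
The plan is to reduce to two successive applications of the five lemma to the natural long exact sequence of \autoref{lem:LES}. The key observation is that whenever $j$ is a level $h$-cofibration of parametrized spectra, the strict cofiber of $j$ is level equivalent to the homotopy cofiber $\L C_B(j)$, since this was established space-by-space in \autoref{space_cofiber_fiber} and passes levelwise to spectra.

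Step 1 is to show that the map of strict cofibers $Y/_B X \to Y'/_B X'$ is already a stable equivalence. Since $i$ and $i'$ are level $h$-cofibrations, we have natural level equivalences $\L C_B i \simeq Y/_B X$ and $\L C_B i' \simeq Y'/_B X'$. Applying \autoref{lem:LES} naturally to the map of maps $i \Rightarrow i'$ yields a commutative ladder of long exact sequences on $\R\pi_{*,b}$. The hypothesis that the vertical maps $X \to X'$ and $Y \to Y'$ are stable equivalences means the outer vertical arrows of the ladder are isomorphisms, and the five lemma then forces $\R\pi_{n,b}(Y/_B X) \to \R\pi_{n,b}(Y'/_B X')$ to be an isomorphism for every $n$ and $b$.

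Step 2 handles the pushouts themselves. The inclusion $Z \to Y \cup_X Z$ is a pushout of the level $h$-cofibration $i$, so at each spectrum level it is a pushout in $\mc R(B)$ of an $h$-cofibration, which is again an $h$-cofibration by \autoref{prop:technical_cofibrations}; hence it is a level $h$-cofibration, and the analogous statement holds for $Z' \to Y' \cup_{X'} Z'$. A direct computation with iterated pushouts identifies $(Y \cup_X Z)/_B Z \cong Y/_B X$, naturally in the map of spans. Applying \autoref{lem:LES} once more to the resulting cofiber sequences $Z \to Y \cup_X Z \to Y/_B X$ and $Z' \to Y' \cup_{X'} Z' \to Y'/_B X'$, and combining the stable equivalence $c\colon Z \to Z'$ with the conclusion of Step 1, the five lemma forces the middle map $Y \cup_X Z \to Y' \cup_{X'} Z'$ to be a stable equivalence.

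The main thing to verify carefully is the naturality of the identification $(Y \cup_X Z)/_B Z \cong Y/_B X$ in the map of spans, so that the connecting homomorphisms of the two long exact sequences in Step 2 are compatible with the map induced between the pushouts. This is immediate from the universal property of iterated pushouts in the category of parametrized spectra. An alternative approach would apply \autoref{cor:spectra_left_proper} iteratively, but this tends to require factoring the stable equivalence $X \to X'$ into a level $h$-cofibration followed by a level equivalence; the long exact sequence argument bypasses such a factorization and is conceptually cleaner.
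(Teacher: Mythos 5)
Your proof is correct and follows essentially the same route as the paper's: it compares the cofiber of $i$ with the cofiber of the induced map $Z \to Y \cup_X Z$ (your identification $(Y\cup_X Z)/_B Z \cong Y/_B X$), and applies \autoref{lem:LES} together with the five lemma twice. The paper packages the two applications into a single commuting square of mapping cones, but the underlying argument is the same.
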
\index{gluing lemma}

\begin{proof}
	For left properness, if $i$ is the cofibration, we factor $X \to Z$ into a level $h$-cofibration and a level equivalence, and from this reduce to the case where $f$ is the cofibration. Then take mapping cones in the vertical direction. For $X \to Z$ this cone is weakly contractible by \autoref{lem:LES}, but the cones are homeomorphic because the above square is a pushout. Therefore the cone of $Y \to Y \cup_X Z$ is also weakly contractible. By \autoref{lem:LES} again, $Y \to Y \cup_X Z$ is a stable equivalence.
	
	The gluing lemma is known to follow from left-properness by a long diagram-chase. Alternatively, we compare the mapping cones of $i\colon X \to Y$ and $\bar i\colon Z \to Y \cup_X Z$. In the square of mapping cones
	\[ \xymatrix @R=1.7em{
		\L C_B i \ar[d] \ar[r] & \L C_B \bar i \ar[d] \\
		\L C_B i' \ar[r] & \L C_B \bar i',
	} \]
	the horizontal maps are equivalences because they are isomorphisms before $\L$ and the maps $i, \bar i, i', \bar i'$ are all level $h$-cofibrations, hence $\L C_B \simeq C_B$. The left-vertical map is a stable equivalence by one application of \autoref{lem:LES} to $X \to Y$, therefore the right-vertical is a stable equivalence as well. By one more application of \autoref{lem:LES} to $\bar i$ and $\bar i'$, the map of pushouts is a stable equivalence.
\end{proof}

\begin{rmk}\label{fixed_may_sigurdsson_2}
	In \cite{ms}, the corresponding results assume that $i$ is a level $f$-cofibration, which is stricter than a level $h$-cofibration. Furthermore, the arguments employed in \cite[Ch. 5-6]{ms} and the counterexample \cite[6.1.5]{ms} suggest that no further improvement is possible. We would like to expand on \autoref{fixed_may_sigurdsson} by explaining why this approach avoids the difficulty and proves a stronger theorem. One difference is that we are using a symmetric monoidal fibrant replacement functor $P$, allowing us to commute fibrant replacement with the mapping cone.
	
	However, the most significant change is that we are thinking in terms of $\barsmash$ instead of $\sma_B$. If we think in terms of $\sma_B$, we are led to define the uncorrected homotopy cofiber as
	\[ C_B f = (X \sma_B I_B) \cup_{X \sma_B S^0_B} Y \]
	instead of $(X \barsmash I) \cup_{X \barsmash S^0} Y$. Though these two models are homeomorphic, the one with internal smash products suggests that $C_B f$ only preserves equivalences if $X$ and $Y$ are level $f$-cofibrant, even though it also preserves equivalences when they are level $h$-cofibrant. This small difference in assumptions leads to a large difference once we construct the model structure. If the cofibrant objects need to be level $f$-cofibrant, then the usual cells $S^{n-1}_{+B} \to D^n_{+B}$ won't work, we need to further restrict to cells where the map $S^{n-1} \to D^n$ is an $f$-cofibration, in other words the ``$qf$-cells.''\footnote{The language of ``well-grounded model categories'' is used in \cite{ms} to explain what assumptions are needed to get the gluing lemma to work. In that language, the $q$-model structure is not well-grounded \cite[6.1.3]{ms}. However, it actually is well-grounded, using a \emph{different ground structure}, that we obtain from the one in \cite[5.3.6]{ms} by replacing the $f$-cofibrations with $h$-cofibrations. If we examine the second half of the counterexample \cite[6.1.5]{ms} carefully, we see that it does not obstruct the existence of this ground structure, because $X \to Y'$ is not an $h$-cofibration to begin with.}
\end{rmk}

Next, define the \textbf{uncorrected homotopy fiber}\index{uncorrected homotopy fiber} of $f\colon X \to Y$ by
\[ F_B f = X \times_{\barF_*(S^0,Y)} \barF_*(I,Y). \]
On each fiber this gives the usual homotopy fiber $F_B f_b = X_b \times_{Y_b} F(I,Y_b)$. Note however that it may not preserve level equivalences because the fibers $X_b$ and $Y_b$ could themselves change.

The \textbf{homotopy fiber}\index{homotopy!fiber} is right-derived from this, again using the level equivalences. It is equivalent to $FR^{lv}f$, or $F_B f$ whenever $f$ is a level $q$-fibration or a map between level $q$-fibrant spectra. On the subcategory of spectra whose levels are $h$-cofibrant, it is also equivalent to $F_B Pf$. This is all by the discussion in \autoref{space_cofiber_fiber}.
\begin{lem}\label{lem:LES2}
	There is a natural long exact sequence
	\[ \xymatrix{
		\ldots \ar[r] & \R\pi_{n,b}(X) \ar[r]^-{\R\pi_{n,b}(f)} & \R\pi_{n,b}(Y) \ar[r] & \R\pi_{n-1,b}(F_B R^{lv}f) \ar[r] & \R\pi_{n-1,b}(X) \ar[r] & \ldots
	}. \]
	Therefore $\R F f \simeq F_B R^{lv}f$ preserves stable equivalences.
\end{lem}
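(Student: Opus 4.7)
The plan is to dualize the proof of \autoref{lem:LES}, using level fibrant replacement $R^{lv}$ in place of the cofibrant replacement $Q$. The key input is the classical long exact sequence for stable homotopy groups of a fiber sequence of (non-parametrized) spectra; everything else is bookkeeping.

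First I would verify that $R^{lv}$ deforms $F_B$: if $X$ and $Y$ are level $q$-fibrant, then each map of levels $(R^{lv}f)_n\colon (R^{lv}X)_n \to (R^{lv}Y)_n$ is a map into a $q$-fibrant space, so by the discussion in \autoref{space_cofiber_fiber} the uncorrected homotopy fiber $F_B(R^{lv}f)$ agrees at each level with the corrected homotopy fiber, preserves level equivalences between level $q$-fibrant spectra, and is itself level $q$-fibrant. This identifies $\R F(f) \simeq F_B R^{lv}f$ and also ensures that the derived homotopy groups of $F_B R^{lv}f$ are just its actual fiberwise homotopy groups.

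Next, for any $b \in B$ the pullback $b^*$ commutes strictly with $F_B$ and with $R^{lv}$ (the first because pullbacks commute with all limits, the second because $R^{lv}$ is built from the small object argument with cells $F_k[D^n_{+B} \to (D^n \times I)_{+B}]$ whose pullbacks to a point are the analogous cells used to construct level fibrant replacement of non-parametrized spectra). Thus the fiber of $F_B R^{lv}f$ at $b$ is the corrected homotopy fiber of a map of level $q$-fibrant non-parametrized spectra $(R^{lv}X)_b \to (R^{lv}Y)_b$, which represents $\R f_b$. Applying the classical long exact sequence for the stable homotopy groups of such a fiber sequence (see e.g.\ \cite[\S 7]{mmss}) and identifying $\pi_{n,b}(F_B R^{lv}f) = \R\pi_{n,b}(F_B R^{lv}f)$ yields the displayed long exact sequence. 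Functoriality in $f$ is automatic because $R^{lv}$, $F_B$, and $b^*$ are all functors.

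Finally, given a level-wise commuting square of spectra over $B$ whose vertical legs $X \to X'$ and $Y \to Y'$ are stable equivalences, applying $\R\pi_{*,b}$ gives a commutative ladder between the two long exact sequences. Since four out of every five consecutive vertical maps are isomorphisms by assumption, the five-lemma yields isomorphisms $\R\pi_{n,b}(F_B R^{lv}f) \cong \R\pi_{n,b}(F_B R^{lv}f')$ for all $n$ and $b$, so $F_B R^{lv}f$ preserves stable equivalences. The only substantive step is the compatibility of $R^{lv}$ with pullback to fibers; this is where the explicit structure of the generating acyclic cofibrations (rather than some abstract fibrant replacement) is essential, but once that is noted the remainder of the argument is a straightforward application of the non-parametrized result.
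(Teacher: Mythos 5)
Your proof follows the same route as the paper: apply the non-parametrized long exact sequence to the fiber sequence $F_B R^{lv}f\to R^{lv}X\to R^{lv}Y$, check level $q$-fibrancy so that derived homotopy groups are actual ones, and conclude preservation of stable equivalences by the five-lemma. This matches the paper's argument and is correct.

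One peripheral claim should be flagged: it is not true that $b^*$ ``commutes strictly with $R^{lv}$''. The cells $F_k[D^n_{+B}\to (D^n\times I)_{+B}]$ are indexed by maps $D^n\to B$, and their pullbacks along $b\colon *\to B$ are $F_k[$ fiber of $D^n$ over $b$ $\to$ fiber of $D^n\times I$ over $b$ $]$, which are generally not the standard cells $F_k[D^n_+\to (D^n\times I)_+]$. What is true (and suffices) is that $b^*$ sends level $q$-fibrations to $q$-fibrations, so $(R^{lv}X)_b$ is a level $q$-fibrant spectrum; but you do not even need to know that it is a fibrant replacement of $X_b$, since $\pi_n((R^{lv}X)_b)=\R\pi_{n,b}(X)$ holds by the very definition of $\R\pi_{n,b}$. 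Deleting the strict-commutation claim removes the only weak spot and leaves the proof intact.
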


\begin{proof}
	The proof is like the one for cofibers, but easier. We take the usual the non-parametrized long exact sequence for the fibers of $F_B R^{lv}f \to R^{lv}X \to R^{lv}Y$. We check that $F_B R^{lv}X$ is level $q$-fibrant and therefore $\R\pi_{n,b}(F_B R^{lv}f) \cong \pi_{n,b}(F_B R^{lv}f)$.
\end{proof}

\begin{lem}
	$F_B f$ preserves stable equivalences when $f$ is a level $q$-fibration or both $X$ and $Y$ are level $q$-fibrant. $\R F_B f$ is also the right-derived functor of $F_B f$ using the \emph{stable} equivalences.
\end{lem}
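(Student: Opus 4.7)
The plan is to dualize the proof of the preceding lemma for $C_B f$, using the right-deformation retract $R^{lv}$ in place of the left-deformation retract $Q$ and invoking \autoref{lem:LES2} in place of \autoref{lem:LES}. The intended parsing of the statement is the following: we consider $F_B$ as a functor out of the arrow category, restricted to the full subcategory consisting of arrows $f\colon X \to Y$ that are level $q$-fibrations, or between level $q$-fibrant spectra; the claim is that this restriction sends stable equivalences to stable equivalences.

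First I would show that under either hypothesis on $f$, the natural comparison map $F_B f \to F_B R^{lv} f$ is a level equivalence. The natural transformations $X \to R^{lv}X$ and $Y \to R^{lv}Y$ are level equivalences, and $R^{lv}f$ is always a level $q$-fibration, so by the right-deformability of the uncorrected homotopy fiber (the discussion in \autoref{space_cofiber_fiber}, together with \autoref{lem:f_star_preserves} applied to pullbacks along $q$-fibrations), $F_B$ preserves the induced level equivalence from $f$ to $R^{lv}f$. Given a stable equivalence $f \to f'$ between two such arrows, I then form the commuting square
\[ \xymatrix @R=1.7em{
F_B f \ar[r]^-\sim \ar[d] & F_B R^{lv} f \ar[d] \\
F_B f' \ar[r]^-\sim & F_B R^{lv} f'
} \]
whose horizontal maps are level equivalences by the step above, and whose right-hand vertical map is a stable equivalence by \autoref{lem:LES2}. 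By the 2-out-of-3 property of stable equivalences, the left-hand vertical map is a stable equivalence too, proving the first claim.

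For the second claim, the functor $R^{lv}$ is a right-deformation retract of the category of arrows (applied levelwise to source and target), and its image lands in the subcategory of arrows between level $q$-fibrant spectra. On that subcategory, $F_B$ is equivalent to its right-derived functor with respect to \emph{level} equivalences (this is exactly the content of the discussion in \autoref{space_cofiber_fiber}, recalled just before the lemma), and by the first claim above $F_B$ also preserves stable equivalences on this subcategory. Thus $R^{lv}$ is simultaneously a right-deformation for $F_B$ with respect to both the level and the stable equivalences, so the induced total derived functors agree and both are canonically represented by $F_B R^{lv}(-)$. The only mildly delicate point is keeping the two classes of equivalences straight; there is no real obstacle since everything here follows formally from \autoref{lem:LES2}, the level-wise right-deformability of $F_B$, and 2-out-of-3, and no new input beyond what was already used for the cofiber version is required.
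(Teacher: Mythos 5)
Your proof is correct and is exactly the argument the paper leaves implicit (the paper states this lemma without proof as the formal dual of the preceding cofiber lemma, and your strategy of replacing $Q$ by $R^{lv}$ and $\autoref{lem:LES}$ by $\autoref{lem:LES2}$ is the intended one). One small misstatement: you write that ``$R^{lv}f$ is always a level $q$-fibration,'' but this is false --- $R^{lv}$ is applied to objects, and the induced map $R^{lv}X \to R^{lv}Y$ need not be a fibration. What is actually true, and what your argument needs, is that $R^{lv}Y$ is always level $q$-fibrant, and the discussion in \autoref{space_cofiber_fiber} says $F_B$ has the right homotopy type whenever the target is $q$-fibrant (or the map is a $q$-fibration). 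So $R^{lv}f$ lands in the subcategory where $F_B$ is homotopical for the correct reason (fibrant target), not because it is a fibration. With that correction the rest goes through verbatim.
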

\begin{cor}[Right-properness]\label{cor:spectra_right_proper}
	In a strict pullback square of spectra over $B$
	\[ \xymatrix @R=1.7em{
		Y \times_W Z \ar[r] \ar[d] & Y \ar[d]^-p \\
		Z \ar[r]_-f & W
	} \]
	if $f\colon Z \to W$ is a stable equivalence and either $p$ or $f$ is a level $q$-fibration then $Y \times_W Z \to Y$ is also a stable equivalence.
\end{cor}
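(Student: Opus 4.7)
The argument dualizes the proof of left-properness (\autoref{cor:spectra_left_proper}), replacing mapping cones with uncorrected homotopy fibers $F_B$. I would handle the case where $f$ is a level $q$-fibration first, then reduce the case where $p$ is a level $q$-fibration to it via a factorization.

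Suppose first that $f\colon Z \to W$ is a level $q$-fibration. Then $q\colon Y \times_W Z \to Y$, being a pullback of $f$, is also a level $q$-fibration by \autoref{lem:spectrum_f_preserves}. By pullback-pasting, the strict fiber of $q$ over the basepoint section $B \to Y$ is canonically isomorphic, as a parametrized spectrum over $B$, to the strict fiber of $f$ over the basepoint section $B \to W$. Since both maps are level $q$-fibrations, these strict fibers compute the uncorrected homotopy fibers, giving an isomorphism $F_B q \cong F_B f$ in $\Osp(B)$. The long exact sequence of \autoref{lem:LES2} applied to $f$ shows that $F_B f$ has vanishing $\R\pi_{*,b}$ at every $b \in B$ because $f$ is a stable equivalence; hence the same is true of $F_B q$, and applying the long exact sequence in reverse to $q$ yields that $q$ is a stable equivalence.

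Now suppose $p\colon Y \to W$ is the level $q$-fibration. I would reduce to the previous case by factoring $f = f' \circ j$, with $j\colon Z \to Z'$ a level acyclic $q$-cofibration and $f'\colon Z' \to W$ a level $q$-fibration, using the level model structure of \autoref{prop:level_model_structure}. By 2-out-of-3 for stable equivalences, $f'$ is itself a stable equivalence. Pulling back along $p$ gives
\[ \xymatrix @R=1.7em{
    Y \times_W Z \ar[r] \ar[d] & Y \times_W Z' \ar[r] \ar[d]_-{p''} & Y \ar[d]^-p \\
    Z \ar[r]^-j & Z' \ar[r]^-{f'} & W.
} \]
The right square reduces to the previous case, so $Y \times_W Z' \to Y$ is a stable equivalence. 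For the left square, the pullback $p''$ is again a level $q$-fibration, and right-properness of the level $q$-model structure on $\Osp(B)$ (inherited levelwise from \autoref{prop:technical_cofibrations}) implies that pulling back the level equivalence $j$ along $p''$ produces a level equivalence $Y \times_W Z \to Y \times_W Z'$. Two applications of 2-out-of-3 for stable equivalences complete the argument.

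The main subtlety is verifying the identification $F_B q \cong F_B f$ in the first step: while the strict-fiber identification is immediate from pullback-pasting, one must check that it extends to the full definition $F_B f = Z \times_{\barF_*(S^0, W)} \barF_*(I, W)$ used in \autoref{space_cofiber_fiber} and \autoref{lem:LES2}, which should reduce to the observation that $\barF_*(S^0, -)$ and $\barF_*(I, -)$ commute with pullback along $p$.
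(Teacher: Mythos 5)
Your argument is correct and follows exactly the route the paper intends (the paper's own proof is literally ``Dual to the above''); your two-case decomposition, use of \autoref{lem:LES2}, and factorization via the level model structure all match the dual of the left-properness proof. The one thing worth fixing is a precision issue in the first case: $F_B q$ and $F_B f$ are not isomorphic as parametrized spectra, and the commutation of $\barF_*(I,-)$ with pullbacks won't produce an isomorphism between them (the map $F_B q \to F_B f$ which applies $p$ to paths is not invertible). What you actually have, and all you need, is a zig-zag of weak equivalences: the \emph{strict} fibers of $q$ and $f$ are canonically isomorphic by pullback-pasting, and since both $f$ and its pullback $q$ are level $q$-fibrations, the inclusions of strict fibers into the uncorrected homotopy fibers $F_B q$ and $F_B f$ are level equivalences (this is the parenthetical in \autoref{space_cofiber_fiber} together with \autoref{ex:smashing_spaces_is_left_Quillen}). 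That zig-zag suffices to transport the vanishing of $\R\pi_{*,b}$ from $F_B f$ to $F_B q$ and run the long exact sequence; no isomorphism is needed. So the ``subtlety'' you flag in the last paragraph is resolved not by a commutation argument but simply by replacing ``isomorphism'' with ``equivalent through the common strict fiber.''
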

\begin{cor}[Dual gluing lemma]
	Any diagram of spectra over $B$
	\[ \xymatrix @R=1.7em{
		Y \ar[d]^-\sim & W \ar@{<-}[l]_-p \ar@{<-}[r]^-f \ar[d]^-\sim & Z \ar[d]^-\sim \\
		Y' & W' \ar@{<-}[l]^{p'} \ar@{<-}[r]_-{f'} & Z'
	} \]
	in which the vertical maps are stable equivalences and both $p$ and $p'$ are level $q$-fibrations, induces a stable equivalence of pullback spectra
	\[ Y \times_W Z \to Y' \times_{W'} Z' \]
\end{cor}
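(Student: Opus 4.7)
The plan is to mirror the proof of the gluing lemma (Corollary~\ref{cor:spectra_gluing}), substituting homotopy fibers for mapping cones. Form the strict pullback squares
\[ \xymatrix @R=1.7em{
Y \times_W Z \ar[r]^-{\bar f} \ar[d]_-{\bar p} & Y \ar[d]^-{p} \\
Z \ar[r]_-{f} & W
} \qquad \xymatrix @R=1.7em{
Y' \times_{W'} Z' \ar[r]^-{\bar f'} \ar[d]_-{\bar p'} & Y' \ar[d]^-{p'} \\
Z' \ar[r]_-{f'} & W'
} \]
Since pullbacks of level $q$-fibrations are level $q$-fibrations, both $\bar p$ and $\bar p'$ are level $q$-fibrations. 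Moreover, the pasting law for pullbacks applied to the basepoint section $B \hookrightarrow Z$ (which composes with $f$ to the basepoint section $B \hookrightarrow W$) yields strict identifications $F_B \bar p \cong F_B p$ and $F_B \bar p' \cong F_B p'$ as spectra over $B$.

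Step one is to show the induced map $F_B p \to F_B p'$ is a stable equivalence. Since $p$ and $p'$ are level $q$-fibrations, their strict fibers already compute the derived homotopy fibers, so $\R\pi_{*,b}(F_B p) = \pi_{*,b}(F_B p)$ and similarly for $p'$. Applying Lemma~\ref{lem:LES2} to $p$ and to $p'$ and comparing via the vertical maps, we obtain a map of long exact sequences of derived homotopy groups in which the terms $\R\pi_{n,b}(Y) \to \R\pi_{n,b}(Y')$ and $\R\pi_{n,b}(W) \to \R\pi_{n,b}(W')$ are isomorphisms by assumption. The five-lemma gives that $\pi_{n,b}(F_B p) \to \pi_{n,b}(F_B p')$ is an isomorphism for all $n$ and $b$, i.e., $F_B p \to F_B p'$ is a stable equivalence. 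Under the identifications above, $F_B \bar p \to F_B \bar p'$ is also a stable equivalence.

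Step two is to apply Lemma~\ref{lem:LES2} a second time, now to $\bar p$ and $\bar p'$, producing another map of long exact sequences. By step one the maps $\R\pi_{*,b}(F_B \bar p) \to \R\pi_{*,b}(F_B \bar p')$ are isomorphisms, and by assumption the maps $\R\pi_{*,b}(Z) \to \R\pi_{*,b}(Z')$ are isomorphisms. A second application of the five-lemma yields that $\R\pi_{n,b}(Y \times_W Z) \to \R\pi_{n,b}(Y' \times_{W'} Z')$ is an isomorphism for every $n$ and $b$, which is the desired stable equivalence.

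The main obstacle is the technical point that the long exact sequence from Lemma~\ref{lem:LES2} must be natural in vertical maps between the source squares. This follows from the functoriality of $R^{lv}$ and of the uncorrected homotopy fiber $F_B(-)$, together with the observation that when $p$ is already a level $q$-fibration the comparison $F_B p \to F_B R^{lv}p$ is a stable equivalence; thus the strict and derived constructions agree canonically on the level-fibrant inputs appearing in our diagram, and the five-lemma applies without further deformation.
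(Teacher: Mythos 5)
Your argument is correct and is exactly the dualization the paper intends by ``Dual to the above'': it mirrors the gluing-lemma proof by comparing strict fibers of $p$ and $\bar p$ (which are canonically homeomorphic by the pasting law, just as the mapping cones of $i$ and $\bar i$ are in the pushout case), and then applying Lemma~\ref{lem:LES2} and the five-lemma twice. The closing paragraph correctly addresses the only subtlety, that $F_B(-)$ models $F_B R^{lv}(-)$ on level $q$-fibrations, so the strict identification $F_B\bar p \cong F_B p$ transports the stable equivalence where it is needed.
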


\begin{proof}
	Dual to the above.
\end{proof}

\begin{ex}
	There is a natural equivalence $(\R F)f \simeq (\R\Omega_B)(\L C)f$ for any map $f\colon X \to Y$ of parametrized spectra. To see this, restrict attention to $X$ and $Y$ both free $f$-cofibrant and level $h$-fibrant. Then the output of each of these functors is still level $h$-fibrant, so it suffices to construct under these assumptions a natural map $F_B f \to \Omega_B C_B f$ that is an equivalence on each fiber. For this we can use the usual recipe from \cite{mmss}, which takes a point in $X$ and path in $Y$ to the concatenation of that path in $Y \subseteq C_B f$ with the canonical path from its endpoint in $X$ to the cone point of $C_B f$.
\end{ex}

The above two gluing lemmas imply that pushouts of spectra over $B$ can be left-derived, and that pullbacks can be right-derived, and that furthermore it does not matter whether we use level or stable equivalences to derive them. This allows us to make the following definition.

Given a strictly commuting square of spectra over $B$,
\[ \xymatrix @R=1.7em{
	X \ar[d] \ar[r] & Y \ar[d] \\
	Z \ar[r] & W,
} \]
we say that it is a \textbf{homotopy pushout square}\index{homotopy!pushout} if the map from the left-derived pushout to $W$ is a stable equivalence, and a \textbf{homotopy pullback square}\index{homotopy!pullback} if the map from $X$ to the right-derived pullback is a stable equivalence.
\begin{prop}\label{prop:pushout_equals_pullback}
	A square of spectra over $B$ is homotopy pushout iff it is homotopy pullback.
\end{prop}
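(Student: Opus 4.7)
The plan is to identify both conditions with a single equivalence between horizontal (or vertical) cofibers, using the relation between $\L C_B$ and $\R F$ provided by the example just preceding the proposition. First I would reduce to the convenient case where the square is levelwise nice: apply $Q$ and then $P$ (and/or $R^{lv}$) to make the four spectra freely $f$-cofibrant and level $h$-fibrant, and factor the top leg $X \to Y$ as a level $h$-cofibration followed by a level equivalence, and the right leg $Y \to W$ as a level equivalence followed by a level $q$-fibration. After these (harmless by the gluing/dual gluing lemmas \autoref{cor:spectra_gluing} and its dual, plus \autoref{cor:spectra_left_proper}, \autoref{cor:spectra_right_proper}) replacements, the strict pushout computes $\L$ of the pushout and the strict pullback computes $\R$ of the pullback.

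Next I would introduce the two auxiliary maps: the induced map of horizontal cofibers
\[ \Phi\colon \L C_B(X \to Y) \to \L C_B(Z \to W) \]
and the induced map of horizontal fibers
\[ \Psi\colon \R F(X \to Y) \to \R F(Z \to W). \]
Under the reductions above one has isomorphisms $Y/_B X \cong (Y\cup_X Z)/_B Z$ (since pushouts commute with the collapse onto $Z$) and, dually, $\mathrm{fib}(Y\to W) \cong \mathrm{fib}(Y\times_W Z \to Z)$. Plugging these into the long exact sequence of \autoref{lem:LES} (applied to $Y\cup_X Z \to W$) and \autoref{lem:LES2} (applied to $X \to Y\times_W Z$), one sees by the five-lemma that the square is a homotopy pushout iff $\Phi$ is a stable equivalence, and a homotopy pullback iff $\Psi$ is a stable equivalence.

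Now I invoke the natural equivalence $\R F(-) \simeq \R\Omega_B\, \L C_B(-)$ recorded immediately above the proposition statement. This gives a commuting square of stable equivalences identifying $\Psi$ with $\R\Omega_B\Phi$, so it suffices to show $\R\Omega_B$ reflects stable equivalences of parametrized spectra. This is immediate from the definition of the derived homotopy groups: on level $q$-fibrant spectra, $\Omega_B$ acts fiberwise, so $\R\pi_{n,b}(\R\Omega_B X) \cong \R\pi_{n+1,b}(X)$, and as $n$ ranges over $\Z$ an isomorphism on the left for all $n,b$ is the same as an isomorphism on the right for all $n,b$.

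The main obstacle is purely bookkeeping: one has to carry out the cofibrant/fibrant replacements simultaneously so that both pushouts and pullbacks are computed by the strict constructions, and one must keep track of enough $h$-cofibrancy/$h$-fibrancy hypotheses to invoke the long exact sequences and the $\R F \simeq \R\Omega_B\L C_B$ comparison on the same diagram. No new homotopical input is required; the result is formally equivalent to the statement that $\R\Omega_B$ is fully faithful on the derived category at the level of $\R\pi_{*,b}$, which is a direct consequence of the definition of stable equivalence in this paper.
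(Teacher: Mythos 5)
Your argument is correct, and it is essentially a worked-out version of the \emph{alternate} proof that the paper mentions in its final sentence. Where the paper's primary argument builds explicit point-set models (the double mapping cylinder and co-cylinder) and reduces to the non-parametrized case of \cite{mmss} by checking $\pi_*$-isomorphisms on each fiber, you instead stay inside the parametrized category and reduce ``homotopy pushout'' to ``$\Phi$ is a stable equivalence'' and ``homotopy pullback'' to ``$\Psi$ is a stable equivalence'' via five-lemma arguments in \autoref{lem:LES} and \autoref{lem:LES2}, using the isomorphism $Y/_BX \cong (Y\cup_XZ)/_BZ$ and its dual. Then you invoke the natural identification $\R F \simeq \R\Omega_B\,\L C_B$ recorded just above the proposition, and finish by observing that $\R\Omega_B$ reflects stable equivalences because the derived homotopy groups of a level-fibrant spectrum shift by one under $\Omega_B$. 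This version of the argument buys some cleanliness: it is fully internal to the parametrized setting and does not require verifying that the double (co)cylinders have the right fiberwise behavior before citing the non-parametrized fact. The paper's version is shorter because it imports the whole of \cite{mmss}'s result rather than re-deriving it. Both routes depend on the same initial reductions (apply $PQ$, be careful about which constructions compute derived functors), and in each case the real content is that derived loops detect equivalences, made explicit in your step 5 and implicit in the citation to \cite{mmss} in theirs.
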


\begin{proof}
	By construction, these conditions are unchanged if we replace the square up to stable equivalence, so we can first apply $PQ$ to make all four spectra free $f$-cofibrant and level $h$-fibrant. Then using \autoref{prop:h_cofibrations_pushout_product}, since $X$ is level $h$-cofibrant the left-derived functor of the pushout is modeled by the double mapping cylinder
	\[ Y \cup_X (X \barsmash I_+) \cup_X Z, \]
	and since $W$ is level $h$-fibrant the right-derived functor of the pullback is modeled by the double mapping co-cylinder
	\[ Y \times_W \barF_*(I_+,W) \times_W Z. \]
	In this setting, both of these constructions produce a level $h$-fibrant spectrum, so each of the two maps
	\begin{align*}
		Y \cup_X (X \barsmash I_+) \cup_X Z &\to W \\
		X &\to Y \times_W \barF_*(I_+,W) \times_W Z
	\end{align*}
	is a stable equivalence iff it is a $\pi_*$-isomorphism on each fiber. This reduces the proposition to the non-parametrized case, handled in \cite{mmss}. An alternate proof proceeds by using the equivalence $F_B f \simeq \Omega_B C_B f$ twice to show that the ``total homotopy fiber'' of the square is contractible iff the ``total homotopy cofiber'' is contractible.
\end{proof}

The above proof tells us that a square of level fibrant spectra is homotopy pushout/pullback iff each of the squares of fiber spectra
\[ \xymatrix @R=1.7em{
	X_b \ar[d] \ar[r] & Y_b \ar[d] \\
	Z_b \ar[r] & W_b,
} \]
is homotopy pushout/pullback, so we can also copy over the Mayer-Vietoris sequence from the non-parametrized case.

\begin{cor}
	There is a natural long exact sequence for homotopy pushout/pullback squares
	\[ \xymatrix{
		\ldots \ar[r] & \R\pi_{n,b}(X) \ar[r] & \R\pi_{n,b}(Y) \oplus \R\pi_{n,b}(Z) \ar[r] & \R\pi_{n,b}(W) \ar[r] & \R\pi_{n-1,b}(X) \ar[r] & \ldots
	}. \]
\end{cor}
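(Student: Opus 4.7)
The plan is to reduce to the non-parametrized Mayer--Vietoris sequence by working fiberwise, after first replacing the square with a well-behaved model. Concretely, I would apply $PQ$ to the entire square, obtaining a stably-equivalent commuting square whose four vertices are freely $f$-cofibrant and level $h$-fibrant. As observed in the proof of \autoref{prop:pushout_equals_pullback}, on such a square the left-derived pushout is modeled by the strict double mapping cylinder and the right-derived pullback by the strict double mapping co-cylinder, and both are level $h$-fibrant. Since the squares of fibers $(X_b, Y_b, Z_b, W_b)$ inherit both the homotopy pushout and homotopy pullback properties from the global square, $\R\pi_{n,b}$ of each vertex coincides with the ordinary stable homotopy group $\pi_n$ of the corresponding fiber spectrum.

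Next, I would build the sequence by splicing together two Puppe sequences. Applying \autoref{lem:LES} to $X \to Y$ and to $Z \to W$ produces two long exact sequences in the $\R\pi_{n,b}$, involving the homotopy cofibers $\L C_B(X \to Y)$ and $\L C_B(Z \to W)$. The map of spans induced by the square, viewed as a morphism of pushout diagrams $[Y \leftarrow X \to I_{+} \barsmash X]$ over to $[W \leftarrow Z \to I_{+} \barsmash Z]$, satisfies the hypotheses of the gluing lemma \autoref{cor:spectra_gluing} after our cofibrant/fibrant replacement, so the induced map $\L C_B(X \to Y) \to \L C_B(Z \to W)$ is a stable equivalence. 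Equivalently: in a homotopy pushout square the two parallel mapping cones are canonically identified in the homotopy category.

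With this identification in hand, the Mayer--Vietoris sequence is assembled in the standard way. The map $\R\pi_{n,b}(Y) \oplus \R\pi_{n,b}(Z) \to \R\pi_{n,b}(W)$ is the difference of the two maps induced by the legs of the square, while the map $\R\pi_{n,b}(X) \to \R\pi_{n,b}(Y) \oplus \R\pi_{n,b}(Z)$ is the sum of the images under the two remaining legs (with appropriate sign). The boundary $\R\pi_{n,b}(W) \to \R\pi_{n-1,b}(X)$ is obtained by composing the Puppe boundary $\R\pi_{n,b}(\L C_B(Z \to W)) \to \R\pi_{n-1,b}(Z)$ with the inverse of the cofiber equivalence and the Puppe boundary for $X \to Y$; exactness follows by a diagram chase against the two Puppe sequences. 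Naturality in the square is immediate because every step in this construction (replacement by $PQ$, the cofiber functor, and the Puppe sequence) is natural up to the zig-zags that define maps in the homotopy category.

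The only nontrivial ingredient is the identification of the two parallel cofibers, which is an invocation of the gluing lemma proven above; everything else is a mechanical splicing that works verbatim as in the non-parametrized setting (\cite{mmss}) or in stable homotopy theory in general. There is no real obstacle; the content of the corollary is essentially a restatement of \autoref{prop:pushout_equals_pullback} plus \autoref{lem:LES}.
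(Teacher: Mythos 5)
Your first paragraph correctly reproduces the paper's entire idea: after $PQ$-replacement the square is level $h$-fibrant, the homotopy pushout/pullback property is detected on each fiber square $(X_b,Y_b,Z_b,W_b)$, and $\R\pi_{n,b}$ becomes ordinary $\pi_n$ of the fiber. At that point the paper simply imports the non-parametrized Mayer--Vietoris sequence from \cite{mmss}, and you could have stopped there.

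Instead you press on to an explicit splicing of Puppe sequences via \autoref{lem:LES}, and this is where there is a genuine gap. To splice you need to know that the parallel homotopy cofibers $\L C_B(X\to Y)$ and $\L C_B(Z\to W)$ are equivalent, but you argue this by invoking the gluing lemma \autoref{cor:spectra_gluing} on a ``morphism of pushout diagrams'' running from $[Y\leftarrow X\to I_+\barsmash X]$ to $[W\leftarrow Z\to I_+\barsmash Z]$. The gluing lemma requires the vertical maps of that diagram to be stable equivalences, and in your situation the verticals $X\to Z$, $Y\to W$, and $I_+\barsmash X\to I_+\barsmash Z$ are the legs of the square, which are completely arbitrary. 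So the hypotheses of \autoref{cor:spectra_gluing} are not satisfied and the lemma does not apply.

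The fact you want is true, but for a different reason: in a \emph{strict} pushout square along a level $h$-cofibration, the strict cofibers of the two parallel arrows are literally isomorphic, $W/_B Z \cong Y/_B X$, by the universal property of pushouts; and under the cofibrancy conditions secured by your replacement these strict cofibers compute the homotopy cofibers. Replacing a general homotopy pushout square by a stably equivalent strict one (e.g.\ factoring $X\to Y$ into a level $h$-cofibration followed by a level equivalence and then taking the strict pushout) finishes the argument. If you simply substitute this observation for the gluing-lemma step, the rest of your splicing goes through; or, more economically, once the first paragraph has reduced everything to fibers, cite the non-parametrized Mayer--Vietoris of \cite{mmss} directly as the paper does and drop the splicing entirely.
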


\beforesubsection
\subsection{The stable model structure}\aftersubsection

The results in this section work for both sequential spectra and orthogonal spectra.

To finish the process of proving that $f_!$ and $\barsmash$ preserve stable equivalences, we need a model structure whose weak equivalences are the stable equivalences.

\begin{thm}[Stable model structure]\label{thm:stable_model_structure}
	There is a proper model structure on $\Osp(B)$ and on $\Psp(B)$ whose weak equivalences are the stable equivalences. Each of these model structures is cofibrantly generated by the sets of maps
	\[ \begin{array}{ccrll}
		I &=& \{ \ F_k\left[ S^{n-1}_{+B} \to D^n_{+B} \right] & : n,k \geq 0, & D^n \to B \ \} \\
		J &=& \{ \ F_k\left[ D^n_{+B} \to (D^n \times I)_{+B} \right] &: n,k \geq 0, & (D^n \times I) \to B \ \} \\
		& \cup & \{ \ k_{i,j} \ \square \ \left[ S^{n-1}_{+B} \to D^n_{+B} \right] & : i,j,n \geq 0, & D^n \to B \ \}.
	\end{array} \]
\end{thm}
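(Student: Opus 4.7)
The plan is to construct the stable model structure as a Bousfield-type localization of the level model structure (\autoref{prop:level_model_structure}), by enlarging the generating acyclic cofibrations $J$ so that their injective objects are the level-fibrant spectra on which the adjoint bonding maps are weak equivalences. The stable fibrant objects will then be the $\Omega$-spectra, and the stable equivalences between stably fibrant spectra will coincide with level equivalences. I verify the six hypotheses of \autoref{prop:construct_cofibrantly_generated_model_category}.

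First I would fix the maps $k_{i,j}$: they are the mapping-cylinder factorizations of the standard stabilization maps $\lambda_{i,j}\colon F_{i+1}(\Sigma_B D^j_{+B}) \to F_i(D^j_{+B})$ from \autoref{ex:equivalent_thom_spectra}, chosen so that each $k_{i,j}$ is a free $h$-cofibration and a stable equivalence between freely cofibrant spectra. (The pushout-products $k_{i,j} \square [S^{n-1}_{+B}\to D^n_{+B}]$ are then free $h$-cofibrations by \autoref{prop:spectra_pushout_product}.) Hypothesis (1) on $W$ is immediate: stable equivalences are detected by the collection of functors $\R\pi_{n,b}$, so they satisfy 2-out-of-3, 2-out-of-6, and closure under retracts. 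Hypotheses (2) and (3) (the simplified smallness condition of \autoref{df:simple_smallness}) hold because each domain of a map in $I\cup J$ is a finite free spectrum on a compact space, and every $I$- or $J$-cell complex is assembled by sequential composition of pushouts along level closed inclusions with weak Hausdorff quotients.

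Next I would verify (4): $J$-cell complexes lie in $W \cap I\text{-cof}$. The maps in $J$ are free $h$-cofibrations (hence $I$-cofibrations, since $I$-cof contains all free cofibrations), and each one is a stable equivalence — the first family because they are level acyclic, the second family $k_{i,j}\square[\cdots]$ because $k_{i,j}$ is a stable equivalence and the pushout-product with a free $h$-cofibration between freely $h$-cofibrant spectra is a stable equivalence by \autoref{prop:reedy_pushout_product} combined with the gluing lemma \autoref{cor:spectra_gluing}. Then transfinite compositions and pushouts of such maps are stable equivalences by the colimit and gluing lemmas (\autoref{cor:spectra_left_proper}, \autoref{prop:coproduct_colimit_stable_equivalences}, \autoref{cor:spectra_gluing}). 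Hypothesis (5), that $I$-injective maps are stable equivalences and $J$-injective, is standard: $I$-inj consists of level acyclic $q$-fibrations (by the level model structure), which are level equivalences (hence stable equivalences) and have the RLP against all free $h$-cofibrations, in particular against $J$.

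The main obstacle is condition (6), for which I would prove $W \cap J\text{-inj} \subseteq I\text{-inj}$. A map is $J$-injective precisely when it is a level $q$-fibration (from the first family of $J$) whose adjoint bonding-map squares
\[
\xymatrix @R=1.5em{
X_i \ar[d] \ar[r] & \R\Omega_B X_{i+1} \ar[d] \\
Y_i \ar[r] & \R\Omega_B Y_{i+1}
}
\]
are homotopy pullbacks (from the $k_{i,j}\square[\cdots]$ family, by the adjunction/pullback-hom calculus of \autoref{lem:pullback_hom_adjunction}). Call these \emph{stable fibrations}; the stably fibrant objects are then the $\Omega$-spectra. For a stable fibration $p\colon X\to Y$ that is also a stable equivalence, I argue it is a level acyclic $q$-fibration: fibers over $b\in B$ are non-parametrized stable fibrations with equivalent derived homotopy groups, which forces each fiber map $X_b \to Y_b$ to be a level equivalence (via the usual induction with the $\Omega$-spectrum condition and the long exact sequences of \autoref{lem:LES}, \autoref{lem:LES2}). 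Since $p$ is already a level $q$-fibration, this suffices. Finally, both properness statements are exactly \autoref{cor:spectra_left_proper} and \autoref{cor:spectra_right_proper}, the latter generalized to pullbacks along $q$-fibrations by the same fiberwise reduction using $PQ$ to achieve free $f$-cofibrant, level $h$-fibrant replacement.
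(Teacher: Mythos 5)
Your proposal follows the paper's proof structure very closely: both verify the six conditions of \autoref{prop:construct_cofibrantly_generated_model_category}, both characterize $J$-injective maps as level $q$-fibrations satisfying a homotopy-pullback adjoint-bonding condition, and both invoke \autoref{cor:spectra_left_proper} and \autoref{cor:spectra_right_proper} for properness. However there is a real gap in your argument for condition (4). To show the maps $k_{i,j}\square[S^{n-1}_{+B}\to D^n_{+B}]$ are stable equivalences you cite \autoref{prop:reedy_pushout_product}, but that result concerns \emph{level} equivalences, and there is no way to combine it with the gluing lemma to conclude something is a \emph{stable} equivalence. The ingredient you would need --- that smashing with a freely cofibrant spectrum preserves stable equivalences --- is \autoref{prop:stably_derived}, whose proof relies on the stable model structure, so invoking it here would be circular. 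The paper's actual argument first uses left-properness to reduce to showing each map $F_{i+j}S^j\barsmash Z_{+B}\to\Cyl_{i,j}\barsmash Z_{+B}$ is a stable equivalence, and then, crucially, applies the monoidal fibrant replacement $P$ (which commutes with $\barsmash$) to make both sides level $h$-fibrant; only after that can the stable equivalence be checked on each fiber over $B$, where it becomes the non-parametrized $\pi_*$-isomorphism $\lambda_{i,j}$ smashed with a well-based space, handled by \cite[7.4(i)]{mmss} via \autoref{ex:smash_with_cell_complex_is_derived}. That $P$-reduction is the technical core of the whole construction (cf.\ \autoref{fixed_may_sigurdsson_2}) and your proof omits it.

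Two smaller issues. First, the paper's $\lambda_{i,j}$ is a map of \emph{non-parametrized} spectra $F_{i+j}S^j\to F_iS^0$ over $*$; the only parametrization in a generating acyclic cofibration comes from the pushout-product with $[S^{n-1}_{+B}\to D^n_{+B}]$. You describe $\lambda_{i,j}$ as $F_{i+1}(\Sigma_B D^j_{+B})\to F_i(D^j_{+B})$, which is already parametrized over $B$; taking a further pushout-product with $B$-cells gives a different generating set than the one in the theorem statement, so the smallness argument and the characterization of $J$-injective maps would both need to be redone. Second, in condition (6) you argue that the fiber maps $X_b\to Y_b$ are level equivalences and that "this suffices" because $p$ is a level $q$-fibration. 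But the projections $X_i\to B$ and $Y_i\to B$ are not assumed to be fibrations, so equivalence of fibers over $B$ does not by itself give a weak equivalence of the total spaces $X_i\to Y_i$. The paper handles this by showing the strict fiber spectrum of $p$ is levelwise weakly contractible (a weak $\Omega$-spectrum with vanishing homotopy groups), concluding that each $p_i$ is an equivalence only on the components of $Y_i$ meeting the basepoint section $i(B)$, and then bootstrapping: $\R\Omega_B p_i$ is an equivalence on every component, and the homotopy-pullback condition \eqref{eq:fibration_of_spectra_means_this_is_a_pullback} transports this down one index to show $p_{i-1}$ is a full weak equivalence. That extra bootstrapping step needs to be made explicit.
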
\index{stable model structure}

\begin{rmk}
	At the time of \cite{ms}, it was not known whether this model structure existed. It was referred to as the ``$q$-model structure.'' The ``$qf$-model structure'' built in \cite{ms} is similar except that one restricts further to those cells for which the maps $S^{n-1} \to D^n$ are also $f$-cofibrations. As a result, its cofibrations are all free $f$-cofibrations, not just free $h$-cofibrations.
\end{rmk}

In the statement of \autoref{thm:stable_model_structure}, the pushout-product is carried out using the operation of smashing a spectrum over $*$ with a space over $B$, see \autoref{sec:smash_with_space}. The map of non-parametrized spectra $k_{i,j}$ includes the front end of the mapping cylinder $\Cyl_{i,j}$ for a map
\[ \lambda_{i,j}\colon F_{i+j} S^j \ra F_i S^0. \]
As in \autoref{ex:equivalent_thom_spectra}, in sequential spectra $\lambda_{i,j}$ is just a truncation, while in orthogonal spectra it arises from the map of spaces $S^j \to \mathscr J(\R^i,\R^{i+j})$ that identifies $S^j$ with the fiber over the standard embedding $\R^i \to \R^{i+j}$. So $\Cyl_{i,j}$ is defined as the pushout
\begin{equation}\label{eq:expanded_cylinder}
\xymatrix @R=1.7em{
	F_{i+j} S^j \ar@{<->}[r]^-\cong \ar[d]_-{\lambda_{i,j}} & \{1\}_+ \sma F_{i+j} S^j \ar[d]_-{\lambda_{i,j}} \ar[r] \ar@{}[rd]|(.85)*\txt{\huge $\ulcorner$} & I_+  \sma F_{i+j} S^j \ar[d] & \{0\}_+ \sma F_{i+j} S^j \ar[l] \ar[ld]^-{k_{i,j}} \\
	F_i S^0 \ar@{<->}[r]^-\cong & \{1\}_+ \sma F_i S^0 \ar[r] & \Cyl_{i,j}
}
\end{equation}
where all the horizontal maps are all free $q$-cofibrations by \autoref{ex:smashing_spaces_is_left_Quillen}. This implies all the horizontal maps in the diagram below are free $q$-cofibrations, hence so is $k_{i,j}$.
\[ \xymatrix @R=1.7em{
	& \{0,1\}_+ \sma F_{i+j} S^j \ar[d] \ar[r] \ar@{}[rd]|(.85)*\txt{\huge $\ulcorner$} & I_+ \sma F_{i+j} S^j \ar[d] \\
	\{0\}_+ \sma F_{i+j} S^j \ar[r] \ar@/_2em/[rr]_-{k_{i,j}} & \{0\}_+ \sma F_{i+j} S^j \vee \{1\}_+ \sma F_i S^0  \ar[r] & \Cyl_{i,j}
} \]

\begin{proof}
	We verify the six conditions listed in \autoref{prop:construct_cofibrantly_generated_model_category}.
	\begin{enumerate}
		\item \textbf{$W$ is closed under 2-out-of-3 and retracts.} The stable equivalences are defined by a collection of functors $\R \pi_{n,b}(-)$, so this is automatic.
		
		\item \textbf{$I$ satisfies the simplified smallness condition.} A map from $F_k S^{n-1}_{+B}$ into a spectrum $X$ is determined by a map of retractive spaces $S^{n-1}_{+B} \to X_k$, which is determined by a map $S^{n-1} \to X_k$ of unbased spaces over $B$. If $X$ is an $I$-cell complex with skeleta $X^{(m)}$, then each map $X^{(m)} \to X^{(m+1)}$ is a free $q$-cofibration, therefore a level $h$-cofibration by \autoref{prop:free_implies_level}, therefore a level closed inclusion. Since the sphere is compact and the quotients $X^{(m)}_k/X^{(m-1)}_k$ are weak Hausdorff, $S^{n-1}$ factors through some skeleton $X^{(m)}_k$. This gives the desired factorization of $F_k S^{n-1}_{+B}$ through some $X^{(m)}$. In fact, the factorization is unique as soon as $m$ is large enough that it exists.
		
		\item \textbf{$J$ satisfies the simplified smallness condition.} Each map in $J$ is a $q$-cofibration by the discussion just before this proof. Therefore so they are also level closed inclusions. By the point above, any map from $F_k D^n_{+B}$ into a $J$-cell complex therefore factors through some finite stage. If instead we take a map from the pushout
		\[ \xymatrix @R=1.7em{
			F_{i+j} S^j \barsmash F_k S^{n-1}_{+B} \ar[r] \ar[d] & Cyl_{i,j} \barsmash F_k S^{n-1}_{+B} \\
			F_{i+j} S^j \barsmash F_k D^n_{+B}
		} \]
		into a $J$-cell complex, the map on $Cyl_{i,j} \barsmash F_k S^{n-1}_{+B}$ is determined by its restriction to each of the terms in \eqref{eq:expanded_cylinder}, smashed with $F_k S^{n-1}_{+B}$. Using \autoref{barsmash_free}, we rewrite each such term as a free spectrum on a compact retractive space. Therefore each term separately factors through some level $X^{(m)}$. Taking the maximum of the resulting values of $m$, these factorizations are unique, and therefore agree along the maps in \eqref{eq:expanded_cylinder}, so they give a factorization of all of $Cyl_{i,j} \barsmash F_k S^{n-1}_{+B}$ through some finite level of the colimit system. Repeating the same argument with the rest of the pushout just above, we conclude that the pushout also factors through some finite level $X^{(m)}$.
		
		\item \textbf{$J$-cell complexes are in $W$ $\cap$ $I$-cof.}\footnote{It's worth pointing out that the earlier proof of the $q$-model structure (see e.g. \cite{hu_duality}) was nearly complete, but had a gap in this step. As explained in \cite{ms}, this step relies in a critical way on the left-properness statement in \autoref{cor:spectra_left_proper}.} Again, we already know they are in $I$-cof. First we argue that each map in $J$ is a stable equivalence. For the first set of maps in $J$ this is clear because they are level equivalences. For the second set, we draw the following diagram in which the square is a pushout.
		\[ \xymatrix @R=1.7em{
			F_{i+j} S^j \barsmash S^{n-1}_{+B} \ar[r] \ar[d]  \ar@{}[rd]|(.85)*\txt{\huge $\ulcorner$} & Cyl_{i,j} \barsmash S^{n-1}_{+B} \ar[d] \ar[rd] \\
			F_{i+j} S^j \barsmash D^n_{+B} \ar[r] & Y \ar[r] & Cyl_{i,j} \barsmash D^n_{+B}
		} \]
		Since the left vertical map is a $q$-cofibration, to prove that the map from $Y$ into the final term is a stable equivalence, by \autoref{cor:spectra_left_proper} it suffices to argue that both horizontal maps of the form
		\[ F_{i+j} S^j \barsmash Z_{+B} \to Cyl_{i,j} \barsmash Z_{+B} \]
		are stable equivalences. In each case, the source and target are level $h$-cofibrant so we may apply $P$ to get
		\[ \xymatrix @R=1.7em{
			F_{i+j} [ S^j \barsmash (Z \times_B B^I)_{+B} ] \ar[r]^-\sim & F_{i+j} [ S^j \sma I_+ \barsmash (Z \times_B B^I)_{+B} ] \ar[r]  & Cyl_{i,j} \barsmash (Z \times_B B^I)_{+B} \\
			& F_{i+j} [ S^j \sma \{1\}_+ \barsmash (Z \times_B B^I)_{+B} ] \ar[r]^-\sim \ar[u] & F_{i} [ S^0 \barsmash (Z \times_B B^I)_{+B} ] \ar[u] \\
		} \]
		The first marked equivalence is a level equivalence. The second is a stable equivalence, because on each fiber it is the $\pi_*$-isomorphism $\lambda_{i,j}$ smashed with the well-based space $[(Z \times_B B^I)_b]_+$, see \autoref{ex:smash_with_cell_complex_is_derived}. Since the vertical maps are level $h$-cofibrations, all horizontal maps are therefore stable equivalences by \autoref{cor:spectra_left_proper}, finishing the argument.
		
		Now we know that each individual map in $J$ is a stable equivalence, and also a level $h$-cofibration (using the discussion before this proof and \autoref{prop:spectra_pushout_product}). Any coproduct of such maps is an equivalence by \autoref{prop:coproduct_colimit_stable_equivalences}, and any pushout is by \autoref{cor:spectra_left_proper}. Then a sequential colimit of such is an equivalence by \autoref{prop:coproduct_colimit_stable_equivalences} again.
		
		\item \textbf{$I$-inj $\subseteq$ $W$ $\cap$ $J$-inj.} The work here is to characterize the classes $I$-inj and $J$-inj more explicitly. Let $p\colon X \to Y$ be a map of spectra over $B$. The adjunction between free spectra and the forgetful functor to spaces implies that $p$ is $I$-injective iff each level $p_n\colon X_n \to Y_n$ is an acyclic $q$-fibration. Moreover, $p$ is $J$-injective iff
		\begin{itemize}
			\item each $p_n$ is just a $q$-fibration (the first set of maps in $J$),
			\item and the map of retractive spaces $\Hom_\square(k_{i,j},p)$ over $B$ is an acyclic $q$-fibration (by \autoref{lem:pullback_hom_adjunction}).
		\end{itemize}
		Since $k_{i,j}$ is always a $q$-cofibration, when $p$ is a level $q$-fibration the map $\Hom_\square(k_{i,j},p)$ is also a $q$-fibration. So we can rearrange the above necessary and sufficient condition for $p$ to be $J$-injective:
		\begin{itemize}
			\item each $p_n$ is a $q$-fibration,
			\item and $\Hom_\square(k_{i,j},p)$ is a weak equivalence.
		\end{itemize}
		It will be convenient to rearrange this one more time. Writing out the definition of $\Hom_\square(k_{i,j},p)$ as
		\begin{equation}\label{eq:horrible_map}
		\barmap_*(\Cyl_{i,j},X) \to \barmap_*(F_{i+j} S^j,X) \times_{\barmap_*(F_{i+j} S^j,Y)} \barmap_*(\Cyl_{i,j},Y),
		\end{equation}
		we see using the level model structure that when $p$ is a level $q$-fibration, the map
		\[ \barmap_*(F_{i+j} S^j,X) \to  \barmap_*(F_{i+j} S^j,Y) \]
		is a $q$-fibration of spaces, hence the pullback is a homotopy pullback. Therefore we can replace the terms of the form $\barmap_*(\Cyl_{i,j},Y)$ with the equivalent terms $\barmap_*(F_i S^0,Y)$\footnote{One might think we need to assume $Y$ is $q$-fibrant for this, but that is not necessary. The inclusion $F_i S^0 \to \Cyl_{i,j}$ is a homotopy equivalence of spectra, in the sense that it has an inverse up to homotopy. The functor $\barmap_*(-,Y)$ preserves this homotopy equivalence with no assumptions on $Y$.}, giving the weakly equivalent map
		\[ \barmap_*(F_{i} S^0,X) \to \barmap_*(F_{i+j} S^j ,X) \times_{\barmap_*(F_{i+j} S^j,Y)} \barmap_*(F_{i} S^0,Y). \]
		Using the universal property of free spectra, this map is homeomorphic to
		\[ X_i \to \Omega_B^j X_{i+j} \times_{\Omega_B^j Y_{i+j}} Y_i. \]
		We also verify that $X_i \to \Omega_B^j X_{i+j}$ is the map we expect by tracing through the composite $F_{i+j} S^j \to F_i S^0 \to X$ at spectrum level $i+j$. In summary, $p$ is $J$-injective iff
		\begin{itemize}
			\item each $p_n$ is a $q$-fibration,
			\item and the following square is a homotopy pullback, cf \cite[12.5.6]{ms}.
		\end{itemize}
		\begin{equation}\label{eq:fibration_of_spectra_means_this_is_a_pullback}
		\xymatrix @R=1.7em{
			X_i \ar[r] \ar[d]^-{p_i} & \Omega^j_B X_{i+j} \ar[d]^-{\Omega_B^j p_{i+j}} \\
			Y_i \ar[r] & \Omega^j_B Y_{i+j}
		}
		\end{equation}
		The functors $\Omega_B$ in the square above are not derived, but we can replace them with their derived functors without changing the condition (assuming the $p_n$ are fibrations). To prove this, we model $\R\Omega_B$ by picking any level equivalence from $p\colon X \to Y$ to another level $q$-fibration $p'\colon X' \to Y'$ in which $Y'$ is level $q$-fibrant. This gives the square below on the left, which is a homotopy pullback. Since its vertical maps are fibrations, their fibers are fibrant over $B$, hence when we take $\Omega^j_B$ they are still equivalent. Therefore the square on the right induces an equivalence on the strict fibers of the vertical maps. Since we already know the vertical maps are fibrations, this implies it is a homotopy pullback square.
		\[ \xymatrix @R=1.7em{
			X_{i+j} \ar@{->>}[d]^-{p_{i+j}} \ar[r]^-\sim & X_{i+j}' \ar@{->>}[d]^-{p_{i+j}'} && \Omega^j_B X_{i+j} \ar@{->>}[d]^-{\Omega_B^j p_{i+j}} \ar[r] & \Omega^j_B X_{i+j}' \ar@{->>}[d]^-{\Omega_B^j p_{i+j}'} \\
			Y_{i+j} \ar[r]^-\sim & Y_{i+j}' && \Omega^j_B Y_{i+j} \ar[r] & \Omega^j_B Y_{i+j}'.
		} \]
		Therefore square \eqref{eq:fibration_of_spectra_means_this_is_a_pullback} is a homotopy pullback with strict $\Omega^j_B$ iff it is a homotopy pullback with derived $\Omega^j_B$.
		
		Now we may finish this step of the proof. Suppose $p$ is $I$-injective. Then it is an acyclic $q$-fibration on every level, hence a level equivalence, hence a stable equivalence. It is certainly at least a $q$-fibration on every level. Furthermore the version of \eqref{eq:fibration_of_spectra_means_this_is_a_pullback} with derived $\Omega^j_B$ has both verticals weak equivalences, hence it is a homotopy pullback square. So $p \in W \cap J$-inj.
		
		\item \textbf{$W$ $\cap$ $J$-inj $\subseteq$ $I$-inj.} If $p \in W \cap J$-inj, then by the above criterion, each $p_i$ is a $q$-fibration and the squares \eqref{eq:fibration_of_spectra_means_this_is_a_pullback} are homotopy pullback squares. We just need to prove that each $p_i$ is also a weak equivalence, using the fact that $p$ is a stable equivalence.
		
		Examine the strict fiber spectrum $F$ of the map $p$, which at each spectrum level is the pullback $B \times_{Y_n} X_n$. Because $p$ is a level $q$-fibration, this is equivalent to the homotopy fiber spectrum. By \autoref{lem:LES2}, $F$ is contractible in the sense that its derived homotopy groups vanish. Since $p$ is a level $q$-fibration, $F$ is level $q$-fibrant, so the homotopy groups of its fiber spectra $F_b$ are also zero. The following diagram of pullbacks is helpful for following the rest of the proof.
		\[ \xymatrix @R=1.7em{
			(F_b)_i \ar@{->>}[d] \ar[r] & F_i \ar@{->>}[d] \ar[r] & X_i \ar@{->>}[d]^-{p_i} \\
			\{b\} \ar[r] & B \ar[r] & Y_i
		} \]
		
		In the diagram below, the bottom square is a homotopy pullback with vertical maps fibrations, so the top vertical map is an equivalence. Hence $F_i$ is a weak $\Omega$-spectrum over $B$ with fibrant levels.
		\[ \xymatrix @R=1.7em{
			F_i \ar[d] \ar[r]^-\sim & \Omega^j_B F_{i+j} \ar[d] \\
			X_i \ar[r] \ar[d]^-{p_i} & \Omega^j_B X_{i+j} \ar[d]^-{\Omega_B^j p_{i+j}} \\
			Y_i \ar[r] & \Omega^j_B Y_{i+j}
		} \]
		In conclusion, each fiber spectrum $F_b$ is a weak $\Omega$-spectrum with vanishing homotopy groups. It is standard that this implies the levels $(F_b)_i$ are weakly contractible; for instance the colimit systems below are isomorphic, hence the top one consists of isomorphisms, and since it has zero colimit, the terms are all zero.
		\[ \xymatrix @R=1.7em{
			\pi_n((F_b)_i) \ar@{=}[d] \ar[r] & \pi_{n+1}((F_b)_{i+1}) \ar@{<->}[d]^-\cong \ar[r] & \pi_{n+2}((F_b)_{i+2}) \ar@{<->}[d]^-\cong \ar[r] & \ldots & \pi_{n-i}(F_b) \cong 0 \\
			\pi_n((F_b)_i) \ar[r]^-\cong & \pi_{n}(\Omega (F_b)_{i+1}) \ar[r]^-\cong & \pi_{n}(\Omega^2 (F_b)_{i+2}) \ar[r]^-\cong & \ldots
		} \]
		Since $p_i\colon X_i \to Y_i$ is a $q$-fibration and its fiber $(F_b)_i$ over $i(b) \in Y_i$ is contractible, it is an equivalence on every component of $Y_i$ containing $i(B)$. Therefore $\R\Omega_B p_i$ is an equivalence on every component. By condition \eqref{eq:fibration_of_spectra_means_this_is_a_pullback}, this implies $p_{i-1}$ is a weak equivalence, for every value of $i$.
	\end{enumerate}
	Left and right properness follow immediately from \autoref{cor:spectra_left_proper} and \autoref{cor:spectra_right_proper}.
\end{proof}

The following was established inside the previous proof.
\begin{cor}\label{cor:stable_fibrations}
	A map $p\colon X \to Y$ of spectra over $B$ is a fibration in the stable model structure iff it is a Serre fibration ($q$-fibration) on each level and each square \eqref{eq:fibration_of_spectra_means_this_is_a_pullback} is a homotopy pullback square.
\end{cor}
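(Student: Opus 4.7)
The plan is to simply extract the characterization of $J$-injective maps that was worked out inside step (5) of the proof of \autoref{thm:stable_model_structure}, since by cofibrant generation the fibrations in the stable model structure are precisely the $J$-injective maps. So the entire corollary is a matter of unpacking what it means for a map $p\colon X \to Y$ of spectra over $B$ to have the right lifting property with respect to the generating acyclic cofibrations of \autoref{thm:stable_model_structure}.

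For the first set of generating acyclic cofibrations $F_k[D^n_{+B} \to (D^n\times I)_{+B}]$, the adjunction between $F_k$ and evaluation at level $k$ shows that $p$ lifts against all of these precisely when each $p_n\colon X_n \to Y_n$ is a Serre fibration of retractive spaces over $B$, i.e.\ a level $q$-fibration. For the second set $k_{i,j}\,\square\,[S^{n-1}_{+B}\to D^n_{+B}]$, one applies \autoref{lem:pullback_hom_adjunction} (the adjunction between the pushout-product and the pullback-hom in the setting where we smash a spectrum over $*$ with a space over $B$) to convert the lifting condition into the statement that $\Hom_\square(k_{i,j},p)$ is an acyclic $q$-fibration of retractive spaces over $B$ for every $i,j \geq 0$.

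Next, I would observe that $k_{i,j}$ is itself a free $q$-cofibration (as verified in the discussion preceding the proof of \autoref{thm:stable_model_structure}), so as soon as $p$ is already a level $q$-fibration the pullback-hom $\Hom_\square(k_{i,j},p)$ is automatically a $q$-fibration by the pushout-product formalism. Hence the only remaining condition is that this pullback-hom be a weak equivalence. Writing out $\Hom_\square(k_{i,j},p)$ explicitly and using that $F_iS^0 \hookrightarrow \Cyl_{i,j}$ is a homotopy equivalence of spectra (so $\barmap_*(\Cyl_{i,j},-) \simeq \barmap_*(F_iS^0,-)$ with no fibrancy assumption needed), one rearranges using the universal property of free spectra, exactly as in step (5), into the map
\[ X_i \to \Omega^j_B X_{i+j}\times_{\Omega^j_B Y_{i+j}} Y_i \]
being a weak equivalence. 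Since $p_{i+j}$ is a $q$-fibration, the strict pullback here computes the homotopy pullback, and this map is a weak equivalence precisely when the square \eqref{eq:fibration_of_spectra_means_this_is_a_pullback} is a homotopy pullback.

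Finally, one notes (again as in step (5)) that this condition is insensitive to whether $\Omega^j_B$ is taken strictly or derived, since the fibers of $p_{i+j}$ are automatically $h$-fibrant over $B$ when $p$ is a level $q$-fibration, so applying $\Omega^j_B$ commutes with a level-equivalence fibrant replacement up to equivalence. The main obstacle, really, is just bookkeeping: one has to keep careful track of the fact that $p$ being a level $q$-fibration is what makes the various $q$-fibrancy hypotheses needed for pullback-hom manipulations and homotopy pullback identifications hold automatically. Once that is noted, the iff is immediate.
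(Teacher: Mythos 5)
Your argument is exactly the paper's argument: in a cofibrantly generated model category the fibrations are the $J$-injective maps, and your unwinding of that condition via the pullback-hom adjunction (\autoref{lem:pullback_hom_adjunction}) and the universal property of free spectra reproduces step (5) of the proof of \autoref{thm:stable_model_structure}, which is precisely what the paper's one-line proof cites. One small correction in your final paragraph: the strict fiber of a levelwise $q$-fibration is $q$-fibrant over $B$ (being a pullback of $p_{i+j}$ along the basepoint section), not $h$-fibrant; this is the version the paper uses, and it is enough since $\Omega_B$ preserves weak equivalences of $q$-fibrant spaces by \autoref{ex:smashing_spaces_is_left_Quillen}.
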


We say $X$ is \textbf{stably fibrant}\index{stably fibrant} when it is fibrant in the stable model structure. By the above, this is equivalent to the condition that $X$ is level $q$-fibrant and the adjoint bonding maps $X_n \to \Omega_B X_{n+1}$ are equivalences. The stable model structure gives us a stable fibrant replacement functor $R^{st}$.\index{fibrant replacement!$R^{st}$ for spectra} On spectra that are at least freely $i$-cofibrant, we could get a different stable fibrant replacement functor by generalizing the classical fibrant replacement operation from \autoref{ex:equivalent_thom_spectra}.

\begin{lem}
	The suspension spectrum functor
	\[ \Sigma^\infty\colon \mc R(B) \to \Psp(B) \textup{ or } \Osp(B) \]
	is left Quillen. More generally the same is true for the free spectrum functor $F_n$.
\end{lem}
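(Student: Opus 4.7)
The plan is to verify directly that $F_n$ carries the generating (acyclic) cofibrations of the Quillen $q$-model structure on $\mc R(B)$ into the generating set $I$ (resp.\ into acyclic cofibrations) of the stable model structure of \autoref{thm:stable_model_structure}. First, $F_n$ is a left adjoint: its right adjoint is evaluation at level $n$, as an $O(n)$-equivariant retractive space in the orthogonal case or just a retractive space in the sequential case. This has been used throughout \autoref{sec:reedy}.

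Next, applying $F_n$ to the generating cofibration $S^{k-1}_{+B} \to D^k_{+B}$ produces $F_n[S^{k-1}_{+B} \to D^k_{+B}]$, which lies in the set $I$ of generating cofibrations of the stable model structure by definition. Since $F_n$ is a left adjoint it preserves all colimits and retracts, so it carries every $q$-cofibration of retractive spaces (a retract of a relative cell complex built from the generating set) to a cofibration of spectra.

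For the generating acyclic cofibrations, $F_n$ sends $D^k_{+B} \to (D^k \times I)_{+B}$ to $F_n[D^k_{+B} \to (D^k \times I)_{+B}]$. This is a free $q$-cofibration of spectra, and at each spectrum level $m$ it is obtained by applying $\mathscr J(\R^n,\R^m) \barsmash_{O(n)} -$ (resp.\ $S^{m-n} \barsmash -$ in the sequential case) to the homotopy equivalence of well-based retractive spaces $D^k_{+B} \to (D^k \times I)_{+B}$. By \autoref{smashing_with_free_complex} (resp.\ \autoref{prop:h_cofibrations_pushout_product}), this is a homotopy equivalence on each level, hence a level equivalence and therefore a stable equivalence. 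So each such image is an acyclic cofibration in the stable model structure, and by preservation of colimits and retracts $F_n$ sends every acyclic cofibration to an acyclic cofibration.

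The only part that could be considered delicate is the level-equivalence claim in the previous paragraph: smashing with $\mathscr J(\R^n,\R^m)$ is only well-behaved because this space is a free, well-based $O(n)$-cell complex, and it is precisely \autoref{smashing_with_free_complex} that lets us conclude that the resulting level map is a homotopy equivalence. With that lemma in hand, the argument is entirely routine, and the case $n=0$ recovers the statement for the suspension spectrum functor $\Sigma^\infty = F_0$.
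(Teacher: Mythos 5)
Your approach matches the paper's own (very terse) proof: verify $F_n$ is a left adjoint and check it sends the generating (acyclic) cofibrations of $\mc R(B)$ into the generating cofibrations (resp.\ acyclic cofibrations of the target). One small slip: for the \emph{free} spectrum $F_n$ in orthogonal spectra, level $m$ is $\mathscr J(\R^n,\R^m) \barsmash -$ with \emph{no} $O(n)$-quotient --- the $O(n)$-quotient appears in the \emph{semifree} functor $\mc G_n$, not $F_n$. Consequently \autoref{smashing_with_free_complex} is not the right citation here; the relevant fact is simply that $\mathscr J(\R^n,\R^m)$ is a well-based space, so $\mathscr J(\R^n,\R^m) \barsmash -$ preserves homotopy equivalences of well-based retractive spaces by \autoref{prop:h_cofibrations_pushout_product} (as you already cite for the sequential case). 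The conclusion and the overall structure of the argument are unaffected.
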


\begin{proof}
	We already know it's a left adjoint, so we just check it preserves generating cofibrations and acyclic cofibrations.
\end{proof}

\begin{prop}\label{prop:pre_equiv_to_orth}
	The forgetful functor $\Osp(B) \to \Psp(B)$ is a right Quillen equivalence.
\end{prop}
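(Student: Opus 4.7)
The left adjoint of $U\colon\Osp(B)\to\Psp(B)$ is the prolongation functor $\mathbb{P}$ determined by $\mathbb{P}(F_n^{\mathrm{seq}} K)=F_n^{\mathrm{orth}} K$ and the fact that $\mathbb{P}$ preserves colimits; its existence follows from the enriched left Kan extension along the inclusion $\mathscr N\hookrightarrow\mathscr J$. The plan is to show first that $(\mathbb{P}\dashv U)$ is a Quillen pair with respect to the stable model structures of \autoref{thm:stable_model_structure} on both sides, and then to verify the Quillen equivalence criterion by checking the derived unit on free spectra via a fiberwise reduction. The first point is automatic from \autoref{cor:stable_fibrations}: a stable fibration $p\colon X\to Y$ of orthogonal spectra is a level $q$-fibration whose squares \eqref{eq:fibration_of_spectra_means_this_is_a_pullback} are homotopy pullbacks, and $U$ preserves levels on the nose, so $Up$ satisfies exactly the same characterization in $\Psp(B)$. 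Combined with the analogous statement for acyclic stable fibrations (which are the level acyclic $q$-fibrations), this shows $U$ is right Quillen.

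Next I would observe that $U$ both preserves and reflects stable equivalences. The derived homotopy groups $\R\pi_{n,b}$ are computed via any level-fibrant replacement, and since $U$ commutes up to level equivalence with $R^{lv}$ and with $P$ (both constructions being levelwise applications of the corresponding functors on $\mc R(B)$), one has a natural isomorphism $\R\pi_{n,b}(X)\cong\R\pi_{n,b}(UX)$ for every orthogonal spectrum $X$. This already reduces the Quillen equivalence criterion to showing that for each cofibrant sequential spectrum $X$ the unit $\eta_X\colon X\to U\mathbb{P}X$ is a stable equivalence, because the further map $U\mathbb{P}X\to UR^{st}_{\Osp}\mathbb{P}X$ is $U$ applied to a stable equivalence of orthogonal spectra and hence a stable equivalence.

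To verify that $\eta_X$ is a stable equivalence, I would use that the class of maps of sequential spectra between level $h$-cofibrant spectra that is closed under the sequential colimit / pushout constructions of \autoref{prop:coproduct_colimit_stable_equivalences} and \autoref{cor:spectra_gluing} is determined by checking on the generating cofibrant objects $F_n^{\mathrm{seq}} K$ for $K$ an $h$-cofibrant retractive space, because the stable model structure on $\Psp(B)$ is cofibrantly generated with such free cells. On $F_n^{\mathrm{seq}} K$, the unit is the free extension of the map of levels
\[ S^{m-n}\barsmash K\ \longrightarrow\ \mathscr J(\R^n,\R^m)\barsmash K \]
picking out the standard embedding $\R^n\hookrightarrow\R^m$. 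Applying the monoidal fibrant replacement $P$ from \autoref{prop:spectrum_px} to both sides produces a level $h$-fibrant model of the same map (using that $P$ commutes with $F_n$ and with smashing with a based space as in \autoref{prop:P_strong_monoidal}), so by \autoref{stable_equivalences} it suffices to check that the map is a $\pi_*$-isomorphism on each fiber spectrum over $b\in B$. Passing to fibers reduces the statement to the classical nonparametrized fact, due to \cite{mmss}, that the unit $F_n^{\mathrm{seq}}(K_b)\to UF_n^{\mathrm{orth}}(K_b)$ is a stable equivalence of spectra.

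The main obstacle is the very last step: the argument that reduces the parametrized statement to the nonparametrized one requires knowing that $P$ commutes with the prolongation $\mathbb{P}$ well enough that the fibers of $P(UF_n^{\mathrm{orth}} K)$ really are $UF_n^{\mathrm{orth}}(PK_b)$ up to level equivalence, and that the same holds for the source. Here the point is that $\mathbb{P}$ is built from left Kan extension along $\mathscr N\to\mathscr J$, which on levels is just $\mathscr J(\R^n,\R^m)\barsmash(-)$; since $\mathscr J(\R^n,\R^m)$ is a finite free $O(n)$-CW complex and smashing with such a space preserves level $h$-fibrancy (\autoref{smashing_with_free_complex}), pulling back to a fiber commutes with these constructions on the nose, and the fiberwise reduction goes through. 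Everything else in the proof is routine bookkeeping about the two adjunctions and the deformability results collected in \autoref{sec:composing_comparing}.
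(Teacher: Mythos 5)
Your proof is correct and follows essentially the same route as the paper: use the abstract Kan-extension adjunction for $\mathbb P \dashv \mathbb U$, check that $\mathbb U$ is right Quillen via the explicit characterization of stable fibrations in \autoref{cor:stable_fibrations}, reduce the Quillen-equivalence criterion to the derived unit (here you flesh out — correctly — that $\mathbb U$ creates stable equivalences since the derived homotopy groups only see the underlying sequential spectrum and $P$ is levelwise), induct down to free spectra $F_n K$, and then pass to fibers via $P = (p_1)_!(p_0)^*$ to invoke the nonparametrized statement from \cite[10.3]{mmss}. The "main obstacle" you flag resolves exactly as you say: $P$ is strong monoidal and commutes with $F_n$, so the commutation needed for the fiberwise reduction is automatic.
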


\begin{proof}
	The forgetful functor $\mathbb U$ is a right adjoint by the abstract theory of diagrams: restricting a diagram of spaces from $\mathscr J$ to $\mathscr N$ has as its left adjoint the left Kan extension from $\mathscr N$ to $\mathscr J$. More concretely, the left adjoint $\mathbb P$ sends free sequential spectra to free orthogonal spectra, and preserves colimits, from which we can figure out what it does to an arbitrary sequential spectrum. See \cite{mmss} for more details.
	
	Since \autoref{cor:stable_fibrations} applies equally well to $\Psp(B)$ and $\Osp(B)$, the forgetful functor $\mathbb U$ preserves fibrations. It also preserves all weak equivalences using \autoref{stable_equivalences}. It is therefore a Quillen right adjoint.
	
	It remains to show it gives an equivalence on the homotopy category. This will follow if the unit map $X \to \mathbb U\mathbb PX$ is stable equivalence when $X$ is an $I$-cell complex. Since $\mathbb P$ is left Quillen it preserves preserves stable equivalences of $q$-cofibrant spectra. Using this fact and inducting up the cell complex structure, we reduce to the case where $X$ is of the form $F_k Z_{+B}$. Then we reduce this to the non-parametrized version of the same statement \cite[10.3]{mmss} by applying $P$, commuting it with the free spectrum construction, and then restricting to a fiber over $B$.
\end{proof}

As a first application of the stable model structure, we verify that homotopy cofiber sequences of spectra over $B$ give long exact sequences of maps into a third spectrum $Z$.
\begin{prop}
	If $f\colon X \to Y$ is a map of spectra over $B$ and $Z$ is a spectrum over $B$ then there is a long exact sequence
	\[ \xymatrix{
		\ldots \ar[r] & [\Sigma_B X,Z] \ar[r] & [\L C_B f,Z] \ar[r] & [Y,Z] \ar[r] & [X,Z] \ar[r] & \ldots
	}. \]
	where $[-,-]$ denotes maps in the homotopy category (stable equivalences inverted). It is natural in both $f$ and $Z$.
\end{prop}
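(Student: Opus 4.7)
The plan is to construct the Puppe cofiber sequence in the homotopy category and then apply $[-,Z]$ termwise. First, I would reduce to the case where $f$ is a free $h$-cofibration of freely $h$-cofibrant spectra by replacing $X$ with $QX$ and factoring $f$ through its mapping cylinder $Mf := Y \cup_X (X \barsmash I_+)$, where $X \hookrightarrow X \barsmash I_+$ is the inclusion at $\{0\}$. By \autoref{prop:spectra_pushout_product}, the inclusion $X \hookrightarrow Mf$ is a free $h$-cofibration, and the collapse $Mf \to Y$ is a (fiberwise) deformation retraction, hence a level equivalence. Thus $\L C_B f$ is represented by the strict mapping cone $C_B f = Mf/X = Y \cup_X (X \barsmash I)$, and from now on I will denote this simply by $Cf$.

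Next I would iterate the construction to build the Puppe sequence. Let $\bar f\colon Y \to Cf$ be the natural inclusion, also a free $h$-cofibration with freely $h$-cofibrant source and target. Its mapping cone $C\bar f$ is computed as $Cf \cup_Y (Y \barsmash I)$, and the obvious collapse map $C\bar f \to \Sigma_B X$ that quotients the subspectrum $Y \barsmash I \cup_Y Y \subseteq C\bar f$ is a stable equivalence: indeed, this collapsed subspectrum is contractible because it deformation retracts onto $Y$ inside the cone, and the quotient map is a level $h$-cofibration between level $h$-cofibrant spectra, so the gluing lemma \autoref{cor:spectra_gluing} applies. Iterating $\bar{\,\cdot\,}$ yields, in $\ho\Osp(B)$ (or $\ho\Psp(B)$), an infinite sequence
\[ X \overset{f}\to Y \overset{\bar f}\to Cf \overset{\delta}\to \Sigma_B X \overset{-\Sigma_B f}\to \Sigma_B Y \to \Sigma_B Cf \to \Sigma^2_B X \to \cdots \]
in which every three consecutive arrows are (up to canonical stable equivalence) of the form $A \to B \to CA{\to}B$.

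Finally I would apply $[-,Z]$. For any strict cofiber sequence $A \overset{i}\to B \to B/A$ with $i$ a free $h$-cofibration of freely cofibrant spectra, the sequence of pointed sets
\[ [B/A,Z] \to [B,Z] \to [A,Z] \]
is exact: given a map $B \to Z$ in the homotopy category whose restriction to $A$ is null, we represent it by a point-set map $B \to R^{st}Z$ into a stably fibrant replacement, use the lifting property of the acyclic cofibration $A \to A \barsmash I_+$ against $R^{st}Z \to *$ to fill in a null-homotopy on $A$, and then extend to $B/A$ by the universal property of the pushout. Applying this exactness at each triangle of the Puppe sequence yields an exact sequence of pointed sets, and the usual loop-space argument shows that from $[\Sigma_B X, Z] \cong [X, \R\Omega_B Z]$ onward the terms are groups, abelian from $[\Sigma^2_B X, Z]$ on, and that the maps respect this structure; the resulting sequence is precisely the one in the statement. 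Naturality in $f$ follows because the mapping cylinder and cone are functorial in maps of arrows, and naturality in $Z$ is immediate from the naturality of $[-,Z]$. The main technical obstacle is verifying that the collapse map $C\bar f \to \Sigma_B X$ is a stable equivalence and that the connecting maps one produces by iterating the mapping-cone construction agree up to canonical isomorphism with suspensions of the earlier maps (with the conventional sign); both reduce, via $P$ and restriction to fibers, to the corresponding classical facts for non-parametrized spectra.
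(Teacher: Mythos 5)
Your proposal is essentially the paper's own proof: the paper replaces $X \to Y$ by a cofibration of cofibrant spectra, replaces $Z$ by a stably fibrant spectrum, invokes the fundamental theorem of model categories to identify $[-,Z]$ with point-set homotopy classes, and says "the usual argument applies" -- which is precisely the Puppe sequence you spell out.

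One step, however, is stated incorrectly and would fail as written. You claim that a lift against $R^{st}Z \to *$ using the acyclic cofibration $A \to A\barsmash I_+$ "fills in a null-homotopy on $A$." Fibrancy alone only gives you \emph{some} extension to $A \barsmash I_+$, with no control over its value at time $1$; it does not produce a null-homotopy. What you actually use is the fundamental theorem of model categories in the other direction: since $A$ is cofibrant and $R^{st}Z$ is fibrant, the hypothesis that the composite $A \to B \to R^{st}Z$ is null in the homotopy category yields a point-set based homotopy $A \barsmash I_+ \to R^{st}Z$ to the constant map. One then extends this homotopy along the $h$-cofibration $A \to B$ via the homotopy extension property (not a pushout $B \cup_A(A\barsmash I_+)$, which is not $B/A$) and restricts to time $1$ to obtain a map killing $A$, hence factoring through $B/A$. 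A second, more cosmetic point: \autoref{prop:spectra_pushout_product} gives that $X\barsmash S^0 \to X\barsmash I_+$ is a free $h$-cofibration, hence its pushout $X\amalg Y \to Mf$ is one, and $X \to Mf$ is a cofibration by composing with the coproduct inclusion; the map $X \to Mf$ itself is not directly a pushout-product.
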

Similar statements apply to double mapping cylinders, homotopy coequalizers, mapping telescopes, together with homotopy fiber sequences and the duals of these constructions.

\begin{proof}
	Applying cofibrant replacement to $X \to Y$ and stable fibrant replacement to $Z$, the above maps in the homotopy category can be described as point-set maps up to homotopy, and $\L C_B f$ can be described as just $C_B f$. Then the usual argument applies.
\end{proof}

\beforesubsection
\subsection{Derived base change and external smash product functors}\aftersubsection

Every result in this section applies to orthogonal spectra. The ones without $\barsmash$, or that only smash a spectrum with a space, apply also to sequential spectra.

We now have all the ingredients we need to construct pullback, pushforward, and smash product functors that are derived with respect to the stable equivalences.

\begin{lem}\label{lem:pullback_stable_Quillen}
	$(f_! \adj f^*)$ is a Quillen pair for the stable model structure.
	If $f\colon A \to B$ is a weak equivalence then it is a Quillen equivalence.
\end{lem}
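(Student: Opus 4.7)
The plan is to verify the Quillen pair condition by checking that the right adjoint $f^*$ preserves stable fibrations and stable acyclic fibrations, and then to adapt the argument of \autoref{ex:derived_adjunction_spaces} to obtain the Quillen equivalence when $f$ is a weak equivalence.

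For the Quillen pair, I would first observe that the stable acyclic fibrations coincide with the level acyclic fibrations, since both are characterized as $I$-injective maps for the common generating set $I$ appearing in \autoref{thm:stable_model_structure} and \autoref{prop:level_model_structure}. These are preserved by $f^*$ via \autoref{lem:spectrum_f_preserves}. For the stable fibrations themselves, I would invoke the characterization in \autoref{cor:stable_fibrations}: a map $p\colon X \to Y$ is a stable fibration iff each $p_n$ is a $q$-fibration and each square \eqref{eq:fibration_of_spectra_means_this_is_a_pullback} is a homotopy pullback square of retractive spaces over $B$. Level $q$-fibrations are preserved by $f^*$ again by \autoref{lem:spectrum_f_preserves}. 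Since $f^*$ is a right adjoint, it commutes with $\Omega_B$ (becoming $\Omega_A$) and with strict pullbacks, so it carries square \eqref{eq:fibration_of_spectra_means_this_is_a_pullback} for $p$ to the corresponding square for $f^*p$. Because the right vertical of the square is a $q$-fibration both before and after applying $f^*$, ``homotopy pullback'' collapses to ``strict pullback,'' and the condition is preserved.

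For the Quillen equivalence claim, I would mirror the argument of \autoref{ex:derived_adjunction_spaces}. Given a cofibrant $X \in \Osp(A)$ (equivalently, freely $q$-cofibrant) and a stably fibrant $Y \in \Osp(B)$ (so in particular level $q$-fibrant), consider the natural square
\[ \xymatrix @R=1.7em{ X \ar[d] \ar[r] & f_!X \ar[d] \\ f^*Y \ar[r] & Y. } \]
The aim is to show both horizontal maps are stable equivalences, so that the left vertical is a stable equivalence iff the right vertical is. By \autoref{prop:free_implies_level}, each level $X_n$ is $q$-cofibrant in $\mc R(A)$, and by stable fibrancy each $Y_n$ is $q$-fibrant in $\mc R(B)$. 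Applying \autoref{ex:derived_adjunction_spaces} levelwise to these inputs shows that both $X_n \to f_!X_n$ and $f^*Y_n \to Y_n$ are weak equivalences in spaces, which makes the horizontal maps level equivalences and therefore stable equivalences.

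The only step requiring any real care is the interaction of $f^*$ with the squares \eqref{eq:fibration_of_spectra_means_this_is_a_pullback} in the stable fibration criterion; once we exploit that the right vertical in those squares is a $q$-fibration both before and after pulling back, so that the homotopy pullback condition reduces to a strict pullback condition, the rest of the argument is a routine transfer of the space-level statement in \autoref{ex:derived_adjunction_spaces} to each spectrum level.
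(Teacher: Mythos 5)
Your argument for the Quillen pair has a genuine gap at the step ``\,`homotopy pullback' collapses to `strict pullback'.'' A commuting square whose right vertical is a $q$-fibration is a \emph{homotopy} pullback square precisely when the induced comparison map from the corner to the \emph{strict} pullback is a weak equivalence; it is \emph{not} the same as the square being a strict pullback (take the map $\R \to *$ over $*$ for a one-line counterexample). So after observing that $f^*$ commutes with $\Omega_B$ and with strict pullbacks, you still have to show that $f^*$ sends the comparison weak equivalence $X_i \to Y_i \times_{\Omega^j_B Y_{i+j}} \Omega^j_B X_{i+j}$ to a weak equivalence over $A$ --- and here is the problem: $f^*$ only preserves weak equivalences between $q$-fibrant retractive spaces, and $X_i$ and the pullback term need not be $q$-fibrant over $B$ (only over $Y_i$, which is a different base). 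The argument as written does not close.

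The clean route to the Quillen pair does not go through \autoref{cor:stable_fibrations} at all. Instead check the left adjoint: $f_!$ commutes with $F_k$ and with $(-)\barsmash(-)$ (\autoref{lem:external_smash_and_base_change}), and on cells it replaces $D^n \to A$ with $D^n \to A \to B$. Hence $f_!$ carries every element of the generating sets $I_A$, $J_A$ from \autoref{thm:stable_model_structure} for $\Osp(A)$ into the corresponding sets $I_B$, $J_B$ for $\Osp(B)$, and is therefore automatically left Quillen; no analysis of the fibration criterion is needed. (The paper phrases the argument via $f^*$ preserving fibrations and acyclic fibrations, invoking \autoref{cor:stable_fibrations}, but the adjoint $f_!$ formulation is where the verification is immediate.)

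For the Quillen equivalence, your levelwise square shows that $X \to f^*Y$ is a \emph{level} equivalence iff $f_!X \to Y$ is; what you then claim --- passing immediately to \emph{stable} equivalences --- requires a further argument, since a map between a cofibrant and a stably fibrant spectrum can be a stable equivalence without being a level equivalence. The paper's route is slightly different: it invokes \autoref{cor:level_derived_equals_stable_derived} to identify the level-derived and stable-derived functors $\L f_!$, $\R f^*$, so that the derived unit and counit are literally the same natural transformations for both model structures, then appeals to \autoref{ex:derived_adjunction_spaces} to see they are level equivalences, which are in particular stable equivalences. You should either import that identification or add a paragraph explaining why the level-equivalence criterion on your square suffices.
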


\begin{proof}
	From \autoref{cor:stable_fibrations} we see that $f^*$ preserves fibrations and acyclic fibrations, hence it is right Quillen. When $f$ is a weak equivalence, we need to check that the derived unit and counit maps are stable equivalences. By \autoref{cor:level_derived_equals_stable_derived} and \autoref{ex:derived_adjunction_spaces}, there is a model for the derived functors in which these maps are level equivalences, so we are done.
\end{proof}

\begin{lem}\label{lem:other_stable_adjunction}
	If $f$ is a fiber bundle with base and fiber both cell complexes, $f^*$ is a Quillen left adjoint. If in addition $f$ is a weak equivalence, $f^*$ is a left Quillen equivalence.
\end{lem}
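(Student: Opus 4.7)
The plan is to mimic the strategy of \autoref{ex:other_adjunction_spaces} one level up, now at the level of spectra. For the first claim, the sheafy pushforward $f_*$ of \autoref{prop:sheafy_pushforward} extends levelwise to a functor $\Osp(A) \to \Osp(B)$ (the commutation with $\Sigma_B$ comes from the commutation of $f_*$ with $\Omega_B$ as a right adjoint to $f^*$), giving a right adjoint to $f^*\colon \Osp(B) \to \Osp(A)$. So it suffices to check that $f^*$ sends the generating cofibrations and generating acyclic cofibrations of \autoref{thm:stable_model_structure} into the analogous classes on $\Osp(A)$. The isomorphism $f^*F_k K \cong F_k f^* K$ (\autoref{lem:external_smash_and_base_change}) together with the fact that $f^*\colon \mc R(B) \to \mc R(A)$ is already left Quillen (\autoref{ex:other_adjunction_spaces}) handles the two ``free cell'' families $F_k[S^{n-1}_{+B} \to D^n_{+B}]$ and $F_k[D^n_{+B} \to (D^n \times I)_{+B}]$ immediately.

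For the remaining generators $k_{i,j} \square [S^{n-1}_{+B} \to D^n_{+B}]$, I would use that $f^*$ preserves pushouts (being a left adjoint here) and that it commutes with the external smash product of a freely $i$-cofibrant non-parametrized spectrum with an arbitrary parametrized space, by \autoref{prop:spectra_external_smash_and_base_change}. The spectra $F_{i+j}S^j$, $F_iS^0$, and their pushout $\Cyl_{i,j}$ are all freely $q$-cofibrant, hence freely $i$-cofibrant, so $f^*$ carries $k_{i,j} \square [S^{n-1}_{+B} \to D^n_{+B}]$ to $k_{i,j} \square f^*[S^{n-1}_{+B} \to D^n_{+B}]$. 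This is an acyclic cofibration in $\Osp(A)$ because $k_{i,j}$ is itself one of the generating acyclic cofibrations there and $f^*[S^{n-1}_{+B} \to D^n_{+B}]$ is a $q$-cofibration of retractive spaces over $A$ (again by \autoref{ex:other_adjunction_spaces}).

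For the Quillen equivalence claim, the key point is that a fiber bundle with cell complex base is in particular a Hurewicz fibration, so \autoref{lem:spectrum_f_preserves} tells us that $f^*$ preserves all level equivalences. Given any stable equivalence $X \to Y$ in $\Osp(B)$, $f^*$ preserves the level equivalences $X \to PQX$ and $Y \to PQY$, and it sends $PQX \to PQY$ to a map between level $q$-fibrant spectra which is a $\pi_*$-isomorphism on each fiber of $A$ (since every fiber of $f^*PQX$ at $a$ is a fiber of $PQX$ at $f(a)$); by \autoref{stable_equivalences} this is a stable equivalence, and hence so is $f^*X \to f^*Y$. Thus $\L f^* \simeq f^* \simeq \R f^*$ on $\ho\Osp(B)$. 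By \autoref{lem:pullback_stable_Quillen}, when $f$ is a weak equivalence $(f_! \adj f^*)$ is already a Quillen equivalence, so $\R f^*$ induces an equivalence of stable homotopy categories; therefore so does $\L f^*$, proving that $(f^* \adj f_*)$ is a Quillen equivalence. The main obstacle is the bookkeeping for the second set of generating acyclic cofibrations: in (CGWH) one must carefully check that the relevant spectra are freely $i$-cofibrant so that \autoref{prop:spectra_external_smash_and_base_change} applies and $f^*$ genuinely commutes with the smash-product construction used to build $k_{i,j}$.
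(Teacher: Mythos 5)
Your proof is correct and follows essentially the same route as the paper: construct $f_*$ levelwise using its commutation with $\Omega_B$, check that $f^*$ sends the generating (acyclic) cofibrations of the stable model structure into the corresponding class over $A$, note that $f$ being a Hurewicz fibration makes $f^*$ homotopical so $\L f^* \simeq \R f^*$, and then piggyback on \autoref{lem:pullback_stable_Quillen} for the Quillen equivalence. You fill in details (especially for the $k_{i,j}\,\square\,[\ldots]$ generators) that the paper leaves implicit.

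One small refinement: for those third-family generators the external smash product in play is $\Osp(*) \times \mc R(B) \to \Osp(B)$ (spectrum with a space), which is defined levelwise; so the commutation of $f^*$ with it follows directly from the unconditional \autoref{lem:external_smash_and_base_change} applied at each spectrum level, rather than from \autoref{prop:spectra_external_smash_and_base_change} (which governs the smash of two parametrized \emph{spectra}). Your invocation of the latter does apply in this situation because free non-parametrized spectra are freely $i$-cofibrant, but it's more firepower than needed, and the $i$-cofibrancy bookkeeping you flag at the end actually evaporates once you make this substitution.
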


\begin{proof}
	We construct the right adjoint $f_*$ in spectra by applying the space-level $f_*$ on each level, and using the canonical commutation with $\Omega_B(-)$. Since the fiber of $f$ is a cell complex, $f^*$ sends generating cofibrations to cofibrations, and similarly for generating acyclic cofibrations, therefore $f^*$ is left Quillen. As in \autoref{ex:other_adjunction_spaces}, $f^*$ is homotopical in this case and so its left-derived and right-derived functors are equivalent, so the Quillen equivalence part follows from \autoref{lem:pullback_stable_Quillen}.
\end{proof}

\begin{lem}\label{lem:stable_smash_is_left_quillen}
	Each of the external smash product functors
	\begin{align*}
	\mc R(A) \times \Psp(B) &\to \Psp(A \times B), \\
	\qquad \mc R(A) \times \Osp(B) &\to \Osp(A \times B), \\
	\qquad \Osp(A) \times \Osp(B) &\to \Osp(A \times B)
	\end{align*}
	is a left Quillen bifunctor with respect to the stable model structure on spectra (and the Quillen model structure on spaces).
\end{lem}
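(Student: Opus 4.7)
The plan is to invoke \autoref{prop:quillen_bifunctor_defn}, which reduces the verification to the pushout-products of generating (acyclic) cofibrations. That each of the three bifunctors is a left adjoint in each slot is already recorded: the right adjoints are the external mapping spaces and external mapping spectra $\barmap_B$, $\barF_B$ constructed in \autoref{sec:smash_with_space} (and analogously for the spectrum-spectrum case). So the content is to check the pushout-product conditions on the generating sets $I$ and $J$ from \autoref{thm:stable_model_structure}.

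First I would handle the cofibration condition $I \square I \subseteq C$. Every generator in $I$ is either a space-level cell of the form $S^{n-1}_{+B} \to D^n_{+B}$ or a free spectrum $F_k$ on such a cell. Using \autoref{barsmash_free} to identify pushout-products of free spectra with free spectra on pushout-products (smashed with a based free orthogonal $G$-cell complex in the two-spectrum case), together with \autoref{prop:h_cofibrations_pushout_product} on the space level, yields a free $q$-cofibration of spectra; closure of the free $q$-cofibrations under pushout-products, which is \autoref{prop:spectra_pushout_product}(1), completes this step.

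Next I would split the acyclic condition $I \square J \subseteq C \cap W$ into the two flavours of generator in $J$. For the level-equivalence cells $F_k[D^n_{+B} \to (D^n \times I)_{+B}]$, the pushout-product with any $f \in I$ is a free $q$-cofibration by the previous paragraph, so it suffices to produce a level equivalence. Both the $I$- and $J$-generators under consideration have freely $q$-cofibrant (hence semifreely $h$-cofibrant) source and target, so \autoref{prop:reedy_pushout_product}(3) applies and delivers a level equivalence, which is a fortiori a stable equivalence.

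The main obstacle is the $\lambda$-type generators $g = k_{i,j} \square c$ with $c = [S^{n-1}_{+B} \to D^n_{+B}]$. The plan is to use associativity of the pushout-product to rewrite $f \square g = k_{i,j} \square (f \square c)$, which reduces us to showing that $k_{i,j} \square h$ is a stable equivalence for every $q$-cofibration $h$ of retractive spaces over $A \times B$ obtained as a pushout-product of two $q$-cofibration cells. This is essentially the content of step (4) in the proof of \autoref{thm:stable_model_structure}: expanding $k_{i,j}$ via the pushout square \eqref{eq:expanded_cylinder}, the gluing lemma \autoref{cor:spectra_gluing} and left-properness \autoref{cor:spectra_left_proper} localize the question to showing that $F_{i+j}[S^j \barsmash (-)] \to \mathrm{Cyl}_{i,j} \barsmash (-) \to F_i[S^0 \barsmash (-)]$ is a stable equivalence when applied to the (level $h$-cofibrant) source and target of $h$. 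Applying $P$ to make all levels fibrations and using that $P$ commutes with $F_n$ and with smash products (\autoref{prop:P_strong_monoidal} and \autoref{prop:spectrum_px}) reduces the claim to the fiberwise statement, which is the classical fact that $\lambda_{i,j}$ is a stable equivalence, smashed with a well-based space, as in \autoref{ex:smash_with_cell_complex_is_derived}. The spectrum-spectrum version of the bifunctor requires one extra associativity step of the same kind when the first factor is itself a $\lambda$-type generator, but it reduces to the same core computation.
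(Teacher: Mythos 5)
Your overall framework is correct --- \autoref{prop:quillen_bifunctor_defn} plus checking generators is exactly the right starting point, and you are right that \autoref{barsmash_free} is the key ingredient. But the paper uses \autoref{barsmash_free} much more aggressively than you do: it observes that every map in $I \square I$ is not merely a free $q$-cofibration but is \emph{isomorphic to a generator in $I$} for spectra over $A\times B$, and every map in $I \square J$ is isomorphic to a generator in $J$ over $A\times B$. With that observation the bifunctor condition is immediate, and there is no need to invoke \autoref{prop:reedy_pushout_product}, \autoref{cor:spectra_gluing}, or \autoref{cor:spectra_left_proper}, nor to re-run anything from step (4) of the proof of \autoref{thm:stable_model_structure}. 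Your route certainly arrives at the correct conclusion for the two space-with-spectrum bifunctors, but it re-derives, with heavy tools, a statement that is already verified inside the construction of the stable model structure.

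There is, however, a gap in your treatment of the spectrum-with-spectrum case, which you have half-noticed but misdiagnosed. After the associativity rearrangement $f \square g = k_{i,j} \square (f \square c)$, when $f = F_l[S^{m-1}_{+A} \to D^m_{+A}] \in I$ for $\Osp(A)$, the map $f \square c$ equals $F_l(d \square c)$, a \emph{free spectrum on a space cell} over $A\times B$ --- not a map of retractive spaces as you claim. So the reduction to ``$k_{i,j} \square h$ for a $q$-cofibration $h$ of retractive spaces over $A\times B$'' does not apply, and the appeal to step (4) of the model structure construction (which treats $k_{i,j}$ pushout-producted with space-level cells) is not literally available. Your parenthetical, ``when the first factor is itself a $\lambda$-type generator,'' is also something of a red herring: the first factor always lies in $I$, which contains no $\lambda$-type generators, so that situation never arises. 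The missing step is the identity, supplied by \autoref{barsmash_free} and the fact that $\barsmash$ commutes with pushouts,
\[
k_{i,j} \barsmash F_l S^0 \;\cong\; k_{i+l,\,j},
\]
from which $k_{i,j} \square F_l e \cong k_{i+l,j} \square e$ for a space cell $e$ over $A\times B$, and this is again a generator in $J$. Once you have this, the spectrum-spectrum case closes exactly like the other two, which is the point of the paper's terse argument.
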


\begin{proof}
	For the last of these smash product operations, we use \autoref{barsmash_free} and the fact that $\barsmash$ commutes with pushouts to define isomorphisms between each map in $I \square I$ and some map of $I$, and each map in $I \square J$ and some map of $J$. The proof for the other two smash product functors is easier.
\end{proof}

\begin{rmk}
	Taking $A = *$, this implies that $\Psp(B)$ and $\Osp(B)$ are simplicial model categories.
\end{rmk}

As a formal consequence of the above (and Ken Brown's lemma),
	\begin{itemize}
		\item $f^*$ preserves all stable equivalences between level $q$-fibrant spectra.
		\item $f_!$ preserves all stable equivalences between freely $q$-cofibrant spectra.
		\item $\barsmash$ preserves all stable equivalences between freely $q$-cofibrant spectra (and if smashing with a space, $q$-cofibrant spaces).
	\end{itemize}
In each of these cases, we can weaken the assumptions further. This will be important for commuting the derived left and right adjoints past each other.

\begin{prop}\label{prop:stably_derived}\hfill
	\vspace{-1em}
	
	\begin{itemize}
		\item $f^*$ preserves all stable equivalences between level-quasifibrant spectra.
		\item $f_!$ preserves all stable equivalences between level $h$-cofibrant spectra.
		\item $\barsmash$ preserves all stable equivalences between freely $h$-cofibrant spectra (and if smashing with a space, $h$-cofibrant spaces).
	\end{itemize}
\end{prop}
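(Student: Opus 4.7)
The three cases are proved by Ken Brown--style arguments that combine the already-established preservation of level equivalences (from Section 7) with the left/right Quillen adjoint property in the stable model structure. I treat them in the order $f_!$, $\barsmash$, $f^*$, since the subtleties increase.

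For $f_!$: given a stable equivalence $g\colon X\to Y$ between level $h$-cofibrant spectra, apply the stable cofibrant replacement $Q$ to obtain freely $q$-cofibrant spectra $QX,QY$ together with level acyclic fibrations $QX\to X$ and $QY\to Y$. Since freely $q$-cofibrant implies level $h$-cofibrant (\autoref{prop:free_implies_level}), \autoref{lem:spectrum_f_preserves} says $f_!QX\to f_!X$ and $f_!QY\to f_!Y$ are level equivalences, hence stable equivalences. Meanwhile $f_!Qg$ is a stable equivalence by \autoref{lem:pullback_stable_Quillen} (left Quillen property plus Ken Brown on freely $q$-cofibrant source). Two-out-of-three then gives the claim for $f_!g$.

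For $\barsmash$: given a stable equivalence $X\to Y$ between freely $h$-cofibrant spectra and a freely $h$-cofibrant $Z$, cofibrantly replace all three to freely $q$-cofibrant $QX,QY,QZ$. Applying \autoref{prop:reedy_pushout_product}(3) to the (semifree $h$-cofibration) map from the zero spectrum, together with the level equivalence $QZ\overset\sim\to Z$, shows $QX\barsmash QZ\to X\barsmash QZ\to X\barsmash Z$ is a composite of level equivalences, and likewise $QY\barsmash QZ\to Y\barsmash Z$ is a level equivalence. The central map $QX\barsmash QZ\to QY\barsmash QZ$ is a stable equivalence by the left Quillen bifunctor property (\autoref{lem:stable_smash_is_left_quillen}). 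Two-out-of-three concludes. The variant where one factor is an $h$-cofibrant space over $A$ is handled identically, with that space replaced by a $q$-cofibrant one and the analogue of \autoref{prop:reedy_pushout_product}(3) for the spectrum--space smash product.

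For $f^*$ the strategy is dual, but with an additional technical point. The base case is that $f^*$ preserves stable equivalences between level $q$-fibrant spectra: $f^*$ preserves $q$-fibrations (\autoref{lem:spectrum_f_preserves}), so $f^*X,f^*Y$ remain level $q$-fibrant, and stable equivalence between level quasifibrant spectra is characterized by $\pi_*$-isomorphism on each strict fiber (\autoref{stable_equivalences}); this condition is visibly preserved since $(f^*X)_a=X_{f(a)}$. To extend to level quasifibrant inputs it suffices, by \autoref{stable_equivalences} applied to $f^*X\to f^*Y$, to verify that $f^*X$ and $f^*Y$ are themselves level quasifibrant.

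The preservation of level quasifibrancy by $f^*$ is the main obstacle. The plan is to argue at the space level: if $E\to B$ is quasifibrant, compare $E$ with a level fibrant replacement $E\to RE$ (e.g.\ $RE=E\times_B B^I$). Quasifibrancy of $E$ and fibrancy of $RE$, combined with the weak equivalence $E\overset\sim\to RE$ of total spaces, force $E_b\to (RE)_b$ to be a weak equivalence for each $b\in B$. Pulling back, $(f^*E)_a\to (f^*RE)_a$ is a weak equivalence at each $a\in A$; the target is the strict fiber of a genuine fibration $f^*RE\to A$, hence coincides with the homotopy fiber. A diagram chase then identifies the inclusion of strict into homotopy fiber of $f^*E$ with a weak equivalence. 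I expect this last comparison---carried out without any fibrancy assumption on $f$---to be the delicate step; once it is in place, the conclusion of the proposition is immediate.
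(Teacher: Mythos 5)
Your treatment of the $f_!$ and $\barsmash$ cases is correct and follows essentially the same route as the paper: replace all inputs by level-equivalent freely $q$-cofibrant spectra, observe that the functor already preserves those level equivalences (using \autoref{lem:spectrum_f_preserves} for $f_!$, and either \autoref{prop:spectra_pushout_product}(4) or, as you cite, \autoref{prop:reedy_pushout_product}(3) for $\barsmash$), appeal to the left Quillen property in the stable model structure plus Ken Brown for the replaced objects, and finish by two-out-of-three. The only cosmetic difference is that you invoke the Reedy/semifree statement where the paper uses its free specialization.

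The $f^*$ case is where your proof stops short, and you say so yourself. Your reduction produces the correct conclusion for level $q$-fibrant inputs, but the extension to level quasifibrant inputs requires exactly the statement you flag as delicate: that the inclusion of the strict fiber $(f^*E)_a$ into the homotopy fiber of $f^*E$ over $A$ is a weak equivalence. This is genuinely not automatic. A quasifibration's strict fiber agrees with the homotopy fiber computed over $B$; but when you pull back along $f\colon A \to B$, the homotopy fiber of $f^*E$ at $a$ is formed using paths in $A$, and the natural map $\operatorname{hofib}_a(f^*E) \to \operatorname{hofib}_{f(a)}(E)$ need not be a weak equivalence without further hypotheses (your own diagram comparison bottoms out in needing $f^*E \to f^*RE$ to be a weak equivalence of total spaces, and the pullback of a weak equivalence along an arbitrary $f$ can fail to be one). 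So there is a real missing step here, not just a presentational one. Note, though, that the paper's own proof of this bullet simply cites \autoref{stable_equivalences} and calls the case immediate; that argument equally needs $f^*$ to carry level quasifibrant spectra to level quasifibrant spectra, which is the same unaddressed point. Since only the level $q$-fibrant version is actually used downstream (e.g.\ in condition (4) of the proof of \autoref{thm:spectra_smbf}, where $f^*$ is right-deformed onto level $q$-fibrant spectra, and \autoref{lem:pullback_stable_Quillen} already gives that case), the safest move is to state and prove the bullet for level $q$-fibrant inputs, which is precisely the part of your argument that is already complete.
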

	
\begin{proof}
	The first case is immediate from \autoref{stable_equivalences}. For the second and third cases, by \autoref{lem:spectrum_f_preserves} and \autoref{prop:spectra_pushout_product} respectively we already know the functor preserves level equivalences under this assumption. Therefore we may use the level model structure to replace the inputs by level-equivalent ones that are freely $q$-cofibrant, and we are done.
\end{proof}

\begin{cor}\label{cor:level_derived_equals_stable_derived}
	The left-derived functor $\L f_!$ with respect to the level equivalences, is also the left-derived functor with respect to the stable equivalences. The same applies to $\R f^*$ and $\barsmash^{\L}$.
\end{cor}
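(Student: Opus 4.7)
The plan is to show, for each of $f_!$, $\barsmash$, and $f^*$, that the left/right deformation retract already constructed to derive the functor with respect to level equivalences still serves as a left/right deformation retract with respect to stable equivalences. Since level equivalences are stable equivalences, once this is established the single functor $f_! Q$ (respectively $\barsmash \circ (Q \times Q)$ and $f^* R^{lv}$) will satisfy the defining universal property of \autoref{prop:derived_functors_universal_property} in both homotopical contexts simultaneously, so the two derived functors agree.

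First I would handle $\L f_!$. We have the level cofibrant replacement $Q$ from \autoref{prop:level_model_structure}, with $QX \to X$ a level equivalence (hence a stable equivalence) and $QX$ freely $q$-cofibrant. By \autoref{prop:free_implies_level}, freely $q$-cofibrant implies freely $h$-cofibrant, hence level $h$-cofibrant. The second bullet of \autoref{prop:stably_derived} then says $f_!$ preserves all stable equivalences between such spectra. Thus $(Q, \textup{freely } q\textup{-cofibrant})$ is a left deformation retract of $f_!$ for the stable equivalences too, and $f_! Q$ models both $\L^{lv} f_!$ and $\L^{st} f_!$ via the same natural map to $f_!$.

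Exactly the same argument handles $\barsmash^\L$, using the third bullet of \autoref{prop:stably_derived}: pairs of freely $q$-cofibrant spectra are in particular pairs of freely $h$-cofibrant spectra, and $\barsmash$ preserves stable equivalences between such. Dually, for $\R f^*$ we use the level fibrant replacement $R^{lv}$, whose image consists of level $q$-fibrant (hence level-quasifibrant) spectra; the first bullet of \autoref{prop:stably_derived} says $f^*$ preserves stable equivalences on this subcategory, so $R^{lv}$ serves as a right deformation retract for $f^*$ with respect to stable equivalences as well.

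There is essentially no obstacle — the real content lies in \autoref{prop:stably_derived}, which was the point of introducing the notions of free and level $h$-cofibrancy as alternatives to the $q$-versions. The only thing to keep in mind is that ``the'' left-derived functor is only unique up to unique canonical isomorphism, so the statement of the corollary should be read as saying that the map $f_! Q \to f_!$ witnesses $f_! Q$ as left-derived against both classes of equivalences (and then \autoref{prop:derived_functors_universal_property} identifies any two such choices).
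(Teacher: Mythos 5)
Your argument is exactly the paper's one-line proof, unpacked: in each case the level deformation retract ($Q$ into freely $q$-cofibrant spectra, $R^{lv}$ into level $q$-fibrant spectra) lands in a subcategory where \autoref{prop:stably_derived} shows the functor preserves stable equivalences, and since level equivalences are stable equivalences, the same deformation witnesses the derived functor in both homotopical contexts. No gaps; this matches the intended argument.
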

\begin{proof}
	Each one arises as a deformation onto a subcategory on which the original functor $f_!$, $f^*$, or $\barsmash$ preserves stable equivalences.
\end{proof}

\begin{rmk}
	The sheafy pushforward $f_*$ is the one exception to the above rule: deriving it with respect to the level equivalences does not produce the same result as deriving it with respect to the stable equivalences.\footnote{Although it appears they agree if the fiber of $f$ is a \emph{finite} cell complex.} In general the input of $f_*$ needs to be stably fibrant.
\end{rmk}

\begin{ex}(Homotopy colimits and limits)
	We can use \autoref{prop:stably_derived} to imitate the definition of homotopy colimits and limits in either $\Psp(B)$ or $\Osp(B)$, as a model for the left-derived colimit or right-derived limit. Without going into too much detail, we build these as in \autoref{space_cofiber_fiber}, requiring for colimits that the spectra in the diagram are level $h$-cofibrant, and for limits that the spectra in the diagram are stably fibrant (or just level fibrant if the homotopy limit is finite). Both of these commute with derived pullback, and the homotopy colimit commutes with derived pushforward.
\end{ex}
%

Using \autoref{prop:stably_derived}, we can now pass from the point-set Beck-Chevalley isomorphism \autoref{prop:beck_chevalley_spaces} and commutation isomorphisms \autoref{prop:spectra_external_smash_and_base_change} to isomorphisms of derived functors on the stable homotopy category.

\begin{thm}\label{thm:spectra_SMBF}
	The homotopy category of parametrized orthogonal spectra $\ho\Osp$ forms a symmetric monoidal bifibration over $\cat{Top}$, with Beck-Chevalley squares the homotopy pullback squares of spaces.
\end{thm}\index{SMBF!of parametrized spectra $\ho\Osp$}

\begin{proof}
	We check the seven conditions from \autoref{prop:deform_SMBF}.
	\begin{enumerate}
		\item $\barsmash$ preserves cofibrant objects and weak equivalences between them by \autoref{prop:spectra_pushout_product} and \autoref{prop:stably_derived}.
		\item $F_0 S^0$ is an $I$-cell complex, therefore cofibrant.
		\item The pushforwards $f_!$ are coherently left-deformable, using for instance the level $h$-cofibrant spectra, by \autoref{prop:stably_derived}.
		\item The pullbacks $f^*$ are coherently right-deformable using level $q$-fibrant spectra.
		\item For a pullback square along an $h$-fibration, the Beck-Chevalley maps are coherently deformable using \autoref{lem:spectrum_f_preserves} and \autoref{prop:stably_derived}, and the freely $f$-cofibrant, level $h$-fibrant spectra as inputs.
		\item $\barsmash$ and all of the pushforwards $f_!$ are coherently deformable, using for instance the freely $f$-cofibrant spectra, by \autoref{prop:stably_derived}.
		\item $\barsmash$ and all of the pullbacks $f^*$ are coherently deformable using the freely $f$-cofibrant, level $h$-fibrant spectra. This uses \autoref{lem:spectrum_f_preserves}, \autoref{prop:stably_derived}, and especially \autoref{prop:spectra_pushout_product}(3). For (CGWH), note that although technically $\Osp$ is not an SMBF because $\barsmash$ does not always preserve cartesian arrows, it preserves them when the target is in this subcategory, so the proof in \autoref{prop:deform_SMBF} still establishes that $\ho \Osp$ is an SMBF.
	\end{enumerate}
	As in \autoref{expand_beck_chevalley}, since $\L f_!$ is an equivalence when $f$ is a weak equivalence, and every homotopy pullback square is equivalent to a strict pullback along an $h$-fibration, we therefore get Beck-Chevalley for every homotopy pullback square of spaces.
\end{proof}

The above theorem tells us that the three operations $\barsmash^\L$, $\L f_!$, and $\R f^*$ on the homotopy category can be interchanged by the three canonical commutation isomorphisms
\begin{align*}
\L f_!X \barsmash^{\L} \L g_!Y &\simeq \L(f \times g)_!(X \barsmash^{\L} Y), \\
\R f^*X \barsmash^{\L} \R g^*Y &\simeq \R(f \times g)^*(X \barsmash^{\L} Y), \\
\L g_! \circ \R f^* &\simeq \R p^* \circ \L q_!.
\end{align*}
Each of these isomorphisms arises from the SMBF structure on $\ho\Osp$. Alternatively, by the proofs of \autoref{prop:deform_bifibration} and \autoref{prop:deform_SMBF}, the same isomorphisms arise by deforming the corresponding isomorphism on the point-set level using \autoref{prop:passing_natural_trans_to_derived_functors}. We can therefore also think of these as canonical weak equivalences of homotopy functors.

The same argument shows that the category of sequential spectra $\ho \Psp$ is a bifibration with the same Beck-Chevalley squares. We also get isomorphisms as above that commute pullback and pushforward with the operation that smashes a spectrum with a space.

\begin{ex}
	A somewhat surprising corollary is that suspension spectrum commutes with pullback,
	\[ \Sigma^\infty_A f^*X \simeq f^*\Sigma^\infty_B X. \]
	Tracing through the above argument, we actually have this isomorphism on the point-set level (and it is unique). Then since these two lists of functors are coherently deformable this passes to a canonical equivalence of derived functors,
	\[ (\L \Sigma^\infty_A)(\R f^*)X \simeq (\R f^*)(\L\Sigma^\infty_B) X. \]
	Of course $\Sigma^\infty$ commutes with pushforward as well, both on the nose and in the homotopy category, but that is less surprising. These statements can be formalized into the larger statement that $\L \Sigma^\infty$ defines a map of symmetric monoidal bifibrations $\ho\mc R \to \ho\Osp$.
\end{ex}

If we allow ourselves to change the category $\ho \Osp$ up to equivalence, we could simplify the construction of its SMBF structure by restricting attention to the freely $f$-cofibrant spectra $\Osp^{c} \subseteq \Osp$. The induced map $\ho \Osp^{c} \to \ho \Osp$ is a fiberwise equivalence of categories over $\cat{Top}$, and by the proof of \autoref{thm:spectra_SMBF}, $\ho\Osp^{c}$ is a symmetric monoidal bifibration as well.

All three of the derived functors are easier to describe in $\ho\Osp^{c}$. The left-derived pushforward $\L f_!$ and smash product $\barsmash^{\L}$ are modeled by the actual pushforward $f_!$ and smash product $\barsmash$, respectively. And although $f^*$ still needs to be derived, here we can use $P$ to derive it, $\R f^* = f^* P$. This is especially convenient for proving things because $P$ commutes with many constructions, and has an explicit geometric description, as opposed to the functors $R^{lv}$ and $R^{st}$ that are produced by the small-object argument.\footnote{We could also pass further to freely $f$-cofibrant, level $h$-fibrant spectra $\ho \Osp^{cf}$. This receives an equivalent SMBF structure where all three of the functors $f_!$, $f^*$, and $\barsmash$ are equivalent to their derived functors. However, since the $h$-fibrant spaces are not preserved by $f_!$, the pushforward functor cannot be modeled directly by $f_!$. We model it by $P f_!$ instead. Therefore this alternate route doesn't completely eliminate the need for fibrant replacement, unless we have no need for pushforwards, or only have to push forward along an $h$-fibration.}
%

\beforesubsection
\subsection{Derived smash products over $B$}\aftersubsection

The homotopy category of orthogonal spectra over $B$, $\ho\Osp(B)$, has a derived internal smash product
\[ \sma_B^{\M} := (\R\Delta_B^*)(\barsmash^{\L}). \]
We emphasize that $\sma_B$ is not left- or right-deformable, merely a composite of coherently deformable functors. So, its derived functor $\sma_B^{\M}$ is only unique if we stipulate that it must arise as a composite of the derived functors for $\barsmash$ and $\Delta_B^*$.

This extends to a symmetric monoidal structure on the homotopy category, with unit $\Sph_B := \Sph \times B \simeq \R r_B^* \Sph$. The most satisfactory description of this in the literature is \cite[12.8]{shulman_framed_monoidal}, though that point of view may be overly abstract for some applications. So consider the following three more concrete approaches.
\begin{itemize}
	\item[(1)] The functors obtained by iterating $\sma_B = \Delta_B^* \circ \barsmash$ and $r_B^*$ are coherently deformable, using \autoref{prop:stably_derived}. Therefore the recipe of \autoref{prop:passing_natural_trans_to_derived_functors} passes the symmetric monoidal structure of $\Osp(B)$ up to the homotopy category.
	\item[(2)] Restrict to the equivalent subcategory $\ho\Osp^{c}(B)$ of freely $f$-cofibrant spectra. By \autoref{prop:stably_derived}, $\sma_B$ preserves this subcategory and is right-deformable using $P$. Therefore the recipe of \autoref{prop:passing_natural_trans_to_derived_functors} passes the symmetric monoidal structure of $\Osp(B)^c$ up to the homotopy category.
	\item[(3)] Restrict to the equivalent subcategory $\ho\Osp^{cf}(B)$ of freely $f$-cofibrant, level $h$-fibrant spectra. By \autoref{prop:stably_derived}, $\sma_B$ is homotopical here and sends this subcategory into itself. Therefore the symmetric monoidal structure on $\Osp^{cf}(B)$ that arose from the rigidity theorem, passes immediately to the homotopy category.
\end{itemize}

By contrast, the more general approach of \cite[12.8]{shulman_framed_monoidal} proves that a symmetric monoidal structure on a Grothendieck fibration can always be pulled back along any map of cartesian monoidal categories. Applying this to the homotopy category $\ho \Osp$ gives a symmetric monoidal structure on the fiber homotopy category $\ho\Osp(B)$. We call this recipe (4).

Let's unpack this and then prove it. Suppose that $\Phi\colon \sC \to \bS$ is any symmetric monoidal bifibration with product $\boxtimes$.\footnote{This works for symmetric monoidal fibrations, or just monoidal fibrations, as well.} Let $F\colon \bT \to \bS$ be any functor from another category $\bT$ that has all finite products. The functor $F$ does not have to preserve products!

Form the pullback category $F^*\sC$. Its objects are pairs $(X,a)$ with $X \in \ob\sC$, $a \in \ob\bT$, and $F(a) = \Phi(X)$. Similarly, the morphisms are morphisms in $\sC$ and $\bT$ lying over the same morphism in $\bS$.

Given any two pairs $(X,a)$ and $(X',b)$ in the pullback category $F^*\sC$, we can take the product $X \boxtimes X'$ in $\sC$, and then choose a cartesian arrow in $\sC$ of the form
\[ \xymatrix @R=1.7em{ X \otimes X' \ar@{~>}[d] \ar[r] & X \boxtimes X' \ar@{~>}[d] \\
	F(a \times b) \ar[r] & F(a) \times F(b).
} \]
This defines a new object $X \otimes X' \in \sC$. Similarly, we can define an object $I_\otimes$ using a cartesian arrow
\[ \xymatrix @R=1.7em{ I_\otimes \ar@{~>}[d] \ar[r] & I_\boxtimes \ar@{~>}[d] \\
	F({*}_\bT) \ar[r] & {*}_\bS
} \]
where $*$ denotes a terminal object, or empty product. These constructions define a tensor product and unit on the pullback category $F^*\sC$: the product of $(X,a)$ and $(X',b)$ is the pair $(X \otimes X',a \times b)$, and the unit is $(I_\otimes,*_\bT)$.\footnote{While this notation is convenient, we have to remember that $X \otimes X'$ is not a function of just $X$ and $X'$ -- it depends on $a$ and $b$ as well.} When $\sC = \ho\Osp$ is the homotopy category of parametrized spectra, and $F\colon * \to \cat{Top}$ is the functor picking out the object $B$, the tensor product so constructed on $F^*\ho\Osp \cong \ho\Osp(B)$ is the derived internal smash product $\sma_B^{\M}$.

We complete this to a symmetric monoidal structure as follows. Given three pairs $(X,a)$, $(X',b)$, $(X'',c)$, we tensor the first two to get a cartesian arrow as above, then tensor with the third pair, giving two cartesian arrows
\[ \xymatrix @R=1.7em{
	(X \otimes X') \otimes X'' \ar@{~>}[d] \ar[r] & (X \otimes X') \boxtimes X'' \ar@{~>}[d] \ar[r] & (X \boxtimes X') \boxtimes X'' \ar@{~>}[d] \\
	F((a \times b) \times c) \ar[r] & F(a \times b) \times F(c) \ar[r] & (F(a) \times F(b)) \times F(c).
} \]
Repeating the other direction gives
\[ \xymatrix @R=1.7em{
	X \otimes (X' \otimes X'') \ar@{~>}[d] \ar[r] & X \boxtimes (X' \otimes X'') \ar@{~>}[d] \ar[r] & X \boxtimes (X' \boxtimes X'') \ar@{~>}[d] \\
	F(a \times (b \times c)) \ar[r] & F(a) \times F(b \times c) \ar[r] & F(a) \times (F(b) \times F(c)).
} \]
By the universal property of cartesian arrows, there is a unique map
\[ (X \otimes X') \otimes X'' \to X \otimes (X' \otimes X'') \]
lying over the associator $(X \boxtimes X') \boxtimes X'' \to X \boxtimes (X' \boxtimes X'')$ in $\sC$ and the associator $(a \times b) \times c \to a \times (b \times c)$ in $\bT$. By the universal property again, this map has an inverse and is therefore an isomorphism. This gives an associator isomorphism $\alpha$ for the product $\otimes$ in the pullback category $F^*\sC$. We define the remaining isomorphisms $\lambda, \rho, \gamma$ by an analogous trick. To prove these are coherent, we first observe that a map such as
\[ ((X \otimes X') \otimes X'') \otimes X''' \overset{\alpha \otimes \id}\to (X \otimes (X' \otimes X'')) \otimes X''' \]
that tensors one of the isomorphisms $\alpha, \lambda, \rho$ or $\gamma$ with an identity map, again satisfies a universal property similar to the one we used to define $\alpha$. Then the coherences follow from this universal property and the coherence for the product $\boxtimes$ in $\sC$ and $\times$ in $\bT$.

\begin{lem}\label{lem:two_smashes_over_b_isomorphic}
	The four recipes discussed thus far give canonically isomorphic symmetric monoidal structures on $\ho\Osp(B)$.
\end{lem}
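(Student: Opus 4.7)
The plan is to pass to the common subcategory $\Osp^{cf}(B)$ of freely $f$-cofibrant, level $h$-fibrant spectra used in recipe (3), where the point-set functor $\sma_B$ is already homotopical and preserves the subcategory by \autoref{prop:stably_derived} and \autoref{prop:spectra_pushout_product}. Since a symmetric monoidal structure is determined by its product, unit, and the four coherence isomorphisms $\alpha, \lambda, \rho, \gamma$, it is enough to check that all four recipes yield the same data on this subcategory; since the inclusion $\ho\Osp^{cf}(B) \to \ho\Osp(B)$ is an equivalence of categories, the comparison will then hold on all of $\ho\Osp(B)$.

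First I would compare recipes (1), (2) and (3) with each other. Each construction produces, on $\Osp^{cf}(B)$, a canonical natural weak equivalence $\sma_B^{\M} \simeq \sma_B$: recipe (1) inserts both a cofibrant replacement $Q$ and the monoidal fibrant replacement $P$ according to \autoref{prop:passing_natural_trans_to_derived_functors}, recipe (2) inserts only $P$, and recipe (3) inserts nothing. Because $Q$ and $P$ restrict to natural equivalences to or from the identity on $\Osp^{cf}(B)$, these three natural equivalences coincide in the homotopy category of homotopy functors. The coherence isomorphisms $\alpha, \lambda, \rho, \gamma$ produced by each of the three recipes are obtained by the same machinery of \autoref{prop:passing_natural_trans_to_derived_functors} from the point-set coherence isomorphisms of $\sma_B$ on $\Osp^{cf}(B)$, so they agree on $\ho\Osp(B)$.

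Next I would compare recipe (4) to the previous three. The cartesian arrow in $\ho\Osp$ over the diagonal $\Delta_B$ applied to $X \barsmash^{\L} Y$ may, for $X, Y \in \Osp^{cf}(B)$, be represented by the point-set map $\Delta_B^*(X \barsmash Y) \to X \barsmash Y$, because $X \barsmash Y$ is level $h$-fibrant by \autoref{prop:spectra_pushout_product} and therefore satisfies the criterion of \autoref{prop:homotopy_cartesian_arrows}. Hence recipe (4) also reduces to the point-set $\sma_B$ on this subcategory. The associator produced by recipe (4) is characterized by the universal property of cartesian arrows over the triple diagonal $B \to B \times B \times B$, lying above the associator of $\barsmash^{\L}$ in $\ho\Osp$ and the identity associator of $\times$ in $\cat{Top}$. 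Since the associator of $\barsmash^{\L}$ is itself obtained by deriving the rigid point-set associator of $\barsmash$, this universal property is already witnessed by the associator used in recipes (1)-(3), so by uniqueness of cartesian lifts the two coincide.

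The main obstacle will be this last verification, and I would dispatch it by a second appeal to rigidity. By \autoref{thm:spectra_rigidity}, the functor $(X, Y, Z) \leadsto \Delta_B^{(3)*}(X \barsmash Y \barsmash Z)$, where $\Delta_B^{(3)}\colon B \to B^3$ is the triple diagonal, admits only the identity natural automorphism, so any natural isomorphism between the two bracketings of $\sma_B^{\M}$ that is obtained from the unique associator of $\barsmash$ by deriving along the triple diagonal must be the same in all four recipes. The symmetry and unitor coherences are handled identically, the only modification being a substitution of $r_B\colon B \to *$ and its diagonal factorizations in the role of $\Delta_B^{(3)}$. This forces the four symmetric monoidal structures to agree on $\ho\Osp^{cf}(B)$, hence to be canonically isomorphic on $\ho\Osp(B)$.
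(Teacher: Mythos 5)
Your overall strategy matches the paper's: restrict to $\Osp^{cf}(B)$, deal with recipes (1)--(3) by the universal properties of derived functors, and bring rigidity to bear against recipe (4). The first paragraph is fine, and your observation that $\Delta_B^*(X \barsmash Y) \to X \barsmash Y$ represents a cartesian arrow in $\ho\Osp$ when $X,Y \in \Osp^{cf}(B)$ is correct (level $h$-fibrancy of $X \barsmash Y$ implies level $q$-fibrancy, so the criterion of \autoref{prop:homotopy_cartesian_arrows} applies). But the step that actually needs an argument --- and the one you flag as the ``main obstacle'' --- is not closed by the way you invoke rigidity.

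\autoref{thm:spectra_rigidity} is a statement about \emph{point-set} functors $\Osp(C_1) \times \cdots \times \Osp(C_n) \to \Osp(B)$, not about functors on homotopy categories. It therefore gives you at most this: any two natural isomorphisms between the two point-set bracketings of $\sma_B$, provided both are natural in honest point-set maps, must coincide. To compare recipe (4) to recipe (3) you need, as an intermediate step, that recipe (4)'s associator on $\ho\Osp^{cf}(B)$ is the image (under the localization $\Osp^{cf}(B) \to \ho\Osp^{cf}(B)$) of \emph{some} point-set natural isomorphism. You assert that recipe (4) ``reduces to the point-set $\sma_B$ on this subcategory,'' but that claim is about the objects $X \otimes X'$, not about the associator; recipe (4) defines its $\alpha$ entirely in $\ho\Osp$ via the universal property of (derived) cartesian arrows over the product associator of $\barsmash^{\L}$, and it does not follow on its face that this $\alpha$ is the passage of anything point-set. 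The paper closes exactly this gap by a two-stage reduction: first the canonical isomorphism $\barsmash^{\L} \cong \barsmash$ is used to show recipe (4), applied to $\barsmash$ instead of $\barsmash^{\L}$, produces a canonically isomorphic structure; then the level fibrancy of everything in sight lets one replace the derived cartesian arrows $\R f^*$ by strict ones $f^*$, producing an associator that agrees along $\R f^* \cong f^*$ and that can now be recognized as arising from applying recipe (4) to the point-set category $\Osp^{cf}$ and then passing to homotopy. Only after both decorations are removed is the resulting isomorphism manifestly point-set natural and eligible for the rigidity comparison with recipe (3). Your argument skips both of these reductions, so while your conclusion is correct, the proof as written has a genuine hole at the spot where you need rigidity to bite.
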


\begin{proof}
	An isomorphism of symmetric monoidal structures is determined by what it does on an equivalent subcategory, so we will restrict to $\ho\Osp^{cf}(B)$, the freely $f$-cofibrant level $h$-fibrant spectra. The first, second, and fourth recipes use $\sma_B^\M$ and $\R r_B^* \Sph$ here, while the third recipe uses $\sma_B$ and $r_B^* \Sph$. These admit canonical isomorphisms because of the uniqueness of right-derived and left-derived functors. The work is to check that the associator, unitor, and symmetry isomorphism in the three recipes commute along these isomorphisms. For the first, second, and third recipes, this is true just by examination of \autoref{prop:passing_natural_trans_to_derived_functors}.
	
	To show that the fourth recipe agrees with the third, we first remove the $\L$ decorations in the fourth recipe. The canonical isomorphism $\barsmash^\L \cong \barsmash$ gives an isomorphism of symmetric monoidal structures on $\ho\Osp^{cf}$, because that's how the structure using $\barsmash^\L$ was defined. Therefore we can apply the fourth recipe to either one, and get canonically isomorphic symmetric monoidal structures on the fiber category $\ho\Osp^{cf}(B)$. In effect, this means that if we remove the $\L$ decorations in the fourth recipe, the resulting associator (and unitor and symmetry isomorphism) is also obtained by applying the fourth recipe to $\ho\Osp^{cf}$ and $\barsmash$.
	
	Next we remove the $\R$ decorations. Because every spectrum is level fibrant, when we carry out the fourth recipe, we can select the cartesian arrow for $f^*$ instead of $\R f^*$ in every step. This gives a new associator (and unitor and symmetry isomorphism) that agrees with the earlier one along the canonical isomorphisms $\R f^* \cong f^*$.
	
	So we have shown that along the canonical isomorphism $\sma_B^\M \cong \sma_B$, the associator in the fourth recipe agrees with an associator we get by applying the fourth recipe to $\ho\Osp^{cf}$, using $\barsmash$ and strict pullbacks. We get the same associator by applying the fourth recipe to the point-set category $\Osp^{cf}$, then passing to the homotopy category. This point-set associator is natural in the three inputs, and extends to a natural isomorphism on all of $\Osp$ (between functors that don't pass to the homotopy category). Hence by the rigidity theorem \autoref{thm:spectra_rigidity}, it must coincide with the point-set associator isomorphism we used in the third recipe. This finishes the proof that the associators from the third and fourth recipes agree along $\sma_B^\M \cong \sma_B$. The argument for the unitor and symmetry isomorphism proceeds the same way.
\end{proof}

The third recipe is the most concrete, because we can just work in the point-set category and draw conclusions in the homotopy category. The only downside is that we have to insist our spectra are freely $f$-cofibrant and level $h$-fibrant. Fortunately, this often happens in applications anyway. For instance, Thom spectra $F_n \Th_B(V)$ satisfy these assumptions, so their geometric behavior passes to the same behavior on the homotopy category. For instance if $\xi = V - n$ and $\omega = W - m$ are virtual bundles over $B$, we get using \autoref{ex:internal_smash_thom_spectra} the point-set isomorphism
\[ \Th_B(\xi) \sma_B \Th_B(\omega) := F_n \Th_B(V) \sma_B F_m \Th_B(W) \cong F_{n+m} \Th_B(V \times_B W) = \Th_B(\xi \oplus \omega). \]
Because everything is in $\Osp^{cf}(B)$, this implies the same isomorphism on the homotopy category.

In \cite{shulman_framed_monoidal}, this structure is extended to that of an indexed symmetric monoidal category. We will do just one part of this, the fact that the pullbacks $\R f^*$ are strong symmetric monoidal.

For $f\colon A \to B$, the strict pullback functor $f^*\colon \Osp(B)\to\Osp(A)$ has a unique symmetric monoidal structure, by \autoref{thm:spectra_rigidity}. Each of the above four recipes suggests a way of passing this to a symmetric monoidal structure on the derived pullback $\R f^*$: coherent deformability, restricting to the freely $f$-cofibrant or the freely $f$-cofibrant and level $h$-fibrant spectra, or using the universal property of cartesian arrows in the fibration $\ho\Osp$.
\begin{prop}\label{prop:unique_sm_structure_on_pullback}
	These give identical symmetric monoidal structures on $\R f^*\colon \ho\Osp(B) \to \ho\Osp(A)$.
\end{prop}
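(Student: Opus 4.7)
The plan is to imitate the strategy of \autoref{lem:two_smashes_over_b_isomorphic}, reducing everything to the subcategory $\ho\Osp^{cf}(B)$ of freely $f$-cofibrant, level $h$-fibrant spectra and then invoking the rigidity theorem \autoref{thm:spectra_rigidity}. Recall that a symmetric monoidal structure on a functor $\R f^*$ is determined by its restriction to any equivalent subcategory of the source, so it suffices to compare the four structures after restricting to $\ho\Osp^{cf}(B)$ and $\ho\Osp^{cf}(A)$. On these subcategories the canonical maps $f^* \to \R f^*$ and $\sma_B \to \sma_B^{\M}$ (and $\sma_A \to \sma_A^{\M}$) are isomorphisms, and each of the four recipes identifies the derived multiplicator with the point-set multiplicator $m_f \colon f^*X \sma_A f^*Y \cong f^*(X \sma_B Y)$ and unitor $i_f \colon S^0_A \cong f^*S^0_B$ along these canonical isomorphisms.

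First I would verify this identification for each recipe separately. For coherent deformability (recipe 1), this is immediate from \autoref{prop:passing_natural_trans_to_derived_functors}: the defining property of $\D m_f$ restricts to $m_f$ on the subcategory. For recipes 2 and 3, the derived functors are literally equal to the point-set functors on the relevant subcategory, so there is nothing to check beyond unwinding the definition. For recipe 4 (the universal property of cartesian arrows), the argument is the same one given in the second half of the proof of \autoref{lem:two_smashes_over_b_isomorphic}: on $\ho\Osp^{cf}$ we may select strict cartesian arrows for $f^*$ in place of homotopy cartesian arrows for $\R f^*$, which reduces the structure map produced by the universal property to the point-set structure map arising from the smbf structure of $\Osp$ itself.

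Next I would note that $f^*$ carries a unique point-set symmetric monoidal structure by \autoref{thm:spectra_rigidity}, since $f^* X \sma_A f^* Y$ and $f^*(X \sma_B Y)$ are both isomorphic to functors of the form $h_! k^* (X \barsmash Y)$ (with $h$ injective on fibers of $k$), hence admit a unique isomorphism between them. In particular, any two candidate point-set $m_f$ and $i_f$ agree, and any two natural transformations built out of them (such as the associator, unitor, and symmetry coherences) automatically agree as well. This takes care of coherence of the derived structure maps for free, because the coherence diagrams in each recipe descend from the corresponding point-set diagrams.

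The main obstacle is bookkeeping: making sure that in recipe 4 the associator and symmetry coherences (not just the multiplicator itself) coming from the universal property of cartesian arrows in $\ho\Osp$ really do agree with the ones produced by \autoref{prop:passing_natural_trans_to_derived_functors}. The cleanest way to handle this is the same trick used in the proof of \autoref{lem:two_smashes_over_b_isomorphic}: first replace $\sma_B^\M$ by $\sma_B$ on $\ho\Osp^{cf}(B)$ using rigidity, then replace $\R f^*$ by $f^*$ on $\ho\Osp^{cf}(B)$ by selecting strict cartesian arrows, observe that the resulting coherence diagrams are the images under $\ho$ of honest natural transformations of point-set functors $\Osp^{cf} \to \Osp^{cf}$, and finally apply rigidity once more to conclude that these point-set natural transformations agree with the ones used in recipes 1--3. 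Once this is done, the symmetric monoidal functor structures on $\R f^*$ produced by all four recipes coincide.
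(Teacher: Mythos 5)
Your proof is correct and takes essentially the same route as the paper: reduce to $\ho\Osp^{cf}(B)$, identify each recipe's $(m_f,i_f)$ with the point-set structure maps, and for recipe (4) strip the $\L$ and $\R$ decorations and appeal to rigidity, exactly as in the proof of \autoref{lem:two_smashes_over_b_isomorphic}. One small imprecision worth correcting: the associator, unitor, and symmetry coherences for a symmetric monoidal \emph{functor} are commutativity conditions, not additional data, so once you know the $m_f$'s and $i_f$'s agree across the four recipes (along the identifications from \autoref{lem:two_smashes_over_b_isomorphic}) the coherence conditions are automatically equivalent and nothing further needs to be ``matched up.''
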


\begin{proof}
	We just have to show they give the same maps in the homotopy category
	\[ \R f^*(X) \sma_B^{\M} \R f^*(Y) \sim \R f^*(X \sma_B^{\M} Y), \qquad \R f^*(r_B^* \Sph) \sim r_A^* \Sph. \]
	Technically, the third recipe produces such maps without the $\M$ and $\R$ decorations, so we use the canonical isomorphisms to compare those maps to the maps furnished by the other three recipes. As before, it suffices to restrict to freely $f$-cofibrant, level $h$-fibrant spectra, and then the first and second recipes give the same map as the third, essentially by their definition.
	
	The proof that the third and fourth recipes produce the same result also proceeds as before. In the fourth recipe, we first remove the $\L$ from $\barsmash$, then remove the $\R$ from the pullbacks. The resulting isomorphisms arise by the universal property of cartesian arrows, but they also arise from the point-set level and are natural, hence they agree with the point-set isomorphism that we used as input for the third recipe. This verifies that the third and fourth recipes agree.
\end{proof}

In a somewhat different direction, we can extend the symmetric monoidal structure described in \autoref{lem:two_smashes_over_b_isomorphic} to the larger category $\Osp_{(B)}$ of all spectra over all spaces over $B$. This might seem like a silly level of generality, but we will actually need it for the fiberwise refinement of the Reidemeister trace (\autoref{ex:fiberwise_reidemeister_trace}).

We define $\Osp_{(B)}$ as the pullback of categories
\[ \xymatrix @R=1.7em{
	\Osp_{(B)} \ar[d] \ar[r] & \Osp \ar[d] \\
	\cat{Fib}_B \ar[r] & \cat{Top}
} \]
where $\cat{Fib}_B$ is the category of $h$-fibrations $p\colon E \to B$, mapping to $\cat{Top}$ by retaining the total space and forgetting the fibration. So an object of $\Osp_{(B)}$ is a pair $(p\colon E \to B, X \in \Osp(E))$. The morphisms are computed by forgetting $p$ and taking morphisms in $\Osp$.

Following the abstract (fourth) recipe we gave earlier in this section, we can give the point-set category $\Osp_{(B)}$ a symmetric monoidal structure. For each pair of spectra $X \in \Osp(D)$ and $Y \in \Osp(E)$, we define the \textbf{external smash product rel $B$}\index{external!smash rel $B$ $X \barsma{B} Y$} as
\[ X \barsma{B} Y := \Delta_{D,E}^*(X \barsmash Y), \qquad \Delta_{D,E}\colon D \times_B E \to D \times E. \]
This is a direct generalization of the internal smash product $\sma_B$.

Continuing to follow the fourth recipe, this extends to a symmetric monoidal structure in which the unit is $\Sph \times B \in \Osp(B)$. In fact, the functors obtained by iterating $\barsma{B}$ are rigid by \autoref{thm:spectra_rigidity}, so there is only one choice for the associator, unitor, and symmetry isomorphisms.

Next we pass to the homotopy category $\ho\Osp_{(B)}$ by inverting the stable equivalences. The result is also the pullback
\[ \xymatrix @R=1.7em{
	\ho \Osp_{(B)} \ar[d] \ar[r] & \ho \Osp \ar[d] \\
	\cat{Fib}_B \ar[r] & \cat{Top}
} \]
by comparing fiber categories and using \autoref{prop:deform_bifibration}.

Then we give the homotopy category $\ho\Osp_{(B)}$ a symmetric monoidal structure by following any of the four recipes covered in \autoref{lem:two_smashes_over_b_isomorphic}: coherent deformability of $\barsma{B}$ and its iterates on the whole category, or on the subcategory of freely $f$-cofibrant spectra, passing to freely $f$-cofibrant level $h$-fibrant spectra where $\barsma{B}$ is homotopical, or formally pulling back the symmetric monoidal structure with product $\barsmash^{\L}$ on $\ho\Osp$. The proof of \autoref{lem:two_smashes_over_b_isomorphic} applies verbatim and tells us that these symmetric monoidal structures are canonically isomorphic.

We conclude with a proof that this makes $\ho\Osp_{(B)}$ into a symmetric monoidal bifibration over $\cat{Fib}_B$.

\begin{prop}
	Suppose $F\colon \bT \to \bS$ is a functor of cartesian monoidal categories, and that $\bS$ and $\bT$ are endowed with a class of Beck-Chevalley squares, preserved by $F$. Then for any SMBF $\mc A$ over $\bS$, the symmetric monoidal structure discussed in this section makes $F^*\mc A$ into an SMBF, provided that for any two maps $A \to A'$, $B \to B'$ in $\bT$ the square
	\[ \xymatrix @R=1.7em{
		F(A \times B) \ar[d] \ar[r] & F(A) \times F(B) \ar[d] \\
		F(A' \times B') \ar[r] & F(A') \times F(B')
	} \]
	is Beck-Chevalley in $\bS$.
\end{prop}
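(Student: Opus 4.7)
The plan is to verify the five axioms of a symmetric monoidal bifibration for $F^*\mc A$ over $\bT$, inheriting each piece of structure from the smbf $\mc A$ over $\bS$. The projection $\Phi'\colon F^*\mc A \to \bT$, $(X,a) \mapsto a$, is strict symmetric monoidal essentially by definition of the tensor product on $F^*\mc A$ recalled just before the proposition. For cartesian and co-cartesian arrows: given $f\colon a \to b$ in $\bT$ and $(X,b) \in F^*\mc A$, the cartesian arrow $F(f)^*X \to X$ in $\mc A$ over $F(f)$ lifts to an arrow $(F(f)^*X,a) \to (X,b)$ in $F^*\mc A$ over $f$, whose universal property follows from that of the cartesian arrow in $\mc A$ together with the observation that an arrow in $F^*\mc A$ is equivalently a pair of an arrow in $\mc A$ and an arrow in $\bT$ lying over the same morphism in $\bS$; the co-cartesian arrows are constructed dually from $F(f)_!X$.

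Next I would show that $\otimes$ preserves cartesian arrows. Writing $\delta\colon F(a \times b) \to F(a) \times F(b)$ and $\delta'\colon F(a' \times b') \to F(a') \times F(b')$, let $\alpha\colon (F(f)^*X',a) \to (X',a')$ and $\beta\colon (F(g)^*Y',b) \to (Y',b')$ be cartesian in $F^*\mc A$. Their tensor $\alpha \otimes \beta$ fits into a commuting square in $\mc A$ whose bottom edge is $\alpha \boxtimes \beta$, cartesian over $F(f) \times F(g)$ because $\boxtimes$ preserves cartesian arrows in $\mc A$. Its diagonal composite is then cartesian over $(F(f) \times F(g)) \circ \delta = \delta' \circ F(f \times g)$; factoring through the cartesian arrow $X' \otimes Y' \to X' \boxtimes Y'$ over $\delta'$ forces $\alpha \otimes \beta$ to be cartesian over $F(f \times g)$ by left-cancellability of cartesian arrows.

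The main obstacle is showing that $\otimes$ preserves co-cartesian arrows. For co-cartesian $\alpha\colon X \to F(f)_!X$ and $\beta\colon Y \to F(g)_!Y$ in $\mc A$, the underlying tensor in $\mc A$ is a map $\delta^*(X \boxtimes Y) \to (\delta')^*(F(f)_!X \boxtimes F(g)_!Y) \cong (\delta')^*(F(f) \times F(g))_!(X \boxtimes Y)$ over $F(f \times g)$. For this to be co-cartesian in $F^*\mc A$ one needs precisely the Beck-Chevalley isomorphism
\[ F(f \times g)_!\delta^*(X \boxtimes Y) \overset{\sim}{\longrightarrow} (\delta')^*(F(f) \times F(g))_!(X \boxtimes Y) \]
in $\mc A$, which holds by our hypothesis that the product square $F(a \times b) \to F(a) \times F(b)$, $F(a' \times b') \to F(a') \times F(b')$ is Beck-Chevalley in $\bS$. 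Finally, the Beck-Chevalley property for the declared BC squares in $\bT$ is inherited directly: $F$ sends such a square to a BC square in $\bS$, and the BC map in $F^*\mc A$ is by construction the BC map for the image square in $\mc A$, which is an isomorphism by the smbf structure on $\mc A$.
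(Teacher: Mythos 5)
Your proposal takes essentially the same route as the paper: establish that $F^*\mc A$ inherits a bifibration structure whose (co)cartesian arrows are detected in $\mc A$, observe that the cartesian case for $\otimes$ is straightforward (which you argue nicely via the cancellability of cartesian arrows -- a point the paper only calls ``much shorter'' without writing out), and reduce the co-cartesian case to the product-square Beck-Chevalley hypothesis. Your identification of the crux -- that the extra hypothesis is needed exactly because forming $X \otimes Y = \delta^*(X \boxtimes Y)$ requires commuting $\delta^*$ past the induced co-cartesian arrow -- is correct and is the same observation the proposition's hypothesis is designed to service.

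However, there is a real gap at the sentence ``For this to be co-cartesian in $F^*\mc A$ one needs precisely the Beck-Chevalley isomorphism.'' What the universal property of the co-cartesian arrow gives you directly is that $\alpha \otimes \beta$ is co-cartesian if and only if the \emph{canonical factorization map} $F(f\times g)_!\,\delta^*(X \boxtimes Y) \to (\delta')^*(F(f)\times F(g))_!(X\boxtimes Y)$, induced by the universal property, is an isomorphism. That this canonical map \emph{coincides} with the Beck-Chevalley transformation $g_!f^* \Rightarrow p^*q_!$ built from units and counits of the $(f_! \adj f^*)$ adjunctions is not a tautology: one has to chase a diagram of ev/coev and cartesian/co-cartesian arrows to match the two descriptions. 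This identification is precisely what the paper's large subdivided square proves, and it is where essentially all the labor in the paper's argument is concentrated. Your write-up asserts the identification rather than proving it, so to be complete you should either carry out that diagram chase or cite a lemma to the effect that in any bifibration the Beck-Chevalley transformation agrees with the mate arising from the universal properties of (co)cartesian lifts.
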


\begin{cor}\label{prop:SMBF_over_b}
	The above makes $\ho\Osp_{(B)}$ into a symmetric monoidal bifibration.
\end{cor}

\begin{proof}	  
	We encounter no difficulty in checking that $F^*\mc A$ is a bifibration with (co)cartesian arrows those arrows that are (co)cartesian in $\mc A$. So we just have to check that the tensor product preserves (co)cartesian arrows.
	
	For the cocartesian arrows, it suffices to check that in the square of objects in $\mc A$ depicted on the left, lying over the square in $\bS$ on the right, the left vertical arrow is cocartesian.
	\[ \xymatrix{
		X \otimes Y \ar@{-->}[d] \ar[r] & X \boxtimes Y \ar[d] \\
		\phi_!X \otimes \gamma_!Y \ar[r] & \phi_!X \boxtimes \gamma_!Y
	}
	\qquad
	\xymatrix{
		F(A \times B) \ar[d]_-g \ar[r]^-f & F(A) \times F(B) \ar[d]^-k \\
		F(A' \times B') \ar[r]_-h & F(A') \times F(B').
	} \]
	To do this, replace the left-hand square by the isomorphic square
	\[ \xymatrix @R=1.7em{
		f^*(X \boxtimes Y) \ar@{-->}[d] \ar[r] & X \boxtimes Y \ar[d] \\
		h^*k_!(X \boxtimes Y) \ar[r] & k_!(X \boxtimes Y).
	} \]
	and show that the induced map $g_!f^*(X \boxtimes Y) \to h^*k_!(X \boxtimes Y)$ is an isomorphism. As one might expect, this turns out to be precisely the Beck-Chevalley map. It is defined by the universal property of cartesian arrows, which re-arranges into the statement that it is the unique map making this square commute (here $Z = X \boxtimes Y$).
	\[ \xymatrix @R=1.7em{
		f^*Z \ar[d] \ar[r] & Z \ar[r] & k_!Z \\
		g_!f^*Z \ar@{-->}[rr] && h^*k_!Z \ar[u]
	} \]
	We prove that the Beck-Chevalley map has this defining property by subdividing this square as follows.
	\[ \xymatrix @R=1.7em{
		f^*Z \ar[r]^-{cart} \ar[rd]^-{cocart} \ar[d]^-{cocart} & Z \ar[r]^-{cocart} & k_!Z & \\
		g_!f^*Z \ar[dd]^-{coev} \ar[rd]^-{cocart} & f_!f^*Z \ar[u]^-{ev} \ar[rd]^-{cocart} & & h^*k_!Z \ar[ul]^-{cart} \\
		& h_!g_!f^*Z \ar[r]^-\cong & k_!f_!f^*Z \ar[uu]^-{ev} & \\
		h^*h_!g_!f^*Z \ar[ur]^-{cart} \ar[rrr]^-\cong &&& h^*k_!f_!f^*Z \ar[ul]^-{cart} \ar[uu]^-{ev}
	} \]
	This finishes the proof that the tensor preserves cocartesian arrows. The proof for cartesian arrows is much shorter.
\end{proof}

\begin{rmk}
	The original symmetric monoidal category $\ho\Osp(B)$ sits inside $\ho\Osp_{(B)}$ as the fiber category over the object $\id_B\colon B \to B$.
\end{rmk}

\begin{rmk}
	If we enlarge the base category of $\Osp_{(B)}$ to be all spaces with maps to $B$, not just fibrations, then by the same argument we would get a symmetric monoidal structure on $\Osp_{(B)}$ and the homotopy category $\ho\Osp_{(B)}$. However the proof of \autoref{prop:SMBF_over_b} fails, and in fact the smash product does not preserve all co-cartesian arrows in the homotopy category.
\end{rmk}
%

\beforesubsection
\subsection{Parametrized homology and cohomology}\aftersubsection

If $B$ is a space, $\mc E$ is a parametrized sequential spectrum or orthogonal spectrum over $B$, and $X \to B$ is an unbased space over $B$, we define the twisted homology and cohomology spectra of $X$ with coefficients in $\mc E$ as follows.
\[ H_\bullet(X;\mc E) := (\L r_!)(\R p^*) \mc E \]
\[ H^\bullet(X;\mc E) := (\R r_*)(\R p^*) \mc E. \]
Here $r\colon X \to *$ and $p\colon X \to B$ are the projections. If $\mc E$ is freely $f$-cofibrant and level $h$-fibrant then the base-change operations are already derived, so we get the simpler formula
\[ H_\bullet(X;\mc E) \simeq r_!p^* \mc E \]
\[ H^\bullet(X;\mc E) \simeq r_*p^* \mc E. \]
In other words, we pull back $\mc E$ to $X$, then either quotient out the basepoint (homology) or take sections (cohomology).

Note that these are ordinary spectra, with no parametrization. The twisted homology and cohomology groups are just the homotopy groups of these spectra,
\[ H_n(X;\mc E) := \pi_n(H_\bullet(X;\mc E)) \]
\[ H^n(X;\mc E) := \pi_{-n}(H^\bullet(X;\mc E)). \]\index{homology}

\begin{lem}
	The homology and cohomology spectra of $X$ can also be described by the formulas
\[ H_\bullet(X;\mc E) \simeq (\L r_!)(\R\Delta_B^*) (X_{+B} \barsmash^\L \mc E),
\qquad H^\bullet(X;\mc E) \simeq \R \barF_B(X_{+B},\mc E) \]
where $r\colon B \to {*}$ and $\Delta_B\colon B \to B \times B$ are 0-fold and 2-fold diagonal maps.
\end{lem}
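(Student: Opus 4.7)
The plan is to obtain both formulas from point-set identities that lift to the homotopy category via \autoref{prop:passing_natural_trans_to_derived_functors}. The key inputs are the spectrum-level versions of the space-level formulas \eqref{half_smash_internal} and \eqref{half_map_sections}.

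For the homology side, I would first factor $r\colon X \to {*}$ as $r^B \circ p$ with $p\colon X \to B$, so that on the point-set level $r_! = (r^B)_! \circ p_!$. I would combine this with the identification $p_! p^* \mc E \cong X_{+B} \sma_B \mc E = \Delta_B^*(X_{+B} \barsmash \mc E)$. The space-level form of this last identity is \eqref{half_smash_internal}, and it applies level-wise to spectra using the canonical commutation of $\barsmash$ with $\Delta_B^*$ from \autoref{lem:external_smash_and_base_change}. Combined, these give a point-set isomorphism
\[ r_! p^* \mc E \cong (r^B)_! \Delta_B^*(X_{+B} \barsmash \mc E). \]
To derive, I would restrict both composites to the subcategory of freely $h$-cofibrant, level $h$-fibrant spectra, whiskering $X_{+B}$ if needed to ensure $h$-cofibrancy. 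By \autoref{lem:spectrum_f_preserves}, \autoref{prop:stably_derived}, and \autoref{prop:spectra_pushout_product}, each functor in either composite preserves this subcategory and agrees there with its derived version. Hence \autoref{prop:passing_natural_trans_to_derived_functors} upgrades the point-set isomorphism to the desired canonical equivalence of derived functors.

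For the cohomology side, I would identify $\barF_B(X_{+B}, -)$ with $(r^X)_* p^*$ on the point-set level by observing that both are right adjoints to the same left adjoint $X_{+B} \barsmash -$. The identification $X_{+B} \barsmash - \cong p_!(r^X)^*$ is precisely the spectrum-level analog of \eqref{external_half_smash} (with source $\Osp(*)$ and target $\Osp(B)$), and the right adjoint of $p_!(r^X)^*$ is unambiguously $(r^X)_* p^*$. This is the spectrum-level incarnation of \eqref{half_map_sections}. Both composites are right-deformable using stably fibrant replacement of $\mc E$ (after which $\R p^*$ is modeled by the strict pullback), and the isomorphism again passes to an equivalence of right-derived functors by \autoref{prop:passing_natural_trans_to_derived_functors}.

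The main obstacle will be bookkeeping the deformation framework carefully enough that both sides of each formula are coherently deformable on a common subcategory. The mild subtlety on the cohomology side is that the sheafy pushforward $(r^X)_*$ as defined in \autoref{prop:sheafy_pushforward} requires $r^X$ to be a fiber bundle with cell-complex fiber, i.e.\ requires $X$ itself to be a cell complex; if $X$ is not of this form, I would interpret the right-hand side as the definition of $\R(r^X)_* p^*$, or equivalently replace $X$ up to weak equivalence by a cell complex before forming the composite, which is harmless because $\R p^*$ is invariant under such replacements.
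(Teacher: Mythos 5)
Your approach is exactly the one the paper intends: the paper's proof is a one-liner citing \eqref{half_smash_internal}, \eqref{half_map_sections}, and a ``careful check that the inputs are appropriately cofibrant and fibrant,'' and you are filling in that check by the same factorizations $r^X_! p^* \cong r^B_! \Delta_B^* (X_{+B} \barsmash -)$ and $\barF_B(X_{+B},-)\cong (r^X)_* p^*$, followed by \autoref{prop:passing_natural_trans_to_derived_functors}.

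One bookkeeping step is off as written, on the homology side. You propose restricting to freely $h$-cofibrant, level $h$-fibrant spectra and say each functor in $r^B_! \circ \Delta_B^* \circ (X_{+B}\barsmash -)$ preserves this subcategory. But $X_{+B}\barsmash -$ only returns a level $h$-fibrant output when $X_{+B}$ is itself $h$-fibrant, i.e.\ when $p\colon X\to B$ is a Hurewicz fibration; for a general space over $B$ this fails, and then $\Delta_B^*$ is not automatically equivalent to $\R\Delta_B^*$ on the image. Whiskering $X_{+B}$ cures $h$-cofibrancy, not $h$-fibrancy, so it does not resolve this. The clean fix is to note that both sides of the formula depend only on $X$ up to weak equivalence over $B$ (both $\R p^*$ and $X_{+B}\barsmash^\L -$ are homotopy-invariant in this sense), so you may replace $X$ by an equivalent Hurewicz fibration before running your deformation argument; alternatively, invoke the spectrum-level version of \autoref{cor:internal_smash_properties}, which shows directly that $X_{+B}\sma_B \mc E$ already models the derived internal smash when $X_{+B}$ is $f$-cofibrant and $\mc E$ is freely $f$-cofibrant and level $h$-fibrant, bypassing the need for $X_{+B}$ to be fibrant. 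With this correction, the rest of your argument, including your handling of the cohomology side and its sheafy-pushforward caveat, is sound.
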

\begin{proof}
	This follows from \eqref{half_smash_internal} and \eqref{half_map_sections} and a careful check that the inputs are appropriately cofibrant and fibrant.
\end{proof}

\begin{ex} \hfill
	\vspace{-1em}
	
	\begin{itemize}
		\item When $\mc E = r^*E = B \times E$ is a trivial bundle of spectra, we get the classical definitions of homology and cohomology spectra:
		\[ H_\bullet(X;E) \simeq X_+ \sma^{\L} E, \qquad H^\bullet(X;E) \simeq \R F(X_+,E). \]
		If $X$ is a cell complex then the first is just $X_+ \sma E$, and if $X$ is finite or $E$ is an $\Omega$-spectrum then the second is just $F(X_+,E)$.
		\item Another common case is $X = B$, where we get
		\[ H_\bullet(B;\mc E) \simeq (\L r_!)\mc E, \qquad H^\bullet(B;\mc E) \simeq (\R r_*)(\mc E). \]
		Again, homology pushes $\mc E$ forward to a point, while cohomology takes sections.
		\item Let $\mc A$ be a bundle of abelian groups and $\mc E = H\mc A$ the associated bundle of Eilenberg-Maclane spectra. Then the above definition of $H_*(X;H\mc A)$ is isomorphic to classical twisted ordinary homology $H_*(X;\mc A)$, and similarly for cohomology. To see this it's enough to observe that it satisfies a twisted version of the Eilenberg-Steenrod axioms, in which the dimension axiom is stated as an isomorphism $h_*(\{b\}) \cong \mc A_b$ for all $b \in B$, and for any path $\gamma$ from $b$ to $b'$ the composite $h_*(\{b\}) \cong h_*(\gamma) \cong h_*(\{b'\})$ agrees with the monodromy isomorphism $\mc A_b \cong \mc A_{b'}$ induced by $\gamma$. Then a slightly modified version of the usual proof of uniqueness of homology shows that $H_*(X;H\mc A) \cong H_*(X;\mc A)$. 
	\end{itemize}
\end{ex}

It is possible to characterize these theories by a variant of the Eilenberg-Steenrod axioms for parametrized spaces over $B$. See \cite[20.1]{ms} for more details. In particular, the twisted $K$-theory of \cite{atiyah_segal} is represented by a parametrized spectrum, as discussed in detail in \cite{hebestreit_sagave_schlichtkrull}.

\newpage
\section{Duality and traces}\label{sec:duality}

We can produce fixed-point invariants by taking ``traces'' in the homotopy category of parametrized spectra. There are two conceptually different ways to do this, leading respectively to the Lefschetz number $L(f)$ and the Reidemeister trace $R(f)$. The first is the usual notion of trace in a symmetric monoidal category from \cite{dold_puppe}, while the second is a more general notion of trace in a shadowed bicategory from \cite{ponto_asterisque,kaledin_traces}.

The following table gives an overview. Each entry is a type of product of parametrized spectra, the duality theory associated to it, and a resulting fixed-point invariant.
\[
\resizebox{\textwidth}{!}{$
	\begin{array}{|c|c|c|}\hline
	\textup{smash product} & \textup{external smash product} & \textup{composition product} \\
	\sma & \barsmash & \odot \\
	\textup{Spanier-Whitehead duality} & \textup{(no interesting} & \textup{Costenoble-Waner duality} \\
	\textup{Lefschetz number } L(f) & \textup{duality theory)}  & \textup{Reidemeister trace } R(f)
	\\\hline
	\textup{smash product rel }B & \textup{external smash product rel }B & \textup{composition product rel }B \\
	\sma_B & \barsma{B} & \odot_B \\
	\textup{fiberwise Spanier-Whitehead duality} & \textup{(no interesting} & \textup{fiberwise Costenoble-Waner duality} \\
	\textup{fiberwise Lefschetz number } L_B(f) & \textup{duality theory)} & \textup{fiberwise Reidemeister trace } R_B(f) \\\hline
	\end{array}
$}
\]
Each of these products is defined from $\barsmash$ as indicated below.
\[ \xymatrix @C=8em @R=2em{
	\sma &
	\barsmash \ar[l]_-*\txt{special case} \ar[d]^-*\txt{used to define} \ar[r]^-*\txt{used to define} &
	\odot \\
	\sma_B &
	\barsma{B} \ar[l]^-*\txt{special case} \ar[r]_-*\txt{used to define} &
	\odot_B \\
} \]
The above table is not exhaustive. For instance the refined Reidemeister trace of \cite{ponto_shulman_mult} is also defined using the composition product $\odot$.

\beforesubsection
\subsection{Duality and traces in a symmetric monoidal category}\aftersubsection

Suppose that $(\mc C,\otimes,I,\alpha,\ell,r,\gamma)$ is a symmetric monoidal category. Recall that $(X,Y)$ is a \textbf{dual pair} in $\mc C$ if there are maps
\[
c: I \ra X \otimes Y \qquad e: Y \otimes X \ra I
\]
such that the composites
\[ \xymatrix @R=0.5em @C=3em{
	X \cong I \otimes X \ar[r]^-{c \otimes \id_X} & X \otimes Y \otimes X \ar[r]^-{\id_X \otimes e} & X \otimes I \cong X \\
	Y \cong Y \otimes I \ar[r]^-{\id_Y \otimes c} & Y \otimes X \otimes Y \ar[r]^-{e \otimes \id_Y} & I \otimes Y \cong Y
} \]
are the identity maps of $X$ and $Y$, respectively. We say that the duality $(X,Y,c,e)$ is \textbf{invertible}\index{invertible!in a symmetric monoidal category} if $c$ and $e$ are isomorphisms.

We usually just say that $X$ is \textbf{dualizable}\index{dualizable!in a symmetric monoidal category} if such data exists, because it turns out that when it exists, it is unique up to isomorphism. Similarly $X$ is invertible if this duality is an invertible one.
\begin{lem}\hfill
	\vspace{-1em}
	
	\begin{itemize}
		\item $X$ is dualizable iff $- \otimes X$ has a right adjoint of the form $- \otimes Y$.
		\item $X$ is invertible iff	$X \otimes Y \cong I$ for some $Y$.
		\item Duals are unique: for any two duals $Y$, $Y'$ of $X$ there is a unique isomorphism $Y \cong Y'$ along which the coevaluation and evaluation maps agree.
		\item Strong symmetric monoidal functors preserve dual pairs.
	\end{itemize}
\end{lem}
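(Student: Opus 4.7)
The plan is to prove the four items in order, all four ultimately resting on straightforward manipulations of the triangle identities together with the coherence theorem for symmetric monoidal categories.

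For the first part, I would argue both directions directly. Given a dual pair $(X,Y,c,e)$, the recipes $\eta_Z \colon Z \to Z \otimes X \otimes Y$ obtained from $\id_Z \otimes c$ and $\epsilon_W \colon W \otimes Y \otimes X \to W$ obtained from $\id_W \otimes e$ (with unitors and associators inserted) define a unit and counit for an adjunction $(- \otimes X \adj - \otimes Y)$; the two triangle identities for this adjunction, evaluated at $X$ and $Y$ respectively, specialize precisely to the two defining identities of the dual pair, so they hold. Conversely, given an adjunction $(- \otimes X \adj - \otimes Y)$, evaluating the unit at $I$ yields a candidate $c$ and evaluating the counit at $I$ yields a candidate $e$; the two adjunction triangle identities at the unit object give the two dual-pair identities.

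For the second part, one direction is immediate, since $c$ being an isomorphism gives $X \otimes Y \cong I$. For the converse, given an isomorphism $\phi\colon X \otimes Y \cong I$, set $c = \phi^{-1}$ and $e = \phi \circ \gamma_{Y,X}$ and check the two triangle identities; both reduce to comparing two parallel morphisms built from $\alpha$, $\lambda$, $\rho$, $\gamma$, and $\phi$, which agree by coherence. For the third part (uniqueness), I would build the comparison map as the zig-zag
\[ Y \cong Y \otimes I \xrightarrow{\id_Y \otimes c'} Y \otimes X \otimes Y' \xrightarrow{e \otimes \id_{Y'}} I \otimes Y' \cong Y', \]
with its putative inverse the analogous zig-zag using $c$ and $e'$; the triangle identities for the two duals force both composites to be identities. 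For uniqueness of the isomorphism, any morphism $Y \to Y'$ compatible with both coevaluation and evaluation can be rewritten, by inserting $c'$ and $e$ and invoking the triangle identities, into precisely this zig-zag.

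For the fourth part, suppose $F\colon (\mc C,\otimes,I) \to (\mc D,\boxtimes,J)$ is strong symmetric monoidal with coherence isomorphisms $m\colon F(A) \boxtimes F(B) \cong F(A \otimes B)$ and $i\colon J \cong F(I)$. Given a dual pair $(X,Y,c,e)$, I would define
\[ c_F = m^{-1} \circ F(c) \circ i, \qquad e_F = i^{-1} \circ F(e) \circ m, \]
and verify the triangle identities for $(F(X), F(Y), c_F, e_F)$ by applying $F$ to the triangle identities for $(X,Y,c,e)$ and then using the hexagon for $m$ against $\alpha$ and the two squares relating $m$ and $i$ to $\lambda$, $\rho$, to convert $F$ applied to an associator/unitor into the corresponding morphism in $\mc D$. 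No step presents a genuine obstacle; the work is bookkeeping of $\alpha$, $\lambda$, $\rho$, $\gamma$, and in the last part the coherence diagrams for $m$ and $i$, with the coherence theorem ensuring that any ambiguity in these manipulations is illusory.
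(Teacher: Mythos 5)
The paper states this lemma without proof, presenting it as standard categorical background, so the right comparison is with the well-known arguments in the literature rather than with anything in the paper.

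Your treatment of the first item in the forward direction, and of items 3 and 4, is sound and essentially standard: the unit/counit built by tensoring with $c$ and $e$ satisfy the adjunction triangle identities because $\otimes$ is a bifunctor; the zig-zag comparison map between two duals is forced to be an isomorphism by the triangle identities; and the transport of duality along a strong symmetric monoidal $F$ via $c_F = m^{-1}\circ F(c)\circ i$ and $e_F = i^{-1}\circ F(e)\circ m$ works by pushing the triangle identities through $F$ and using the coherence of $(m,i)$. In the converse direction of item 1, however, you are compressing a real step. Evaluating the adjunction triangle identity at $Z=I$ gives $\epsilon_X\circ(c\otimes\mathrm{id}_X)\circ\lambda_X^{-1} = \mathrm{id}_X$, which involves the counit component $\epsilon_X$ at $X$, not the morphism $e = \epsilon_I$ (up to unitors). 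To reach the dual pair triangle identity one must additionally verify that $\epsilon_X$ equals $\rho_X\circ(\mathrm{id}_X\otimes e)\circ\alpha$, i.e.\ that the abstract counit of the adjunction is the ``tensoring'' natural transformation built from $e$. This is true and standard, but it does not follow from naturality applied at $I$ alone; it uses the strength of the adjunction or an explicit Yoneda-style argument via the adjunction isomorphism $\Phi$.

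The converse of item 2 has a genuine flaw. Your prescription $c=\phi^{-1}$, $e=\phi\circ\gamma_{Y,X}$ does not satisfy the triangle identities in general. In the category of $\Z$-graded abelian groups with the Koszul symmetry, take $X=\Z[1]$, $Y=\Z[-1]$, with generators $x_0$, $w_0$, and $\phi(x_0\otimes w_0)=1$. Then $\gamma_{Y,X}(w_0\otimes x_0)=-x_0\otimes w_0$, so $e(w_0\otimes x_0)=-1$, and the first zig-zag $\rho_X\circ(\mathrm{id}_X\otimes e)\circ\alpha\circ(c\otimes\mathrm{id}_X)\circ\lambda_X^{-1}$ sends $x_0$ to $-x_0$ rather than $x_0$. (The same failure appears in the stable homotopy category with $X=\Sph^1$, $Y=\Sph^{-1}$.) The conclusion of the lemma is still correct, but the argument should instead observe that an isomorphism $X\otimes Y\cong I$ in a symmetric monoidal category makes $-\otimes X$ and $-\otimes Y$ mutually inverse equivalences of $\mc C$, and then invoke the standard theorem that any equivalence of categories can be promoted to an adjoint equivalence; this adjustment replaces one of your candidate maps (say $e$) by a modified one for which the triangle identities do hold, and the unit and counit of the resulting adjunction are isomorphisms, which is exactly invertibility.
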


\begin{ex}[Examples of dualizable objects]\label{ex:dualizable_objects}\hfill
	\vspace{-1em}
	
	\begin{itemize}
		\item If $k$ is a field, a $k$-vector space $V$ is dualizable iff it is finite-dimensional, and invertible iff it is 1-dimensional. If $k$ is instead a commutative ring, a module $M$ is dualizable iff it is finitely generated projective.
		\item In $k$-chain complexes up to quasi-isomorphism, a chain complex is dualizable iff it is quasi-isomorphic to one with finitely many nonzero levels, each of which is finitely generated projective.
		\item In the stable homotopy category $\ho\Osp(*)$, a spectrum $X$ is dualizable iff it is equivalent to $F_n K$ where $K$ is a finite complex, iff the total homology $H_*(X;\Z)$ is finitely generated. This is \textbf{Spanier-Whitehead duality}\index{Spanier-Whitehead duality}. A spectrum is invertible iff it is equivalent to a sphere $F_n S^m$. If $X$ is a finitely dominated cell complex (a retract up to homotopy of a finite complex) then $\Sigma^\infty_+ X$ is dualizable.
		\item In the homotopy category of spectra over $B$, $\ho\Osp(B)$, a spectrum $X$ is dualizable iff its derived fiber spectra $(\R b^*)X$ are dualizable. This is \textbf{fiberwise Spanier-Whitehead duality}\index{Spanier-Whitehead duality!fiberwise}. The spectrum is furthermore invertible iff its fiber spectra are spheres. (See \autoref{fiberwise_dualizability}.)
	\end{itemize}
\end{ex}

\begin{rmk}
	In $\cat{Set}$ or $\cat{Top}$ with the product $\times$, the only dualizable objects are the singletons $\{*\}$. As a result, if we look at the external smash product $\barsmash$ on the homotopy category of all parametrized spectra $\ho\Osp$, the only dualizable objects are non-parametrized spectra in $\ho \Osp(*)$.
\end{rmk}

The above symmetric monoidal categories are all closed. A \textbf{closed}\index{symmetric monoidal!category, closed} symmetric monoidal category is one in which $- \otimes X$ has a right adjoint $F(X,-)$ for each object $X$. Therefore $X$ is dualizable iff $F(X,-)$ is isomorphic to a functor of the form $- \otimes Y$. This implies that any dual of $X$ must be isomorphic to $F(X,I)$, the \textbf{functional dual} of $X$.

If $X$ is dualizable, the \textbf{trace}\index{trace!in a symmetric monoidal category} of a map $f\colon X \to X$ is defined as the composite
\[ \xymatrix @R=0.5em @C=3em{
	I \ar[r]^-c & X \otimes Y \ar[r]^-{f \otimes \id} & X \otimes Y \ar@{<->}[r]^-{\cong} & Y \otimes X \ar[r]^-{e} & I.
} \]
More generally, the \textbf{twisted trace} of a map $f\colon Q \otimes X \to X \otimes P$ is the composite
\[ \xymatrix @R=0.5em @C=3em{
	Q \ar@{<->}[r]^-\cong & Q \otimes I \ar[r]^-{\id \otimes c} & Q \otimes X \otimes Y \ar[r]^-{f \otimes \id} & X \otimes P \otimes Y \ar@{<->}[r]^-\cong & Y \otimes X \otimes P \ar[r]^-{e \otimes \id} & P.
} \]
\begin{rmk}
	By the coherence theorem for symmetric monoidal categories \cite{maclane}, we get the same trace no matter how we parenthesize the terms above, and no matter how we define the isomorphism $\cong$, so long as we pick a definition that is formally obtained as composites of the structure isomorphisms $\alpha, \ell, r, \gamma$ of $\mc C$.
\end{rmk}
If $F\colon \mc C \to \mc D$ is a strong symmetric monoidal functor, then it preserves tensor products and dual pairs, so it also preserves traces. In other words $\tr(F(f)) \cong F(\tr(f))$, along the isomorphism $I \cong F(I)$ coming from the symmetric monoidal structure on $F$. A similar statement holds for twisted traces. 

\begin{ex}[Examples of traces]\label{ex:traces}\hfill
	\vspace{-1em}
	
	\begin{itemize}
		\item If $f\colon V \to V$ is a linear transformation of finite-dimensional vector spaces, or more generally finite free modules, the trace $\tr(f)$ in the above sense is a map $k \to k$ that multiplies by the trace of $f$ in the usual sense from linear algebra.
		\item If $f\colon C_* \to C_*$ is a map of free finitely-generated chain complexes, its trace is the alternating sum of the traces
		\[ \tr(f) = \sum_i (-1)^i \tr(f_i). \]
		The signs arise from the symmetry isomorphism $X \otimes Y \cong Y \otimes X$, which introduces a $(-1)^i$ in degree $i$. Without this kind of sign convention, the symmetry isomorphism would not be a map of chain complexes, and rational homology $H_*(-;\Q)$ would not be a symmetric monoidal functor.
		\item If $f\colon X \to X$ is a map of finite or finitely dominated cell complexes, the trace of $\Sigma^\infty_+ f$ in the stable homotopy category is a map $\Sph \to \Sph$, in other words an integer $L(f) \in \Z$. We obtain the same number by taking the trace of $f$ on $C_*(X;\Z)$ in the homotopy category of chain complexes, since $C_*(-;\Z)$ is strong symmetric monoidal by a variant of the Eilenberg-Zilber theorem.
		
		Applying the strong symmetric monoidal functor $H_*(-;\Q)$, we conclude that $L(f)$ is equal to the trace of the map $f_*$ on rational homology $H_*(X;\Q)$, or the alternating sum
		\[ L(f) = \sum_i (-1)^i \tr(f_*\colon H_i(X;\Q) \to H_i(X;\Q)). \]
		This is the classical \textbf{Lefschetz number}\index{Lefschetz number $L(f)$} of $f$, see for instance \cite{dold1974fixed,dold1976fixed}.
		\item If $f\colon E \to E$ is a map of fibrations over $B$ with finite or finitely dominated fibers, the trace of $\Sigma^\infty_{+B} f$ in $\ho\Osp(B)$ is a map of trivial bundles
		\[ L_B(\Sph)\colon B \times \Sph \to B \times \Sph, \]
		called the \textbf{fiberwise Lefschetz number}, cf \cite[II.10]{crabb_james}. By \autoref{prop:unique_sm_structure_on_pullback}, pulling back to a single point of $B$ is strong symmetric monoidal, so $L_B(f)$ is on each fiber equivalent to the Lefschetz number $L(f_b)$ of the fiber map $f_b\colon E_b \to E_b$. By the adjunction $(r_! \adj r^*)$ we can also rearrange $L_B(f)$ to a map of spectra $\Sigma^\infty_+ B \to \Sph$, or equivalently a map of spaces $B \to QS^0$. \index{Lefschetz number $L(f)$!fiberwise}\index{fiberwise!Lefschetz number $L(f)$} 
		\item If $E$ is a fibration over $B$ with finite or finitely dominated fibers, the twisted trace of the diagonal map $\Sigma^\infty_{+B} E \to \Sigma^\infty_{+B} E \times_B E$ is a map of spectra $\Sigma^\infty_{+B} B \to \Sigma^\infty_{+B} E$. Applying $r_!$ gives a map of spectra
		\[ \tau\colon \Sigma^\infty_+ B \to \Sigma^\infty_+ E. \]
		This is the \textbf{Becker-Gottlieb transfer} \cite{becker1975transfer,becker1976transfer}.
	\end{itemize}
\end{ex}

We conclude with three elementary properties of the trace. We state them for the ordinary trace, but they also have versions that are twisted by $Q$ and $P$.
\begin{prop}\label{prop:trace_properties}\hfill
	\vspace{-1em}
	
	\begin{itemize}
		\item (Additivity) \cite{may_additivity} If $\mc C$ is an additive category with biproduct $\oplus$, and the product $\otimes$ from the symmetric monoidal structure distributes over $\oplus$, given two self-maps $f_X\colon X \to X$, $f_Y\colon Y \to Y$ of dualizable objects we have
		\[ \tr(f_X \oplus f_Y) = \tr(f_X) + \tr(f_Y). \]
		More generally, if $\mc C$ has a compatible triangulated structure, given a map of cofiber sequences
		\[ \xymatrix @R=1.7em @C=3em{
			X \ar[d]^-{f_X} \ar[r] & Z \ar[d]^-{f_Z} \ar[r] & Y \ar[d]^-{f_Y} \\
			X \ar[r] & Z \ar[r] & Y
		} \]
		we have $\tr(f_Z) = \tr(f_X) + \tr(f_Y)$.
		\item (Multiplicativity) Given two self-maps $f_X\colon X \to X$, $f_Y\colon Y \to Y$ of dualizable objects we have
		\[ \tr(f_X \otimes f_Y) \cong \tr(f_X) \otimes \tr(f_Y). \]
		\item (Composition) \cite{mp1} Given a cycle of maps
		\[ \xymatrix{
			X_0 \ar[r]^-{f_1}& X_1 \ar[r]^-{f_2}& \ldots  \ar[r]^-{f_{n-1}}& X_{n-1} \ar[r]^-{f_1}& X_0
		} \]
		if we regard their tensor product as a self-map
		\[ \psi(f_1,\ldots,f_n)\colon (X_0 \otimes X_1 \otimes \ldots \otimes X_{n-1}) \to (X_0 \otimes X_1 \otimes \ldots \otimes X_{n-1}), \]
		its trace is equal to the trace of the composite
		\[ \tr(\psi(f_1,\ldots,f_n)) = \tr(f_n \circ \ldots \circ f_1). \]
		As a corollary, the trace has cyclic invariance,
		\[ \tr(f_n \circ \ldots \circ f_2 \circ f_1) = \tr(f_1 \circ f_n \circ \ldots \circ f_2). \]
	\end{itemize}
\end{prop}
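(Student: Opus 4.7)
The plan is to handle the three properties separately, starting with the most elementary and building up. For multiplicativity, I would first observe that if $(X,X^\vee,c_X,e_X)$ and $(Y,Y^\vee,c_Y,e_Y)$ are dual pairs, then so is $(X \otimes Y,\, Y^\vee \otimes X^\vee)$ with coevaluation and evaluation given by
\[ c_{X \otimes Y}\colon I \cong I \otimes I \xrightarrow{c_X \otimes c_Y} (X \otimes X^\vee) \otimes (Y \otimes Y^\vee) \cong (X \otimes Y) \otimes (Y^\vee \otimes X^\vee), \]
\[ e_{X \otimes Y}\colon (Y^\vee \otimes X^\vee) \otimes (X \otimes Y) \cong Y^\vee \otimes (X^\vee \otimes X) \otimes Y \xrightarrow{e_X, e_Y} I. \]
Verification of the two triangle identities is a coherence computation. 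Once this is in hand, the formula $\tr(f_X \otimes f_Y) \cong \tr(f_X) \otimes \tr(f_Y)$ is a direct expansion: the trace of $f_X \otimes f_Y$ built from $c_{X \otimes Y}$ and $e_{X \otimes Y}$ factors as the external product of two independent trace diagrams, and the equality $\tr(f_X) \otimes \tr(f_Y) = (\tr(f_X) \otimes I) \circ (I \otimes \tr(f_Y))$ finishes the matter. Throughout, I would appeal freely to the symmetric monoidal coherence theorem so as not to worry about parenthesization or the precise string of $\alpha, \ell, r, \gamma$ used.

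For the composition/cyclicity statement I would iterate the construction above to equip $X_0 \otimes \cdots \otimes X_{n-1}$ with the dual $X_{n-1}^\vee \otimes \cdots \otimes X_0^\vee$. Then I would expand $\tr(\psi(f_1,\ldots,f_n))$ by drawing the coevaluation $I \to \bigotimes_i X_i \otimes \bigotimes_i X_{n-1-i}^\vee$ as a product of the individual $c_{X_i}$'s, applying $\psi(f_1,\ldots,f_n)$, reshuffling by a single symmetry that moves $X_0$ past the other tensor factors, and then contracting via the pairings $e_{X_i}$. The key manipulation is that each $e_{X_i}$ pairs the copy of $X_i^\vee$ coming from $c_{X_i}$ with the copy of $X_i$ produced by $f_i$, so the cumulative effect along the diagram is to compose $f_n \circ \cdots \circ f_1$ at the $X_0$-strand, with all other strands cancelling. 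Cyclicity is then the special case where we rotate which $X_i$ plays the role of the ``base strand'' and invoke the same argument. The actual check is a long but routine coherence diagram; I would present it as a commutative diagram of string-diagram type rather than a string of equations.

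For additivity, the biproduct case is the easy one: $X \oplus Y$ is dualizable with dual $X^\vee \oplus Y^\vee$, coevaluation $c_X \oplus c_Y$ and evaluation $e_X \oplus e_Y$ under the distributivity isomorphism, and because $f_X \oplus f_Y$ has no ``off-diagonal'' components the trace diagram decomposes as $\tr(f_X) + \tr(f_Y)$. The genuinely hard part is the triangulated statement, which is May's additivity theorem \cite{may_additivity}. Rather than reprove it, I would invoke it, and my only remaining obligation is to check the hypotheses of that theorem in our setting: namely, that the relevant homotopy categories ($\ho\Osp(B)$, $\ho\Osp_{(B)}$, and their bicategorical cousins to come) carry compatible triangulated and closed symmetric monoidal structures, i.e.\ that the tensor is exact in each variable and that the distinguished triangles admit the ``good'' cofiber sequences May requires. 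This reduces to the left properness of the stable model structure (\autoref{cor:spectra_left_proper}) plus the gluing lemma (\autoref{cor:spectra_gluing}) together with the fact that $\barsmash^{\L}$ and $\sma_B^{\M}$ are left-derived from left Quillen bifunctors (\autoref{lem:stable_smash_is_left_quillen}). The main obstacle is really this verification of May's axioms in the parametrized setting; once that is done, the additivity formula is purely formal.
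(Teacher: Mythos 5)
The paper does not actually prove this proposition — it records the statements with citations (\cite{may_additivity} for additivity, \cite{mp1} for composition) and moves on, so there is no paper-internal proof to compare against. Your outline is substantively the standard one in those references: multiplicativity via the dual pair $(X \otimes Y,\ Y^\vee \otimes X^\vee)$, composition via tracking the pairings in the iterated coevaluation/evaluation, and additivity via the biproduct calculation in the split case together with an appeal to May for the triangulated case. A few small remarks on accuracy. In the biproduct case you don't really need to assume $f_X \oplus f_Y$ has no off-diagonal components — for any self-map $f$ of $X \oplus Y$ the off-diagonal blocks contribute nothing to the trace, so $\tr(f) = \tr(f_{XX}) + \tr(f_{YY})$; the stated version is the special case. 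In the composition step, ``a single symmetry that moves $X_0$ past the other tensor factors'' understates the bookkeeping: $\psi(f_1,\ldots,f_n)$ is a cyclic permutation of the $n$ strands composed with $f_1 \otimes \cdots \otimes f_n$, and the careful coherence check that the cancellations indeed leave only $f_n \circ \cdots \circ f_1$ on one strand is precisely the content that the cited reference works out with string diagrams. Finally, for the triangulated additivity the reduction you describe — verifying May's axioms for the parametrized stable homotopy category by appealing to left properness, the gluing lemma, and the left Quillen bifunctor structure on $\barsmash$ — is the right checklist and is what one would actually have to carry out; this is not done in the present paper either, which simply defers to \cite{may_additivity}.
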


\beforesubsection
\subsection{Spanier-Whitehead duality and applications}\aftersubsection

Let $X$ be a finite cell complex, or retract of such (a compact ENR).\index{compact ENR} Then $\Sigma^\infty_+ X$ is dualizable in the stable homotopy category. This can be deduced indirectly by an inductive argument on the number of cells, without too much work. However, if we want explicit coevaluation and evaluation maps, we need to do more work.

If $X$ is a finite cell complex, pick an embedding $i\colon X \to \R^n$ that has a mapping cylinder neighborhood. By this we mean a closed neighborhood $N$ containing $X$, with boundary $\partial N$ a CW complex, such that $N$ is homeomorphic to the mapping cylinder of a continuous map $\partial N \to X$. Let $p\colon N \to X$ denote the projection map.

For each $\epsilon > 0$ we define the $\epsilon$-tube, $\epsilon$-ball, and $\epsilon$-sphere as follows.
\[ \begin{array}{rcl}
	N_\epsilon &=& \{ u \in \R^n : d(u,i(X)) \leq \epsilon \} \\
	B_\epsilon &=& \{ u \in \R^n : d(u,0) \leq \epsilon \} \\
	S^n_\epsilon &=& \R^n / \{ u : d(u,0) \geq \epsilon \} \cong B_\epsilon / \partial B_\epsilon
\end{array} \]
We pick $\epsilon > 0$ so that $N_\epsilon \subseteq N$; this is possible by a straightforward compactness argument. Finally, we let $S^n$ refer to the one-point compactification of $\R^n$. We identify $S^n$ it with $S^n_\epsilon$ by the homotopy equivalence $S^n \overset\sim\to S^n_\epsilon$ that is the identity inside $B_\epsilon$ and sends the rest to the basepoint. We also identify the orthogonal spectrum $F_n S^n$ with $\Sph = F_0 S^0$ along the stable equivalence $F_n S^n \overset\sim\to \Sph$ discussed in \autoref{ex:equivalent_thom_spectra}.

\begin{thm}[Spanier-Whitehead duality, neighborhood version]\label{thm:sw_duality_1}
	If $X$ is a finite cell complex, the dual of $\Sigma^\infty_+ X$ in the stable homotopy category is $F_n (N/\partial N)$, with coevaluation and evaluation maps
	\begin{align*}
		F_n S^n &\to F_n (N/\partial N) \sma X_+ \\
		X_+ \sma F_n (N/\partial N) &\to F_n S^n
	\end{align*}
	obtained by applying $F_n$ to the ``collapse'' and ``scanning'' maps
	\[ \begin{array}{ccccccc}
	S^n & \overset{c}\to & N/\partial N \sma X_+ && X_+ \sma N/\partial N & \overset{e}\to & S^n_\epsilon \\[.5em]
	v &\mapsto & \left\{ \begin{array}{ccl} v \sma p(v) && v \in N \\ {*} && v \not\in N \end{array} \right. && x \sma u &\mapsto & u-i(x)
	\end{array} \]
\end{thm}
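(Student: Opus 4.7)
The plan is to verify the two triangle identities for the data $(F_n(N/\partial N), \Sigma^\infty_+ X, c, e)$ directly in $\ho\Osp(*)$, by passing to a sufficiently high spectrum level and constructing explicit based homotopies. First I would check that both $c$ and $e$ are well-defined continuous pointed maps: for $c$, we use the closed cover $N \cup \overline{S^n \setminus \mathrm{int}\, N}$ of $S^n$ together with the fact that $\partial N$ is collapsed in $N/\partial N$; for $e$, we fix $\epsilon > 0$ small enough that $N_\epsilon \subseteq \mathrm{int}\, N$ (possible by compactness of the finite cell complex $X$), so that every $u \in \partial N$ satisfies $|u - i(x)| > \epsilon$ for all $x \in X$ and therefore maps to the basepoint of $S^n_\epsilon$. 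We identify $F_n S^n \simeq \Sph$ via \autoref{ex:equivalent_thom_spectra}, treating $c$ and $e$ as stable maps to and from $\Sph$.

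Next I would verify the triangle identity for $\Sigma^\infty_+ X$. At a sufficiently high spectrum level, the composite
\[ \Sigma^\infty_+ X \xrightarrow{1 \sma c} \Sigma^\infty_+ X \sma F_n(N/\partial N) \sma \Sigma^\infty_+ X \xrightarrow{e \sma 1} \Sigma^\infty_+ X \]
is represented (up to a standard swap of smash factors) by the map $X_+ \sma S^n \to X_+ \sma S^n_\epsilon$ sending $x \sma v$ to $p(v) \sma (v - i(x))$ when $v \in N$ and to the basepoint otherwise. The homotopy
\[ H_s(x \sma v) \ =\ p\bigl((1-s)\, v + s\, i(x)\bigr) \sma (v - i(x)) \]
carries this composite (at $s=0$) to $x \sma (v - i(x))$ (at $s=1$). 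The key continuity point is that whenever $(1-s)\, v + s\, i(x)$ fails to lie in $N$, the choice $N_\epsilon \subseteq \mathrm{int}\, N$ forces $(1-s)|v - i(x)| > \epsilon$, hence $|v - i(x)| > \epsilon$, so the second smash factor is already at the basepoint of $S^n_\epsilon$; thus the piecewise formula extends continuously. A second homotopy $K_t(x \sma v) = x \sma (v - (1-t)\, i(x))$ then moves $x \sma (v - i(x))$ to $x \sma v$, which represents the identity map after the identification $S^n \simeq S^n_\epsilon$. The triangle identity for $F_n(N/\partial N)$ is verified by the symmetric homotopy $H'_s(u \sma v) = u \sma (v - (1-s)\, i(p(v)))$, using the mapping cylinder coordinate on $N$ to slide $i(p(v))$ along a straight segment to $0$.

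The main obstacle will be the continuity of these piecewise-defined homotopies at the transition between the region where the algebraic formula applies and the region where the output is declared to be the basepoint; this is the only place where the mapping cylinder neighborhood hypothesis enters the argument, via the $\epsilon$-choice $N_\epsilon \subseteq \mathrm{int}\, N$ that controls the second smash factor (the cylinder structure also ensures $p$ extends continuously up to $\partial N$, which is needed to cleanly verify $H'_s$). Once continuity is confirmed, both triangle identities hold on the nose as based homotopies of pointed spaces, hence as equalities in $\ho\Osp(*)$, establishing the claimed duality together with the explicit coevaluation and evaluation.
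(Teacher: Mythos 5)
Your strategy—verify the triangle identities directly via explicit piecewise homotopies—is essentially what the paper does, once it has reduced to the space-level \autoref{lem:n-dual}. Your first triangle identity is handled correctly: the two homotopies $H_s$ and $K_t$ are exactly the chain $(p(v),v-x) \sim (x,v-x) \sim (x,v)$ that the paper uses, and your continuity observation using $N_\epsilon \subseteq \mathrm{int}\, N$ is sound.

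There is, however, a genuine gap in the second triangle identity. In the $n$-duality framework the second composite
\[
S^n \sma (N/\partial N) \xrightarrow{c \sma 1} (N/\partial N) \sma X_+ \sma (N/\partial N) \xrightarrow{1 \sma e} (N/\partial N) \sma S^n_\epsilon
\]
is \emph{not} required to be homotopic to the identity (or to a mere relabeling); it must be homotopic to the transposition followed by $v \mapsto -v$ on the sphere factor. Tracing through, the composite sends $v \sma u \mapsto v \sma (u - i(p(v)))$, and the target is $u \sma (-v)$. Your single homotopy $H'_s$ slides $i(p(v))$ to $0$ and therefore terminates at (roughly) $v \sma u$, which is neither the swap nor carries the sign. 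The paper needs three homotopy steps here, summarized as $(v,w-p(v)) \sim (v,w-v) \sim (w,w-v) \sim (w,-v)$: first slide $p(v)$ to $v$ (not to $0$), then interpolate the first coordinate from $v$ to $w$, and finally slide $w$ to $0$ in the second coordinate to produce $-v$. Each step requires its own verification that the homotopy respects the pair (stays away from the basepoint locus); your appeal to symmetry with the first identity obscures that the two identities are structurally asymmetric. Relatedly, the statement ``both triangle identities hold on the nose'' glosses over the sign that arises when lifting from the space-level $n$-duality to the spectrum level: the symmetry isomorphism $F_n S^n \sma F_n Y \cong F_n Y \sma F_n S^n$ at spectrum level $2n$ involves the shuffle $\rho_{n,n} \in O(2n)$, which is homotopic not to the identity but to $(v,w) \mapsto (-v,w)$—this is exactly what \autoref{lem:n-dual} in the paper is set up to absorb, and is the reason the space-level second triangle identity targets $(w,-v)$ rather than $(w,v)$. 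Without this step your claimed identities do not play back correctly through the $F_n$ functor.

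As a secondary remark: the paper derives this neighborhood version from the more general mapping-cone statement \autoref{thm:sw_duality_2}, proving that version via the product map $\mu$ on unbased mapping cones and excision (\autoref{product_of_cones}, \autoref{lem:mapping_cones_equivalences}). The cone formalism is what lets the argument go through for compact ENRs without a mapping cylinder neighborhood. Your direct approach is viable in the finite-complex case and buys concreteness; it just needs the correct three-step homotopy and the sign bookkeeping to be complete.
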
\index{Spanier-Whitehead duality}

\begin{proof}
	Below we generalize this to \autoref{thm:sw_duality_2}, then we prove that generalization in \autoref{sec:sw_proof}.
\end{proof}

\begin{rmk}
	If $X = M$ is a smooth manifold and $i$ is a smooth embedding, we can take $N$ to be a tubular neighborhood of $X$. Then $N/\partial N$ is the Thom space of the normal bundle $\nu$, hence its desuspension is the Thom spectrum $Th(-TM)$ of the negative tangent bundle, $-TM = \nu - \R^n$. This result is commonly called \textbf{Atiyah duality} after \cite{atiyah_thom}, though this special case was done first by Milnor and Spanier \cite{milnor_spanier}.
\end{rmk}

As a corollary, if $X = M$ is an orientable smooth manifold with normal bundle $\nu$, then the above theorem identifies $F_n Th(\nu) \sma -$ with $F(M_+,-)$, therefore
\[ \begin{array}{c}
	F(M_+,H\Z) \simeq F_n Th(\nu) \sma H\Z \\[.5em]
	\Rightarrow H^{i}(M;\Z) \cong H_{n-i}(Th(\nu),*;\Z) \cong H_{d-i}(M;\Z)
\end{array} \]
where $d = \dim M$ and the last isomorphism is the Thom isomorphism. This is a particularly clean way to prove Poincar\'e duality. To generalize to extraordinary homology theories and non-orientable manifolds, one only needs to understand the twisted version of the Thom isomorphism in these more general cases.
%

Before proving \autoref{thm:sw_duality_1} we first give a more general version with mapping cones. For each inclusion $A \subseteq X$ we let $C^u(X,A)$ denote the unreduced mapping cone, and if $A$ is based we let $C(X,A)$ denote the reduced mapping cone:
\begin{align*}
	C(X,A) &:= A \sma I \cup_{A \sma \{1\}_+} X \\
	& \cong C^u(X,A) / C^u(*,*) \\
	C^u(X,A) &:= {*} \cup_{A \times \{0\}} (A \times I) \cup_{A \times \{1\}} X \\
	& \cong C(X_+,A_+).
\end{align*}\index{mapping cone}
Every map of pairs gives a map of mapping cones, and there is a canonical homeomorphism $C^u(X,A) \sma Y_+ \cong C^u(X \times Y,A \times Y)$.

Let $X$ be a compact ENR, $i\colon X \to \R^n$ a topological embedding, and $p\colon N \to X$ be a map that retracts the closed neighborhood $N$ back to $X$. It may not be a mapping cylinder neighborhood, so we may not have much control over the homotopy type of $N/\partial N$.\footnote{It might be helpful to picture the case where $i(X) \subset \R^3$ is Alexander's horned sphere.} We can always at least make $\partial N \to N$ into an $h$-cofibration, if we wish, by taking a fine triangulation of $\R^n$ and taking $N$ to be the union of the closed simplices meeting $X$.

In this more general case, the dual is $F_n C^u(N,N-X)$. The coevaluation map becomes any route through the following diagram from top-left to bottom-right.
\[ \xymatrix @R=2em{
	S^n \ar[d] \ar[r] & N/\partial N \ar[r]^-p & N/\partial N \sma X_+ \\
	C(S^n,S^n - \textup{int} N) \ar[d] \ar[ur]_-{(\sim)} & \ar[l] C^u(N,\partial N) \ar[u]_-{(\sim)} \ar[d] \ar[r]^-p & C^u(N,\partial N) \sma X_+ \ar[u]_-{(\sim)} \ar[d] \\
	C(S^n,S^n-X) & \ar[l]_-\sim C^u(N,N-X) \ar[r]^-p & C^u(N,N-X) \sma X_+ \\
} \]
If $\partial N \to N$ is an $h$-cofibration then the maps marked $(\sim)$ are equivalences, so we can go through the middle of the diagram. Otherwise we have to take the far-left route. The equivalence on the bottom is by excision, \autoref{lem:mapping_cones_equivalences}.

The evaluation map becomes the bottom edge of the following diagram.
\[ \xymatrix @R=2em{
	X_+ \sma N/\partial N \ar[r] & B_\epsilon / \partial B_\epsilon \\
	X_+ \sma C^u(N,\partial N) \ar[u]_-{(\sim)} \ar[d] \ar[r] & C^u(\R^n,\R^n - \textup{int} B_\epsilon) \ar[u]_-\sim \ar[d]^-\sim & \ar@{<->}[l]_-\sim \ar@{<->}[lu]_-\sim\ar@{<->}[ld]_-\sim S^n \\
	X_+ \sma C^u(N,N-X) \ar[r] & C^u(\R^n, \R^n - \{0\})	
} \]
The equivalences on the right are any degree-one maps between different models for the sphere $S^n$, while the horizontal maps in the first column arise from the map $X \times N \to \R^n$ sending $x,u$ to $u - i(x)$.

\begin{thm}[Spanier-Whitehead duality, mapping cone version]\label{thm:sw_duality_2}\cite{dold_puppe}
	If $X$ is a compact ENR, the dual of $\Sigma^\infty_+ X$ in the stable homotopy category is $F_n C^u(N,N-X)$, with coevaluation and evaluation maps
	\begin{align*}
	F_n S^n &\to F_n C^u(N,N-X) \sma X_+ \\
	X_+ \sma F_n C^u(N,N-X) &\to F_n S^n
	\end{align*}
	obtained by applying $F_n$ to the collapse and scanning maps defined above.
\end{thm}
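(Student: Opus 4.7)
The plan is to verify the two triangle identities for the proposed dual pair $(\Sigma^\infty_+ X, F_n C^u(N,N{-}X))$ with the explicit $c$ and $e$. Since $\Sigma^\infty_+ X$ is already known to be dualizable for $X$ a compact ENR (\autoref{ex:dualizable_objects}) and duals are unique up to canonical isomorphism, the content of the theorem is identifying our point-set $c$ and $e$ with a valid coevaluation/evaluation pair. I will focus on the first triangle identity; the second is analogous. After applying the stable equivalence $F_n S^n\simeq \Sph$ and the unitor isomorphisms of the symmetric monoidal structure, the triangle identity asserts that the composite
\[ \Sigma^\infty_+ X\cong \Sigma^\infty_+ X\sma\Sph \xrightarrow{\id\sma c} \Sigma^\infty_+ X\sma F_n C^u(N,N{-}X)\sma\Sigma^\infty_+ X \xrightarrow{e\sma\id} \Sph\sma\Sigma^\infty_+ X\cong\Sigma^\infty_+ X \]
is the identity. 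Tracing through the definition of $c$ and $e$, and representing $\Sph$ by $F_n S^n_\epsilon$ using the collapse $S^n\to S^n_\epsilon$, this reduces to showing that the map of spaces
\[ \phi\colon X_+\sma S^n_\epsilon\to S^n_\epsilon\sma X_+,\qquad (x_0,v)\mapsto (v-i(x_0))\sma p(v)\quad(v\in N), \]
(and basepoint otherwise) is stably homotopic to the swap $\sigma\colon(x_0,v)\mapsto v\sma x_0$. The zig-zag through $C(S^n,S^n{-}X)$ and $C^u(N,N{-}X)$ in the general ENR case contributes only the excision equivalences marked $(\sim)$ in the statement, which are handled by \autoref{lem:mapping_cones_equivalences}, so the core geometric content is the homotopy $\phi\simeq\sigma$.

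I would construct the homotopy in three stages. First, the linear interpolation $H^{(1)}_s(x_0,v)=(v-s\cdot i(x_0))\sma p(v)$ connects $v\sma p(v)$ at $s=0$ to $\phi$ at $s=1$. Second, the ``retraction'' homotopy $H^{(2)}_s(x_0,v)=v\sma p((1-s)v+s\cdot i(x_0))$ connects $v\sma p(v)$ at $s=0$ to $v\sma x_0=\sigma$ at $s=1$, using that $p$ retracts $N$ onto $X$ so $p(i(x_0))=x_0$. Concatenating $(H^{(1)})^{-1}$ with $H^{(2)}$ yields the desired homotopy $\phi\simeq\sigma$, and applying $F_n$ then transports the triangle identity to $\ho\Osp(*)$.

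The main obstacle is that $H^{(2)}$ is only defined when $(1-s)v+s\cdot i(x_0)\in N$, which fails for $v$ far from $i(X)$. The resolution is to use the fact that once we replace $S^n$ by $S^n_\epsilon$, the maps $\phi$ and $\sigma$ are already supported where $|v-i(x_0)|<\epsilon$ (respectively $|v|<\epsilon$) in the target, so away from this support both maps are the basepoint. Using compactness of $X$ and openness of $N$, I would first shrink $\epsilon>0$ so that $N_{2\epsilon}\subseteq N$; then whenever $v\in N_\epsilon$ and $|v-i(x_0)|<\epsilon$, the entire segment $(1-s)v+s\cdot i(x_0)$ stays within $N_{2\epsilon}\subseteq N$, so $H^{(2)}$ is well-defined on this subspace. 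On the complement, extend by the constant basepoint homotopy, smoothing the transition by damping the displacement $s\cdot(i(x_0)-v)$ with a bump function $\rho(|v-i(x_0)|)$ that vanishes for $|v-i(x_0)|\geq 2\epsilon$; continuity at the boundary is immediate because $p$ is continuous and the displacement is damped to zero before the basepoint region of $S^n_\epsilon$ is reached. The second triangle identity is verified by the same three-stage homotopy after interchanging the roles of source and target sphere, completing the proof.
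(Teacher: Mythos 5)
Your overall plan — verify the two triangle identities for the explicit $(c,e)$, reducing each one to a space‑level homotopy and then applying $F_n$ — is the same strategy the paper uses (via the notion of $n$-duality, \autoref{lem:n-dual}). Your homotopy for the first identity is essentially the paper's: the stage $H^{(2)}_s(x_0,v)=v\sma p((1-s)v+s\,i(x_0))$ is exactly the ``retraction'' homotopy that appears in the paper's proof, and the domain issue you raise (the segment leaving $N$) is real and is handled in the paper by working with mapping cones of pairs, where the relevant subspaces of $X\times N$ such as $\Gamma_\epsilon$ make the constraint automatic rather than requiring a bump function. Your substitute with a damping function could be pushed through but you would need to check that the damping preserves the endpoints $s=0$ and $s=1$, which you don't verify.

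The genuine gap is the second triangle identity, which you dismiss as ``analogous after interchanging the roles of source and target sphere.'' It is not analogous in any simple way, for two reasons. First, the second composite runs along $C^u(N,N-X)$ rather than $X_+$, so it involves entirely different spaces and requires the product map $\mu$ on mapping cones (\autoref{product_of_cones}); you cannot obtain its formulas by relabeling your $H^{(1)}, H^{(2)}$. Second, and more importantly, you never mention the antipode. The paper's definition of an $n$-dual pair requires the second composite to be homotopic to the transposition \emph{followed by} $v\mapsto -v$ on $S^n$, and this sign is essential: the $(-1)^n$ it carries is precisely what cancels the sign introduced by the symmetry isomorphism $F_nS^n\sma F_nY\cong F_nY\sma F_nS^n$ in orthogonal spectra (see the argument with $\rho_{n,n}\in O(2n)$ in the proof of \autoref{lem:n-dual}). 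Your sentence ``applying $F_n$ then transports the triangle identity'' glosses over exactly this point. If you prove the second composite homotopic to the plain swap (which is what your ``same three-stage homotopy'' would give), the conclusion fails for odd $n$. You therefore need to carry out the second triangle check explicitly, tracing the composite to the map $(v,w)\mapsto(w,-v)$ and constructing the longer chain of homotopies $(v,w-p(v))\sim(v,w-v)\sim(w,w-v)\sim(w,-v)$ that the paper gives, while also justifying the excision equivalences on the mapping-cone side.

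Finally, your opening framing — that the theorem ``reduces to'' identifying $c,e$ with a valid coevaluation/evaluation pair because dualizability is already known and duals are unique — is a red herring: identifying a specific triple $(Y,c,e)$ as a duality \emph{is} verifying the triangle identities, so no work is saved, and your argument does not actually invoke uniqueness of duals.
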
\index{Spanier-Whitehead duality}

The proof is in \autoref{sec:sw_proof}. When $N$ is a mapping cylinder neighborhood, $C^u(N,\partial N) \to C^u(N,N-X)$ is an equivalence, so this theorem implies the previous one.

\begin{cor}\label{cor:lefschetz_formula}
	When $X$ is a compact ENR, the Lefschetz number $L(f)$ of a map $f\colon X \to X$ is the degree of the map of spheres
	\[ \xymatrix @R=0em{
		S^n \ar[r] & S^n_\epsilon \\
		v \ar@{|->}[r] & v - f(p(v)).
	} \]
\end{cor}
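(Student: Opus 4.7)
The plan is to compute the trace of $\Sigma^\infty_+ f$ in the stable homotopy category directly, using the explicit coevaluation and evaluation provided by \autoref{thm:sw_duality_2}. By \autoref{ex:traces}, this trace is by definition the Lefschetz number $L(f)$, and as a self-map of $\Sph$ it is an integer. So all I need is to recognize that integer as the degree of $v\mapsto v-f(p(v))$.

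Concretely, set $DX := F_n C^u(N,N-X)$ with $c$ and $e$ as in \autoref{thm:sw_duality_2}. The trace of $\Sigma^\infty_+ f$ is the composite
\[
\Sph \simeq F_n S^n \xrightarrow{F_n c} DX \sma \Sigma^\infty_+ X \xrightarrow{\id \sma \Sigma^\infty_+ f} DX \sma \Sigma^\infty_+ X \xrightarrow{\gamma} \Sigma^\infty_+ X \sma DX \xrightarrow{F_n e} F_n S^n \simeq \Sph,
\]
using the identifications $\Sph \simeq F_n S^n$ and $S^n \simeq S^n_\epsilon$ fixed at the start of the subsection. Chasing a point $v \in S^n$ through the point-set representatives, $F_n c$ sends $v$ to $v \sma p(v)$; the middle map sends this to $v \sma f(p(v))$; the symmetry $\gamma$ swaps to $f(p(v)) \sma v$; and $F_n e$ applies the scanning formula to return $v - i(f(p(v)))$. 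Identifying $X$ with $i(X)\subseteq\R^n$, the composite is the map $v\mapsto v-f(p(v))$ of spheres, whose degree is the integer represented by the trace.

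The main obstacle is bookkeeping, not conceptual. In the general ENR setting the coevaluation $c$ is defined only through the zig-zag
\[
S^n \to C(S^n, S^n - X) \xleftarrow{\sim} C^u(N,N-X) \to C^u(N,N-X)\sma X_+,
\]
rather than as a single point-set map, so I would check at each stage that the formula $v\mapsto v\sma p(v)$ correctly represents the map: the first and third maps are literal collapses involving $p$, and the middle excision equivalence (\autoref{lem:mapping_cones_equivalences}) is induced by the inclusion of pairs and therefore preserves the formula on representatives. A secondary check is that the symmetry isomorphism $\gamma$ introduces no sign, because $\Sigma^\infty_+$ is strong symmetric monoidal and $\gamma$ is induced by the underlying topological swap. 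Together these reduce the computation of $\tr(\Sigma^\infty_+ f)$ to the degree of the explicit self-map of $S^n$ given by $v\mapsto v-f(p(v))$, which is the stated formula.
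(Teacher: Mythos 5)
Your overall plan is the same as the paper's: compose the explicit coevaluation and evaluation from Spanier--Whitehead duality with $f$ and read off the degree. The gap is in how you handle the backward arrow in the coevaluation zig-zag. You write that the excision equivalence
\[
C^u(N,N-X)\longrightarrow C(S^n,S^n-X)
\]
"preserves the formula on representatives," but this map points the \emph{wrong} way: it is a weak equivalence you need to invert in the homotopy category, not a forward map you can push a point through. There is no canonical point-set inverse here for a general compact ENR, and without one your claimed point-chase $v \mapsto v\sma p(v)$ does not literally represent the zig-zag coevaluation. This is the one genuine piece of work in the corollary, and your phrase glosses over it.

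The paper closes this by a two-step reduction that you would need to reproduce. First, for $X$ a finite complex, it invokes \autoref{thm:sw_duality_1} rather than \autoref{thm:sw_duality_2}: there the coevaluation is the honest point-set collapse $S^n \to N/\partial N \to N/\partial N \sma X_+$, so the composite really is the formula $v \mapsto v - f(p(v))$ with no inversion required. Second, for a general compact ENR it replaces $N$ by a refinement (union of closed simplices of a fine triangulation of $\R^n$ meeting $X$) so that $\partial N \to N$ is an $h$-cofibration; then the arrows labelled $(\sim)$ in the commuting diagram displayed just before \autoref{thm:sw_duality_2} become genuine weak equivalences, and chasing that diagram shows the mapping-cone coevaluation and evaluation are homotopic to the neighborhood-version collapse and scanning maps. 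This reduces the trace computation back to the point-set composite already handled in the first step. If you add this refinement of $N$ and invoke those commuting diagrams rather than asserting that the excision map "preserves representatives," your argument matches the intended one.
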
\index{Lefschetz number $L(f)$}

\begin{proof}
	When $X$ is a finite complex, this follows from \autoref{thm:sw_duality_1} by writing out the formulas for the pieces of the trace of $\Sigma^\infty_+ f\colon \Sigma^\infty_+ X \to \Sigma^\infty_+ X$ and composing them together:
	\[ \xymatrix @R=0.5em{
		S^n \ar[r] & N/\partial N \sma X_+ \ar[r]^-f & N/\partial N \sma X_+ \ar[r] & S^n_\epsilon \\
		v \ar@{|->}[r] & v \sma p(v) \ar@{|->}[r] & v \sma f(p(v)) \ar@{|->}[r] & v - f(p(v))
	} \]
	When $X$ is a compact ENR, we refine $N$ so that $\partial N \to N$ is an $h$-cofibration. Then the commuting diagrams before the statement of \autoref{thm:sw_duality_2} imply that the trace defined using mapping cones is homotopic to the above composite as well.
\end{proof}

Assume the fixed points of $f$ are isolated (this can be arranged by a homotopy) and then choose $\epsilon$ so that outside a small neighborhood of each fixed point, $f(x)$ is always at least $\epsilon$ away from $p^{-1}(x)$. Then the trace of $\Sigma^\infty_+ f$ vanishes away from the fixed points. Near each fixed point $x \in \Fix(f)$, we get a map of spheres homotopic to
\[ \begin{array}{rcl}
	B_\epsilon(x)/\partial & \to & B_\epsilon(0)/\partial \\
	v &\mapsto & v - f(p(v)).
\end{array} \]
The degree of this map is the standard definition of ``index'' of a fixed point when $X$ is a finite complex or compact ENR.\footnote{When $X$ is a smoothly embedded submanifold of dimension $d$, this reduces to a more classical definition, namely the degree of the map $v \mapsto v - f(v)$ defined on a punctured small $d$-dimensional coordinate ball in $X$.} Therefore the degree of $\tr(\Sigma^\infty_+ f)$ is the sum of these degrees, proving:
\begin{thm}[Lefschetz-Hopf]
	\[ \sum_{x \in \Fix(f)} \ind(x) = \sum_i (-1)^i \tr(f_*\colon H_i(X;\Q) \to H_i(X;\Q)). \]
\end{thm}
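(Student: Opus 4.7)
The plan is to identify both sides of the equation with the trace of $\Sigma^\infty_+ f$ in the stable homotopy category, using the formula from \autoref{cor:lefschetz_formula} on one side and the multiplicativity properties of the trace on the other.

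First, I would interpret the right-hand side. By \autoref{ex:traces}, the Lefschetz number $L(f)$ is defined as the trace of $\Sigma^\infty_+ f$ in $\ho\Osp(*)$, viewed as an integer via $\pi_0(\Sph) \cong \Z$. Because $C_*(-;\Z)$ and $H_*(-;\Q)$ are strong symmetric monoidal functors from the appropriate homotopy categories (via Eilenberg-Zilber), they preserve traces; applying them successively shows that $L(f)$ equals the alternating sum $\sum_i (-1)^i \tr(f_* : H_i(X;\Q) \to H_i(X;\Q))$. This identifies the right-hand side with $\deg(\tr(\Sigma^\infty_+ f))$.

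Next, I would compute the left-hand side using \autoref{cor:lefschetz_formula}, which represents $\tr(\Sigma^\infty_+ f)$ as the map of spheres $v \mapsto v - f(p(v))$. After a preliminary homotopy of $f$ (using the fact that the Lefschetz number is a homotopy invariant of $f$, itself a consequence of functoriality of the trace), I may assume the fixed-point set $\Fix(f)$ is finite. Then I choose $\epsilon > 0$ small enough that for every $x \in X \setminus \bigsqcup_{x_0 \in \Fix(f)} B_\epsilon(x_0)$, the distance from $f(x)$ to $p^{-1}(x)$ exceeds $\epsilon$. (Such an $\epsilon$ exists by compactness of $X$ and of the complement of these small balls.) Under the identification $S^n \cong S^n_\epsilon$ described just before the statement of \autoref{thm:sw_duality_1}, this ensures the map $v \mapsto v - f(p(v))$ sends the complement of a disjoint union of small neighborhoods of the fixed points to the basepoint.

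Finally, I would use additivity of degree across disjoint summands of the domain: the degree of the collapse-and-subtract map decomposes as the sum of local degrees, one for each fixed point $x_0$, given by the map $v \mapsto v - f(p(v))$ restricted to a small ball around $x_0$. By the definition of fixed-point index recalled just before the statement of the theorem, each of these local degrees is exactly $\ind(x_0)$. Combining with the first paragraph yields
\[ \sum_{x \in \Fix(f)} \ind(x) = \deg(\tr(\Sigma^\infty_+ f)) = L(f) = \sum_i (-1)^i \tr(f_* : H_i(X;\Q) \to H_i(X;\Q)). \]
The main subtleties are technical rather than conceptual: justifying the homotopy to a map with isolated fixed points for a compact ENR (as opposed to a manifold), and ensuring the choice of $\epsilon$ works uniformly — but both are standard compactness arguments once $X$ is embedded with a retracting neighborhood in $\R^n$.
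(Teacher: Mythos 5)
Your proposal is correct and follows essentially the same route as the paper: identify the right-hand side with $L(f) = \tr(\Sigma^\infty_+ f)$ via the strong symmetric monoidal functors $C_*(-;\Z)$ and $H_*(-;\Q)$, use \autoref{cor:lefschetz_formula} to express $L(f)$ as the degree of $v \mapsto v - f(p(v))$, and then decompose that degree as a sum of local degrees near the (isolated) fixed points, each of which is the standard index by definition. The main difference is purely cosmetic — you are slightly more explicit about the compactness argument for choosing $\epsilon$ and about why the choice of $\epsilon$ collapses the complement to the basepoint — but the underlying argument is the paper's argument.
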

This approach to the Lefschetz-Hopf theorem is originally due to Dold and Puppe \cite{dold_puppe}. It is more conceptual than earlier proofs, and easier to generalize to the case of bundles, because it doesn't require us to triangulate $X$ and use simplicial homology. Instead, it goes straight from topology to stable homotopy theory.

\begin{ex}
	The flip map of $S^1$ has two fixed points, each of index $+1$, so $L(f) = 2$. On homology, the flip map is the identity on $H_0$ and negation on $H_1$, so the alternating sum of traces is also equal to 2.
\end{ex}

This entire discussion can now be done fiberwise over $B$, cf. \cite[II.6-10]{crabb_james}. Suppose that $E \to B$ is a fiber bundle in which both the fiber $X$ and the base $B$ are finite cell complexes, and $B$ is connected. Pick a fiberwise embedding $i\colon E \to B \times \R^n$ over $B$. To generalize the first theorem above, we assume that $i$ has a fiberwise mapping cylinder neighborhood $N \to E$. This is true for instance if $E$ is a trivial bundle, or a smooth fiber bundle with closed manifold fiber.

Let $N_\epsilon$ be the $\epsilon$-tube about $i(E)$ in $B \times \R^n$, distance measured only in the $\R^n$ direction, and chosen so that $N_\epsilon \subseteq N$. Let $N/_B\partial N$ refer to the cofiber in retractive spaces over $B$, i.e. the pushout $N \cup_{\partial N} B$.

\begin{thm}[Fiberwise Spanier-Whitehead duality, neighborhood version]\label{thm:fiberwise_sw_duality_1}
	Under these assumptions, the dual of $\Sigma^\infty_{+B} E$ in spectra over $B$ is $F_n N/_B\partial N$, with coevaluation and evaluation maps
	\begin{align*}
	F_n S^n_B &\to F_n N/_B\partial N \sma_B E_{+B} \\
	E_{+B} \sma_B F_n N/_B\partial N &\to F_n S^n_B
	\end{align*}
	obtained by applying $F_n$ to the maps of retractive spaces
	\[ \begin{array}{ccccccc}
	S^n \times B & \to & N/_B\partial N \sma_B E_{+B} && E_{+B} \sma_B N/_B\partial N & \to & S^n_\epsilon \times B \\[.5em]
	v &\mapsto & \left\{ \begin{array}{ccl} v \sma p(v) && v \in N \\ {*} && v\not\in N \end{array} \right. && x \sma u &\mapsto & u-i(x)
	\end{array} \]
\end{thm}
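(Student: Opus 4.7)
The plan is to deduce fiberwise Spanier-Whitehead duality from the non-parametrized version (\autoref{thm:sw_duality_1}) by combining a pullback-to-fibers argument with an explicit point-set model. First I would check that all spectra and maps in the statement are sufficiently cofibrant and fibrant to pass directly to $\ho\Osp(B)$: the spectrum $\Sigma^\infty_{+B} E$ is freely $f$-cofibrant and level $h$-fibrant because $E \to B$ is a fiber bundle (hence an $h$-fibration), and $F_n N/_B\partial N$ is freely $f$-cofibrant and level $h$-fibrant because $\partial N \to N$ is an $f$-cofibration of $h$-fibrant spaces over $B$ (using the fiberwise mapping cylinder hypothesis and \autoref{prop:clapp}). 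Under these assumptions $\sma_B$ coincides with $\sma_B^{\M}$, and the point-set maps $c_B$ and $e_B$ represent well-defined morphisms in $\ho\Osp(B)$.

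Next I would exploit that for each $b \in B$ the derived pullback $\R b^*\colon \ho\Osp(B) \to \ho\Osp(*)$ is strong symmetric monoidal by \autoref{prop:unique_sm_structure_on_pullback}, so it carries duality data to duality data. Because $N \to E$ is a fiberwise mapping cylinder neighborhood, the fiber $N_b$ is a mapping cylinder neighborhood of $E_b \cong X$ in $\R^n$, and $\R b^*c_B$, $\R b^* e_B$ reduce to the collapse and scanning maps of \autoref{thm:sw_duality_1} for $X \subset \R^n$. In particular the two triangle composites become the identity in $\ho\Osp(*)$ after restriction to any fiber.

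The remaining work is to lift these fiberwise triangle identities to identities in $\ho\Osp(B)$. The cleanest route is a direct point-set verification using the fiberwise scaling homotopy $h_t(x,u) = u - t\bigl(i(x) - i(p(u))\bigr)$, defined continuously over $B$ because $i$, $p$, and the mapping cylinder coordinate all vary continuously with $b$; this gives explicit fiberwise homotopies between each triangle composite and the identity, running entirely in the category of $f$-cofibrant, $h$-fibrant retractive spaces over $B$, and hence realizing the required identities in $\ho\Osp(B)$.

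The main obstacle is exactly this last step: verifying that the homotopy chosen in the non-parametrized proof (the scaling/straight-line deformation in $\R^n$) genuinely lifts to a continuous family over $B$. Pointwise existence on each fiber is cheap, but fiberwise continuity is a real requirement and is where the mapping cylinder neighborhood hypothesis is used essentially; without a \emph{fiberwise} mapping cylinder structure (as opposed to a mere fiberwise embedding with pointwise retract), the retraction $p$ might not be continuous on $N$, and the homotopy would fail to descend to a morphism of retractive spaces over $B$. A more abstract alternative, which I would keep in reserve, is to invoke uniqueness of duals together with fiberwise dualizability (\autoref{ex:dualizable_objects}): since $\Sigma^\infty_{+B} E$ is already known to admit \emph{some} dual in $\ho\Osp(B)$, it suffices to exhibit a fiberwise equivalence from $F_n N/_B\partial N$ to that dual compatible with $c_B$, bypassing the explicit triangle identity verification at the cost of losing the concrete description of $e_B$.
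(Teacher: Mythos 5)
Your overall plan — reduce the triangle identities to explicit fiberwise homotopies running in a class of sufficiently nice retractive spaces over $B$ — is indeed the paper's strategy, but there are two significant issues with how you frame it.

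First, your central worry is misplaced. You flag fiberwise continuity of the deformation as "the real requirement," but the homotopies used in the non-parametrized \autoref{thm:sw_duality_2} are all explicit formulas of the shape "linear interpolation in $\R^n$ followed by $p$ or $i$"; once $i\colon E \to B\times\R^n$ and $p\colon N \to E$ are continuous maps over $B$ (which is part of the hypothesis "fiberwise retract," not a consequence of the mapping cylinder condition), these formulas are automatically continuous and fiberwise. The mapping cylinder hypothesis in \autoref{thm:fiberwise_sw_duality_1} is there to guarantee $N/_B\partial N \simeq C^u_B(N,N\!-\!E)$ and has nothing to do with defining the homotopies. Also, your specific formula $h_t(x,u) = u - t(i(x)-i(p(u)))$ does not match any of the interpolations that actually close the triangles; it needs to be replaced by the chains $(p(v),v-x)\sim(x,v-x)\sim(x,v)$ and $(v,w-p(v),\gamma_{p(v),w})\sim\cdots\sim(w,-v,c_{p(w)})$ (in the neighborhood case the loop coordinates drop out, but the shape is the same).

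Second, and more seriously, the claim that the verification "run[s] entirely in the category of $f$-cofibrant, $h$-fibrant retractive spaces over $B$, and hence realiz[es] the required identities in $\ho\Osp(B)$" is wrong as stated. In the first triangle identity every $\sma_B$ in sight involves $E_{+B}$, which \emph{is} $h$-fibrant, so \autoref{cor:internal_smash_properties} makes those smash products derived. But in the second triangle identity the composite involves $N/_B\partial N \sma_B N/_B\partial N$ — or $C^u_B(N,N-E) \sma_B C^u_B(N,N-E)$ in the cone version — where \emph{neither} factor is $h$-fibrant, only $f$-cofibrant. This is exactly where $\sma_B$ fails to be automatically derived, and the paper has to apply the product map $\mu_{\sma_B}$ to reduce to a single mapping cone, invoke excision to keep the backwards arrows equivalences, and then do a short diagram chase to transfer the point-set identity to the derived one. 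Your proposal has no ingredient to address this; that is the actual gap. Finally, your "abstract alternative" via uniqueness of duals would only establish dualizability and pin down the coevaluation, losing the explicit scanning formula for $e_B$ that is part of the theorem's statement, so it is not an adequate fallback here.
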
\index{Spanier-Whitehead duality!fiberwise}

Again, we prove this by generalizing it. For an inclusion $A \subseteq X$ of spaces over $B$, let $C_B(X,A)$ and $C_B^u(X,A)$ denote the fiberwise versions of the reduced and unreduced mapping cone:
\begin{align*}
C_B(X,A) &:= A \barsmash I \cup_{A \barsmash \{1\}_+} X \\
& \cong C^u_B(X,A) / C^u_B(B,B) \\
C^u_B(X,A) &:= {B} \cup_{A \times \{0\}} (A \times I) \cup_{A \times \{1\}} X \\
& \cong C_B(X_{+B},A_{+B}).
\end{align*}\index{mapping cone}

Let $E \to B$ be a fiber bundle in which the base $B$ and fiber $X$ are compact ENRs. Pick an embedding $i\colon E \to B \times \R^n$ over $B$ and let $N$ be any neighborhood that retracts fiberwise onto $E$ (possible by \cite[1.8]{dold1974fixed}). It does not have to be a mapping cylinder neighborhood.

\begin{thm}[Fiberwise Spanier-Whitehead duality, mapping cone version]\label{thm:fiberwise_sw_duality_2}
	Under these assumptions, the dual of $\Sigma^\infty_{+B} E$ in spectra over $B$ is $F_n C^u_B(N,N-E)$, with maps
	\begin{align*}
	F_n S^n_B &\to F_n C^u_B(N,N-E) \sma_B E_{+B} \\
	E_{+B} \sma_B F_n C^u_B(N,N-E) &\to F_n S^n_B
	\end{align*}
	given by the following fiberwise analogs of the maps from \autoref{thm:sw_duality_2}.
\end{thm}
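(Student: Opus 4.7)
The plan is to adapt the non-parametrized proof of \autoref{thm:sw_duality_2} (which appears in \autoref{sec:sw_proof}) directly to the fiberwise setting. All of the geometric data used there -- the embedding into $\R^n$, the neighborhood retract $p\colon N\to X$, the collapse and scanning formulas, and the homotopies that witness the triangle identities -- depend only on continuous choices inside $\R^n$. When $i$ is upgraded to a fiberwise embedding into $B\times\R^n$ and $p$ to a fiberwise retract, each such construction parametrizes continuously over $B$. By the hypothesis that $E\to B$ is a fiber bundle with compact ENR base and fiber, each restriction to $b\in B$ returns precisely the data of \autoref{thm:sw_duality_2} applied to the compact ENR $E_b$; in particular $\Sigma^\infty_{+B} E$ is already known to be dualizable in $\ho\Osp(B)$ by fiberwise Spanier-Whitehead duality (\autoref{ex:dualizable_objects}), so the content of the theorem is really the identification of the specific dual pair.

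First I would check that the written formulas define continuous maps of retractive spaces over $B$. The collapse map is defined by cases using the closed subset $N\subseteq B\times\R^n$ together with the basepoint section, exactly as in the non-parametrized version, with continuity using only that $p$ is continuous. The scanning map is the fiberwise subtraction $(x,u)\mapsto u-i(x)$ in $B\times\R^n$, which is continuous because $i$ is continuous over $B$. The chain of reductions drawn just before \autoref{thm:sw_duality_2} carries over verbatim to $\Osp(B)$, with the excision equivalence $C^u(N,N-X)\simeq C(S^n,S^n-X)$ replaced by its fiberwise analog.

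The main step is the triangle identities, which are equalities in $\ho\Osp(B)$. The plan is to parametrize the explicit homotopies that witness them in the non-parametrized proof: these are geometric interpolations inside $\R^n$ (linear homotopies between the identity and the collapse-scan composite, together with rescalings of the $\epsilon$-tube), and they become continuous homotopies in $\Osp(B)$ once performed fiberwise. To ensure that point-set homotopies faithfully realize equalities in the homotopy category, I would first replace every spectrum involved by a freely $f$-cofibrant, level $h$-fibrant model, which exists and is preserved by $F_n$, $\sma_B$, and the relevant base-change operations by \autoref{thm:intro_cof_fib} and \autoref{prop:spectra_pushout_product}. Applying $F_n$ to each homotopy and passing to $\ho\Osp(B)$ then yields both triangle identities.

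The hard part will be handling the case in which $N$ is only a neighborhood retract, not a fiberwise mapping cylinder neighborhood: the comparison among $C^u_B(N,\partial N)$, $N/_B\partial N$, and $C^u_B(N,N-E)$ is now more delicate because the map $C^u_B(N,\partial N) \to N/_B\partial N$ may no longer be an equivalence. The way out, as in the non-parametrized case, is to use $C^u_B(N,N-E)$ throughout and to verify that the fiberwise excision map $C^u_B(N,N-E)\to C_B(B\times S^n, B\times S^n - E)$ is a stable equivalence in $\Osp(B)$. Since excision for compact ENRs is a fiberwise statement and the fiber-bundle hypothesis gives uniform control across fibers, this can be checked on fiber spectra after replacing source and target by level $h$-fibrant models via $P$, and then upgraded using \autoref{stable_equivalences}.
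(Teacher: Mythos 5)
Your overall strategy — parametrize the non-parametrized proof and perform the explicit geometric homotopies fiberwise over $B$ — is the same as the paper's. But you have misidentified where the difficulty actually lies, and your proposed fix for the step you do flag would not work.

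The real obstruction is not the comparison among the three mapping-cone models (that is handled once and for all by the commuting squares drawn before the theorem statement, exactly as you say). It is that the internal smash product $\sma_B$ is \emph{not} homotopical unless at least one factor is $h$-fibrant (\autoref{cor:internal_smash_properties}). In the first triangle identity every $\sma_B$ that appears has a factor $E_{+B}$, which is $h$-fibrant, so the point-set homotopies automatically witness equalities of derived smash products. In the second triangle identity, however, the right-hand column contains terms such as $C^u_B(N,N-E)\sma_B C^u_B(N,N-E)$ where \emph{neither} factor is fibrant; the canonical map from the strict to the derived smash product is then not a stable equivalence, and the paper proves the identity by observing that this map \emph{is} an equivalence on the left-hand column of the diagram, which (by a short diagram chase) is enough to propagate commutativity to the derived diagram.

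Your plan to replace every spectrum by a freely $f$-cofibrant, level $h$-fibrant model before applying $F_n$ and passing to the homotopy category does not survive contact with the actual proof. The point-set homotopies you want to use are defined by explicit formulas on subquotients of spaces like $(E\times_B N)\times I^k$ — linear interpolations in the $\R^n$ direction, rescalings of the $\epsilon$-tube, applications of $p$ to line segments. Once you replace $C^u_B(N,N-E)$ by $PQ(C^u_B(N,N-E))$, these geometric formulas no longer make sense, because the total space has been enlarged by $B^I$-factors and cell attachments that have no geometric description. So you cannot both keep the explicit homotopies and have all the smash products already derived. The paper's resolution is to keep the explicit mapping-cone models throughout, accept that some $\sma_B$'s are not yet derived, and close the gap with the diagram chase; you will need to do the same.
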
\index{Spanier-Whitehead duality!fiberwise}
\[ \xymatrix{
	S^n_B \ar[d] \ar[r] & N/_B\partial N \ar[r]^-p & N/_B\partial N \sma_B E_{+B} \\
	C_B(S^n_B,S^n_B - \textup{int} N) \ar[d] \ar[ur]_-{(\sim)} & \ar[l] C^u_B(N,\partial N) \ar[u]^-{(\sim)} \ar[d] \ar[r]^-p & C^u_B(N,\partial N) \sma_B E_{+B} \ar[u]_-{[\sim]} \ar[d] \\
	C_B(S^n_B,S^n_B-E) & \ar[l]_-\sim C^u_B(N,N-E) \ar[r]^-p & C^u_B(N,N-E) \sma_B E_{+B} \\
} \]

\[ \xymatrix{
	E_{+B} \sma_B N/_B\partial N \ar[r] & B_\epsilon / \partial B_\epsilon \times B \\
	E_{+B} \sma_B C_B^u(N,\partial N) \ar[u]^-{[\sim]} \ar[d] \ar[r] & C^u(\R^n,\R^n - \textup{int} B_\epsilon) \times B \ar[u]_-\sim \ar[d]^-\sim & \ar@{<->}[l]_-\sim \ar@{<->}[lu]_-\sim\ar@{<->}[ld]_-\sim S^n_B \\
	E_{+B} \sma_B C_B^u(N,N-E) \ar[r] & C^u(\R^n, \R^n - \{0\})	\times B
} \]
Note that all the terms in the bottom two rows are $f$-cofibrant and that $E_{+B}$ is $h$-fibrant. By \autoref{cor:internal_smash_properties} this makes the smash products model the smash product in the homotopy category. The maps marked $(\sim)$ are equivalences when $\partial N \to N$ is an $h$-cofibration, while $[\sim]$ is an equivalence if $\partial N \to N$ is an $f$-cofibration. So if $N$ is a fiberwise mapping cylinder neighborhood then these are the same coevaluation and evaluation maps from before.

\begin{cor}\label{cor:fiberwise_lefschetz_formula}
	When $E$ is a fiber bundle with base $B$ and fiber $X$ both compact ENRs, the fiberwise Lefschetz number $L_B(f)$ of a fiberwise map $f\colon E \to E$ arises from the map of trivial sphere bundles over $B$
\[ \xymatrix @R=0em{
	S^n \times B \ar[r] & S^n_\epsilon \times B \\
	(v,b) \ar@{|->}[r] & (v - f_b(p_b(v)),b),
} \]
by de-suspension (equivalently, by taking free spectra $F_n$ of both sides).
\end{cor}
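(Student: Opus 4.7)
The plan is to imitate the proof of \autoref{cor:lefschetz_formula} verbatim, but carried out fiberwise. The point is that the coevaluation and evaluation maps in \autoref{thm:fiberwise_sw_duality_1} (and \autoref{thm:fiberwise_sw_duality_2}) are literally the fiberwise versions of the ones in \autoref{thm:sw_duality_1} (and \autoref{thm:sw_duality_2}), so the same composite calculation produces the map $(v,b) \mapsto (v - f_b(p_b(v)),b)$.

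In detail: $L_B(f)$ is by definition the trace of $\Sigma^\infty_{+B} f$ in the symmetric monoidal category $(\ho\Osp(B), \sma_B^\M, \Sph_B)$, and by the duality established in \autoref{thm:fiberwise_sw_duality_1}, this trace is represented by the composite
\[ F_n S^n_B \xrightarrow{c} F_n\left(N/_B\partial N\right) \sma_B (E_{+B}) \xrightarrow{\id \sma f} F_n\left(N/_B\partial N\right) \sma_B (E_{+B}) \xrightarrow{\gamma}  (E_{+B}) \sma_B F_n\left(N/_B\partial N\right)  \xrightarrow{e} F_n S^n_B. \]
First I would verify that this point-set composite computes the trace in the homotopy category. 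To do this, I restrict attention to the subcategory $\Osp^{cf}(B)$ of freely $f$-cofibrant, level $h$-fibrant spectra, where recipe (3) of \autoref{lem:two_smashes_over_b_isomorphic} identifies $\sma_B^\M$ with the strict $\sma_B$. By choosing $N$ so that $\partial N \to N$ is an $h$-cofibration (hence an $f$-cofibration by \autoref{lem:heath_kamps}, since both sides are $h$-fibrant over $B$), the space $N/_B\partial N$ is $f$-cofibrant. Applying $P$ where needed makes all participating spectra level $h$-fibrant without changing the homotopy type. By the rigidity result \autoref{prop:spaces_rigidity}, the symmetry isomorphism $\gamma$ for $\sma_B$ is the unique one, and in particular agrees with the standard swap of smash factors on each fiber.

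Next, plugging in the explicit point-set formulas from \autoref{thm:fiberwise_sw_duality_1}, one traces through a point $(v,b) \in S^n \times B$ with $v \in N_b$:
\[ (v,b) \;\mapsto\; (v \sma_B p(v),\,b) \;\mapsto\; (v \sma_B f(p(v)),\,b) \;\mapsto\; (f(p(v)) \sma_B v,\,b) \;\mapsto\; (v - f(p(v)),\,b), \]
and points with $v \notin N_b$ map to the basepoint on both sides (consistent with the fact that $v - f(p(v))$ then lies outside the ball $B_\epsilon$ and thus at the basepoint of $S^n_\epsilon$). Desuspending along the stable equivalence $F_n S^n_B \overset{\sim}{\to} \Sph_B$ of \autoref{ex:equivalent_thom_spectra} yields the stated formula.

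The main obstacle is handling the case when $E$ is only a compact ENR bundle (and $N$ is not a mapping cylinder neighborhood); here we must use the mapping cone version \autoref{thm:fiberwise_sw_duality_2}. The commuting diagrams displayed after its statement, together with the observation that the maps marked $(\sim)$ and $[\sim]$ are equivalences under our refinement of $N$, show that the composite defining the trace through $C_B^u(N,N-E)$ is homotopic to the same point-set composite above through $N/_B\partial N$. A secondary bookkeeping point is that the fiber bundle structure guarantees the retraction $p\colon N \to E$ varies continuously over $B$, so the resulting formula is genuinely a continuous map of sphere bundles over $B$ whose restriction to each fiber $b \in B$ recovers the classical formula from \autoref{cor:lefschetz_formula}.
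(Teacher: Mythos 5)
Your strategy of redoing \autoref{cor:lefschetz_formula} fiberwise via \autoref{thm:fiberwise_sw_duality_1} and \autoref{thm:fiberwise_sw_duality_2} is the right one, and the explicit point-set composite you write out is exactly what the paper has in mind. But there is a concrete gap: you say ``by choosing $N$ so that $\partial N \to N$ is an $h$-cofibration'' and treat this as available for any compact ENR base $B$, when in fact such a choice is only available via the simplicial refinement argument (triangulate $B \times \R^n$, subdivide, take the union of closed simplices meeting $i(E)$), and that argument requires $B$ to be a finite simplicial complex so that $B \times \R^n$ is a polyhedron. For a general compact ENR $B$ this fails, and the paper handles it with an additional reduction step you have not supplied: embed $B$ as a retract of a finite simplicial complex $K$ along an inclusion $j\colon B \to K$, establish the formula over $K$, and then transport it to $B$ using that $j^*\colon \ho\Osp(K) \to \ho\Osp(B)$ is strong symmetric monoidal (by \autoref{prop:unique_sm_structure_on_pullback}) and so preserves traces, while the formula defines a map of level-fibrant spectra over $K$, so its strict pullback along $j$ models the derived pullback. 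Without this step your argument proves the corollary only when $B$ is already a finite simplicial complex or $N$ is a fiberwise mapping cylinder neighborhood.

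A secondary, minor issue: your parenthetical appeal to \autoref{lem:heath_kamps} to upgrade $\partial N \to N$ to an $f$-cofibration requires that $\partial N$ and $N$ be $h$-fibrant over $B$, which you haven't justified and isn't automatic from the construction. This upgrade is also unnecessary for the argument: the paper's proof only needs the maps marked $(\sim)$ after \autoref{thm:fiberwise_sw_duality_2} to be equivalences, which already follows from $\partial N \to N$ being an $h$-cofibration.
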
\index{Lefschetz number $L(f)$!fiberwise}\index{fiberwise!Lefschetz number $L(f)$}
So $L_B(f)$ is just $L(f)$ carried out over each point of $B$, giving a map of bundles because the formula for $L(f)$ varies continuously in $f$.

\begin{proof}
	 If $N$ is a fiberwise mapping cylinder neighborhood, this is direct from \autoref{thm:fiberwise_sw_duality_1}. Otherwise, we first assume that $B$ is a finite simplicial complex, and find an $N$ so that $\partial N \to N$ is an $h$-cofibration. We do this by triangulating $B \times \R^n$, subdividing, and taking the union of closed simplices meeting $i(E)$. Then the maps marked $(\sim)$ are equivalences, and the same argument as before gives the result.
	 
	 If $B$ is a compact ENR, it is a retract of a finite simplicial complex $K$ along an inclusion $j\colon B \to K$. Since $j^*\colon \ho\Osp(K) \to \ho\Osp(B)$ is strong symmetric monoidal, it preserves the trace. The above formula gives a map of level-fibrant spectra over $K$, so its strict pullback is equivalent to the derived pullback, and is therefore a formula for the trace over $B$.
\end{proof}

We can even interpret the above formula as a weighted sum of fixed points of $f$ if we count the degrees in the right way. If $B$ is a smooth $d$-dimensional manifold and the fixed points are arranged to be smoothly embedded $d$-dimensional submanifolds of $E$, then this trace as an element in $[\Sigma^\infty_+ B, \Sph]$ counts the fixed points with framing of their normal bundle in $B \times \R^n$ given by $v \mapsto f(p(v))$ as a framed manifold in $B \times \R^n$ up to framed cobordism.\footnote{Technically $v \mapsto f(p(v))$ is not a bundle map, but as long as it is smooth and full-rank at the zero section it can be deformed to one.} This is by a variant of the Pontryagin-Thom isomorphism, see for instance \cite[2.5]{cw_transversality}.

When $B$ is a sphere, it turns out we can embed $B \times \R^n$ into $\R^m \times \R^n$ and count the framing there. If in addition $B$ is the circle, the fiberwise Lefschetz number $L_{S^1}(f)$ lands in the group
\[ [\Sigma^\infty_+ S^1, \Sph] \cong \Z \oplus \Z/2. \]
The first generator is the Lefschetz number of the fiber $L(f_b)$, while the second generator counts the total number of twists in the framing of the fixed points, mod 2.

\begin{ex}\label{ex:lefschetz_fiberwise}
	Consider the trivial bundle $S^3 \times S^1 \to S^1$ and the fiberwise self-map that over $\theta \in [0,2\pi]/(0 = 2\pi)$ is the one-point compactification of the map $\R^3 \to \R^3$ that scales by $1+\epsilon$ and rotates by $\theta$ about the $z$-axis. The fixed points of this map are always $0$ and $\infty$, with indices $-1$ and $+1$, respectively. In the bundle $S^3 \times S^1$ these fixed points form two disjoint circles, and on each one the framing $v \mapsto v-f(v)$ contributes one twist and zero twists, respectively, for a total of one twist. Therefore
	\[ L_{S^1}(f) = (0,1) \in \Z \oplus \Z/2. \]
	This proves that even though the action of $f$ on each copy of $S^3$ can have its fixed points removed, we cannot remove the fixed points by a fiberwise homotopy on all of $S^3 \times S^1$.
\end{ex}

\beforesubsection
\subsection{Proof of Spanier-Whitehead duality}\label{sec:sw_proof}\aftersubsection

We now recall the argument from \cite{dold_puppe}, \cite[III.4]{lms}, and \cite{ms} that proves \autoref{thm:sw_duality_2}, and generalize it to \autoref{thm:fiberwise_sw_duality_2}. Note that the fiberwise theorem \autoref{thm:fiberwise_sw_duality_2} is not handled directly in \cite{ms}, since their motivation is to prove dualizability, which reduces to checking what happens on a single fiber, whereas our motivation is to get an explicit formula for the trace.

Recall that for a pair $(X,A)$ of (not retractive) spaces over $B$, the unbased mapping cone $C^u_B(X,A)$ is the retractive space defined as the colimit of the diagram
\[ \xymatrix{
	B & \ar[l] A \times 0 \ar[r] & A \times I & \ar[l] A \times 1 \ar[r] & X \times 1. } \]\index{mapping cone}
For two inclusions $A \to X$ and $A' \to X'$, let $(X,A) \square (X',A')$ denote the pair
\[ (X,A) \square (X',A') = (X \times X',X \times A' \cup_{A \times A'} A \times X') \]
where the second is regarded as a subspace of the first for the purpose of giving it a topology. We will only consider this product when we have open inclusions, so this is the same as the pushout topology. If $(X,A)$ is a pair over $B$ and $(X',A')$ is a pair over $B'$, we define a map
\[ \mu\colon C^u_B(X,A) \barsmash C^u_{B'}(X',A') \to C^u_{B \times B'}((X,A) \square (X',A')) \]
as follows. The left-hand side rearranges into the colimit the diagram below.
\[ \xymatrix @R=1em{
	& \ar[ddddl] A \times X' \times 0 \times 1 \ar[r] & A \times X' \times I \times 1 & \ar[l] A \times X' \times 1 \times 1 \ar[r] & X \times X' \times 1 \times 1 \\
	& & A \times A' \times I \times 1 \ar[u] \ar[d] & & X \times A' \times 1 \times 1 \ar[u] \ar[d] \\
	& \ar[ddl] A \times A' \times 0 \times I \ar[r] & A \times A' \times I \times I & \ar[l] A \times A' \times 1 \times I \ar[r] & X \times A' \times 1 \times I \\
	& & A \times A' \times I \times 0 \ar[u] \ar[dll] & & X \times A' \times 1 \times 0 \ar[u] \ar[dllll] \\
	B \times B'
} \]
Then we use the formula
\[ X \times X' \times I \times I \to X \times X' \times I, \qquad (x,y,s,t) \mapsto (x,y,\min(s,t)) \]
to give maps from all these pieces into the mapping cone for $(X,A) \square (X',A')$ over $B \times B'$.

This product map $\mu$ can be defined for any number of pairs, and is associative, unital, and commutative in an appropriate sense, since this follows directly from its formula. When $A' = \emptyset$, all terms below the top row of the above diagram disappear, and $\mu$ becomes the canonical homeomorphism
\[ C^u_B(X,A) \barsmash X'_{+B'} \cong C^u_{B \times B'}(X \times X',A \times X'). \]

In the following lemma, ``normal'' means that the total space (forgetting the map to $B$) is normal in the sense of point-set topology, i.e. closed sets can be separated by open sets. In particular, if $B$ is a cell complex and $X$ and $X'$ are subspaces of $B \times \R^n$, then $X \times X'$ is a metric space and therefore normal.

\begin{lem}\label{product_of_cones}\hfill
	\vspace{-1em}
	
	\begin{itemize}
		\item (Excision) If $U \subset A \subset X$ are inclusions of fiberwise spaces over $B$, $\bar U \subset \textup{int} A$, and $X$ is normal, then the map of unbased cones
		\[ C^u_B(X-U,A-U) \to C^u_B(X,A) \]
		is a fiberwise homotopy equivalence.
		\item (Product) If $f\colon A \subset X$ and $f'\colon A' \subset X'$ are open inclusions of fiberwise spaces and $X \times X'$ is normal, then the product map $\mu$ is a fiberwise homotopy equivalence.
	\end{itemize}
\end{lem}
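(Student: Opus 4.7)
Both statements assert fiberwise homotopy equivalences, and the plan in each case is to construct an explicit homotopy (inverse) using Urysohn's lemma, which is available thanks to the normality hypothesis. Because the constructions only modify the cone coordinate and act by the identity on the underlying spaces' projection to $B$ (or $B \times B'$), all maps and homotopies respect the parametrization automatically.

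\textbf{For excision.} Since $\bar U$ and $X - \textup{int}\,A$ are disjoint closed subsets of the normal space $X$, Urysohn's lemma produces a continuous $\phi\colon X \to [0,1]$ with $\phi|_{\bar U} = 1$ and $\phi|_{X - \textup{int}\,A} = 0$. Using $\phi$ I will define a fiberwise homotopy $H_s\colon C^u_B(X,A) \to C^u_B(X,A)$ that leaves $B$ fixed, acts on the cylinder by $(a,t) \mapsto (a, t(1 - s\phi(a)))$, and acts on $x \in X$ by $x \mapsto (x, 1 - s\phi(x)) \in A \times I$ whenever $x \in A$ and by the identity when $x \in X - A$. Continuity at the seam $x \in \bar A - A$ holds because $\phi$ vanishes on $X - \textup{int}\,A$ (which contains $\bar A - A$), so both formulas agree with $(x,1) \sim x$. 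At $s = 1$ every $x \in \bar U$ (where $\phi = 1$) is pushed to cylinder height $0$, landing in the basepoint section, so $H_1$ factors through the subspace $C^u_B(X - U, A - U)$, and $H$ exhibits $H_1$ as a fiberwise deformation retraction, which is the desired equivalence.

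\textbf{For the product.} I will construct an explicit fiberwise homotopy inverse $\nu$ to $\mu$ and then show $\mu\nu \simeq \id$ and $\nu\mu \simeq \id$ via straight-line homotopies of the cone coordinate. The closed subsets $A \times (X' - A')$ and $(X - A) \times A'$ of $X \times A' \cup A \times X'$ are disjoint, and by invoking Urysohn's lemma in the normal space $X \times X'$, together with the openness of $A$ and $A'$, one can produce continuous functions governing the promotion of the cone parameter toward the top of each factor cone. Concretely, on the open locus $A \times A'$ the map $\nu$ splits the parameter as $((x,x'),t) \mapsto ((x,t),(x',t))$ in the smash, while near $(X - A) \times A'$ the $X$-cone coordinate is forced to the top, yielding $(x,(x',t))$, and symmetrically near $A \times (X' - A')$. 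The interpolating Urysohn functions glue these three prescriptions into a globally continuous $\nu$ over $B \times B'$, and by construction each of $\mu\nu$ and $\nu\mu$ differs from the identity only by a cone-reparametrization that is straightened by a linear homotopy.

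\textbf{Main obstacle.} The delicate step, as usual when mixing strata of mapping cones, is continuity of $\nu$ across the transitions where $x$ crosses from $A$ into $X - A$ (and symmetrically for $x'$). This is precisely where the openness of $A$ and $A'$ and the normality of $X \times X'$ are essential: they supply the continuous separating functions that make the three case-by-case definitions of $\nu$ agree at their common boundary. Once continuity is secured, the remaining verifications (that $\mu\nu$ and $\nu\mu$ are connected to the identity by straight-line homotopies in the cone coordinate, and that every map and homotopy commutes with the projection to $B \times B'$) are routine.
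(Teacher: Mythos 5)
Your overall strategy — explicit fiberwise deformations built from a Urysohn function, with normality and the subset hypotheses supplying the separating function — is exactly the approach the paper delegates to (it cites \cite[III.4.3--4.4]{lms} and reinterprets the formulas fiberwise), and your observation that the constructions only modify the cone coordinate, so that they are automatically fiberwise, is correct.

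There is, however, a genuine gap in the continuity argument for the excision half. You justify continuity of $H_s|_X$ across the ``seam'' by noting that the two case formulas agree under $(x,1)\sim x$, but your decomposition $X = \textup{int}\,A \cup (X - \textup{int}\,A)$ is an open/closed cover, not a setting where agreement at the interface suffices: the gluing lemma requires both pieces open or both pieces closed. And with a Urysohn $\phi$ that merely vanishes on $X-\textup{int}\,A$ (so that $\{\phi>0\}$ can limit out to $\partial A = \bar A - A$), the map $H_s|_X$ can genuinely fail to be continuous at $\bar A - A$. Concretely, for $X=\R$, $A=(0,1)$ and $\phi(a)=4a$ near $a=0$, the set
\[ W = (-\epsilon,\epsilon)\ \cup\ \{(a,t) : 0<a<\epsilon,\ 1-a^2<t\le 1\} \]
is a saturated open neighborhood of $0$ in $C^u(X,A)$, yet $H_s^{-1}(W)$ contains $0$ but no interval around it (one needs $4s<a$ to land in $W$ from the cylinder side); this persists after $k$-ification and weak-Hausdorffification, which only add open sets. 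The repair costs one more use of normality: choose an open $V$ with $\bar U \subseteq V \subseteq \bar V \subseteq \textup{int}\,A$ and apply Urysohn to the disjoint closed sets $\bar U$ and $X-V$, so that $\overline{\{\phi>0\}}\subseteq \bar V \subseteq \textup{int}\,A$. Then $X$ is covered by the two \emph{closed} sets $\overline{\{\phi>0\}}$ (contained in $A$, on which the cylinder formula is a continuous map into $A\times I$) and $\{\phi=0\}$ (on which $H_s|_X = \id$); they agree on the overlap, and the closed gluing lemma gives continuity. With that $\phi$ the rest of your excision argument goes through. The same issue applies to the product direction: you correctly identify continuity of $\nu$ across the transition strata as the delicate point but do not actually exhibit the Urysohn functions and show how they avoid this failure, and the fix there is the analogous ``properly supported'' choice.
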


\begin{proof}
	Identical to \cite[III.4.3 and III.4.4]{lms}, only we interpret the formulas as taking place in spaces over $B$ and $B \times B'$, respectively.
\end{proof}

\begin{cor}\label{lem:mapping_cones_equivalences}
	Suppose $X$ is any compact subspace of $\R^n$, and $N$ is any neighborhood of $X$, not necessarily open. Then the following maps are homotopy equivalences.
	\begin{itemize}
		\item $C^u(N,N-X) \to C^u(\R^n,\R^n-X)$
		\item $C^u(N,N-X) \to C(S^n,S^n-X)$
	\end{itemize}
	Similarly the fiberwise versions of these maps
	\begin{itemize}
		\item $C^u_B(N,N-E) \to C^u_B(\R^n,\R^n-E)$
		\item $C^u_B(N,N-E) \to C_B(S^n,S^n-E)$
	\end{itemize}
	are homotopy equivalences if $B$ is a cell complex, $E$ is a subspace of $B \times \R^n$ with $E \to B$ proper, and $N$ any neighborhood in $B \times \R^n$.
\end{cor}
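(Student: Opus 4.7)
The plan is to reduce both homotopy equivalences to the excision part of Lemma \ref{product_of_cones}, with a brief quotient argument handling the passage from unreduced to reduced cones. Throughout, I interpret ``$N$ is a neighborhood'' in the strong sense that $X \subset \textup{int}\, N$ (resp.\ $E \subset \textup{int}\, N$).

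For the map $C^u(N, N - X) \to C^u(\R^n, \R^n - X)$, apply the excision part of Lemma \ref{product_of_cones} to the pair $(\R^n, \R^n - X)$ with $U = \R^n - N$. The hypotheses are immediate: $\R^n$ is normal since it is metric, $X$ is compact hence closed so $\R^n - X$ is open, and $X \subset \textup{int}\, N$ gives $\overline{\R^n - N} \subset \R^n - \textup{int}\, N \subset \R^n - X = \textup{int}(\R^n - X)$. The conclusion of the lemma rearranges to the displayed map. For $C^u(N, N - X) \to C(S^n, S^n - X)$, I factor as
\[ C^u(N, N - X) \to C^u(S^n, S^n - X) \to C(S^n, S^n - X). \]
The first map is a homotopy equivalence by the same excision argument applied in the pair $(S^n, S^n - X)$ with $U = S^n - N$; since $S^n$ is also metric and $\infty \notin X$, all the hypotheses transfer directly. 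The second map is the quotient that collapses the subspace consisting of the cone point together with the arc over $\infty \in S^n - X$ (identified at its other end with the basepoint $\infty \in S^n$); this subspace is homeomorphic to a closed interval, hence contractible, and the inclusion is a closed $h$-cofibration since a standard neighborhood deformation retract can be built by thickening this arc in the cone direction. Quotienting by a contractible subspace along an $h$-cofibration is a homotopy equivalence, so the composite is a homotopy equivalence.

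The fiberwise statements are proven by the same two-step argument carried out in the category of spaces over $B$, invoking the excision part of Lemma \ref{product_of_cones} fiberwise. The point-set hypotheses to check are: (i) the ambient space $B \times \R^n$ (resp.\ $B \times S^n$) is normal; (ii) $E$ is closed in this ambient space; and (iii) $E \subset \textup{int}\, N$. For (i), a theorem of Milnor guarantees that the product of a CW complex with a locally compact Hausdorff space is again a CW complex, and CW complexes are normal. For (ii), properness of $E \to B$ combined with the Hausdorff property gives closedness of $E$. Condition (iii) is part of the definition of neighborhood. The collapse in the second step is now the identification of the contractible subspace over $B \times \{\infty\}$ (a fiberwise arc, which is a closed $h$-cofibration with contractible fibers) to the basepoint section; the quotient argument goes through verbatim.

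I do not anticipate a genuine obstacle: the substantive content is already contained in Lemma \ref{product_of_cones}, and the remainder is routine point-set bookkeeping. The only step requiring a moment's thought is the quotient in the second claim, which relies on the subspace being a contractible NDR-pair inside a reasonable ambient space; the cell complex hypothesis on $B$ is used in the fiberwise version essentially only to secure normality of the ambient product space.
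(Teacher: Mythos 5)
Your proof of the first map (excision on $\R^n - N \subseteq \R^n - X \subseteq \R^n$) is verbatim the paper's argument. Where you diverge is the second map. The paper uses part one to reduce to the case $N = D^n$ a large disc, and then writes down an explicit deformation retract using that $S^n$ is $D^n$ glued to another disc along $S^{n-1}$. You instead run the same excision inside $S^n$ to obtain $C^u(N, N-X) \simeq C^u(S^n, S^n - X)$, then observe that passing from the unreduced to the reduced cone collapses a contractible arc along an $h$-cofibration. Your route is arguably cleaner in that both parts fall out of a single application of the excision lemma plus a trivial quotient, which is also conceptually parallel to how $\mu$ is used elsewhere in the section. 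The paper's route is more hands-on and self-contained. For the fiberwise version, you invoke the Milnor/Whitehead theorem that $B \times \R^n$ and $B \times S^n$ are CW complexes when $B$ is, hence normal, to justify excision globally; the paper instead varies the disc radius cell by cell over $B$, using compactness of $E$ over each cell. Both resolutions of the normality/finiteness issue are valid; yours buys uniformity at the cost of citing a nontrivial CW-product theorem.

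One point to be a bit more careful about: your quotient step ultimately rests on the arc over $B_\infty$ including as an $h$-cofibration (fiberwise, in the $B$-version), which in turn requires $\infty$ (resp.\ the section $B_\infty$) to be well-based in $S^n - X$ (resp.\ $S^n_B - E$). The phrase ``thickening this arc in the cone direction'' glosses this: you actually need to thicken in the $A$-direction, which means choosing a fiberwise neighborhood of $B_\infty$ in $S^n_B - E$ that fiberwise deformation retracts onto it, and the size of that neighborhood must adapt to how $E$ spreads in $\R^n$ over $B$. This is precisely the ``vary the disc radius over each cell'' step that the paper flags explicitly; it does not disappear in your approach, it just relocates to the $h$-cofibrancy claim. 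The paper itself treats this point lightly, so this is forgivable, but you should at least name the issue rather than call it a standard NDR retract with no qualifier.
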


\begin{proof}
	The first part follows by excision on $\R^n - N \subseteq \R^n - X \subseteq \R^n$. For the second part, using the first part, it suffices to assume that $N$ is a large disc centered at the origin. Then we get a deformation retract by gluing on a deformation retract of $C(D^n,D^n)$ onto $C^u(S^{n-1},S^{n-1})$, where $D^n$ is given basepoint $*$.
	
	The fiberwise version of the first part has the same argument. For the second part,	we merely have to let the radius of the disc $D^n$ vary continuously over the base $B$ so that it always contains $X$. Over every cell of $B$, the preimage in $X$ is compact, making it straightforward to do this.
\end{proof}

Since the product map $\mu$ is a fiberwise homotopy equivalence, any pullback or pushforward of $\mu$ is also a weak equivalence. Notice that since the mapping cone is built from pushouts along closed inclusions, it commutes up to canonical homeomorphism with pullbacks $g^*$ and pushforwards $f_!$. Therefore for any span ending in $B \times B'$
\[ \xymatrix{
	D & \ar[l]_-f C \ar[r]^-g & B \times B'
} \]
composing these commutations with the equivalence $f_!g^*\mu$ gives an equivalence
\[ f_!g^*(C^u_B(X,A) \barsmash C^u_{B'}(X',A')) \simar f_!g^*C^u_{B \times B'}((X,A) \square (X',A')) \cong C^u_{D}f_!g^*((X,A) \square (X',A')). \]
So any ``operation'' on parametrized spaces formed by an external smash product, a pullback, and then a pushforward, must commute with mapping cones up to a canonical fiberwise homotopy equivalence.

The ordinary smash product $\sma$ and the internal smash product $\sma_B$ are two examples of such operations. We therefore get a homotopy equivalence
\[ \mu_\sma \colon C^u(X,A) \sma C^u(X',A') \to C^u((X,A) \square (X',A')) \]
for spaces over $*$ and a fiberwise homotopy equivalence
\[ \mu_{\sma_B}\colon C^u_B(X,A) \sma_B C^u_B(X',A') \to C^u_B((X,A) \square_B (X',A')) \]
when both pairs $(X,A)$ and $(X',A')$ are over the same base space $B$ and
\[ (X,A) \square_B (X',A') =  (X \times_B X',X \times_B A' \cup_{A \times_B A'} A \times_B X'). \]
These are still associative, commutative, and unital in an appropriate sense.

Next we reduce \autoref{thm:sw_duality_2} to a space-level statement. Let $n \geq 0$. Recall that two based spaces $X,Y$ are $n$-dual if there are maps $c\colon S^n \to Y \sma X$ and $e\colon X \sma Y \to S^n$ such that the first composite below is homotopic to the transposition map, and the second is homotopic to the transposition followed by the map $S^n \to S^n$ that is the one-point compactification of $v \mapsto -v$.\index{$n$-duality!in a symmetric monoidal category} In particular, it is homotopic to the identity if $n$ is even and a reflection if $n$ is odd.
\[ \xymatrix @R=.5em{
	X \sma S^n \ar[r]^-{1 \sma c} & X \sma Y \sma X \ar[r]^-{e \sma 1} & S^n \sma X \\
	S^n \sma Y \ar[r]^-{c \sma 1} & Y \sma X \sma Y \ar[r]^-{1 \sma e} & Y \sma S^n
} \]
When $X,Y \in \mc R(B)$ we formulate the same definition using $\sma_B$ and the trivial sphere over $B$, $S^n_B = S^n \times B$. We restrict to $f$-cofibrant spaces so that the smash product can be right-derived using $P$, and we ask for all the smash products that appear to be equivalent to their right-derived smash products. It is not necessary that $c$ and $e$ be maps; they can be zig-zags that give maps in the homotopy category.
\begin{lem}\label{lem:n-dual}
	If $X$ and $Y$ are $n$-dual and $h$-cofibrant, then applying $F_n$ to $c$ and $e$ gives a duality between $F_0 X$ and $F_n Y$ in the homotopy category of orthogonal spectra. The same holds if $X$ and $Y$ are $f$-cofibrant retractive spaces over $B$.
\end{lem}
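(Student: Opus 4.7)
The plan is to verify both triangle identities for $(F_0 X, F_n Y)$ in $\ho\Osp(*)$, and analogously for $(F_0 X, F_n Y)$ in $\ho\Osp(B)$, by reducing them to the $n$-duality triangles that we have assumed for $(X,Y)$. The raw ingredients are: applying $F_n$ to the coevaluation $c$ and the evaluation $e$, the canonical isomorphism $F_0 X \sma F_n Y \cong F_n(X \sma Y)$ from \autoref{barsmash_free}, the stable equivalence $\lambda\colon F_n S^n \overset\sim\to \Sph$ from \autoref{ex:equivalent_thom_spectra}, and the rigidity theorem \autoref{thm:spectra_rigidity} to ensure that the resulting coherence isomorphisms are the unique ones compatible with the symmetric monoidal structure.

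First I would set up the candidate derived coevaluation and evaluation in the homotopy category,
\[ \tilde c\colon \Sph \overset{\lambda^{-1}}\simar F_n S^n \xrightarrow{F_n c} F_n(Y\sma X) \cong F_n Y \sma F_0 X, \]
\[ \tilde e\colon F_0 X \sma F_n Y \cong F_n(X\sma Y) \xrightarrow{F_n e} F_n S^n \overset{\lambda}\simar \Sph, \]
and trace through the first triangle composite for $F_0 X$. Using the associator, unitor, and the canonical isomorphisms of \autoref{barsmash_free} (which by rigidity intertwine the space-level and spectrum-level symmetries in the only possible way), the first triangle composite agrees with $F_n$ applied to the space-level composite $X \sma S^n \to X \sma Y \sma X \to S^n \sma X$. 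By hypothesis this is homotopic to the transposition $\tau\colon X\sma S^n\to S^n\sma X$, whose image under $F_n$ coincides with the symmetry $\gamma$ of orthogonal spectra under the identifications $F_n(X \sma S^n) \cong F_0 X \sma F_n S^n$ and $F_n(S^n \sma X) \cong F_n S^n \sma F_0 X$. Hence the first triangle composite is the identity of $F_0 X$ in $\ho\Osp(*)$.

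The second triangle is where the real content lies. The space-level composite $S^n \sma Y \to Y \sma X \sma Y \to Y \sma S^n$ is by hypothesis homotopic to the transposition followed by the antipodal $\epsilon\sma 1_Y$. The obstacle to concluding is that $\epsilon\colon S^n\to S^n$ has degree $(-1)^n$, so naively it seems the second triangle for $(F_0 X, F_n Y)$ would be off by $(-1)^n$. The key observation is that $\epsilon$ is the action of $-I \in O(n)$ on $S^n$, and the symmetry isomorphism of $\Osp$ in the stable homotopy category picks up exactly the compensating $(-1)^n$ through the shuffle $\sigma_{m,n} \in O(m+n)$ that appears in the proof of $F_m A \sma F_n B \cong F_{n+m}(A \sma B) \cong F_n B \sma F_m A$ (this is the whole reason orthogonal spectra are preferred to sequential ones). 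In short, the triangle identity $\id_{F_n Y} = \gamma \circ (F_n \epsilon \sma 1)$ in $\ho\Osp(*)$ is exactly the Koszul-sign-corrected version of the second space-level triangle, and the two factors of $(-1)^n$ precisely cancel. This can be verified by direct computation at spectrum level $n$, where everything reduces to a comparison of self-maps of $O(n)_+\sma S^n$ followed by the evaluation $(g,v)\mapsto gv\in S^n$.

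For the fiberwise version over $B$, the argument is formally identical, working with $\sma_B$ and $S^n_B = S^n\times B$ in place of $\sma$ and $S^n$. The $f$-cofibrancy hypothesis on $X$ and $Y$ ensures by \autoref{cor:internal_smash_properties} that $\sma_B$ is already homotopical on the relevant subcategory, so the point-set triangle identities descend to $\ho\Osp(B)$ without having to insert derived smash products by hand; all the ``smash products appear'' hypothesis buys us is that $\sma_B$ in the setup of \autoref{lem:n-dual} agrees with its right-derived version. The main obstacle throughout the proof is precisely the sign bookkeeping in the second triangle; \autoref{thm:spectra_rigidity} keeps the ambient natural isomorphisms unique, and the orthogonality structure of $\Osp$ absorbs the $(-1)^n$ so that the second triangle identity lifts to $\ho\Osp$.
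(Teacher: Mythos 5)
Your proof takes essentially the same route as the paper's: both triangle identities are reduced to the space-level $n$-duality triangles, and the crucial content of the second triangle --- that the symmetry of orthogonal spectra absorbs the $(-1)^n$ coming from the antipodal map in the $n$-duality --- is located exactly where the paper locates it, namely in the shuffle permutation in $O(m+n)$ being path-connected to the map $(v,w)\mapsto(-v,w)$. One small inaccuracy: in the fiberwise paragraph you cite \autoref{cor:internal_smash_properties} to conclude that $f$-cofibrancy alone makes $\sma_B$ homotopical, but that corollary also requires one input to be $q$-fibrant; the paper instead derives the needed compatibility by observing that $F_n$ commutes with the monoidal fibrant replacement $P$ (the operative deformation here), so that the derived smash of free spectra is a free spectrum on the derived space-level smash, and then the $n$-duality hypothesis (which explicitly asks the space-level products to agree with their derived versions) does the rest. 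Also, for $F_n S^n\sma F_nY$ the relevant level at which to read off the symmetry is $2n$, not $n$, though this is cosmetic.
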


\begin{proof}
	In the non-fiberwise case, we interpret the maps
	\[ F_n c\colon F_n S^n \to F_n Y \sma X, \qquad \qquad F_n e\colon X \sma F_n Y \to F_n S^n \]
	as maps in the homotopy category
	\[ \Sph \to F_n Y \sma^\L X, \qquad \qquad X \sma^\L F_n Y \to \Sph \]
	and the same is true of all the smash products in the proof below. Hence we can prove the triangle identities in the homotopy category, using the strict smash product $\sma$ in the place of $\sma^\L$ and proving the maps agree up to homotopy.
	
	The triangle identities in spectra become
	\[ \xymatrix @R=.5em{
		X \sma F_n S^n \ar[r]^-{1 \sma F_n c} & X \sma F_n Y \sma X \ar[r]^-{F_n e \sma 1} & F_n S^n \sma X \\
		F_n S^n \sma F_n Y \ar[r]^-{F_n c \sma 1} & F_n Y \sma X \sma F_n Y \ar[r]^-{1 \sma F_n e} & F_n Y \sma F_n S^n.
	} \]
	The symmetry isomorphism for $X \sma F_n S^n$ is just $F_n$ of the transposition map, so the first triangle identity for spaces directly gives the first triangle identity for spectra. The symmetry isomorphism for $F_n S^n \sma F_n Y$ is determined by what it does at spectrum level $2n$, where it is a map of quotients of $O(2n) \times S^n \times Y$ sending $(\rho,v,y)$ to $(\rho\rho_{n,n},y,v)$. Here $\rho_{n,n}$ is the automorphism of $\R^{2n}$ shuffling the first $n$ coordinates past the remaining coordinates. It is in the same path component of $O(2n)$ as the map $(v,w) \mapsto (-v,w)$, and this gives a homotopy from the symmetry isomorphism to the map that sends $(\rho,v,y)$ to $(\rho,y,-v)$. This latter map is $F_{2n}$ of the map of spaces provided by the $n$-duality, so we get the second triangle identity in spectra as well.
	
	For general $B$, the proof starts by observing that $F_n$ commutes with $P$. Hence the derived smash product of free spectra is isomorphic to a free spectrum on the derived smash product. The proof then proceeds as before.
\end{proof}

\begin{proof} of \autoref{thm:sw_duality_2}.
	Define two closed subspaces of $X \times N$ by
	\[ \Delta_X = \{ (x,i(x)) : x \in X \},
	\qquad
	\Gamma_\epsilon = \{ (x,u) : d(i(x),u) \leq \epsilon \}. \]
	Intuitively, $\Delta_X$ is an embedded copy of $X$ and $\Gamma_\epsilon$ is its tubular neighborhood.
	
	Let $D$ be a large disc centered at the origin containing $i(X)$. By \autoref{lem:mapping_cones_equivalences}, our proposed coevaluation and evaluation maps can be rearranged slightly so that they only use unbased mapping cones:
	\[ \xymatrix @C=1em @R=.8em{
		C^u(\R^n,\R^n-D) \ar[r] & C^u(\R^n,\R^n-X) & \ar[l]_-\sim C^u(N,N-X) \ar[r]^-p & C^u(N,N-X) \sma X_+ \\
		X_+ \sma C^u(N,N-X) \ar[rr]^-{(x,u) \mapsto u-x} && C^u(\R^n, \R^n - \{0\}).
	} \]
	At either end, we still identify $C^u(\R^n,\R^n-D)$ and $C^u(\R^n, \R^n - \{0\})$ with $S^n$ along a degree one map.
	
	By \autoref{lem:n-dual} it suffices to show these give an $n$-duality between $X_+$ and $C^u(N,N-X)$, which are both $h$-cofibrant based spaces. For the first triangle identity, the composite of coevaluation and evaluation maps becomes the top and right edges of the following diagram.
	\[ \xymatrix @R=1.5em{
		X_+ \sma C^u(\R^n,\R^n-D) \ar[dd] \ar[r] & X_+ \sma C^u(\R^n,\R^n-X) \ar@/_1em/[ldd] & \ar[l]_-\sim X_+ \sma C^u(N,N-X) \ar[ld] \ar[dd] \\
		& C^u(X \times N, (X \times N) - \Delta_X) \ar[ld] \ar@/^5em/[dd] & \\
		X_+ \sma C^u(\R^n,\R^n-0) \ar@{<->}[rd]^-\cong & C^u(\Gamma_\epsilon,\Gamma_\epsilon - \Delta_X) \ar[l] \ar[d] \ar[u]_-\sim & X_+ \sma C^u(N,N-X) \sma X_+ \ar[ld] \\
		& C^u(\R^n,\R^n-0) \sma X_+ &
	} \]
	These maps all arise as maps of pairs, and so to check the regions commute up to homotopy it suffices to give the maps and homotopies of pairs as indicated below.
	\[ \xymatrix @R=1.5em @C=6em{
		X \times (\R^n,\R^n-D) \ar[dd] \ar[r] & X \times (\R^n,\R^n-X) \ar@/_1em/[ldd]_-{(x,v) \mapsto (x,v-x)} & \ar[l]_-\sim X \times (N,N-X) \ar[ld] \ar[dd]^-{(x,v) \mapsto (x,v,p(v))} \\
		& (X \times N, (X \times N) - \Delta_X) \ar[ld]_-{(x,v) \mapsto (x,v-x)} \ar@/^5em/[dd]^-(.3){(x,v) \mapsto (v-x,p(v))} & \\
		X \times (\R^n,\R^n-0) \ar@{->}[rd]^-\cong & (\Gamma_\epsilon,\Gamma_\epsilon - \Delta_X) \ar[l]_-{(x,v) \mapsto (x,v-x)} \ar[d]_-(.3){(x,v) \mapsto (v-x,p(v))} \ar[u]_-\sim & X \times (N,N-X) \times X \ar[ld]^-{(x,v,y) \mapsto (v-x,y)} \\
		& (\R^n,\R^n-0) \times X &
	} \]
	The unlabeled maps are all inclusions and collapses, in other words $(x,v) \mapsto (x,v)$. The top-left region commutes by the homotopy $(x,v-tx)$. This gives a map of pairs because $v - tx = 0 \Rightarrow v \in D$. The bottom-left region commutes by the homotopy $(p((1-t)v+tx),v-x)$. This gives a map of pairs because $v - x = 0 \Rightarrow (x,v) \in \Delta_X$. The remaining regions strictly commute. In summary, the first triangle identity follows from the homotopies $$(p(v),v-x) \sim (x,v-x) \sim (x,v).$$
	
	For the second triangle identity, the composite of coevaluation and evaluation maps becomes the top, right, and bottom edges of the following diagram. Along our identifications with $S^n$, the map we wish to compare it to is given on the left-hand edge, where the second map swaps the cones and also negates the coordinate in $\R^n$.
	\[ \xymatrix @R=1.5em{
		C^u(\R^n,\R^n-D) \sma C^u(N,N-X) \ar[d]_-\sim \ar[r] & C^u(\R^n,\R^n-X) \sma C^u(N,N-X) \\
		C^u(\R^n,\R^n-0) \sma C^u(N,N-X) \ar[d]_-\sim & C^u(N,N-X) \sma C^u(N,N-X) \ar[u]_-\sim \ar[d] \\
		C^u(N,N-X) \sma C^u(\R^n,\R^n-0) & C^u(N,N-X) \sma X_+ \sma C^u(N,N-X) \ar[l]
	} \]
	Applying $\mu_\sma$ to every term gives a termwise homotopy equivalence to a new diagram, where every term arises from a single pair and the maps arise as maps of pairs, indicated below.
	\[ \xymatrix @R=1.5em{
		(\R^n,\R^n-D) \square (N,N-X) \ar[d]_-\sim \ar[rr] && (\R^n,\R^n-X) \square (N,N-X) \ar[ld]_-{(v,w) \mapsto (w,w-v)} \\
		(\R^n,\R^n-0) \square (N,N-X) \ar[d]_-{(v,w) \mapsto (w,-v)} & (\R^n,\R^n-X) \square (\R^n,\R^n-0) & (N,N-X) \square (N,N-X) \ar[l]^-{(v,w) \mapsto (v,w-v)} \ar[u]_-\sim \ar[d]^-{(v,w) \mapsto (v,p(v),w)} \\
		(N,N-X) \square (\R^n,\R^n-0) \ar[ur]^-\sim && ((N,N-X) \times X) \square (N,N-X) \ar[ll]^-{(v,x,w) \mapsto (v,w-x)}
	} \]
	The unlabeled maps are all inclusions and collapses, in other words $(v,w) \mapsto (v,w)$. The top-left region commutes by the homotopy $(w,tw-v)$. This gives a map of pairs because if $w \in X$ and $tw - v = 0$ then $(v,w) \in D \times X$. The right-hand region commutes by the homotopy $((1-t)w+tv,w-v)$. This gives a map of pairs because if $w = v$ and $(1-t)w+tv = v \in X$ then $(v,w) \in X \times X$. The bottom region commutes by the homotopy $(v,w-(1-t)v-tp(v))$. This gives a map of pairs because if $v \in X$ and $w - (1-t)v - tp(v) = w-v = 0$ then $(v,w) \in X \times X$. In summary, the second triangle identity follows from the homotopies $$(v,w-p(v)) \sim (v,w-v) \sim (w,w-v) \sim (w,-v).$$
	
	Since all the mapping cones are $h$-cofibrant, these two point-set triangle identities imply the triangle identities on the homotopy category.
\end{proof}
\begin{proof} of \autoref{thm:fiberwise_sw_duality_2}.
	Define $\Delta_E \subseteq \Gamma_\epsilon \subseteq E \times_B N$ in the same way as above, and perform the same rearrangement of the coevaluation and evaluation maps, so that everything arises from maps of pairs. Then the point-set version of the first triangle identity follows by the same formulas. All the smash products are equivalent to the derived smash product because $E_{+B}$ is $h$-fibrant, by \autoref{cor:internal_smash_properties}. This gives the first triangle identity in the homotopy category.
	
	In the second triangle identity, we can still apply $\mu_{\sma_B}$ termwise and get an equivalent diagram that commutes up to homotopy. The backwards map is still an equivalence, by excision for $N \times_B N \subset \R^n_B \times_B N$, after we have applied $\mu_{\sma_B}$. This gives a point-set version of the second triangle identity; namely, the following diagram commutes in the homotopy category.
	\[ \xymatrix @R=1.5em{
		C^u_B(\R^n_B,\R^n_B-D_B) \sma_B C^u_B(N,N-E) \ar[d]_-\sim \ar[r] & C^u_B(\R^n_B,\R^n_B-E) \sma_B C^u_B(N,N-E) \\
		C^u_B(\R^n_B,\R^n_B-0_B) \sma_B C^u_B(N,N-E) \ar[d]_-\sim & C^u_B(N,N-E) \sma_B C^u_B(N,N-E) \ar[u]_-\sim \ar[d] \\
		C^u_B(N,N-E) \sma_B C^u_B(\R^n_B,\R^n_B-0_B) & C^u_B(N,N-E) \sma_B E_{+B} \sma_B C^u_B(N,N-E) \ar[l]
	} \]
	This diagram admits a map to the same diagram with derived smash products. The map is an equivalence on the left-hand column, but not in the right-hand column.\footnote{The derived smash product in the right-hand column would give us the cone on the same pair as before, but with expressions such as $(N-E) \times_B (N-E)$ replaced by $(N-E) \times_B B^I \times_B (N-E)$.} However, by a brief diagram chase, this is enough to deduce that the diagram with derived smash products also commutes in the homotopy category. This verifies the second triangle identity in the homotopy category.
\end{proof}

\beforesubsection
\subsection{Duality and traces in a shadowed bicategory}\label{sec:traces_in_bicategories}\aftersubsection

To get stronger fixed-point invariants, we generalize what it means to take a trace of a morphism $f$. Instead of working in a symmetric monoidal category, such as modules over a commutative ring $k$, we work in a shadowed bicategory, such as the bicategory of modules over \emph{non}-commutative rings.

To illustrate the difference, recall that for non-commutative algebras $A$, $B$, and $C$ over a commutative ring $k$ there is a tensor product operation $\otimes_B$ that takes each $(A,B)$-bimodule ${}_A M_B$ and $(B,C)$-bimodule ${}_B N_C$ to an $(A,C)$-bimodule $M \otimes_B N$. As the algebras vary, these tensor products are associative and unital up to coherent isomorphism. We can therefore tensor long strings of bimodules together,
\[ M_0 \otimes_{A_1} M_1 \otimes_{A_2} M_2 \ldots M_{n-1} \otimes_{A_n} M_n, \]
without having to worry about parentheses, because any two ways of forming this product are canonically isomorphic.

If the same ring acts on both sides of this string of bimodules, we can also close the string into a circle. Define the \textbf{shadow} of an $(A,A)$-bimodule $M$ to be the $k$-module
\[ \shad{M} := M/(am \sim ma). \]
Intuitively, this is $M$ tensored with itself. Then define the \textbf{circular product} of a list of bimodules $_{A_i} (M_i)_{A_{i+1}}$, indices taken mod $n$, by the formula
\[ \shad{M_0,\ldots,M_n} := \shad{ M_0 \otimes_{A_1} M_1 \otimes_{A_2} \ldots \otimes_{A_n} M_n }. \]
This is well-defined up to canonical isomorphism, even if we ``rotated'' the list of modules before applying this formula. So we have an essentially unique way to define the circular product of a circular list of bimodules.

This is formalized in the notion of a \textbf{bicategory with shadow}\index{shadowed bicategory}\index{bicategory with shadow}, or shadowed bicategory \cite{ponto_asterisque}, sometimes also called a trace theory after \cite{kaledin_traces}. Formally, a bicategory with shadow $\mathscr B$ is defined by the following data.
\begin{itemize}
	\item A collection of objects or ``0-cells'' $\ob \mathscr B$. We typically label these $A$, $B$, $C$, ...
	\item For each pair of 0-cells $A$, $B$, a category $\mathscr B(A,B)$ whose objects are called ``1-cells'' $X$, $Y$, ... and whose morphisms are called ``2-cells'' $f$, $g$, ...
	\item A category $\mathscr B_{\shad{}}$ called the ``shadow category.''
	\item For every triple of 0-cells $A$, $B$, $C$ a functor $\odot\colon \mathscr B(A,B) \times \mathscr B(B,C) \to \mathscr B(A,C).$
	\item For every 0-cell $A$ a functor $\shad{}\colon \mathscr B(A,A) \to \mathscr B_{\shad{}}$ and a 1-cell $U_A \in \ob\mathscr B(A,A)$.
	\item Natural isomorphisms $\alpha$, $\lambda$, $\rho$, and a ``rotator'' isomorphism $\theta$ making the following four diagrams of functors commute.
	\[
\begin{array}{cc}
	\xymatrix @R=1.5em{
		\mathscr B(A,B) \times \mathscr B(B,C) \times \mathscr B(C,D) \ar[d]_-{\odot \times 1} \ar[r]^-{1 \times \odot} & \mathscr B(A,B) \times \mathscr B(B,D) \ar[d]^-{\odot} \\
		\mathscr B(A,C) \times \mathscr B(C,D) \ar[r]_-{\odot} & \mathscr B(A,D)
	}
	& \raisebox{-2em}{$\alpha\colon (X \odot Y) \odot Z \cong X \odot (Y \odot Z)$} \\[5em]
	\xymatrix @R=1.5em{
		\mathscr B(A,B) \ar[d]_-{U_A \times 1} \ar@{=}[dr] \ar[r]^-{1 \times U_B} & \mathscr B(A,B) \times \mathscr B(B,B) \ar[d]^-{\odot} \\
		\mathscr B(A,A) \times \mathscr B(A,B) \ar[r]_-{\odot} & \mathscr B(A,B)
	}
	& \raisebox{-3em}{$\lambda\colon U_A \odot X \cong X$}
	\raisebox{-1em}{$\rho\colon X \odot U_B \cong X$} \\[5em]
	\xymatrix @R=1.5em{
		\mathscr B(A,B) \times \mathscr B(B,A) \ar[dr]_-{\shad{}} \ar[r]^-{\textup{swap}} & \mathscr B(B,A) \times \mathscr B(A,B) \ar[d]^-{\shad{}} \\
		& \mathscr B_{\shad{}}
	}
	& \raisebox{-2em}{$\theta\colon \shad{X \odot Y} \cong \shad{Y \odot X}$}
\end{array}
	\]
\end{itemize}
This data must also satisfy a coherence condition. If we start with any expression for an $n$-fold product of 1-cells, either arranged along a line or a circle, then if we re-arrange this expression using the isomorphisms $\alpha$, $\lambda$, $\rho$, and $\theta$, and eventually come back to the same expression, the composite isomorphism must be the identity. As in the case of symmetric monoidal categories, it suffices to check this condition for four particular strings of the above isomorphisms. See \cite{benabou} and \cite[4.4.1]{ponto_asterisque} for the four conditions and \cite{mp2} for the proof of this coherence theorem.

\begin{ex}\label{ex:ring_bicategories}\hfill
	\vspace{-1em}
	
	\begin{itemize}
		\item If $\mc C$ is a symmetric monoidal category, it also defines a shadowed bicategory with a single 0-cell $*$. The categories $\mathscr B(*,*)$ and $\mathscr B_{\shad{}}$ are both equal to $\mc C$, the product $\odot$ is the tensor product in $\mc C$, the shadow $\shad{}$ is the identity functor, and $U_*$ is the unit of $\mc C$. The associator, unitors, and symmetry isomorphism come from the symmetric monoidal structure of $\mc C$.
		\item For any commutative ring $k$, there is a shadowed bicategory $\mathscr Bimod_k$ whose 0-cells are $k$-algebras, 1-cells are bimodules, and 2-cells are bimodule maps. The horizontal product, shadow, and unit were defined earlier in this section. The coherence conditions are checked using the universal property of the tensor product.\index{bicategory!of bimodules $\mathscr Bimod_k$}
		
		\item There is also a shadowed bicategory $ Ch\mathscr Bimod_k$ whose 0-cells are $k$-algebras that are projective as $k$-modules, 1-cells are unbounded chain complexes of bimodules, and 2-cells are maps in the derived category, i.e. maps of chain complexes with quasi-isomorphisms inverted. The horizontal product and shadow are left-derived from the same operations in $\mathscr Bimod_k$, and the coherence is deduced using the general theory of left-derived functors.\footnote{We need to know that the $k$-algebras $A$ are projective over $k$ to deduce that these products can be coherently deformed. To study algebras that are not projective over $k$, we replace them by DGAs that are levelwise projective.} We could similarly restrict to nonnegatively-graded chain complexes $ Ch^{\geq 0}\mathscr Bimod_k$.
		
		In particular, if $_A M_B$ and $_B N_C$ are concentrated in degree 0 and projective over $k$, their horizontal product is the two-sided bar complex
		\[ B_i(M;B;N) = M \otimes_k B^{\otimes_k i} \otimes_k N. \]
		Similarly if $_A M_A$ is concentrated in degree 0 and projective over $k$, the shadow of $M$ is the Hochschild complex \cite{loday_cyclic}
		\[ B^{\cyc}_i(A;M) = M \otimes_k A^{\otimes_k i}. \]
		
		\item There is a shadowed bicategory $ \mathscr Bimod_{\Sph}$ whose 0-cells are orthogonal ring spectra that are cofibrant\footnote{It seems fairly certain that some kind of cofibrancy hypothesis is needed to make the composition products and shadows coherently deformable. Though it would be interesting to work out if it is enough to assume the rings are flat-cofibrant spectra.} as $\Sph$-modules, 1-cells are orthogonal bimodule spectra, and 2-cells are maps in the homotopy category of bimodule spectra. We define the tensor and shadow as in $\mathscr Bimod_k$, using the smash product in the place of $\otimes_k$, then left-derive the results so that they pass to the homotopy category.\index{bicategory!of bimodule spectra $\mathscr Bimod_{\Sph}$}
		
		The shadow is therefore given by \textbf{topological Hochschild homology}
		\[ \THH(A;M) := |n \mapsto M \sma A^{\sma n}| \]
		on the subcategory of bimodules $M$ that are cofibrant as orthogonal spectra, see for instance \cite{shipley_thh,angeltveit2014relative}. The rotator isomorphism $\theta$ is left-derived from the same isomorphism on the point-set level, but it can be described more explicitly as an isomorphism
		\[ B^{\cyc}(A;B(M;B;N)) \cong B^{\cyc}(B;B(N;A;M)) \]
		that arises by recognizing that both are realizations of bisimplicial spectra, and on the diagonal the two simplicial spectra are isomorphic, both are given by
		\[ A^{\sma k} \sma M \sma B^{\sma k} \sma N \]
		with the same face and degeneracy maps. This is sometimes called the \textbf{Dennis-Waldhausen-Morita argument}, see \cite[p.36]{waldhausen_2}, \cite{blumberg_mandell_localization}.
		
		\item The shadowed bicategory $ \mathscr Bimod_{\Sph}$ can be extended from orthogonal ring spectra to categories enriched in orthogonal spectra, i.e. ``ring spectra on many objects,'' see \cite{campbell_ponto,blumberg_mandell_thc,clmpz}. The shadow in this case is the categorical cyclic bar construction
		\[ \THH(A;M) := \left|n \mapsto \bigvee_{a_0,\ldots,a_n} M(a_n,a_0) \sma A(a_0,a_1) \sma \ldots \sma A(a_{n-1},a_n)\right|. \]

		\item There is a bicategory of group actions $\mathscr Act$, whose 0-cells are well-based topological groups $G$, whose 1-cells from $G$ to $H$ are spaces $X$ with a left action of $G$ and right action of $H$ that commute with each other, and 2-cells are maps of such $G$-$H$ spaces with the weak equivalences inverted. As in the case of bimodules, the product of $_G X_H$ and $_H Y_J$ is the left-derived functor of $X \times_H Y$, in other words the unbased bar construction $B(X,H,Y)$.\index{bicategory!of group actions $\mathscr Act$} We could alternatively take based $G$-$H$ spaces with product $X \sma_H Y$, which left-derives to the based bar construction $B(X,H_+,Y)$ on the subcategory of well-based $X$ and $Y$.
		\item There is a similar bicategory of group actions on spectra $\mathscr Act_\Sph$, whose 0-cells are $q$-cofibrant topological groups $G$, whose 1-cells from $G$ to $H$ are spectra $X$ with a left action of $G$ and right action of $H$ that commute with each other, and 2-cells are maps of such with the weak equivalences inverted. This is just a sub-bicategory of $\mathscr Bimod_\Sph$ on the ring spectra of the form $\Sigma^\infty_+ G$.\index{bicategory!of group actions on spectra $\mathscr Act_\Sph$}
\end{itemize}
\end{ex}

\begin{rmk}\label{rmk:left_deformable_bicategory}
	Most of the examples in the above list follow a general pattern, where we pass from a ``point-set'' bicategory $\mathscr B$ to a ``homotopy'' bicategory $\ho\mathscr B$ by inverting certain 2-cells and left-deriving all the bicategory operations. Using the theory from \autoref{sec:composing_comparing} (or simply citing \cite[22.11]{shulman2006homotopy}), this is guaranteed to work if there exists a left retraction $Q$ on each category $\mathscr B(A,B)$, landing in a designated subcategory of cofibrant 1-cells, such that
	\begin{itemize}
		\item $\odot$ preserves cofibrant objects,
		\item $\odot$ and $\shad{}$ preserve weak equivalences between cofibrant objects,
		\item $QU_A \odot QX \to U_A \odot QX$ is always a weak equivalence, and
		\item $QX \odot QU_B \to QX \odot U_B$ is always a weak equivalence.
	\end{itemize}
	We say that $\mathscr B$ is a \textbf{left-deformable bicategory}\index{left-deformable!bicategory} when this happens. It appears that in most examples, for $\mathscr B$ to be left-deformable we need to restrict to the 0-cells that are cofibrant in some sense.
\end{rmk}

The notion of duality and trace generalizes from symmetric monoidal categories to bicategories with shadow. In the case of bimodules over non-commutative algebras, an $(A,B)$ bimodule $M$ is \textbf{left-dualizable}, or \textbf{dualizable over $A$}, if there is a $(B,A)$ bimodule $M^*$ and maps
\[ c\colon B \to M^* \otimes_A M \]
\[ e\colon M \otimes_B M^* \to A \]
such that the following two composites are identity maps:
\[ M \cong M \otimes_B B \to M \otimes_B M^* \otimes_A M \to A \otimes_A M \cong M \]
\[ M^* \cong B \otimes_B M^* \to M^* \otimes_A M \otimes_B M^* \to M^* \otimes_A A \cong M^* \]
Equivalently, every functor of the form $M \otimes_B -$ has right adjoint isomorphic to $M^* \otimes_A -$. As in the symmetric monoidal case, this implies that $M^* \cong \Hom_A(M,A)$, the set of left $A$-linear maps $M \to A$, with $(B,A)$-action given by pre-composition with the right $B$-action and post-composition with the right $A$-action.

Similarly, we say that $M$ is \textbf{right-dualizable} or \textbf{dualizable over $B$} if there are maps
\[ c\colon A \to M \otimes_B M' \]
\[ e\colon M' \otimes_A M \to B \]
satisfying similar triangle identities.\footnote{The author generally finds the term ``dualizable over $A$'' to be less confusing than ``left-dualizable.'' For instance, when $M$ is dualizable over $A$, its dual $M^*$ is also dualizable over $A$, and its dual over $A$ is $M$ again. This is a little cleaner than saying that its left-dual $M^*$ has a right-dual which is $M$ again. It also makes us less attached conceptually to writing things in a particular order, which is always good.} The dual over $B$ is then the $(B,A)$ bimodule $\Hom_B(M,B)$.

In general, these are separate conditions and the two duals of $M$ do not have to be isomorphic. However they coincide when $M$ is \textbf{invertible}, meaning it has a dual on one side for which the maps $c$ and $e$ are isomorphisms. From this it follows that $M$ has a dual on the other side, and the two duals are isomorphic. Such an invertible $(A,B)$-bimodule $M$ induces the famous \textbf{Morita equivalence} equivalence between the category of $B$-$k$ bimodules and of $A$-$k$-bimodules.\index{Morita equivalence}

The above discussion generalizes to any bicategory $\mathscr B$.
\begin{df}\label{def:dual_in_bicategory}
	A 1-cell $X$ from $A$ to $B$ is dualizable over $A$ if there is another 1-cell $Y$ from $B$ to $A$ and maps
	\[ c\colon U_B \to Y \odot X \]
	\[ e\colon X \odot Y \to U_A \]
	such that the following two composites are identity maps.
	\[ X \cong X \odot U_B \to X \odot Y \odot X \to U_A \odot X \cong X \]
	\[ Y \cong U_B \odot Y \to Y \odot X \odot Y \to Y \odot U_A \cong Y \]
\end{df}\index{dualizable!in a bicategory}
Dualizability over $B$ is defined similarly. The mnemonic is that the 0-cell you are dualizable over, is the one that receives the evaluation map. Furthermore $X$ is invertible if it is dualizable on at least one side and the maps $c$ and $e$ are isomorphisms.\index{invertible!in a bicategory}

\begin{ex}[Examples of dualizable objects]\hfill
	\vspace{-1em}
	
	\begin{itemize}
		\item If the shadowed bicategory comes from a symmetric monoidal category $\mc C$, then a 1-cell $X$ is left- or right-dualizable in this sense iff it was dualizable in $\mc C$.
		\item In $\mathscr Bimod_k$, an $(A,B)$-bimodule $M$ is dualizable over $A$ iff it is finitely generated projective as an $A$-module (forgetting the $B$-action). It is invertible if in addition the map $B \to \Hom_A(M,M)$ is an isomorphism, and $M$ is a ``generator'' meaning that $A$ is a direct summand of $M^{\oplus k}$ for some $k$ \cite[Ch II]{bass_moritach2}.
		\item In $Ch\mathscr Bimod_k$, a chain complex of $(A,B)$-bimodules $M_\bullet$ is dualizable over $A$ iff it is quasi-isomorphic as a chain complex of $A$-modules to one that is nonzero in only finitely many degrees, each of which is finitely generated projective.
		\item In $\mathscr Bimod_{\Sph}$, an $(A,B)$-bimodule $M$ is dualizable over $A$ iff as an $A$-module it is a retract in the homotopy category of a finite $A$-cell complex, in other words iff it is in the thick subcategory of $A$-modules generated by $A$.\footnote{It appears that the same argument applies to bimodules of spectral categories, proving that an $(A,B)$ bimodule $M(-,-)$ is dualizable over $A$ iff for each object $b$, the $A$-module $M(-,b)$ is a retract in the homotopy category of a finite cell complex of representable $A$-modules $A(-,a)$.}
		\item In $\mathscr Act_\Sph$, the suspension spectrum of a $G$-$H$-space $X$ is dualizable over $G$ if $X$ is a homotopy retract of a finite $G$-cell complex, after forgetting the $H$-action. The case of $H = 1$ is sometimes called \textbf{Ranicki duality}, see for instance \cite[5.3]{ponto_asterisque}.
		\item The 2-category $\cat{Cat}$ of categories, functors, and natural transformations forms a bicategory without a shadow.\footnote{We are not saying no shadow exists, only that we are not considering a shadow here.} A functor is dualizable on one side iff it is a left adjoint, and dualizable on the other side iff it is a right adjoint.
	\end{itemize}
\end{ex}

For bimodules, we arrive at these characterizations by comparing to the adjunction between $M \otimes_A -$ and $\Hom_A(M,-)$. We see that $M$ is dualizable over $A$ iff the composition map
\[ \Hom_A(M,A) \otimes_A M \cong \Hom_A(M,A) \otimes_A \Hom_A(A,M) \to \Hom_A(M,M) \]
is an isomorphism of $(B,B)$ bimodules, cf \cite[16.4.12]{ms}, \cite[6.3]{lind2019transfer}. The fact that this corresponds to a ``finitely generated projective'' condition is a standard argument: we prove the above map map is an isomorphism when $M$ is finitely generated projective by an induction on the generators. Conversely, when that map is an isomorphism we show that the coevaluation map $k \to M^* \otimes_A M$ factors through $M^* \otimes_A M'$ some finitely generated free module (or finite $A$-cell complex) $M'$. Using the triangle identities we then deduce that $M$ is a retract of $M'$, see e.g. \cite[p.79]{ekmm}.
This style of argument actually works in any ``closed'' bicategory, meaning one in which the operations $X \odot -$ and $- \odot X$ always have right adjoints, see \cite[\S 16.4]{ms}.\index{bicategory!closed}

We never use the shadow $\shad{}$ when we talk about dualizability, but we need it to take traces. It acts as a substitute for the fact that we can't swap terms in a tensor product past each other. We ``rotate'' them instead.

Suppose $M$ is a 1-cell from $A$ to $B$, dualizable over $B$, with dual $M^*$. Then the \textbf{trace}\index{trace!in a bicategory} of a map $f\colon M \to M$ is defined as the composite
\[ \xymatrix @R=0.5em @C=3em{
	\shad{U_A} \ar[r]^-{\shad{c}} & \shad{M \odot M^*} \ar[r]^-{\shad{f \odot \id}} & \shad{M \odot M^*} \ar@{<->}[r]^-{\cong} & \shad{M^* \odot M} \ar[r]^-{\shad{e}} & \shad{U_B}.
} \]
More generally, if $Q$ is a 1-cell from $A$ to $A$, and $P$ goes from $B$ to $B$, the \textbf{twisted trace} of a map $f\colon Q \odot M \to M \odot P$ is the composite
\[ \xymatrix @R=0.5em @C=3em{
	\shad{Q} \ar@{<->}[r]^-\cong & \shad{Q \odot U_A} \ar[r]^-{\shad{\id \odot c}} & \shad{Q \odot X \odot Y} \ar[r]^-{\shad{f \odot \id}} & \shad{X \odot P \odot Y} \ar@{<->}[r]^-\cong & \shad{Y \odot X \odot P} \ar[r]^-{\shad{e \odot \id}} & \shad{P}.
} \]

\begin{ex}[Examples of bicategorical traces]\hfill
	\vspace{-1em}
	
	\begin{itemize}
		\item If the shadowed bicategory comes from a symmetric monoidal category $\mc C$, this is exactly the same trace we defined earlier.
		\item In $\mathscr Bimod_k$, suppose $A$ is a $k$-algebra and $M \cong A^{\oplus n}$ is a free module of finite rank, regarded as a $k$-$A$ bimodule that is dualizable over $A$. Then the trace of $f\colon M \to M$ is a map
		\[ k \to A/(aa' = a'a) \]
		where the quotient is taken in $k$-modules. It sends $1 \in k$ to the sum of the diagonal entries of the matrix for $f$.\footnote{This equivalence relation is exactly what one needs for the trace to be invariant under coordinate change -- even when $n = 1$, we do not have the equality $a = bab^{-1}$ in the non-commutative algebra $A$ until we apply the above equivalence relation. And if $A$ is commutative, the equivalence relation is trivial, so this recovers the usual trace.}
		\item The trace of $f\colon P \to P$ when $P$ is a projective $A$-module is equal to the trace of the self-map of $A^{\oplus n}$ that retracts to $P$ then applies $f$. Doing this for every finitely generated projective module gives a map
		\[ K_0(A) \to A/(aa'=a'a) = HH_0(A) \]
		which is exactly the Hattori-Stallings trace \cite{hattori,stallings}.
		\item Let $X$ be a connected finite cell complex with fundamental group $\pi$, and $\ti X$ is its universal cover as a space with a left $\pi$-action. Since $X$ is finite, $\ti X$ is built from finitely many free $\pi$-cells, which implies that the $\Z[\pi]$-$\Z$ bimodule chain complex $C_*(\ti X;\Z)$ is dualizable over $\Z[\pi]$. Its dual is the cochain complex $C^*(\ti X;\Z)$ with the homological (i.e. negative) grading. This dualizability is true in the homotopy category always, and true on the nose if we insist that $C_*(\ti X;\Z)$ means cellular chains.
		
		If $f\colon X \to X$ is a self-map that fixes the basepoint, it induces a homomorphism $f_*\colon \pi \to \pi$ and a map $\ti f\colon \ti X \to \ti X$ that fixes a chosen basepoint of $\ti X$. The map $\ti f$ is not quite $\pi$-equivariant, instead satisfying the equation $\ti f(ax) = f_*(a)\ti f(x).$ Therefore $\ti f$ gives a linear map
		\[ C_*(\ti X;\Z) \to \Z[\pi]^f \otimes_{\Z[\pi]} C_*(\ti X;\Z) \]
		where $\Z[\pi]^f$ is the $\Z[\pi]$-$\Z[\pi]$ bimodule with actions $a \cdot b \cdot c := f_*(a)bc.$ The twisted trace of the map induced by $\ti f$ is therefore a map of Hochschild chain complexes
		\[ \shad{\Z} \to \shad{\Z[\pi]^f} \]
		which is determined by its effect on homology,\footnote{Note that over the group ring $\Z[\pi]$, there is no result stating that an alternating sum of traces on a chain complex is equal to the alternating sum of traces on homology. This is why we take the trace at the chain level, then measure the result on homology.} a map
		\[ \Z \to HH_0(\Z[\pi];\Z[\pi]^f) \cong \Z[\pi]/(f_*(a)b = ba). \]
		The image of 1 is called the \textbf{Reidemeister trace} $R(f)$,\index{Reidemeister trace $R(f)$} see \cite{wecken,husseini,ponto_asterisque}. Concretely, the action of $\ti f$ on each $C_n(\ti X;\Z)$ gives a matrix with entries in $\Z[\pi]$. We can compute $R(f)$ as the sum of the diagonal entries of all these matrices, each one weighted by $(-1)^n$, and then we apply the equivalence relation $f_*(a)b = ba$.\footnote{A good exercise is to use this description to show that when $X$ is the circle and $f$ is the map that reflects across a line, the resulting element of $\Z \oplus \Z$ is $(1,1)$.}
		
		\item Morita equivalent algebras $A$ and $B$ have isomorphic Hochschild homology groups, $HH_n(A) \cong HH_n(B)$, see e.g. \cite{loday_cyclic}. The fastest way to prove this is to let $M$ be an invertible 1-cell from $A$ to $B$, and take the trace of the identity map of $M$. Since the coevaluation and evaluation maps are isomorphisms, this trace is an isomorphism. Taken in the bicategory of bimodules, we therefore get an isomorphism
		\[ HH_0(A) = A/(aa'=a'a) \cong B/(bb' = b'b) = HH_0(B). \]
		To get the higher Hochschild homology groups, we think of $M$ as a chain complex concentrated in degree 0. Then it gives an invertible 1-cell in the bicategory of bimodule chain complexes, by taking the coevaluation and evaluation maps from before and composing them with the quasi-isomorphisms
		\[ B(M^*,A,M) \simar M^* \otimes_A M, \qquad B(M,B,M^*) \simar M \otimes_B M^*. \]
		(They are quasi-isomorphisms because dualizability forces $M$ and $M^*$ to be projective over both $A$ and $B$.) The trace of the identity of $M$ therefore gives an equivalence in the derived category between the Hochschild chains,
		\[ HC_\bullet(A;A) \simeq HC_\bullet(B;B). \]
		Taking homology gives $HH_n(A) \cong HH_n(B)$ for all $n \geq 0$. This is a conceptual reinterpretation of the classical use of the Dennis-Waldhausen-Morita argument, see also \cite[p.36]{waldhausen_2}, \cite{mccarthy_cyclic_homology_of_exact_category,blumberg_mandell_localization,campbell_ponto,clmpz}.
		\item Carrying out the same argument for Morita equivalent ring spectra $A$ and $B$ gives an equivalence of spectra
		\[ \THH(A) \simeq \THH(B). \]
		\item The more classical argument for Morita invariance of Hochschild homology goes by arguing first that
		\begin{equation}\label{morita_equivalent_to_perfect_modules}
			HH_\bullet(A) \cong HH_\bullet(\mc P(A)),
		\end{equation}
		where $\mc P(A)$ denotes the category of finitely-generated projective $A$-modules, enriched in $k$-modules by $\Hom_A(-,-)$. Then Morita equivalent rings have equivalent categories of modules, hence equivalent Hochschild homology.
		
		However, the equivalence \eqref{morita_equivalent_to_perfect_modules} could also be proven by a Dennis-Waldhausen-Morita argument. It suffices to show that in the bicategory of bimodules-on-many-objects, there is an invertible 1-cell from $A$ to $\mc P(A)$. The proof of this turns out to be surprisingly short -- it amounts to checking that $A \to \mc P(A)$ is fully faithful and essentially surjective up to retracts and cofiber sequences. See \cite{blumberg_mandell_localization,campbell_ponto,clmpz}.
	\end{itemize}
\end{ex}

\beforesubsection
\subsection{Shadowed bicategories of parametrized spaces and spectra}\aftersubsection

In this section we construct the shadowed bicategory of parametrized spectra. To warm up, we first construct a bicategory $\mc R/\cat{Top}$ whose 0-cells are topological spaces. The 1-cells from $A$ to $B$ are retractive spaces $X$ over $A \times B$, and 2-cells are maps of retractive spaces. The product of $X \in \mc R(A \times B)$ and $Y \in \mc R(B \times C)$ is defined by the formula
		\[ X \odot Y := (r_B)_!(\Delta_B)^*(X \barsmash Y), \]
		\[ \xymatrix @R=1.7em @C=4em{
			X \odot Y \ar[d] & \ar[l] \Delta_B^*(X \barsmash Y) \ar[d] \ar[r] & X \barsmash Y \ar[d] \\
			A \times C & \ar[l]_-{r_B} A \times B \times C \ar[r]^-{\Delta_B} & A \times B \times B \times C.
		} \]
We sometimes call this $X \odot^B Y$ if the choice of $B$ is not clear. The shadow and unit are
\[ \shad{X} := (r_A)_!(\Delta_A)^*X, \qquad U_A := (\Delta_A)_!(r_A)^* S^0 \cong A_{+(A \times A)}. \]
Intuitively, the product is a space over $A \times C$ whose fiber over $(a,c)$ is the sum of $X_{a,b} \sma Y_{b,c}$ over all $b \in B$. Another intuition is that away from the basepoint sections, this is just the fiber product $X \times_B Y$.\index{bicategory!of retractive spaces $\mc R/\cat{Top}$}

We have the rigidity theorem \autoref{prop:spaces_rigidity} waiting on standby, so as soon as we show that isomorphisms $\alpha$, $\lambda$, $\rho$, and $\theta$ exist, we will know for free that they are unique and coherent. To show these isomorphisms exist, we decompose each of the operations $\odot$, $\shad{}$, and $U_A$ into three pieces. Then we compose several smaller isomorphisms from the SMBF structure on $\mc R$, one for each small square or triangular region in the diagrams below.\footnote{These diagrams were typeset by Kate Ponto.} In fact, this procedure works for any symmetric monoidal bifibration (SMBF), not just $\mc R \to \cat{Top}$.

\begin{thm}\label{thm:SMBF_to_bicategory}
	For any SMBF $(\mc C,\boxtimes,I)$ over $\bS$, there is a shadowed bicategory $\mc C/\bS$ whose 0-cells are the objects of $\bS$, 1- and 2-cells the objects and morphisms in the fiber category $\mc C^{A \times B}$, shadow category $\mc C^*$, three operations $\odot$, $\shad{}$, $U_A$ defined by
	\[ \begin{array}{rclcrcl}
	\mc C^{A \times B} \times \mc C^{B \times C} & \overset\odot\to & \mc C^{A \times C}
	&& X \odot Y &=& (r_B)_!(\Delta_B)^*(X \boxtimes Y) \\
	\mc C^{A \times A} & \overset{\shad{}}\to & \mc C^{*}
	&& \shad{X} &=& (r_A)_!(\Delta_A)^*X \\
	{*} & \overset{U_A}\to & \mc C^{A \times A}
	&& U_A &=& (\Delta_A)_!(r_A)^* I
	\end{array} \]
	and isomorphisms $\alpha$, $\lambda$, $\rho$, $\theta$ given by \autoref{fig:assoc_indexed}, \autoref{fig:unit_indexed}, and \autoref{fig:shadow_indexed}.
\end{thm}

\begin{figure}[h]
	{
		\begin{tikzcd}[column sep=30pt,row sep=15pt]
			{\begin{pmatrix} \mathscr{C}^{A\times B}
					\\
					\mathscr{C}^{B\times C}
					\\
					\mathscr{C}^{C\times D}
				\end{pmatrix}
			}
			\ar[d,"{\boxtimes \times 1}"]
			\ar[r,"{1 \times \boxtimes}"]
			&
			{\begin{pmatrix} \mathscr{C}^{A\times B}
					\\
					\mathscr{C}^{B\times C\times C\times D}
				\end{pmatrix}
			}\ar[r,"{1 \times \Delta_C^*}"]\ar[d,"\boxtimes"]
			&
			{\begin{pmatrix} \mathscr{C}^{A\times B}
					\\
					\mathscr{C}^{B\times C\times D}
				\end{pmatrix}
			}\ar[r,"{1 \times (r_C)_!}"]\ar[d,"\boxtimes"]
			&
			{\begin{pmatrix} \mathscr{C}^{A\times B}
					\\
					\mathscr{C}^{B\times D}
				\end{pmatrix}
			}\ar[d,"\boxtimes"]
			\\
			{\begin{pmatrix} \mathscr{C}^{A\times B\times B\times C}
					\\
					\mathscr{C}^{C\times D}
				\end{pmatrix}
			}\ar[r,"\boxtimes"]\ar[d,"{\Delta_B^* \times 1}"]
			&\mathscr{C}^{A\times B\times B\times C\times C\times D}\ar[r,"\Delta_C^*"]\ar[d,"\Delta_B^*"]
			&\mathscr{C}^{A\times B\times B\times C\times D}\ar[r,"(r_C)_!"]\ar[d,"\Delta_B^*"]
			&\mathscr{C}^{A\times B\times B\times D}\ar[d,"\Delta_B^*"]
			\\
			{\begin{pmatrix} \mathscr{C}^{A\times B\times C}
					\\
					\mathscr{C}^{C\times D}
				\end{pmatrix}
			}\ar[r,"\boxtimes"]\ar[d,"{(r_B)_! \times 1}"]
			&\mathscr{C}^{A\times B\times C\times C\times D}\ar[r,"\Delta_C^*"]\ar[d,"(r_B)_!"]
			&\mathscr{C}^{A\times B\times  C\times D}\ar[r,"(r_C)_!"]\ar[d,"(r_B)_!"]
			&\mathscr{C}^{A\times B\times D}\ar[d,"(r_B)_!"]
			\\
			{
				\begin{pmatrix} \mathscr{C}^{A\times C}
					\\
					\mathscr{C}^{C\times D}
				\end{pmatrix}
			}\ar[r,"\boxtimes"]
			&
			\mathscr{C}^{A\times C\times C\times D}\ar[r,"\Delta_C^*"]
			&
			\mathscr{C}^{A\times C\times D}\ar[r,"(r_C)_!"]
			&
			\mathscr{C}^{A\times D}
		\end{tikzcd}
	}
	\caption{Associator (products of categories are written vertically in parentheses)}\label{fig:assoc_indexed}
\end{figure}
\begin{figure}[h]
	\begin{tikzcd}[column sep=50pt,row sep=30pt]
		\sC^{A\times B}\times \ast \ar[d,"1 \times I"]\ar[dr,"\cong"] \\
		\sC^{A\times B}\times \sC^\ast \ar[d,"1 \times r_B^*"]\ar[r,"\boxtimes"]
		&\sC^{A\times B}\ar[d,"r_B^*"]\ar[dr,"\cong"]
		\\
		\sC^{A\times B}\times \sC^B\ar[d,"1 \times (\Delta_B)_!"]\ar[r,"\boxtimes"]
		&\sC^{A\times B\times B}\ar[d,"(\Delta_B)_!"]\ar[r,"\Delta_B^*"]
		&\sC^{A\times B}\ar[d,"(\Delta_B)_!"]\ar[dr,"\cong"]
		\\ 
		\sC^{A\times B}\times \sC^{B\times B}\ar[r,"\boxtimes"]
		&
		\sC^{A\times B\times B\times B}\ar[r,"\Delta_B^*"]
		&
		\sC^{A\times B\times B}\ar[r,"(r_B)_!"]
		&\sC^{A\times B}
	\end{tikzcd}
	\caption{Unitor}\label{fig:unit_indexed}
\end{figure}
\begin{figure}[h]
	\begin{tikzcd}[column sep=50pt,row sep=30pt]
		{\mathscr{C}^{A\times B} \times \mathscr{C}^{B\times A}}
		\ar[r,"{\boxtimes}"]
		\ar[d,"{\boxtimes}"]
		&\mathscr{C}^{B\times A\times A\times B}
		\ar[ld,"\simeq"]
		\ar[r,"\Delta_A^*"]
		\ar[d,"\simeq"]
		&\mathscr{C}^{B\times A\times B}
		\ar[r,"(r_A)_!"]
		\ar[d,"\simeq"]
		&\mathscr{C}^{B\times B}\ar[d,"="]
		\\
		\mathscr{C}^{A\times B \times B\times A}
		\ar[d,"\Delta_B^*"]
		\ar[r,"\simeq"]
		&\mathscr{C}^{A\times A \times B\times B}
		\ar[r,"\Delta_A^*"]
		\ar[d,"\Delta_B^*"]
		&\mathscr{C}^{A\times B\times B}
		\ar[r,"(r_A)_!"]
		\ar[d,"\Delta_B^*"]
		&\mathscr{C}^{B\times B}
		\ar[d,"\Delta_B^*"]
		\\
		\mathscr{C}^{A\times B\times A}
		\ar[d,"(r_B)_!"]
		\ar[r,"\simeq"]
		&\mathscr{C}^{A\times A\times B}
		\ar[r,"\Delta_A^*"]
		\ar[d,"(r_B)_!"]
		&\mathscr{C}^{A\times B}
		\ar[r,"(r_A)_!"]
		\ar[d,"(r_B)_!"]
		&\mathscr{C}^{B}
		\ar[d,"(r_B)_!"]
		\\
		\mathscr{C}^{A\times A}
		\ar[r,"="]
		&
		\mathscr{C}^{A\times A}
		\ar[r,"\Delta_A^*"]
		&
		\mathscr{C}^{A}\ar[r,"(r_A)_!"]
		&
		\mathscr{C}^{\ast}
	\end{tikzcd}
	\caption{Rotator}\label{fig:shadow_indexed}
\end{figure}

The content of this theorem is that the isomorphisms $\alpha$, $\lambda$, $\rho$, and $\theta$ are coherent. For $\mc R/\cat{Top}$, this is true by rigidity, but it turns out the isomorphisms are coherent even when we don't have such a rigidity theorem. The literature contains at least two different proofs of this, \cite{ponto_shulman_indexed} and \cite{mp2}.\footnote{The bicategory this produces also has an obvious symmetry that flips the direction of all the 1-cells, showing that the sidedness is artificial. We are therefore free to swap the direction of everything if we wish. In \cite{ms} this is encoded by saying the bicategory is ``symmetric,'' while in \cite{mp2} this is encoded in a graphical calculus that is insensitive to the orientation of the 1-cells.}

Applying \autoref{thm:SMBF_to_bicategory} to the point-set category $\Osp$ of parametrized orthogonal spectra over all base spaces, gives the \textbf{point-set bicategory of parametrized spectra} $\Osp/\cat{Top}$. The 0-cells are spaces, 1-cells from $A$ to $B$ are parametrized orthogonal spectra over $A \times B$, and 2-cells are maps in $\Osp(A \times B)$. The product, unit, and shadow are defined just as we did for $\mc R/\cat{Top}$. By the rigidity theorem, since we know that isomorphisms $\alpha$, $\lambda$, $\rho$, and $\theta$ exist, they are also unique.\footnote{If we use (CGWH) then not all of these maps are isomorphisms, so we have to restrict to the subcategory of freely $i$-cofibrant spectra, or perhaps the further subcategory of freely $f$-cofibrant spectra. This does not introduce any difficulty in the rest of this section, so we won't comment on it again.}

\begin{rmk}
	If $K \in \mc R(A \times B)$ is a retractive space and $X \in \Osp(B \times C)$ is a parametrized spectrum, we can also define a product $K \odot X \in \Osp(A \times C)$ by following the above recipe and using the smash product with a space operation $K \barsmash X$ for the external smash product. This is uniquely naturally isomorphic to $\Sigma^\infty K \odot X$. Under this description, the monoidal fibrant replacement functor $P$ on both spaces and spectra is given by the composition product $PX \cong B^I_{+(B \times B)} \odot X$.
\end{rmk}

Applying \autoref{thm:SMBF_to_bicategory} to the homotopy category $\ho\Osp$ gives the \textbf{homotopy bicategory of parametrized spectra} $\ho \Osp/\cat{Top}$, or $\Ex$ for short. In this bicategory, the 0-cells are again topological spaces and the 1-cells are again orthogonal spectra over $A \times B$, but the 2-cells are maps in the homotopy category $\ho \Osp(A \times B)$ with the stable equivalences inverted.\index{bicategory!of parametrized spectra $\Ex$}

The bicategory $\Ex$ is the example we are most interested in, so we will take a little time to describe its product, unit, and shadow in several equivalent ways. As when we defined $\sma_B$ and $\barsma{B}$, there are four equivalent ways of defining $\odot$, $\shad{}$, and the rest of the bicategory structure on the homotopy category.
\begin{enumerate}
	\item Define $\odot$ and $\shad{}$, and $U_A$ as composites of left- and right-deformable functors
	\[ X \odot^\M Y := \L(r_B)_!\R(\Delta_B)^*(X \barsmash^\L Y), \qquad \M\shad{X} := \L(r_A)_!\R(\Delta_A)^*X, \]
	\[ U_A = \Sigma^\infty_{+(A \times A)} A = (\Delta_A)_!(r_A)^*\Sph \simeq \L(\Delta_A)_!\R(r_A)^*\Sph. \]
	Define the isomorphisms $\alpha$, $\lambda$, $\rho$, $\theta$ from the point-set level isomorphisms in the bicategory $\Osp/\cat{Top}$, using \autoref{prop:passing_natural_trans_to_derived_functors}. The relevant composites of functors are coherently deformable, using the subcategory of freely $f$-cofibrant, level $h$-fibrant spectra in the source category, and all of its images.\footnote{Note that the coherent deformability condition is slightly complicated to check for the unit isomorphism $U_A \odot X \cong X$. $X$ is freely $f$-cofibrant and level $h$-fibrant, but $U_A \barsmash X$ is only freely $f$-cofibrant. However the map
	\[ (\Delta_A)^*(U_A \barsmash X) \to \R(\Delta_A)^*(U_A \barsmash X) := (\Delta_A)^*(PU_A \barsmash X) \]
	is isomorphic at each spectrum level to
	\[ A \times_A X_n \to A^I \times_A X_n \]
	and is therefore an equivalence.}
	\item Restrict to freely $f$-cofibrant spectra. Then the resulting bicategory is right-deformable, i.e. the compositions of $\odot$ and $\shad{}$ are coherently right-deformable (see \autoref{rmk:left_deformable_bicategory}), for instance by applying $P$ to the inputs.\index{right-deformable!bicategory} Therefore the isomorphisms between their composites on the point-set level pass to the homotopy category.
	\item Start with the point-set bicategory $\Osp/\cat{Top}$ and restrict each of the categories $\Osp(A \times B)$ to the subcategory $\Osp(A\times B)^{cfu}$ containing the freely $f$-cofibrant level $h$-fibrant spectra, along with the unit $U_A$ if $A = B$. Observe that the operation $\odot$ preserves these objects and weak equivalences between them. Therefore it passes directly to operations and coherent isomorphisms between them on the homotopy categories $\ho\Osp(A \times B)^{cfu}$, giving a bicategory.\footnote{While this recipe is the simplest, we actually don't think of it as giving a shadowed bicategory because $\shad{U_A}$ has the wrong homotopy type. If we really want to use shadows with this recipe, we can, at the cost of removing the strict units $U_A$.}
	\item As above, apply \autoref{thm:SMBF_to_bicategory} to the homotopy category $\ho\Osp$. In other words, pass $\barsmash$, $f^*$, and $f_!$ and the isomorphisms between them to the homotopy category before assembling them together into $\odot$, $U_A$, and $\shad{}$.
\end{enumerate}

\begin{thm}\label{thm:four_bicategories_of_spectra}
	These give canonically isomorphic (shadowed) bicategory structures on the homotopy categories $\ho\Osp(A \times B)$.
\end{thm}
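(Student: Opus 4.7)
The plan is to adapt the argument of \autoref{lem:two_smashes_over_b_isomorphic} to the bicategorical setting. First I would restrict attention to the equivalent subcategory $\ho\Osp(A\times B)^{cfu}$ of freely $f$-cofibrant, level $h$-fibrant spectra (together with the units $U_A$), on which the four decorations $\odot^\M$, $\M\shad{-}$, $\L(r_B)_!$, $\R(\Delta_B)^*$ are all canonically isomorphic to their underived counterparts $\odot$, $\shad{-}$, $(r_B)_!$, $(\Delta_B)^*$. Because any bicategory structure on $\ho\Osp(A\times B)$ of the sort produced by recipes (1)--(4) is determined up to canonical isomorphism by its restriction to this subcategory, the theorem reduces to showing that the four recipes produce the same associator $\alpha$, unitors $\lambda,\rho$, and rotator $\theta$ after this restriction.

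For the comparison of recipes (1), (2), (3), I would invoke the uniqueness clause in \autoref{prop:passing_natural_trans_to_derived_functors}: all three recipes construct $\alpha,\lambda,\rho,\theta$ by deriving the same point-set isomorphisms from $\Osp/\cat{Top}$, and their defining property only constrains them on a subcategory on which the relevant composites of functors are already homotopical. Any such subcategory can be enlarged to contain $\Osp(A\times B)^{cfu}$, so the three recipes induce the same restricted isomorphisms and therefore the same global ones.

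To show recipe (4) agrees with the others, I would follow the two-step stripping procedure from the proof of \autoref{lem:two_smashes_over_b_isomorphic}. First I would remove the $\L$ decoration on $\barsmash^\L$: because the canonical isomorphism $\barsmash^\L \cong \barsmash$ on freely $f$-cofibrant inputs is itself an isomorphism of smbf data in the sense used by \autoref{thm:smbf_to_bicategory}, recipe (4) applied to $(\ho\Osp,\barsmash^\L)$ and to $(\ho\Osp,\barsmash)$ produces canonically isomorphic bicategory structures. Next I would remove the $\R$ decoration on the pullbacks: by \autoref{prop:homotopy_cartesian_arrows} the cartesian arrows in $\ho\Osp$ can be realized on the subcategory $\Osp^{cfu}$ by the point-set cartesian arrows $f^*Y \to Y$, and dually for cocartesian arrows, so the $\alpha,\lambda,\rho,\theta$ produced by recipe (4) can be computed using strict $f^*$ and $f_!$ in place of $\R f^*$ and $\L f_!$. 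The resulting natural isomorphisms live entirely in $\Osp^{cfu}/\cat{Top}$ and extend naturally to all of $\Osp/\cat{Top}$, so by the rigidity theorem \autoref{thm:spectra_rigidity} (in the stronger form of \autoref{rmk:general_rigidity}) they must coincide with the unique point-set isomorphisms used in recipes (1)--(3).

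The main obstacle will be the rotator isomorphism $\theta$, since unlike $\alpha,\lambda,\rho$ its construction in \autoref{fig:shadow_indexed} depends crucially on the symmetry isomorphism $X \barsmash Y \cong Y \barsmash X$, which is not part of the bare smbf data and is derived in $\ho\Osp$ rather than read off from the point-set level. The careful point is to check that the symmetry isomorphism in $\ho\Osp$ intertwines correctly with the cartesian and co-cartesian arrows used in recipe (4), so that $\theta$ too can be stripped of its $\L$ and $\R$ decorations and compared at the point-set level. Once this intertwining is verified, the remaining argument is formal: every induced natural automorphism of a composite of the form $f_!g^*(X_1 \barsmash \cdots \barsmash X_n)$ is trivial by rigidity, and so the four recipes are forced to agree. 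The coherence of the resulting bicategory structure on $\ho\Osp(A\times B)$ is then inherited from the coherence already established in \autoref{thm:smbf_to_bicategory}.
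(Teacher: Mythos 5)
Your overall strategy — restrict to a good subcategory, reduce comparison of (1), (2), (3) to the uniqueness clause of \autoref{prop:passing_natural_trans_to_derived_functors}, then strip the $\L$ and $\R$ decorations from recipe (4) and compare at the point-set level via rigidity — is the same strategy as the paper's proof, and most of it is sound. But you have misdiagnosed where the difficulty actually lies.

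You flag the rotator $\theta$ as the problem case on the grounds that the symmetry isomorphism ``is not part of the bare smbf data.'' This is not quite right: a symmetric monoidal bifibration comes equipped with a symmetric monoidal structure, so the symmetry is part of the data of both $\Osp$ and $\ho\Osp$, and the derived symmetry isomorphism is declared to be compatible with the point-set one already by \autoref{prop:deform_sym_mon_cat} and \autoref{prop:deform_smbf}. No intertwining argument is needed; the region-by-region comparison in \autoref{fig:shadow_indexed} goes through without incident.

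The genuine obstacle, which your proposal skips past, is the \emph{unitor}. In \autoref{fig:unit_indexed}, the intermediate object in $\mathscr{C}^{A\times B\times B\times B}$ is $X \barsmash U_B$, and $U_B = (\Delta_B)_!(r_B)^*\Sph$ is a pushforward along a diagonal — so $X \barsmash U_B$ is \emph{not} level $h$-fibrant even when $X$ is. Consequently your appeal to \autoref{prop:homotopy_cartesian_arrows} fails precisely at the step where you want to remove the $\R$ decoration from $(\Delta_B)^*$: that proposition requires the target of the cartesian arrow to be fibrant, and here it is not. Your assertion that all four decorations ``are canonically isomorphic to their underived counterparts'' on $\Osp^{cfu}$ is true for the objects of the subcategory but not for the intermediate stages of the composite that defines $\lambda$ and $\rho$. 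The paper closes this gap by producing a commuting square of Beck-Chevalley isomorphisms to show that $(\Delta_B)^*$ agrees with $\R(\Delta_B)^*$ on the image of $(\Delta_B)_!$ applied to a fibrant object; that extra argument, or something equivalent, is required and is missing from your proposal.
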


This says in essence that we can pass from the SMBF to the bicategory before or after taking homotopy categories, and the results are canonically isomorphic.

\begin{rmk}
	Description (4) has been the most common definition of $\Ex$ in the literature so far, but (2) and (3) are often more useful on a technical level. For instance, using (3) it is easy to prove that $\ho\Osp/\cat{Top}$ has a ``shadowed $n$-Fuller structure'' as defined in \cite{mp1} -- such a structure exists on the point-set category, using the rigidity theorem to check all the coherence conditions. It then passes immediately to the homotopy category. It should also be straightforward to use (3) to prove that $\Ex$ it is a symmetric monoidal bicategory, even though the list of axioms for this structure is incredibly long \cite[4.4-4.8]{stay_compact_closed}. Unfortunately (3) has a drawback that it doesn't extend well to base-change objects, where (2) is better.
\end{rmk}

\begin{proof} of \autoref{thm:four_bicategories_of_spectra}.
	The canonical isomorphisms $\odot \cong \odot'$ and $\shad{} \cong \shad{}'$ arise because in every recipe $\odot$ and $\shad{}$ are defined as composites of deformations of the same functors. So the content of this theorem is that these canonical isomorphisms commute with the associator, unitor, and rotator isomorphisms. When comparing recipes (1), (2), and (3) this is true essentially by the definition of passing an isomorphism of functors to an isomorphism between its derived functors, \autoref{prop:passing_natural_trans_to_derived_functors}. So it remains to compare (2) to (4) on the subcategory of freely $f$-cofibrant spectra.
	
	Examine \autoref{fig:assoc_indexed}, \autoref{fig:unit_indexed}, and \autoref{fig:shadow_indexed}. We derive each of the functors along each little edge by right-deriving the pullbacks with $P$, and leaving the smash products and pushforwards alone. The isomorphism furnished by (2) says, restrict to fibrant inputs at the top-left, then remove the $P$s, and use the unique point-set isomorphism between the outside routes of the diagram on this input. This large isomorphism is the composite of the isomorphisms for the small square and triangular regions arising from the SMBF $\Osp^c$. So we could get the same isomorphism by leaving the $P$s in, removing and then re-inserting them every time we traverse one square or triangle using them. In other words, we compose the right-derived isomorphisms in each little region. By the proofs of \autoref{prop:deform_bifibration} and \autoref{prop:deform_SMBF}, these right-derived isomorphisms agree with the isomorphisms coming from the SMBF structure on $\ho\Osp^{c}$. This leaves us with the isomorphism from recipe (4).
	
	This argument runs beautifully for the associator and rotator but runs into a small hiccup for the unitor: at the bottom of the second column, in $\mathscr{C}^{A\times B\times B\times B}$, the image of our input is not fibrant, hence we are at risk of not being able to remove $P$. However, it comes from a fibrant object in the category $\mathscr{C}^{A \times B \times B}$ just above. Using the proof of \autoref{prop:deform_SMBF}, we get a commuting square in the homotopy category
	\[ \xymatrix @R=1.7em{
		(\Delta_B)_!(\Delta_B)^* \ar[d] \ar[r]^-\cong & (\Delta_B)^*(\Delta_B)_! \ar[d] \\
		(\Delta_B)_!\R(\Delta_B)^* \ar[r]^-\cong & \R(\Delta_B)^*(\Delta_B)_!
	} \]
	where the horizontal maps are Beck-Chevalley isomorphisms. On fibrant inputs, the left vertical is an isomorphism, hence the right vertical is as well. Therefore on $(\Delta_B)_!$ of a fibrant input, $(\Delta_B)^*$ is equivalent to its right-derived functor. (This is essentially a formalization of the observation we made when we defined recipe (1).) The proof can then proceed as in the other cases.
\end{proof}

\begin{ex}[Dualizable objects and traces in $\Ex$]\label{ex:traces_in_ex}\hfill
	\vspace{-1em}
	
	\begin{itemize}
		\item A parametrized spectrum $X$ over $B$, can be regarded as a 1-cell between $*$ and $B$ in the bicategory $\Ex$. We will see that $X$ is dualizable over $*$ when its derived fibers are dualizable, and dualizable over $B$ when it is a retract in the homotopy category of a finite cell complex spectrum over $B$, the cells being taken from the stable model structure \autoref{thm:stable_model_structure}. In \cite{ms} dualizability over $B$ is called \textbf{Costenoble-Waner duality}, after \cite{cw_book}.\index{Costenoble-Waner duality}
		\item If $X$ is level $h$-cofibrant, its shadow of $X$ over $B$ in the homotopy category is the strict shadow of $PX$, which at each level comes out to
		\[ (X_n \times_{B \times B} B^I) / ((B \times B) \times_{B \times B} B^I). \]
		In particular, if $X = U_B = \Sigma^\infty_{+(B \times B)} B$, the shadow is the free loop space $\Sigma^\infty_+ \Lambda B$.
		\item If $X$ is a compact ENR and $f\colon X \to X$, we define the \textbf{Reidemeister trace}\index{Reidemeister trace $R(f)$} as follows. Note that $f$ is in general not a fiberwise map over $X$, so it cannot be directly interpreted as a 2-cell in $\Ex$. Instead, we think $f$ as a map
		\[ \Sph \odot \Sigma^\infty_{+(* \times X)} X \to \Sigma^\infty_{+(* \times X)} X \odot \Sigma^\infty_{+(X \times X)} X^f, \]
		where $X^f$ is just $X$ regarded as a space over $X \times X$ by the map $(f,\id_X)$. The above map of spectra arises from the isomorphism of spaces over $X$
		\[ X \to X \times_X X^f, \qquad x \mapsto (f(x),x). \]
		We note that with this twisting, $f$ is an isomorphism of parametrized spectra.\footnote{This heuristically suggests that the Reidemeister trace is the most refined lift of the Lefschetz number that we could possibly construct.} The twisted trace of this isomorphism is then a map in the homotopy category
		\[ \xymatrix{ R(f)\colon \Sph \simeq \shad{\Sph} \ar[r] & \shad{\Sigma^\infty_{+(X \times X)} X^f} \simeq \Sigma^\infty_+ \Lambda^f X } \]
		where $\Lambda^f X$ is the twisted free loop space
		\[ X^f \times_{X \times X} X^I \cong \{ \gamma\colon I \to X \ | \ \gamma(0) = f(\gamma(1)) \} \cong \textup{hoeq}(f,\id_X). \]
		We will later give a concrete geometric formula for this trace, and indicate how to show that on homology it agrees with the algebraic definition of $R(f)$.
		\item The above definition can be generalized to the \textbf{refined Reidemeister trace} $R(f,f')$ for a fibration $E \overset{p}\to B$ with compact ENR fibers and a commuting square
		\[ \xymatrix @R=1.7em{
			E \ar[r]^-f \ar[d]_-p & E \ar[d]^-p \\
			B \ar[r]_-{f'} & B.
		} \]
		We take the trace of the isomorphism
		\[ \Sigma^\infty_{+(B \times B)} B^{f'} \odot \Sigma^\infty_{+(B \times E)} E^p \to \Sigma^\infty_{+(B \times E)} E^p \odot \Sigma^\infty_{+(E \times E)} E^f \]
		given by $(p(e),e) \mapsto (f(e),e)$. This gives a map
		\[ \xymatrix{ R(f,f')\colon \Sigma^\infty_+ \Lambda^{f'} B \simeq \shad{\Sigma^\infty_{+(B \times B)} B^{f'}} \ar[r] & \shad{\Sigma^\infty_{+(E \times E)} E^f} \cong \Sigma^\infty_+ \Lambda^f E. } \]
		Using the multiplicativity of the Reidemeister trace, pasting such commuting squares bottom-to-top corresponds to composing these traces, see \cite{ponto_shulman_mult}.
	\end{itemize}
\end{ex}

If we fix a base $B$, then we can run the entire discussion above on the category $\Osp_{(B)}$ constructed in \autoref{prop:SMBF_over_b}. We first get a point-set bicategory $\Osp_{(B)}/\cat{Fib}_B$. The 0-cells are fibrations $E \to B$, and the 1- and 2-cells from $D$ to $E$ are the objects and morphisms, respectively of the category $\Osp(D \times_B E)$. The product of $X \in \mc R(D \times_B E)$ and $Y \in \mc R(E \times_B F)$ is defined by the formula
\[ X \odot_B Y := (r_E)_!(\Delta_E)^*(X \barsma{B} Y), \]
\[ \xymatrix @R=1.7em @C=4em{
	X \odot_B Y \ar[d] & \ar[l] \Delta_E^*(X \barsma{B} Y) \ar[d] \ar[r] & X \barsma{B} Y \ar[d] \\
	D \times_B F & \ar[l]_-{r_E} D \times_B E \times_B F \ar[r]^-{\Delta_E} & D \times_B E \times_B E \times_B F.
} \]
We sometimes call this $X \odot_B^E Y$ if the choice of $E$ is not clear. Similarly, the shadow and unit are
\[ \shad{X} := (r_E)_!(\Delta_E)^*X, \qquad U_E := (\Delta_E)_!(r_E)^* \Sph_B \cong \Sigma^\infty_{+(E \times_B E)} E. \]
It is helpful to imagine that these are the same constructions as before, carried out over every point of $B$. In particular, the shadow lands in spectra over $B$, and the shadow of $U_E$ is the fiberwise free loop space $\Sigma^\infty_{+B} \Lambda_B E$, where $\Lambda_B E$ consists of those loops in $\Lambda E$ that are contained in a single fiber over $B$. The same figures as before define associator, unitor, and rotator isomorphisms, and these in turn are unique by the rigidity theorem.

We then build the homotopy bicategory $\ho\Osp_{(B)}/\cat{Fib}_B$, sometimes abbreviated as $\Ex_B$, whose 0- and 1-cells are the same but whose 2-cells are morphisms in the homotopy categories $\ho\Osp(D \times_B E)$. There are again four ways to do this, which are canonically isomorphic, by the same proof as above.

\begin{ex}[Dualizable objects and traces in $\Ex_B$]\label{ex:fiberwise_reidemeister_trace}\hfill
	\vspace{-1em}
	
	\begin{itemize}
		\item A parametrized spectrum $X$ over $E$ over $B$, can be regarded as a 1-cell between $B$ and $E$ in the bicategory $\Ex_B$. It turns out that $X$ is dualizable over $B$ iff its fiber over $E$ is finite, and dualizable over $E$ iff its fiber over $B$ is finite \cite[6.3]{lind2019transfer}. This may seem backwards, but a convenient mnemonic is: to see if it's dualizable over $A$, pull it back so that it's a spectrum over $A$, then check if it's a retract of a finite cell complex.
		\item If $E \to B$ is a fibration with compact ENR fibers then $\Sigma^\infty_{+(B \times_B E)} E$ is dualizable over $E$. Given a fiberwise self-map $f\colon E \to E$, the trace of the map
		\[ \Sigma^\infty_{+(B \times_B B)} B \odot \Sigma^\infty_{+(B \times_B E)} E \to \Sigma^\infty_{+(B \times_B E)} E \odot \Sigma^\infty_{+(E \times_B E)} E^f \]
		given by $(p(e),e) \mapsto (f(e),e)$, gives a map
		\[ \xymatrix{ R_B(f)\colon \Sph_B \simeq \shad{U_B} \ar[r] & \shad{\Sigma^\infty_{+(E \times E)} E^f} \cong \Sigma^\infty_{+B} \Lambda^f_B E. } \]
		This is the \textbf{fiberwise Reidemeister trace}.\index{Reidemeister trace $R(f)$!fiberwise}\index{fiberwise!Reidemeister trace $R(f)$} Intuitively, it is just $R(f)$ carried out over every point of $B$. With a small amount of masochism, one can also define $R_B(f,f')$ for a self-map $(f,f')$ of a fibration $E \to A$ over $B$.
	\end{itemize}
\end{ex}

We end with a technical lemma about $\odot$ and $\odot_B$ at the space level that we will need for the proof of Costenoble-Waner duality. We use it in conjunction with recipe (2) to tell when the composition product agrees with the right-derived composition product.
\begin{lem}\label{lem:derived_circle_product}
	If $A, C, D$ are $h$-fibrant spaces over $B$, then for $X \in \mc R(A \times_B C)$ and $Y \in \mc R(C \times_B D)$ both $f$-cofibrant, the relative composition product $X \odot_B Y$ is $f$-cofibrant and preserves equivalences so long as $X \to C$ or $Y \to C$ is at least a $q$-fibration.
\end{lem}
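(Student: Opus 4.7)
The strategy is to decompose the relative composition product into a pullback--smash--pushforward sequence and to track $f$-cofibrancy and the preservation of weak equivalences through each step. Using the commutation of pullback with external smash (\autoref{lem:external_smash_and_base_change}), one can rewrite
\[
X \odot_B Y \;=\; (r_C)_!\,\Delta_C^*\bigl(X \barsma{B} Y\bigr) \;\cong\; (r_C)_!\bigl[(\pi_1^* X) \sma_W (\pi_2^* Y)\bigr],
\]
where $W = A \times_B C \times_B D$ and $\pi_1\colon W \to A \times_B C$, $\pi_2\colon W \to C \times_B D$, $r_C\colon W \to A \times_B D$ are the evident projections forgetting the $D$-, $A$-, and $C$-factors respectively. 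The central observation is that because $A$, $C$, $D$ are $h$-fibrant over $B$, each of $\pi_1$, $\pi_2$, $r_C$ is a pullback of one of the $h$-fibrations $D \to B$, $A \to B$, $C \to B$, and is therefore itself an $h$-fibration.

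For the $f$-cofibrancy assertion I would invoke, in turn, that pullbacks preserve $f$-cofibrations (\autoref{lem:f_star_preserves}), that the internal smash product $\sma_W$ of two $f$-cofibrant retractive spaces is again $f$-cofibrant (first bullet of \autoref{cor:internal_smash_properties}), and that pushforwards preserve $f$-cofibrations (\autoref{lem:f_shriek_preserves}); composing these gives $f$-cofibrancy of $X \odot_B Y$.

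For the preservation of equivalences, the pullbacks $\pi_1^*$ and $\pi_2^*$ preserve all weak equivalences since $\pi_1,\pi_2$ are $h$-fibrations (\autoref{lem:f_star_preserves}), and the pushforward $(r_C)_!$ preserves equivalences between $f$-cofibrant retractive spaces (\autoref{lem:f_shriek_preserves}). The remaining step is to verify that $\sma_W$ preserves the induced equivalence between $\pi_1^* X$ and $\pi_2^* Y$; by the third bullet of \autoref{cor:internal_smash_properties}, this reduces to checking that at least one of $\pi_1^* X$ or $\pi_2^* Y$ is $q$-fibrant over $W$.

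The hard part will be this final $q$-fibrancy check. Assuming without loss of generality that $Y \to C$ is a $q$-fibration, the task is to show that $\pi_2^* Y \to W$ is a $q$-fibration. Identifying $\pi_2^* Y$ with $A \times_B Y$ (with projection to $W$ given by $(a,y) \mapsto (a, c(y), d(y))$), a lifting problem against this map reduces to the sub-problem of lifting a cylinder $D^n \times I \to C \times_B D$ along $Y \to C \times_B D$, extending given data in $Y$. This does not follow immediately from $Y \to C$ alone being a $q$-fibration; the argument must combine a first lift obtained from $Y \to C$ with a corrective homotopy produced from the path-lifting functions of the $h$-fibrations $D \to B$ and $A \to B$, while arranging these corrections coherently so as not to disturb the already-lifted $C$-coordinate. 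Verifying that the required correction can always be carried out is where essentially all of the real technical work of the proof will lie.
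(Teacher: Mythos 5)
Your decomposition of $X \odot_B Y$ as $(r_C)_!\bigl[(\pi_1^* X) \sma_W (\pi_2^* Y)\bigr]$ with $W = A \times_B C \times_B D$ is correct, and the $f$-cofibrancy argument via \autoref{lem:f_star_preserves}, \autoref{cor:internal_smash_properties}, \autoref{lem:f_shriek_preserves} works. The gap lies in the step for preservation of equivalences: you reduce to verifying that $\pi_2^* Y \to W$ is a $q$-fibration, and this reduction is to a \emph{false} statement, not merely a technically hard one. Since $\pi_2^* Y \to W$ is the pullback of $Y \to C \times_B D$ along $\pi_2$, asking it to be a $q$-fibration is equivalent to asking $Y \to C \times_B D$ to be a $q$-fibration. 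But the hypothesis only gives $q$-fibrancy of the \emph{composite} $Y \to C \times_B D \to C$, and a composite being a fibration does not make the first factor one. For a concrete failure take $B = C = *$: then $Y \to C = *$ is automatically a $q$-fibration no matter what $Y$ is, while $\pi_2^* Y \to W$ is $A \times Y \to A \times D$, which is a $q$-fibration only when $Y \to D$ already is. No amount of corrective-homotopy bookkeeping salvages a false claim.

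The correct move, and the one the paper makes, is to not route through the blanket statement of \autoref{cor:internal_smash_properties} (which asks for $q$-fibrancy over the entire base $W$) but to open up the pushout square defining $\sma_W$ directly. The nontrivial corner of that pushout is $\pi_1^* X \times_W \pi_2^* Y \cong X \times_C Y$, and right-properness of the $q$-model structure says this pullback preserves equivalences whenever one of the two legs $X \to C$, $Y \to C$ is a $q$-fibration --- exactly the hypothesis, with no promotion to a $q$-fibration over $C \times_B D$ or $A \times_B C$ required. The remaining corners are pullbacks along the $h$-fibrations $A \times_B C \times_B D \to A \times_B C$ and $A \times_B C \times_B D \to C \times_B D$ (pullbacks of $D \to B$ and $A \to B$ resp.), so they preserve all equivalences; the two horizontal maps of the pushout square are $f$-cofibrations, so the square is a homotopy pushout; and then $(r_C)_!$ finishes the job. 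Concretely, the fix in your plan is to replace ``$\pi_2^* Y$ is $q$-fibrant over $W$'' with ``$X \times_C Y$ is a homotopy pullback of the span $X \to C \leftarrow Y$,'' which follows from the hypothesis as stated.
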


\begin{cor}
	For $X \in \mc R(A \times C)$ and $Y \in \mc R(C \times D)$ both $f$-cofibrant, the composition product $X \odot Y$ is $f$-cofibrant and preserves equivalences so long as $X \to C$ or $Y \to C$ is at least a $q$-fibration.
\end{cor}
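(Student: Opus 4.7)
The plan is to decompose
\[ X \odot_B Y = (r_C)_!(X \barsma{C} Y), \]
where $r_C\colon A \times_B C \times_B D \to A \times_B D$ is the projection and $X \barsma{C} Y$ denotes the external smash of $X$ and $Y$ taken relative to $C$ (viewing both as spaces over $C$ via $A \times_B C \to C$ and $C \times_B D \to C$). Tracing through the definitions, this agrees with the relevant iterated pullback of $X \barsmash Y$, using the identification $(A \times_B C) \times_C (C \times_B D) = A \times_B C \times_B D$. Just as in \eqref{eq:internal_smash}, the relative external smash $X \barsma{C} Y$ then fits into a pushout
\[ \xymatrix @R=1.5em{
	L \ar[r] \ar[d] & X \times_C Y \ar[d] \\
	A \times_B C \times_B D \ar[r] & X \barsma{C} Y,
} \]
where $L = (X \times_C (C \times_B D)) \cup_{A \times_B C \times_B D} ((A \times_B C) \times_C Y)$ and the top map is the pushout-product of the section maps $i_X$ and $i_Y$ with respect to $\times_C$.

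I would first show $f$-cofibrancy: since pullback preserves $f$-cofibrations (\autoref{lem:f_star_preserves}), the top map is an $f$-cofibration by the standard pushout-product argument (cf.~\autoref{cor:internal_smash_properties}(1)), so $X \barsma{C} Y$ is $f$-cofibrant, and $(r_C)_!$ preserves $f$-cofibrations by \autoref{lem:f_shriek_preserves}. The corollary follows by specializing $B = *$. For equivalence preservation, because the top map is an $f$-cofibration the defining square is a homotopy pushout, so by the gluing lemma (\autoref{prop:technical_cofibrations}) it suffices to check that each of the three non-trivial corners preserves equivalences in $X$ and $Y$ separately. The corners $(A \times_B C) \times_C Y$ and $X \times_C (C \times_B D)$ are pullbacks along the $h$-fibrations $A \times_B C \to C$ and $C \times_B D \to C$ (obtained from the $h$-fibrancy of $A$ and $D$ over $B$), so are homotopical by \autoref{lem:f_star_preserves}. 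For the corner $X \times_C Y$, the hypothesis that at least one of $X \to C$ or $Y \to C$ is a Serre fibration makes the strict pullback into a homotopy pullback, and right properness of spaces (the dual of \autoref{prop:technical_cofibrations}) then gives equivalence preservation in either input variable separately. Finally $(r_C)_!$ preserves equivalences between $h$-cofibrant spaces by \autoref{lem:f_shriek_preserves}.

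The main obstacle is this last step showing $- \times_C -$ is homotopical whenever one projection to $C$ is a $q$-fibration: the two input-equivalences must be handled asymmetrically, applying the dual gluing lemma first to an equivalence lying on the side admitting the fibration (where source and target are both $q$-fibrant over $C$) and then to an equivalence on the opposite side (which is pulled back along the given fibration). The remainder of the argument is routine bookkeeping with the cofibration/fibration lemmas of \autoref{sec:spaces}, made uniformly applicable by the $h$-fibrancy hypothesis on $A, C, D$ over $B$.
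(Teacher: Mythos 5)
Your argument is essentially the same as the paper's: you decompose $X \odot Y$ as $(r_C)_!\Delta_C^*(X \barsmash Y)$, identify the pushout square presenting $\Delta_C^*(X \barsmash Y)$, observe its top horizontal is an $f$-cofibration (so the square is a homotopy pushout and the result is $f$-cofibrant), and then check that each non-trivial corner is homotopical — the pullbacks along $A \to B$ and $D \to B$ using $h$-fibrancy, and the fiber product $X \times_C Y$ using the $q$-fibration hypothesis together with right properness — before pushing forward along $r_C$. Your final paragraph spelling out the asymmetric handling of the two input-equivalences in $X \times_C Y$ is a correct elaboration of a point the paper leaves compressed in a single sentence.
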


\begin{cor}
	If $A,D$ are $h$-fibrant spaces over $B$, then for $X \in \mc R(A)$ and $Y \in \mc R(D)$ both $f$-cofibrant, the relative smash product $X \barsma{B} Y$ is $f$-cofibrant and preserves equivalences so long as $X \to B$ or $Y \to B$ is at least a $q$-fibration.
\end{cor}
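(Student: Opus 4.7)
The plan is to reduce the final corollary to \autoref{lem:derived_circle_product} by specializing $C = B$: then $A \times_B B \cong A$, $B \times_B D \cong D$, the identity $B \to B$ is trivially $h$-fibrant, the projections $r_C$ and $\Delta_C$ collapse to isomorphisms, and $X \odot_B Y$ becomes $\Delta_{A,D}^*(X \barsmash Y) = X \barsma{B} Y$, with the hypothesis ``$X \to C$'' becoming ``$X \to B$''. So I focus on the lemma. The essential preliminary step is to rewrite $X \odot_B Y$ in terms of the internal smash product over $W := A \times_B C \times_B D$. Let $q\colon W \to A \times_B C$ and $p\colon W \to C \times_B D$ be the projections that forget $D$ and $A$, and $r_C\colon W \to A \times_B D$ the one that forgets $C$. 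The composite $W \to (A \times_B C) \times (C \times_B D)$ sending $(a,c,d) \mapsto ((a,c),(c,d))$ factors as $\Delta_W$ followed by $q \times p$, so \autoref{lem:external_smash_and_base_change} yields a natural isomorphism $\Delta_C^* \Delta_{A \times_B C,\, C \times_B D}^*(X \barsmash Y) \cong q^*X \sma_W p^*Y$, whence
\[
X \odot_B Y \cong (r_C)_!\bigl(q^*X \sma_W p^*Y\bigr).
\]

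The $f$-cofibrancy claim then follows by combining: $X \barsmash Y$ is $f$-cofibrant by \autoref{prop:h_cofibrations_pushout_product}; pullbacks preserve $f$-cofibrations (\autoref{lem:f_star_preserves}); the internal smash preserves $f$-cofibrations (\autoref{cor:internal_smash_properties}); and pushforwards preserve $f$-cofibrations (\autoref{lem:f_shriek_preserves}).

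For preservation of equivalences, assume without loss of generality that $X \to C$ is a $q$-fibration (the other case follows from the symmetry $q^*X \sma_W p^*Y \cong p^*Y \sma_W q^*X$). By \eqref{eq:internal_smash},
\[
q^*X \sma_W p^*Y = W \cup_{q^*X \cup_W p^*Y} \bigl(q^*X \times_W p^*Y\bigr),
\]
and a direct calculation shows $q^*X \times_W p^*Y \cong X \times_C Y$ as retractive spaces over $W$, since matching a point of $q^*X$ with a point of $p^*Y$ over a common $w = (a,c,d) \in W$ forces the $C$-coordinates of their projections to agree. Because $X \to C$ is a $q$-fibration, the dual of the gluing lemma (cf. the remark after \autoref{prop:technical_cofibrations}) implies that the functor $Y \mapsto X \times_C Y$ sends weak equivalences of total spaces to weak equivalences. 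The wedge $q^*X \cup_W p^*Y$ preserves equivalences in $Y$ because the basepoint section $W \to q^*X$ is an $f$-cofibration and $p^*Y$ varies by an equivalence of $f$-cofibrant spaces. The map $q^*X \cup_W p^*Y \to q^*X \times_W p^*Y$ is itself an $f$-cofibration by \autoref{lem:product_ndr} applied fiberwise to the basepoint sections, so the pushout defining $\sma_W$ is a homotopy pushout and $q^*X \sma_W p^*Y$ preserves equivalences in $Y$. Finally, $(r_C)_!$ preserves equivalences between $h$-cofibrant spaces by \autoref{lem:f_shriek_preserves}, and this yields the lemma.

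The main step meriting attention is the identification $q^*X \times_W p^*Y \cong X \times_C Y$. It is what allows the relatively weak hypothesis ``$X \to C$ is a $q$-fibration'' to suffice, rather than the stronger, and unavailable, ``$X \to A \times_B C$ is a $q$-fibration'' which would be needed to apply \autoref{cor:internal_smash_properties} directly to $q^*X$ and $p^*Y$ over $W$: the fiber product over $W$ effectively sees only the $C$-coordinate of $W$, so pullback invariance along $X \to C$ is already enough.
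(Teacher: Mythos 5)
Your proof is correct and follows essentially the same approach as the paper. The paper proves the lemma directly by applying $\Delta_C^*$ to the pushout square defining $X \barsmash Y$ and then analyzing the resulting homotopy pushout term by term (with the top-right corner being exactly $X \times_C Y$, so that $X \to C$ a $q$-fibration suffices by right properness, and the top-left handled because $A \to B$ and $D \to B$ are fibrations); you reach the same square after an extra cosmetic rearrangement through $q^*X \sma_W p^*Y$, and identify $q^*X \times_W p^*Y \cong X \times_C Y$ explicitly, which is the identification the paper uses implicitly. Both corollaries then follow from the lemma by specialization ($C = B$ for the relative smash product), exactly as you say.
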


\begin{proof}
	Applying $\Delta_C^*$ to the square defining the external smash product gives the pushout square
	\[ \xymatrix @R=1.7em{
		(A \times_B C \times_C Y) \cup_{A \times_B C \times_B D} (X \times_C C \times_B D) \ar[d] \ar[r] & X \times_C Y \ar[d] \\
		A \times_B C \times_B D \ar[r] & \Delta_C^*(X \barsmash Y).
	} \]
	The horizontal maps are $f$-cofibrations over $A \times_B C \times_B D$, so pushing forward to $A \times_B D$ will preserve equivalences, so we only have to focus on $\Delta_C^*(X \barsmash Y)$. Since the square is a homotopy pushout square, it suffices to check that the top two terms preserve equivalences. In the top-left, this is because $A \to B$ and $D \to B$ are fibrations. In the top-right, this uses the assumption that $X \to C$ or $Y \to C$ is a $q$-fibration.
\end{proof}

We can also apply this lemma to the shadow by making the following observation. If $X \in \mc R(A \times C)$ and $Y \in \mc R(C \times A)$, then we can regard them as 1-cells in $\Ex$ and form their circular product $\shad{X \odot Y}$. Or, we could regard $X$ as a 1-cell from $*$ to $A\times C$ and $Y$ as a 1-cell from $A \times C$ to $*$ and take their composition product $X \odot^{A \times C} Y$. These two constructions are canonically isomorphic, because both return the external smash product $X \barsmash Y$ pulled back along the diagonal maps of $A$ and $C$, then pushed forward to a point.

\begin{cor}\label{lem:derived_shadow}
	For $X \in \mc R(A \times C)$ and $Y \in \mc R(C \times A)$ both $f$-cofibrant, the circular product $\shad{X \odot Y}$ is $f$-cofibrant and preserves equivalences so long as $X \to A \times C$ or $Y \to A \times C$ is at least a $q$-fibration.
\end{cor}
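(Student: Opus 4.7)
The plan is to reduce this corollary directly to the previous one (the corollary about $X \odot Y$ for $X \in \mc R(A \times C)$, $Y \in \mc R(C \times D)$) by means of the canonical isomorphism pointed out in the paragraph just above. Specifically, I would reinterpret $X$ as a 1-cell from the 0-cell $*$ to the 0-cell $A \times C$, and $Y$ as a 1-cell from $A \times C$ back to $*$; then the observation made there gives a natural isomorphism
\[ \shad{X \odot Y} \;\cong\; X \odot^{A \times C} Y \]
as objects of $\mc R(*)$. Both sides are built from the same underlying prescription ``external smash, pull back along the diagonals of $A$ and $C$, then push forward to a point,'' so this isomorphism is canonical and does not introduce any hidden cofibrancy conditions.

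Next I would apply the immediately preceding corollary (the one deduced from \autoref{lem:derived_circle_product}) to this reinterpretation, taking the ``middle'' 0-cell to be $A \times C$ instead of $C$, and the outer two 0-cells both to be $*$. The hypotheses of that corollary require $X$ and $Y$ to be $f$-cofibrant (which we have assumed), and ask that the projection of either $X$ or $Y$ down to the middle 0-cell $A \times C$ be a $q$-fibration; this is precisely our hypothesis that $X \to A \times C$ or $Y \to A \times C$ is a $q$-fibration. The conclusion is that $X \odot^{A \times C} Y$ is $f$-cofibrant and preserves weak equivalences in both variables under these assumptions.

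Transporting this conclusion back along the canonical isomorphism $\shad{X \odot Y} \cong X \odot^{A \times C} Y$ gives the corresponding statement for the circular product, completing the proof. I do not anticipate a real obstacle here: the only substantive point is the identification $\shad{X \odot Y} \cong X \odot^{A \times C} Y$, and that has already been recorded above the statement of the corollary by inspecting the two constructions and noticing that both involve $X \barsmash Y$ pulled back along $\Delta_A$ and $\Delta_C$ and then pushed forward to a point.
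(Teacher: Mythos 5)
Your reduction is exactly the paper's intended argument: the paragraph immediately preceding the corollary records precisely the canonical identification $\shad{X \odot Y} \cong X \odot^{A\times C} Y$, and the corollary is then the special case of the preceding corollary with middle 0-cell $A \times C$ and outer 0-cells both $*$. Nothing is missing and no hidden hypotheses are introduced by the reindexing.
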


The same reasoning applies rel $B$ to $X \in \mc R(A \times_B C)$ and $Y \in \mc R(C \times_B A)$, using the fact that in the following diagram the square is a homotopy pullback.
\[ \xymatrix @R=1.7em{
	A \times_B C \ar[d] \ar[r] & A \times_B C \times_B A \ar[d] \ar[r] & A \times_B C \times C \times_B A \\
	A \ar[d] \ar[r] & A \times_B A & \\
	B
} \]
\begin{cor}
	For $A$ and $C$ $h$-fibrant over $B$ and $X \in \mc R(A \times_B C)$ and $Y \in \mc R(C \times_B A)$ both $f$-cofibrant, the circular product $\shad{X \odot_B Y}_B$ is $f$-cofibrant and preserves equivalences so long as $X \to A \times_B C$ or $Y \to A \times_B C$ is at least a $q$-fibration.
\end{cor}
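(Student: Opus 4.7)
The plan is to mimic the argument that gave the non-relative corollary. Recall that in the non-relative case, the key observation was that $\shad{X \odot Y}$ can be rewritten as a composition product $X \odot^{A \times C} Y$, after which \autoref{lem:derived_circle_product} applies directly. I would prove the analogous observation in the relative setting: there is a canonical point-set isomorphism
\[ \shad{X \odot_B Y}_B \;\cong\; X \odot_B^{A \times_B C} Y, \]
where on the right $X$ is viewed as a 1-cell from $B$ to $A \times_B C$ in $\Ex_B$ and $Y$ as a 1-cell from $A \times_B C$ to $B$ (after identifying $C \times_B A \cong A \times_B C$ via the swap).

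To produce this isomorphism, I would write both sides as iterated diagonal pullbacks and projection pushforwards of $X \barsma{B} Y \in \mc R(A \times_B C \times_B C \times_B A)$. The left-hand side is $(r_A)_! \Delta_A^* (r_C)_! \Delta_C^*$; the right-hand side is $(r_{A \times_B C})_! (\tau \circ \Delta_{A \times_B C / B})^*$, with $\tau$ the swap of the middle two factors. The identification follows by a single application of Beck-Chevalley to the pullback square
\[ \xymatrix @R=1.5em @C=3.5em{
A \times_B C \ar[d]_-{r_C} \ar[r]^-{(a,c) \mapsto (a,c,a)} & A \times_B C \times_B A \ar[d]^-{r_C} \\
A \ar[r]_-{\Delta_A} & A \times_B A,
} \]
which is precisely the square highlighted in the hint, followed by the identity $(r_A)_! (r_C)_! = (r_{A\times_B C})_!$ and a direct check that $\Delta_C \circ (a,c) \mapsto (a,c,a)$ equals $\tau \circ \Delta_{A \times_B C / B}$. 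The fact that this square is not merely a strict pullback but also a homotopy pullback (shown in the three-term diagram preceding the statement) is what licenses us to conclude that the identification is homotopically sensible.

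With the isomorphism in hand, the rest is bookkeeping. Since $A$ and $C$ are $h$-fibrant over $B$, so is $A \times_B C$, because $h$-fibrations pull back to $h$-fibrations by \autoref{lem:f_star_preserves}. I would then apply the first corollary of \autoref{lem:derived_circle_product}, instantiated with the triple of $h$-fibrant spaces $(B,\, A \times_B C,\, B)$ over $B$ and with $X, Y \in \mc R((A \times_B C))$ both $f$-cofibrant, one of them a $q$-fibration to $A \times_B C$. That corollary delivers directly that $X \odot_B^{A \times_B C} Y$ is $f$-cofibrant and preserves equivalences, and the isomorphism transports these properties to $\shad{X \odot_B Y}_B$.

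The main (and only) obstacle is checking the Beck-Chevalley identification in the first paragraph, where one needs to be careful that the various diagonal and projection maps interact as claimed and that the resulting isomorphism is truly natural in $X$ and $Y$; the homotopy-pullback hypothesis in the hint is exactly what removes any doubt about compatibility with equivalences. Everything else is a direct invocation of previously established results.
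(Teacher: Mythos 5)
Your argument is correct and follows essentially the same route the paper intends: rewrite $\shad{X \odot_B Y}_B$ as the composition product $X \odot_B^{A \times_B C} (\tau^* Y)$ in $\Ex_B$, using the strict (and, since $C \to B$ is a fibration, homotopy) pullback square you display together with the standard commutations of pullback and pushforward, and then invoke the rel-$B$ deformability result with the triple of $h$-fibrant $B$-spaces $(B,\, A \times_B C,\, B)$. One small slip: you cite ``the first corollary of \autoref{lem:derived_circle_product},'' but that corollary is the absolute ($B=*$) case; the statement you are actually instantiating with a triple of $h$-fibrant spaces over $B$ is \autoref{lem:derived_circle_product} itself.
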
	
		
\beforesubsection
\subsection{Properties of the trace; base-change objects}\aftersubsection

It gets easier to use the bicategorical trace when we know a few basic properties. The first few properties are direct generalizations of additivity, multiplicativity, and composition for traces in a symmetric monoidal category (\autoref{prop:trace_properties}). Again, for ease of exposition, we state these properties without the extra twisting by $Q$ and $P$.
\begin{prop}\label{prop:bicat_trace_properties}\hfill
	\vspace{-1em}
	
	\begin{itemize}
		\item (Additivity) If each category $\mathscr B(A,B)$ is additive under $\oplus$, $\odot$ distributes over $\oplus$, and the shadow category $\mathscr B_{\shad{}}$ and shadow $\shad{}$ are both additive, then given two 2-cells $f_X\colon X \to X$, $f_Y\colon Y \to Y$ of right-dualizable 1-cells from $A$ to $B$, we have
		\[ \tr(f_X \oplus f_Y) = \tr(f_X) + \tr(f_Y). \]
		More generally, if these categories have compatible triangulated structures, given a map of cofiber sequences of 1-cells from $A$ to $B$
		\[ \xymatrix @R=1.7em @C=3em{
			X \ar[d]^-{f_X} \ar[r] & Z \ar[d]^-{f_Z} \ar[r] & Y \ar[d]^-{f_Y} \\
			X \ar[r] & Z \ar[r] & Y
		} \]
		we have $\tr(f_Z) = \tr(f_X) + \tr(f_Y)$ \cite{ponto_shulman_linearity}.
		\item (Multiplicativity) Given right-dualizable 1-cells $X \in \mathscr B(A,B)$ and $Y \in \mathscr B(B,C)$, and self-maps $f_X,f_Y$, we have by \cite[4.5.5]{ponto_asterisque}
		\[ \tr(f_X \odot f_Y) \cong \tr(f_Y) \circ \tr(f_X). \]
		\item (Composition) If $\mathscr B$ has a notion of $n$-fold tensor product that commutes with $\odot$ as described in \cite[\S 5]{mp1}, given a cycle of maps of right-dualizable 1-cells from $A$ to $B$
		\[ \xymatrix{
			X_0 \ar[r]^-{f_1}& X_1 \ar[r]^-{f_2}& \ldots  \ar[r]^-{f_{n-1}}& X_{n-1} \ar[r]^-{f_1}& X_0
		} \]
		if we regard their tensor product as a self-map of a 1-cell from $A^{\times n}$ to $B^{\times n}$,
		\[ \psi(f_1,\ldots,f_n)\colon (X_0 \otimes X_1 \otimes \ldots \otimes X_{n-1}) \to (X_0 \otimes X_1 \otimes \ldots \otimes X_{n-1}), \]
		its trace is equal to the trace of the composite
		\[ \tr(\psi(f_1,\ldots,f_n)) = \tr(f_n \circ \ldots \circ f_1). \]
		As a corollary, the trace has cyclic invariance,
		\[ \tr(f_n \circ \ldots \circ f_2 \circ f_1) = \tr(f_1 \circ f_n \circ \ldots \circ f_2), \]
		though this corollary is true without any additional structure on $\mathscr B$ \cite[4.5.4]{ponto_asterisque}.
	\end{itemize}
\end{prop}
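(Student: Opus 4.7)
The plan is to verify each property separately, using the coherence of the shadowed bicategory structure to organize the manipulations of the various unitors, associators, and rotators, and working formally from the definition of the trace.

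For additivity in the additive case, I would first check that if $X$ and $Y$ are right-dualizable with duals $X^*$, $Y^*$, then $X \oplus Y$ is right-dualizable with dual $X^* \oplus Y^*$, coevaluation $c_{X \oplus Y}$ assembled from $c_X$ and $c_Y$ via the distributivity isomorphisms, and similarly for $e_{X \oplus Y}$. The defining composite for $\tr(f_X \oplus f_Y)$ then splits as a biproduct diagram, and additivity of $\shad{}$ collapses it to $\tr(f_X) + \tr(f_Y)$. The triangulated version is considerably more subtle: given a map of cofiber sequences with dualizable ends, one has to produce a dual for the middle term $Z$ from duals of $X$ and $Y$ and then chase a triangulated version of the above identity. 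I would cite \cite{ponto_shulman_linearity} for this, since the argument is essentially the same one used there (no new feature of parametrized spectra is involved).

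For multiplicativity, the first step is to exhibit $Y^* \odot X^*$ as a right dual of $X \odot Y$, where the coevaluation and evaluation are built from those of $X$ and $Y$ by sandwiching: $c_{X \odot Y}$ inserts $c_X$ inside the middle of $c_Y$, and dually for $e_{X \odot Y}$. Because right duals are unique up to unique isomorphism, this determines the trace of $f_X \odot f_Y$. Expanding that trace and inserting a triangle identity for $(X, X^*)$ in the middle produces a large circular composite of six factors which, after one application of the rotator $\theta$ to cut and re-glue the cycle, factors visibly into the two traces composed as $\tr(f_Y) \circ \tr(f_X)$. The entire manipulation takes place in compositions of the bicategory operations, so coherence (either via the string-diagram calculus of \cite{mp2} or by direct diagram chases) guarantees that all the reassociations commute.

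The composition identity is the one I expect to be the main obstacle, since unlike additivity and multiplicativity it genuinely requires the auxiliary $n$-fold tensor product structure. Given that structure, the strategy is to reduce to multiplicativity inductively: each time one absorbs a map $f_i\colon X_{i-1} \to X_i$ into its neighbors, one uses a coevaluation/evaluation pair for $X_i$ to convert a composition into an $n$-fold tensor, and then invokes multiplicativity to identify traces. The cyclic invariance corollary follows by applying the composition identity to two different linear reorderings of the same cyclic list and equating the outcomes; alternatively, it can be proved directly from $\theta$ without the $n$-fold structure, by writing $\tr(f_n \circ \cdots \circ f_1)$ as the shadow of a circular composite and rotating the cycle by one position. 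The delicate point will be ensuring that the many uses of associator, unitor, and rotator agree on the nose, which is precisely what the coherence theorem for shadowed bicategories, cited above, supplies.
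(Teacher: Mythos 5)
The paper itself gives no proof of this proposition; it simply records the statements with citations (Ponto–Shulman for triangulated additivity in \cite{ponto_shulman_linearity}, \cite[4.5.5]{ponto_asterisque} for multiplicativity, \cite[\S 5]{mp1} and \cite[4.5.4]{ponto_asterisque} for composition and cyclic invariance), together with a one-line remark that multiplicativity has a clean string-diagram proof à la \cite{ponto_shulman_general}. So there is no in-paper proof for your sketch to diverge from.

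That said, your outline is correct and matches the arguments in the cited references. A few small observations. For multiplicativity, the point that $Y^* \odot X^*$ is the right dual of $X \odot Y$, with sandwiched coevaluation/evaluation, is exactly how \cite[4.5.5]{ponto_asterisque} sets things up, and the factorization of $\tr(f_X \odot f_Y)$ into $\tr(f_Y) \circ \tr(f_X)$ comes from applying $e_X$ in the middle first (after one use of $\theta$), which pinches the circular composite at $\shad{U_B}$ — this is precisely what the cylindrical string-diagram picture makes visible, so your "insert a triangle identity" step is not quite how the cleanest version goes, but the underlying manipulation is the same. For additivity, you correctly identify that the biproduct case and the triangulated case are of very different difficulty; the triangulated case requires the full machinery of \cite{ponto_shulman_linearity} (including constructing compatible stable-derivator or other coherence data, which a map of cofiber sequences alone does not provide), and it is right to defer there rather than pretend to a short argument. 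For cyclic invariance, you are correct that it only needs $\theta$: write $\tr(f_n \circ \cdots \circ f_1)$ as a circular composite whose shadow is taken, and apply $\theta$ once to rotate; this is \cite[4.5.4]{ponto_asterisque} and does not require the $n$-fold tensor product structure, exactly as the paper's parenthetical remark states. In short: no gap, but also no proof in the paper to compare against; your sketch accurately reflects what the references do.
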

There is a string diagram calculus that gives an elegant proof of the multiplicativity property -- it is like the calculus for monoidal categories except that the strings are drawn on a cylinder instead of the plane. See \cite{ponto_shulman_general}.

The next property generalizes the fact that the trace in a symmetric monoidal category are preserved by a strong symmetric monoidal functor. A \textbf{strong shadow functor}\index{strong shadow functor} $F\colon \mathscr B \to \mathscr C$ is a homomorphism of shadowed bicategories. In detail, it is given by
\begin{itemize}
	\item A function of 0-cells $\ob \mathscr B \to \ob \mathscr C$, abusively called $F$.
	\item For each pair of 0-cells $A$, $B$, a functor $F\colon \mathscr B(A,B) \to \mathscr C(FA,FB)$.
	\item A functor $F\colon \mathscr B_{\shad{}} \to \mathscr C_{\shad{}}$.
	\item Natural isomorphisms making the following three squares commute.
	\[
	\begin{array}{cc}
	\xymatrix @R=1.5em{
		\mathscr B(A,B) \times \mathscr B(B,C) \ar[d]_-{F} \ar[r]^-{\odot} & \mathscr B(A,C) \ar[d]^-{F} \\
		\mathscr C(FA,FB) \times \mathscr C(FB,FC) \ar[r]_-{\odot} & \mathscr C(FA,FC)
	}
	& \raisebox{-2em}{$m_F\colon F(X) \odot F(Y) \cong F(X \odot Y)$} \\[5em]
	\xymatrix @R=1.5em{
		\mathscr B(A,A) \ar[d]_-{F} \ar[r]^-{\shad{}} & \mathscr B_{\shad{}} \ar[d]^-{F} \\
		\mathscr C(FA,FA) \ar[r]_-{\odot} & \mathscr C_{\shad{}}
	}
	& \raisebox{-2em}{$s_F\colon \shad{F(X)} \cong F \shad{X}$} \\[5em]
	\xymatrix @R=1.5em{
		{*} \ar@{=}[d] \ar[r]^-{\odot} & \mathscr B(A,A) \ar[d]^-{F} \\
		{*} \ar[r]_-{\odot} & \mathscr C(FA,FA)
	}
	& \raisebox{-2em}{$i_F\colon U_{FA} \cong F(U_A)$}
	\end{array}
	\]
\end{itemize}
This must satisfy four coherence conditions, stating in effect that the four isomorphisms $\alpha$, $\lambda$, $\rho$, and $\theta$ agree with the action of $F$. 
If we drop all the references to $\shad{}$ and $\theta$, we get the notion of a \textbf{pseudofunctor}\index{pseudofunctor}, or homomorphism of bicategories. A strong shadow functor is an \textbf{equivalence}\index{bicategory!equivalence of} if it induces equivalences on the categories of 1-cells and 2-cells, and every 0-cell is connected to a 0-cell in the image by an least one invertibly dualizable 1-cell.

\begin{ex}[Strong shadow functors]\hfill
	\vspace{-1em}
	
	\begin{itemize}
		\item Thinking of each symmetric monoidal category as a shadowed bicategory, each strong symmetric monoidal functor $F$ is also a strong shadow functor.
		\item The isomorphisms of shadowed bicategory structures described in \autoref{thm:four_bicategories_of_spectra} can be interpreted as strong shadow functors in which $F$ is the identity but $m_F$, $s_F$, and $i_F$ are not.
		\item Fiberwise suspension spectrum defines a strong shadow functor $\Sigma^\infty\colon \mc R/\cat{Top} \to \mc Osp/\cat{Top}$ from the point-set bicategory of spaces to the point-set bicategory of spectra. The isomorphisms and their coherence all come for free by rigidity. Passing to cofibrant-fibrant spaces, we get a strong shadow functor on the homotopy categories as well.
		\item Restricting from $B$ to a single point $b \in B$ defines a strong shadow functor $\Ex_B \to \Ex$.
	\end{itemize}
\end{ex}

\begin{thm}\label{bicategory_map_preserves_traces}\cite[8.3]{ponto_shulman_general}
	Suppose $F$ is a strong shadow functor and $(M,N)$ is a dual pair in $\sB$.
	\begin{enumerate}
		\item Then $(F(M),F(N))$ forms a dual pair in $\sC$, with coevaluation and evaluation maps canonically isomorphic to $F$ of the same maps for $(M,N)$.
		\item  For any $f\colon Q\odot M\rightarrow M\odot P$, $F(f)$ is canonically isomorphic to a map $$F(Q) \odot F(M) \rightarrow F(M) \odot F(P),$$ and the trace of this map is canonically isomorphic to $F(\tr(f))$. In other words, $F$ commutes with bicategorical traces.
	\end{enumerate}
\end{thm}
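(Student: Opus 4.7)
The plan is to exhibit explicit coevaluation and evaluation 2-cells for $(F(M),F(N))$ built from $F(c)$, $F(e)$, and the structure isomorphisms $m_F$, $i_F$, and then check the triangle identities by pasting with the coherence conditions of $F$. Concretely, for a dual pair $(M,N)$ from $A$ to $B$ with $c\colon U_B \to N \odot M$ and $e\colon M \odot N \to U_A$, I would define
\[ \ti c\colon U_{FB} \xrightarrow{i_F} F(U_B) \xrightarrow{F(c)} F(N \odot M) \xrightarrow{m_F^{-1}} F(N) \odot F(M) \]
\[ \ti e\colon F(M) \odot F(N) \xrightarrow{m_F} F(M \odot N) \xrightarrow{F(e)} F(U_A) \xrightarrow{i_F^{-1}} U_{FA}. \]
This makes part (1) of the assertion about canonical isomorphism nearly tautological — the maps are, by construction, equal to $F(c)$ and $F(e)$ after conjugating by the structure isomorphisms.

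To verify the first triangle identity for the putative dual pair $(F(M),F(N))$, I would paste the defining diagram and reduce everything inside $F$. Naturality of $m_F$ lets me pull all instances of $m_F^{\pm 1}$ through the tensor factors, and the coherence of $F$ with the associator and unitors ($\alpha$, $\lambda$, $\rho$ of $\sB$ and $\sC$) collapses the outer structural 2-cells to $F$ of the corresponding structural 2-cells of $\sB$. What remains in the middle is $F$ applied to the composite
\[ M \cong M \odot U_B \xrightarrow{\id \odot c} M \odot N \odot M \xrightarrow{e \odot \id} U_A \odot M \cong M, \]
which equals $F(\id_M) = \id_{F(M)}$ by the triangle identity in $\sB$. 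The second triangle identity is dual.

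For part (2), once we have the dual pair $(F(M),F(N))$, the trace of $F(f)$ (after conjugating $F(f)\colon F(Q \odot M) \to F(M \odot P)$ by $m_F$ to give the map $F(Q)\odot F(M) \to F(M)\odot F(P)$) is assembled from exactly the same ingredients $\ti c$, $F(f)$, a rotator, and $\ti e$, all under a shadow. I would expand the definition, commute every $m_F$ and $i_F$ past the shadow using $s_F$, and then invoke the fourth coherence condition on $F$ — the one expressing compatibility with $\theta$ — to identify the rotator step with $F$ of the rotator step in $\sB$. The resulting composite factors as $s_F$ applied to $F$ of the defining composite of $\tr(f)$ in $\sB$, which is exactly what ``$F(\tr(f))$'' means along the canonical isomorphisms $\shad{F(Q)} \cong F\shad{Q}$ and $\shad{F(P)} \cong F\shad{P}$.

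The bookkeeping in the second part is the only real obstacle: in contrast to the triangle identities, which involve only associators and unitors, the trace computation must also commute $F$ past $\theta$, and the various $m_F$'s interact in a way that is easy to mis-parenthesize. My plan would be to carry out this verification using the string-diagram calculus for shadowed bicategories from \cite{ponto_shulman_general}, where the coherence isomorphisms of $F$ become a single ``box'' that slides over the diagram, reducing the computation to the observation that sliding the box past the cyclic rotation of strings is precisely the fourth coherence axiom for a strong shadow functor. Once this bookkeeping step is done, the conclusion is immediate.
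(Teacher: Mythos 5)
The paper does not include its own proof of this theorem; it cites \cite[8.3]{ponto_shulman_general}, which is exactly the argument you sketch: conjugate $F(c)$ and $F(e)$ by $m_F$ and $i_F$ to get the dual pair, check the triangle identities using naturality of $m_F$ together with the $\alpha,\lambda,\rho$-coherence of $F$, and reduce the trace comparison to the $\theta$-coherence axiom using the cylindrical string-diagram calculus developed there. Your proposal is correct and is the same approach.
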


So for instance, the fiberwise Reidemeister trace constructed in $\Ex_B$ produces a map of spectra over $B$, that over each point of $b$ is isomorphic to the ordinary Reidemeister trace carried out on that fiber.

Finally we discuss base-change 1-cells. Before trying to formalize them, we will simply say what they are in all the examples we have encountered so far.\index{base-change objects}

\begin{ex}[Base-change 1-cells]\hfill
	\vspace{-1em}
	
	\begin{itemize}
		\item In the shadowed bicategory $\mathscr Bimod_k$ of $k$-algebras and bimodules, for any homomorphism $f\colon A \to B$ of $k$-algebras, the base-change 1-cell $$\bcr{A}{f}{B} = {}_A B_B$$ is the $(A,B)$ bimodule given by $B$ with the evident $A$ and $B$ actions. Notice that $- \odot \bcr{A}{}{B}$ extends scalars from $A$ to $B$, while $\bcr{A}{}{B} \odot -$ restricts scalars (i.e. restricts the $B$-action to an $A$-action without changing the underlying abelian group).
		\item In $Ch\mathscr Bimod_k$, $\mathscr Bimod_{\Sph}$, and $\mathscr Act$, any homomorphism of rings or groups $A \to B$ gives a base-change 1-cell $\bcr{A}{}{B} = {}_A B_B$ by essentially the same rule as above. In all of these settings, multiplying by this 1-cell induces derived extension and restriction of scalars.
		\item In $\mc R/\cat{Top}$, for any map of spaces $f\colon A \to B$, the base-change 1-cell $\bcr{A}{}{B}$ is the retractive space $A_{+(A \times B)}$ with $A$ mapping to the base by $(\id,f)$. Notice that $- \odot \bcr{A}{}{B}$ is isomorphic to the pushforward $f_!$ and $\bcr{A}{}{B} \odot -$ is isomorphic to the pullback $f^*$.
		\item Similarly in $\Ex = \ho\mc Osp/\cat{Top}$, for any map of spaces $f\colon A \to B$, the base-change 1-cell $\bcr{A}{}{B}$ is the parametrized spectrum $\Sigma^\infty_{+(A \times B)} A$. Again $- \odot \bcr{A}{}{B}$ is isomorphic to the (derived) pushforward $f_!$ and $\bcr{A}{}{B} \odot -$ is isomorphic to the (derived) pullback $f^*$.
	\end{itemize}
\end{ex}

The explanation for the notation $\bcr{A}{}{B}$ is that in the ``ring'' examples the object is $B$, but in the ``parametrized'' examples it is $A$. What they all have in common is that the bicategorical product with the base-change 1-cell on either side is isomorphic to some familiar operation and its right adjoint.

As $f$ varies, the base-change 1-cells form a pseudofunctor from some ``base category'' $\bS$ into the bicategory $\mathscr B$. This means we have canonical isomorphisms
\[ \bcr{A}{f}{B} \odot \bcr{B}{g}{C} \cong \bcr{A}{g \circ f}{C} \]
\[ \bcr{A}{\id}{A} \cong U_A \]
that satisfy the same coherences as for a monoidal functor. 
In the ``ring'' examples, these isomorphisms are fairly obvious before passing to the homotopy category. To argue that they pass to the homotopy category it's enough to see that the maps
\[ Q\bcr{A}{}{B} \odot Q\bcr{B}{}{C} \to \bcr{A}{}{B} \odot \bcr{B}{}{C} \]\index{left-deformable!base-change objects}
are all equivalences. The construction of these isomorphisms in the ``parametrized'' examples will be done in \autoref{prop:base_change_all_isomorphic}.

The other similarity between these cases is that the base-change objects are always dualizable over the ``target,'' so that the duals form another pseudofunctor pointing the other way, that we might call $\bcl{A}{}{B}$. We will not formalize base-change 1-cells any further than this, but see \cite{shulman_framed_monoidal}.

\begin{ex}[Duals of base-change 1-cells]\hfill
	\vspace{-1em}
	
	\begin{itemize}
		\item In bimodules, the dual of $\bcr{A}{}{B}$ over $B$ is $\bcl{A}{}{B} = {}_B B_A$ defined in the same way but with the $A$-action on the right. The coevaluation and evaluation maps are induced by $f$ and multiplication:
		\[ \xymatrix{ A \ar[r]^-c & B \otimes_B B = B & B \otimes_A B \ar[r] & B. } \]
		The triangle identities reduce to the fact that $f$ is a ring homomorphism. The same rule describes the duals of the base-change 1-cells in $Ch\mathscr Bimod_k$, $\mathscr Bimod_{\Sph}$, and $\mathscr Act$.
		\item In $\mc R/\cat{Top}$, the dual of $\bcr{A}{}{B}$ over $B$ is $\bcl{A}{}{B} = A_{+(B \times A)}$. The coevaluation and evaluation maps arise from the maps of spaces given by the diagonal of $A$ and by $f$:
		\[ \xymatrix{ A \ar[r]^-c & A \times_B A & A \times_A A \ar[r] & B. } \]
		The same applies to parametrized spectra $\Ex = \ho\mc Osp/\cat{Top}$.
	\end{itemize}
\end{ex}

In a shadowed bicategory with base-change objects, the trace of the identity of a base-change 1-cell $\bcr{A}{}{B}$ gives a map of shadows $\shad{U_A} \to \shad{U_B}$. This map is usually something very familiar. For instance, tracing the identity of $_A B_B$ gives the map $HH(A) \to HH(B)$ induced by the ring homomorphism $f$, and similarly for topological Hochschild homology. Tracing the identity of $\Sigma^\infty_{+(A \times B)} A$ in parametrized spectra gives the map $\Sigma^\infty_+ LA \to \Sigma^\infty_+ LB$ arising from the map of spaces $f\colon A \to B$.

In some cases, we can also trace the identity of $\bcr{A}{}{B}$ over $A$ instead of $B$. This gives a \textbf{transfer} or wrong-way map on Hochschild homology or the free loop space. It does not always exist -- we have to assume dualizability over $A$, which amounts to some finiteness condition on $f$. In the ring case, this dualizability is equivalent to $B$ being finitely built as an $A$-module, while in the space case it corresponds to $A \to B$ having homotopy fiber that is finite CW or finitely dominated.

The Reidemeister trace defined in \autoref{ex:traces_in_ex} is a twisted version of this transfer, as discussed at length in \cite{lind2019transfer,campbell_ponto}. It is the twisted trace of the isomorphism of base-change objects
\[ \xymatrix{ \bcr{X}{}{*} \ar[r]^-\cong & \bcr{X}{}{X} \odot \bcr{X}{}{*} } \]
arising from the trivial fact that the composite $X \overset{f}\to X \to *$ agrees with the unique map $X \to *$.

To compare Reidemeister traces in different bicategories, we need to know that the base-change objects are preserved. Given a strong shadow functor $F\colon \sB \to \sC$ if $\sB$ and $\sC$ have base-change objects then we ask for $F$ to come with a functor of base categories $F\colon \bS \to \bT$, and isomorphisms $\bcr{A}{f}{B} \cong \bcr{FA}{Ff}{FB}$ that agree with the compositions and units up to coherent isomorphism. See \cite[\S 14]{mp2} for the axioms.

One situation where this happens is when $F$ came from a map of symmetric monoidal bifibrations. Each SMBF $\sC \to \bS$ gives a bicategory $\calBi CS$, as we have seen in \autoref{thm:SMBF_to_bicategory}, but this bicategory also has a canonical set of base-change objects. The base-change object for a map $f\colon A \to B$ in the base category $\bS$ is given by
\[ \bcr{A}{f}{B} = (\id,f)_!(r_A)^* I \in \sC^{A \times B} \]
and the isomorphism $\bcr{A}{\id}{A} \cong U_A$ is because they are both defined by the same formula $(\Delta_A)_!(r_A)^* I$. The composition isomorphisms arise from \autoref{fig:base_change_composition_indexed}, which is formally similar to the definition of the unitor isomorphism from \autoref{fig:unit_indexed}.

\begin{figure}[h]
	\begin{tikzcd}[column sep=50pt,row sep=30pt]
		{\ast}
		\ar[d,"I \times I"]
		\ar[rd,"I"]
		\\
		{\mathscr{C}^{\ast} \times \mathscr{C}^{\ast}}
		\ar[r,"{\boxtimes}"]
		\ar[d,"r_A^* \times r_B^*"]
		& \mathscr{C}^{\ast}
		\ar[d,"r_{A \times B}^*"]
		\ar[rd,"r_A^*"]
		\\
		{\mathscr{C}^{A} \times \mathscr{C}^{B}}
		\ar[r,"{\boxtimes}"]
		\ar[d,"{(1,f)_! \times (1,g)_!}"]
		&\mathscr{C}^{A\times B}
		\ar[r,"{(1,f)^*}"]
		\ar[d,"{((1,f)\times(1,g))_!}"]
		&\mathscr{C}^{A}
		\ar[rd,"{(1,g \circ f)_!}"]
		\ar[d,"{(1,f,g \circ f)_!}"]
		\\
		{\mathscr{C}^{A \times B} \times \mathscr{C}^{B \times C}}
		\ar[r,"\boxtimes"]
		&\mathscr{C}^{A\times B \times B\times C}
		\ar[r,"(1\Delta_B1)^*"]
		&\mathscr{C}^{A\times B\times C}
		\ar[r,"(1r_B1)_!"]
		&\mathscr{C}^{A\times C}
	\end{tikzcd}
	\caption{Base change composition 
	}\label{fig:base_change_composition_indexed}
\end{figure}

\begin{prop}\cite[14.1]{mp2}\label{prop:little_bit_of_functoriality}
	Each map of symmetric monoidal bifibrations $H\colon (\sC,\bS) \to (\sD,\bT)$ induces a strong shadow functor $F\colon \calBi CS \to \calBi DT$, and a coherent isomorphism of base-change objects $F \circ [] \cong [] \circ H$. Therefore the trace of an isomorphism of base-change objects in $\sC$ is carried to a trace of an isomorphism of base-change objects in $\sD$.
\end{prop}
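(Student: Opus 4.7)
The plan is to build $F$ level by level from $H$, exploiting the fact that every piece of the bicategory structure on $\calBi CS$ is assembled from the smbf operations $\boxtimes$, $f^*$, $f_!$ (and the Beck-Chevalley isomorphisms between them), all of which are preserved by $H$ up to canonical isomorphism. First I would define $F$ on 0-cells by sending $A \in \bS$ to $H(A) \in \bT$, and on 1- and 2-cells from $A$ to $B$ by applying $H$ to the fiber category $\sC^{A \times B}$, landing in $\sD^{H(A \times B)} = \sD^{H(A) \times H(B)}$ (using that $H$ as a map of smbfs is strict symmetric monoidal on the cartesian base categories, hence preserves products). Because $\odot$, $\shad{}$, and $U_A$ in $\calBi CS$ are defined by the formulas in \autoref{thm:smbf_to_bicategory}, and because $H$ preserves tensor products, cartesian arrows, and cocartesian arrows, each of these formulas commutes with $H$ up to a canonical isomorphism built by pasting the preservation isomorphisms piece by piece. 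This yields the required natural isomorphisms $m_F$, $s_F$, and $i_F$.

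The remaining content is coherence: the four conditions that $m_F$, $s_F$, $i_F$ must satisfy relative to $\alpha$, $\lambda$, $\rho$, $\theta$. The key observation is that the associator, unitor, and rotator of $\calBi CS$ are themselves defined in \autoref{fig:assoc_indexed}--\autoref{fig:shadow_indexed} as pastings of the small canonical isomorphisms of the smbf: commutation of $\boxtimes$ with cartesian/cocartesian arrows, commutation of pullbacks and pushforwards along Beck-Chevalley squares, and the symmetric monoidal coherences $\alpha, \lambda, \rho, \gamma$ of $\boxtimes$. Applying $H$ to each small tile yields the corresponding tile in $\calBi DT$, and the tile-isomorphisms agree along $m_F$, $s_F$, $i_F$ because $H$ is a map of smbfs. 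The four coherence conditions for the strong shadow functor therefore reduce to the (tautological) identities obtained by pasting these tile-level compatibilities. The main technical obstacle is bookkeeping: one must verify that the particular pastings used to define $\alpha, \lambda, \rho, \theta$ in $\calBi CS$ and in $\calBi DT$ match up under $H$ tile-by-tile. This is long but essentially mechanical, and is precisely the content worked out carefully in \cite[\S 14]{mp2}.

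For the base-change part, the object $\bcr{A}{f}{B} = (\id,f)_! r_A^* I_{\sC}$ is again a composite of smbf operations preserved by $H$, so $F(\bcr{A}{f}{B})$ and $\bcr{HA}{Hf}{HB}$ are canonically isomorphic. The coherence with composition and units amounts to checking, tile-by-tile, that $F$ intertwines the pastings in \autoref{fig:base_change_composition_indexed}, which again goes through because $H$ preserves all the primitive data. Once the strong shadow functor $F$ and the isomorphism $F \circ [] \cong [] \circ H$ are in hand, the final sentence is a direct invocation of \autoref{bicategory_map_preserves_traces}: any dual pair $(\bcr{A}{f}{B}, \bcl{A}{f}{B})$ in $\calBi CS$ is carried by $F$ to a dual pair $(\bcr{HA}{Hf}{HB}, \bcl{HA}{Hf}{HB})$ in $\calBi DT$ via the canonical isomorphisms just constructed, and any trace of an isomorphism of such base-change objects is preserved by $F$ up to canonical isomorphism.
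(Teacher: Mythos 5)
The paper gives no proof of this proposition: it is cited verbatim as \cite[14.1]{mp2}, and the text leaves the entire argument to that reference. So there is no ``paper's proof'' to match against; the real question is whether your sketch is a plausible reconstruction of the argument in \cite[\S 14]{mp2}, given how the rest of this paper sets things up.

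It is. You correctly identify the three layers of the argument. First, $F$ should be defined by letting $H$ act on each fiber category; the identification $\sD^{H(A\times B)} = \sD^{H(A)\times H(B)}$ you use implicitly requires the base-category component of $H$ to preserve products strictly (the paper's convention for $\Phi$ is strict symmetric monoidal, so this is consistent, but if only a strong monoidal structure is assumed you would transport along the canonical isomorphism $H(A\times B)\cong H(A)\times H(B)$ instead). Second, the data $m_F$, $s_F$, $i_F$ and the four coherence axioms all come from the observation you make: each of $\odot$, $\shad{}$, $U_A$, $\alpha$, $\lambda$, $\rho$, $\theta$ is assembled in \autoref{thm:smbf_to_bicategory} and \autoref{fig:assoc_indexed}--\autoref{fig:shadow_indexed} from the primitive smbf constituents $\boxtimes$, pullbacks, pushforwards, Beck-Chevalley isomorphisms, and the symmetric monoidal coherences of $\boxtimes$, every one of which is preserved by $H$; so the required commutativities factor into the tile-level compatibilities provided by the smbf map structure. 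That tile-by-tile bookkeeping is precisely what \cite[\S 14]{mp2} carries out. Third, the base-change objects $\bcr{A}{f}{B} = (\id,f)_! r_A^* I$ are again composites of preserved operations, and the diagram in \autoref{fig:base_change_composition_indexed} gives the composition isomorphism in the same paste-of-tiles form, so $F$ intertwines it; then \autoref{bicategory_map_preserves_traces} closes the argument for the final sentence. Your proposal thus matches the expected mp2 strategy and correctly uses the paper's infrastructure; the only caveat worth flagging is the strictness assumption about products in $\bT$, which is a cosmetic point.
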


We will use this for \autoref{thm:reidemeister_traces_agree}, showing that the Reidemeister trace in parametrized spectra is equivalent to a similar Reidemeister trace defined in ring spectra.

For now, if we want to use this formal description to define the Reidemeister trace in parametrized spectra, we at least need to know that different ways of constructing this isomorphism give the same result. The recipes (1), (2)\index{right-deformable!base-change objects} and (4) from \autoref{thm:four_bicategories_of_spectra} extend in a natural way to base-change objects and their composition isomorphisms, by the same construction that we used for the unit isomorphism.\footnote{Recipe (3) fails because the base-change objects aren't fibrant, and if you include them, tensoring on one side has the effect of pushing forward fibrant spectra, which creates additional non-fibrant spectra.} The same proof then establishes

\begin{prop}\label{prop:base_change_all_isomorphic}
	These three pseudofunctors from spaces into $\Ex$ are canonically isomorphic.
\end{prop}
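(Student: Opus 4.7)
The plan is to mirror the three-step strategy used in the proof of \autoref{thm:four_bicategories_of_spectra}, applied to the pseudofunctor data for base-change objects rather than to the unit, associator, and rotator data. I would begin by observing that in all three recipes, the base-change 1-cell $\bcr{A}{f}{B}$ is represented by the same point-set spectrum $(\id,f)_!(r_A)^*\Sph = \Sigma^\infty_{+(A \times B)} A$, which is already freely $f$-cofibrant. The composition isomorphism $\bcr{A}{f}{B} \odot \bcr{B}{g}{C} \cong \bcr{A}{g\circ f}{C}$ is built by composing, along the rectangular regions of \autoref{fig:base_change_composition_indexed}, the canonical Beck-Chevalley squares and symmetric-monoidal-bifibration commutations coming from $\Osp$ (or its derived counterparts), and the unit isomorphism $\bcr{A}{\id}{A} \cong U_A$ holds on the nose since both sides are defined by the same formula $(\Delta_A)_!(r_A)^*\Sph$.

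Next I would verify that the composites of functors appearing in \autoref{fig:base_change_composition_indexed} are coherently deformable in the sense of \autoref{sec:composing_comparing}, using as the ``cofibrant'' subcategory the freely $f$-cofibrant, level $h$-fibrant spectra (applying $PQ$ where needed). Every pushforward along a map of the form $(\id,f)$, $(\id,f,g\circ f)$, or $(\id,r_B,\id)$ preserves free $f$-cofibrations by \autoref{lem:spectrum_f_preserves}, and the one pullback $(\id,\Delta_B,\id)^*$ in the diagram only needs to be applied to a spectrum of the form $((\id,f)\times(\id,g))_!(\ldots)$ arising from the front face, which is the kind of term on which the comparison with its right-derived functor can be controlled. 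With coherent deformability in hand, \autoref{prop:passing_natural_trans_to_derived_functors} directly provides the canonical isomorphism between the composition data of recipe (1) and recipe (2): restricted to freely $f$-cofibrant inputs both are the derivation of the single point-set composition isomorphism, hence they agree as morphisms in the homotopy category of functors, hence agree everywhere. The comparison of (2) with (4) proceeds by the same square-by-square argument used at the end of the proof of \autoref{thm:four_bicategories_of_spectra}: one removes the $P$'s a region at a time, using that the Beck-Chevalley isomorphisms built into the smbf structure on $\ho\Osp$ (via \autoref{prop:deform_bifibration} and \autoref{prop:deform_smbf}) agree with the right-derivations of the corresponding point-set Beck-Chevalley isomorphisms in $\Osp^{c}$.

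The main obstacle, exactly as in the unitor case of \autoref{thm:four_bicategories_of_spectra}, is that the intermediate term $((\id,f)\times(\id,g))_!(X \barsmash Y)$ sitting in the middle column of \autoref{fig:base_change_composition_indexed} is only freely $f$-cofibrant and not level $h$-fibrant, so it is not immediate that removing $P$ from the pullback $(\id,\Delta_B,\id)^*$ applied to it is harmless. The resolution is the one sketched in the proof of \autoref{thm:four_bicategories_of_spectra}: this term is a pushforward of a fibrant input, and on pushforwards of fibrant inputs the strict and derived pullbacks agree, because the Beck-Chevalley commutation between $f_!$ and $(\Delta_B)^*$ on fibrant spectra identifies the two. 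This is the single nontrivial input needed to carry the proof through verbatim.

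Finally I would check compatibility with the unit isomorphism: under the canonical identification $\bcr{A}{\id_A}{A} = U_A$, specialising $f = \id_A$ in the composition isomorphism for (1), (2), (4) recovers (up to the chosen unit isomorphism of the bicategory in each recipe) the unitor data already shown to agree in \autoref{thm:four_bicategories_of_spectra}; this is automatic from the fact that \autoref{fig:base_change_composition_indexed} specialises to (one half of) \autoref{fig:unit_indexed} when $f = \id$. The coherence axioms for a pseudofunctor (associativity and left/right unit) then follow because they hold on the point-set level by the rigidity theorem \autoref{thm:spectra_rigidity}, and the recipe of \autoref{prop:passing_natural_trans_to_derived_functors} preserves composition of natural transformations and identities.
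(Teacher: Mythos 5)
Your proposal is correct and takes essentially the same route as the paper: the paper's entire proof of \autoref{prop:base_change_all_isomorphic} is the sentence that precedes it (``The recipes (1), (2) and (4) ... extend in a natural way to base-change objects ... by the same construction that we used for the unit isomorphism. The same proof then establishes'' the proposition), and you have correctly unpacked what that same proof amounts to, including correctly identifying the one nontrivial step as the Beck-Chevalley argument that controls removal of $P$ from $(\id,\Delta_B,\id)^*$ at the non-fibrant term $((\id,f)\times(\id,g))_!(X \barsmash Y)$ in the middle column of \autoref{fig:base_change_composition_indexed}, exactly as in the unitor hiccup of \autoref{thm:four_bicategories_of_spectra}. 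You also correctly exclude recipe (3), matching the paper's footnote.
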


Specifically, along the canonical isomorphisms between the different models for $\odot$ and $U_A$, the composition and unit isomorphisms for these pseudofunctors agree with each other. Therefore the Reidemeister trace in parametrized spectra can be defined as the trace of the point-set map described in \autoref{ex:traces_in_ex}, or the formal isomorphism of composites of base-change objects just above.

\beforesubsection
\subsection{Costenoble-Waner duality and applications}\aftersubsection

Recall from \autoref{ex:traces_in_ex} that a spectrum over $X \times *$ is called Costenoble-Waner dualizable if it is dualizable over $X$ in the bicategorical sense. In \autoref{sec:applications}, we will recall a full characterization of Costenoble-Waner dualizable spectra, similar to the classification of Spanier-Whitehead dualizable spectra. In this section, we focus on compact ENRs and bundles of such, and give explicit formulas for the coevaluation and evaluation maps, from which we get a geometric description of the Reidemeister trace.

For any space $X$, the parametrized spectrum $\bcr{X}{}{*} = \Sigma^\infty_{+(X \times *)} X$ gives a 1-cell in $\Ex$ from $X$ to $*$. For this to be dualizable over $X$, by \autoref{def:dual_in_bicategory} we need a parametrized spectrum $Y$ over $X$, regarded as a 1-cell from $*$ to $X$, a coevaluation map of in the homotopy category spectra
\[ U_* = \Sph \to Y \odot \Sigma^\infty_{+(X \times *)} X, \]
and an evaluation map in the homotopy category of spectra over $X \times X$
\[ \Sigma^\infty_{+(X \times *)} X \odot Y \to \Sigma^\infty_{+(X \times X)} X = U_X, \]
satisfying two triangle identities. We identify $X$ with $X^I$ along inclusion of constant paths, so that we can have the evaluation map land in $X^I_{+(X \times X)}$ instead of $X_{+(X \times X)}$.

To describe $Y$ explicitly, we first assume that $X$ is a finite complex. Let $i\colon X \to \R^n$ be an embedding with mapping cylinder neighborhood $N$ and projection $p\colon N \to X$. We pick an $\epsilon > 0$ so that $N_{\epsilon} \subseteq N$, in other words the closed tube of radius $\epsilon$ about $X$ is contained in $N$. Let $N /_X \partial N$ denote the fiberwise quotient over $X$, as a 1-cell from $*$ to $X$. We have canonical isomorphisms
\[ N /_X \partial N \odot X_{+(X \times *)} \cong N / \partial N, \qquad
X_{+(X \times *)} \odot N /_X \partial N \cong (X \times N)/_{(X \times X)} (X \times \partial N). \]
The slickest way to prove that these are homeomorphisms is to remember that $X_{+(X \times *)}$ is a base-change object, so the first expression pushes forward $N /_X \partial N$ to a point, while the second pulls it back to $X \times X$.

Define two closed subspaces of $X \times N$ by
\[ \Delta_X = \{ (x,i(x)) : x \in X \},
\qquad
\Gamma_\epsilon = \{ (x,u) : d(i(x),u) \leq \epsilon \}. \]
Intuitively, $\Delta_X$ is an embedded copy of $X$ and $\Gamma_\epsilon$ is its tubular neighborhood. For every $(x,u) \in \Gamma_\epsilon$, since the line segment from $i(x)$ to $u$ is contained in $N_\epsilon$, we may apply $p$ to that line segment and get a path in $X$ that we call $\gamma_{x,u}$:
\[ \gamma_{x,u}(t) = p((1-t)i(x) + tu). \]
This defines a continuous function $\Gamma_\epsilon \to X^I$ of spaces over $X \times X$.

\begin{thm}[Costenoble-Waner duality, neighborhood version] \label{thm:cw_duality_1}\cite[D]{klein2001dualizing}\cite[18.5]{ms}\cite[2.4]{ww1}
	If $X$ is a finite cell complex, the dual of $\Sigma^\infty_{+(X \times *)} X$ over $X$ is the 1-cell from $*$ to $X$ given by $F_n (N/_X\partial N)$, with coevaluation and evaluation maps
	\begin{align*}
	F_n S^n &\to F_n (N/_X\partial N) \odot X_{+(X \times *)} \\
	X_{+(X \times *)} \odot F_n (N/_X\partial N) &\to F_n S^n \barsmash X^I_{+(X \times X)}
	\end{align*}
	obtained by applying $F_n$ to the ``collapse'' and ``scanning'' maps
	\[ \begin{array}{cclcc}
	S^n & \overset{c}\to & N/\partial N && (X \times N)/_{(X \times X)} (X \times \partial N) \overset{e}\to S^n_\epsilon \barsmash X^I_{+(X \times X)} \\[.5em]
	v &\mapsto & \left\{ \begin{array}{ccl} v && v \in N \\ {*} && v \not\in N \end{array} \right. && (x,u) \mapsto \left\{ \begin{array}{ccl} (u-i(x),\gamma_{x,u}) && (x,u) \in \Gamma_\epsilon \\ {*} && (x,u) \not\in \Gamma_\epsilon \end{array} \right.
	\end{array} \]
	of retractive spaces over $*$ and $X \times X$, respectively.
\end{thm}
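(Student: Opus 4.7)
The plan follows the pattern of the Spanier-Whitehead proofs (\autoref{thm:sw_duality_1}, \autoref{thm:sw_duality_2}): reduce to a space-level duality statement and verify two triangle identities via explicit geometric homotopies. The first task is a variant of \autoref{lem:n-dual} adapted to the composition product $\odot$ in place of $\sma$. Specifically, if $K$ and $L$ are appropriately cofibrant and fibrant retractive spaces over $X \times *$ and $* \times X$, and one has point-set maps $c\colon S^n \to L \odot K$ and $e\colon K \odot L \to S^n \barsmash X^I_{+(X \times X)}$ whose composites satisfy the triangle identities up to fiberwise homotopy (with the $n$-duality sign convention), then applying $F_n$ to $L$ yields a dual pair $(F_0 K, F_n L)$ in $\Ex$. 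The proof mirrors that of \autoref{lem:n-dual}: the twist at spectrum level $2n$ introduced by the symmetry isomorphism agrees with that of $n$-duality modulo a path in $O(2n)$, which is absorbed into the fiberwise homotopy allowed at the space level.

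Next I would write out the two composites appearing in the triangle identities using the natural isomorphisms asserted in the text just before the theorem,
\[ (N /_X \partial N) \odot X_{+(X \times *)} \cong N / \partial N, \qquad X_{+(X \times *)} \odot (N /_X \partial N) \cong (X \times N)/_{(X \times X)} (X \times \partial N), \]
so that the source and target of the collapse map $c$ and scanning map $e$ are identified. The remaining composites in the triangle identities are then spaces over $X$ or over $X \times X$ built from subspaces of products of $X$, $N$, and $\R^n$, very much in the style of the product-of-cones step of \autoref{sec:sw_proof}.

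The actual triangle identities would then be verified by homotopies modeled on those from the proof of \autoref{thm:sw_duality_2}, essentially $(p(v), v-x) \sim (x, v-x) \sim (x, v)$ for the first identity and its transpose for the second, now augmented by the path coordinate $\gamma_{x,u}(t) = p((1-t)i(x) + tu)$. As the spatial coordinates deform through each homotopy, the associated path deforms compatibly via straight-line interpolation in the tube $\Gamma_\epsilon$, where $p$ is defined; the crucial support conditions (e.g. $v - x = 0 \Rightarrow (x,v) \in \Delta_X$) continue to apply, and on $\Delta_X$ the path $\gamma_{x,u}$ is constant, so it continues to give a map of pairs into the fibrant model $X^I_{+(X \times X)}$.

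The main obstacle will be matching fibrant and cofibrant replacements so that the space-level triangle identities pass to the homotopy category. In particular, $U_X$ must be represented by $X^I_{+(X \times X)}$ rather than $X_{+(X \times X)}$, since the diagonal $X \to X \times X$ is not a fibration; this is exactly why the evaluation lands in $X^I$, so the path $\gamma_{x,u}$ is forced on us by the homotopy theory, not just a notational convenience. One must then invoke \autoref{lem:derived_circle_product} at each location in the triangle identity where $\odot$ involves a non-fibrant input, verifying the relevant $q$-fibration hypothesis from $N \to X$, from $X \to X \times *$, and from $X^I \to X \times X$. Once these identifications are in place, the space-level homotopies of the previous paragraph yield triangle identities in $\Ex$, and the $\odot$-variant of \autoref{lem:n-dual} finishes the argument.
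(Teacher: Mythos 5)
Your overall framework matches the paper's: a bicategorical $n$-duality lemma adapted to $\odot$ (the paper's \autoref{cw_n-dual}), rewriting the source and target of $c$ and $e$ via the canonical isomorphisms for composition products with base-change objects, geometric homotopies modeled on the Spanier--Whitehead case, and \autoref{lem:derived_circle_product} to control where $\odot$ must be derived. The paper actually routes through the more general mapping-cone version (\autoref{thm:cw_duality_2}) and recovers the neighborhood version as a special case, but proving the neighborhood version directly as you propose is reasonable.

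The genuine gap is in the second triangle identity. You describe the new ingredient as ``its transpose $\ldots$ augmented by the path coordinate $\ldots$ which deforms compatibly via straight-line interpolation in the tube $\Gamma_\epsilon$.'' This understates the difficulty in two ways. First, the chain of homotopies for the second identity is \emph{not} just the Spanier--Whitehead chain with a path coordinate carried along; it requires an extra step of the form $\gamma_{p(w),w} \sim c_{p(w)}$ (deforming the loop $w \mapsto \gamma_{p(w),w}$ fiberwise over $X$ to the constant loop) before the final transpositions can be performed. That deformation is \emph{not} a straight-line interpolation in $\Gamma_\epsilon$: naively contracting the loop by reparametrizing, e.g.\ $(s,t) \mapsto \gamma_{p(w),w}(st)$, fails to remain a loop. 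The paper produces this homotopy from \autoref{lem:awesome}, which shows $X \to \Lambda X$ is an $f$-cofibration, so a small enough tube about $X$ in $\Lambda X$ fiberwise deformation retracts onto the constant loops. Second, the preceding step $\bigl(v,\,w-v,\,\gamma_{p(v),w}\bigr) \sim \bigl(w,\,w-v,\,\gamma_{p(w),w}\bigr)$ uses the formula $\gamma_{p((1-t)w+tv),w}$, which is only well-defined if every intermediate segment stays inside $N_\epsilon$; this forces you to first restrict by excision to $(N_\delta, N_\delta - X) \square_X (\Gamma_{\epsilon'}, \Gamma_{\epsilon'} - \Delta_X)$ for suitably small $\delta$ (by uniform continuity of $p$) and $\epsilon'$ (supplied by the same tube about $X$ in $\Lambda X$). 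Without this shrinking step, the homotopies you propose do not supply maps of pairs. Your proposal as written would need these two additional ingredients---the $f$-cofibration $X \to \Lambda X$ and the $\delta, \epsilon'$ shrinking---to close the second triangle identity.
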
\index{Costenoble-Waner duality}

\begin{rmk}\label{twisted_poincare_duality}
	For a smooth closed manifold $M$ of dimension $d$, the quotient $N/_M \partial N$ is an $(n-d)$-dimensional sphere bundle over $M$, constructed from the normal bundle by taking a fiberwise one-point compactification. Therefore \autoref{thm:cw_duality_1} gives a Poincar\' e duality theorem with coefficients in any bundle of spectra $\mc E$ over $M$. To deduce it, consider the operation $r_M^*(-) \cong \bcr{M}{}{*} \odot -$ from spectra over $*$ to spectra over $M$. Its right adjoint in the homotopy category, when applied to a freely $f$-cofibrant level $h$-fibrant spectrum $\mc E$, is both $\Gamma_M(\mc E)$ and $F_n N/_M \partial N \odot \mc E$, so we get an isomorphism
	\[ H^q(X;\mc E) = \pi_{-q}(\Gamma_M(\mc E)) \cong \pi_{-q}(F_n N/_M \partial N \odot \mc E) \cong H_{d-q}(M;\Sigma^{d-n}_M(N/_M \partial N) \sma_M \mc E), \]
	cf \cite{ms,malkiewich2014duality}. The fibers of $\Sigma^{d-n}_M(N/_M \partial N)$ are sphere spectra $\Sph$, so smashing it with $\mc E$ gives a new parametrized spectrum $\ti{\mc E}$ with the same fibers as $\mc E$, but perhaps different monodromy. So we get a twisted Poincar\' e duality theorem with spectrum coefficients,
	\[ H^q(X;\mc E) \cong H_{d-q}(M;\ti{\mc E}). \]
	This is particularly useful when $M$ is ``$\mc E$-orientable,'' meaning the two coefficient systems $\ti{\mc E}$ and $\mc E$ are equivalent. When $\mc E$ is a bundle of Eilenberg-Maclane spectra, the above theorem becomes the variant of Poincar\' e duality for non-orientable manifolds, the two coefficient systems differing by a twist by the orientation sheaf of $M$.
\end{rmk}

To generalize \autoref{thm:cw_duality_1} to $X$ a compact ENR, we recall from \autoref{sec:sw_proof} that when $(X,A)$ is a pair over $B$ and $(X',A')$ is a pair over $B'$, the pair $(X,A) \square (X',A')$ is over $B \times B'$ and there is a fiberwise homotopy equivalence
\[ \mu\colon C^u_B(X,A) \barsmash C^u_{B'}(X',A') \to C^u_{B \times B'}((X,A) \square (X',A')). \]

Now let $X$ be a compact ENR, $i\colon X \to \R^n$ a topological embedding, and $p\colon N \to X$ be a map that retracts the closed neighborhood $N$ back to $X$. Again, we may or may not ask for $\partial N \to N$ to be an $h$-cofibration. We take the dual to be $F_n C^u_{* \times X}(N,N-X)$ as a 1-cell from $*$ to $X$. The coevaluation map becomes any route through the following diagram from top-left to bottom-right.
\[ \xymatrix{
	S^n \ar[d] \ar[r] & N/\partial N \ar[r]^-\cong & N/_X \partial N \odot X_{+(X \times *)} \\
	C(S^n,S^n - \textup{int} N) \ar[d] \ar[ur]_-{(\sim)} & \ar[l] C^u(N,\partial N) \ar[u]_-{(\sim)} \ar[d] \ar[r]^-\cong & C^u_{* \times X}(N,\partial N) \odot X_{+(X \times *)} \ar[u]_-{(\sim)} \ar[d] \\
	C(S^n,S^n-X) & \ar[l]_-\sim C^u(N,N-X) \ar[r]^-\cong & C^u_{* \times X}(N,N-X) \odot X_{+(X \times *)} \\
} \]
The evaluation map becomes the left edge of the following diagram.
\[ \resizebox{\textwidth}{!}{
	\xymatrix{
	X_{+(X \times *)} \odot C^u_{* \times X}(N,N-X) \ar[d]^-\cong_-\mu &
	X_{+(X \times *)} \odot C^u_{* \times X}(N,\partial N) \ar[r]_-{(\sim)} \ar[l] \ar[d]^-\cong_-\mu &
	X_{+(X \times *)} \odot N/_X\partial N \ar[d]^-\cong \\
	C^u_{X \times X}(X \times N,(X \times N) - (X \times X)) \ar[d] &
	C^u_{X \times X}(X \times N,X \times \partial N) \ar[r]_-{(\sim)} \ar[l] \ar[d] &
	(X \times N)/_{(X \times X)}(X \times \partial N) \ar[d] \\
	C^u_{X \times X}(X \times N,(X \times N)-\Delta_X) &
	C^u_{X \times X}(X \times N,(X \times N)- \Gamma_{\frac\epsilon{2}}) \ar[r] \ar[l] &
	(X \times N)/_{(X \times X)}((X \times N) - \textup{int} \Gamma_{\frac\epsilon{2}}) \\
	C^u_{X \times X}(\Gamma_\epsilon,\Gamma_\epsilon - \Delta_X) \ar[u]_-\sim \ar[d] &
	C^u_{X \times X}(\Gamma_\epsilon, \Gamma_\epsilon - \Gamma_{\frac\epsilon{2}}) \ar[u]_-\sim \ar[d] \ar[r] \ar[l] &
	\Gamma_{\frac\epsilon{2}}/_{(X \times X)}\partial \Gamma_{\frac\epsilon{2}} \ar[u]_-\cong \ar[d] \\
	C^u(\R^n,\R^n-0) \barsmash X^I_{+(X \times X)} &
	C^u(\R^n,\R^n - B_{\frac\epsilon{2}}) \barsmash X^I_{+(X \times X)} \ar[r]_-{(\sim)} \ar[l] &
	B_{\frac\epsilon{2}} / \partial B_{\frac\epsilon{2}} \barsmash X^I_{+(X \times X)} \\
	& \ar@{<->}[u]_-\sim \ar@{<->}[ur]_-\sim\ar@{<->}[lu]_-\sim S^n \barsmash X^I_{+(X \times X)} &
}
} \]
All unlabeled maps are inclusions or collapses, except for the vertical maps in the last row, which arise from the formula $(x,u) \mapsto (u - i(x),\gamma_{x,u})$. The homeomorphism in the final column arises by observing that $X \times N$ is the pushout of $\Gamma_{\frac\epsilon{2}}$ and the closure of its complement along $\partial \Gamma_{\frac\epsilon{2}}$, and so we get this homeomorphism by comparing universal properties.

\begin{thm}[Costenoble-Waner duality, mapping cone version]\label{thm:cw_duality_2}
	If $X$ is a compact ENR, the dual of $\Sigma^\infty_{+(* \times X)} X$ over $X$ is $F_n C^u_{* \times X}(N,N-X)$, with coevaluation and evaluation maps
	\begin{align*}
	F_n S^n &\to F_n C^u_{* \times X}(N,N-X) \odot \Sigma^\infty_{+(X \times *)} X \\
	\Sigma^\infty_{+(X \times *)} X \odot F_n C^u_{* \times X}(N,N-X) &\to F_n S^n \barsmash X^I_{+(X \times X)}
	\end{align*}
	obtained by applying $F_n$ to the collapse and scanning maps defined above.
\end{thm}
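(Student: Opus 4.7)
The plan is to follow the proof strategies used for \autoref{thm:sw_duality_2} and \autoref{thm:fiberwise_sw_duality_2}, generalizing them to the bicategorical setting of $\Ex$. First, I would establish a parametrized analog of \autoref{lem:n-dual}: if $X \in \mc R(*\times B)$ and $Y \in \mc R(B\times *)$ are $f$-cofibrant retractive spaces and there exist maps $c\colon S^n \to Y \odot X$ and $e\colon X \odot Y \to S^n \barsmash U_B$ satisfying the two triangle identities up to homotopy (with an appropriate sign correction for $n$ odd coming from the $O(n)$ action on $S^n$), then applying $F_n$ yields a dual pair in $\Ex$ with $X$ as a 1-cell from $*$ to $B$ and $F_n Y$ as its dual over $B$. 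The proof of this lemma follows from the proof of \autoref{lem:n-dual} almost verbatim, since the symmetry isomorphism argument at spectrum level $2n$ only uses the $O(2n)$-action and is insensitive to the choice of base.

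With this lemma in hand, the theorem reduces to verifying the two triangle identities on the point-set level for the collapse and scanning maps. First I would use the two diagrams displayed before the statement to rearrange the coevaluation and evaluation so that every term is an unbased mapping cone $C^u_{*\times X}(-,-)$, and every map arises from a map of pairs. The equivalence $C^u(N,N-X) \simar C(S^n,S^n-X)$ follows from \autoref{lem:mapping_cones_equivalences}. Next, I would check using \autoref{lem:derived_circle_product} and \autoref{lem:derived_shadow} that each composition product appearing in the proposed triangle identities agrees with its derived composition product. The key observation is that all four spaces involved ($C^u_{*\times X}(N,N-X)$, $X_{+(X\times *)}$, and their variants) are $f$-cofibrant over their respective bases, and $X_{+(X\times *)} \to X \times *$ is a fibration because the projection to a point is always a fibration while the map to $X$ is the identity.

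The bulk of the verification consists of writing down the two triangle identities as diagrams of pairs, analogous to the two large diagrams in the proof of \autoref{thm:sw_duality_2}. For the first triangle identity $\bcr{X}{}{*} \to \bcr{X}{}{*} \odot Y \odot \bcr{X}{}{*} \to \bcr{X}{}{*}$, I would apply $\mu$ termwise and then use essentially the same homotopies $(p(v),v-x) \sim (x,v-x) \sim (x,v)$ from that proof, now interpreted as a sequence of maps of pairs over $X$. For the second triangle identity $F_n Y \to F_n Y \odot \bcr{X}{}{*} \odot F_n Y \to F_n Y$, I would do the same with the homotopies $(v,w-p(v)) \sim (v,w-v) \sim (w,w-v) \sim (w,-v)$, again interpreted in the parametrized setting.

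The main obstacle will be tracking the path-valued component of the evaluation map, namely the $\gamma_{x,u}(t) = p((1-t)i(x)+tu)$ factor that sends each point of $\Gamma_\epsilon$ to a path in $X^I$. This component is what distinguishes Costenoble-Waner duality from Spanier-Whitehead duality, and every homotopy in the triangle identity computations must extend to a fiberwise homotopy over $X \times X$ that is compatible with these path-lifting data. For the first triangle identity, the homotopy $(x,v-tx)$ happens inside a single fiber, which is straightforward; for the second identity, however, the homotopy from $(p(v),v-x)$ to $(x,v-x)$ involves moving the base point in $X$, and so must be accompanied by a homotopy of paths in $X^I$ interpolating between the constant path at $p(v)$ and $\gamma_{x,v}$. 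These path homotopies are provided by re-parametrizing $\gamma$, and compactness of $X$ (which makes $i(X) \to N_\epsilon$ proper) ensures that everything assembles into a continuous map. Once the point-set identities are verified, the parametrized $n$-duality lemma completes the proof.
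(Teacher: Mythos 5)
Your overall strategy matches the paper's: reduce to a space-level bicategorical $n$-duality lemma (which the paper calls \autoref{cw_n-dual} and proves exactly as you predict, by observing the $O(2n)$-argument is insensitive to the base), rearrange both maps to live entirely in unbased mapping cones via $\mu$ and $\mu_\odot$, verify the triangle identities via explicit homotopies of pairs, and use \autoref{lem:derived_circle_product} to pass the point-set identities to the homotopy category. Up to this point your proposal is sound and close to what the paper does.

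However, there is a genuine gap in your treatment of the second triangle identity. You assert the path homotopies needed to connect $\gamma_{p(w),w}$ to the constant loop $c_{p(w)}$ ``are provided by re-parametrizing $\gamma$,'' but reparametrization only handles the homotopies of the form $\gamma_{x,((1-t)u+tv)}$ that appear in the first triangle identity. In the second identity, the critical step is finding a homotopy of based loops $\gamma_{p(w),w} \sim c_{p(w)}$ that is continuous in $w$ and respects the subspace structure of the mapping cones, and this cannot be done by reparametrization: the point $p(w) \in X$ is a based loop in the free loop space $\Lambda X$, and a nearby non-constant loop $\gamma_{p(w),w}$ need not reparametrize to it through a path fixing the basepoint. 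The paper's proof invokes \autoref{lem:awesome} (the inclusion of constant loops $X \to \Lambda X$ is an $f$-cofibration over $X$), which provides a fiberwise deformation retract of a neighborhood of $X$ in $\Lambda X$, and then uses uniform continuity to choose $\delta$ and $\epsilon'$ small enough that $N_\delta \times_X \Gamma_{\epsilon'}$ maps into that neighborhood. Without this, the second triangle identity is not verified; this is precisely the technical point that makes the Costenoble-Waner argument harder than the Spanier-Whitehead one, and your claim about compactness making $i(X) \to N_\epsilon$ proper does not substitute for it — compactness is used there for uniform continuity, not to assemble the path homotopy.
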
\index{Costenoble-Waner duality}

\begin{rmk}
	We can re-derive Spanier-Whitehead duality from Costenoble-Waner duality. If we compose the dual pair $(X_{+(X \times *)},F_n C_{* \times X}(N,N-X))$ with the dual pair $(X_{+(* \times X)},X_{+(X \times *)})$, the resulting dual pair $(X_+,F_n C(N,N-X))$ is just a pair of spectra over $* \times *$, with the same coevaluation and evaluation maps as before.
\end{rmk}

\begin{rmk}
	Using \autoref{thm:cw_duality_2}, the twisted Poincar\' e duality theorem from \autoref{twisted_poincare_duality} generalizes from a smooth manifold to any Poincar\' e duality space $X$, because the condition of being a Poincar\'e duality space is equivalent to $C_X(N,N-X) \to X$ being stably equivalent to a spherical fibration, see \cite{browder_spivak}, \cite[Thm A]{klein2001dualizing}. The same argument as in \autoref{twisted_poincare_duality} gives an isomorphism
	\[ H^q(X;\mc E) \cong H_{d-q}(X;\ti{\mc E}) \]
	where $d$ is the formal dimension of the Poincar\' e duality space $X$. In particular, we get this isomorphism when $X$ is any closed topological manifold.
\end{rmk}

\begin{cor}\label{cor:reidemeister_formula}
	When $X$ is a compact ENR, the Reidemeister trace $R(f)$ of a map $f\colon X \to X$ is the desuspension of the map of spaces
	\[ \xymatrix @R=0em{
		S^n \ar[r] & S^n_\epsilon \sma (\Lambda^f X)_+ \\
		v \ar@{|->}[r] & (v - f(p(v))) \sma \gamma_{f(p(v)),v}.
	} \]
\end{cor}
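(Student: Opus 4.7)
The plan is to adapt the direct computation used to prove the Lefschetz formula (\autoref{cor:lefschetz_formula}) to the bicategorical setting, using the explicit coevaluation and evaluation provided by Costenoble--Waner duality (\autoref{thm:cw_duality_2}) and the definition of $R(f)$ from \autoref{ex:traces_in_ex}. First I observe that $\widetilde M := \Sigma^\infty_{+(*\times X)} X$, viewed as a 1-cell $*\to X$, is Costenoble--Waner dual to $\widetilde M^* := F_n C^u_{*\times X}(N,N-X)$, viewed as a 1-cell $X\to *$: this is the same dual pair as in \autoref{thm:cw_duality_2} with the roles of source and target of the 1-cells swapped, and the coevaluation $\widetilde c\colon \Sph \to \widetilde M \odot \widetilde M^*$ and evaluation $\widetilde e\colon \widetilde M^* \odot \widetilde M \to U_X$ are given by the same collapse and scanning maps.

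The Reidemeister trace is the twisted trace (with $Q=\Sph$ and $P=\Sigma^\infty_{+(X\times X)}X^f$) of the graph map $f\colon Q\odot \widetilde M \to \widetilde M \odot P$, which as in \autoref{sec:traces_in_bicategories} is the composite
\[
\Sph \;\cong\; \shad{\Sph\odot U_*} \;\xrightarrow{\shad{\id\odot\widetilde c}}\; \shad{\Sph\odot \widetilde M\odot \widetilde M^*} \;\xrightarrow{\shad{f\odot\id}}\; \shad{\widetilde M\odot P\odot \widetilde M^*} \;\cong\; \shad{\widetilde M^*\odot \widetilde M\odot P} \;\xrightarrow{\shad{\widetilde e\odot\id}}\; \shad{U_X\odot P} \;\cong\; \shad{P}.
\]
The identification $\shad{P}\simeq \Sigma^\infty_+ \Lambda^f X$ is the one from \autoref{ex:traces_in_ex}, obtained from $\shad{X^f_{+(X\times X)}}\cong \Lambda^f X_+$ after replacing $X$ by $X^I$.

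Next I compute what each piece does on underlying spaces at spectrum level $n$, exactly as in the proof of \autoref{cor:lefschetz_formula} but keeping track of the path component. A point $v\in S^n$ with $v\in N$ is sent by the coevaluation to the element $v\sma p(v)$ of $\widetilde M\odot \widetilde M^*$; since $f\colon \widetilde M\to \widetilde M\odot P$ is induced by the graph map $x\mapsto(f(x),x)\in X\times_X X^f$, this becomes the element $v\sma f(p(v))\sma p(v)$ of $\widetilde M\odot P\odot \widetilde M^*$, where $f(p(v))$ sits in the first $X$-factor of $P$ and $p(v)$ in the second. After the rotator isomorphism, the scanning evaluation $\widetilde e\odot\id$ applies the formula $(x,u)\mapsto(u-i(x),\gamma_{x,u})$ with $x=f(p(v))$ and $u=v$, producing
\[
\bigl(v-f(p(v))\bigr)\sma\bigl\{\,t\mapsto p((1-t)f(p(v))+tv)\,\bigr\}\sma p(v)
\]
in $S^n_\epsilon\barsmash X^I_{+(X\times X)}\barsmash X_{+X}$; the $X$-factor $p(v)$ is then absorbed by the rotator/shadow identification, while the path $\gamma_{f(p(v)),v}$ goes from $f(p(v))$ to $p(v)$ and is therefore an element of $\Lambda^f X$. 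This gives precisely the map stated in the corollary. For $X$ merely a compact ENR (no mapping cylinder neighborhood), one uses the mapping cone form of \autoref{thm:cw_duality_2} exactly as at the end of the proof of \autoref{cor:lefschetz_formula}: the auxiliary maps marked $(\sim)$ and $\sim$ in the diagrams preceding the theorem are weak equivalences, so the above point-set composite still computes the homotopy-categorical trace.

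The main obstacle I anticipate is the bookkeeping: the composition products $\odot$ are defined as $(r_B)_!\Delta_B^*(-\barsmash-)$, and one must check that the point-set formulas for $\widetilde c$, $\widetilde e$, and the graph map $f$ really compose to the displayed formula under the unitors and the rotator $\theta$. Here I would exploit the rigidity theorem (\autoref{thm:spectra_rigidity}): all the functors involved are iterated $f_!g^*(-\barsmash\cdots\barsmash-)$ with $f$ injective on fibers of $g$, so the coherence isomorphisms of the bicategory are uniquely determined, and it suffices to check the formula on representative points, exactly as above. Alternatively, using \autoref{prop:base_change_all_isomorphic} one may describe $R(f)$ as the trace of the evident isomorphism of base-change objects $\bcr{X}{}{*}\cong \bcr{X}{f}{X}\odot\bcr{X}{}{*}$ and argue fiberwise, but the explicit point-set calculation above is the most direct route to the stated geometric formula.
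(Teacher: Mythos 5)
Your proof follows the same strategy as the paper's: compose the explicit coevaluation and evaluation from \autoref{thm:cw_duality_2} with the graph map, apply the rotator, and compute on representative points, then handle the general compact ENR case via the commuting diagrams preceding that theorem. Working from the twisted trace of \autoref{ex:traces_in_ex} rather than from the isomorphism of base-change objects is cosmetic --- the two descriptions agree by \autoref{prop:base_change_all_isomorphic} --- and the resulting formula is the paper's.

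There is, however, a gap in the justification. You appeal to rigidity (\autoref{thm:spectra_rigidity}) to conclude that ``it suffices to check the formula on representative points.'' But rigidity only pins down which natural isomorphisms obtain between the point-set functors; it does not tell you that the point-set circle products and shadows appearing in your composite actually model the \emph{derived} circle products and shadows that define the bicategorical trace in $\Ex$. The twisting 1-cell $\Sigma^\infty_{+(X\times X)}X^f$ (the paper's $\bcr{X}{f}{X}$) is $f$-cofibrant but is not a fibration over $X \times X$, so \autoref{lem:derived_circle_product} and \autoref{lem:derived_shadow} do not apply to the products taken along it, and nothing forces your point-set composite to compute the trace in the homotopy category. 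The paper closes exactly this gap by replacing the base-change 1-cell with its monoidal fibrant replacement $P\bcr{X}{f}{X}$: this makes all the intermediate circle products and shadows derived, and in the compact ENR case turns the backwards arrows in the mapping-cone diagrams before \autoref{thm:cw_duality_2} into genuine weak equivalences of derived products, without changing the underlying point-set formula up to canonical equivalence. Your argument should include this step or an equivalent substitute; as written it is not complete.
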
\index{Reidemeister trace $R(f)$}

\begin{proof}
	We use the observation before \autoref{lem:derived_shadow} to express our shadows as composition products. When $X$ has a mapping cylinder neighborhood, we compose the maps of the Reidemeister trace to get
	\[ \resizebox{\textwidth}{!}{
		\xymatrix @R=0.5em{
		S^n \ar[r] &
		N/_X\partial N \odot^X X_{+(X \times *)} \ar[r]^-\cong &
		N/_X\partial N \odot^X \bcr{X}{f}{X} \odot^X X_{+(X \times *)} \ar[dd]^-\cong \\
		\\
		S^n_\epsilon \sma (\Lambda^f X)_+ &
		\ar[l]_-\cong S^n_\epsilon \barsmash X^I_{+(X \times X)} \odot^{X \times X} \bcr{X}{f}{X} &
		\ar[l] (X \times N)/_{(X \times X)}(X \times \partial N) \odot^{X \times X} \bcr{X}{f}{X} \\
		v \ar@{|->}[r] & (v,p(v)) \ar@{|->}[r] & (v,p(v),f(p(v)) \ar@{|->}[dd] \\
		\\
		(v - f(p(v)),\gamma_{f(p(v)),v}) & (v - f(p(v)),\gamma_{f(p(v)),v},p(v)) \ar@{|->}[l] & (f(p(v)),v,p(v)) \ar@{|->}[l]
		}
	} \]
	Applying $P$ to the base-change 1-cell makes all the products derived, but does not change them up to equivalence, so this composite is the trace as calculated in the homotopy category. If $X$ is a general compact ENR, we argue that this composite is still the trace of $f$ using the commuting diagrams before \autoref{thm:cw_duality_2}. The diagram is quite large, but most of its terms are the terms in the diagram for the evaluation map, tensored over $X \times X$ with the base-change 1-cell $\bcr{X}{f}{X}$. To make the circle products and shadows derived it is sufficient to put a $P$ on the base-change 1-cell. This makes all of the backwards maps into weak equivalences, so that we get a commuting diagram in the homotopy category.
\end{proof}

Examining the above formula, when the fixed points of $f$ are isolated we see that it vanishes away from the fixed points, and near each one it gives a loop in $\Lambda^f X$ that is very nearly constant, together with a map of spheres whose degree is the index of that fixed point. This leads to a formula:
\begin{cor}\label{reidemeister_trace_is_weighted_sum_of_fixed_points}
	When $f\colon X \to X$ has isolated fixed points $x$, regarded as constant paths $[x] \in \pi_0(\Lambda^f X)$, then
	\[ R(f) = \left( \sum_{x \in \Fix(f)} \ind(x) \cdot [x] \right) \in \pi_0(\Sigma^\infty_+ \Lambda^f X). \]
\end{cor}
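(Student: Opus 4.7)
The plan is to directly unpack the explicit formula from \autoref{cor:reidemeister_formula} and localize it near each fixed point. Since $X$ is a compact ENR and the fixed points of $f$ are isolated, the set $\mathrm{Fix}(f)$ is finite. By continuity of $f\circ p$ and the identity map on $N$, we can choose disjoint small closed neighborhoods $U_x \subseteq N$ of each fixed point $x$ (thought of as sitting in $\R^n$ via $i$) and a value $\epsilon > 0$ so small that $|v - f(p(v))| \geq \epsilon$ whenever $v \in N \setminus \bigcup_x U_x$. With this choice, the formula $v \mapsto (v - f(p(v))) \sma \gamma_{f(p(v)),v}$ sends the complement of $\bigcup_x U_x$ (and all of $\R^n \setminus N$) to the basepoint of $S^n_\epsilon \sma (\Lambda^f X)_+$. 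Consequently, the map $S^n \to S^n_\epsilon \sma (\Lambda^f X)_+$ factors, up to homotopy, through the pinch map $S^n \to \bigvee_{x \in \mathrm{Fix}(f)} (U_x/\partial U_x)$, and by additivity in $\pi_0$ it suffices to compute the class contributed by each summand separately.

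For a fixed $x \in \mathrm{Fix}(f)$, consider the restriction to $U_x/\partial U_x$. The second coordinate of the formula is the path
\[ \gamma_{f(p(v)),v}(t) = p\bigl((1-t)f(p(v)) + t v\bigr), \]
which is indeed a loop in $\Lambda^f X$ because its endpoint condition $\gamma(0) = f(\gamma(1))$ is forced by the retraction $p\circ i = \mathrm{id}_X$ and the identity $f(x) = x$. On $U_x$, as $U_x$ shrinks to $\{x\}$, this loop converges uniformly to the constant loop at $x$. Thus, upon further shrinking $U_x$ if necessary, the second coordinate is homotopic through loops in $\Lambda^f X$ to the constant family $[x]$. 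This reduces the contribution from $x$ to the wedge summand
\[ U_x/\partial U_x \;\to\; S^n_\epsilon \sma \{[x]\}_+ \;\subseteq\; S^n_\epsilon \sma (\Lambda^f X)_+ \]
given by $v \mapsto (v - f(p(v))) \sma [x]$.

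The first coordinate $v \mapsto v - f(p(v))$ is, by the standard definition recalled just after the proof of \autoref{cor:lefschetz_formula}, a representative of the fixed-point index $\mathrm{ind}(x) \in \Z$. Hence the contribution of the summand $U_x/\partial U_x$ is $\mathrm{ind}(x) \cdot [x] \in \pi_0(\Sigma^\infty_+ \Lambda^f X)$. The main obstacle here is really just bookkeeping: ensuring that the choice of $\epsilon$ and of neighborhoods $U_x$ is simultaneously fine enough that (a) the map is supported in $\bigcup_x U_x$, (b) the loop $\gamma_{f(p(v)),v}$ stays close enough to the constant loop at $x$ for the straight-line homotopy in $\Lambda^f X$ to make sense, and (c) the first-coordinate map on $U_x/\partial U_x$ represents the index. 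All three conditions are arranged by the uniform continuity of $f \circ p$ on a compact neighborhood of $\mathrm{Fix}(f)$. Summing over $x$ and using additivity of $\pi_0$ under the pinch map completes the proof.
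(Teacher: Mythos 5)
Your proposal is correct and spells out exactly the informal observation the paper makes immediately before the corollary (that the scanning map vanishes away from the isolated fixed points, contributes a nearly-constant loop near each one, and the first coordinate represents the index); the paper offers no more detailed proof than that one-sentence sketch. The only imprecision is the phrase ``straight-line homotopy in $\Lambda^f X$'': what actually works is a straight-line contraction of $U_x$ onto $i(x)$ in $\R^n$, which transports the family $v \mapsto \gamma_{f(p(v)),v}$ to the constant family at $c_x$ within $\Lambda^f X$ (each intermediate $\gamma_{f(p(v(s))),v(s)}$ automatically satisfies the endpoint constraint), and this homotopy of the second coordinate alone respects the quotient by $\partial U_x$ because the first coordinate is already at the basepoint there.
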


\begin{ex}
	The flip map of $S^1$ has two fixed points, each of index $+1$, and $\Lambda^f S^1$ has two components. Each fixed point lands in a different component, so that the Reidemeister trace $R(f) = (1,1)$. This proves that any map homotopic to $f$ must have at least two fixed points.
\end{ex}

We can run the discussion fiberwise again. Let $E \to B$ be any Hurewicz fibration. To form the fiberwise version of $X^I$, define $\ti E$ by the pullback
\[ \xymatrix @R=1.5em{
	\ti E \ar[d] \ar[r] & E^I \ar[d] \\
	B \ar[r] & B^I,
} \]
in other words the closed subspace of $E^I$ of paths whose projection to $B$ is constant. To check that the evaluation at the endpoints map $\ti E \to E \times_B E$ is a fibration, we compare universal properties and see that if we add a disjoint copy of $E \times_B E$ to the source, the map is homeomorphic to
\[ \barmap_*(I_+,E_{+B}) \to \barmap_*((\partial I)_+,E_{+B}) \]
and is therefore a fibration by \autoref{h_fibrations_pullback_hom}. The inclusion of constant paths $E \to \ti E$ is a weak equivalence over $E \times_B E$.

Now suppose that $E \to B$ is a fiber bundle fiber $X$ and base $B$ both compact ENRs. Pick a fiberwise embedding $i\colon E \to B \times \R^n$ over $B$, a neighborhood $N$ that fiberwise projects back to $E$ by $p$ (possible by \cite[1.8]{dold1974fixed}), and choose $\epsilon > 0$ so that the fiberwise $\epsilon$-tube $N_\epsilon$ is contained in $N$.
Define two closed subspaces of $E \times_B N$ by
\[ \Delta_E = \{ (e,i(e)) : e \in E \},
\qquad
\Gamma_\epsilon = \{ (e,u) : d(i(e),u) \leq \epsilon \}. \]
Intuitively, $\Delta_E$ is an embedded copy of $E$ and $\Gamma_\epsilon$ is its tubular neighborhood. For every $(e,u) \in \Gamma_\epsilon$, since the line segment in $\R^n$ from $i(e)$ to $u$ is contained in $N_\epsilon$, we may apply $p$ to that line segment and get a path in $E$ that we call $\gamma_{x,u}$:
\[ \gamma_{x,u}(t) = p((1-t)i(x) + tu). \]
Note that this lies over a constant path in $B$. So this defines a continuous function $\Gamma_\epsilon \to \ti E$ of spaces over $E \times_B E$.

\begin{thm}[Fiberwise Costenoble-Waner duality, neighborhood version]\label{thm:fiberwise_cw_duality_1}
	If in addition $N$ is a fiberwise mapping cylinder neighborhood, the dual of $\Sigma^\infty_{+(E \times_B B)} E$ in $\Ex_B$ is $F_n N/_E\partial N$ as a 1-cell from $B$ to $E$, with coevaluation and evaluation maps
	\begin{align*}
	F_n S^n_B &\to F_n N/_E\partial N \odot^E_B E_{+(E \times_B B)} \\
	E_{+(E \times_B B)} \odot^B_B F_n N/_E\partial N &\to F_n S^n \barsmash \ti E_{+(E \times_B E)}
	\end{align*}
	obtained by applying $F_n$ to the maps of retractive spaces
	\[ \begin{array}{cclcc}
	S^n_B & \overset{c}\to & N/_B \partial N && (E \times_B N)/_{(E \times_B E)} (E \times_B \partial N) \overset{e}\to S^n_\epsilon \barsmash \ti E_{+(E \times_B E)} \\[.5em]
	v &\mapsto & \left\{ \begin{array}{ccl} v && v \in N \\ {*} && v \not\in N \end{array} \right. && (e,u) \mapsto \left\{ \begin{array}{ccl} (u-i(e),\gamma_{e,u}) && (e,u) \in \Gamma_\epsilon \\ {*} && (e,u) \not\in \Gamma_\epsilon \end{array} \right.
	\end{array} \]
	over $B$ and $E \times_B E$, respectively.
\end{thm}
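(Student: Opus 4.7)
The plan is to generalize the proof of \autoref{thm:cw_duality_2} fiberwise over $B$, following the same three-step strategy: (i) state a more flexible mapping-cone version that holds for any fiberwise neighborhood $N$ (not necessarily a mapping cylinder), (ii) reduce the duality to a space-level $n$-duality via a fiberwise analog of \autoref{lem:n-dual}, and (iii) verify the two triangle identities by explicit fiberwise homotopies. The mapping-cylinder version stated here will then follow because when $N$ is a fiberwise mapping cylinder neighborhood, the map $C^u_{E\times_B B}(N,\partial N) \to N/_E \partial N$ is a fiberwise homotopy equivalence, so our proposed coevaluation and evaluation maps agree in the homotopy category with the mapping-cone versions built from the diagrams preceding \autoref{thm:cw_duality_2} (carried out over $B$).

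First I would set up the relevant point-set objects. Embed $i\colon E \hookrightarrow B\times\R^n$ as a fiberwise embedding, pick the neighborhood $N$ with retraction $p$, choose $\epsilon>0$ so that $N_\epsilon \subseteq N$, and define $\Delta_E, \Gamma_\epsilon \subseteq E\times_B N$ and the scanning path $\gamma_{e,u}\in \widetilde E$ as in the statement. The key isomorphisms we need are $N/_E\partial N \odot_B^E E_{+(E\times_B B)} \cong N/_B \partial N$ (because $E_{+(E\times_B B)}$ is a base-change 1-cell, so this product pushes forward to $B$) and $E_{+(E\times_B B)}\odot_B^B F_n N/_E\partial N \cong F_n (E\times_B N)/_{(E\times_B E)}(E\times_B \partial N)$ (for the same reason, but pulling back to $E\times_B E$). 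These are direct consequences of the smbf structure on $\ho \Osp_{(B)}$ (\autoref{prop:smbf_over_b}) together with \autoref{lem:derived_circle_product} and its corollaries, which also confirm that under our fibrancy/cofibrancy assumptions the strict composition products agree with their derived versions.

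Next I would prove a fiberwise analog of \autoref{lem:n-dual}: if $X\in \mc R(E)$ and $Y\in\mc R(E)$ (with appropriate $f$-cofibrancy and $h$-fibrancy over $B$) carry space-level maps $c\colon S^n_B \to Y\sma_B X$ and $e\colon X\sma_B Y \to S^n_B$ satisfying the usual two triangle identities up to fiberwise homotopy (with a $(-1)^n$-twist in the second), then $(F_0 X, F_n Y)$ is a dual pair in the relevant homotopy bicategory. The argument is formally identical to \autoref{lem:n-dual}: $F_n$ commutes with $P$ and with the relative smash product up to equivalence on cofibrant inputs, and the sign/shuffle that appears in the second triangle identity lives in the path component of $O(2n)$ of the identity. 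This is the step where the coherent deformability of $\odot_B$ (via recipe (2) for $\Ex_B$) is used to reduce a homotopy-category identity to a homotopy on the point-set level.

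The heart of the proof then consists of carrying out the two diagram chases from the proof of \autoref{thm:cw_duality_2} fiberwise over $B$. The crucial point is that every space and every homotopy appearing in that argument can be re-interpreted over $B$: the homotopy $(x,v-tx)$ becomes $(e,v-ti(e))$, the homotopy $(p((1-t)v+tx),v-x)$ becomes $(p((1-t)v+ti(e)),v-i(e))$ fiber-over-fiber, and similarly for the second triangle identity. What needs to be checked is that each deformation genuinely takes place in a fiberwise pair over $B$ (which it does, because $p$ and the linear combinations in $\R^n$ stay in the same $B$-fiber). The product map $\mu$ from \autoref{product_of_cones} is used in exactly the same way, taking the external smash product rel $B$ of pairs to a single pair over $B\times_B B = B$; its proof carries over verbatim to $\barsma{B}$ since the relevant normality hypotheses are inherited from subspaces of $B\times\R^n$. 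The main obstacle I anticipate is bookkeeping: making sure each mapping cone and each smash product appearing in the diagram chase is sufficiently cofibrant and fibrant for the point-set identity to imply the homotopy-category identity in $\Ex_B$. This is where we invoke \autoref{lem:derived_circle_product}, \autoref{lem:derived_shadow}, and the assumption that $E\to B$ and $\widetilde E\to E\times_B E$ are fibrations, together with the fact that all mapping cones appearing are $f$-cofibrant by \autoref{prop:h_cofibrations_pushout_product}. Once the two fiberwise triangle identities are established on the point-set level for suitably cofibrant-fibrant representatives, the fiberwise $n$-duality lemma upgrades them to the required duality in $\Ex_B$, completing the proof.
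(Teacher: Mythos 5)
Your proposal follows the paper's route exactly: prove the mapping-cone version (\autoref{thm:fiberwise_cw_duality_2}) by running the proof of \autoref{thm:cw_duality_2} fiberwise over $B$, with \autoref{lem:derived_circle_product} and \autoref{lem:derived_shadow} supplying the cofibrancy/fibrancy needed to pass from point-set homotopies to identities in $\Ex_B$, and then deduce the mapping-cylinder case by observing that when $\partial N \to N$ is a fiberwise $f$-cofibration the maps marked $[\sim]$ and $(\sim)$ in the diagrams are equivalences.

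One imprecision worth flagging in your second paragraph: the reduction to a space-level duality should cite the bicategorical $n$-duality lemma, which the paper already states for $\Ex_B$ as \autoref{cw_n-dual}, rather than a hypothetical fiberwise analog of \autoref{lem:n-dual} with both coevaluation and evaluation phrased via $\sma_B$. In Costenoble-Waner duality the two products $N/_E\partial N \odot^E_B E_{+(E\times_B B)}$ (a retractive space over $B$) and $E_{+(E\times_B B)}\odot^B_B N/_E\partial N$ (a retractive space over $E\times_B E$) are genuinely distinct 1-cells over different bases; collapsing both to $\sma_B$ would reduce you to fiberwise Spanier-Whitehead duality (\autoref{thm:fiberwise_sw_duality_1}), not Costenoble-Waner duality. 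Your first paragraph already identifies the correct composition products, so this is a mislabeling rather than a conceptual error, but the $n$-duality lemma you invoke must carry the bicategorical bookkeeping.
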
\index{Costenoble-Waner duality!fiberwise}

\begin{thm}[Fiberwise Costenoble-Waner duality, mapping cone version]\label{thm:fiberwise_cw_duality_2}
	In general, the dual of $\Sigma^\infty_{+E \times_B B} E$ in $\Ex_B$ is $F_n C_{B \times_B E}(N,N-E)$, with maps
	\begin{align*}
	F_n S^n_B &\to F_n C_{B \times_B E}(N,N-E) \odot^E_B E_{+(E \times_B B)} \\
	E_{+(E \times_B B)} \odot^B_B F_n C_{B \times_B E}(N,N-E) &\to F_n S^n \barsmash \ti E_{+(E \times_B E)}
	\end{align*}
	given by the analogs of the maps from \autoref{thm:cw_duality_2}, with all products taken over $B$ and all instances of $X$ replaced by $E$.
\end{thm}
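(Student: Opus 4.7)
The plan is to combine the strategies used in the proofs of \autoref{thm:cw_duality_2} and \autoref{thm:fiberwise_sw_duality_2}. First I would establish a fiberwise bicategorical analog of \autoref{lem:n-dual}: given $f$-cofibrant retractive spaces $X \in \mc R(B \times_B E)$ and $Y \in \mc R(E \times_B B)$ (interpreted as 1-cells from $B$ to $E$ and from $E$ to $B$ respectively in the point-set bicategory $\Osp_{(B)}/\cat{Fib}_B$), together with a space-level $n$-duality using the relative composition products $\odot_B$ and the trivial sphere $S^n_B$, the pair $(F_n X, F_n Y)$ forms a dual pair in $\Ex_B$. The proof proceeds as in \autoref{lem:n-dual}, with one wrinkle: we must ensure that the point-set composition products model the derived ones. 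By \autoref{lem:derived_circle_product} and \autoref{lem:derived_shadow}, this holds provided the relevant spaces are $h$-fibrant over $B$; the hypotheses that base and fiber are compact ENRs, together with $E \to B$ being a fiber bundle, make $E$ and $E \times_B E$ $h$-fibrant over $B$.

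Second, as in the proof of \autoref{thm:fiberwise_sw_duality_2}, I would use the diagrams stated just before the theorem to rearrange the proposed coevaluation and evaluation maps so that they arise from maps of pairs of spaces. The fiberwise analog of \autoref{lem:mapping_cones_equivalences} (which holds because $E \to B$ is proper and $B$ can be handled cell by cell after reducing to a finite complex by a retraction argument, exactly as in the proof of \autoref{cor:fiberwise_lefschetz_formula}) supplies the equivalences needed for the bottom-left route through each diagram. This reduces the triangle identities in the homotopy bicategory to identities verified on maps of pairs over appropriate fiber products over $B$.

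Third, I would verify the two point-set triangle identities using the fiberwise analogs of the explicit homotopies from the proof of \autoref{thm:cw_duality_2}. For the first identity, the homotopies $(p(e), u - e) \sim (e, u - e) \sim (e, u)$ (together with the induced homotopies of path coordinates $\gamma_{e,u}$ obtained by applying $p$ to the appropriate line segments) give the required maps of pairs over $E \times_B E$. For the second identity, after applying the termwise equivalence $\mu_{\odot_B}$ (which is an equivalence by the fiberwise version of \autoref{product_of_cones}, valid because $E \times_B N$ is a metric space hence normal), the homotopies $(v, w - p(v)) \sim (v, w - v) \sim (w, w - v) \sim (w, -v)$ fit together fiberwise over $B$ to give the diagram of pairs on $E \times_B N$ analogous to the one in the proof of \autoref{thm:cw_duality_2}.

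The main obstacle will be the same as in \autoref{thm:fiberwise_sw_duality_2}: when $\partial N \to N$ is not an $h$-cofibration, the middle routes in the rearrangement diagrams are not weak equivalences, and some terms are only $f$-cofibrant rather than both $f$-cofibrant and $h$-fibrant. As a result, the point-set diagrams commute up to homotopy, but the derived-product diagrams a priori do not receive these homotopies directly. The resolution, as in \autoref{thm:fiberwise_sw_duality_2}, is a brief diagram chase: the map from the non-derived to the derived version of the second triangle identity is an equivalence on the left-hand column (the source of the identity), and this is enough to deduce commutativity in the homotopy category from commutativity on the point-set level. No essentially new ideas are required beyond combining the techniques already developed.
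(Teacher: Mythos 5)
Your proposal follows essentially the same route as the paper: reduce via the fiberwise case of \autoref{cw_n-dual} to a space-level $n$-duality, perform the same mapping-cone rearrangement and $\mu_{\odot_B}$ excision equivalences, verify the point-set triangle identities by the same explicit homotopies carried out fiberwise, then pass to the homotopy category via \autoref{lem:derived_circle_product} and the observation that the left-hand column of the second triangle identity suffices. The only cosmetic difference is that you state the fiberwise $n$-duality lemma as something to be established, whereas the paper's \autoref{cw_n-dual} already covers the case over $B$.
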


\begin{cor}\label{cor:fiberwise_reidemeister_formula}
	When $E$ is a fiber bundle over a compact ENR $B$ with fiber $X$ a compact ENR, the fiberwise Reidemeister trace $R_B(f)$ of a fiberwise map $f\colon E \to E$ is the desuspension over $B$ of the map of spaces over $B$
	\[ \xymatrix @R=0em{
		S^n_B \ar[r] & S^n_\epsilon \barsmash (\Lambda^f_B E)_{+B} \\
		(v,b) \ar@{|->}[r] & (v - f_b(p_b(v))) \sma \gamma_{f_b(p_b(v)),v}.
	} \]
\end{cor}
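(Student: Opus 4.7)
The plan is to mirror the proof of \autoref{cor:reidemeister_formula} in the fiberwise setting, substituting fiberwise Costenoble-Waner duality (\autoref{thm:fiberwise_cw_duality_1} and \autoref{thm:fiberwise_cw_duality_2}) for its non-fiberwise counterparts, and using the bicategory $\Ex_B$ (with products $\odot_B^?$ and shadow $\shad{-}_B$) in place of $\Ex$. As in \autoref{ex:fiberwise_reidemeister_trace}, the fiberwise Reidemeister trace is the trace of the canonical isomorphism of base-change 1-cells $\Sigma^\infty_{+(B\times_B E)} E \cong \Sigma^\infty_{+(B\times_B E)}E \odot_B^E \bcr{E}{f}{E}$ over $E\times_B E$, and via the observation preceding \autoref{lem:derived_shadow} (adapted rel $B$) the resulting shadow may be computed as the composition product $\Sigma^\infty_{+(E\times_B B)} E \odot_B^B F_n C_{B\times_B E}(N,N-E) \odot^{E\times_B E} P\bcr{E}{f}{E}$, landing in $\Sigma^\infty_{+B}\Lambda^f_B E$ after applying the fiberwise evaluation map.

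First I would handle the case where $N \to E$ is a fiberwise mapping cylinder neighborhood, applying \autoref{thm:fiberwise_cw_duality_1} directly. Tracing through the composition product, exactly as in the proof of \autoref{cor:reidemeister_formula} but now with all products taken over $B$ and with $\ti E_{+(E\times_B E)}$ in place of $X^I_{+(X\times X)}$, gives a composite
\[ S^n_B \to N/_E\partial N \odot_B^E E_{+(E\times_B B)} \to \ldots \to S^n_\epsilon \barsmash \ti E_{+(E\times_B E)} \odot^{E\times_B E} \bcr{E}{f}{E} \cong S^n_\epsilon \barsmash (\Lambda^f_B E)_{+B} \]
whose point-set formula is precisely $(v,b)\mapsto (v - f_b(p_b(v))) \sma \gamma_{f_b(p_b(v)),v}$. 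Applying $P$ to the base-change 1-cell makes all products derived without changing the homotopy type, so this represents the trace in $\ho\Osp_{(B)}$.

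For a general $N$, I would use \autoref{thm:fiberwise_cw_duality_2} and tensor the large diagram that precedes \autoref{thm:cw_duality_2} (the definition of the evaluation map, rewritten fiberwise) with the base-change 1-cell $P\bcr{E}{f}{E}$ over $E \times_B E$. By \autoref{lem:derived_circle_product} and its corollary for $\barsma{B}$, since $E$ and $B$ are $h$-fibrant over $B$ and $P\bcr{E}{f}{E}$ is a $q$-fibration, all the composition products in this enlarged diagram are fiberwise-equivalent to their derived composition products. The maps marked $\sim$ or $(\sim)$ in that diagram (excision for $N \times_B N \subset \R^n_B \times_B N$, and the various collapses) remain equivalences after tensoring with $P\bcr{E}{f}{E}$. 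So the outside route of the enlarged diagram computes the same map in the homotopy category as the ``nice'' formula, matching the claimed point-set formula.

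The main obstacle will be verifying the fiberwise version of \autoref{lem:mapping_cones_equivalences} and the excision step in the bottom-left of the evaluation diagram, because $E$ is only a compact ENR fiberwise over a compact ENR base, so one cannot a priori triangulate $B\times \R^n$ with $\partial N \to N$ an $h$-cofibration. To circumvent this, I would first prove the corollary when $B$ is a finite simplicial complex, using the same simplicial-refinement trick as in the proof of \autoref{cor:fiberwise_lefschetz_formula} to arrange $\partial N\to N$ to be a fiberwise $h$-cofibration; then for a general compact ENR base $B$, embed $j\colon B\hookrightarrow K$ as a retract of a finite complex, observe that $j^*\colon \ho\Osp_{(K)} \to \ho\Osp_{(B)}$ is a strong shadow functor by \autoref{prop:little_bit_of_functoriality} (applied to the evident map of smbfs induced by pullback along $j$), and invoke \autoref{bicategory_map_preserves_traces} to transfer the formula from $K$ to $B$. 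The output of the formula is already level-fibrant as a spectrum over $B$, so strict pullback along $j$ agrees with derived pullback and no further fibrant replacement is needed.
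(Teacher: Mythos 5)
Your proposal is correct and takes essentially the same route as the paper: reduce to $B$ a finite simplicial complex via the strong shadow functor along an embedding into a finite complex (the trick from the proof of the fiberwise Lefschetz formula), arrange $\partial N \to N$ to be an $h$-cofibration by simplicial refinement of $B \times \mathbb{R}^n$, compare the mapping-cone and neighborhood versions of fiberwise Costenoble-Waner duality, and write out the neighborhood-version trace to read off the point-set formula. Your treatment spells out the derivedness bookkeeping (applying $P$ to the base-change object, invoking \autoref{lem:derived_circle_product}) a bit more explicitly than the paper's proof, which delegates those details to the proof of \autoref{cor:reidemeister_formula}, but the argument is the same.
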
\index{Reidemeister trace $R(f)$!fiberwise}\index{fiberwise!Reidemeister trace $R(f)$}
So $R_B(f)$ is just $R(f)$ carried out over each point of $B$, giving a map of bundles because the formula for $R(f)$ varies continuously in $f$.

\begin{proof}
	We follow the same steps as in \autoref{cor:fiberwise_lefschetz_formula} to reduce to the case where $B$ is a finite simplicial complex and then to make $\partial N \to N$ into an $h$-cofibration of spaces over $E$. Then we compare the trace using mapping cones to the trace using neighborhoods as in \autoref{cor:reidemeister_formula}. Finally, when we write out the trace using neighborhoods, we get the expression
	\[ \xymatrix @R=0em{
		S^n_B \ar[r] & S^n_\epsilon \barsmash \ti E_{+(E \times_B E)} \odot^{E \times_B E}_B \bcr{E}{f}{E}_B \\
		(v,b) \ar@{|->}[r] & (v - f_b(p_b(v)),\gamma_{f_b(p_b(v)),v},p(v)).
	} \]
	This circle product is canonically isomorphic to the fiberwise twisted free loop space $(\Lambda^f_B E)_{+B}$, taking the above formula to the path $\gamma_{f(p(v)),v}$.
\end{proof}

As before, if $B$ is a smooth $d$-dimensional manifold and the fixed points are arranged to be smoothly embedded $d$-dimensional submanifolds of $E$, we can interpret this as a weighted sum of the framed $d$-dimensional fixed-point manifolds of $f$, labeled by reference maps to $\Lambda^f_B E$. If the fiber $X$ is highly connected, this gives the same information as the Lefschetz number. By \cite[10.5]{klein_williams}, it is a complete obstruction to removing fixed points from a smooth fiber bundle in the range where the dimension of $X$ is at least 3 more than the dimension of $B$.

\begin{ex}\label{ex:reidemeister_fiberwise}
The trivial bundle $S^3 \times S^1 \to S^1$ has 2-connected fiber $S^3$, so the fibers of $\Lambda^f_B E$ are simply-connected. Therefore $R_{S^1}(f)$ contains the same information as $L_{S^1}(f)$, regardless of the homotopy class of $f$. If we modify \autoref{ex:lefschetz_fiberwise} by having $f$ go through two whole rotations as we go around the base $S^1$, then
\[ R_{S^1}(f) = L_{S^1}(f) = (0,0) \in \Z \oplus \Z/2. \]
In fact, $f$ can have its fixed points removed by a fiberwise homotopy. Observe that the corresponding path in $\pi_1(O(3)) \cong \Z/2$ is homotopic to a constant path, and therefore $f$ is fiberwise homotopic to the identity. The identity can then be slightly deformed to have no fixed points because $S^3$ is an odd-dimensional sphere.
\end{ex}

\begin{rmk}
	There are more variants of Costenoble-Waner duality that can be deduced from these. For instance when $E \to B$ is a fiber bundle with compact ENR base and fiber, the 1-cell $E_{+(E \times B)}$ in $\Ex$ from $E$ to $B$ is dualizable over $E$. As a consequence of \cite[7.1]{lind2019transfer}, we can get its coevaluation and evaluation maps by pushing forward the maps from \autoref{thm:fiberwise_cw_duality_2} along the diagonal $B \to B \times B$ and the inclusion $E \times_B E \to E \times E$, respectively.
\end{rmk}

\beforesubsection
\subsection{Proof of Costenoble-Waner duality}\aftersubsection

We now recall the argument \cite{ms} that proves \autoref{thm:cw_duality_2}, including some additional detail that seems to be necessary outside the case of a smooth manifold or finite complex with particularly nice retract $p$. We also make heavy use of our rigidity results to be assured that the various identifications between different models of the same space can be made in a coherent way. We then generalize the argument to prove \autoref{thm:fiberwise_cw_duality_2}. Again, \autoref{thm:fiberwise_cw_duality_2} is not handled directly in \cite{ms} since their motivation is not to get an explicit formula for the trace.

We begin with an elementary but important observation. For $X \in \mc R(A \times C)$, $Y \in \mc R(C \times D)$, and $Z \in \mc R(*)$, the three expressions
\[ Z \barsmash (X \odot Y), \quad (Z \barsmash X) \odot Y, \quad X \odot (Z \barsmash Y) \]
are canonically isomorphic. Indeed, each of them can be rearranged to a pushforward of a pullback of $X \barsmash Y \barsmash Z$, and the pullback is along an injective map, so the rigidity theorem \autoref{prop:spaces_rigidity} applies. The same applies if instead $X$, $Y$, and $Z$ are spectra, and/or if everything is done relative to $B$, so that $X \in \mc R(A \times_B C)$, $Y \in \mc R(C \times_B D)$, and $Z \in \mc R(B)$, and we are comparing
\[ Z \barsma{B} (X \odot_B Y), \quad (Z \barsma{B} X) \odot_B Y, \quad X \odot_B (Z \barsma{B} Y). \]
We are therefore free to switch between these expressions without further comment.

In \autoref{product_of_cones} and the discussion that followed, we proved a general statement that specializes to the following.\index{mapping cone} Given an open inclusion $K \to X$ of spaces over $A \times C$ and $L \to Y$ of spaces $C \times D$, if we define
\[ (X,K) \square_C (Y,L) = (X \times_C Y,X \times_C L \cup_{K \times_C L} K \times_C Y) \]
as a pair of spaces over $A \times D$, then there is an equivalence of mapping cones
\[ \mu_\odot\colon C^u_{A \times C}(X,K) \odot C^u_{C \times D}(Y,L) \to C^u_{A \times D}((X,K) \square_C (Y,L)) \]
formed from $\mu$ by pulling back and pushing forward along
\[ \xymatrix{
	A \times D & \ar[l] A \times C \times D \ar[r] & A \times C \times C \times D.
} \]
The equivalence $\mu_\odot$ is associative and unital in an appropriate sense. If $(Z,M)$ is over $*$, so that we also have an instance of $\mu$
\[ \xymatrix{
	C^u(Z,M) \barsmash C^u_{A \times C}(X,K) \ar[r]^-{\mu} & C^u_{A \times C}((Z,M) \square (X,K)),
} \]
then the following square commutes.
\[ \xymatrix @R=1.7em{
	C^u(Z,M) \barsmash C^u_{A \times C}(X,K) \odot C^u_{C \times D}(Y,L) \ar[d]^-{1 \odot \mu} \ar[r]^-{\mu \odot 1} &
	C^u_{A \times C}((Z,M) \square (X,K)) \odot C^u_{C \times D}(Y,L) \ar[d]^-{\mu_\odot} \\
	C^u_{A \times C}(X,K) \odot C^u_{C \times D}((Z,M) \square (Y,L)) \ar[r]^-{\mu_\odot} &
	C^u_{A \times D}((Z,M) \square (X,K) \square_C (Y,L))
} \]
We don't need to formalize the coherence between all possible combinations of these maps -- it is enough to observe that all terms are subquotients of $(X \times Y \times Z \times I^n) \amalg (A \times D)$ and all maps lie underneath the identity of $X \times Y \times Z$ and functions $I^n \to I^m$ in which each coordinate is a minimum of some subset of the coordinates in the source.

Similarly, if $K \to X$ is over $A \times_B C$ and $L \to Y$ is over $C \times_B D$ then $(X,K) \square_C (Y,L)$ is a pair over $A \times_B D$, and we get an equivalence of mapping cones
\[ \mu_{\odot_B}\colon C^u_{A \times_B C}(X,K) \odot_B C^u_{C \times_B D}(Y,L) \to C^u_{A \times_B D}((X,K) \square_C (Y,L)). \]
We get a similar coherence between $\mu_{\odot_B}$ and the product map for the smash product rel $B$,
\[ \xymatrix{
	C^u(Z,M) \barsma{B} C^u_{A \times C}(X,K) \ar[r]^-\sim & C^u_{A \times C}((Z,M) \square_B (X,K)).
} \]

Next we reduce \autoref{thm:cw_duality_2} to a space-level statement. Let $n \geq 0$. Suppose that $X \in \mc R(A \times C)$ and $Y \in \mc R(C \times A)$ are $f$-cofibrant, so that their bicategorical product can be derived by taking $P$. In what follows, all maps are in the homotopy category, and we right-derive all circle products $\odot$ when necessary.

We say $X$ and $Y$ are $n$-dual over $A$ if there are maps in the homotopy category of retractive spaces $c\colon S^n \barsmash U_C \to Y \odot X$ and $e\colon X \odot Y \to S^n \barsmash U_A$, such that the first square below commutes in $\ho\mc R(A \times A)$ and the second commutes in $\ho\mc R(C \times C)$, where $\sigma$ denotes the one-point compactification of $v \mapsto -v$, cf \cite{ponto_asterisque}.\index{$n$-duality!in a bicategory}
\[ \xymatrix @R=1.7em{
	X \odot (S^n \barsmash U_C) \ar@{<->}[d]_-\cong \ar[r]^-{1 \odot c} & X \odot Y \odot X \ar[d]^-{e \odot 1} &&
	(S^n \barsmash U_C) \odot Y \ar@{<->}[d]_-\cong \ar[r]^-{c \odot 1} & Y \odot X \odot Y \ar[d]^-{1 \odot e} \\
	S^n \barsmash X \ar@{<->}[r]^-\cong & (S^n \barsmash U_A) \odot X &&
	S^n \barsmash Y \ar@{<->}[r]^-\sigma & Y \odot (S^n \barsmash U_A)
} \]
When $X \in \mc R(A \times_B C)$ and $Y \in \mc R(C \times_B A)$ we formulate the same definition using $\barsma{B}$, $\odot_B$, and $S^n_B$.
\begin{lem}\label{cw_n-dual}
	If $X$ and $Y$ are $n$-dual $f$-cofibrant spaces in this sense, then applying $F_n$ to $c$ and $e$ gives a bicategorical duality between $F_0 X$ and $F_n Y$ in $\Ex$, or in $\Ex_B$ in the fiberwise case.
\end{lem}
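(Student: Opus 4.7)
The plan is to adapt the argument used for \autoref{lem:n-dual} to the bicategorical setting, using the smbf structure on $\Osp$ in place of the symmetric monoidal structure on $\Osp(*)$. The central observation is that $F_n$ interacts well with the bicategory operations: by \autoref{barsmash_free} together with the rigidity theorem \autoref{thm:spectra_rigidity}, there are canonical isomorphisms
\[ F_0 X \odot F_n Y \cong F_n(X \odot Y), \qquad F_n S^n \barsmash U_C \cong F_n(S^n \barsmash U_C), \]
and likewise for any rearrangement of these operations, so that a triangle identity for the dual pair $(F_0 X, F_n Y)$ in $\Ex$ arises by applying $F_n$ to an analogous composite of space-level maps. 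These isomorphisms can be derived using $P$ on the base-change 1-cells, since by \autoref{lem:derived_circle_product} the relevant space-level circle products preserve equivalences under our $f$-cofibrancy assumptions. So first I would set up the commuting diagrams between the space-level and spectrum-level versions of the coevaluation and evaluation composites, carefully tracking which products need to be derived.

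Next, the first triangle identity in $\Ex$ reduces, after applying $F_n$, to the first triangle identity at the space level that is given by hypothesis. Here the structure isomorphisms $\alpha,\lambda,\rho$ that appear in the bicategorical triangle identity for $F_0 X \odot F_n Y$ correspond, via the isomorphisms above, to the analogous isomorphisms in $\mc R/\cat{Top}$ (or $\mc R_{(B)}/\cat{Fib}_B$ in the fiberwise case); this correspondence is unique because both constructions are rigid. Hence the first triangle identity transfers directly from the space level to the spectrum level.

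The second triangle identity is the main obstacle, because it involves the rotator $\theta$, which on $F_n Y \odot (F_n S^n \barsmash U_A)$ acts at spectrum level $2n$ by a shuffle map $\rho_{n,n} \in O(2n)$ that permutes the two $\R^n$ summands. As in the proof of \autoref{lem:n-dual}, this shuffle lies in the same path component of $O(2n)$ as the map $(v,w) \mapsto (-v,w)$, so in the homotopy category the spectrum-level rotator is homotopic to the map that negates the coordinate in $S^n$ and swaps factors. This is precisely what is needed to match the appearance of $\sigma$ (the antipodal map) on the right-hand side of the second space-level triangle identity. Applying $F_n$ to that space-level identity and using this homotopy, we recover the bicategorical triangle identity in the homotopy category.

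Finally, to handle the fiberwise case in $\Ex_B$ the argument is essentially unchanged: all relevant circle products and smash products rel $B$ preserve equivalences on $f$-cofibrant inputs when one factor fibers suitably (\autoref{lem:derived_circle_product}), the rigidity and commutation isomorphisms for $F_n$ work equally well for $\barsma{B}$ and $\odot_B$, and the shuffle $\rho_{n,n}$ is the same nonequivariant element of $O(2n)$. Thus the same homotopy inside $O(2n)$ gives the second triangle identity relative to $B$.
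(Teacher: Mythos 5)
Your proposal matches the paper's approach: the proof there is literally a one-line reduction to \autoref{lem:n-dual}, observing that $F_0$ and $F_n$ pull out of the bicategorical products and commute with $P$, with the $\rho_{n,n}\in O(2n)$ shuffle at spectrum level $2n$ accounting for the antipode $\sigma$ in the second space-level square. One small terminological slip: the isomorphism you are deforming in the second triangle identity is not the rotator $\theta$ (which only acts on shadows $\shad{-}$ and plays no role in dualizability) but the commutation isomorphism that carries $F_nS^n$, as a free spectrum over a point, from one side of the $\odot$-string to the other; the $O(2n)$ analysis you describe, however, is exactly right.
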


\begin{proof}
	Since $F_0$ and $F_n$ can be pulled out of the bicategorical products, and commute with $P$, the proof is formally identical to that of \autoref{lem:n-dual}.
\end{proof}

\begin{proof} of \autoref{thm:cw_duality_2}.
	Let $D$ be a large disc centered at the origin containing $i(X)$. By \autoref{lem:mapping_cones_equivalences}, our proposed coevaluation and evaluation maps can be rearranged slightly so that they only use unbased mapping cones:
	\[ \resizebox{\textwidth}{!}{
		\xymatrix @C=1em @R=.8em{
			C^u(\R^n,\R^n-D) \ar[r] & C^u(\R^n,\R^n-X) & \ar[l]_-\sim C^u(N,N-X) \ar@{<-}[r]^-\cong_-{\mu_\odot} & C^u_{* \times X}(N,N-X) \odot X_{+(X \times *)} \\
			X_{+(X \times *)} \odot C^u_{* \times X}(N,N-X) \ar[r] & C^u_{X \times X}(X \times N,(X \times N)-\Delta_X) & C^u_{X \times X}(\Gamma,\Gamma - \Delta_X) \ar[l]_-\sim \ar[r] & C^u(\R^n,\R^n-0) \barsmash X^I_{+(X \times X)}.
		}
	} \]
	By \autoref{cw_n-dual} it suffices to show these give an $n$-duality between $X_{+(X \times *)}$ and $C^u_{* \times X}(N,N-X)$, which are both $f$-cofibrant retractive spaces over $X$. For the first triangle identity, the composite of coevaluation and evaluation maps becomes the top, right, and bottom edges of the following diagram. Note that $C^u(\R^n,\R^n-D)$ is a space over $*$ and so it can be moved around at will.
	\[ \resizebox{\textwidth}{!}{
		\xymatrix @C=1em{
			X_{+(X \times *)} \odot C^u(\R^n,\R^n-D) \ar[d]_-\cong \ar[r] &
			X_{+(X \times *)} \odot C^u(\R^n,\R^n-X) &
			\ar[l]_-\sim X_{+(X \times *)} \odot C^u(N,N-X) \ar[d]^-\cong \\
			C^u(\R^n,\R^n-D) \barsmash X_{+(X \times *)} \ar[d]_-\cong &
			C^u(\R^n,\R^n-0) \barsmash (X^I_{+(X \times X)} \odot X_{+(X \times *)}) &
			X_{+(X \times *)} \odot C^u_{* \times X}(N,N-X) \odot X_{+(X \times *)} \ar[d] \\
			C^u(\R^n,\R^n-D) \barsmash (X_{+(X \times X)} \odot X_{+(X \times *)}) \ar[ur]^-\sim &
			C^u_{X \times X}(\Gamma_\epsilon,\Gamma_\epsilon - \Delta_X) \odot X_{+(X \times *)} \ar[u] \ar[r]_-\sim &
			C^u_{X \times X}(X \times N, (X \times N) - \Delta_X) \odot X_{+(X \times *)}
		}
	} \]
	Applying $\mu$ liberally turns this into a termwise equivalent diagram of retractive spaces over $X \times *$, where every term is a cone and every map arises as a map of pairs.\footnote{The coherence statements we made earlier about $\mu_\odot$ are used right here to check that this transformation extrudes each arrow into a commuting square. This is necessary to conclude that one diagram commutes iff the other one does.} To check it commutes in the homotopy category, it suffices to give homotopies of pairs as indicated below.
	\[ \resizebox{\textwidth}{!}{
		\xymatrix @C=5em{
		X \times (\R^n,\R^n-D) \ar[d]_-{(x,v) \mapsto (v,x)} \ar[r] &
		X \times (\R^n,\R^n-X) \ar[d]_-{(x,v) \mapsto (v-x,c_x)} &
		\ar[l]_-\sim X \times (N,N-X) \ar[d]^-{(x,v) \mapsto (x,v,p(v))} \\
		(\R^n,\R^n-D) \times X \ar[d]_-\sim &
		(\R^n,\R^n-0) \times X^I \times_X X &
		X \times (N,N-X) \times_X X \ar[d] \\
		(\R^n,\R^n-D) \times X \times_X X \ar[ur]^-{(v,x) \mapsto (v,c_x)} &
		(\Gamma_\epsilon,\Gamma_\epsilon - \Delta_X) \times_X X \ar[u]_-{(x,v) \mapsto (v-x,\gamma_{x,v})} \ar[r]_-\sim &
		(X \times N, (X \times N) - \Delta_X) \times_X X \ar[ul]_-{(x,v) \mapsto (v-x,c_x)}
	}
	} \]
	The unlabeled maps are all inclusions and collapses. The top-left region commutes by the homotopy $(x,v-tx,c_x)$. This gives a map of pairs because $v - tx = 0 \Rightarrow v \in D$. The next region commutes strictly. The final region at the bottom commutes by the homotopy $(v-x,\gamma_{x,p((1-t)x+tv)})$. This gives a map of pairs because $v - x = 0 \Rightarrow (x,v) \in \Delta_X$. In summary, the first triangle identity follows from the homotopies $$(v-x,\gamma_{x,v}) \sim (v-x,c_x) \sim (v,c_x).$$
	
	Note that all maps and homotopies respect the projection back to $X \times *$, taking all of the above points to $x \in X$. Furthermore, note that all spaces are $f$-cofibrant and the map from each of the composition products to the derived composition product, $Y \odot Z \to PZ \odot PZ$, is an equivalence, using \autoref{lem:derived_circle_product}. This could also be observed directly because $- \odot X_{+(X \times *)}$ and $X_{+(X \times *)} \odot -$ correspond on the point-set level to pushforward and pullback, respectively, along the fibration $X \times *$. We therefore get the first triangle identity in the homotopy category.
	
	For the second triangle identity, the composite of the point-set versions of the coevaluation and evaluation maps becomes the top, right, and bottom edges of the following diagram.
	\[ \resizebox{\textwidth}{!}{
		\xymatrix @C=2em @R=2em{
		C^u(\R^n,\R^n-D) \odot C^u_{* \times X}(N,N-X) \ar[d]_-\sim \ar[r] & C^u(\R^n,\R^n-X) \odot C^u_{* \times X}(N,N-X) \\
		C^u(\R^n,\R^n-0) \barsmash C^u_{* \times X}(N,N-X) \ar[d]_-\sim & C^u(N,N-X) \odot C^u_{* \times X}(N,N-X) \ar[u]_-\sim \ar[d] \\
		C^u_{* \times X}(N,N-X) \barsmash C^u(\R^n,\R^n-0) \ar[d]_-\sim & C^u_{* \times X}(N,N-X) \odot C^u_{X \times X}(X \times N,(X \times N) - \Delta_X) \\
		C^u_{* \times X}(N,N-X) \barsmash C^u(\R^n,\R^n-0) \odot X^I_{+(X \times X)} & C^u_{* \times X}(N,N-X) \odot C^u_{X \times X}(\Gamma_\epsilon,\Gamma_\epsilon - \Delta_X) \ar[l] \ar[u]_-\sim
	}
	} \]
	Again, applying $\mu$ to everything gives a termwise homotopy equivalence to a new diagram of spaces over $* \times X$, where every term arises from a single pair and the maps arise as maps of pairs, indicated below. Using excision on this diagram allows us to conclude that the backwards maps in the diagram above are weak equivalences, even though the composition products have not been derived.
	\[ \xymatrix @C=4em @R=4em{
		(\R^n,\R^n-D) \square (N,N-X) \ar[d]_-\sim \ar[r] & (\R^n,\R^n-X) \square (N,N-X) \ar[ldd]_-{(v,w) \mapsto (w,w-v)} \\
		(\R^n,\R^n-0) \square (N,N-X) \ar[d]_-{(v,w) \mapsto (w,-v)} & (N,N-X) \square (N,N-X) \ar[ld]_-{(v,w) \mapsto (w,w-v)} \ar[u]_-\sim \ar[d]^-{(v,w) \mapsto (v,p(v),w)} \\
		(N,N-X) \square (\R^n,\R^n-0) \ar[d]_-{(w,u) \mapsto (w,u,c_{p(w)})} & (N,N-X) \square_X (X \times N,(X \times N) - \Delta_X) \ar[l]^-{(v,p(v),w) \mapsto (w,w-v)} \\
		(N,N-X) \square (\R^n,\R^n-0) \times_X X^I & (N,N-X) \square_X (\Gamma_\epsilon,\Gamma_\epsilon - \Delta_X) \ar[l]^-{(v,p(v),w) \mapsto (v,w-p(v),\gamma_{p(v),w})} \ar[lu]_-{(v,p(v),w) \mapsto (w,w-v)} \ar[u]_-\sim \\
		& (N_\delta,N_\delta-X) \square_X (\Gamma_{\epsilon'},\Gamma_{\epsilon'} - \Delta_X) \ar[u]_-\sim
	} \]
	The unlabeled maps are all inclusions and collapses. The top-left region commutes by the homotopy $(w,tw-v)$. This gives a map of pairs because if $w \in X$ and $tw - v = 0$ then $(v,w) \in D \times X$. The remaining regions commute except for the bottom triangle, which commutes up to homotopy after we pre-compose with the map at the bottom right of the diagram for $\delta$ and $\epsilon'$ sufficiently small.
	
	Specifically, we use uniform continuity to find $\delta > 0$ such that if $d(v,i(X)) < \delta$ then $d(v,p(v)) \leq \frac{\epsilon}{2}$. To find $\epsilon'$, by \autoref{lem:awesome} $X \to \Lambda X$ is an $f$-cofibration over $X$, so $\Lambda X$ contains a small neighborhood of $X$ that fiberwise deformation retracts onto $X$. The map $N_\epsilon \to \Lambda X$ taking $w$ to $\gamma_{p(w),w}$ sends $X \subseteq N_\epsilon$ to the constant loops, so we can find an $\epsilon' > 0$, $\epsilon' \leq \frac{\epsilon}{2}$, such that $N_{\epsilon'} \to \Lambda X$ lands in a neighborhood that deformation retracts through $LX$ to $X$, through maps that are over $X$. This gives a homotopy of based loops $\gamma_{p(w),w} \sim c_{p(w)}$, fixing the basepoint, that is continuous in $w$.
	
	With these bounds on $v$ and $w$, we therefore get a homotopy from $(w,w-v,c_{p(w)})$ to $(w,w-v,\gamma_{p(w),w})$, as maps into the fiber product over $X$. Then we apply the homotopy
	\[ ((1-t)w+tv,w-v,\gamma_{p((1-t)w+tv),w}). \]
	This is well-defined because $d(v,p(v)) \leq \frac{\epsilon}{2}$ and $d(p(v),w) \leq \epsilon' \leq \frac{\epsilon}{2}$ guarantees that every point on the line segment from $v$ to $w$ is within $\epsilon$ of $p(v)$ and is therefore in $N$. It gives a map of pairs because if $w = v$ and $(1-t)w+tv = v \in X$ then $(v,w) \in X \times X$.
	Finally we apply the homotopy
	\[ (v,w-(1-t)v-tp(v),\gamma_{p(v),w}). \]
	It gives a map of pairs because if $v \in X$ and $w - (1-t)v - tp(v) = w-v = 0$ then $(v,w) \in X \times X$. In summary, the second triangle identity follows from the homotopies $$(v,w-p(v),\gamma_{p(v),w}) \sim (v,w-v,\gamma_{p(v),w}) \sim (w,w-v,\gamma_{p(w),w}) \sim (w,w-v,c_{p(w)}) \sim (w,-v,c_{p(w)}).$$
	
	Note that all maps and homotopies respect the projection back to $* \times X$, taking all of the above points to $p(w) \in X$. Using \autoref{lem:derived_circle_product}, the map to the derived circle product induces an equivalence on all the terms in the left-hand column, which is enough to deduce the second triangle identity in the homotopy category.
\end{proof}
\begin{proof} of \autoref{thm:fiberwise_cw_duality_2}.
	We perform the same rearrangement as above, then prove the point-set version of both triangle identities by the same formulas. Again, the backwards maps in the second triangle identity are equivalences by excision after applying $\mu_{\odot_B}$. By \autoref{lem:derived_circle_product}, all the circle products $\odot_B$ in the first triangle identity agree with the right-derived circle product. In the second triangle identity, this is at least true in the left-hand column. This is enough to deduce the triangle identities in the homotopy category.
\end{proof}

\newpage
\section{Modules over group rings}\label{sec:models}

In this section we show how parametrized spectra over $BG$ are equivalent to spectra with a $G$-action, for any topological group $G$. In other words, when $X$ is a parametrized spectrum over $B$, its derived fiber $\R b^* X$ inherits an action by a group equivalent to $\Omega B$. When $B$ is connected, this gives an equivalence between parametrized spectra over $B$ and module spectra over the ring spectrum $\Sigma^\infty_+ \Omega B$.

Along this equivalence, the three basic operations $f^*$, $f_!$, $\barsmash$ turn into restriction and extension of scalars, and the ordinary smash product $\sma$. We can phrase the result as a fiberwise equivalence of SMBFs (\autoref{prop:SMBF_equivalence}) and an associated equivalence of shadowed bicategories (\autoref{prop:all_groups}). As a result, most of the fixed-point theory done using parametrized spectra can be re-interpreted entirely with ring and module spectra.

It is usually easier to prove things for module spectra than for parametrized spectra -- for instance we use this equivalence to characterize the fiberwise dualizable spectra without having to wrangle with the functor $(\Delta_B)_*$. On the other hand, passing to module spectra loses geometry, and the geometric model of the bicategorical trace can be useful for computations, as in \cite[\S 9]{lind2019transfer} or \autoref{ex:lefschetz_fiberwise}.

\beforesubsection
\subsection{Comparison for a single group}\label{sec:equivalent_to_modules}\aftersubsection

Let $G$ be a topological group. We assume at least that $G$ is well-based and homotopy equivalent to a cell complex, but for the main theorem we will make the stronger assumption that the underlying space of $G$ is $q$-cofibrant.\footnote{Equivalently, the unit $\{1\} \to G$ is a $q$-cofibration.}

Let $\mathscr M(G)$ denote the category of orthogonal spectra with an action of $G$. Equivalently, the objects are orthogonal module spectra over the group ring $\Sph[G] = \Sigma^\infty_+ G$ whose multiplication comes from $G$. These have a cofibrantly generated model structure in which the stable equivalences and fibrations are measured on the underlying orthogonal spectrum, and the generating cofibrations and acyclic cofibrations are the ``free $G$-cell spectra''
	\[ \begin{array}{ccrl}
	I &=& \{ \ F_k\left[ (G \times S^{n-1})_{+} \to (G \times D^n)_{+} \right] & : n,k \geq 0 \ \} \\
	J &=& \{ \ F_k\left[ (G \times D^n)_{+} \to (G \times D^n \times I)_{+} \right] &: n,k \geq 0 \ \} \\
	& \cup & \{ \ k_{i,j} \ \square \ \left[ (G \times S^{n-1})_{+} \to (G \times D^n)_{+} \right] & : i,j,n \geq 0 \ \}.
	\end{array} \]
Define $EG = B(*,G,G)$ and $BG = B(*,G,*)$ using the unbased bar construction, in other words the realization of the simplicial unbased spaces
\[ [n] \mapsto * \times G^{\times n} \times G, \qquad [n] \mapsto * \times G^{\times n} \times *. \]
In particular, $EG$ has a free right $G$-action by multiplication on the last coordinate, and the quotient $EG/G$ is $BG$. For an unbased space $X$ with continuous left $G$-action, we call
\[ B(*,G,X) \cong EG \times_G X \]
the \textbf{Borel construction}\index{Borel construction $EG \times_G (-)$} on $X$. This defines a functor $EG \times_G -$ from $G$-spaces to spaces over $BG$. The map $EG \to BG$ is a principal $G$-bundle and therefore $EG \times_G X \to BG$ is a fiber bundle with fiber $X$. Therefore the Borel construction is a homotopical functor, i.e. it sends every weak equivalence of spaces with $G$-action to a weak equivalence of spaces over $BG$.

If $X$ is based, its Borel construction is a retractive space over $BG$. Furthermore, when $X = K \sma Y$ and $K$ has a trivial $G$-action, we get a canonical isomorphism
\[ EG \times_G (K \sma Y) \cong K \barsmash (EG \times_G Y) \]
that is associative and unital in the $K$ variable. Therefore if $X$ is a spectrum with $G$-action, the retractive spaces $EG \times_G X_n$ assemble in a canonical way into a parametrized spectrum over $BG$. Therefore the Borel construction defines a functor $\mathscr M(G) \to \Osp(BG)$.

\begin{prop}\label{prop:one_group}
	When $G$ is $q$-cofibrant, $EG \times_G -$ is a left Quillen equivalence from $\Sph[G]$-modules with the above model structure, to $\Osp(BG)$ with the stable model structure.
\end{prop}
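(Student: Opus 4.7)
The plan is to proceed in three parts. First, I would identify the right adjoint to $EG \times_G -$. The coequalizer presentation $EG \times G \times X \rightrightarrows EG \times X \to EG \times_G X$ yields, for $Y \in \Osp(BG)$, a natural bijection between maps $EG \times_G X \to Y$ of parametrized spectra over $BG$ and $G$-equivariant maps $X \to \Gamma_{EG}(p^*Y)$, where $p\colon EG \to BG$ is the projection, $p^*Y$ is pulled back to $EG$, and $\Gamma_{EG}$ denotes sections applied levelwise. The $G$-action on the sections is induced by the right $G$-action on $EG$. So the right adjoint is $R(Y) := \Gamma_{EG}(p^*Y)$.

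Second, I would check that $EG \times_G -$ is left Quillen by verifying the condition on the generators. Both model structures are cofibrantly generated with the simplified smallness condition (\autoref{df:simple_smallness}), so it suffices to check the functor carries generating (acyclic) cofibrations to (acyclic) cofibrations. The key identity is the homeomorphism $EG \times_G (G \times Z)_+ \cong (EG \times Z)_{+BG}$ for an unbased space $Z$, valid since $G$ acts freely on $G$ in the first factor. Moreover $EG \times_G -$ commutes with $F_k$, with smashing by spaces carrying trivial $G$-action (including smash by $k_{i,j}$), with pushout-products, and with colimits. Hence each generating cofibration $F_k[(G \times S^{n-1})_+ \to (G \times D^n)_+]$ maps to $F_k[(EG \times S^{n-1})_{+BG} \to (EG \times D^n)_{+BG}]$. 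Once $EG$ is given a CW structure (using that $G$ is $q$-cofibrant, via the bar construction), the space-level inclusion is a relative cell complex of spaces over $BG$, making the image a free $q$-cofibration in $\Osp(BG)$ by \autoref{prop:level_model_structure} and \autoref{thm:stable_model_structure}. The two families of generating acyclic cofibrations are handled in parallel.

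Third, for the Quillen equivalence, I would show that for every cofibrant $X \in \mathscr M(G)$ the derived unit $X \to \R R(EG \times_G X)$ is a stable equivalence of $G$-spectra. The geometric input is that when $X$ is a free $G$-cell spectrum, the map $EG \times_G X \to BG$ is at every spectrum level the fiber bundle with fiber $X_n$ associated to the principal $G$-bundle $EG \to BG$. Taking a stably fibrant replacement $Y$ of $EG \times_G X$ in $\Osp(BG)$ and pulling back along $p$ yields a stably fibrant parametrized spectrum over the contractible space $EG$, so $\Gamma_{EG}(p^*Y)$ is stably equivalent to the fiber $Y_{b_0}$ at any basepoint $b_0 \in BG$; and this fiber is in turn stably equivalent to $X$ on the underlying spectrum by the fiber-bundle description of $EG \times_G X$.

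The main obstacle is upgrading this underlying equivalence to one in $\mathscr M(G)$. The $G$-action on $\Gamma_{EG}(p^*Y)$ corresponds via the identifications above to the $\Omega BG$-monodromy action on the fiber $Y_{b_0}$, which must be matched with the original $G$-action on $X$ via the canonical $A_\infty$-equivalence $G \simeq \Omega BG$. Verifying that all this structure agrees in the homotopy category of $G$-spectra is where the $q$-cofibrancy hypothesis on $G$ is essential, both so that $EG \to BG$ is a genuine principal $G$-bundle and so that the comparison $G \simeq \Omega BG$ becomes an equivariant equivalence.
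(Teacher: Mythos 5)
Your first two steps are correct and are essentially the paper's argument: the right adjoint $\Gamma_{EG}(p^*(-))$ is the same as $\barF_{BG}(EG_{+BG},-)$ by equation~\eqref{half_map_sections}, and verifying left Quillen-ness on generators is the paper's route (the image $F_k\bigl[(EG\times S^{n-1})_{+BG} \to (EG\times D^n)_{+BG}\bigr]$ is not literally in $I$, but it is a free $q$-cofibration once $EG$ is built up a $G$-CW skeleton, which is possible because $G$ is $q$-cofibrant).

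The ``main obstacle'' you describe in the final paragraph does not exist, and treating it as the crux is where the argument goes astray. The derived unit $X \to \Gamma_{EG}(p^*Y)$ (with $Y$ a stably fibrant replacement of $EG\times_G X$) is already a morphism of $\Sph[G]$-modules: it is the unit of an adjunction between $G$-equivariant categories, hence equivariant on the nose. And weak equivalences in $\mathscr M(G)$ are by definition detected on the underlying spectrum. There is no $G$-action to reconstruct or match against $\Omega BG$; you only need that the underlying map is a stable equivalence, which follows from two-out-of-three applied to $X \to \Gamma_{EG}(p^*Y) \to Y_{b_0}$. The restriction $\Gamma_{EG}(p^*Y) \to Y_{b_0}$ is an equivalence because $EG_{+BG} \to *_{+BG}$ is a weak equivalence of $q$-cofibrant retractive spaces and $Y$ is level $q$-fibrant (\autoref{ex:smashing_spaces_is_left_Quillen}), and the composite is precisely the map on fibers over $b_0$ induced by $EG\times_G X \to Y$, an equivalence because each $EG\times_G X_n \to BG$ is a fiber bundle with fiber $X_n$. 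The $q$-cofibrancy of $G$ is used to make $EG$ a cofibrant free $G$-space and $EG \to BG$ a principal bundle with CW total space, not for any comparison of $G$ with $\Omega BG$. Finally, note that checking only the derived unit does not on its own give a Quillen equivalence; the paper instead verifies directly that, for cofibrant $X$ and fibrant $Y$, a map $EG\times_G X \to Y$ is a stable equivalence iff its adjoint is, which is the same computation but closes the argument without a separate conservativity step for $\Gamma_{EG}(p^*(-))$.
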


\begin{proof}
	By induction up the skeleton, $EG$ is cofibrant as a free $G$-space and $EG \times_G -$ sends generating (acyclic) cofibrations to (acyclic) cofibrations. Its right adjoint is the mapping spectrum $\barF_{BG}(EG_{+BG},-)$, in other words $\Map_{BG}(EG,-)$ applied to each spectrum level. This proves that $EG \times_G -$ is left Quillen.
	
	To prove it induces an equivalence, take a cofibrant $G$-spectrum $X$ and a stably fibrant parametrized spectrum $Y$ over $BG$, and any map $EG \times_G X \to Y$. This map is a stable equivalence of parametrized spectra iff at each spectrum level, over the basepoint of $BG$, the resulting map of spaces $X_n \to (Y_n)|_{*}$ is a weak equivalence. This map is also obtained by composing the adjunct with restriction from $EG$ to a point $*$:
	\[ \xymatrix{ X_n \ar[r] & \Map_{BG}(EG,Y_n) \ar[r]^-\sim & \Map_{BG}(*,Y_n) \cong (Y_n)|_{*}. } \]
	The marked equivalence is by \autoref{ex:smashing_spaces_is_left_Quillen}. So $X_n \to (Y_n)|_{*}$ is an equivalence iff $X \to \barF_{BG}(EG,Y)$ is a level equivalence, iff it is a stable equivalence because $\barF_{BG}(EG_{+BG},Y)$ is already a fibrant $\Sph[G]$-module.
\end{proof}
\begin{rmk}
	By the same argument, if $G$ is only well-based and homotopy equivalent to a cell complex, then $EG \times_G -$ and $\barF_{BG}(EG_{+BG},-)$ form a deformable adjoint pair, which give an equivalence of homotopy categories. We can also weaken further to the case that $G$ is a grouplike topological monoid, but then the argument is a bit longer because $EG \times_G X \to BG$ is no longer a fiber bundle, only a quasifibration. See \cite{vegetables} for details. 
\end{rmk}
\begin{rmk}
	This proof also shows that the inverse functor $\barF_{BG}(EG_{+BG},Y)$, if we forget its $G$-action, is equivalent to the derived fiber of $Y$. If we remember the $G$-action, it is also equivalent to the functor that pulls back $Y$ to $EG$ then pushes forward to a point, which we could imprecisely call $EG \times_{BG} Y$.
	
	One way to conceptualize these different constructions is to build a mixed bicategory where the 0-cells are pairs $(G,A)$ where $G$ is a group and $A$ is a space, and the 1-cells from $(G,A)$ to $(H,B)$ are retractive spaces over $A \times B$ with fiberwise $G \times H^\op$-actions. The composition is just like for $\mc R/\cat{Top}$ and $\mathscr Act$. In this setting, $EG_{+BG}$ is an invertible 1-cell from $(G,*)$ to $(*,BG)$, essentially because $EG \times_{BG} EG \simeq G$ and $EG \times_G EG \simeq BG$. Therefore tensoring with it on either side gives inverse equivalences of homotopy categories, between based $G$-spaces and retractive spaces over $BG$, or $G$-spectra and parametrized spectra over $BG$.
\end{rmk}

\begin{ex}
	Parametrized spectra over $S^1$ are equivalent to spectra with an action by $\Z$. Since $E\Z \simeq \R$, the equivalence takes a $\Z$-spectrum to its mapping torus.
\end{ex}

If $\phi\colon H \to G$ is a group homomorphism then the functor $\phi^*$ that restricts $G$-actions on spectra to $H$-actions is a Quillen right adjoint, with left adjoint $\phi_!X = G_+ \sma_H X$. It is a Quillen equivalence when $\phi$ is a weak equivalence as a map of topological spaces.

The functor $\phi^*$ also has a right adjoint on the homotopy category. To define the right adjoint we either assume that $G$ is a free $H$-cell complex (for instance because $G$ is a compact Lie group and $H$ is a closed subgroup) or we replace $G$ by $B(H,G,G)$. Then we fibrantly replace $X$ and take $\phi_* X = F^H(G_+,X)$.

The homomorphism $\phi$ induces a map $\ti f\colon EH \to EG$ over $f\colon BH \to BG$, giving a map
\[ EH \times_H X \congar f^*(EG \times_G X). \]
This last map is actually a homeomorphism because geometric realization preserves all finite limits 
(e.g. \cite[11.6]{may_iterated_loop_spaces}), so in particular preserves the pullback of simplicial spaces
\[ \xymatrix @R=1.7em{
	{*} \times H^{\times n} \times X \ar[d] \ar[r] & {*} \times G^{\times n} \times X \ar[d] \\
	{*} \times H^{\times n} \times {*} \ar[r] & {*} \times G^{\times n} \times {*}.
} \]

This homeomorphism tells us that the pullback functors $\phi^*$ and $f^*$ correspond to each other along the Borel construction:
\[ \xymatrix @R=1.7em @C=6em{
	\mathscr M(G) \ar[d]_-{\phi^*} \ar[r]^-{EG \times_G -} & \Osp(BG) \ar[d]^-{f^*} \\
	\mathscr M(H) \ar[r]_-{EH \times_H -} & \Osp(BH)
} \]
On fibrant $G$-spectra $X$, the relevant functors are all equivalent to their derived functors. Therefore on the homotopy category, $\R \phi^*$ corresponds to $\R f^*$. As a formal consequence, $\L \phi_!$ corresponds to $\L f_!$ and $\R\phi_*$ corresponds to $\R f_*$.

\begin{ex}
	Taking $G = 1$, this proves that if the parametrized spectrum $Y$ over $BH$ corresponds to the $H$-spectrum $X$, collapsing the basepoint section $r_!Y$ corresponds to (based) homotopy orbits $X_{hH}$, and sections $r_*Y = \Gamma(Y)$ correspond to homotopy fixed points $X^{hH}$. This is consistent with the fact that at each spectrum level $Y$ is the unbased homotopy orbits of $X$, and we get the based orbits from the unbased ones by quotienting out $BH$.
\end{ex}

\beforesubsection
\subsection{Comparison for all groups}\aftersubsection

Now we upgrade \autoref{prop:one_group} to a map of symmetric monoidal fibrations (SMBFs) $\mathscr M \to \Osp$ that is an equivalence on each fiber category.

First we build the SMBF $\mathscr M$ of module spectra over varying groups $G$.\index{SMBF!of module spectra $\mathscr M$} The base category is the category of $q$-cofibrant topological groups $G$,\footnote{This is to ensure that the functors $- \sma_G -$ will be coherently deformable. Though, by a variant of \cite[8.7]{mp1}, we could argue that $- \sma_G -$ preserves equivalences when $G$ is well-based and the spectra are cell complexes built out of $F_k G_+$ smashed with $h$-cofibrations of unbased spaces. So $q$-cofibrations are not really necessary for any of this to work, they are merely convenient because it takes less time to prove that they work.} and the fiber category is $\mathscr M(G)$. A map $(H,X) \to (G,Y)$ is a homomorphism $\phi\colon H \to G$ and an $H$-equivariant map of spectra $X \to \phi^* Y$. It is elementary to check that restricting a group action gives a cartesian arrow, and arrows of the form $X \cong H_+ \sma_H X \to G_+ \sma_H X$ are cocartesian. Therefore $\phi^*$ and $\phi_!$ define pullback and pushforward functors in $\mathscr M$.

If $(G,X)$ and $(G',X')$ are two objects of $\mathscr M$, the smash product spectrum $X \sma X'$ inherits a $G \times G'$-action. This uniquely extends to a symmetric monoidal structure on $\mathscr M$ lying over the cartesian monoidal structure on cofibrant topological groups $\cat{TopGrp}$. It clearly respects cartesian arrows, and for cocartesian arrows we construct a natural isomorphism
\[ (G_+ \sma_{H} X) \sma (G'_+ \sma_{H'} X') \cong (G \times G')_+ \sma_{H \times H'} (X \sma X') \]
that agrees with the maps in from $X \sma X'$.

\begin{lem}\label{prop:bc_groups}
	For a commuting square of groups
\[ \xymatrix @R=1.7em{
	A \ar[r] \ar[d] & B \ar[d] \\
	C \ar[r] & D
} \]
	the Beck-Chevalley map of functors $\mathscr M(C) \to \mathscr M(B)$ is an isomorphism iff the induced map of $(B,C)$ spaces $B \times_A C \to D$ is a homeomorphism. We call such a square of groups a \textbf{compositional pushout square}.
\end{lem}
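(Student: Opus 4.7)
The plan is to reduce the question to a single statement about $(B,C)$-bispaces by computing the Beck-Chevalley map explicitly.

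Fix notation for the square: write the horizontal arrows as $f\colon A\to B$ and $p\colon C \to D$, and the vertical arrows as $g\colon A \to C$ and $q\colon B \to D$, so commutativity reads $qf = pg$. The Beck-Chevalley transformation $f_!g^* \Rightarrow q^*p_!$ is built from units and counits exactly as in \autoref{prop:beck_chevalley_spaces}. Unwinding the explicit descriptions $\phi_!X = G_+ \sma_H X$ and $\phi^*X = X$ (with restricted action), one checks that for $X \in \mathscr M(C)$ this map is, level by level, the evident natural map
\[ B_+ \sma_A X_n \to D_+ \sma_C X_n, \qquad [b,x] \mapsto [q(b), x], \]
which is well-defined precisely because $qf=pg$.

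The main step is then to use associativity of the smash product to rewrite
\[ B_+ \sma_A X \cong (B_+ \sma_A C_+) \sma_C X \cong (B \times_A C)_+ \sma_C X, \]
where $B \times_A C$ is the topological quotient of $B \times C$ by $(bf(a),c) \sim (b,g(a)c)$, endowed with commuting left $B$- and right $C$-actions. Under this identification the Beck-Chevalley map becomes $h_+ \sma_C \id_X$, where
\[ h\colon B\times_A C \to D, \qquad [b,c] \mapsto q(b)p(c) \]
is the canonical $(B,C)$-equivariant continuous map coming from the commutativity of the square. Consequently the Beck-Chevalley map is an isomorphism for every $X$ iff $h$ is a homeomorphism of $(B,C)$-bispaces.

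The ``if'' direction is then immediate from the factorization above. For the ``only if'' direction, apply the Beck-Chevalley map to the free rank-one module $X = \Sigma^\infty_+ C$ (with $C$ acting on itself by left multiplication): the resulting map of spectra is $\Sigma^\infty_+ h$, and an isomorphism at spectrum level $0$ forces $h$ to be a homeomorphism of based $B$-equivariant spaces, the right $C$-action being preserved throughout by the construction of the functors $q^*p_!$ and $f_!g^*$. The only point requiring genuine care in the whole argument is verifying that the abstract Beck-Chevalley map produced by the recipe of \autoref{prop:beck_chevalley_spaces} really does unwind to the concrete formula $[b,x]\mapsto[q(b),x]$; this is a routine but essential unit/counit chase, and I would expect it to be the main (if modest) obstacle.
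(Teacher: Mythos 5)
Your proposal is correct and follows essentially the same two steps as the paper's own proof: for sufficiency, rewrite the Beck--Chevalley map as $(B \times_A C)_+ \sma_C X \to D_+ \sma_C X$, i.e.\ as $- \sma_C X$ applied to the canonical map of $(B,C)$-spaces $B \times_A C \to D$; for necessity, evaluate on the free rank-one $\Sph[C]$-module. The extra unwinding of the unit/counit formula that you flag as the delicate point is real but routine, and the paper takes the same shortcut of stating the level-wise formula directly.
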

\begin{proof}
	To see this is necessary, restrict to the free $\Sph[C]$-module of rank one. To see it is sufficient, rewrite the Beck-Chevalley map for a general $C$-module $X$ as $- \sma_C X$ applied to the above map,
	\[ (B \times_A C)_+ \sma_C X \longrightarrow D_+ \sma_C X. \]
\end{proof}

\begin{rmk}
	A compositional pushout square is not a pushout square. If all groups are discrete and $A = 1$, then the above square is a pushout in (non-abelian) groups if $D = B * C$ and a compositional pushout if $D = B \times C$.
\end{rmk}

This finishes the construction of the SMBF structure on $\mathscr M$. Next we invert the weak equivalences on each fiber category $\mathscr M(G)$. Recall from the definitions before \autoref{prop:one_group} that we are taking these to be the maps of $G$-spectra $X \to Y$ that are stable equivalences on the underlying spectra.
\begin{prop}\label{prop:modules_SMBF}
	The resulting category $\ho\mathscr M$ forms a symmetric monoidal bifibration over cofibrant groups $\cat{TopGrp}$. It has Beck-Chevalley for any compositional pushout square in which $\Sph[A] \to \Sph[C]$ is a cofibration of $A$-modules.
\end{prop}

Note that $\Sph[A] \to \Sph[C]$ is a cofibration iff $C$ is a retract of a free $A$-cell complex.

\begin{proof}
	We check the seven conditions from \autoref{prop:deform_SMBF}.
	\begin{enumerate}
		\item $\barsmash$ sends cofibrant $G$-spectra and $H$-spectra to cofibrant $G \times H$-spectra, and preserves weak equivalences between them because cofibrant $G$-spectra are also cofibrant over $\Sph$. The small-object argument allows us to define a cofibrant replacement on all $G$-spectra for all $G$ at the same time.
		\item $\Sph$ is cofibrant as a spectrum with an action of $G = 1$.
		\item The pushforwards $\phi_!$ are coherently left-deformable, using the cofibrant $G$-modules.
		\item The pullbacks $f^*$ are coherently right-deformable, in fact no deformation is necessary.
		\item For a compositional pushout square along a cofibration, the Beck-Chevalley map is between coherently deformable functors, because a cofibrant $C$-module is also cofibrant as an $A$-module.
		\item $\sma$ and all of the pushforwards $\phi_!$ are coherently deformable, because we can check that $\sma\colon \mathscr M(G) \times \mathscr M(G') \to \mathscr M(G \times G')$ is a left Quillen bifunctor.
		\item $\barsmash$ and all of the pullbacks $\phi^*$ are coherently deformable, using the $G$-spectra whose underlying spectra are cofibrant.
	\end{enumerate}
\end{proof}
As in \autoref{expand_beck_chevalley}, we can expand the Beck-Chevalley squares to include any square of groups weakly equivalent to one as above. We call such squares the \textbf{compositional homotopy pushout squares}.

\begin{lem}
	The associated bicategory $\mathscr M/\cat{TopGrp}$ is canonically isomorphic to the bicategory of spectra with group actions $\mathscr Act_\Sph$ from \autoref{ex:ring_bicategories}.
\end{lem}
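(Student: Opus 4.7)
The plan is to first set up a canonical strong shadow functor on the point-set level and then observe that both bicategories are obtained from this point-set data by applying the same deformation procedure, yielding a canonical isomorphism of shadowed bicategories. The key identification is that for well-based groups $G$ and $H$, a left $(G \times H)$-action on an orthogonal spectrum $X$ is the same datum as a commuting pair of a left $G$-action and a left $H$-action, and by composing with the inversion isomorphism $H \cong H^{\op}$ this is the same datum as a $\Sph[G]$-$\Sph[H]$ bimodule structure. This gives an isomorphism of categories $\mathscr{M}(G \times H) \cong \mathscr{Act}_{\Sph}^{\textup{pt}}(G,H)$ between the fiber categories on each side before any derivation.

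Next I would check the four relevant operations agree strictly under this identification. For 1-cells $X$ over $G \times H$ and $Y$ over $H \times K$, the point-set product in $\mathscr{M}/\cat{TopGrp}$ is $X \odot Y := (r_H)_!(\Delta_H)^*(X \sma Y)$, which unwinds to the strict quotient $X \sma_H Y$; under the inversion identification, the relation $hx \otimes hy \sim x \otimes y$ becomes the familiar bimodule relation $xh \otimes y \sim x \otimes hy$, so this matches the point-set tensor over $\Sph[H]$ used to define $\mathscr{Act}_\Sph$. The unit $U_G = (\Delta_G)_!(r_G)^*\Sph \cong \Sigma^\infty_+ G = \Sph[G]$ with its double action matches the bimodule unit, and the shadow $\shad{X} = (r_G)_!(\Delta_G)^*X$ is the quotient of $X$ by the conjugation $G$-action, which is exactly the point-set $(G,G)$-coinvariants defining the shadow on $\mathscr{Bimod}_\Sph$. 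The strict associator, unitor, and rotator isomorphisms produced by \autoref{thm:smbf_to_bicategory} (\autoref{fig:assoc_indexed}, \autoref{fig:unit_indexed}, \autoref{fig:shadow_indexed}) are built entirely from the smash product, strict restriction of actions, strict orbits, and the canonical Beck-Chevalley isomorphisms of \autoref{prop:modules_smbf}; each of these matches the analogous point-set datum used to set up $\mathscr{Act}_\Sph$ inside $\mathscr{Bimod}_\Sph$, so the coherence 2-cells agree on the nose.

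Having a strict isomorphism of point-set shadowed bicategories, I would then pass to the homotopy level. Both $\mathscr{M}/\cat{TopGrp}$ and $\mathscr{Act}_\Sph$ invert the same class of weak equivalences on each fiber (stable equivalences of underlying orthogonal spectra), and both are derived by left-deforming the product $\odot$ and pushforwards $\phi_!$ via cofibrant replacement for $\Sph[G]$-modules, while pullbacks are homotopical and need no deformation (recipe (2) of \autoref{thm:four_bicategories_of_spectra} and \autoref{rmk:left_deformable_bicategory}). Since cofibrant replacements, coherent deformations, and Beck-Chevalley squares correspond under the identification of fiber categories, the recipe of \autoref{prop:passing_natural_trans_to_derived_functors} produces the same derived coherence isomorphisms on both sides. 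Therefore the identity-on-objects functor extends uniquely to a canonical strong shadow equivalence on the homotopy bicategories, and it is tautologically an isomorphism because the underlying functors on 1- and 2-cells are already isomorphisms of categories.

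The main obstacle will be the bookkeeping around the inversion $H \cong H^{\op}$: one must verify that this conversion is compatible with all of the structural isomorphisms at once, in particular with the rotator $\theta$ (where the symmetry isomorphism of the smash product interacts with swapping left and right actions) and with the shadow (where the conjugation action must be matched with the tensor-over-$\Sph[G]\sma\Sph[G]^{\op}$ description). A clean alternative, which I would use as a sanity check, is to promote $\mathscr{Bimod}_\Sph$ to arise from an smbf over ring spectra, exhibit $G \mapsto \Sph[G]$ as a map of smbfs from $\mathscr{M} \to \cat{TopGrp}$ into it, and then apply \autoref{prop:little_bit_of_functoriality} to obtain the strong shadow functor automatically; the fully faithful and essentially surjective check that it lands in $\mathscr{Act}_\Sph$ and is an equivalence onto it is then immediate from the fiberwise isomorphism established in the first paragraph.
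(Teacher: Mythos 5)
Your proposal is correct and follows essentially the same route as the paper: both observe that the point-set shadowed bicategory operations on $\mathscr M/\cat{TopGrp}$ and $\mathscr Act_\Sph$ coincide (you make the identification of a left $(G\times H)$-action with a left-$G$-right-$H$ action via $H\cong H^{\op}$ explicit where the paper leaves it silent), and both reduce the remaining issue --- that deriving the bicategory operations commutes with the smbf-to-bicategory construction --- to the method of \autoref{thm:four_bicategories_of_spectra}, simplified because $\odot$, $\shad{}$, and $U_A$ are coherently left-deformable on cofibrant bimodules. The paper's proof is only a few lines, so your write-up is an expansion of the same argument rather than a different one.
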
\index{bicategory!of group actions on spectra $\mathscr Act_\Sph$}
\begin{proof}
	They both have the same 0-, 1-, and 2-cells. In $\mathscr Act_\Sph$, the product is left-derived from a point-set product that is obtained by applying \autoref{thm:SMBF_to_bicategory} to the point-set SMBF $\mathscr M$
	\[ X \sma_H Y = (r_H)_!(\Delta_H)^*(X \sma Y) \]
	and similarly for the unit and shadow. We are therefore left with showing that passage to the homotopy category commutes with passage from an SMBF to a bicategory. At this point we can restrict to the 1-cells that are cofibrant as bimodules, and follow the proof of \autoref{thm:four_bicategories_of_spectra}. The proof in this case is actually a bit simpler because the components of the product, shadow, and unit are coherently left-deformable, using the cofibrant bimodules.
\end{proof}

Next we define a functor $\mathscr M \to \Osp$ on the point-set level by taking $(G,X)$ to the Borel construction $EG \times_G X$, and a map $(H,X) \to (G,Y)$ to the map
\[ EH \times_H X \to EG \times_G Y \]
induced by $H \to G$ and $X \to Y$. We check this is indeed a map of parametrized spectra over the map $BH \to BG$, and this rule respects identities and compositions. By the discussion in the previous section, it also preserves cartesian arrows, so it is a map of Grothendieck fibrations. It does not strictly preserve co-cartesian arrows, but it preserves them up to equivalence. To see this, factor $X \to G_+ \sma_H X$ into a map of spectra that at each level fits into a composite of squares
\[ \xymatrix @R=1.7em{
	EH \times_H {*} \ar[d] \ar[r]^-\sim & EG \times_H {*} \ar[d] \ar[r]^-\cong & EG \times_G (G/H) \ar[d] \ar[r] & EG \times_G {*} \ar[d] \\
	EH \times_H X_n \ar[r]^-\sim & EG \times_H X_n \ar[r]^-\cong & EG \times_G (G \times_H X_n) \ar[r] & EG \times_G (G_+ \sma_H X_n).
} \]
Note that $\phi$ may not be injective, so $G/H$ means the orbits of $G$ under the right action of $H$. The middle and right-hand squares are strict pushouts, while the square on the left is a homotopy pushout. Therefore $EH \times_H X \to EG \times_G (G_+ \sma_H X)$ is weakly equivalent to a cocartesian arrow, whose source is cofibrant if $X$ is cofibrant.

We define a symmetric monoidal structure on the Borel construction by choosing an isomorphism natural in $X$ and $X'$ for each pair of groups $G$,$G'$
\[ (EG \times_G X) \barsmash (EG' \times_{G'} X') \cong E(G \times G') \times_{G \times G'} (X \barsmash X') \]
and similarly
\[ E1 \times_1 \Sph \cong \Sph. \]
By rigidity applied to each fiber, these natural isomorphisms are unique and coherent with each other, making the Borel construction into a symmetric monoidal functor. Since it preserves cofibrant objects, as a functor on the homotopy category it also inherits a symmetric monoidal structure.

Finally, given a compositional pushout square of groups, the resulting square of bar constructions is homotopy pullback by comparing homotopy fibers in either direction:
\[ \xymatrix @R=1.7em{
	B/A \ar[r] \ar[d]_-\cong & BA \ar[r] \ar[d] & BB \ar[d] \\
	D/C \ar[r] & BC \ar[r] & BD
} \]

We conclude
\begin{prop}\label{prop:SMBF_equivalence}
	The Borel construction induces a map of SMBFs
	\[ \xymatrix{ \ho\mathscr M \ar[r] & \ho \Osp } \]
	that is an equivalence on each fiber category.
\end{prop} 
Because SMBFs are equivalent to indexed symmetric monoidal categories, we get as a corollary
\begin{cor}
	The equivalence of homotopy categories $\ho\mathscr M(G) \simeq \ho\Osp(BG)$ given by the Borel construction is symmetric monoidal with respect to $\sma$ on the left and $\sma_{BG}$ on the right.
\end{cor}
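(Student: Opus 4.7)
The plan is to read off the corollary from \autoref{prop:smbf_equivalence} using the general fact (already used throughout Sections 5 and 6) that the symmetric monoidal structure on each fiber of an smbf is determined by the smbf structure. For any smbf $(\mc C,\boxtimes,I)$ over $\bS$, each fiber category $\mc C^A$ acquires a symmetric monoidal product via $X \otimes_A Y := \Delta_A^*(X \boxtimes Y)$ and unit $r_A^* I$, and this is precisely the construction used in \autoref{sec:internal} and \autoref{sec:smbf} (fourth recipe of \autoref{lem:two_smashes_over_b_isomorphic}). Consequently, any morphism of smbfs which is an equivalence on each fiber category automatically lifts to a symmetric monoidal equivalence on each fiber.

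First I would identify the internal tensor products produced by the two smbfs on either side. Applying this recipe to $\ho\mathscr M$, the product on $\mathscr M(G)$ is $\Delta_G^*$ of $X \sma Y$, which is just $X \sma Y$ equipped with the diagonal $G$-action, and the unit is $r_G^* \Sph = \Sph$ with trivial $G$-action; this is the standard symmetric monoidal structure on $\Sph[G]$-modules. Applying the same recipe to $\ho\Osp$, the product on $\Osp(BG)$ is $\Delta_{BG}^*(X \barsmash Y) = X \sma_{BG} Y$ and the unit is $r_{BG}^*\Sph = \Sph_{BG}$, matching the structure already constructed in \autoref{sec:internal}.

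Second I would invoke \autoref{prop:smbf_equivalence}: the Borel construction is a map of smbfs $\ho\mathscr M \to \ho\Osp$ over $\cat{TopGrp} \to \cat{Top}$, meaning it preserves the external product $\barsmash$ and the pullbacks $f^*$ up to canonical isomorphism, and preserves units. Because the internal fiberwise products and units are built from $\barsmash$, $\Delta^*$, $r^*$, and $I$, the canonical isomorphisms assembling the smbf map automatically assemble into structure isomorphisms $$(EG \times_G X) \sma_{BG} (EG \times_G Y) \simeq EG \times_G (X \sma Y), \qquad \Sph_{BG} \simeq EG \times_G \Sph$$ in $\ho\Osp(BG)$, and the four coherence diagrams ($\alpha$, $\lambda$, $\rho$, $\gamma$) follow from the corresponding coherences in the smbf map. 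Since \autoref{prop:smbf_equivalence} already states that the Borel construction is an equivalence on each fiber category, this makes it a symmetric monoidal equivalence.

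The main thing to verify, and the only potential obstacle, is that the ``two-step'' derivation---first pass to the homotopy smbf, then extract the fiberwise symmetric monoidal structure---agrees with deriving the symmetric monoidal structure on each fiber directly. This is precisely the content of the comparison of recipes (1)--(4) in \autoref{lem:two_smashes_over_b_isomorphic} and \autoref{prop:unique_sm_structure_on_pullback}, applied to both $\ho\mathscr M$ and $\ho\Osp$. Once this is noted, the result is immediate.
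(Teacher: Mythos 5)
Your proposal is correct and takes essentially the same route as the paper: the paper simply invokes, in one line, the fact that symmetric monoidal bifibrations are equivalent to indexed symmetric monoidal categories (so a fiberwise equivalence of smbfs gives fiberwise symmetric monoidal equivalences), and you unpack exactly this argument — identifying the fiberwise tensor $\Delta^*(- \boxtimes -)$ on both sides, noting the smbf map supplies the structure isomorphisms, and pointing to \autoref{lem:two_smashes_over_b_isomorphic} and \autoref{prop:unique_sm_structure_on_pullback} for the compatibility of the different derivations. The extra detail you supply is a faithful expansion of the paper's one-sentence justification, not a different method.
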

\begin{cor}\label{fiberwise_dualizability}
	A spectrum over $BG$ is dualizable (resp. invertible) with respect to $\sma_{BG}$ iff its derived fiber is dualizable (resp. invertible) with respect to $\sma$.
\end{cor}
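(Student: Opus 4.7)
The plan is to build the equivalence of dualizability conditions out of two strong symmetric monoidal equivalences, and to use the general principle that such equivalences both preserve and reflect dualizable (and invertible) objects.

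First, I would invoke the preceding corollary: the Borel construction gives a strong symmetric monoidal equivalence of homotopy categories $\ho\mathscr M(G) \simeq \ho\Osp(BG)$ taking $\sma$ to $\sma_{BG}$. Any strong symmetric monoidal functor carries dual pairs to dual pairs by transporting the coevaluation and evaluation maps and applying the structure isomorphisms to verify the triangle identities (cf. the second bullet after \autoref{ex:dualizable_objects}); when the functor is an equivalence, any dual pair in the target pulls back to a dual pair in the source. Applied here, dualizability of $Y \in \ho\Osp(BG)$ with respect to $\sma_{BG}$ is equivalent to dualizability of the corresponding $G$-spectrum $X \in \ho\mathscr M(G)$ with respect to $\sma$, and likewise for invertibility (since the latter just asks that the coevaluation and evaluation maps be isomorphisms).

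Next I would reduce dualizability in $\ho\mathscr M(G)$ to dualizability of the underlying spectrum. The forgetful functor $U\colon \mathscr M(G) \to \Osp(*)$ is strict symmetric monoidal, so it sends dual pairs to dual pairs. For the converse, one uses that both $\mathscr M(G)$ and $\Osp(*)$ are closed symmetric monoidal, so $X \in \ho\mathscr M(G)$ is dualizable if and only if the canonical map
\[ Z \sma F(X,\Sph) \to F(X,Z) \]
is an isomorphism in $\ho\mathscr M(G)$ for all $Z$; and this map is an isomorphism in $\ho\mathscr M(G)$ iff it is an isomorphism after applying $U$, since $U$ is conservative (weak equivalences in $\mathscr M(G)$ are defined on underlying spectra). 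Hence $X$ is dualizable in $\ho\mathscr M(G)$ iff $U(X)$ is dualizable in $\ho\Osp(*)$. For invertibility the same argument works, since $X$ is invertible iff the evaluation map $X \sma F(X,\Sph) \to \Sph$ is an isomorphism, which can again be checked after forgetting the $G$-action.

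Finally I would identify $U(X)$ with the derived fiber of $Y$. By the remark following \autoref{prop:one_group}, the inverse equivalence to the Borel construction is $\barF_{BG}(EG_{+BG},-)$, and its underlying (nonequivariant) spectrum is equivalent to the derived fiber $\R b^*Y$ for any choice of basepoint $b \in BG$. Chaining the three equivalences together completes the proof. The only step that is not purely formal is the reflection of dualizability along the forgetful functor $U$, but this is the standard argument via closedness and conservativity; no further technical work is required.
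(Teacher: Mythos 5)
Your proof is correct and is essentially the argument the paper intends: the Borel construction gives a strong symmetric monoidal equivalence $\ho\mathscr M(G) \simeq \ho\Osp(BG)$ (the preceding corollary), and the only remaining content is that a $G$-spectrum is dualizable for the diagonal-action smash product iff its underlying spectrum is. You fill that gap with the standard argument — closedness plus conservativity of the forgetful functor, using that cofibrant and fibrant replacements in $\mathscr M(G)$ forget to cofibrant and fibrant spectra so the derived internal hom commutes with $U$ — and then identify the underlying spectrum with the derived fiber via the remark after \autoref{prop:one_group}, which is exactly how the paper arrives at the statement.
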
\index{Spanier-Whitehead duality!fiberwise}

By \autoref{prop:little_bit_of_functoriality}, the equivalence of \autoref{prop:SMBF_equivalence} also induces a map of shadowed bicategories
\[ \xymatrix{ \mathscr Act_\Sph = \ho\mathscr M/\cat{TopGrp} \ar[r] & \ho \Osp/\cat{Top} = \Ex } \]
that is an equivalence on each of the categories of 1-cells and 2-cells
\[ \mathscr Act_\Sph(G,H) \simeq \Ex(BG,BH). \]
Since every path-connected base space is weakly equivalent to $BG$ for some $q$-cofibrant topological group $G$, we conclude
\begin{thm}\label{prop:all_groups}
	The Borel construction gives an equivalence of shadowed bicategories between $\mathscr Act_\Sph$ and the subcategory $\Ex_{conn} \subset \Ex$ on those spaces that are path-connected.
\end{thm}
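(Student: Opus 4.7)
The plan is to assemble this result from three ingredients already in hand: \autoref{prop:smbf_equivalence}, which gives the Borel construction as a map of smbfs $\ho\mathscr{M} \to \ho\Osp$ that is a fiberwise equivalence; \autoref{prop:little_bit_of_functoriality}, which promotes any such map of smbfs to a strong shadow functor on associated bicategories; and the lemma just before the present theorem identifying $\mathscr{M}/\cat{TopGrp}$ with $\mathscr{Act}_\Sph$. Combining these, the Borel construction induces a strong shadow functor
\[ F\colon \mathscr{Act}_\Sph \longrightarrow \Ex, \qquad G \mapsto BG, \qquad X \mapsto EG \times_G X. \]
What remains is to verify the three conditions for $F$ to be an equivalence of shadowed bicategories in the sense recalled before \autoref{bicategory_map_preserves_traces}: a bijection on 2-cells, an equivalence on the categories of 1-cells, and essential surjectivity on 0-cells through invertibly dualizable 1-cells.

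The first two conditions reduce directly to \autoref{prop:smbf_equivalence}. Specifically, the categories of 1-cells from $G$ to $H$ in $\mathscr{Act}_\Sph$ are (equivalent to) the fiber categories $\ho\mathscr{M}(G \times H)$, and the categories of 1-cells from $BG$ to $BH$ in $\Ex$ are $\ho\Osp(BG \times BH) \cong \ho\Osp(B(G \times H))$. The Borel construction sends one to the other, and \autoref{prop:smbf_equivalence} applied to the group $G \times H$ shows it is an equivalence, carrying 2-cells to 2-cells bijectively. For the 0-cell condition, I would observe that for any path-connected space $B$, one can choose a $q$-cofibrant topological group $G$ together with a weak equivalence $f\colon BG \to B$ (for instance via the Milnor realization of the Kan loop group, followed by a $q$-cofibrant replacement in topological groups if needed). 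By \autoref{lem:pullback_stable_Quillen}, the Quillen pair $(f_! \adj f^*)$ is a Quillen equivalence, which is exactly the statement that the base-change 1-cell $\bcr{BG}{f}{B} \in \Ex$ has $- \odot \bcr{BG}{f}{B}$ and $\bcr{BG}{f}{B} \odot -$ equivalent to mutually inverse equivalences of homotopy categories; equivalently, $\bcr{BG}{f}{B}$ is an invertible 1-cell in $\Ex$ from $BG = F(G)$ to $B$.

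The main obstacle is really just bookkeeping. One subtlety is confirming that the 1-cells from $G$ to $H$ in $\mathscr{Act}_\Sph$ (two-sided actions, $G$ on the left and $H$ on the right) match up with objects of $\ho\mathscr{M}(G \times H)$ in a way compatible with the composition product $\odot$; this is part of the content of the lemma identifying $\mathscr{M}/\cat{TopGrp}$ with $\mathscr{Act}_\Sph$, and amounts to the standard reinterpretation of a right $H$-action as a left $H^{\op}$-action together with the compositional pushout squares of \autoref{prop:bc_groups} being the data encoded by $\odot$. A second minor point is to ensure the $q$-cofibrant group model for $\Omega B$ is available; this can be arranged classically, or avoided by weakening $\cat{TopGrp}$ to well-based groups homotopy equivalent to cell complexes (as permitted by the remark after \autoref{prop:one_group}), which already suffices for Milnor's construction to give the required $G$ directly.
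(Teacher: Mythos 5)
Your proposal is correct and follows essentially the same route as the paper: apply \autoref{prop:little_bit_of_functoriality} to the map of smbfs from \autoref{prop:smbf_equivalence} to obtain a strong shadow functor, identify the categories of 1-cells with the fiber categories $\ho\mathscr{M}(G \times H) \simeq \ho\Osp(B(G \times H))$, and note that every path-connected space is weakly equivalent to $BG$ for some $q$-cofibrant topological group $G$. You spell out the 0-cell invertibility condition and the left/right action bookkeeping in more detail than the paper, but the structure of the argument is the same.
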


As a result, the shadow of a 1-cell in $\mathscr Act_\Sph$ and its image in $\Ex$ are canonically weakly equivalent. Applying this to the units, we get an elegant proof that
\[ \THH(\Sigma^\infty_+ G) \simeq \Sigma^\infty_+ \Lambda BG. \]
Unwinding the proof slightly, this is because $E(G \times G) \times_{G \times G} G \simeq BG$ and then the three steps that construct the shadow correspond to each other up to weak equivalence:
\[ \xymatrix{
	_G(\Sigma^\infty_+ G)_G \ar@{~>}[d]_-{\textup{left-deform}}
	& \Sigma^\infty_{+(BG \times BG)} BG \ar@{~>}[d]^-{\textup{right-deform}} \\
	_G(\Sigma^\infty_+ B(G;G;G))_G \ar@{~>}[d]_-{\textup{restrict to diagonal action}}
	& \Sigma^\infty_{+(BG \times BG)} BG^I \ar@{~>}[d]^-{\textup{restrict to diagonal}} \\
	_G(\Sigma^\infty_+ B(G;G;G)) \ar@{~>}[d]_-{\textup{quotient out $G$}}
	& \Sigma^\infty_{+(BG)} \Lambda BG \ar@{~>}[d]^-{\textup{push forward to $*$}} \\
	\Sigma^\infty_+ B^{\cyc}(G;G)
	& \Sigma^\infty_{+} \Lambda BG
} \]
	
Along the above equivalence of shadows, the trace of any 1-cell in $\mathscr Act_\Sph$ agrees with the trace of its image in $\Ex$, by \autoref{bicategory_map_preserves_traces}. In other words, we can directly compare traces in the setting of parametrized spectra with traces in bimodules. This idea has proven to be important in recent work on traces \cite{lind2019transfer,campbell_ponto}, and most likely a good deal of future work as well. \autoref{sec:applications} uses this result to give a revisionist account of \cite{ponto_asterisque}.

\begin{rmk}
	If one wants to hit all the base spaces in $\Ex$ and not just the connected ones, the strategy is to re-define $\cat{TopGrp}$ to consist of topological groupoids $G$ whose mapping spaces are $q$-cofibrant. Then $BG$ is the categorical classifying space, and $EG$ is the two-sided bar construction, which gives a $G$-diagram of spaces over $BG$. Each of the spaces in this diagram projects to just one of the path components of $BG$. The rest of the treatment is largely unchanged. See also \cite{ponto_asterisque,vegetables}.
\end{rmk}

\beforesubsection
\subsection{Applications}\label{sec:applications}\aftersubsection

We finish with three more applications of the preceding discussion. The first is a minimalist sketch of the approach to Thom spectra pioneered in \cite{units_of_ring_spectra}. See \cite{hebestreit_sagave_schlichtkrull} for more recent technical results in this area.

Let $R$ be any associative orthogonal ring spectrum, and $R_0$ the space at spectrum level 0 of $R$. The product on $R$ makes $R_0$ into a topological monoid. The set of connected components $\pi_0(R)$ is therefore also a monoid, and we let $\pi_0(R_0)^\times$ denote its group of invertible elements.

A standard example is $R = \ti\Sph$, a fibrant replacement of the sphere spectrum $\Sph$ in the model category of associative orthogonal ring spectra from \cite[12.1(iv)]{mmss}. Then $R_0$ is the space of based maps $S^n \to S^n$, stabilized by letting $n \rightarrow\infty$, and the product is the one that smashes maps of spheres. In this example, $\pi_0(R) = (\Z,\cdot)$ and $\pi_0(R)^\times = \{\pm 1\}$.

Let $GL_1(R)$ be the union of invertible components of $R_0$, or in other words the pullback
\[ \xymatrix @R=1.7em{
	GL_1(R) \ar[d] \ar[r] & R_0 \ar[d] \\
	\pi_0(R_0)^\times \ar[r] & \pi_0(R_0).
} \]
The operation $GL_1(-)$ preserves equivalences when $R$ is fibrant, and outputs a $q$-cofibrant space when $R$ is cofibrant. So when $R$ is both cofibrant and fibrant, the bar construction $BGL_1(-)$ also preserves equivalences. The product on $R$ restricts to a left action of the grouplike monoid $GL_1(R)$ on the orthogonal spectrum $R$.

Then we form a parametrized spectrum over $BGL_1(R)$ by the Borel construction:
\[ EGL_1(R) \times_{GL_1(R)} R \to BGL_1(R) \]
this produces a levelwise-quasifibrant spectrum whose fibers are $R$. Since we used a left action of the units, there is a residual right $R$-action on this bundle. Because $GL_1(R)$ is precisely the monoid of maps $R \to R$ that commute with this right $R$-action, we might call this parametrized spectrum the \textbf{universal $R$-line bundle}.

We then define a \textbf{generalized Thom spectrum}\index{fiberwise!Thom spectrum} $Mf$ for any map of spaces $f\colon X \to BGL_1(R)$ by pulling back to $X$ and pushing forward to a point,
\[ Mf := \L(r_X)_!\R f^*(EGL_1(R) \times_{GL_1(R)} R). \]
Equivalently, if $P$ denotes the homotopy pullback of $EGL_1(R)$ to $X$, and $P'$ is a cofibrant replacement, the Thom spectrum is
\[ Mf = \L(r_X)_! (P \times_{GL_1(R)} R) \simeq P'_+ \sma_{GL_1(R)} R,  \ \]
compare \cite[3.13]{units_rigid}. If we forego pushing forward to a point, we get a fiberwise Thom spectrum instead.

The reason for the name is that the classical Thom spectra are a special case. Taking $R$ to be the fibrant sphere $\ti\Sph$, the space $BGL_1(\ti\Sph)$ receives a homotopy class of maps from $BO$ arising from the action of $O(n)$ on $\R^n$, which passes to its one-point compactification $S^n$. Since pullback $f^*$ commutes with products and geometric realization, pulling back the bar construction
\[ EGL_1(\ti\Sph) \times_{GL_1(\ti\Sph)} \ti\Sph \cong B(*,GL_1(\ti\Sph),\ti\Sph) \]
along $BO(n) \to BGL_1(\ti\Sph)$ gives the bar construction $B(*,O(n),\ti\Sph)$. The $O(n)$-action on $\ti\Sph$ is equivalent to the $O(n)$-action on $F_n S^n$ that acts on the $S^n$, so this is equivalent to the bundle of spectra
\[ B(*,O(n),O(n)) \times_{O(n)} F_n S^n. \]
We can pull the $F_n$ out front. So this is just the fiberwise Thom spectrum of the canonical bundle $\gamma^n = EO(n) \times_{O(n)} \R^n \to BO(n)$, de-suspended $n$ times.

Given a virtual bundle $\xi$ of formal dimension $d$ over a compact space $X$, we can represent it by a difference of an $n$-dimensional vector bundle $V$ and an integer $n-d$. The vector bundle $V$ gives a map $X \to BO(n) \to BO \to BGL_1(\ti\Sph)$. Its fiberwise Thom spectrum is equivalent to the pullback we described just above, pulled back from $BO(n)$ to $X$. Since $V$ is the pullback of the canonical bundle $\gamma^n$, the result is therefore $F_n \Th_X(V)$. At any point in this process, we can suspend $d$ times, so that the end result is instead
\[ \Th_X(\xi) = F_{n-d} \Th_X(V). \]
This is the fiberwise Thom spectrum of $\xi$ from \autoref{ex:thom_spectra}. Pushing forward to a point gives the classical Thom spectrum of $V$.

We could even get the cobordism spectrum $MO$ by taking $X = BO(n)$ and taking the colimit of the results as $n \rightarrow\infty$. The most elegant way to do this is to let the base space $B$ vary as the levels of our parametrized spectra vary, so that level $n$ of the Thom spectrum is parametrized over $BO(n)$, for every value of $n$. See \cite[\S 9]{hebestreit_sagave_schlichtkrull} for more details, including the multiplicative properties of this construction.

The next application is a characterization of the dualizable 1-cells in $\Ex$, see also \cite[7.1]{vegetables}.
\begin{thm}
	A parametrized spectrum $X$ over $A \times C$ is dualizable over $A$ precisely when for every $c \in C$, the derived fiber $c^*X$ over $A$ is a retract in the homotopy category of a spectrum built from finitely many of the cells in the stable model structure, \autoref{thm:stable_model_structure}.
\end{thm}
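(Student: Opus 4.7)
The plan is to reduce this to the characterization of dualizable bimodule spectra given in \autoref{ex:dualizable_objects}, by transporting everything through the equivalence of shadowed bicategories $\mathscr{Act}_\Sph \simeq \Ex_{\textup{conn}}$ from \autoref{prop:all_groups}. First, I would reduce to the case where $A$ and $C$ are both path-connected. Writing $A = \coprod_\alpha A_\alpha$ and $C = \coprod_\beta C_\beta$, the spectrum $X$ splits as a wedge $\bigvee_{\alpha,\beta} X_{\alpha\beta}$ with $X_{\alpha\beta}$ over $A_\alpha \times C_\beta$. Pulling back along the open-closed inclusions of components, the unit $U_C$ splits accordingly, and one checks that a dualizing pair $(X,Y)$ splits into dualizing pairs $(X_{\alpha\beta},Y_{\beta\alpha})$ in each sector; dually, the fiberwise condition is pointwise and decomposes automatically. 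So it suffices to treat each $X_{\alpha\beta}$ separately.

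Now assume $A$ and $C$ are connected and, by replacing them up to weak equivalence (which preserves both sides by \autoref{prop:all_groups} and \autoref{lem:pullback_stable_Quillen}), model them as $A = BG$ and $C = BH$ for $q$-cofibrant topological groups $G,H$. By \autoref{prop:all_groups} the Borel construction induces an equivalence of shadowed bicategories, and under this equivalence the 1-cell $X$ from $A$ to $C$ in $\Ex$ corresponds to a spectrum $M$ carrying commuting actions of $G$ and $H$, i.e.\ a $(G,H)$-bimodule spectrum in $\mathscr{Act}_\Sph$. Because this equivalence is a strong shadow functor, it preserves duality pairs in both directions (\autoref{bicategory_map_preserves_traces}), so $X$ is dualizable over $A=BG$ in $\Ex$ if and only if $M$ is dualizable over $G$ in $\mathscr{Act}_\Sph$. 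By \autoref{ex:dualizable_objects} (applied in the setting of group actions, which is the full sub-bicategory of $\mathscr{Bimod}_\Sph$ on the ring spectra $\Sph[G]$), this in turn is equivalent to $M$, regarded as a $\Sph[G]$-module by forgetting the $H$-action, being a retract in the homotopy category of a finite cell $\Sph[G]$-module.

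The last step is to identify this condition with the one in the statement. The equivalence $\ho\mathscr{M}(G) \simeq \ho\Osp(BG)$ from \autoref{prop:one_group} sends the free $\Sph[G]$-cell $F_k(G\times D^n)_+$ to $F_k(EG\times D^n)_+$, which is equivalent in $\ho\Osp(BG)$ to the generating cell $F_k[S^{n-1}_{+BG}\to D^n_{+BG}]$ (pulled back along $EG\times D^n\to BG$) of the stable model structure from \autoref{thm:stable_model_structure}; so ``retract of a finite cell $\Sph[G]$-module'' and ``retract of a finite cell spectrum over $BG$'' agree. Moreover, for any point $c\colon \ast\to BH$ the smbf map from \autoref{prop:smbf_equivalence} makes the derived pullback $\R c^*$ correspond to restricting the action along the homomorphism $1\to H$; that is, under the Borel equivalence $\R c^*X$ is exactly $M$ with the $H$-action forgotten. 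When $C=BH$ is connected, all choices of $c$ give weakly equivalent $\Sph[G]$-modules, so ``for every $c\in C$'' reduces to ``for some $c\in C$'' and matches the bimodule characterization on the nose.

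The main obstacle is the bookkeeping in the last paragraph: one must verify that the strong shadow functor structure from \autoref{prop:little_bit_of_functoriality} coherently identifies the base-change object $[c]\in\Ex(C,\ast)$ with the base-change object $[1\to H]\in\mathscr{Act}_\Sph(H,1)$, so that dualizability over $A=BG$ in $\Ex$ and dualizability over $\Sph[G]$ in $\mathscr{Act}_\Sph$ really are testing the same fiberwise condition. This is essentially the content of the compatibility between the smbf map of \autoref{prop:smbf_equivalence} and the passage to bicategories, and it is the only place where care is needed; once it is in hand, the equivalence of the two conditions follows formally from the bimodule result.
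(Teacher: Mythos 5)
Your core argument — reduce to the connected case and transport dualizability through the equivalence $\mathscr{Act}_\Sph \simeq \Ex_{\textup{conn}}$ from \autoref{prop:all_groups}, then match finite cell $\Sph[G]$-modules with finite cell spectra over $BG$ — is the same as the paper's, and that part is sound. The place where your argument departs from the paper, and where it has a gap, is the reduction to path-connected $A$ and $C$. You claim that a dual pair $(X,Y)$ splits sector-by-sector and ``dually, the fiberwise condition is pointwise and decomposes automatically. So it suffices to treat each $X_{\alpha\beta}$ separately.'' The forward splitting (that if $(X,Y)$ is a dual pair then so is each $(X_{\alpha\beta},Y_{\beta\alpha})$) does hold: the coevaluation projects to each $\alpha$-summand and the evaluation already lives on the diagonal $\alpha=\alpha'$, so the triangle identities restrict. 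But ``it suffices to treat each sector separately'' is not a biconditional when $A$ has infinitely many components. If infinitely many $X_{\alpha\beta}$ are nonzero and each is dualizable over $A_\alpha$ with coevaluation $c_\alpha\colon U_{C_\beta}\to Y_{\beta\alpha}\odot X_{\alpha\beta}$, there is no way to assemble those into a single coevaluation $U_{C_\beta}\to\bigvee_\alpha Y_{\beta\alpha}\odot X_{\alpha\beta}$, because the sum $\sum_\alpha c_\alpha$ is not defined; and likewise $\bigvee_\alpha c^*X_{\alpha\beta}$ is not a retract of a finite cell complex even if each summand is. So both sides of your purported componentwise decomposition fail in exactly the same infinite case, and without an explicit finiteness observation your reduction is circular rather than conclusive. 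The missing step is to note that the fiberwise hypothesis (each $c^*X$ a retract of a finite cell complex over $A$) forces $c^*X$, and hence $X$, to be supported on only finitely many components of $A$; with that in hand the reassembly over $\alpha$ is a genuinely finite wedge and goes through. The paper supplies this finiteness in a slightly different way, by observing that a Costenoble–Waner dualizable spectrum over $A$ is compact (its $\bar F_A(X,-)$ commutes with arbitrary coproducts), hence nontrivial on only finitely many components, or alternatively sidesteps the issue entirely by working with groupoids. Either fix is short, but you need one of them.

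Separately, the place you flag as ``the main obstacle'' — the compatibility of base-change objects under \autoref{prop:little_bit_of_functoriality} — is actually not where the subtlety lies. Dualizability over $A$ is a property of the 1-cell $X$ defined via $\odot$, units and triangle identities, and \autoref{bicategory_map_preserves_traces}(1) already tells you a strong shadow functor preserves dual pairs without any appeal to base-change objects. All you need from \autoref{prop:one_group} is the (already-stated) fact that the inverse of the Borel construction, forgetting the $G$-action, models the derived fiber; that identifies ``$M$ as a $\Sph[G]$-module'' with ``$c^*X$ as a spectrum over $BG$.'' The real delicacy is the finiteness point above and the footnote-level argument that cells $F_k(EG\times D^n)_+$ generate the same thick subcategory of $\ho\Osp(BG)$ as the cells $F_k D^n_{+BG}$ (because $EG\simeq *$), which you gesture at correctly but compress.
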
\index{Costenoble-Waner duality}

\begin{proof}
	If $A \simeq BG$ and $C \simeq BH$ are path-connected, then by the equivalence of bicategories from \autoref{prop:all_groups}, $X$ is dualizable over $A$ iff its derived fiber over a point $(a,c) \in A \times C$ is dualizable over $\Sigma^\infty_+ G$. As a $G$-module this is determined by the fiber $c^*X$ as a spectrum over $A$. The dualizable $G$-modules are precisely the homotopy retracts of the finite cell $G$-spectra, which along the Borel construction are turned into finite cell spectra over $BG$.\footnote{There is a small amount of extra argument because $EG \times_G (G \times D_n)$ is homeomorphic to $EG \times D^n$, not $D^n$. But because $EG$ is homotopy equivalent to a point, a cell complex built out of cells of the form $EG \times D^n$ is homotopy equivalent to one built out of cells $D^n$; see \cite[7.1]{vegetables} for more details.}

	When $C$ has multiple components, we reduce to one component using \cite[4.3]{ponto_shulman_mult}, or we replace $H$ by a groupoid and run the same argument. When $A$ has multiple components, we either do the same to $G$, or we observe that a spectrum $X$ over $A \times *$ that is dualizable over $A$ must be compact in the homotopy category (meaning the derived version of $\bar F_A(X,-)\colon \Osp(A) \to \Osp(*)$ commutes with arbitrary coproducts), so it can only be nontrivial on finitely many of the components of $A$. On each of these components, $X$ must be a homotopy retract of a finite complex, hence the entire spectrum is as well.
\end{proof}

The final application is a comparison of definitions of the Reidemeister trace. Suppose that $X$ is a path-connected compact ENR. We defined its Reidemeister trace in two ways, as the trace of the map
\[ \xymatrix{ C_*(\ti X;\Z) \ar[r]^-{C_*(\ti f)} & \Z[\pi]^f \otimes_{\Z[\pi]} C_*(\ti X;\Z) } \]
in the bicategory of rings and bimodule chain complexes, and as the trace of the isomorphism of base-change objects
\[ \xymatrix{ \bcr{X}{}{*} \ar[r]^-\cong & \bcr{X}{f}{X} \odot \bcr{X}{}{*} } \]
in the bicategory of parametrized spectra.

By the proof of \autoref{prop:all_groups}, we have a map of symmetric monoidal bifibrations, and by \autoref{prop:little_bit_of_functoriality} this preserves traces of maps of base-change objects that arise from the canonical isomorphisms between their compositions. So, if we modify $f$ up to homotopy so that it preserves the basepoint, the second definition gives the same map of spectra as the trace of the isomorphism of base-change objects
\[ \xymatrix @R=0.5em{
	\bcr{\Omega X}{}{*} \ar[r]^-\cong & \bcr{\Omega X}{\Omega f}{\Omega X} \odot \bcr{\Omega X}{}{*} \\
	\Sph \ar[r]^-\cong & \Sigma^\infty_+ \Omega X^f \times_{\Omega X} *
} \]
which gives a map of spectra $\Sph \to \Sigma^\infty_+ \Lambda^f X$.\index{Reidemeister trace $R(f)$} This proves:
\begin{thm}\label{thm:reidemeister_traces_agree}
	The Reidemeister trace as computed in parametrized spectra is equivalent to the Reidemeister trace as computed in ring and bimodule spectra.
\end{thm}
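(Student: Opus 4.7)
The plan, essentially assembled in the preceding paragraphs, is to exhibit both traces as images of a single bicategorical trace under a canonical strong shadow functor, and then to invoke \autoref{bicategory_map_preserves_traces}.

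First I would set up the comparison. Deform $f$ to fix a chosen basepoint $x_0 \in X$, let $G$ be a $q$-cofibrant model of $\Omega_{x_0} X$, and let $\phi\colon G \to G$ be a homomorphism modeling $\Omega f$. The Reidemeister trace ``in ring and bimodule spectra'' is by definition the trace in $\mathscr Act_\Sph$ of the isomorphism of base-change 1-cells
\[
\bcr{G}{}{1} \cong \bcr{G}{\phi}{G} \odot \bcr{G}{}{1}
\]
induced by the tautological factorization $G \overset{\phi}\to G \to 1$, while the parametrized Reidemeister trace is the trace in $\Ex$ of the analogous isomorphism $\bcr{X}{}{*} \cong \bcr{X}{f}{X} \odot \bcr{X}{}{*}$ from \autoref{ex:traces_in_ex}.

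Next I would invoke \autoref{prop:smbf_equivalence}: the Borel construction is a map of symmetric monoidal bifibrations $\ho\mathscr M \to \ho\Osp$ that is an equivalence on each fiber category. By \autoref{prop:little_bit_of_functoriality}, it thereby induces a strong shadow functor $\mathscr Act_\Sph \to \Ex$ equipped with a coherent isomorphism of base-change objects (in particular $\bcr{G}{\phi}{G}$ is sent to $\bcr{BG}{B\phi}{BG}$, and $\bcr{G}{}{1}$ to $\bcr{BG}{}{*}$). Restricting to the path-connected sub-bicategory $\Ex_{conn}$, this functor is the equivalence of \autoref{prop:all_groups}, and in particular the two isomorphisms of base-change 1-cells above correspond to one another. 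By \autoref{bicategory_map_preserves_traces}, their traces agree under the canonical equivalence of shadows $\THH(\Sigma^\infty_+ G) \simeq \Sigma^\infty_+ \Lambda BG \simeq \Sigma^\infty_+ \Lambda^f X$ (its $\phi$-twisted variant, which was established as a special case of the shadow calculation at the end of the previous subsection).

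The hard part will be purely bookkeeping: one must match up the chosen basepoint, the $q$-cofibrant model of $\Omega X$, the twisted shadow presenting $\shad{\bcr{G}{\phi}{G}}$, and the identification of its image under the Borel construction with $\Sigma^\infty_+ \Lambda^f X$. None of this is conceptually difficult, but each identification has to be threaded through the equivalence of \autoref{prop:all_groups} carefully enough to be sure that the map of shadows produced is indeed the canonical one. Once this is spelled out, the theorem is an immediate consequence of the functoriality of traces for strong shadow functors.
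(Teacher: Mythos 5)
Your proposal takes essentially the same route as the paper: deform $f$ to fix a basepoint, and then observe that the map of symmetric monoidal bifibrations coming from the Borel construction (\autoref{prop:smbf_equivalence}) preserves base-change objects and hence their traces by \autoref{prop:little_bit_of_functoriality}, which together with \autoref{bicategory_map_preserves_traces} identifies the two Reidemeister traces along the canonical equivalence $\THH(\Sigma^\infty_+ G) \simeq \Sigma^\infty_+ \Lambda^f X$. The paper's proof is in fact just the paragraph preceding the theorem statement, and it consists of exactly these steps (with $G = \Omega X$), so your account is correct and even a bit more explicit about the bookkeeping that is being outsourced to those cited results.
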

This fact is of foundational importance to the work in \cite{lind2019transfer,campbell_ponto} and will likely prove useful in further investigations of topological fixed-point theory.

It is possible to go from here all the way back to chain complexes, but we will only do the first half of this argument here. Take $\pi = \pi_1(X)$ and observe that the map $X \to B\pi$ is an isomorphism on $\pi_0$ and $\pi_1$. Therefore the map $\Lambda^f X \to \Lambda^{f_*} B\pi$ is an isomorphism on $\pi_0$. To compute the Reidemeister trace it therefore suffices to evaluate the composite
\[ \xymatrix{ \Sph \ar[r] & \Sigma^\infty_+ \Lambda^f X \ar[r] & \Sigma^\infty_+ \Lambda^{f_*} B\pi } \]
The second map is in fact also a trace of
\[ \xymatrix @R=0.5em{
	\bcl{\Omega X}{}{\pi} \odot \bcr{\Omega X}{\Omega f}{\Omega X} \ar[r] & \bcr{\pi}{f_*}{\pi} \odot \bcl{\Omega X}{}{\pi} \\
	\Sigma^\infty_+ B(\pi;\Omega X;\Omega X^f) \ar[r]^-{f_*} & \Sigma^\infty_+ \pi^{f_*} \times_\pi \pi.
} \]
So by the multiplicativity property from \autoref{prop:bicat_trace_properties}, the composite of these two traces is the trace of the product map
\[ \resizebox{\textwidth}{!}{
	\xymatrix @R=0.5em{
	\bcl{\Omega X}{}{\pi} \odot \bcr{\Omega X}{}{*} \ar[r]
	& \bcl{\Omega X}{}{\pi} \odot \bcr{\Omega X}{\Omega f}{\Omega X} \odot \bcr{\Omega X}{}{*} \ar[r]
	& \bcr{\pi}{f_*}{\pi} \odot \bcl{\Omega X}{}{\pi} \odot \bcr{\Omega X}{}{*} \\
	\Sigma^\infty_+ B(\pi;\Omega X;*) \ar[r]
	& \Sigma^\infty_+ B(\pi;\Omega X;\Omega X^f;\Omega X;*) \ar[r]
	& \Sigma^\infty_+ \pi^{f_*} \times_\pi B(\pi;\Omega X;*) \\
	\Sigma^\infty_+ \ti X \ar[rr]^-{\ti f}
	&
	& \Sigma^\infty_+ \pi^{f_*} \times_\pi \ti X.
}
} \]
This is the trace of the $\pi$-equivariant map $\ti f\colon \ti X \to \pi^{f_*} \times_\pi \ti X$ in $\mathscr Act_\Sph$. This is very close to the algebraic trace, it only remains to pass between spectra and chain complexes. The argument for this half is more purely algebraic and has little to do with parametrized spectra, so we will not develop it here.\footnote{Note that a direct comparison of the trace on $\ti X$ with the trace on $C_*(\ti X)$ already appears in \cite[\S 6]{ponto_asterisque}, and that there are more classical arguments comparing the formula in \autoref{reidemeister_trace_is_weighted_sum_of_fixed_points} with the trace in chain complexes, see e.g. \cite{wecken}, \cite{husseini}.}

\begin{rmk}
	We don't have a similar comparison theorem for the fiberwise Reidemeister trace $R_B(f)$, because this argument does not appear to generalize well. Though when $\dim B = 1$, the work of Geoghegan-Nicas gives chain-level descriptions of $R_B(f)$ \cite{gn_one_parameter}, which we can compare to ours by going through the geometric characterization of a count of the fixed points as a 1-manifold with a framing \cite{geoghegan_forum}. When $\dim B \geq 2$, we expect that an algebraic characterization of $R_B(f)$ would be much more difficult.
\end{rmk}

\newpage
\section{Genuine equivariance}\label{sec:G}

In this final section we describe how to insert a compact Lie group $G$ of equivariance everywhere. Almost every theorem remains true, with the same proof, but there are a few places where an extra argument is needed. For the most part, it is a simple matter of getting the definitions right.

We say very little about genuine fixed points, geometric fixed points, or multiplicative norm constructions, because these require additional arguments to work that go beyond the scope of this text. The inspired reader might take the results on geometric fixed points from \cite{mp1} as a starting point and attempt to go further.

\beforesubsection
\subsection{Parametrized spaces}\aftersubsection

Let $G$ be a compact Lie group. A $G$-space is a space $B$ with a left action by $G$, and a map $f\colon A \to B$ of $G$-spaces is equivariant when it commutes with the action of each $g \in G$. We say the map is non-equivariant when the condition does not necessarily hold. For any $G$-space $B$, a retractive $G$-space over $B$ is a $G$-space $X$ such that the inclusion and projection are both equivariant. We keep the convention that $B$ is always CGWH, and that our convention can either ask for $X$ to be CG or CGWH.\index{retractive space!equivariant}

Let $G\mc R(B)$ be the category of such spaces, where the morphisms are equivariant maps commuting with the inclusion and projection. We also have a larger category $G\mc R(B)^\non$, equivalent to $\mc R(B)$, with the same objects but where the morphisms are non-equivariant maps. Then $G$ acts on the mapping spaces of $G\mc R(B)^\non$ by conjugation,
\[ g(f) := g \circ f \circ g^{-1}, \]
and the $G$-fixed maps are precisely the equivariant ones. Unless otherwise noted, all results use the category $G\mc R(B)$ of equivariant maps.

By convention, we will only consider closed subgroups of $G$. If $H \leq G$, the \textbf{$H$-fixed points} $X^H$ is the closed subspace of $X$ on which $hx = x$ for all $h \in H$. This is a retractive space over $B^H$, with an action by the \textbf{Weyl group} $WH = NH/H$. This defines a functor $G\mc R(B) \to WH\mc R(B^H)$.

A \textbf{weak equivalence}\index{weak equivalence!equivariant} $f\colon X \to Y$ is an (equivariant) map such that for every (closed) $H \leq G$ the map $f^H\colon X^H \to Y^H$ is a weak homotopy equivalence. $f$ is an \textbf{$h$-fibration}\index{$h$-fibration!equivariant} if the projection map $X^I \to X \times_Y Y^I$ has a $G$-equivariant section, and a \textbf{$q$-fibration}\index{$q$-fibration!equivariant} if each $X^H \to Y^H$ is a Serre fibration, equivalently the map has lifts with respect to $G/H \times (D^n \to D^n \times I)$. The cofibrations are all defined as before, but in an $h$-cofibration or $f$-cofibration the retract $(Y \times I) \to (X \times I) \cup_{(X \times \{0\})} (Y \times \{0\})$ has to be $G$-equivariant\index{$h$-cofibration!equivariant}\index{$f$-cofibration!equivariant}. The following lemma is one indication that these are good definitions.

\begin{lem}
	The $H$-fixed point functor preserves every notion of equivalence, fibration, and cofibration.
\end{lem}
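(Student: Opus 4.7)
The plan is to exploit two basic facts. First, closed subgroups $L \leq WH = NH/H$ correspond bijectively under the quotient map $\pi\colon NH \to WH$ to closed subgroups $L' \leq NH$ containing $H$, and for each such $L'$ there is a canonical homeomorphism $(X^H)^{L'/H} \cong X^{L'}$. Second, $(-)^H$ preserves limits (it is the equalizer of $X$ with itself under all $h \in H$) and also preserves the colimits that actually appear in the definitions of the various classes of maps: products with a trivial $G$-space such as $I$, $D^n$, $B^I$; pushouts along closed inclusions; and sequential colimits along closed inclusions. (The last two follow by inspection on underlying sets in (CG), and then pass to (CGWH) because $(-)^H$ preserves closed subspaces.)

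Granting these facts, each case is nearly automatic. For weak equivalences, $(f^H)^L = f^{L'}$ is a weak equivalence of underlying spaces for every closed $L \leq WH$, so $f^H$ is a $WH$-weak equivalence. For $h$-fibrations, a $G$-equivariant path-lifting function $Y \times_X X^I \to Y^I$ is a section of an equivariant map, and applying $(-)^H$ gives a $WH$-equivariant section of the corresponding map for $f^H$, using that $(-)^H$ commutes with the pullback $Y \times_X X^I$ and with $(-)^I$. For $q$-fibrations, a $WH$-equivariant lifting problem against $WH/L \times (D^n \times \{0\} \hookrightarrow D^n \times I)$ corresponds under the adjunction $\mathrm{Map}^{WH}(WH/L, X^H) \cong X^{L'} \cong \mathrm{Map}^G(G/L', X)$ to a $G$-equivariant lifting problem against $G/L' \times (D^n \times \{0\} \hookrightarrow D^n \times I)$, which has a solution by hypothesis; that solution automatically lies in $X^H$ because its source does.

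For $h$- and $f$-cofibrations, the defining $G$-equivariant retract
\[ (Y \times I) \to (X \times I) \cup_{X \times \{0\}} (Y \times \{0\}) \]
restricts under $(-)^H$ to a $WH$-equivariant retract of the corresponding diagram for $f^H$, since $(-)^H$ commutes with $- \times I$ and with pushouts along the closed inclusion $X \times \{0\} \hookrightarrow X \times I$; the extra property of respecting the projection to $B^H$ is automatic in the $f$-cofibration case. A closed inclusion is sent to a closed inclusion because $f^H$ is injective with closed image $f(X) \cap Y^H = f(X^H)$ and is a homeomorphism onto this image. For $q$-cofibrations (defined by the equivariant cells $G/K \times (S^{n-1} \hookrightarrow D^n)$ over $B$), the fixed points $(G/K)^H$ decompose as a disjoint union of orbits $WH/\bar L$, so each cell is sent to a relative $WH$-cell complex; since $(-)^H$ commutes with the pushouts, transfinite compositions, and retracts used to build such cell complexes, retracts of relative $G$-cell complexes go to retracts of relative $WH$-cell complexes.

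The main obstacle is really just the bookkeeping of the two preservation properties of $(-)^H$ — the subgroup correspondence $L \leftrightarrow L'$, and the commutation of $(-)^H$ with the specific colimits appearing in the definitions of $h$-, $f$-, and $q$-cofibrations. Once these are in hand, every clause of the lemma follows by directly applying $(-)^H$ to the diagrams that witness the defining property, with no further homotopy-theoretic input.
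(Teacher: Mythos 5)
The paper leaves this lemma without proof, so there is no internal argument to compare against; your proof therefore serves to fill a genuine gap. The argument you give is correct and is the standard one: everything boils down to the two facts you isolate — the dictionary $L \leftrightarrow L'$ between closed subgroups of $WH$ and closed subgroups of $NH$ containing $H$, with $(X^H)^{L'/H} \cong X^{L'}$, and the commutation of $(-)^H$ with limits and with the specific colimits (products with trivial $G$-spaces, pushouts and sequential colimits along closed inclusions) that appear in the defining diagrams. Given these, each class of maps is handled by applying $(-)^H$ to the witnessing data: $f^{L'}$ for weak equivalences, the $H$-fixed points of a $G$-equivariant path lifting function for $h$-fibrations, the $H$-fixed points of a $G$-equivariant NDR retract for $h$- and $f$-cofibrations, and the subgroup correspondence for $q$-fibrations.

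Two small editorial comments. First, your claim that $(G/K)^H$ decomposes as a \emph{finite} disjoint union of $WH$-orbits $WH/\bar{L}$ is a genuine classical fact for compact Lie groups (tom Dieck, \emph{Transformation Groups}, I.5.11), but it is not entirely obvious and deserves an explicit citation; alternatively, one can observe that $(G/K)^H$ is a smooth compact $WH$-manifold and invoke Illman's triangulation theorem — which the paper already cites for a similar purpose — to get the $WH$-CW structure. Either route closes the $q$-cofibration case. Second, the commutation of $(-)^H$ with pushouts and sequential colimits along closed inclusions, which your proof leans on repeatedly, is exactly the content of the paper's Lemma 8.1.2 on the page following the statement in question (also stated there without proof); in (CGWH) it is standard, and since every retractive space over $B$ is automatically $i$-cofibrant in that convention, it applies uniformly. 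Your parenthetical CG/CGWH remark is in the right spirit but could be tightened to point at the $i$-cofibrancy hypothesis rather than ``inspection on underlying sets,'' since the subtle point is the comparison of subspace topologies, not point sets.
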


We have the same technical lemmas as before, because the formulas we used in the proofs all respect the $G$-action. In particular, the result of Clapp holds, so equivariant $h$-fibrations are preserved by pushout along an equivariant $f$-cofibration.

For an equivariant map $f\colon A \to B$ of base spaces, the pullback $f^*$, pushforward $f_!$, and sheafy pushforward $f_*$ are defined as before. They also define functors on the larger category of nonequivariant maps. On the category of equivariant maps, the Beck-Chevalley isomorphism is equivariant.

These functors interact with cofibrations, fibrations, and weak equivalences in the same way as before. But the proof of this requires the following additional lemma:
\begin{lem}\label{lem:commute_with_fixed_points}
	$f^*$ commutes with $H$-fixed points for $H \leq G$. $f_!$ commutes with $H$-fixed points (CGWH) always (CG) on $i$-cofibrant spaces (basepoint section is a closed inclusion).
\end{lem}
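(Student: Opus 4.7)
My plan is to handle the pullback claim by a purely formal argument, and handle the pushforward claim by explicitly identifying underlying sets and then checking the topology, leaning on the basepoint section being a closed inclusion.

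For the pullback $f^*X = A \times_B X$: since $H$-fixed points is right adjoint to the induction $G \times_H (-)$ from $H$-spaces to $G$-spaces, it preserves all limits. In particular it preserves pullbacks, so
\[ (f^*X)^H = (A \times_B X)^H \cong A^H \times_{B^H} X^H = (f^H)^* (X^H), \]
which is the claim. (If I want to avoid even mentioning the adjunction, I can observe directly that a point $(a,x)$ is $H$-fixed iff $a$ and $x$ are separately $H$-fixed, and the subspace topology on the $H$-fixed subspace of a product agrees with the product of $H$-fixed subspaces.)

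For the pushforward $f_! X = B \cup_A X$: this is a pushout, and the $H$-fixed point functor does not commute with general pushouts, which is the main obstacle. However, under the assumption that the basepoint section $i_X\colon A \to X$ is a closed inclusion — automatic in (CGWH), and the extra hypothesis in (CG) — the pushout respects underlying sets, so the underlying set of $f_!X$ is $B \sqcup (X \setminus i_X(A))$. Restricting to $H$-fixed points and using that $(X \setminus i_X(A))^H = X^H \setminus i_X(A)^H$ (a fixed point of $X$ not lying in $A$ cannot become a point of $A$ after restriction) gives the underlying set of $B^H \cup_{A^H} X^H = (f^H)_!(X^H)$.

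The remaining point is to match the topologies, and this is where closedness of $i_X$ is really used. Since $G$ acts continuously on a weak Hausdorff total space, each $H$-fixed subspace is closed; in particular $i_X(A)^H \subseteq X^H$ is a closed inclusion, and likewise $B^H \subseteq B$. A subset of $f_!X$ is closed iff its preimages in $B$ and $X$ are closed; intersecting with the $H$-fixed subspace and using that closed subspaces and $H$-fixed subspaces commute shows that the bijection above is a homeomorphism. The main obstacle here is really just being careful that in (CG) without the $i$-cofibrancy hypothesis, the underlying set of the pushout is not what one expects, which is precisely why the hypothesis is needed; an explicit counterexample (e.g.\ collapsing a non-closed orbit to a point) shows that the hypothesis cannot be dropped. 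The analogous argument, applied levelwise, will then carry over verbatim to the equivariant versions of \autoref{prop:f_star_colimits} and the preservation statements of \autoref{lem:f_star_preserves} and \autoref{lem:f_shriek_preserves} used in later sections.
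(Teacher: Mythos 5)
Your overall route is the same as the paper's implicit one: the $f^*$ claim is formal (they both observe that $(-)^H$ is a right adjoint and so preserves pullbacks), and the $f_!$ claim comes down to the fact that $(-)^H$ commutes with pushouts along closed inclusions, and the $i$-cofibrancy hypothesis is exactly what makes $f_!$ into such a pushout. But two of your supporting steps in the $f_!$ part are off—and the second one is a genuine gap in the (CG) case.

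First, the $i$-cofibrancy hypothesis is not what guarantees that the pushout respects underlying sets. The Remark following the definition of $f_!$ in Section 2 says this is automatic in \emph{both} (CG) and (CGWH), with no hypothesis on the basepoint section; the hypothesis is needed only for the topology. Second, and more seriously, your topology argument begins ``Since $G$ acts continuously on a weak Hausdorff total space, each $H$-fixed subspace is closed.'' In the (CG) convention the total space $X$ is only compactly generated, not weak Hausdorff, so $X^H$ need not be closed in $X$; your argument as written only applies in (CGWH), where the $i$-cofibrancy is automatic and there is nothing extra to prove. The ``in particular'' is also a non-sequitur: the closed inclusion $i_X(A^H) \hookrightarrow X^H$ follows from $i_X(A) \subseteq X$ being closed and intersecting with $X^H$, not from $X^H$ being closed in $X$. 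The clean way to finish, matching the paper's discussion immediately after the lemma, is to directly invoke (or prove) that $(-)^H$ preserves pushouts along closed inclusions of compactly generated $G$-spaces for compact $G$; the closed-inclusion hypothesis on $i_X$ is precisely what puts $f_!X = B \cup_A X$ in that class. Your closing remark about the consequences for \autoref{prop:f_star_colimits}, \autoref{lem:f_star_preserves}, and \autoref{lem:f_shriek_preserves} is correct and is how the lemma is used.
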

The issue is that $H$-fixed points don't commute with all pushouts, only pushouts along closed inclusions. This is not something that switching between (CG) and (CGWH) solves; it happens even in simple examples like the pushout of
\[ \xymatrix{
	{*} & \ar[l] \Z/2 \ar[r] & {*}.
} \]
However, working in (CGWH) makes all of the basepoint sections $B \to X$ into closed inclusions, so that $f_!$ is always a pushout along a closed inclusion. For this reason and others, working in (CGWH) becomes more convenient in the equivariant theory than it was in the non-equivariant theory.

We define $WX$ and $PX$ by the same formulas, and they have the same properties. The $H$-fixed points commute with $WX$ always, and with $PX$ under the same assumptions as in \autoref{lem:commute_with_fixed_points}.

We define $X \barsmash Y$ as before, and get an isomorphism of spaces over $A^H \times B^H$
\begin{equation}\label{eq:smash_fixed_points}
(X \barsmash Y)^H \cong X^H \barsmash Y^H
\end{equation}
for $i$-cofibrant $X$ and $Y$. In the definition of $\barmap_B(Y,Z)$ we use the space of \emph{non-equivariant} maps, with $G$ acting by conjugation, so that $\barmap_B(Y,Z)$ is a $G$-equivariant retractive space over $A$ when $Y \in G\mc R(B)$ and $Z \in G\mc R(A \times B)$. The proofs are the same as before, taking care to remember that most of the maps appearing in the argument are equivariant, only the ones representing points in a mapping space $\Map(-,-)$ are not. In particular, equivariant maps of retractive spaces $Y' \to Y$ and $Z \to Z'$ induce an equivariant map
\[ \barmap_B(Y,Z) \to \barmap_B(Y',Z'). \]
We get a bijection both on equivariant and on non-equivariant maps
\[ X \barsmash Y \to Z \quad \textup{ over }A \times B \quad \longleftrightarrow \quad X \to \barmap_B(Y,Z) \quad \textup{ over }A. \]
Letting $S^V$ denote the one-point compactification of an orthogonal $G$-representation $V$, this gives us fiberwise reduced suspension and based loop functors
\[ \Sigma^V_B X := S^V \barsmash X, \quad \Omega^V_B X := \barmap_*(S^V,X) \]
for $X \in G\mc R(B)$. Note that $\barmap_B(Y,Z)$ does not commute with fixed points, there is instead a restriction map
\[ \barmap_B(Y,Z)^H \to \barmap_{B^H}(Y^H,Z^H). \]

The pushout-product and pullback-hom interact with cofibrations, fibrations, and weak equivalences in the same way as before. When working with the pullback-hom, the relevant variant of \autoref{lem:pullback_hom_adjunction} asks for all the dotted maps to be non-equivariant and all the solid maps to be equivariant. Though, when proving that the pullback-hom preserves weak equivalences, without loss of generality we only have to consider its $G$-fixed points, and then all the maps become $G$-equivariant and the cells do not require an extra $G/H$. The external smash product also commutes with pullbacks and pushforwards by the same proof as before. So we still get a canonical isomorphism $PX \barsmash PY \cong P(X \barsmash Y)$.

The first significant change in the equivariant theory is that the functor $f_!g^*(X_1 \barsmash \ldots \barsmash X_n)$ is not rigid. A counterexample is given by any span of the form
\[ \xymatrix{ \Z/2 & \ar[l] (\Z/2) \amalg (\Z/2) \ar[r]^-\cong & \Z/2 \times \Z/2. } \]
However, since the non-equivariant rigidity theorem holds, we can still say that among all the natural automorphisms of $f_!g^*(X_1 \barsmash \ldots \barsmash X_n)$ there is only one that is also natural with respect to the non-equivariant maps. So whenever we switch between different models for this functor, all we have to do is insist that we use the unique isomorphism that is natural in non-equivariant maps.\index{rigidity!for $G$-spaces}

\begin{rmk}
	Suppose that we also take $G$-fixed points. Then the proof of \autoref{prop:spaces_rigidity} shows that the functor $f_!g^*(X_1 \barsmash \ldots \barsmash X_n)^G$ is rigid. In (CG) we restrict to $i$-cofibrant spaces so that we don't have to worry about whether we should take $(-)^G$ or $f_!$ first. This and the previous result can be generalized: for any $H \leq G$, the functor $f_!g^*(X_1 \barsmash \ldots \barsmash X_n)^H$ has only one automorphism that is natural with respect to the $H$-equivariant maps.
\end{rmk} 

The non-equivariant rigidity theorem makes $G\mc R$ into a symmetric monoidal category under $\barsmash$, not necessarily in a unique way, but in a canonical way. This makes $G \mc R$ into an SMBF over the category $G\cat{Top}$ of unbased $G$-spaces, with Beck-Chevalley for the strict pullback squares of $G$-spaces.

We define the internal smash product $X \sma_B Y$ as before, making $G\mc R(B)$ into a symmetric monoidal category. The operation $\sma_B$ preserves equivalences of $f$-cofibrant spaces if one of them is $q$-fibrant. The proof is by reduction to the non-equivariant case, because all the constructions commute up to homeomorphism with the fixed points $(-)^H$.

The pushforward $f_!$ and external smash product $\barsmash$ are left-deformable, while the pullback $f^*$ and external mapping space $\barmap_B(-,-)$ are right-deformable. We can write down the same composites of coherently deformable functors as in the nonequivariant case.

The \textbf{Quillen model structure} on $G\mc R(B)$ has the weak equivalences and $q$-fibrations that we defined earlier (namely the maps that are weak equivalences or Serre fibrations on the $H$-fixed points for all closed subgroups $H \leq G$). These fit into a proper model structure that is cofibrantly generated by
	\[ \begin{array}{ccrlll}
	I &=& \{ \ (G/H \times S^{n-1})_{+B} \to (G/H \times D^n)_{+B} & : n,k \geq 0, & H \leq G, & D^n \to B \ \} \\
	J &=& \{ \ (G/H \times D^n)_{+B} \to (G/H \times D^n \times I)_{+B} &: n,k \geq 0, & H \leq G, & (D^n \times I) \to B \ \}.
	\end{array} \]
The adjunction $(f_! \adj f^*)$ is Quillen, and is a Quillen equivalence whenever $f\colon A \to B$ is a weak equivalence of $G$-spaces (i.e. $f^H$ is an equivalence for all $H$).

\beforesubsection
\subsection{Parametrized spectra}\aftersubsection

In the interest of brevity we only discuss orthogonal spectra. A parametrized orthogonal $G$-spectrum is an object with left $G$-action, in the category of parametrized orthogonal spectra $\Osp$. Equivalently, it consists of a $G$-space $B$, regarded as a $G \times O(n)$-space for every $n$ by having $O(n)$ act trivially, a sequence of equivariant retractive spaces $X_n \in (G \times O(n))\mc R(B)$, and $G$-equivariant bonding maps
\[ \sigma\colon \Sigma_B X_n \to X_{1+n}, \]
such that each composite
\[ \sigma^p\colon S^p \barsmash X_q \to \ldots \to X_{p + q} \]
is $O(p) \times O(q)$-equivariant. Note that the $G$ and $O(n)$ actions on $X_n$ commute. For each $b \in B$ the fiber spectrum $X_b$ is usually not a $G$-spectrum, but rather a spectrum with an action of the stabilizer subgroup $G_b = \{ g \in G : gb = b \}$.\index{orthogonal spectra!equivariant category $G\Osp(B)$}

It is sometimes convenient to have a level $X_V$ for every finite-dimensional $G$-representation $V$. One might think that this requires a change in our definition of orthogonal $G$-spectra, but it does not, essentially because we already have $G \times O(n)$-actions on $X_n$.

By convention, we fix a complete $G$-universe $\mc U$\footnote{Recall this means that $\mc U \cong \R^\infty$ with an orthogonal $G$-action, and as a representation it is isomorphic to a countable direct sum of every irreducible representation of $G$.} and only take those $V$ that are finite-dimensional $G$-invariant linear subspaces of $\mc U$. Recall the category $\mathscr J_G$ that has one object for every such $V \subset \mc U$. Even though $V$ and $W$ have nontrivial $G$-actions, we define $\mathscr J_G(V,W)$ just as $\mathscr J(V,W)$, using the \emph{non-equivariant} linear isometric maps $V \to W$. Then it inherits a $G$-action by conjugation, making $\mathscr J_G$ into a category enriched in $G$-spaces.\index{orthogonal spectra!equivariant indexing category $\mathscr J_G$}

Recall that $G\mc R(B)^\non$ refers to equivariant retractive spaces over $B$ with a $G$-action, and non-equivariant morphisms between them.
\begin{df}
	A parametrized $\mathscr J_G$-space is a $\mathscr J_G$-diagram in $G\mc R(B)^\non$ enriched in $G$-spaces. Concretely, this means for each object $V$ an object $X(V) \in G\mc R(B)$, for each pair $V,W$ a $G$-equivariant map over $B$
	\[ \mathscr J(V,W) \barsmash X(V) \to X(W) \]
	with the same associativity and unit as before.
\end{df}

Then as in the non-parametrized case \cite[V.1.5]{mandell2002equivariant}, the category of parametrized $\mathscr J_G$-spaces $G\Osp(B)$ is equivalent to the category of parametrized orthogonal $G$-spectra, defined using \emph{only trivial representations}. This is somewhat surprising when one first learns about it, but it is true essentially because $\mathscr J$ and $\mathscr J_G$ are equivalent categories when you ignore $G$-actions, and that is enough for a diagram on $\mathscr J_G$ to be determined up to isomorphism by its behavior on $\mathscr J$.

More concretely, $X(V)$ is isomorphic to $X_n$ with $G$ acting through a homomorphism $G \leq G \times O(n)$ that is the graph of some (smooth) homomorphism $\rho\colon G \to O(n)$. So $X(V)^H \cong X_n^\Gamma$ for some (closed) subgroup $\Gamma$ of such a graph. We call such a $\Gamma$ a \textbf{graph subgroup} of $G \times O(n)$. In other words, it is a subgroup whose elements are of the form $(g,\rho(g))$ for some fixed homomorphism $\rho\colon G \to O(n)$.\footnote{If we consider instead the larger class of subgroups that come from homomorphisms $H \to O(n)$, we are keeping track of representations that have an $H$-action that might not extend to a $G$-action. This eventually leads to the complete model structure on $G$-spectra from \cite[App B]{hhr}.}

We define free spectra as before. Note that with orthogonal $G$-spectra we can take a free spectrum $F_V A$ on a retractive $G$-space $A$ and a nontrivial $G$-representation $V$. At level $n$ this gives the retractive space $\mathscr J_G(V,\R^n) \barsmash A$. The space $\mathscr J_G(V,\R^n)$ is a $G \times O(n)$-CW complex by an application of Illman's triangulation theorem \cite{illman}.  

A \textbf{level equivalence}\index{level equivalence!equivariant} is a map of spectra $X \to Y$ inducing an equivalence $X(V)^H \to Y(V)^H$ for all $G$-representations $V$ and $H \leq G$. Again, our convention is to only consider the closed subgroups. Similarly a \textbf{level $q$-fibration}\index{$q$-fibration!equivariant level} is a map for which $X(V)^H \to Y(V)^H$ is a Serre fibration for all $V$ and $H$. Note that $X \to Y$ is a level equivalence or $q$-fibration precisely when $X_n^\Gamma \to Y_n^\Gamma$ is an equivalence or $q$-fibration for every $n \geq 0$ and every graph subgroup $\Gamma \leq G \times O(n)$.

A \textbf{level $h$-fibration}\index{$h$-fibration!equivariant level} is a map for which $X_n \to Y_n$ is a $G \times O(n)$-equivariant $h$-fibration for all $n$. This is strictly stronger than asking for $X(V) \to Y(V)$ to be a $G$-equivariant $h$-fibration for all $V$. In a \textbf{level $f$-cofibration}\index{$f$-cofibration!equivariant level} or \textbf{level $h$-cofibration}\index{$h$-cofibration!equivariant level} we ask that $X_n \to Y_n$ is a $G \times O(n)$-equivariant $f$-cofibration or $h$-cofibration, respectively. Again, this implies that $X(V) \to Y(V)$ is a $G$-equivariant cofibration for all $V$.

Consider all maps of spectra over $B$ of the form $F_V(K \to X)$, where $V$ may have nontrivial $G$-action and $K \to X$ is a $G$-equivariant $f$-cofibration of equivariantly $f$-cofibrant retractive $G$-spaces over $B$. The class of \textbf{free $f$-cofibrations}\index{$f$-cofibration!equivariant free} is the smallest class of maps of spectra containing the above, and closed under pushout, transfinite composition, and retracts. The \textbf{free $h$-cofibrations}\index{$h$-cofibration!equivariant free} are defined similarly, and for \textbf{free $q$-cofibrations}\index{$q$-cofibration!equivariant free} we ask for $K \to X$ to be a relative $G$-cell complex, so that the free $q$-cofibrations are generated by the maps of the form
\[ F_V((G/H \times S^{n-1})_{+B}) \to F_V((G/H \times D^n)_{+B}). \]

When proving the following, we think of $K \to X$ as a $G \times O(n)$-space with trivial $O(n)$-action, and recall that $\mathscr J_G(V,\R^n)$ is a $G \times O(n)$-cell complex.

\begin{lem}
	Every free cofibration is a level cofibration. The free spectrum functor $F_V$ sends cofibrant spaces to freely cofibrant spectra. It sends $h$-cofibrant, $h$-fibrant spaces to level $h$-fibrant spectra.
\end{lem}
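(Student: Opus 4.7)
The plan is to reduce every statement to a computation at spectrum level $n$, where $(F_V Y)(\R^n) = \mathscr{J}_G(V,\R^n) \barsmash Y$, viewed as a $G \times O(n)$-equivariant retractive space over $B$ with $O(n)$ acting trivially on $Y$ and $B$. The crucial input is that $\mathscr{J}_G(V,\R^n)$ is a $G \times O(n)$-CW complex (Illman's theorem, as cited above), so that each attaching cell is of the form $(G \times O(n))/\Gamma \times (S^{d-1} \to D^d)$ for some closed subgroup $\Gamma$, and hence the pair $(\mathscr{J}_G(V,\R^n), \text{skeleton})$ is a $G \times O(n)$-equivariant NDR pair.

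For the first assertion, it suffices to prove that the generating free cofibrations $F_V(K \to X)$ are level cofibrations, since the class of level cofibrations is closed under pushout, transfinite composition, and retracts by the equivariant analog of \autoref{prop:technical_cofibrations}. Evaluating at level $n$ gives the pushout-product $\mathscr{J}_G(V,\R^n) \barsmash (K \to X)$, regarded as a $G \times O(n)$-equivariant map. Since $K \to X$ is a $G \times O(n)$-equivariant $f$-cofibration (resp.\ $h$-cofibration, $q$-cofibration) with trivial $O(n)$-action, and $\mathscr{J}_G(V,\R^n)$ carries a $G \times O(n)$-equivariant NDR-pair filtration, the equivariant version of \autoref{lem:product_ndr} produces an NDR structure (or pushout-product-of-cells structure in the $q$ case) on the corresponding product pair. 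The standard formulas defining this structure are manifestly equivariant for any group action, so no genuinely new argument is required beyond reading the nonequivariant proof with $G \times O(n)$ in place of $O(n)$.

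The second assertion is almost immediate from the definitions: if $B \to X$ is an $f$-, $h$-, or $q$-cofibration of cofibrant equivariant retractive spaces over $B$, then $F_V$ applied to this map is precisely the defining kind of free cofibration of the freely cofibrant spectrum $F_V X$. For the third assertion, when $X$ is $h$-cofibrant and $h$-fibrant, we must show that $\mathscr{J}_G(V,\R^n) \barsmash X$ is $G \times O(n)$-equivariantly $h$-fibrant over $B$. This follows by induction up the $G \times O(n)$-cell filtration of $\mathscr{J}_G(V,\R^n)$: at the bottom, $\ast \barsmash X = B$ is trivially $h$-fibrant; for the inductive step, each cell attachment gives a pushout of the form in \autoref{prop:clapp} with the attaching map an $f$-cofibration between $h$-fibrant spaces, so the equivariant analog of Clapp's lemma applied to the group $G \times O(n)$ produces an $h$-fibrant pushout.

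The main technical obstacle is thus the equivariant upgrade of \autoref{prop:clapp} at the group $G \times O(n)$. The only subtle point compared with the nonequivariant case is that the choice of path-lifting function and the NDR retraction used to glue the fibrations must be $G \times O(n)$-equivariant; but since every formula in the proof of \autoref{prop:clapp} is natural in the space under consideration and uses only the homotopy extension/lifting properties together with an NDR presentation (all of which descend verbatim to the equivariant category by \autoref{lem:dold} applied equivariantly), the proof goes through unchanged. With this equivariant Clapp in hand, the three assertions follow as outlined.
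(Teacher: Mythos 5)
Your proof is correct and takes essentially the same approach the paper has in mind (the paper states this lemma without explicit proof, relying on the prefatory remark that one should regard $K\to X$ as a $G\times O(n)$-space with trivial $O(n)$-action and use Illman's triangulation of $\mathscr{J}_G(V,\R^n)$). You correctly identify the key subtlety absent from the nonequivariant case: level cofibrations and fibrations for parametrized $G$-spectra are $G\times O(n)$-equivariant notions at spectrum level $n$, so the NDR structure on $\mathscr{J}_G(V,\R^n)$ must be $G\times O(n)$-equivariant, which Illman's theorem provides. Your cell-induction argument for the fibrancy claim unwinds the equivariant analogue of \autoref{prop:h_cofibrations_pushout_product} rather than invoking it directly, but the two are equivalent, and your reliance on the equivariant \autoref{prop:clapp} is exactly what the paper asserts holds (``In particular, the result of Clapp holds'').
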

	
\begin{prop}[Level model structure]
	The level equivalences, free $q$-cofibrations, and level $q$-fibrations define a proper model structure on $G\Osp(B)$. It is cofibrantly generated by
\[ \begin{array}{ccrllll}
I &=& \{ \ F_V\left[ (G/H \times S^{n-1})_{+B} \to (G/H \times D^n)_{+B} \right] & : n \geq 0, & V \subset \mc U, & H \leq G, & D^n \to B^H \ \} \\
J &=& \{ \ F_V\left[ (G/H \times D^n)_{+B} \to (G/H \times D^n \times I)_{+B} \right] &: n \geq 0, & V \subset \mc U, & H \leq G, & (D^n \times I) \to B^H \ \}.
\end{array} \]
\end{prop}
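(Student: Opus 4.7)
The plan is to verify the six conditions of \autoref{prop:construct_cofibrantly_generated_model_category}, following the template used for the non-equivariant level model structure in \autoref{prop:level_model_structure}. The class $W$ of level equivalences is defined by requiring isomorphisms on the functors $\pi_n(X(V)^H)$ for all $n \geq 0$, all $V \subset \mc U$, and all closed $H \leq G$, so closure under 2-out-of-3 and retracts is immediate.

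For the smallness of $I$ and $J$, I would use the adjunction between $F_V$ and evaluation at $V$ together with the observation that a $G$-equivariant map $F_V[(G/H \times S^{n-1})_{+B}] \to X$ is the same data as an $H$-equivariant map $(S^{n-1})_{+B^H} \to X(V)^H$. The key input is that every free $q$-cofibration is a level $h$-cofibration (proven as in \autoref{prop:free_implies_level}, using that $\mathscr J_G(V,\R^n)$ is a $G \times O(n)$-CW complex by Illman's theorem), hence a level closed inclusion; taking fixed points $(-)^\Gamma$ for any relevant graph subgroup $\Gamma \leq G \times O(V)$ then commutes with the pushouts and sequential colimits involved, by \autoref{lem:commute_with_fixed_points} and its $(G\times O(V))$-equivariant analog. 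Compactness of $S^{n-1}$, $D^n$, and $D^n \times I$ yields the required factorization through a finite stage just as in the non-equivariant case.

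The most delicate condition is (4), that $J$-cell complexes lie in $W \cap I$-cof. Membership in $I$-cof is automatic from the closure properties built into the definition of free $q$-cofibrations. For membership in $W$, each generating acyclic cofibration is, at every spectrum level $V'$ and every graph subgroup's fixed points, obtained by smashing a cell with $I$, hence a deformation retract inclusion and in particular a level equivalence. Since the maps in $J$ are level $h$-cofibrations, the equivariant gluing and colimit lemmas (applied on each $\Gamma$-fixed subspace separately) propagate level equivalences through the pushouts and transfinite compositions that build a $J$-cell complex. I expect this to be the main technical step: one must confirm that the gluing and colimit lemmas recalled in \autoref{prop:technical_cofibrations} generalize fiberwise-fixed-pointwise, which should be straightforward since their proofs reduce to the corresponding statements in topological spaces.

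Finally, I would characterize $I$-inj and $J$-inj by adjunction: $p\colon X \to Y$ is $I$-injective iff each $X(V)^H \to Y(V)^H$ is an acyclic Serre fibration, and $J$-injective iff each is a Serre fibration. Hence $I$-inj $= W \cap J$-inj, discharging (5) and (6) simultaneously. Left and right properness follow because the cofibrations are level $h$-cofibrations and the fibrations are level $q$-fibrations, so the equivariant gluing and dual gluing lemmas apply levelwise. No step is a true obstacle once the equivariant point-set lemmas of the preceding subsections are in hand; the only genuinely new bookkeeping is to remember that the relevant ``subgroups'' at spectrum level $V$ are the graph subgroups of $G \times O(V)$, not arbitrary subgroups.
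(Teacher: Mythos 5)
Your proposal is correct and follows the same route the paper takes: the paper's treatment of this proposition is simply ``same as the non-equivariant case with the obvious modifications'' (see the proof of \autoref{prop:level_model_structure}, together with the surrounding remarks in \autoref{sec:G} that the $\mathscr{J}_G(V,\R^n)$ are $G\times O(n)$-CW complexes by Illman's theorem and that level equivalences/fibrations are detected on graph-subgroup fixed points). You verify the six conditions of \autoref{prop:construct_cofibrantly_generated_model_category} in the expected way, correctly identifying the two places where equivariance needs a word: smallness via the adjunction $F_V \dashv \mathrm{ev}_V$ and the fact that graph-subgroup fixed points commute with the relevant pushouts and sequential colimits along level closed inclusions, and the CW structure on $\mathscr{J}_G(V,\R^n)$ from Illman needed to see that free $q$-cofibrations are level $h$-cofibrations. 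The identification $I\text{-inj} = W \cap J\text{-inj}$ via adjunction, and properness via level $h$-cofibrations and $q$-fibrations, also match the paper's intended argument.
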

	
The pullback $f^*$, pushforward $f_!$, and monoidal fibrant replacement functor $P$ lift to spectra as before by applying them to each level separately. They have the same properties as before. In particular, $f_!$ preserves level $h$-cofibrant spectra and equivalences between them, $f^*$ preserves level $h$-fibrant spectra and equivalences between them, $(f_! \adj f^*)$ form a Quillen pair, and $X$ is freely $h$-cofibrant then $PX$ is freely $f$-cofibrant and level $h$-fibrant.
	
We define the functor that smashes a spectrum $X$ with a space $K$,
\[ \barsmash\colon G\mc R(A) \times G\Osp(B) \to G\Osp(A \times B). \]
by applying $K \barsmash -$ at each spectrum level of $X$. This is unique up to canonical (not unique) isomorphism, so its right adjoints $\barF_A(K,-)$ and $\barmap_B(X,-)$ and the two adjunctions themselves are also defined up to canonical (not unique) isomorphism.

We define the smash product of spectra
\[ \barsmash\colon G\Osp(A) \times G\Osp(B) \to G\Osp(A \times B) \]
as before, by left Kan extending along the direct sum map $\mathscr J_G \sma \mathscr J_G \to \mathscr J_G$. Equivalently, we work with only trivial representations and use the direct sum map $\mathscr J \sma \mathscr J \to \mathscr J$. So we can think of it is the same external smash product from before, with the diagonal $G$-action, and its right adjoint in each variable is also the same right adjoint from before, with a conjugation $G$-action. The description using $\mathscr J_G$ is useful however for verifying that
\begin{equation}\label{eq:smash_free}
	F_V X \barsmash F_W Y \cong F_{V \oplus W} (X \barsmash Y)
\end{equation}
for nontrivial $G$-representations $V$ and $W$.\footnote{We insist that $V \oplus W$ is a model of the direct sum of $V$ and $W$ that is a subspace of $\mc U$. The choice of subspace does not change the answer, up to canonical isomorphism.} As before, the external smash product always commutes with pushforward, and commutes with pullback either in (CG) or in (CGWH) when the spectra are freely $i$-cofibrant.
%
%

Once again the functor $f_!g^*(X_1 \barsmash \ldots \barsmash X_n)$ is not rigid, but on any full subcategory of $\Osp(C_1) \times \ldots \times \Osp(C_n)$ containing all $n$-tuples of the form $(F_{V_1} *_{+C_1},\ldots, F_{V_n} *_{+C_n})$, any functor isomorphic to $\Phi$ is admits a canonical isomorphism to $\Phi$, the one that is natural with respect to all non-equivariant maps.\index{rigidity!for $G$-spectra}
	
\beforesubsection
\subsection{Skeleta and semifree cofibrations}\label{sec:G_reedy}\aftersubsection

The equivariant version of this section requires some additional lemmas, primarily because the level equivalences are sensitive to the entire $G \times O(n)$-action at level $n$. As a result, we must make the equivariant version of a Reedy cofibration more sensitive to the $O(n)$-action.

Let $O$ be a compact Lie group, such as $O = O(n)$, and let $H$ be a (closed) subgroup. We give $O$ a left $H$-action with $h$ acting by $o \mapsto oh^{-1}$. For a left $H$-space $X$ we let $O \times_H X$ denote the quotient of $O \times X$ by the diagonal left $H$-action.
\begin{lem}\label{lem:induce_equivalences}
	The functor $O \times_H -$ sends $H$-equivalences of unbased spaces to $O$-equivalences.
\end{lem}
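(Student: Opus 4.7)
The plan is to reduce the claim to a cellular induction over a $K$-CW structure on $O/H$. Fix a closed subgroup $K \leq O$; since $O$-equivalences are detected on $K$-fixed points for all such $K$, it suffices to show that $(O \times_H f)^K$ is a weak equivalence of unbased spaces for every $f \colon X \to Y$ that is an $H$-equivalence.

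First I would identify the $K$-fixed subspace explicitly. A point $[o,x]$ is $K$-fixed iff for every $k \in K$ there exists (necessarily unique) $h \in H$ with $(ko,x) = (oh^{-1},hx)$, forcing $h = o^{-1}k^{-1}o$. Thus $[o,x] \in (O\times_H X)^K$ iff $o^{-1}Ko \leq H$ and $x \in X^{o^{-1}Ko}$. Writing $Z = \{o \in O : o^{-1}Ko \leq H\}$ (a closed $K$-$H$-biinvariant subspace of $O$), the projection $[o,x] \mapsto oH$ realizes $(O\times_H X)^K$ as a space over $Z/H = (O/H)^K$ with fiber over $oH$ the space $X^{o^{-1}Ko}$.

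Next I would apply Illman's triangulation theorem to give $O/H$ a $K$-CW structure. This restricts to a CW structure on $(O/H)^K$ with the property that on each open cell $e_\alpha$ the stabilizer $o^{-1}Ko$ is constant up to $H$-conjugacy, say conjugate to a fixed closed subgroup $L_\alpha \leq H$. Because each closed cell is contractible, the principal $H$-bundle $Z \to Z/H$ admits a continuous section over each closed cell, and choosing such a section trivializes the preimage of $\overline{e_\alpha}$ in $(O\times_H X)^K$ as $\overline{e_\alpha} \times X^{L_\alpha}$. Consequently the inclusion of the relative $(n-1)$-skeleton into the relative $n$-skeleton of $(O\times_H X)^K$ is, over each $n$-cell, a pushout square
\[ \xymatrix @R=1.7em{
S^{n-1} \times X^{L_\alpha} \ar[d] \ar[r] & \textup{sk}_{n-1}(O\times_H X)^K \ar[d] \\
D^{n} \times X^{L_\alpha} \ar[r] & \textup{sk}_{n}(O\times_H X)^K
} \]
whose left vertical map is an $h$-cofibration.

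The proof then concludes by induction. Since $f$ is an $H$-equivalence and $L_\alpha \leq H$, the map $f^{L_\alpha}\colon X^{L_\alpha}\to Y^{L_\alpha}$ is a weak equivalence, hence so is its product with the identity of any $D^n$ or $S^{n-1}$. Assuming inductively that the map of $(n-1)$-skeleta is a weak equivalence, the gluing lemma (\autoref{prop:technical_cofibrations}) applied cell-by-cell shows that the map of $n$-skeleta is still a weak equivalence. Since $O$ is compact, $(O/H)^K$ has finitely many cells, so finitely many inductive steps give that $(O\times_H f)^K$ is a weak equivalence. As $K$ was arbitrary, this is exactly the statement that $O\times_H f$ is an $O$-equivalence.

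The main obstacle is the second step: producing, cell by cell on $(O/H)^K$, a trivialization of $(O\times_H X)^K$ with constant fiber $X^{L_\alpha}$. One has to verify that Illman's triangulation can be arranged so that the orbit-type subgroup $o^{-1}Ko$ is literally (not just up to $H$-conjugacy) constant on each chosen section, and that the resulting identifications with $X^{L_\alpha}$ glue compatibly along subcells. This is the place where the equivariant topology of $O/H$ really enters, and it is essentially the only point in the argument that uses anything beyond the non-equivariant gluing lemma.
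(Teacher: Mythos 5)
Your proposal takes a genuinely different route from the paper, and the ``main obstacle'' you flag at the end is not a minor technicality: it is the heart of the matter, and it has real teeth. There are at least two separate difficulties. First, over a cell $\overline{e_\alpha}$ of $(O/H)^K$, the subgroup $o^{-1}Ko$ is only determined up to $H$-conjugacy, and choosing a continuous section of $Z\to Z/H$ over $\overline{e_\alpha}$ does not by itself make $o^{-1}Ko$ literally constant; you would have to further correct the section by a continuous family of conjugating elements of $H$, which is a rigidity claim about homomorphisms $K\to H$ that you have not established. Second, even granting a trivialization over each open cell, the closure of $e_\alpha$ meets lower cells whose stratum-subgroup is a different $L_\beta$, so the fiber of $(O\times_H X)^K$ changes there; the pushout square you write with $S^{n-1}\times X^{L_\alpha}$ in the upper left is therefore not the restriction of $(O\times_H X)^K$ to the closed cell, and the gluing-lemma induction does not go through as stated.

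The paper avoids both problems by applying the fixed-point formula of \cite[B.17]{schwede_global}: $(O\times_H X)^K \cong \coprod_{(\alpha)} Q_\alpha\times_{C(\alpha)} X^{\alpha(K)}$, where the coproduct is over $H$-conjugacy classes of homomorphisms $\alpha\colon K\to H$, $Q_\alpha = \{o\in O : o\alpha(k)o^{-1}=k\ \text{for all}\ k\in K\}$, and $C(\alpha)$ is the centralizer of $\alpha(K)$ in $H$. This decomposition separates the strata as a \emph{disjoint union} before any cell structure is chosen, and repackages the ``varying lift'' problem into a single free $C(\alpha)$-action on $Q_\alpha$; the remaining induction is over free $C(\alpha)$-cells of $Q_\alpha$ with a genuinely constant fiber $X^{\alpha(K)}$, so no compatible choice of sections across cells is ever required. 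To make your cellular argument rigorous you would essentially be re-deriving this double-coset formula, so it does not save work over simply citing it.
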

\begin{proof}
	It suffices to recall the formula for the $K$-fixed points of $O \times_H X$ for a closed subgroup $K \leq O$. By \cite[B.17]{schwede_global} with $G = H$ and $K = K$ applied to $O \times X$,
	\[ (O \times_H X)^K \cong \coprod_{(\alpha)} (O \times X)^\alpha/C(\alpha). \]
	Here the sum runs over conjugacy classes of homomorphisms $\alpha\colon K \to H$, $C(\alpha)$ is the centralizer of $\alpha(K)$ in $H$, and the $(-)^\alpha$ denotes points that are fixed by the subgroup of $K \times H$ that is the graph of $\alpha$. This subgroup is isomorphic to $K$, but acts on $O \times X$ by
	\[ k(o,x) = (ko\alpha(k)^{-1},\alpha(k)x). \]
	Therefore its fixed points are $Q_\alpha \times X^{\alpha(K)}$ where $Q_\alpha \leq O$ is the subgroup of $O$ fixed by the $K$-action $o \mapsto ko\alpha(k)^{-1}$. Since $C(\alpha) \leq H$ acted freely on $O$, its action on the subset $Q_\alpha$ is necessarily still free, so we get
	\[ (O \times_H X)^K \cong \coprod_{(\alpha)} Q_\alpha \times_{C(\alpha)} X^{\alpha(K)}. \]
	An $H$-equivalence in the $X$ variable induces an equivalence on all the subspaces $X^{\alpha(K)}$, which then become equivalences on the above terms by a simple induction on the free $C(\alpha)$-cells of $Q_\alpha$.
\end{proof}

\begin{lem}\label{lem:induce_fibrations}
	The functor $O \times_H -$ sends $H$-equivariant Hurewicz fibrations to $O$-equivariant Hurewicz fibrations.
\end{lem}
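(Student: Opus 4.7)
The plan is to build an $O$-equivariant path-lifting function for $O \times_H p$ out of the $H$-equivariant path-lifting function $\Lambda_p \colon E \times_B B^I \to E^I$ of $p$, together with a path-lifting function for the principal $H$-bundle $O \to O/H$. The first ingredient is that since $O$ is a compact Lie group and $H$ is closed, $O \to O/H$ is a smooth principal $H$-bundle, and therefore admits an $(O \times H^{\op})$-equivariant path-lifting function
\[ \Lambda_O \colon O \times_{O/H} (O/H)^I \to O^I \]
(e.g.\ by parallel transport along an $O$-invariant Riemannian metric on $O$, or by a partition-of-unity argument using local sections of $O \to O/H$ and the fact that $O/H$ is paracompact). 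The second ingredient is the elementary observation that the square
\[ \xymatrix @R=1.7em{
O \times B \ar[d] \ar[r] & O \ar[d] \\
O \times_H B \ar[r] & O/H
} \]
is a pullback, so any path in $O \times_H B$ together with a lift of its projection to $O/H$ determines a unique path in $O \times B$.

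Given these, define the proposed path-lifting function $\Lambda$ for $O \times_H p$ by the following recipe. For an input $([o, e], \gamma)$ with $\gamma(0) = [o, p(e)]$, let $\bar\gamma$ denote the projection of $\gamma$ to a path in $O/H$, set $\omega := \Lambda_O(o, \bar\gamma)$, and use the pullback property to write $\gamma(t) = [\omega(t), \beta(t)]$ for a unique continuous $\beta\colon I \to B$ with $\beta(0) = p(e)$. Then apply $\Lambda_p$ to obtain $\tilde\beta := \Lambda_p(e, \beta)\colon I \to E$, and declare
\[ \Lambda([o,e], \gamma)(t) := [\omega(t), \tilde\beta(t)] \in O \times_H E. \]
By construction this is a continuous path in $O \times_H E$ starting at $[o,e]$ and projecting to $\gamma$.

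The main point is to verify that $\Lambda$ is well-defined on $O \times_H E$, $O$-equivariant, and continuous. Continuity and $O$-equivariance are routine: the recipe is a composite of continuous, left-$O$-equivariant operations (the $O$-action only changes $\omega$, via the $O$-equivariance of $\Lambda_O$, and commutes with the formation of $\beta$ and $\tilde\beta$). The main obstacle is well-definedness: if the representative $(o,e)$ is replaced by $(oh, h^{-1}e)$ for some $h \in H$, then right $H$-equivariance of $\Lambda_O$ replaces $\omega$ by $\omega h$, the pullback uniqueness replaces $\beta$ by $h^{-1}\beta$, and now $H$-equivariance of $\Lambda_p$ replaces $\tilde\beta$ by $h^{-1}\tilde\beta$; so the final output $[\omega h, h^{-1}\tilde\beta] = [\omega, \tilde\beta]$ is unchanged. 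This is precisely where the hypothesis that $p$ is $H$-equivariantly a Hurewicz fibration is used, and it completes the construction of the required equivariant path-lifting function.
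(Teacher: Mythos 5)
Your strategy is correct and is a genuine simplification of the paper's argument: you reduce the general statement to the special case $X=H$, $Y=\ast$, i.e.\ to the existence of a left-$O$-, right-$H$-equivariant path-lifting function $\Lambda_O$ for the principal bundle $O \to O/H$, and then splice $\Lambda_O$ with the given $H$-equivariant $\Lambda_p$ using the pullback identification $O \times B \cong (O \times_H B) \times_{O/H} O$. The splicing itself is sound: the well-definedness check, the $O$-equivariance, and the continuity on the quotient all go through as you say (for the last point one should remark that $O\times E \to O\times_H E$ is an open quotient map since $H$ is compact, hence remains a quotient map after pullback, so the map descends continuously). The paper instead uses a local-to-global argument---a slice $s\colon D(L) \to O$ producing a tubular neighborhood of $H$ in $O$, from which it deduces $H$-equivariant fibrancy via equivariant numerability and a citation to Waner, followed by a separate step identifying a based-path pullback as an induction to upgrade $H$- to $O$-equivariance.

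However, there is a genuine gap in your justification of the existence of $\Lambda_O$, and this is exactly where the paper spends most of its effort. Neither parenthetical supplies a correct argument. Parallel transport along a connection is defined only on (piecewise) $C^1$ paths and does not extend to a continuous operation on $\Map(I,O/H)$ in the compact-open topology: the required estimates are $C^1$-estimates, and holonomy phenomena obstruct $C^0$-continuity. A partition-of-unity argument \`a la Hurewicz uniformization does produce a path-lifting function compatible with the right $H$-action, but it gives no control over the left $O$-action---indeed, since $O$ acts transitively on $O/H$, the only $O$-invariant partition of unity is the constant one, so the naive equivariant Hurewicz patching argument does not apply. In effect, the statement you need (that $O\to O/H$ is an $(O\times H^{\op})$-equivariant $h$-fibration) is a nontrivial special case of exactly the lemma under discussion; the reduction you give is clean but does not by itself discharge it. To close the gap you would either need to run an argument along the lines of the paper's---construct a local $(O\times H^{\op})$-equivariant lifting near $eH$ using the slice section and globalize by equivariant numerability---or supply a reference that establishes the equivariant fibrancy of $O \to O/H$ directly.
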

\begin{proof}
	Let $L$ be the tangent space at $1H \in O/H$ with the conjugation $H$-action, and $D(L)$ its unit disc. As in \cite[3.2.1]{schwede_global}, we can pick a smooth embedding $s\colon D(L) \to O$ satisfying $s(hl) = hs(l)h^{-1}$ for all $h \in H$, such that the map $D(L) \times H \to O$, $(l,h) \mapsto s(l)h$, is an embedding whose image is a tubular neighborhood of $H$ in $O$. Note that such a map respects the left and right $H$-multiplications in the following way.
	\[ \begin{array}{rcl}
	l,h &\mapsto & s(l)h \\
	h_1l,h_1h &\mapsto & h_1s(l)h \\
	l,hh_2 &\mapsto & s(l)hh_2
	\end{array} \]
	The map $(D(L) \times H) \times_H X \to O \times_H X$ is a closed inclusion by directly checking the topology. (It is also an open inclusion on the interior of $D(L)$.) This gives a subspace that is $H$-equivariantly homeomorphic to the product $D(L) \times X$ with the diagonal $H$-action.
	
	Since $D(L) \times -$ clearly preserves $H$-equivariant fibrations, this means that for an $H$-equivariant fibration $X \to Y$, $O \times_H Y$ can be covered by $H$-equivariant open sets on which $O \times_H X \to O \times_H Y$ is an $H$-equivariant fibration. Furthermore these sets are equivariantly numerable, meaning that the continuous function to $[0,1]$ that picks them out factors through the $H$-orbits. By \cite[3.2.4]{waner_fibrations}, or the $H$-equivariant version of the usual argument from e.g. \cite[7.4]{concise}, this means that the entire map $O \times_H X \to O \times_H Y$ is an $H$-equivariant fibration .
	
	To prove that it is an $O$-equivariant fibration, consider the path space $(O \times_H Y)^I$, and let $(O \times_H Y,Y)^{(I,0)}$ denote the $H$-invariant subspace of those paths starting in $H \times_H Y$. The $O$-action gives a continuous bijection
	\begin{equation}\label{eq:induce_paths}
		O \times_H (X \times_Y (O \times_H Y,Y)^{(I,0)}) \to (O \times_H X) \times_{(O \times_H Y)} (O \times_H Y)^I.
	\end{equation}
	If we restrict to $D(L) \times H$, the resulting map is isomorphic to
	\[ D(L) \times (X \times_Y (O \times_H Y,Y)^{(I,0)}) \to (D(L) \times X) \times_{(D(L) \times Y)} (O \times_H Y,D(L) \times Y)^{(I,0)} \]
	and given by the formula
	\[ (l,x,\gamma) \mapsto (l,x),(s(l)\gamma), \]
	so it has a continuous inverse $(l,x),\gamma \mapsto (l,x,s(l)^{-1}\gamma)$. Therefore the map \eqref{eq:induce_paths} is open in the interior of $D(L) \times H$. Translating by elements of $O$, we get an open cover on which the map is open, therefore \eqref{eq:induce_paths} is a homeomorphism.
	
	Now take an $H$-equivariant path-lifting function for $O \times_H X \to O \times_H Y$, restrict it to $X \times_Y (O \times_H Y,Y)^{(I,0)}$, and extend to an $O$-equivariant map
	\[ O \times_H (X \times_Y (O \times_H Y,Y)^{(I,0)}) \to (O \times_H X)^I. \]
	Along the homeomorphism \eqref{eq:induce_paths}, this gives an $O$-equivariant path-lifting function for $O \times_H X \to O \times_H Y$.
\end{proof}

The retractive space version of these lemmas follows almost immediately.
\begin{cor}\label{lem:induce_all}
	The functor $O_+ \barsmash_H (-)\colon H\mc R(B) \to O\mc R(B)$ preserves equivariant $h$- and $f$-cofibrations, and on equivariantly $h$-cofibrant spaces it preserves $h$-fibrations and equivalences.
\end{cor}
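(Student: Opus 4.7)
My plan is to reduce each claim to the corresponding unbased statement in \autoref{lem:induce_equivalences} and \autoref{lem:induce_fibrations} by way of a pushout presentation. At the level of underlying spaces, $O_+ \barsmash_H X$ is the pushout, in $O$-spaces over $B$, of the span $B \xleftarrow{\alpha} O \times_H B \hookrightarrow O \times_H X$, where $\alpha([o,b]) := o \cdot b$ uses the ambient $O$-action on $B$ and the projection to $B$ on $O \times_H X$ is $[o,x] \mapsto o \cdot p_X(x)$. Each property in the corollary will then follow by checking the same property separately for $\alpha$ and for the induction $O \times_H (-)$ applied to $p_X$ and to $X \to Y$, and assembling via the gluing/Clapp machinery of \autoref{sec:technical_lemmas}.

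For the cofibration claims, I would use that an $H$-equivariant (fiberwise) NDR structure $(u,h)$ on a cofibration $X \to Y$ induces an $O$-equivariant (fiberwise) NDR structure on $(O \times_H Y, O \times_H X)$ via the well-defined $\bar u([o,y]) := u(y)$ and $\bar h([o,y],t) := [o, h(y,t)]$; fiberwiseness over $B$ persists because $\bar h$ preserves the projection $[o,y] \mapsto o \cdot p_Y(y)$. Pushing out along the identity of $O \times_H B \to B$ then preserves the $h$- and $f$-cofibration properties by \autoref{prop:technical_cofibrations}(1) and \autoref{cofibration_of_pushouts}, giving the claim for $|O_+ \barsmash_H X| \to |O_+ \barsmash_H Y|$.

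The technical heart is the auxiliary claim that $\alpha\colon O \times_H B \to B$ is an $O$-equivariant Hurewicz fibration for any $O$-space $B$: the twist $(o,b) \mapsto (o, ob)$ descends to a non-equivariant homeomorphism $O \times_H B \cong (O/H) \times B$ identifying $\alpha$ with the second projection, and the obvious lift $L((oH,c),\gamma) := (t \mapsto (oH,\gamma(t)))$ is $O$-equivariant for the transported $O$-action $o' \cdot (oH,c) = (o'oH, o'c)$. Granting this, when $X$ is both $h$-cofibrant and $h$-fibrant the projection $O \times_H X \to B$ is an $h$-fibration as the composite of $O \times_H p_X$ (an $h$-fibration by \autoref{lem:induce_fibrations}) with $\alpha$, while $O \times_H B \to O \times_H X$ is an $O$-equivariant $h$-cofibration between $h$-fibrant spaces, hence an $f$-cofibration by \autoref{lem:heath_kamps}; \autoref{prop:clapp} then makes $|O_+ \barsmash_H X|$ itself $h$-fibrant over $B$. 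Preservation of weak equivalences between $h$-cofibrant inputs follows by combining \autoref{lem:induce_equivalences} with the gluing lemma \autoref{prop:technical_cofibrations}(2) on the two pushout squares. The sole non-formal obstacle is the twist calculation establishing that $\alpha$ is equivariantly a Hurewicz fibration; everything else is bookkeeping around the pushout presentation.
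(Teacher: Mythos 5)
Your proof follows the same route as the paper's: the defining pushout square $B \leftarrow O\times_H B \to O\times_H X$ reduces everything to \autoref{lem:induce_equivalences} and \autoref{lem:induce_fibrations} together with the gluing and Clapp machinery. The twist calculation you single out as the ``technical heart'' is correct, but superfluous in the paper's intended applications, where $O$ and $H$ act trivially on $B$, so that $O\times_H B = O/H \times B$ canonically and $\alpha$ is visibly the second projection; the substantive content sits in the two cited lemmas rather than in $\alpha$.
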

\begin{proof}
	Recall the pushout square
	\[ \xymatrix @R=1.7em{
		O/H \times B \ar[d] \ar[r] & O \times_H X \ar[d] \\
		B \ar[r] & O_+ \barsmash_H X.
	} \]
	This construction preserves cofibrations because the orbits commute with $- \times I$, so they preserve the retract that defines an $h$- or $f$-cofibration. On an $h$-cofibrant space, the horizontal maps of the above square are equivariant $h$-cofibrations. Since the term $O \times_H X$ preserves equivariant equivalences and fibrations by the previous two results, for $h$-cofibrant $X$ the pushout $O_+ \barsmash_H X$ does as well.
\end{proof}

From here we can follow the outline of \autoref{sec:reedy}. Only the first result has a substantially different proof. Let $O$ and $O'$ be compact Lie groups, such as $O = O(n)$ and $O' = O(m)$ with $m \geq n$. They will always act trivially on the base spaces.

\begin{prop}\label{eq_smashing_with_free_complex}\hfill
	\vspace{-1em}
	
	\begin{itemize}
		\item If $f: K \to L$ is an $O$-free $O \times O'$-cell complex of based spaces and $g: X \to Y$ is a $G \times O$-equivariant $h$-cofibration of spaces over $B$ then $f \square g$ constructed by $(- \barsmash -)_{O}$ is a $G \times O'$-equivariant $h$-cofibration.
		\item The same is true for $f$-cofibrations or closed inclusions.
		\item If $L$ is any finite $O$-free based $O \times O'$-cell complex, and $X$ is any $G \times O$-equivariantly $h$-cofibrant and $h$-fibrant retractive space, then $(L \barsmash X)_O$ is $G \times O'$-equivariantly $h$-fibrant.
		\item If $L$ is any finite $O$-free based $O \times O'$-cell complex, and $g: X \to Y$ is any $G \times O$-equivariant equivalence of $G \times O$-$h$-cofibrant spaces over $B$, then $(\id_L \barsmash g)_{O}$ is a $G \times O'$-equivariant weak equivalence.
	\end{itemize}
\end{prop}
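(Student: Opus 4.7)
The plan is to mirror the proof of the non-equivariant \autoref{smashing_with_free_complex}, using \autoref{lem:induce_all} (the equivariant analogue of the identity $((O \times D)_+ \barsmash Y)_O \cong D_+ \barsmash Y$) as the essential new input. All of the formal manipulations with pushout-products from \autoref{pushout_product_lemmas}, together with the closure properties of equivariant $h$-, $f$-, and $i$-cofibrations from \autoref{prop:technical_cofibrations}, transfer verbatim to the equivariant setting.

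For parts (1) and (2), I would reduce to the case where $f$ is a single $O$-free cell of the form $((O \times O')/\Gamma)_+ \barsmash (S^{d-1}_+ \to D^d_+)$ with $\Gamma \cap O = \{e\}$, using that the class of pairs for which $(f \square g)_O$ is a $G \times O'$-equivariant cofibration of the given type is closed under pushouts, transfinite compositions, and retracts in the $f$-variable (\autoref{pushout_product_pushouts}, \autoref{pushout_product_composition}). Writing $H = \pi_{O'}(\Gamma) \leq O'$ and $\phi\colon H \to O$ for the homomorphism whose graph is $\Gamma$, there is a canonical $(O \times O')$-equivariant homeomorphism $(O \times O')/\Gamma \cong O \times_H O'$, and a short calculation with the orbit quotient identifies
\[ (f \square g)_O \;\cong\; O'_+ \barsmash_H \bigl( (S^{d-1}_+ \to D^d_+) \square g \bigr), \]
where $H \leq O'$ acts on $g$ through $\phi$ composed with the $O$-action and the residual $O'$-action comes from left multiplication on the $O'$-factor. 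The inner pushout-product is a $G \times H$-equivariant $h$- (resp.\ $f$-, $i$-) cofibration between equivariantly $h$-cofibrant spaces by \autoref{prop:h_cofibrations_pushout_product}, and \autoref{lem:induce_all} (applied with $O'$ and $H \leq O'$ in the roles of $O$ and $H$ there) then delivers the required $G \times O'$-equivariant cofibration.

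For parts (3) and (4), I would induct on the cells of $L$ exactly as in the non-equivariant proof. Each inductive step attaches a single $O$-free cell via a pushout square, and parts (1)--(2) ensure that the relevant maps remain $h$-cofibrations after the orbit quotient. For part (3), the equivariant form of \autoref{prop:clapp} (whose proof is formula-based and goes through $G \times O'$-equivariantly) combined with \autoref{lem:induce_all}'s preservation of $h$-fibrant retractive spaces on the $h$-cofibrant subcategory shows that the pushout stays $h$-fibrant at each step. For part (4), the equivariant gluing lemma combined with the fact that $O'_+ \barsmash_H (-)$ preserves equivariant equivalences of $h$-cofibrant spaces (again by \autoref{lem:induce_all}) yields the conclusion.

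The main obstacle is the bookkeeping in the identification $(f \square g)_O \cong O'_+ \barsmash_H (\ldots)$: one must carefully track how the homomorphism $\phi\colon H \to O$ converts the original $G \times O \times O'$-structure on $f \barsmash g$ into the $G \times H$-structure required as input to \autoref{lem:induce_all}, with the correct residual $G \times O'$-action appearing after the orbit construction. Once this identification is pinned down for a single $O$-free cell, everything else is purely formal and proceeds in direct parallel with the non-equivariant proof.
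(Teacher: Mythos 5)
Your proposal is correct and follows the same route as the paper's proof: decompose an $O$-free $O \times O'$-cell complex into cells $(O \times O')/\Gamma_+ \barsmash (S^{d-1}_+ \to D^d_+)$ with $\Gamma$ the graph of a homomorphism $\phi\colon H \to O$ for $H \leq O'$, identify $((O\times O')/\Gamma_+ \barsmash D^d_+ \barsmash Y)_O$ with $D^d_+ \barsmash (O'_+ \barsmash_H Y)$, and feed this into \autoref{lem:induce_all} to convert the $G \times H$-equivariant data into $G \times O'$-equivariant data, then induct on skeleta for parts (3)--(4). The only caveat is that the intermediate assertion $(O \times O')/\Gamma \cong O \times_H O'$ needs care with left/right action conventions, but the paper sidesteps this by establishing the identification directly through pushout squares, and the end result is the same.
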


\begin{proof}
	An $O$-free $O \times O'$-cell complex is built out of cells of the form $(O \times O')/\rho_H \times D^n$, where $H \leq O'$ is closed, $\rho\colon H \to O$ is a homomorphism, and $\rho_H \leq O \times O'$ is the image of $H$ under the homomorphism $h \mapsto (\rho(h),h)$. So if $D$ is an unbased space with no group action, and $Y$ is a $G \times O$-equivariant $h$-cofibrant retractive space, we form the pushout square
	\[ \xymatrix @R=1.7em{
		O \times O' \times D \times B \ar[d] \ar[r] & O \times O' \times D \times Y \ar[d] \\
		B \ar[r] & (O \times O' \times D)_+ \barsmash Y.
	} \]
	in which the top horizontal is a $G \times O \times O'$-equivariant $h$-cofibration. Quotienting by the diagonal left $O$-action on the $O$ and $Y$, and the diagonal right $H$-action on $O$ (through $\rho$) and $O'$, gives a new pushout square along a $G \times O'$-equivariant $h$-cofibration
	\[ \xymatrix @R=1.7em{
		O'/H \times D \times B \ar[d] \ar[r] & D \times (O' \times_H Y) \ar[d] \\
		B \ar[r] & ((O \times O')/\rho_H \times D)_+ \barsmash_O Y
	} \]
	where the $H$-action on $Y$ in the upper-right term is through the homomorphism $\rho$. Comparing to the pushout squares
	\[ \xymatrix @R=1.7em{
		D \times O'/H \times B \ar[d] \ar[r] & D \times (O' \times_H Y) \ar[d] \\
		D \times B \ar[d] \ar[r] & D \times (O' \barsmash_H Y) \ar[d] \\
		B \ar[r] & D_+ \barsmash (O' \barsmash_H Y)
	} \]
	gives a natural isomorphism of $G \times O'$-equivariantly $h$-cofibrant spaces
	\[ ((O \times O')/\rho_H \times D)_+ \barsmash_O Y \cong D_+ \barsmash (O' \barsmash_H Y). \]
	We note that any $G \times O$-equivariant property of $Y$ becomes a $G \times H$-equivariant property along $\rho\colon H \to O$, and that
	\[ (G \times O') \barsmash_{G \times H} Y \cong O' \barsmash_H Y \]
	as $G \times O'$-spaces, hence this becomes a $G \times O'$-equivariant property of $O' \barsmash_H Y$ by \autoref{lem:induce_all}.
	
	The first two claims follow immediately from this analysis because as in \autoref{smashing_with_free_complex} if suffices to assume that $f$ is a single cell
	\[ [(O \times O')/\rho_H \times S^{n-1}]_+ \to [(O \times O')/\rho_H \times D^n]_+. \]
	For the fourth statement we induct on the skeleta of $L$. If $L$ is obtained from $K$ by pushout along a single cell, then $g$ gives a map of two pushout squares of the form
	\[ \xymatrix @R=1.7em{
		S^{d-1}_+ \barsmash O' \barsmash_H (-) \ar[r] \ar[d] & D^d_+ \barsmash O' \barsmash_H (-) \ar[d] \\
		(K \barsmash (-))_O \ar[r] & (L \barsmash (-))_O.
	} \]
	The horizontal maps are $G \times O'$-cofibrations by the first part, so these squares are $G \times O'$-equivariantly homotopy pushout squares. The map $g$ induces $G \times O'$-equivalences on the top-left and top-right by the above analysis, so we can do the induction and conclude $(\id_{L'} \barsmash f)_O$ is an equivalence for all skeleta $L'$ of $L$, and therefore for $L$ itself. The third statement is proven by the same induction, with just one pushout square of the above form for each cell of $L$.
\end{proof}

We define semifree spectra $\mc G_n A$ as before, where $A$ has a $G \times O(n)$-action. It turns out that a semifree spectrum on a nontrivial representation $V$ is always isomorphic to a semifree spectrum on a trivial representation, so it is unnecessary to define $\mc G_V A$ for general $V$. We then define skeleta and latching maps just as in \autoref{sec:reedy}. The $n$th latching map $L_n X \to X_n$ is now a $G \times O(n)$-equivariant map.

\begin{df}
	A map of spectra $X \to Y$ over $B$ is a \textbf{semifree} or \textbf{Reedy $h$-cofibration} if each relative latching map $L_n Y \cup_{L_n X} X_n \to Y_n$ is a $G \times O(n)$-equivariant $h$-cofibration. A semifree $f$-cofibration is defined similarly.\index{$f$-cofibration!equivariant semifree}\index{$h$-cofibration!equivariant semifree}
\end{df}

As before, the semifree cofibrations are closed under pushouts, compositions, sequential colimits, and retracts. Using the formula
	\[ F_V A \cong \mc G_n (\mathscr J_G(V,\R^n) \barsmash A) \]
for $n = \dim V$, we check that every generating free cofibration is a semifree cofibration. Therefore every free cofibration is also a semifree cofibration.

The same arguments as in \autoref{sec:reedy} prove that semifree cofibrations are level cofibrations, a level equivalence of semifreely $h$-cofibrant spectra has all its relative latching maps equivalences, and a level $h$-fibrant semifreely $h$-cofibrant spectrum has all its latching objects fibrant. In all of these claims the notions of cofibration, fibration, and weak equivalence at spectrum level $n$ is always the $G \times O(n)$-equivariant one. The equivariant version of \autoref{prop:reedy_pushout_product} has the same argument as before. We will only record its specialization back to free spectra.

\begin{thm}\label{eq_prop:spectra_pushout_product}\hfill
	\vspace{-1em}
	
	\begin{itemize}
		\item Let $f: K \to X$ and $g: L \to Y$ be free $h$-cofibrations of orthogonal $G$-spectra over $A$ and $B$, respectively. Then $f \square g$, constructed using $\barsmash$, is a free $h$-cofibration.
		\item The same is true for free $f$-cofibrations, free $q$-cofibrations, and free closed inclusions.
		\item If $X$ and $Y$ are freely $h$-cofibrant and level $h$-fibrant then $X \barsmash Y$ is level $h$-fibrant.
		\item If $X$ is freely $h$-cofibrant and $g: Y \to Y'$ is a level equivalence of freely $h$-cofibrant spectra then $\id_X \barsmash g$ is a level equivalence.
	\end{itemize}
\end{thm}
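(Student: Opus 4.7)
The statement is the specialization to free spectra of the equivariant analog of \autoref{prop:reedy_pushout_product}, so my plan is to first outline that more general Reedy pushout-product theorem in the equivariant setting, and then observe that the four items in the statement fall out as direct corollaries.

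For items (1) and (2), I would reduce directly to the generating case, i.e.\ pairs of maps of the form $F_V\phi$ and $F_W\gamma$ with $\phi\colon K\to X$ and $\gamma\colon L\to Y$ being $G$-equivariant cofibrations of $G$-equivariantly cofibrant retractive $G$-spaces. The equivariant free-spectrum identity \eqref{eq:smash_free}, namely $F_VA \barsmash F_WB \cong F_{V\oplus W}(A \barsmash B)$, combined with the fact that both $\square$ and $F_{V\oplus W}$ commute with the relevant colimits, identifies $F_V\phi \square F_W\gamma$ with $F_{V\oplus W}(\phi \square \gamma)$. The space-level pushout-product $\phi \square \gamma$ is a $G$-equivariant cofibration of $G$-equivariantly cofibrant retractive $G$-spaces by the equivariant version of \autoref{prop:h_cofibrations_pushout_product} established earlier in \autoref{sec:G}, so $F_{V\oplus W}(\phi \square \gamma)$ is a free cofibration by definition. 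The closure properties \autoref{pushout_product_pushouts} and \autoref{pushout_product_composition} then extend this from generators to all pairs of free cofibrations, exactly as in the non-equivariant proof of \autoref{prop:spectra_pushout_product}(1)--(2).

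For items (3) and (4), which concern interactions of $\barsmash$ with level $h$-fibrancy and level equivalences, I would pass through the Reedy machinery. Every free cofibration is a semifree cofibration and every freely cofibrant spectrum is semifreely cofibrant, so it suffices to prove the equivariant analog of \autoref{prop:reedy_pushout_product}(2)--(3). The skeletal decomposition of $(X \barsmash Y)_n$ supplied by \autoref{prop:skeleta_pushout}, \autoref{prop:map_skeleta_pushout}, and the semifree smash identity \autoref{smash_two_semi_free}, together with the induction up the skeleta of $X$ and $Y$, carries over verbatim; at each inductive step the work is reduced to a pushout square whose upper-left map has the form $\mathscr J_G(\R^k,\R^n) \barsmash_{O(k)} (\text{relative latching map})$. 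The equivariant cofibration, $h$-fibrancy, and level-equivalence behavior of such maps is exactly what \autoref{eq_smashing_with_free_complex} was set up to deliver, $G \times O(n)$-equivariantly at spectrum level $n$. Combined with \autoref{prop:clapp} (which holds $G \times O(n)$-equivariantly by the same proof), the induction propagates $G \times O(n)$-equivariant $h$-fibrancy to yield (3), and preserves $G \times O(n)$-equivariant level equivalences between semifreely $h$-cofibrant spectra to yield (4).

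The one place the argument is not a mechanical rerun of the non-equivariant case is in \autoref{eq_smashing_with_free_complex} itself, whose proof was the main new obstacle and is handled earlier in \autoref{sec:G_reedy} using the new induction lemmas \autoref{lem:induce_equivalences} and \autoref{lem:induce_fibrations}. With that result in hand, the present theorem requires no further genuinely equivariant input beyond formal bookkeeping.
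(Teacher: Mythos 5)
Your proposal matches the paper's own route: the paper explicitly states the equivariant version of \autoref{prop:reedy_pushout_product} ``has the same argument as before'' and records \autoref{eq_prop:spectra_pushout_product} as its specialization to free spectra, with \autoref{eq_smashing_with_free_complex} (proved via \autoref{lem:induce_equivalences} and \autoref{lem:induce_fibrations}) as the only genuinely new equivariant input. Your direct argument for items (1)--(2) via \eqref{eq:smash_free}, and the Reedy induction for (3)--(4), is exactly the non-equivariant proof of \autoref{prop:spectra_pushout_product} and \autoref{prop:reedy_pushout_product} run with the equivariant lemmas substituted in.
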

	
\beforesubsection
\subsection{Stable equivalences}\aftersubsection
	
For $X \in G\Osp(B)$ the \textbf{homotopy groups}\index{homotopy!groups} are the genuine stable homotopy groups of the $G_b$-equivariant fiber spectrum $X_b$, for $b \in B$ and $n \in \Z$. For each (closed) subgroup $H \leq G_b$ these are defined as
\[ \pi_{k,b}^H(X) := \begin{cases}\quad
\underset{V \subset \mathcal U}\colim\, \pi_k([\Omega^V X_b(V)]^H)  &\text{if } k \geq 0 \\
\underset{V \subset \mathcal U,\ \R^{|k|} \subset V}\colim\, \pi_0([\Omega^{V-\R^{|k|}} X_b(V)]^H)  &\text{if } k < 0.\end{cases} \]
where $V$ runs over all $G_b$-representations in $\mc U$ and their inclusions in $\mc U$. On level $q$-fibrant spectra, every level equivalence induces an equivariant equivalence on the spaces $X_b(V)$, hence an isomorphism on these groups. So the \textbf{right-derived homotopy groups} $\R \pi_{n,b}(X)$ can be defined from these by replacing the spectrum $X$ by one that is level $q$-fibrant.

The stable equivalences are the maps inducing isomorphisms on these homotopy groups; they are generated under 2-out-of-3 by the level equivalences and the maps of level $q$-fibrant spectra that are stable equivalences on each fiber.\index{stable equivalence} The examples we gave earlier all have the same properties in the equivariant case, using \cite[III.3]{mandell2002equivariant} in the place of \cite[\S 7]{mmss}.

Most of the proofs in this section use the formal behavior of the various kinds of cofibrations and fibrations, and so the equivariant versions have the same proofs, verbatim. One might worry about the $i$-cofibrancy hypothesis on commuting $f_!$, $\barsmash$, or $P$ with $H$-fixed points, but that was already used to establish the earlier lemmas concerning weak equivalences and cofibrant spaces, so it does not need to be invoked again.

\begin{thm}[Stable model structure]\label{eq_thm:stable_model_structure}
	There is a proper model structure on $G\Osp(B)$ whose weak equivalences are the stable equivalences, and is cofibrantly generated by the sets of maps
	\[ \resizebox{\textwidth}{!}{$
		\begin{array}{ccrllll}
	I &=& \{ \ F_V\left[ (G/H \times S^{n-1})_{+B} \to (G/H \times D^n)_{+B} \right] & : n \geq 0, & V \subset \mc U, & H \leq G, & D^n \to B^H \ \} \\
	J &=& \{ \ F_V\left[ (G/H \times D^n)_{+B} \to (G/H \times D^n \times I)_{+B} \right] &: n \geq 0, & V \subset \mc U,  & H \leq G, & (D^n \times I) \to B^H \ \} \\
	& \cup & \{ \ k_{V,W} \ \square \ \left[ (G/H \times S^{n-1})_{+B} \to (G/H \times D^n)_{+B} \right] & : n \geq 0, & V,W \subset \mc U, & H \leq G, & D^n \to B^H \ \}.
	\end{array}
	$} \]
\end{thm}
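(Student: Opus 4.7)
The plan is to verify the six conditions of \autoref{prop:construct_cofibrantly_generated_model_category}, following the template of the proof of \autoref{thm:stable_model_structure} but replacing every appeal to non-equivariant facts about orthogonal spectra with their genuine-equivariant counterparts. The class $W$ is defined as the maps inverted by the functors $\R\pi_{n,b}^H(-)$ for all $n \in \Z$, $b \in B$, $H \leq G_b$, hence is automatically closed under 2-out-of-3 and retracts, settling (1). Properness at the end will follow from the equivariant versions of left-properness (\autoref{cor:spectra_left_proper}) and right-properness (\autoref{cor:spectra_right_proper}), whose proofs are formal from the gluing lemmas and the long exact sequences associated with the equivariant mapping cone and mapping fiber, which carry over without change to $G\Osp(B)$.

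The smallness conditions (2) and (3) are checked essentially as before. Every domain of a map in $I$ is of the form $F_V[(G/H \times S^{n-1})_{+B}]$, which is a retractive space built out of a compact Hausdorff space; free cofibrations are level cofibrations by the equivariant version of \autoref{prop:free_implies_level}, and the levels of an $I$-cell complex are glued together by $G \times O(n)$-equivariant closed inclusions between weak Hausdorff spaces, so any map from a compact space factors through a finite stage. For $J$, the new ingredients are the pushout-products with $k_{V,W}$, but $k_{V,W}$ itself is a free $q$-cofibration between spectra whose relevant levels are free spectra on compact equivariant complexes, so the same factorization argument applies term-by-term through the expanded mapping-cylinder presentation \eqref{eq:expanded_cylinder}.

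For (4), the first family of generating acyclic cofibrations consists of level equivalences. For the second family, the key input is that the map $\lambda_{V,W}\colon F_{V \oplus W} S^W \to F_V S^0$, adjoint to the inclusion of $S^W$ as the fiber over the standard embedding $V \to V \oplus W$ in $\mathscr{J}_G(V,V \oplus W)$, is a $\pi_*^H$-isomorphism on each genuine fiber $G_b$-spectrum for every $H \leq G_b$; this is a standard fact about orthogonal $G$-spectra (cf.\ \cite[III.4]{mandell2002equivariant}), and it is the equivariant analog of the statement used in step (4) of the non-equivariant proof. Smashing this map with a well-based $G$-space preserves the property of being a stable equivalence, so the same pushout-square argument using left-properness (\autoref{cor:spectra_left_proper}, equivariant version) shows each $J$-cell is a level $h$-cofibration and a stable equivalence; then \autoref{prop:coproduct_colimit_stable_equivalences} (equivariant) gives the same conclusion for arbitrary $J$-cell complexes.

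The main obstacle is (5) and (6), which require the equivariant characterization of $J$-injective maps. Exactly as in the non-equivariant proof, using the adjunction between $k_{V,W}$ and its pullback-hom together with the level model structure, one shows that $p\colon X \to Y$ is $J$-injective iff each level is an equivariant $q$-fibration and, for every pair $V, W \subset \mathcal{U}$, the square
\[ \xymatrix @R=1.7em{ X_V \ar[d]_-{p_V} \ar[r] & \Omega^W_B X_{V \oplus W} \ar[d]^-{\Omega^W_B p_{V \oplus W}} \\ Y_V \ar[r] & \Omega^W_B Y_{V \oplus W} } \]
is a homotopy pullback. Condition (5) then follows because $I$-injective maps are level equivalences, hence stable equivalences with every such square a homotopy pullback. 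For (6), assuming $p$ is both $J$-injective and a stable equivalence, its strict fiber spectrum $F$ over the basepoint section is level $q$-fibrant and a genuine $\Omega$-$G$-spectrum in the above sense, with vanishing $\R\pi_{*,b}^H$ for every $b$ and every $H \leq G_b$; the standard telescope argument (with the colimit now indexed over $V \subset \mathcal{U}$ and over $H$) then forces each level of each fiber to be equivariantly weakly contractible, so each $p_V$ is an equivariant weak equivalence. The hard part of this last step is that one must run the telescope argument over the entire poset of representations in $\mathcal{U}$ and over all subgroups simultaneously, but both colimits are directed and the $G \times O(V)$-equivariant Whitehead-type argument in \cite[III.3]{mandell2002equivariant} handles exactly this, so no new ingredient beyond the equivariant level model structure is required.
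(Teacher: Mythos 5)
Your proposal follows the same template as the paper's proof: verify the six conditions of \autoref{prop:construct_cofibrantly_generated_model_category} by making each step of the non-equivariant argument equivariant, with the same characterization of $J$-injective maps via equivariant homotopy pullback squares on all $H$-fixed points. The one place you gloss over what the paper explicitly flags is step (6): the colimit defining $\pi^H_{k,b}$ is indexed over $V \subset \mc U$ alone (each $H$ is checked separately; there is no colimit over subgroups, contrary to your ``both colimits'' phrasing), and because $\Omega^W$ for a nontrivial representation $W$ does not commute with $(-)^H$, you must first establish that the adjoint bonding maps of the strict fiber spectrum are \emph{equivariant} equivalences and only then pass to $H$-fixed points --- deferring to a Whitehead-type argument in \cite{mandell2002equivariant} hides exactly the point that would trip someone up here.
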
\index{stable model structure}
The map $k_{V,W}$ is the inclusion of the front end of the mapping cylinder $\Cyl_{V,W}$ for the map
\[ \lambda_{V,W}\colon F_{V \oplus W} S^W \ra F_V S^0 \]
adjoint to the equivariant map of spaces $S^W \to \mathscr J_G(V,V \oplus W)$ that identifies $S^W$ with the fiber over the standard embedding $V \to V \oplus W$. As in \autoref{thm:stable_model_structure} this is a free $q$-cofibration, and it is a stable equivalence by \cite[4.5]{mandell2002equivariant}.

The proof of \autoref{eq_thm:stable_model_structure} is the same with the following modifications. When proving the smallness condition, we use the fact that equivariant maps out of $G/H \times D$ correspond to maps from $D$ into the $H$-fixed points, and that $H$-fixed points preserve pushouts and sequential colimits along levelwise $h$-cofibrations. The fibrations ($J$-injective maps) rearrange into the maps that are (equivariant) $q$-fibrations at every level $V$ and for which every one of the squares
\begin{equation}\label{eq_eq:fibration_of_spectra_means_this_is_a_pullback}\index{stably fibrant}
\xymatrix @R=1.7em{
	X_V \ar[r] \ar[d]^-{p_V} & \Omega^W_B X_{V\oplus W} \ar[d]^-{\Omega_B^W p_{V\oplus W}} \\
	Y_V \ar[r] & \Omega^W_B Y_{V\oplus W}
}
\end{equation}
\noindent
is an equivariant homotopy pullback square (i.e. a homotopy pullback on the $H$-fixed points for all $H \leq G$). When proving the last property, we cannot restrict to the $H$-fixed points too early in the proof; we show that the maps of the colimit system defining the homotopy groups are isomorphisms by checking that we get an equivariant equivalence of spaces, before taking $H$-fixed points.

We get the same results on the derived smash product and base change functors, in particular:
\begin{thm}\label{eq_thm:spectra_SMBF}
	The homotopy category of parametrized orthogonal $G$-spectra $\ho G\Osp$ forms a symmetric monoidal bifibration over $G\cat{Top}$, with Beck-Chevalley squares the equivariant homotopy pullback squares of spaces.
\end{thm}\index{SMBF!of parametrized $G$-spectra $\ho G\Osp$}

The rest of the proofs concerning derived smash products over $B$ are formal, so there is no change in the equivariant case. For simplicity we often restrict the category $G\Osp_{(B)}$ to base spaces $B$ with trivial $G$-action, but the proofs still work even if the $G$-action is nontrivial.
	
\beforesubsection
\subsection{Duality and traces}\aftersubsection

For simplicity, in this last section we take $G$ to be finite. We do still get the Spanier-Whitehead and Costenoble-Waner duality theorems when $G$ is a compact Lie group, it is just a bit more work in that case to check the assumptions and to do calculations.

Let $G$ be a finite group. In the equivariant stable homotopy category $\ho G\Osp(*)$, a spectrum $X$ is dualizable iff it is equivalent to $F_V K$ where $K$ is a finite $G$-CW complex. In the homotopy category of spectra over $B$, $\ho G\Osp(B)$, a spectrum $X$ is known to be dualizable iff for every $b \in B$ its derived fiber spectrum $(\R b^*)X$ is a dualizable $G_b$-spectrum \cite[15.1.1]{ms}, though we have not done the equivariant version of our argument in this document.

If $f\colon X \to X$ is an equivariant map of finite $G$-cell complexes, its \textbf{equivariant Lefschetz number}\index{Lefschetz number $L(f)$!equivariant} is the trace of $\Sigma^\infty_+ f$ in $\ho G\Osp$. It is a map $\Sph \to \Sph$ in the equivariant stable homotopy category, in other words an element of the Burnside ring $L(f) \in A(G)$.

More generally, suppose $B$ is a space with trivial $G$-action and $f\colon E \to E$ is an equivariant self-map of an equivariant fibration over $B$ with finite or finitely dominated fibers. Then the \textbf{equivariant fiberwise Lefschetz number} $L_B(f)$ is the trace of $\Sigma^\infty_{+B} f$ in $\ho G\Osp(B)$, which gives an equivariant map of trivial $\Sph$-bundles over $B$. The tom Dieck splitting turns this into an element of the group
\[ L_B(f) \in \bigoplus_{(H) \leq G} [B,\Sigma^\infty_+ BH] \]
where the sum is over the conjugacy classes of subgroups of $G$. Informally, the summand for $H$ tracks the $f$-fixed points whose stabilizer group is conjugate to $H$.

We define a $G$-ENR to be a $G$-space $X$ that can be equivariantly embedded in a finite-dimensional $G$-representation $V$ with an equivariant retract $p$ of a $G$-invariant neighborhood $N$. In particular, this is possible when $X$ is a finite $G$-CW complex. A fiberwise $G$-ENR is a bundle with $G$-trivial base that can be equivariantly fiberwise embedded into $B \times V$ with fiberwise equivariant retract. This includes the smooth fiber bundles of closed $G$-manifolds.

When $X$ is a compact $G$-ENR, we pick such an embedding into $V$ with equivariant neighborhood retract $p$. The formulas from \autoref{thm:sw_duality_1} and \autoref{thm:sw_duality_2}, with $n$ replaced by $V$, give the dual of $\Sigma^\infty_+ X$ in $\ho G\Osp(*)$, under the equivariant versions of the same assumptions.\index{Spanier-Whitehead duality!equivariant}

Therefore \autoref{cor:lefschetz_formula} is also the formula for the Lefschetz number of an equivariant map $f\colon X \to X$, for any compact $G$-ENR. The geometric interpretation is slightly trickier because we have to control how the index changes when we pass from the $K$-fixed points to the $H$-fixed points for $H \leq K$, and we don't discuss it further here.

When $E \to B$ is a fiberwise $G$-ENR with compact ENR base space $B$, the fiberwise version of the discussion runs as before. When the base is a manifold, the equivariant fiberwise Lefschetz number is counting the fixed points as an equivariant manifold with framing, but we defer a detailed discussion of this to future work.

The proof of Spanier-Whitehead duality is identical, with $n$ replaced by $V$, and $G$ acting trivially on the $I$ coordinate of the mapping cone. All the maps and homotopies we used before become equivariant in this setting. We note the importance of making $V$ an orthogonal $G$-representation so that the $\epsilon$-tubes are invariant under the $G$-action. The formation of the mapping cones commutes with $H$-fixed points, so we can think of them as the same cones as before but with a $G$-action.

We form the point-set bicategory of parametrized spectra $G\Osp/G\cat{Top}$, its homotopy bicategory $G\Ex$, and its base-change objects in the same way as before. The proof that the four different definitions are canonically isomorphic is formal, so it does not need to be repeated. The technical lemmas \autoref{lem:derived_circle_product}, \autoref{lem:derived_shadow} concerning when $\odot$ and $\shad{}$ are derived have the same statements and proofs as before.\index{bicategory!of parametrized $G$-spectra $G\Ex$}

We define the \textbf{equivariant Reidemeister trace}\index{Reidemeister trace $R(f)$!equivariant} of an equivariant map $f\colon X \to X$ as the trace in $G\Ex$ of the isomorphism of base-change objects
\[ \xymatrix{ \bcr{X}{}{*} \ar[r]^-\cong & \bcr{X}{f}{X} \odot \bcr{X}{}{*} } \]
and similarly for the fiberwise equivariant Reidemeister trace in $G\Ex_B$.

When $X$ is a compact $G$-ENR, the formulas from \autoref{thm:cw_duality_1} and \autoref{thm:cw_duality_2} with $n$ replaced by $V$, give the Costenoble-Waner dual of $\Sigma^\infty_{+(X \times *)} X$ in $G\Ex$, again under the equivariant versions of the same assumptions.\index{Costenoble-Waner duality!equivariant} We note that $X^I$ has a $G$-action by post-composition -- we do not take only the $G$-fixed paths. The formula from \autoref{cor:reidemeister_formula} therefore describes the equivariant Reidemeister trace of an equivariant map $f\colon X \to X$, for $X$ any compact $G$-ENR. We run the fiberwise discussion as before, and conclude that \autoref{cor:fiberwise_reidemeister_formula} describes the equivariant fiberwise Reidemeister trace for a fiberwise $G$-ENR $E \to B$ with compact ENR base.

In the proof of Costenoble-Waner duality we replace $n$ and $\R^n$ by $V$. We check that all the subspaces are $G$-invariant and the point-set maps and homotopies are equivariant. In particular, the retract of the neighborhood of $X$ in $LX$ is equivariant, because the neighborhood deformation retract for an equivariant cofibration is always equivariant. When we apply uniform continuity, we ignore the $G$-action because the $\delta$-, and $\epsilon$-tubes about $X$ in $V$ and the $\epsilon'$-tube about $X$ in $X \times N$ are automatically $G$-invariant subspaces. Since the equivariant version of \autoref{lem:derived_circle_product} and \autoref{lem:derived_shadow} hold, we can pass to the homotopy category as before.

We leave the equivariant generalization of \autoref{sec:models} to the interested reader. It appears important in this case to work with groupoids with $G$-action whose realization is equivariantly equivalent to the base space $B$.

\printindex

\bibliographystyle{amsalpha}
\bibliography{streamlined}{}

\end{document}